\newtheorem{theorem}{Theorem}[section]
\newtheorem{fact}[theorem]{Fact}
\newtheorem*{fact*}{Fact}
\newtheorem{lemma}[theorem]{Lemma}
\newtheorem{corollary}[theorem]{Corollary}
\newtheorem{proposition}[theorem]{Proposition}
\newtheorem{conjecture}[theorem]{Conjecture}
\newtheorem*{theorem*}{Theorem}
\theoremstyle{definition}
\newtheorem{definition}[theorem]{Definition}
\newtheorem{example}[theorem]{Example}
\newtheorem{remark}[theorem]{Remark}
\newtheorem{question}[theorem]{Question}
\newcommand{\abar}{\bar{a}}
\newcommand{\dbar}{\bar{d}}
\newcommand{\fG}{{\mathfrak G}}
\newcommand{\ext}{\textrm{ext}}
\def\eq{\textit{eq}}
\def\tp{\operatorname{tp}}
\def\Av{\operatorname{Av}}
\def\fs{\operatorname{fs}}
\def\cl{\operatorname{cl}}
\def\acl{\operatorname{acl}}
\def\dcl{\operatorname{dcl}}
\def\Stab{\operatorname{Stab}}
\def\bdn{\operatorname{bdn}}
\def\C{\mathbb{C}}
\def\cL{\mathcal{L}}
\def\cU{\mathcal{U}}
\def\cG{\mathcal{G}}
\def\cM{\mathcal{M}}
\def\Aut{\operatorname{Aut}}
\def\Av{\operatorname{Av}}
\def\supp{\operatorname{supp}}
\def\inva{\operatorname{inv}}
\def\im{\operatorname{im}}
\def\CB{\operatorname{CB}}
\def\id{\operatorname{id}}
\def\Gal{\operatorname{Gal}}
\def\ima{\operatorname{Im}}
\def\Sym{\operatorname{Sym}}
\newcommand{\fim}{\textit{fim}}
\def\Ind{\setbox0=\hbox{$x$}\kern\wd0\hbox to 0pt{\hss$\mid$\hss}
\lower.9\ht0\hbox to 0pt{\hss$\smile$\hss}\kern\wd0}
\def\Notind{\setbox0=\hbox{$x$}\kern\wd0\hbox to 0pt{\mathchardef
\nn=12854\hss$\nn$\kern1.4\wd0\hss}\hbox to
0pt{\hss$\mid$\hss}\lower.9\ht0 \hbox to 0pt{\hss$\smile$\hss}\kern\wd0}
\def\ind{\mathop{\mathpalette\Ind{}}}
\def\nind{\mathop{\mathpalette\Notind{}}}
\DeclareMathOperator{\alt}{{Alt}}
\DeclareMathOperator{\ord}{ord}
\newcommand{\rii}{\textrm{r}}
\newcommand{\lee}{\ell}
\newcommand{\nref}[2]{\hyperref[#1]{\ref*{#1}$_{#2}$}}
\DeclareMathOperator{\Th}{{Th}}
\DeclareMathOperator{\SL}{{SL}}
\DeclareMathOperator{\mlt}{{Mlt}}
\def\forkindep{\mathrel{\raise0.2ex\hbox{\ooalign{\hidewidth$\vert$\hidewidth\cr\raise-0.9ex\hbox{$\smile$}}}}}
\numberwithin{theorem}{section}
\newtheorem{problem}[theorem]{Problem}
\newtheorem{clm}{Claim}
\newtheorem*{clm*}{Claim}
\theoremstyle{definition}
\theoremstyle{remark}
\newenvironment{clmproof}[1][\proofname]{\proof[#1]}{\endproof}
\newcommand{\orcidlogo}{\includegraphics[height=\fontcharht\font`\B]{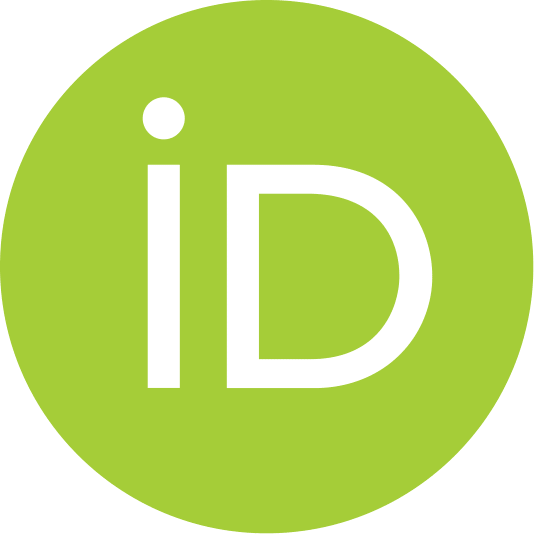}}
\newcommand{\orcid}[1]{\href{#1}{\orcidlogo #1}}
\title[Definable convolution and idempotent Keisler measures III]{Definable convolution and idempotent Keisler measures III. Generic stability, generic transitivity, and revised Newelski's conjecture}
\author{Artem Chernikov}
\address{Artem Chernikov \orcid{https://orcid.org/0000-0002-9136-8737} \\ 
University of Maryland, College Park and University of California, Los Angeles, USA}
\email{artem@umd.edu}
\author{Kyle Gannon}
\address{Kyle Gannon \\ Beijing International Center for Mathematical Research (BICMR)\\
Peking University\\
Beijing, China}
\email{kgannon@bicmr.pku.edu.cn}
\author{Krzysztof Krupi\'nski}
\address{Krzysztof Krupi\'{n}ski \orcid{https://orcid.org/0000-0002-2243-4411} \\ 
Instytut Matematyczny Uniwersytet Wroc{\l}awski, pl. Grunwaldzki 2, 50-384 Wroc{\l}aw, Poland}
\email{Krzysztof.Krupinski@math.uni.wroc.pl}
\begin{document}

\maketitle
\begin{abstract}
We study idempotent measures and the structure of the convolution semigroups  of  measures over definable groups.

We isolate the property of \emph{generic transitivity} and demonstrate that it is sufficient (and necessary) to develop stable group theory localizing on a generically stable type, including invariant stratified ranks and connected components. We establish generic transitivity of generically stable idempotent types in important new cases, including abelian groups in arbitrary theories and arbitrary groups in rosy theories, and characterize  them as generics of connected type-definable subgroups.
	
	Using tools from Keisler's randomization theory, we generalize some of these results from types to generically stable Keisler measures, and classify idempotent generically stable measures in abelian groups as (unique) translation-invariant measures on type-definable fsg subgroups. This provides a partial definable counterpart to the classical work of Rudin, Cohen and Pym for locally compact topological groups.
	
	Finally, we provide an explicit construction of a minimal left ideal in the convolution  semigroup
of measures for an arbitrary countable NIP group, from a minimal left ideal in the corresponding semigroup on types  and a canonical measure constructed on its ideal subgroup. In order to achieve it, we in particular prove the revised Ellis group conjecture of Newelski for countable NIP groups.
\end{abstract}
\vspace{-15pt} 

\tableofcontents
\section{Introduction}

We study idempotent measures and the structure of the convolution semigroups on measures in definable groups, as well as some related questions about topological dynamics of definable actions (continuing \cite{chernikov2022definable, chernikov2023definable}).

We first recall the classical setting. If $G$ is a locally compact group and $\mathcal{M}(G)$ is the space of regular Borel probability measures on $G$, one  extends group multiplication on $G$ to  \emph{convolution} $\ast$ on $\mathcal{M}(G)$: if $\mu, \nu \in \mathcal{M}(G)$ and $B$ is a Borel subset of $G$, then
\begin{equation*}
    (\mu * \nu) (B) = \int_{G} \int_{G} \mathbf{1}_{B}(x \cdot y) d\mu(x)d\mu(y).  
\end{equation*}
A measure $\mu$ is \emph{idempotent} if $\mu * \mu = \mu$. A classical line of work established a correspondence between compact subgroups of $G$ and idempotent measures in $\mathcal{M}(G)$, in progressively broader contexts \cite{kawada1940probability,Wendel,Cohen,Rudin,Glicksberg2} culminating in the following:
\begin{fact}\cite[Theorem A.4.1]{pym1962idempotent}\label{fac: classical Pym} Let $G$ be a locally compact group and $\mu \in \mathcal{M}(G)$. Then the following are equivalent: 
\begin{enumerate}
    \item $\mu$ is idempotent. 
    \item The support $\supp(\mu)$ of $\mu$ is a compact subgroup of $G$ and $\mu|_{\supp(\mu)}$ is the normalized Haar measure on $\supp(\mu)$. 
\end{enumerate}
\end{fact}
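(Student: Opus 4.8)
The plan is to prove the two implications separately; all the content is in $(1)\Rightarrow(2)$. For $(2)\Rightarrow(1)$ I would argue directly: writing $H:=\supp(\mu)$ for the compact subgroup and assuming $\mu$ is its normalized Haar measure, for every Borel $B\subseteq G$ and every $x\in H$ we have $\int_G\mathbf{1}_B(xy)\,d\mu(y)=\mu(x^{-1}B)=\mu\bigl(x^{-1}(B\cap H)\bigr)=\mu(B\cap H)=\mu(B)$, using in turn that $\mu$ is supported on $H$, that $x\in H$ (so $x^{-1}B\cap H=x^{-1}(B\cap H)$), left-invariance of Haar measure, and full support; integrating over $x$ against $\mu$ yields $(\mu*\mu)(B)=\mu(B)$.

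For $(1)\Rightarrow(2)$, put $H:=\supp(\mu)$. \emph{Step 1: $H$ is a closed subsemigroup.} This is immediate from the standard identity $\supp(\mu*\nu)=\overline{\supp(\mu)\cdot\supp(\nu)}$ for regular Borel probability measures on a topological group (which uses only continuity of multiplication): taking $\nu=\mu$ gives $\overline{H\cdot H}=H$. \emph{Step 2: $H$ is compact.} This is the heart of the matter. By inner regularity choose a compact $K\subseteq H$ with $\mu(K)>\tfrac34$; from $\mu(K)=(\mu*\mu)(K)=\int_H\mu(x^{-1}K)\,d\mu(x)$ and a Chebyshev-type estimate, the set $B:=\{x\in H:\mu(x^{-1}K)>\tfrac12\}$ has $\mu(B)>\tfrac12$, and for $x,y\in B$ the sets $x^{-1}K$ and $y^{-1}K$ must intersect, forcing $yx^{-1}\in KK^{-1}$; fixing $y_0\in B$ this confines $B$, hence $\overline B$, to the compact set $(KK^{-1})y_0$. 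The delicate remaining point --- and the main obstacle of the whole proof --- is to pass from ``$H$ contains a compact set of measure $>\tfrac12$ inside a translate of $KK^{-1}$'' to ``$H$ is compact''. Here I would bring in the subsemigroup structure of Step 1 together with the translation identities forced by idempotency: for instance, showing that $\delta_h*\mu=\mu$ for every $h\in H$ (so that $\mu(hK)=\mu(K)>\tfrac34$ for all $h\in H$, whence any two left translates $h_1K,h_2K$ meet, giving $h_1^{-1}h_2\in KK^{-1}$ and therefore $H\subseteq h_0(KK^{-1})$ for any fixed $h_0\in H$); equivalently, that a locally compact group carrying a fully supported invariant probability measure is compact.

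\emph{Step 3: $H$ is a group and $\mu$ is its Haar measure.} A compact semigroup under a continuous operation contains an idempotent $e$ by the usual Zorn/compactness argument, and $e^2=e$ in the ambient group $G$ forces $e=1_G$; for $a\in H$ the closure of $\{a^n:n\ge1\}$ is a compact subsemigroup of $H$, so it contains $1_G$, which gives a net $a^{n_\alpha}\to1_G$ and hence $a^{n_\alpha-1}\to a^{-1}\in H$ by closedness. Thus $H$ is a compact subgroup with bi-invariant normalized Haar measure $m$, and $m*\mu=m=\mu*m$ by invariance. Finally $\mu=m$: since $\mu$ is a probability measure on $H$ with $\mu^{*n}=\mu$ for all $n\ge1$ and full support, one may either invoke the It\^{o}--Kawada theorem (the Ces\`aro averages $\tfrac1n\sum_{k=0}^{n-1}\mu^{*k}$, here eventually equal to the constant $\mu$, converge weak-$*$ to the Haar measure of the closed subgroup generated by $\supp(\mu)$, namely $H$), or argue directly that convolution by $\mu$ is a norm-one idempotent operator --- hence an orthogonal projection --- on $L^2(H,m)$ whose range, by full support and Peter--Weyl, consists of the constants, so $\mu=m$. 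Steps 1 and 3 and this last identification are soft; the technical weight sits entirely in the compactness claim of Step 2, which is precisely where local compactness and regularity of $\mu$ are used.
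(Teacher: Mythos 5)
The paper does not prove this statement at all: Fact \ref{fac: classical Pym} is quoted as a classical black box from Pym's paper, so your attempt can only be measured against the standard proofs in the literature. The skeleton you set up is the classical one, and the soft parts are fine: the computation for $(2)\Rightarrow(1)$; Step 1 via $\supp(\mu\ast\nu)=\overline{\supp(\mu)\supp(\nu)}$; the Ellis--Numakura argument that a compact subsemigroup of a group is a closed subgroup containing $1_G$ and inverses; and the identification $\mu=m$ via It\^o--Kawada or via the norm-one idempotent convolution operator on $L^2(H,m)$ whose fixed space, by full support and Peter--Weyl, is the constants. Your Chebyshev computation in Step 2 is also correct as far as it goes: it places a set $B$ with $\mu(B)>\tfrac12$ inside the compact set $KK^{-1}y_0$.

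The genuine gap is exactly where you flag it, and the proposed repair does not close it. Trapping a positive-measure piece of $H$ in a compact set says nothing about the rest of $H$ (far-away points of the support can have arbitrarily small-measure neighborhoods, so purely integral estimates cannot see them), and the completion you offer rests on the unproved claim that $\delta_h\ast\mu=\mu$ for every $h\in\supp(\mu)$. Before compactness is known, that claim \emph{is} the theorem: granting it, compactness follows by your translate argument, $P:=\{x: \delta_x\ast\mu=\mu\}$ is a closed subgroup containing $\supp(\mu)$, and everything else is routine. Proving it amounts to showing that the bounded functions $\varphi_f(x)=\int f(xy)\,d\mu(y)$, which by idempotence satisfy the harmonicity identity $\varphi_f(x)=\int \varphi_f(xz)\,d\mu(z)$, are constant on $\supp(\mu)$ --- a Choquet--Deny-type maximum principle that is false for general bounded $\mu$-harmonic functions on general locally compact groups, so it cannot simply be ``brought in'' from the subsemigroup structure; this is precisely where Wendel/Pym-style proofs spend their effort (typically establishing compactness of the support first by a separate argument, and only then extracting invariance from the structure of compact semigroups or an averaging/minimization argument). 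Your parenthetical ``equivalently, a locally compact group carrying a fully supported invariant probability measure is compact'' also mislabels the difficulty: that implication is the easy translate argument you already wrote, whereas what is missing is the invariance itself, at a stage where $H$ is only known to be a closed subsemigroup, not a group.
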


We are interested in a counterpart of this phenomenon in the \emph{definable category}. In the same way as e.g.~algebraic or Lie groups are important in algebraic or
differential geometry, the understanding of groups definable in a given first-order structure (or in certain classes of first-order structures) is important for model theory and its applications. The
class of \emph{stable groups} is at the core of model theory, and the corresponding theory was developed in the 1970s-1980s borrowing many ideas from the
study of algebraic groups over algebraically closed fields (with corresponding
notions of connected components, stabilizers, generics, etc., see~\cite{poizat2001stable}). 
More recently, many of the ideas of stable group theory were extended to the  class of \emph{NIP groups}, which contains both stable groups and groups definable in $o$-minimal structures or over the $p$-adics.
This led to multiple applications, e.g.~a resolution of Pillay’s conjecture for compact o-minimal groups
\cite{NIP1} or Hrushovski's work on approximate subgroups \cite{hrushovski2012stable}, and brought to light the importance of the study of invariant measures on definable subsets of the group (see e.g.~\cite{chernikov2018model} for a short survey), as well as the methods  of
topological dynamics (introduced into the picture starting with Newelski \cite{N1}). In particular, deep connections with \emph{tame} dynamical systems as studied by Glasner, Megrelishvili and others (see e.g.~\cite{glasner2007structure, Gla18}) have emerged, and play an important role in the current paper.

More precisely, we now let $G$ be a group definable in some structure $M$ (i.e.~both the
underlying set and multiplication are definable by formulas with parameters
in $M$), it comes equipped with a collection of definable subsets of
cartesian powers of $G$  closed under Boolean combinations, projection
and Cartesian products (but does not carry topology or any additional structure a priori). We let $\cU$ be a ``non-standard'' elementary extension of $M$, and we let $G(\cU)$ denote the group obtained by evaluating in $\cU$ the formulas used to
define $G$ in $M$ (which in the case of an algebraic group
corresponds to working in the universal domain, in the sense of Weil).  So
e.g.~if we start with $M = (\mathbb{R}, +, \times)$ the field of reals, and $G(M)$ its additive group, then
$G(\cU)$ is the additive group of a large real closed field extending $\mathbb{R}$ which
now contains infinitesimals — i.e., it satisfies a saturation condition: every small finitely
consistent family of definable sets has non-empty intersection.
It is classical in topological dynamics to consider the action of a discrete
group $G$ on the compact space $\beta G$ of ultrafilters on $G$, or more precisely ultrafilters on the Boolean algebra of \emph{all} subsets of $G$. In the definable setting, given
a definable group $G(M)$, we let $S_G(M)$ denote the space of ultrafilters on the
Boolean algebra of \emph{definable} subsets of $G(M)$, hence the space $S_G(M)$ (called
the space of types of $G(M)$) is a ``tame'' analogue of the Stone-\v Cech compactification of the discrete group $G$. Then $G(M)$ acts on $S_G(M)$ by homeomorphisms, and the same construction applies to $G(\cU)$ giving the space $S_G(\cU)$
of ultrafilters on the definable subsets of $G(\cU)$.
Similarly, we let $\mathfrak{M}_G(M)$ denote the space of finitely additive probability measures on the Boolean algebra of definable subsets of $G(M)$ (and $\mathfrak{M}_G(\cU)$ for $G(\cU)$), it is affinely homeomorphic to the space of all regular $\sigma$-additive Borel probability measures on $S_G(M)$ (respectively on $S_G(\cU)$), with weak$^*$-topology. The set $G(M)$ embeds into $S_G(\cU)$ as realized types, and we let $S_{G,M}(\cU)$ denote its closure (model theoretically, this corresponds to the set of global types in $S_G(\cU)$ that are \emph{finitely satisfiable} in $G(M)$). Similarly, we let $\mathfrak{M}_{G,M}(\cU)$ denote the closed convex hull of $G(M)$ in $\mathfrak{M}_G(\cU)$ (this is the set of global Keisler measures on $G(\cU)$ finitely satisfiable in $G(M)$, equivalently the set of measures supported on $S_{G,M}(\cU)$ --- see \cite[Proposition 2.11]{chernikov2022definable}).
Similarly to the classical case, in many situations (including the ones discussed in the introduction) we have a well-defined convolution operation $\ast$  on $\mathfrak{M}_{G,M}(\cU)$ (see Definition \ref{def: definable conv} and the discussion around it).

In this context, generalizing a classical fact about idempotent types in  stable groups \cite{N3}, we have the following definable counterpart of Fact \ref{fac: classical Pym} for  \emph{stable} groups:

\begin{fact}\cite[Theorem 5.8]{chernikov2022definable}\label{fac: stable corresp}
Let $G$ be a (type-)definable group in a stable structure $M$ and $\mu \in \mathfrak{M}_{G,M}(\cU)$ a measure. Then $\mu$ is idempotent if and only if $\mu$ is the unique left-invariant (and the unique right-invariant) measure on a type-definable subgroup of $G(\cU)$ (namely, the left-/right-stabilizer of $\mu$).
\end{fact}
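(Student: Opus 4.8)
The plan is to prove the two implications separately, using throughout that over a stable $M$ every $\mu \in \mathfrak{M}_{G,M}(\cU)$ is definable over $M$ and generically stable, so that convolution is associative on $\mathfrak{M}_{G,M}(\cU)$, Fubini holds for iterated integrals of definable functions against two such measures, and $\supp(\mu)$ is a closed subset of $S_{G,M}(\cU)$; I will also use the standard fact that a type-definable group in a stable theory admits at most one left-invariant Keisler measure and that such a measure, when it exists, is automatically bi-invariant (and hence is also the unique right-invariant one) --- this is seen by pushing forward to the compact quotient $H/H^{00}$, where the unique invariant measure is the Haar measure, and to $H^{00}$, where it is the Dirac mass at the generic type. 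For the \emph{easy direction}, suppose $\mu$ is the unique left-invariant measure on a type-definable subgroup $H \le G(\cU)$; then $\mu$ is in particular right-$H$-invariant and concentrated on $H$. For a definable $B$, writing $y \mapsto \mu(By^{-1})$ for the definable function $y \mapsto \mu(\{x : xy \in B\})$, the definition of convolution gives $(\mu * \mu)(B) = \int_{G} \mu(By^{-1})\, d\mu(y)$; since $\mu$ is concentrated on $H$ and right-$H$-invariant, the integrand is constantly $\mu(B)$ on $\supp(\mu) \subseteq H$, so $(\mu * \mu)(B) = \mu(B)\cdot\mu(H) = \mu(B)$ and $\mu$ is idempotent.

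For the \emph{hard direction}, assume $\mu * \mu = \mu$ and put $H := \Stab_l(\mu) = \{g \in G(\cU) : g\mu = \mu\}$, a subgroup of $G(\cU)$. First I would check that $H$ is type-definable: for each formula $\phi(x;y)$, definability of $\mu$ makes $(g,b) \mapsto \mu\big(\phi(g^{-1}x;b)\big)$ and $b \mapsto \mu\big(\phi(x;b)\big)$ into definable functions taking finitely many values, so $H_\phi := \{g : \forall b\ \mu(\phi(g^{-1}x;b)) = \mu(\phi(x;b))\}$ is definable, and $H = \bigcap_\phi H_\phi$. By construction $\mu$ is left-$H$-invariant, so the whole content of this direction is concentration of $\mu$ on $H$, which I treat next.

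\emph{The crux} is that $\mu(H) = 1$. Since convolution by $\mu$ on the right is affine and continuous and $\mu$ is the barycenter of $\mu$ over $S_G$, idempotency rewrites as $\mu = \int_{S_G}(p * \mu)\, d\mu(p)$, displaying $\mu$ as an average over its own support of left-translates $p*\mu$ of $\mu$; one wants to upgrade this to: for each fixed $\phi$, the relation $g\mu = \mu$ holds on $\phi$-formulas for $\mu$-almost every $g$, i.e.\ $\mu(H_\phi) = 1$. In the type case this is Newelski's argument: if $a \models p$ and $c \models p$ with $c \ind a$, then $ac \models p*p = p$, and for any $\phi$-formula $\psi$ one has $\psi(a\cdot x)\in p \iff \psi(ac)\text{ holds} \iff \psi(x)\in p$ (the first equivalence since $c\models p|_a$, the second since $ac\models p$), so $a\in H_\phi$. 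For measures I would run this same argument inside Keisler's randomization of $T$, which is again stable, where $\mu$ becomes an idempotent type in the randomized group and $\phi$ is coded by its randomization; transporting the conclusion back yields $\mu(H_\phi) = 1$ for each $\phi$. Finally, by compactness of $S_G$, $\mu(H) = \mu\big(\bigcap_\phi H_\phi\big) = 1$: any open $U \supseteq H$ contains a finite sub-intersection $H_{\phi_1}\cap\dots\cap H_{\phi_n}$ (cover the compact set $U^{c}$ by the open sets $H_\phi^{c}$), and a finite intersection of measure-one sets has measure one, so $\mu(U)=1$ and hence $\mu(H)=1$. I expect this concentration step --- and in particular the passage from per-formula almost-everywhere statements to genuine concentration of $\mu$ on the type-definable group $H$ --- to be the main obstacle.

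To \emph{conclude}: $\mu$ is left-$H$-invariant and concentrated on $H$, so it is a left-invariant Keisler measure on $H$; by the uniqueness fact recalled at the outset it is the unique left-invariant, and the unique right-invariant, measure on $H$, and it is bi-invariant. It remains to identify the stabilizers: bi-invariance gives $H \subseteq \Stab_r(\mu)$, while if $\mu g = \mu$ for some $g \notin H$ then $\mu$ would be concentrated both on $H$ and on $Hg$, hence on $H \cap Hg = \varnothing$, which is absurd; so $H = \Stab_l(\mu) = \Stab_r(\mu)$. Combining the two directions yields the stated characterization.
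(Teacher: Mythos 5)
Your overall architecture (easy converse; hard direction reduced to type-definability of the stabilizer plus concentration of $\mu$ on it; randomization of the stable theory to import the argument for types) is close in spirit to the alternative proof given in Section \ref{sec: stable measures} of this paper, but the step you yourself call the crux is a genuine gap rather than a routine transport. What Newelski's argument, run in the stable randomization $T^{R}$, gives you is that a realization $\mathbf{a}$ of the randomized type $r_{\mu}$ --- a \emph{random variable} --- lies in $\Stab(r_{\mu})$, i.e.\ $\mathbf{a}\cdot r_{\mu}=r_{\mu}$. Unwinding this against constant parameters $f_b$ only yields the averaged identity $\int_{\Omega}\mu'(\varphi(\mathbf{a}(\omega)\cdot x,b))\,d\mathbb{P}(\omega)=\mu(\varphi(x,b))$ (with $\mu'$ the definable extension); it does not say that $\mathbf{a}(\omega)\cdot\mu=\mu$ for almost every $\omega$, and membership of a random variable in $\Stab(r_{\mu})$ is in general weaker than almost-sure membership of its values in $\Stab(\mu)$: a convex combination of distinct translates $g_i\cdot\mu$ could a priori average back to $\mu$, and ruling this out is not formal. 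There is also a hidden quantifier exchange (``for $\mu$-a.e.\ $g$, for \emph{all} $b$'') in the assertion $\mu(H_{\varphi})=1$ that an instance-by-instance argument would not give for free. So ``transporting the conclusion back'' is exactly the statement that needs proof, and nothing in the sketch supplies it. Both known proofs fill this hole with an extra ingredient you do not have: the original proof of the cited theorem analyzes the compact semigroup $(S(\mu),\ast)$ on the support and applies Newelski's group-chunk theorem, while the argument of Section \ref{sec: stable measures} uses the randomization but translates by the \emph{constant} random variable $f_a$ for $a\models p\in S(\mu)$, which requires knowing in advance that $\mu$ is \emph{support transitive} ($\mu\ast p=\mu$ for every $p\in S(\mu)$; cf.\ Problem \ref{intermediate:conjecture} and Proposition \ref{prop: supp trans equiv}); in the stable case support transitivity again comes from the semigroup analysis of the support. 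Your proposal neither proves support transitivity nor validly replaces it.

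Two smaller points. Your justification that each $H_{\varphi}$ is definable (``definability of $\mu$ makes $b\mapsto\mu(\varphi(x;b))$ a definable function taking finitely many values'') is false for measures: finitely many values is a feature of generically stable \emph{types}, not of definable measures (already $\mu=\sum_n 2^{-n}\delta_{a_n}$ in a stable theory gives infinitely many values); the stabilizer of a definable measure is nonetheless type-definable, for the standard reason recorded in Fact \ref{fac: stab of def meas type def}, so the conclusion survives but not your argument for it. Likewise, the sketch of uniqueness/bi-invariance via pushforwards to $H/H^{00}$ ``and to $H^{00}$'' is not an argument (there is no definable retraction onto $H^{00}$, and $\mu$ is not determined by those two pushforwards); the fact itself is true --- it is the stable instance of Proposition \ref{prop:unique} --- and should simply be cited.
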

\noindent This suggests a remarkable analogy between the topological and definable settings, even though Fact \ref{fac: stable corresp} is proved using rather different methods.

In the first part of the paper (Sections \ref{sec: generically stable types} and \ref{sec: idemp gen stab measures}), we study generalizations of Fact \ref{fac: stable corresp} beyond the limited context of stable groups (we note that this correspondence fails in general NIP  groups  without an appropriate tameness assumption on the idempotent measure \cite[Example 4.5]{chernikov2023definable}). An important class of groups arising in the work on Pillay's conjectures is that of groups with \emph{finitely satisfiable generics}, or \emph{fsg} groups in short \cite{NIP1}. It contains stable groups, as well as (definably) compact groups in $o$-minimal structures, and provides a natural counterpart to the role that compact groups play in Fact \ref{fac: stable corresp}. By a well-known characterization in the NIP context (see e.g.~\cite[Proposition 8.33]{Guide}), these are precisely the groups that admit a (unique) translation-invariant measure $\mu$ on their definable subsets which is moreover \emph{generically stable}: a sufficiently
long random sample of elements from the group uniformly approximates the measure of
all sets in a definable family of subsets with high probability (i.e.~$\mu$ is a \emph{frequency interpretation measure}, or \fim\ measure, satisfying a \emph{uniform} version of the weak law of large numbers --- this notion is motivated by Vapnik-Chervonenkis theory, and serves as a correct generalization of generic stability for measures outside of NIP, by analogy with generically stable types in the sense of \cite{PiTa}); see Section \ref{sec: fim measures}).
An analog of Fact \ref{fac: stable corresp} would thus amount to demonstrating that such subgroups are the  only source of idempotent generically stable measures (see Problem \ref{conj: main measures}).

First, in Section \ref{sec: generically stable types} we focus on the case of idempotent types in $S_{G,M}(\cU)$ (i.e.~$\{0,1\}$-measures, equivalently~ultrafilters on the Boolean algebra of definable subsets of $G$). After reviewing some preliminaries on generically stable types (Sections \ref{sec: gen stab types} and \ref{def: generically stable group}), we revise the case of groups in stable structures (Section \ref{subsection: stable theories}), and then resolve the question in several important cases:
\begin{theorem}
	Assume $p \in S_{G}(\cU)$ is generically stable and idempotent, and one of the following holds:
	\begin{enumerate}
	\item $p$ is  stable and $M$ is arbitrary (Proposition \ref{proposition: stable type case}, see Section \ref{sec: gen trans stable types});
		\item $G$ is abelian and $M$ is arbitrary (Proposition \ref{prop: key for types in ab}, see Section \ref{sec: abelian for types});
		\item $G$ is arbitrary and $M$ is inp-minimal (Proposition \ref{prop: key for types in dpmin}, see Section \ref{sec: inp-min});
		\item $G$ is arbitrary and $M$ is rosy (so e.g.~if $M$ has a simple theory; Proposition \ref{prop: gen trans in rosy}, see Sections \ref{sec: rosy types} and \ref{sec: gen trans simple}).
	\end{enumerate}
	Then $p$ is the unique left-/right-invariant type on a type-definable subgroup of $G(\cU)$ (namely, the left-/right-stabilizer of $p$).
\end{theorem}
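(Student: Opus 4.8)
The plan is to reduce the entire statement to a single property of the idempotent generically stable type $p$, namely \emph{generic transitivity}, and then to verify that property separately in each of the four cases; the cases share the reduction and differ only in how the verification is carried out.

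\emph{Step 1 (reduction to generic transitivity).} Since $p$ is generically stable it is definable and finitely satisfiable over a small model $M_0 \prec \cU$, and the canonical independence relation $\ind$ attached to it --- where $a \ind_{M_0} b$ means $b \models p|_{M_0 a}$, equivalently $a \models p|_{M_0 b}$ --- is symmetric and transitive, so a Morley sequence $(a_i)_{i<\omega}$ of $p$ over $M_0$ is totally indiscernible. Idempotence $p \ast p = p$ says precisely that $a_0 \cdot a_1 \models p$ (over $M_0$) for such a sequence, where total indiscernibility is what lets one disregard the order of the two factors. One calls $p$ \emph{generically transitive} when moreover $a_0 \cdot a_1 \models p|_{M_0 a_0}$, and, using symmetry and transitivity of $\ind$ together with definability of $p$, this propagates to: $a_0 \cdots a_n \models p$ over the preceding terms, for every $n$. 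Granting generic transitivity, the localized stable group theory developed in Section~\ref{sec: generically stable types} applies: $\Stab^{l}(p)$ is a type-definable subgroup of $G(\cU)$ on which $p$ concentrates, it coincides with $\Stab^{r}(p)$, and $p$ is simultaneously the unique left-invariant and the unique right-invariant type on it. Thus in each of the four cases it suffices to show that $p$ is generically transitive.

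\emph{Step 2, cases (1) and (2).} If the type $p$ is itself stable, then on (products of) realizations of $p$ the relation $\ind$ behaves exactly as non-forking in a stable theory, so left translation by a realization of $p$ is a generic bijection of $p$ onto itself over that realization --- this is generic transitivity, and it is the local, type-by-type version of the classical fact of Newelski \cite{N3} about idempotent generics of connected subgroups of stable groups, which transfers with no change. If instead $G$ is abelian (and $M$ arbitrary), the only additional tool is commutativity: taking a Morley sequence $(a,b,c)$ of $p$ one gets $ab \models p$, $bc \models p$ and $(ab)c = a(bc) \models p$ from idempotence, $ab \ind_{M_0} c$ from symmetry of $\ind$, and then commutativity --- which allows the fresh generic $c$ to be moved from one side of $a$ to the other --- converts the over-$M_0$ output of idempotence into the over-$M_0 a$ statement that is generic transitivity.

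\emph{Step 2, cases (3) and (4).} For rosy $M$ one runs the analogue of Step~1 with thorn-forking independence $\ind^{\mathrm{th}}$ in place of $\ind$: in a rosy theory $\ind^{\mathrm{th}}$ is symmetric, transitive and has local character, and generic stability of $p$ forces $\ind^{\mathrm{th}}$ to agree with $\ind$ on the relevant tuples, so the localized thorn-rank computations of stable (indeed simple) group theory go through and give generic transitivity; this divides into the simple case, where $\ind^{\mathrm{th}}$ is non-forking (Section~\ref{sec: gen trans simple}), and the remaining rosy part. The inp-minimal case is the one in which the relevant localized rank is $1$: there a failure of generic transitivity would yield an inp-pattern of width $2$, contradicting inp-minimality, so a short direct combinatorial argument suffices, and it also serves as a base case for the rosy reduction.

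\emph{Main obstacle.} The crux is case (2): proving generic transitivity with \emph{no} ambient model-theoretic tameness, relying solely on commutativity of $G$ and generic stability of the single type $p$. None of the usual apparatus of stable, simple or rosy group theory is available, so one must extract generic transitivity from idempotence together with the symmetry and associativity of the generic product alone, the delicate point being to prevent the ``error'' in a product of generic realizations from accumulating --- which is exactly what the commutative re-bracketing trick (sliding a fresh generic past an earlier factor) achieves. The reduction, in case (4), of the general rosy setting to the simple and inp-minimal cases is the next most delicate step.
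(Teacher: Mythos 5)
Your overall architecture is the paper's: reduce the theorem to showing that $p$ is generically transitive (via Remarks \ref{rem: fsg follows} and \ref{remark: basic}), then verify generic transitivity case by case. Cases (3) and (4) are described essentially as in the paper (burden/weight of $\tp(a_1\cdot a_0/\cU)$ against Fact \ref{fac: burden gen stab} for inp-minimality; stratified local thorn ranks plus the coincidence of forking and thorn-forking on generically stable types for rosiness), and case (1) gestures correctly at the stratified-rank argument, though you assert rather than prove the key step that left translation preserves the relevant rank and that rank-preservation plus multiplicity one pins down $a\cdot p'$ as $p'$.

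The genuine gap is in case (2), which you yourself identify as the crux. The mechanism you describe — a three-element Morley sequence $(a,b,c)$, the facts $ab, bc, abc \models p|_{M_0}$, the symmetry statement $ab \ind_{M_0} c$, and then "sliding $c$ past $a$" by commutativity — does not yield $ab \models p|_{M_0 a}$. What commutativity and symmetry give you from three elements is, e.g., that $bc \models p|_{M_0 a}$ and hence $(bc)\cdot a \models p|_{M_0}$; but concluding $(bc)\cdot a \models p|_{M_0 a}$ is exactly the same problem you started with, so the argument is circular at bounded length. The missing idea is that the failure of generic transitivity must be played off against \emph{arbitrarily long} products: if $a_1\cdot a_0 \nind_M a_0$, fix a formula $\varphi(x,a_0)$ forking over $M$ with $\models\varphi(a_1\cdot a_0,a_0)$, extend to $(a_i)_{i<\omega}\models p^{(\omega)}|_M$, and use commutativity plus total indiscernibility to see that $b_k := a_{k-1}\cdots a_0$ satisfies $\varphi(x,a_i)$ simultaneously for \emph{all} $i<k$ (because $b_k$ can be rewritten as (product of the others)$\cdot a_i$, and that product realizes $p|_{M a_i}$ by idempotence). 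Compactness then makes $\{\varphi(x,a_i):i<\omega\}$ consistent, contradicting Fact \ref{fac: basic gen stab}(5), which says a formula forking over $M$ cannot be consistently instantiated along a Morley sequence of a generically stable type. Without this accumulation-to-contradiction step the abelian case does not close; your "obstacle" paragraph names the phenomenon but inverts its role (the proof \emph{forces} the error to accumulate rather than preventing it).
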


The proof proceeds by establishing the crucial property of \emph{generic transitivity} (see Section \ref{sec: gen trans}) for idempotent generically stable types in these cases, namely that if $(a_1,a_2) \models p \otimes p$, then $(a_1 \cdot a_2, a_1) \models p \otimes p$ (using local weight arguments in case (2), and the appropriate version of the theory of stratified ranks in the other cases). The question whether every generically stable idempotent type is generically transitive remains open, even for NIP groups (see Problem \ref{conjecture: main conjecture'} and discussion in Section \ref{sec: gen trans}).

We further investigate generic transitivity, and demonstrate that it is a sufficient and necessary condition for developing some crucial results of stable group theory localizing on a generically stable type (some other elements of stable group theory for generically stable types were considered in \cite{wang2022group}). Sometimes we use a slightly stronger technical assumption that $p^{(n)}$ is generically stable for all $n$, which always holds in NIP structures. 
 In Section \ref{sec: gen trans strat rank}, working in an arbitrary theory, we define an analog of the stratified rank in stable theories restricting to subsets of $G(\cU)$ defined using parameters from a Morley sequence in a generically stable type $p$, demonstrate finiteness of this rank (Lemma \ref{lemma: properties of new rank}) and show that this rank is left invariant (under multiplication by realizations of $p$) if and only if $p$ is generically transitive (Proposition \ref{prop: gen trans iff R left inv}).  A fundamental theorem of Hrushovski \cite{hrushovski1990unidimensional} demonstrates that in a stable theory, every \emph{type definable} group (i.e.~an intersection of definable sets that happens to be a group) is in fact an intersection of definable groups. The main result of Section \ref{sec: stab is inters of def grps} is an analog for generically transitive types:
\begin{theorem}[Proposition \ref{prop: type-def grp is intersec of def grps}]\label{prop: type-def grp is intersec of def grps}
If $G$ is type-definable and $p \in S_{G}(\cU)$ is generically stable, idempotent and generically transitive, then its stabilizer  is an intersection of $M$-definable groups.
\end{theorem}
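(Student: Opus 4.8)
The plan is to adapt the proof of Hrushovski's theorem on type-definable groups in stable theories \cite{hrushovski1990unidimensional}, with the invariant rank $R$ introduced in Section~\ref{sec: gen trans strat rank} playing the role of the stratified rank; the only place where the hypotheses ``idempotent'' and ``generically transitive'' enter is through the existence of $R$ and its left-invariance (Proposition~\ref{prop: gen trans iff R left inv}). Fix a small model $M$ over which $p$ is definable and finitely satisfiable. Since $H := \Stab(p)$ is a point-stabilizer for the action of $G(\cU)$ on global types, it is a subgroup, and it is type-definable over $M$ by definability of $p$; by the results preceding the proposition, $p$ concentrates on $H$ and is its unique left-invariant type. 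Writing $H = \bigcap_{i \in I} Y_i$ with each $Y_i$ relatively $M$-definable, it suffices to produce, for each $Y := Y_i$, an honest $M$-definable subgroup $K$ of $G(\cU)$ with $H \subseteq K \subseteq Y$; then $H$ is the intersection of all such $K$.

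I would then carry out the following construction. Fix $Y \supseteq H$; since $H = H\cdot H^{-1}$, compactness yields a relatively $M$-definable $Y_1 \supseteq H$ with $Y_1 Y_1^{-1} \subseteq Y$. Work inside the Boolean algebra $\mathcal{B}_p$ of subsets of $G(\cU)$ relatively definable over $M$ together with a fixed Morley sequence $(a_j)_{j<\omega} \models p^{(\omega)}$, and put $n_0 := R(p) = \min\{R(X) : X \in p \cap \mathcal{B}_p\}$. Using finiteness of $R$ and of the associated degrees (Lemma~\ref{lemma: properties of new rank}), repeatedly shrink to obtain $X_0 \in p \cap \mathcal{B}_p$ with $X_0 \subseteq Y_1$, $R(X_0) = n_0$, and multiplicity one, i.e.\ for every $Z \in \mathcal{B}_p$ one of $R(X_0 \cap Z)$, $R(X_0 \setminus Z)$ is $< n_0$. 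Set $X_0^\ast := \{g \in G(\cU) : R(gX_0\,\triangle\,X_0) < n_0\}$. Then (i) $H \subseteq X_0^\ast$: for $h \in H$ the set $hX_0$ again lies in $p$ and has rank $n_0$ by left-invariance of $R$, so $X_0 \cap hX_0 \in p$ has rank $n_0$, and multiplicity one of $X_0$ and of its translate $hX_0$ forces $R(X_0 \setminus hX_0)$, $R(hX_0 \setminus X_0) < n_0$; (ii) $X_0^\ast \subseteq X_0 X_0^{-1} \subseteq Y_1 Y_1^{-1} \subseteq Y$: for $g \in X_0^\ast$ one shows $R(X_0 \cap gX_0) = n_0 \geq 0$, so this set is nonempty, giving $z \in X_0$ with $g^{-1}z \in X_0$; (iii) $X_0^\ast$ is a subgroup, via $g_1 g_2 X_0\,\triangle\,X_0 \subseteq g_1(g_2 X_0\,\triangle\,X_0) \cup (g_1 X_0\,\triangle\,X_0)$, the left-invariance of $R$, and a genericity argument to control the first term.

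It remains to descend to honest $M$-definability. Write $X_0 = \chi(x;\bar a)$ for a relatively $M$-definable $\chi$ and a finite $\bar a \models p^{(k)}$ from the Morley sequence. Since $\tp(\bar a/M) = p^{(k)}$ is $M$-invariant, applying $M$-automorphisms shows that for every $\bar a' \models p^{(k)}$ the set $X_0^{\bar a'} := \chi(x;\bar a')$ has the same three properties, so $(X_0^{\bar a'})^\ast$ is a subgroup with $H \subseteq (X_0^{\bar a'})^\ast \subseteq Y$; hence $K := \bigcap_{\bar a' \models p^{(k)}} (X_0^{\bar a'})^\ast$ is a subgroup with $H \subseteq K \subseteq Y$. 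Finally, by definability of $R$ there is a relatively $M$-definable formula $\theta(g,\bar y)$ expressing $R(g\cdot\chi(x;\bar y)\,\triangle\,\chi(x;\bar y)) < n_0$, and $g \in K$ iff $\theta(g,\bar y) \in p^{(k)}$ iff $\models (d_{p^{(k)}}\theta)(g)$ --- an $M$-formula, since a tensor power of a definable type is definable. So $K$ is honestly $M$-definable, completing the proof.

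The main obstacle is that $G$ is merely type-definable, so there is no ambient definable group to work in --- were $G$ definable, the bare point-stabilizer identity $\Stab(p) = \bigcap_{\phi}\Stab_{\phi^{\mathrm{tr}}}(p)$ (intersection over translation-closed localizations) would already exhibit $H$ as an intersection of definable groups with no recourse to ranks. Producing honest definable groups inside a type-definable ambient group is exactly what the rank delivers, and the delicate point is step (iii): the left-invariance of $R$ supplied by Proposition~\ref{prop: gen trans iff R left inv} is asserted only for multiplication by realizations of $p$, whereas elements of $X_0^\ast$ range over $X_0 X_0^{-1}$, which protrudes from $H$. Closing this gap --- either by upgrading the invariance of $R$ to all of $\Stab(p)$ (using $\Stab(p) = \langle p(\cU)\rangle$ together with the symmetry properties of the generically stable type $p$, so that $R$ is insensitive to inverses of realizations of $p$), or by a careful choice of generic representatives $g = z_1 z_2^{-1}$ --- is the part I expect to require the most care, and it is where the generic-transitivity hypothesis is genuinely used.
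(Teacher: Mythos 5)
Your route is genuinely different from the paper's, and unfortunately it has gaps that go beyond the one you flag. The paper's proof elaborates Hrushovski's argument directly: using the $\delta_\varphi$-definitions $\varepsilon_\varphi(y)$ of $p$ (for $\varphi\in G(x)$) it writes down, for each suitable $\varphi_0$, an honest $M$-definable set $H_{\varphi_0}\subseteq\varphi_0(\cU)$ containing $p|_M(\cU)$ which is a group under $\cdot$; generic transitivity enters only through a forking computation showing $\varepsilon_\varphi(\cU)$ is closed under right multiplication by realizations of $p|_M$, together with $p=p^{-1}$, and no rank is used. Crucially, these definable groups are \emph{not} subgroups of $G(\cU)$: they live in $\varphi_0(\cU)$, and only the intersection over all $\varphi_0$ lands inside $\bar G$. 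Your reduction instead demands, for each relatively definable $Y\supseteq H$, a definable group $K$ with $H\subseteq K\subseteq Y\subseteq G(\cU)$. That is strictly stronger than the statement and can fail: if $p$ is the generic of a type-definable but non-definable generically stable group $G$, then $H=\Stab(p)=G(\cU)$, the only relatively definable $Y\supseteq H$ inside $G(\cU)$ is $G(\cU)$ itself, and a definable $K$ with $G(\cU)\subseteq K\subseteq G(\cU)$ would force $G$ to be definable. Relatedly, your final $K$ is cut out \emph{inside} $G(\cU)$ by a $p^{(k)}$-definition, which only makes it relatively $M$-definable, not $M$-definable, so even granting the construction you have not produced definable groups.

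The rank-theoretic core is also not available at the stated level of generality, and the step you flag is the crux rather than a technicality. First, finiteness of $R_{p,\varphi}$ (Lemma \ref{lemma: properties of new rank}, via Proposition \ref{proposition: countably many types}) is only proved under the stronger hypothesis that $p^{(n)}$ is generically stable for all $n$, which Proposition \ref{prop: type-def grp is intersec of def grps} does not assume; moreover the definability of $R_{p,\varphi}$ (needed for your $\theta(g,\bar y)$) and the good behaviour of $\mlt_{p,\varphi}$ under shrinking and under translation are established nowhere. Second, Proposition \ref{prop: gen trans iff R left inv} gives invariance of $R_{p,\varphi}$ only under left translation by realizations of $p$ --- indeed that invariance is \emph{equivalent} to generic transitivity, so full translation-invariance is exactly what this setting lacks. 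Writing $h\in H$ as a product of two realizations of $p$ (using $p=p^{-1}$) can plausibly rescue step (i), but in step (iii) the element $g_1$ ranges over $X_0^\ast\subseteq X_0X_0^{-1}$, which is only known to sit inside $Y$, not inside $H$; showing $X_0^\ast\subseteq H$ (or expressing its elements as controlled products of realizations of $p$) is essentially the conclusion you are trying to reach, so the argument is circular at the point where, in the classical stable proof, one silently invokes invariance of the stratified rank under \emph{all} translations. Neither of your suggested repairs addresses this, so the ``stabilizer of a multiplicity-one set'' construction does not go through as written; the paper's $\varepsilon_\varphi$-based construction avoids the issue entirely.
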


\noindent Finally, in Section \ref{sec: chain conds} we establish a chain condition for groups type-definable using parameters from a Morley sequence of a generically stable type $p$, implying that there is a smallest group of this form --- and it is equal to the stabilizer of $p$ when $p$ is generically transitive (see Lemma \ref{lem: chain cond with params from gs MS} and Proposition \ref{prop: chain cond loc stab} for the precise statement).

In Section \ref{sec: idemp gen stab measures}, we generalize some of these results from types (i.e.~$\{0,1\}$-measures) to  general measures, in arbitrary structures. Our main result is a definable counterpart of Fact \ref{fac: classical Pym} for \emph{arbitrary} abelian group:

\begin{theorem}\label{thm: abel meas gen trans intro}(Theorem \ref{prop:main})
	Let $G$ be an abelian group and $\mu \in \mathfrak{M}_G(\cU)$ a generically stable measure. Then $\mu$ is idempotent if and only if $\mu$ is the unique left-invariant (and the unique right-invariant) measure on a type-definable subgroup of $G(\cU)$ (namely, its stabilizer).

\end{theorem}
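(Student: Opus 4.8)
The plan is to prove the equivalence by two implications, with the direction ``idempotent $\Rightarrow$ translation-invariant measure on a type-definable subgroup'' doing the real work. For the easy direction, suppose $\mu$ is the unique left-invariant measure concentrated on a type-definable subgroup $H \leq G(\cU)$; since $G$, hence $H$, is abelian, this is automatically the unique right-invariant measure as well. The convolution $\mu \ast \mu$ is well-defined (generically stable measures can always be convolved, cf.\ the convolution section), it is concentrated on $H \cdot H = H$ because $\mu \otimes \mu$ is concentrated on $H \times H$, and it is $H$-invariant because convolving an $H$-invariant measure by $\mu$ again yields an $H$-invariant measure (immediate in the abelian case by a change of variables). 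By uniqueness $\mu \ast \mu = \mu$, so $\mu$ is idempotent.

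For the forward direction, the key intermediate step is to show that a generically stable idempotent measure on an abelian group is \emph{generically transitive}: if $(a_1, a_2) \models \mu \otimes \mu$ then $(a_1 \cdot a_2, a_1) \models \mu \otimes \mu$, equivalently the pushforward of $\mu \otimes \mu$ under $(x,y) \mapsto (x \cdot y, x)$ is $\mu \otimes \mu$ again. I would obtain this from the type case by passing to Keisler's randomization: $\mu$ corresponds to a generically stable type $r_\mu$ in the definable group $G^{R}$, which is abelian since $G$ is; the randomization correspondence sends convolution of measures to the group operation on types and $\otimes$-products to $\otimes$-products, so $r_\mu$ is idempotent and generic stability is preserved; then Proposition \ref{prop: key for types in ab}, applied inside the (continuous-logic) theory $T^{R}$, gives that $r_\mu$ is generically transitive, and translating back $\mu$ is generically transitive. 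One also records that $\mu^{(n)}$ is generically stable for all $n$ (automatic, since the corresponding statement holds for $r_\mu$).

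With $\mu$ generically transitive and generically stable, I would then run the measure version of the localized stable group theory developed earlier (the analogues for measures of Sections \ref{sec: gen trans strat rank}--\ref{sec: chain conds}, again via the randomization dictionary): let $H := \Stab(\mu)$; generic transitivity gives that $\mu$ is $H$-invariant and, through the stratified-rank/dimension comparison between $\mu$ and cosets of $\Stab(\mu)$, that $\mu$ is concentrated on $H$; the descending chain condition on subgroups type-definable over a Morley sequence in $\mu$ (the measure analogue of Lemma \ref{lem: chain cond with params from gs MS} and Proposition \ref{prop: chain cond loc stab}) shows that $H$ is type-definable and is the smallest such group; and a type-definable group carrying the translation-invariant generically stable measure $\mu$ admits a unique left-invariant measure (and, $H$ being abelian, a unique right-invariant one), namely $\mu$. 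Together these statements are exactly the right-hand side of the equivalence.

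I expect the main obstacle to be the randomization step: verifying carefully that $\mu \mapsto r_\mu$ respects $\otimes$-products, convolution, generic stability, and the abelian group structure, and that the type-level argument for abelian groups (the local weight argument behind Proposition \ref{prop: key for types in ab}) genuinely goes through in the continuous-logic setting of $T^{R}$. Once generic transitivity of $\mu$ is established, the remaining arguments should be essentially transcriptions of the type-level results from the first part of the paper to measures, carried out through the same randomization correspondence.
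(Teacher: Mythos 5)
Your overall architecture (generic transitivity is the crux; then pass to the stabilizer and invoke uniqueness of invariant measures) matches the paper, and your endgame is essentially right, though simpler than you suggest: once $\mu$ is generically transitive, no stratified rank or chain condition is needed — generic transitivity \emph{is} the statement that $\mu$ concentrates on $H=\Stab(\mu)$ (Proposition \ref{prop: gen trans meas}), $H$ is type-definable because a definable measure has type-definable stabilizer (Fact \ref{fac: stab of def meas type def}), and uniqueness of the invariant measure on the \fim\ group $H$ is Proposition \ref{prop:unique} together with Remark \ref{rem: gen trans implies fim}.

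The genuine gap is in your route to generic transitivity. You propose to apply Proposition \ref{prop: key for types in ab} ``inside $T^R$'' to the type $r_\mu$. But that proposition is a theorem of discrete first-order logic whose proof rests on forking-theoretic properties of generically stable types — in particular Fact \ref{fac: basic gen stab}(4) (symmetry/characterization of $p^{(2)}$ via nonforking) and Fact \ref{fac: basic gen stab}(5) (a formula forking over $M$ is inconsistent along a Morley sequence) — plus a compactness argument. None of these is available off the shelf for generically stable types in an arbitrary continuous theory, and establishing a continuous-logic forking calculus at this level of generality is a substantial project, not a verification. This is precisely why the paper does \emph{not} transfer the type theorem wholesale: it extracts from the randomization only one clean statement, Theorem \ref{thm: unif gen stab meas} (Morley sequences of a \fim\ measure average out formulas even over ``random'' parameters), whose proof uses only the averaging form of generic stability of $r_\mu$ (Fact \ref{CGH2:Facts}(3)), and then re-runs the weight argument purely measure-theoretically via pushforwards (Lemmas \ref{lemma:fam idem} and \ref{lem: proof of the key prop meas ab}), with Theorem \ref{thm: unif gen stab meas} replacing the appeal to Fact \ref{fac: basic gen stab}(5). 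Separately, your parenthetical that generic stability of $\mu^{(n)}$ for all $n$ is ``automatic'' from the corresponding statement for $r_\mu$ is wrong — whether \fim\ (or even generic stability of a type) passes to powers is explicitly open in the paper — though the abelian weight argument does not actually require it.
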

\noindent Groups as in Theorem \ref{thm: abel meas gen trans intro}, i.e.~supporting an invariant generically stable measure, are called {\em fim} groups (see Section \ref{sec: fim groups}), and in the NIP case correspond precisely to fsg groups (but this is potentially a stronger condition in general).
Our proof of Theorem \ref{thm: abel meas gen trans intro} relies on several ingredients of independent interest. First, we develop some theory of \fim\  groups, generalizing from fsg groups in NIP structures (Section \ref{sec: fim groups}). 
Then, in Section \ref{sec: fim meas and randomization}, we provide a characterization  of generically stable measures of independent interest extending \cite{CGH2},  demonstrating that the usual property --- any Morley sequence determines the measure of arbitrary formulas by averaging along it --- holds even when the parameters of these formulas are allowed to be ``random''. More precisely:
\begin{theorem}[Theorem \ref{thm: unif gen stab meas}]Let $\mu \in \mathfrak{M}_{x}(\mathcal{U})$ be \fim,  $\nu \in \mathfrak{M}_{y}(\mathcal{U})$ arbitrary, $\varphi(x,y,z)$ a formula, $b \in \mathcal{U}^{z}$, and let $\mathbf{x} = (x_i)_{i \in \omega}$. Suppose that $\lambda \in \mathfrak{M}_{\mathbf{x}y}(\mathcal{U})$ is arbitrary such that $\lambda|_{\mathbf{x},M}  = \mu^{(\omega)}$ and $\lambda|_{y} = \nu$. Then 
\begin{equation*} 
\lim_{i \to \infty} \lambda(\varphi(x_i,y,b)) = \mu \otimes \nu(\varphi(x,y,b)). 
\end{equation*} Moreover for every $\varepsilon > 0$ there exists $n = n(\mu, \varphi, \varepsilon) \in \mathbb{N}$ so that for any $\nu, \lambda,b$ as above, we have $\lambda(\varphi(x_i,y,b)) \approx^{\varepsilon} \mu \otimes \nu( \varphi(x,y,b))$ for all but $n$ many $i \in \omega$.
\end{theorem}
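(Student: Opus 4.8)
The plan is to prove the quantitative ``moreover'' clause first; the displayed limit is then a formal consequence of it (by letting $\varepsilon \to 0$). Throughout write $w := (y,z)$, so that $\varphi(x,y,z)$ is seen as a formula $\varphi(x,w)$, and recall the defining property of a \fim\ measure (the uniform law of large numbers, see Section~\ref{sec: fim measures}) applied to this formula: for every $\delta > 0$ there are $N = N(\mu,\varphi,\delta) \in \mathbb{N}$ and an $M$-definable set $\Theta \subseteq (\mathcal{U}^{x})^{N}$ with $\mu^{(N)}(\Theta) > 1-\delta$ such that, for every $(a_1,\dots,a_N) \in \Theta$ and \emph{every} $c \in \mathcal{U}^{w}$,
\[
\Bigl|\ \tfrac{1}{N}\bigl|\{\,l \le N : {}\models \varphi(a_l,c)\,\}\bigr| \ -\ \mu(\varphi(x,c))\ \Bigr| \ <\ \delta .
\]
The one feature we exploit is that $N$ and $\Theta$ depend on $\mu,\varphi,\delta$ only --- in particular not on the (possibly external) parameter $b$, nor on $\nu$ or $\lambda$; this is precisely what will force the bound to be uniform. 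Since \fim\ implies $\dfs$, the measure $\mu$ is definable over $M$, so $y' \mapsto \mu(\varphi(x,y',b))$ is a Borel function of $\operatorname{tp}(y'/Mb)$, the Morley product $\mu\otimes\nu$ is well defined, and $L := \mu\otimes\nu(\varphi(x,y,b)) = \int \mu(\varphi(x,y',b))\, d\nu(y')$.

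Now fix $\varepsilon > 0$, set $\delta := \varepsilon/3$, and take $N,\Theta$ as above for this $\delta$. Suppose, for contradiction, that there are $N$ indices $i_1 < \dots < i_N$ with $\lambda(\varphi(x_{i_l},y,b)) > L + \varepsilon$ for all $l$. Let $\lambda'$ be the marginal of $\lambda$ on the variables $(x_{i_1},\dots,x_{i_N},y)$. Because the marginal of $\mu^{(\omega)}$ on any $N$ of its coordinates equals $\mu^{(N)}$ (Morley powers of generically stable measures are symmetric), and because $\Theta$ is $M$-definable, the hypothesis $\lambda|_{\mathbf{x},M} = \mu^{(\omega)}$ gives $\lambda'(\Theta) = \mu^{(N)}(\Theta) > 1-\delta$ (here $\Theta$ is read in the variables $x_{i_1},\dots,x_{i_N}$); also $\lambda'|_{y} = \nu$. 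Consider the $\{0,\tfrac1N,\dots,1\}$-valued continuous function $A := \tfrac1N\sum_{l=1}^{N}\mathbf{1}_{\varphi(x_{i_l},y,b)}$ on the relevant type space. On one hand, $\int A\, d\lambda' = \tfrac1N\sum_{l=1}^{N}\lambda(\varphi(x_{i_l},y,b)) > L + \varepsilon$. On the other hand, on $\Theta$ (an event of $\lambda'$-mass $> 1-\delta$) the choice of $\Theta$ gives $|A - \mu(\varphi(x,y,b))| < \delta$ pointwise, while off it this difference is $\le 1$; integrating against $\lambda'$ and using $\lambda'|_{y} = \nu$,
\[
\Bigl|\ \int A\, d\lambda' \ -\ \int \mu(\varphi(x,y',b))\, d\nu(y')\ \Bigr| \ <\ 2\delta ,
\]
that is $\int A\, d\lambda' < L + 2\delta = L + \tfrac{2\varepsilon}{3} < L + \varepsilon$ --- contradicting the previous line. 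Hence $\#\{\,i : \lambda(\varphi(x_i,y,b)) > L + \varepsilon\,\} \le N-1$, and the mirror-image argument yields $\#\{\,i : \lambda(\varphi(x_i,y,b)) < L - \varepsilon\,\} \le N-1$, so the ``moreover'' clause holds with $n(\mu,\varphi,\varepsilon) := 2N(\mu,\varphi,\varepsilon/3)$. For the displayed limit: for any fixed $\nu,\lambda,b$ and every $\varepsilon > 0$, all but finitely many $i$ satisfy $|\lambda(\varphi(x_i,y,b)) - L| \le \varepsilon$, whence $\lim_{i\to\infty}\lambda(\varphi(x_i,y,b)) = L = \mu\otimes\nu(\varphi(x,y,b))$.

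There is no serious obstacle once the \fim\ hypothesis is invoked in the right form; the step I would be most careful about is exactly that invocation --- that the sample size and the exceptional set in the uniform law of large numbers are independent of the parameter, so that applying it simultaneously to all instances $c = (y',b)$ with $b$ external is legitimate. The remainder is bookkeeping, but each point still deserves a line: transferring $\lambda|_{\mathbf{x},M} = \mu^{(\omega)}$ to $\lambda'(\Theta) = \mu^{(N)}(\Theta)$ (using that $\Theta$ is over $M$ and symmetry of Morley powers of generically stable measures); the identity $\int A\, d\lambda' = \tfrac1N\sum_l \lambda(\varphi(x_{i_l},y,b))$ (linearity of the integral and the definition of a marginal); and $L = \int\mu(\varphi(x,y',b))\, d\nu(y')$ (definability of $\mu$, via \fim\ $\Rightarrow \dfs$, together with the definition of the Morley product $\otimes$).
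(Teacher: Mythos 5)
Your proof is correct, but it takes a genuinely different route from the paper's. The paper first proves the limit statement (Proposition \ref{Main:Proposition}) by passing to the Keisler randomization: $\mu$, $\mu^{(\omega)}$, $\nu$ and $\lambda$ are encoded as types in $T^{R}$ (Fact \ref{fact:building1}), the hypothesis on $\lambda$ makes the $\mathbf{x}$-part of a realization of $p_{\lambda}$ a Morley sequence in the generically stable type $r_{\mu}$ (Fact \ref{CGH2:Facts}, Corollaries \ref{Corollary:restriction} and \ref{Corollary:otimes}), and the limit is computed there; the uniform ``moreover'' clause is then extracted by an abstract compactness argument (Fact \ref{CGH:fact1}). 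You instead work directly with the uniform law of large numbers built into Definition \ref{def:fim}, prove the quantitative clause first with an explicit bound $n = 2N(\mu,\varphi,\varepsilon/3)$, and deduce the limit as a corollary. The two points your argument leans on are both available: (i) the sample size $N$ and the $M$-definable event $\Theta$ are independent of the parameter $c=(y',b)$, so the pointwise estimate $|A - F_{\mu}^{\varphi(x,y,b)}| < \delta$ holds on $[\Theta]$ simultaneously for all types in $S_{\bar{x}y}(Mb)$ (and one should, as you implicitly do, evaluate everything on types over $Mb$ realized in $\cU$ so that the \fim\ guarantee applies literally); and (ii) the marginal of $\mu^{(\omega)}$ on any $N$ coordinates is $\mu^{(N)}$, which requires symmetry of Morley powers of \fim\ measures --- this follows from Fact \ref{fac: fim commutes} together with associativity of $\otimes$ for definable measures, and the paper uses the same fact implicitly in Lemma \ref{lem: proof of the key prop meas ab}. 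Your approach is more elementary and yields an explicit bound; what the paper's detour buys is Proposition \ref{Main:Proposition} and the randomization dictionary itself, which are reused elsewhere (e.g.\ in Section \ref{sec: stable measures}).
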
 
This is new even in the NIP case, and relies on the use of Keisler randomization theory. Namely, we use the correspondence between measures in $G(\cU)$ and types in its randomization, viewed as a structure in continuous logic, that was introduced  in \cite{BEN} (and studied further in \cite{CGH2}). It  allows us to imitate in Section \ref{sec: idemp meas in ab proof} the bounded local weight argument from Section \ref{sec: abelian for types} in a purely measure theoretic context, using an adapted version of generic transitivity (see Section \ref{sec: gen trans for meas}) and arguments with pushforwards.

Problem \ref{conjecture: main conjecture'} on whether every generically stable idempotent type is generically transitive generalizes to measures (see Problem \ref{conj: main measures}). In Section \ref{sec: support trans}, we distinguish a weaker property of a measure than being generically transitive, which we call {\em support transitivity}. It leads to a weaker conjecture saying that every generically stable idempotent measure is support transitive (see Problem \ref{intermediate:conjecture}). While this conjecture is open, it trivially holds for idempotent types, and so one can expect that if the techniques used for types in Sections \ref{subsection: stable theories}--\ref{sec: rosy types} could be adapted to measures, they would rather not prove the main conjecture that every generically stable idempotent measure is generically transitive, but reduce it to the above weakening. The idea is to pass to the randomization of the structure in question, and if this randomization happens to have a well-behaved stratified rank, then apply a continuous logic version of the arguments from Sections \ref{subsection: stable theories}--\ref{sec: rosy types}. In Section \ref{sec: stable measures}, we illustrate how it works for stable theories (recall that stability is preserved under randomization). 

In Section \ref{section: top dyn}, instead of considering an individual (idempotent) measure, we study the structure of the (left-continuous compact Hausdorff) semigroup $\left(\mathfrak{M}_{G,M}(\cU), \ast \right)$ of measures on a definable NIP group under convolution, through the lens of Ellis theory. It was demonstrated in \cite[Theorem 5.1] {chernikov2023definable} that the ideal (or Ellis) 
subgroup of any minimal left ideal is always trivial, and that when $G$ is \emph{definably amenable} (i.e.~admits a left-invariant finitely additive probability measure on its definable subsets), then any minimal left ideal itself is trivial, but has infinitely many extreme points when $G$ is not definably amenable. In the general, non-definably amenable case, a description of a minimal left ideal in $\left(\mathfrak{M}_{G,M}(\cU), \ast \right)$  was obtained  under some additional strong assumptions (see \cite[Theorem 6.11]{chernikov2023definable} and discussion at the end of  Section \ref{sec: minimal left ideal of measures}).
Here we obtain a description of a minimal left ideal of $\left(\mathfrak{M}_{G,M}(\cU), \ast \right)$ for an \emph{arbitrary} countable NIP group:
\begin{theorem}[Corollary \ref{cor: min ideal ctbl group}]\label{thm: intro min ideal}
	Assume that $G$ is group definable in a countable NIP structure $M$, and  let $\cM$ be a minimal left ideal in $(S_{G,M}(\mathcal{U}),*)$ and $u \in \cM$ an idempotent. Then the ideal group $u\cM$ carries a canonical invariant Keisler measure $\mu_{u\cM}$ (see Proposition \ref{prop: def of inv meas on ideal group} for the definition), and $\mathfrak{M}(\cM) * \mu_{u\cM}$  is a minimal left ideal of $\left(\mathfrak{M}_{G,M}(\mathcal{U}), \ast \right)$, where $\mathfrak{M}(\cM)$ denotes the space of all measures supported on $\cM$, and  $\mu_{u\cM}$ is an idempotent in $\mathfrak{M}(\cM) * \mu_{u\cM}$.
\end{theorem}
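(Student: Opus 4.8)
The plan is to verify the following three assertions in order: (a) the canonical invariant measure $\mu_{u\cM}$ on the ideal group $u\cM$ is well-defined (this is Proposition \ref{prop: def of inv meas on ideal group}, so I would simply invoke it, noting that it requires the revised Ellis group conjecture of Newelski for countable NIP groups — i.e.\ that $u\cM$, with its $\tau$-topology, is a compact Hausdorff topological group — which is established earlier in Section \ref{section: top dyn}); (b) $\mathfrak{M}(\cM) * \mu_{u\cM}$ is a left ideal of $(\mathfrak{M}_{G,M}(\cU), *)$; and (c) it is minimal, and $\mu_{u\cM}$ is idempotent in it.

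For (b), the key computation is that $\mathfrak{M}(\cM)$ is itself a left ideal of $(\mathfrak{M}_{G,M}(\cU),*)$: since $\cM$ is a minimal left ideal of $(S_{G,M}(\cU),*)$, for any $\nu \in \mathfrak{M}_{G,M}(\cU)$ and any $q \in \cM$ the type $\nu * q$ lies in $\overline{G(\cU) * q} \subseteq \cM$ (using that $\nu$ is a limit of measures supported on finitely many realized types and that $\cM$ is closed and left-invariant under $G(\cU)$); integrating, $\nu * \lambda$ is supported on $\cM$ whenever $\lambda$ is. Hence $\mathfrak{M}(\cU) * (\mathfrak{M}(\cM) * \mu_{u\cM}) \subseteq \mathfrak{M}(\cM) * \mu_{u\cM}$ by associativity of convolution. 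I also need that $\mathfrak{M}(\cM) * \mu_{u\cM}$ is closed (hence compact): this follows because $\mathfrak{M}(\cM)$ is compact (a closed subset of the compact space $\mathfrak{M}_{G,M}(\cU)$) and $\nu \mapsto \nu * \mu_{u\cM}$ is continuous, so the image is compact.

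For (c), I would first check that $\mu_{u\cM}$ is idempotent in the convolution semigroup. Writing $\mu_{u\cM}$ via its defining property as (the pushforward under the quotient of) the image of a sequence of averages of realized types lying in $u\cM$, the identity $\mu_{u\cM} * \mu_{u\cM} = \mu_{u\cM}$ reduces to the fact that $u\cM$ is a group with Haar-type measure invariant under its own multiplication — exactly the content of Proposition \ref{prop: def of inv meas on ideal group} together with the group structure on $u\cM$; more concretely, for $p, q \in u\cM$ one has $p * q \in u\cM$ and left-translation by $p$ preserves $\mu_{u\cM}$, so Fubini gives $\mu_{u\cM} * \mu_{u\cM} = \mu_{u\cM}$. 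Then $\mu_{u\cM}$ is an idempotent element of the left ideal $\mathfrak{M}(\cM) * \mu_{u\cM}$ (note $\mu_{u\cM} = \delta_u * \mu_{u\cM} \in \mathfrak{M}(\cM)*\mu_{u\cM}$ since $\delta_u \in \mathfrak{M}(\cM)$). Minimality then follows from a standard Ellis-semigroup argument: in a compact left-topological semigroup, a left ideal $I$ is minimal iff $I = S * e$ for some idempotent $e \in I$ with $e * I = $ a group, equivalently iff $x * I = I$ for all $x \in I$; so I must show $\nu * (\mathfrak{M}(\cM) * \mu_{u\cM}) = \mathfrak{M}(\cM) * \mu_{u\cM}$ for every $\nu \in \mathfrak{M}(\cM) * \mu_{u\cM}$. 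One inclusion is (b). For the reverse, given $\lambda \in \mathfrak{M}(\cM)$, I want to produce $\eta \in \mathfrak{M}(\cM)*\mu_{u\cM}$ with $\nu * \eta = \lambda * \mu_{u\cM}$; here I use that $\cM$ itself is a minimal left ideal in $(S_{G,M}(\cU),*)$, so for each fixed $q$ the map $\cM \to \cM$, $p \mapsto p * q$ is onto when $q$ ranges appropriately, together with right-multiplication by $\mu_{u\cM}$ acting as an idempotent averaging operator that collapses $\mathfrak{M}(\cM)$ onto $\mathfrak{M}(\cM)*\mu_{u\cM}$; combining these and invoking surjectivity of convolution-by-$\mu_{u\cM}$-composed-with-the-Ellis-group-action yields the claim.

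The main obstacle I anticipate is exactly this last surjectivity step in (c): showing $\nu * (\mathfrak{M}(\cM)*\mu_{u\cM}) \supseteq \mathfrak{M}(\cM)*\mu_{u\cM}$, i.e.\ that convolution on the left by an element of the putative minimal ideal is surjective onto it. On types this is Ellis's classical lemma, but transporting it to measures requires knowing that the "averaging" operator $\,\cdot * \mu_{u\cM}$ interacts correctly with left convolution — essentially that $\nu * \lambda * \mu_{u\cM}$ depends on $\lambda$ only through a "projection to the ideal group" that is itself surjective. I expect this to hinge on the explicit description of $\mu_{u\cM}$ from Proposition \ref{prop: def of inv meas on ideal group} and on the fact that $u$ is a two-sided identity on $u\cM$, so that $\mu_{u\cM} * \mu_{u\cM} = \mu_{u\cM}$ makes $\,\cdot * \mu_{u\cM}$ idempotent, reducing surjectivity onto its image to the group-theoretic surjectivity of left translations in the Ellis group $u\cM$ — which holds because $u\cM$ is a group. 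Packaging this carefully, rather than any single hard estimate, is where the real work lies.
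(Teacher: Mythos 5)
Your steps (a) and (b) — invoking Proposition \ref{prop: def of inv meas on ideal group} (which indeed rests on the Hausdorffness of the $\tau$-topology, Theorem \ref{thm: revised New conj}, plus Lemma \ref{lemma: constructibility}), showing $\mathfrak{M}(\cM)$ is a left ideal via supports, and getting closedness from left-continuity and compactness — are fine and agree with the paper's route, which obtains the statement by combining Theorem \ref{thm: min ideal of meas Hausd} (whose proof, once Lemma \ref{lemma: constructibility} and Proposition \ref{prop: def of inv meas on ideal group} are in place, follows Lemma 6.9--Corollary 6.12 of \cite{chernikov2023definable}) with Theorem \ref{thm: revised New conj}. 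The problem is step (c). The semigroup criterion you rely on is wrong: minimality of a left ideal $I$ in a compact left topological semigroup $S$ means $S * \nu = I$ for \emph{every} $\nu \in I$, not $\nu * I = I$ for every $\nu \in I$. The latter condition does not imply minimality (in the two-element semigroup with $xy = y$ the whole semigroup satisfies it, while its minimal left ideals are the singletons), and it is not even the right model of the situation at hand: already on the level of types, $u * \cM = u\cM$, which is in general a proper subset of $\cM$, so left multiplication by an element of a minimal ideal need not be onto that ideal. Consequently the ``surjectivity'' step you single out as the main difficulty is aimed at the wrong statement, and the sketch offered for it (averaging operators, ``Ellis-group action'') is not an argument.

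What actually drives the proof — and is missing from your proposal — is the absorption identity $\mu_{u\cM} * \delta_p = \mu_{u\cM}$ for \emph{every} $p \in \cM$, not only for $p \in u\cM$ (this is the lemma stated just before Theorem \ref{thm: min ideal of meas Hausd}). Concretely: for $\varphi$ over $N \supseteq M$ and $b \models p|_N$ one has $\{r \in u\cM : \varphi \in r * p\} = [\varphi(x \cdot b)] \cap u\cM$, right convolution by $p$ on $u\cM$ coincides with right translation by $u p \in u\cM$, and Haar measure is right-invariant; but to take Haar measure of these sets at all one needs $[\psi] \cap u\cM$ to be Borel in the $\tau$-topology for arbitrary $\psi \in \cL(\cU)$ — that is exactly Lemma \ref{lemma: constructibility}, i.e.\ the Stone-topology/$\tau$-topology mismatch that your ``Fubini gives'' glosses over. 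Integrating over the right-hand variable, this identity yields $\mu_{u\cM} * \lambda = \mu_{u\cM}$ for every $\lambda \in \mathfrak{M}(\cM)$, which gives the idempotency (your version, using Haar invariance only for $p, q \in u\cM$, is too weak for what follows) and gives minimality in one line using the correct criterion: for $\nu = \lambda * \mu_{u\cM} \in I := \mathfrak{M}(\cM) * \mu_{u\cM}$ and any $\lambda' \in \mathfrak{M}(\cM)$, $(\lambda' * \mu_{u\cM}) * \nu = \lambda' * (\mu_{u\cM} * \lambda) * \mu_{u\cM} = \lambda' * \mu_{u\cM}$, hence $I \subseteq \mathfrak{M}_{G,M}(\cU) * \nu \subseteq I$ for every $\nu \in I$. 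Finally, your membership claim $\mu_{u\cM} = \delta_u * \mu_{u\cM}$ is yet another unproved Fubini-type identity and is unnecessary: $\mu_{u\cM}$ is supported on $\overline{u\cM} \subseteq \cM$ and is idempotent, so it lies in $I$ directly.
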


\noindent Theorem \ref{thm: intro min ideal} is deduced using a combination of two results of independent interest that we now discuss.

An important general fact from topological dynamics is that the ideal group $u\cM$  of the Ellis semigroup of any flow is always a compact $T_1$ (\emph{not} necessarily Hausdorff) semi-topological group (i.e.~multiplication is separately continuous) with respect to a canonical topology, the so called \emph{$ \tau$-topology} (which is weaker than the induced topology from the Ellis semigroup). This topology was defined by Ellis and has played an essential role in the most important structural results in abstract topological dynamics, starting from the Furstenberg structure theorem for minimal distal flows (e.g.~see \cite{Aus}) and ending with a recent theorem of Glasner on the structure of tame, metrizable, minimal flows \cite{Gla18}.
In model theory, the $\tau$-topology on the ideal groups played a key role in applications to the quotients of definable groups by their model-theoretic connected components (\cite{KrPi1}) and to Lascar strong types and quotients by arbitrary bounded invariant equivalence relations \cite{KPR18,KrRz}. It also partly motivated the work of Hrushovski on definability patterns structures with spectacular applications to additive combinatorics \cite{Hr_pat1,Hr_pat2}. In \cite{KrPi2}, the $\tau$-topology was used to give a shorter and simpler proof of the main result of \cite{Hr_pat2}.
As the key result of Section  \ref{section: top dyn} we demonstrate the following: 
\begin{theorem}\label{thm: intro Borel def}
 (Lemma \ref{lemma: constructibility}) Assume that  $G$ is group definable in an arbitrary  NIP structure $M$, and that the $\tau$-topology on the ideal group $u\cM$  of the $G(M)$-flow $S_{G,M}(\cU)$ is Hausdorff. Then for any clopen subset $C$ of $S_{G}(\cU)$, the subset $C \cap u\cM$ of $u\cM$ is constructible, and so Borel, in the $\tau$-topology. 
\end{theorem}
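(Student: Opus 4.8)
The plan is to combine the standard description of the $\tau$-topology via the Ellis-semigroup structure with the bounded alternation available under NIP. Recall that for $D\subseteq u\cM$ the $\tau$-closure is obtained by first taking the topological closure $\overline D$ of $D$ in $S_G(\cU)$ (which automatically lies in the minimal ideal $\cM$) and then multiplying by $u$ on the left: $\cl_\tau(D)=u\ast\overline D$. Fix a clopen set $C=[\varphi]\subseteq S_G(\cU)$ with $\varphi=\varphi(x)$ a formula over $\cU$, write $N:=\overline{u\cM}$, and put $E:=C\cap u\cM$. Since $C$ is clopen and $u\cM$ is dense in $N$, one checks $\overline E=C\cap N$ and likewise $\overline{u\cM\setminus E}=(S_G(\cU)\setminus C)\cap N$, so that
\[\cl_\tau(E)=u\ast(C\cap N),\qquad \cl_\tau(u\cM\setminus E)=u\ast\big((S_G(\cU)\setminus C)\cap N\big);\]
in particular the $\tau$-boundary $\partial_\tau E=\cl_\tau(E)\cap\cl_\tau(u\cM\setminus E)$ is exactly the set of $p\in u\cM$ having one preimage in $C$ and one preimage in the complement of $C$ under the (in general discontinuous) map $q\mapsto u\ast q$ on $N$.

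Next I would run a Cantor--Bendixson-style stratification inside the $\tau$-topology. Set $B_0:=u\cM$ and inductively $B_{k+1}:=\partial_\tau^{B_k}(C\cap B_k)$, the boundary of $C\cap B_k$ computed in the compact (and, by hypothesis, Hausdorff) subspace $(B_k,\tau)$; then each $B_{k+1}$ is $\tau$-closed in $u\cM$ and $B_{k+1}\subseteq B_k$. The elementary identity $C\cap B_k=\operatorname{int}_\tau^{B_k}(C\cap B_k)\sqcup(C\cap B_{k+1})$ shows that at stage $k$ one peels off $\operatorname{int}_\tau^{B_k}(C\cap B_k)$, which is a $\tau$-open subset of $u\cM$ intersected with the $\tau$-closed set $B_k$, hence $\tau$-locally closed. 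Consequently, if $B_n=\emptyset$ for some $n\in\N$, then $E=C\cap u\cM=\bigsqcup_{k<n}\operatorname{int}_\tau^{B_k}(C\cap B_k)$ is a finite union of $\tau$-locally closed sets, i.e.\ constructible, and in particular Borel. So the whole statement reduces to showing that the descending chain $B_0\supseteq B_1\supseteq\cdots$ reaches $\emptyset$ after finitely many steps, with a bound depending only on $\varphi$.

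The termination is where NIP and the Hausdorff hypothesis enter, and this is the part I expect to be the real work. The claim to prove is that $p\in B_k$ forces a ``configuration of depth $k$'': unwinding $p\in B_k$ via $\cl_\tau(D)=u\ast\overline D$ and descending through the iterated boundaries produces a nested system of witnesses that alternately lie in $C$ and in its complement and whose images under the coheir-then-multiply operation $q\mapsto u\ast q$ telescope down towards $p$. Realizing these products through a heir/coheir extension of $u$ — legitimate because $u$ is finitely satisfiable in $M$, hence Borel definable over $M$ by NIP — one extracts, after a Ramsey-type pruning of the limits involved, an indiscernible-like sequence along which $\varphi$ changes truth value at least $k$ times. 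Once $k$ exceeds the alternation number of $\varphi$ (with the parameters of $\varphi$ read as variables) this contradicts NIP, so the chain must already have terminated; and Hausdorffness of $(u\cM,\tau)$, equivalently triviality of $H(u\cM)$, is precisely what guarantees that the ``depth'' is genuine rather than swallowed by the intrinsic non-separation of the $\tau$-topology, and that the compactness extraction of a point of $\bigcap_k B_k$ is meaningful. Making this last step precise — faithfully converting a non-terminating iterated $\tau$-boundary into an honest long $\varphi$-alternation, and isolating exactly where the Hausdorff hypothesis is used — is the main obstacle; everything else is bookkeeping. (Alternatively one could try to route the argument through the Borel-reducibility machinery for the $\tau$-topology in NIP theories in the spirit of Krupi\'{n}ski--Pillay, but the explicit alternation count seems the most self-contained.)
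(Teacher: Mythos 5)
Your reduction of the statement to ``the iterated $\tau$-boundary chain $B_0\supseteq B_1\supseteq\cdots$ terminates'' is correct as far as it goes (and your identity $\cl_\tau(D)=u\ast\overline{D}$ for $D\subseteq u\cM$, which you state as a general recollection, does need the Hausdorff hypothesis: the inclusion $u\ast\overline{D}\subseteq\cl_\tau(D)$ is Fact \ref{fact: tau-topology}(3), but the reverse inclusion uses that $\eta\mapsto u\eta$ is continuous on $\overline{u\cM}$, i.e.\ Fact \ref{fact: strange_cont}, so that $u\ast\overline{D}$ is $\tau$-compact, hence $\tau$-closed). The genuine gap is the termination step, which you explicitly defer: it is not bookkeeping but the entire mathematical content of the lemma. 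Note that the process can get stuck without reaching $\emptyset$: if $C\cap B_k$ has empty $\tau$-interior in $B_k$, then $B_{k+1}=\cl_\tau^{B_k}(C\cap B_k)\supseteq C\cap B_k$ and nothing is peeled off at stage $k$, so without a genuine NIP input the decomposition need not be finite. To convert $p\in B_k$ into a $k$-fold $\varphi$-alternation you must unwind each $\tau$-limit as $p=uq$ with $q\in\overline{C\cap B_{k-1}}$ and $p=uq'$ with $q'\in\overline{B_{k-1}\setminus C}$, and then relate whether $uq\in[\varphi]$ to whether $q\in[\varphi]$; but $uq\in[\varphi]$ is governed by the $\varphi(x\cdot y)$-definition of the $M$-invariant type $u$ evaluated at $q$, not by $q$'s own membership in $C$. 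So the ``honest long alternation'' you need is an alternation of $\varphi(x\cdot b)$ along a coheir/Morley-type sequence for $u$ --- which is precisely the content of Fact \ref{fact: Borel definability of types} --- and your sketch gives no mechanism for producing it from the nested boundaries.

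For comparison, the paper avoids the stratification entirely: it takes the finite-alternation data as input rather than output. By Fact \ref{fact: Borel definability of types} (Borel definability of invariant types under NIP), the set $S=\{b:\varphi(xb)\in\bar u\}$ is already a finite union $\bigcup_{n<N}A_n\cap B_{n+1}^c$ of differences of type-definable alternation sets; this yields closed sets $\widetilde A_n,\widetilde B_n\subseteq\overline{u\cM}$ with $f^{-1}[[\varphi]\cap u\cM]=\bigcup_n\widetilde A_n'\cap(\widetilde B_{n+1}')^c$ after saturating by $\ker(f)$ (which equals the set of idempotents of $\overline{u\cM}$), and then $[\varphi]\cap u\cM=\bigcup_n f[\widetilde A_n]\cap f[\widetilde B_{n+1}]^c$ is constructible because $f(\eta)=u\eta$ is continuous --- the sole use of Hausdorffness. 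If you want to salvage your approach, the honest path is to first prove (or invoke) Fact \ref{fact: Borel definability of types} and then observe that it bounds your Cantor--Bendixson process; at that point the stratification is redundant and you may as well push the alternation sets through $f$ directly.
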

\noindent It follows that when the $\tau$-topology on $u\cM$ is Hausdorff, the ideal group $u\cM$  is a compact topological group (Corollary \ref{corollary: T2 implies topological}), so we have the unique (normalized)  Haar measure $h_{u\cM}$ on Borel subsets, and by Theorem \ref{thm: intro Borel def} it induces the aforementioned Keisler measure $\mu_{u\cM} \in \mathfrak{M}_{G}(\cU)$ via $\mu_{u\cM}(\varphi(x)):=h_{u\cM}([\varphi(x)] \cap u\cM)$ (Proposition \ref{prop: def of inv meas on ideal group}). This reduces the question to understanding when the $\tau$-topology is Hausdorff --- which is precisely the revised version of Newelski's conjecture (see \cite[Conjecture 5.3]{KrPi2}).

\emph{The Ellis group conjecture} of Newelski \cite{N1} is an important prediction in the study of NIP groups connecting a canonical model-theoretic quotient of a definable group $G(M)$ and a dynamical invariant of its natural action on $S_{G,M}(\cU)$.
Let $G$ be a group definable in a structure $M$, and let $u \cM$ be the ideal (Ellis) group of the $G(M)$-flow $(G(M), S_{G,M}(\cU))$. We let $G^{00}_{M}$ be the smallest type-definable over $M$ subgroup of $G(\cU)$ of bounded index. The \emph{Ellis group conjecture} of Newelski says that the group epimorphism $\theta: u \cM \to G(\cU)/G^{00}_{M}$ given by $\theta(p) := a/G^{00}_{M}$, for a type $p \in u \cM$ and $a$ a realization of $p$, is an isomorphism under suitable tameness assumptions on the ambient theory. This conjecture was established for definably amenable groups definable in $o$-minimal structures in \cite{chernikov2014external}, and for definably amenable groups in arbitrary NIP structures in \cite{CS}. On the other hand, it was refuted for $\SL_2(\mathbb{R})$ in \cite{gismatullin2015some}. Newelski's epimorphism $\theta$ was refined in \cite{KrPi1} to a sequence of epimorphisms
 $$u \cM \to u \cM / H(u \cM) \to G(\cU)/G^{000}_{M} \to G(\cU)/G^{00}_{M},$$
 where $G^{000}_{M}$ is the smallest bounded index subgroup of $G(\cU)$ invariant under the action of $\Aut(\cU/M)$, and $H(u \cM)$ is the subgroup of $u \cM$ given by the intersection of the $\tau$-closures of all $\tau$-neighborhoods of $u$.
With this refinement,  Newelski's conjecture fails when $G^{000}_M \neq G^{00}_{M}$, in which case also $u\cM/H(u \cM) \to G(\cU)/G^{000}_M$ is not an isomorphism. The first such example, the universal cover  $\widetilde{\SL_2(\mathbb{R})}$ of $\SL_2(\mathbb{R})$, was found in \cite{conversano2012connected}, and further examples were given in \cite{gismatullin2015model}. On the other hand, no examples of NIP groups with non-trivial $H(u \cM)$  (equivalently, with $u\cM$ not Hausdorff in the $\tau$-topology) were known. This motivated the following weakening of Newelski's conjecture: 
\begin{conjecture}\label{conjecture: revised Newelski's conjecture}\cite[Conjecture 5.3]{KrPi2}
If $M$ is NIP, then the $\tau$-topology on $u\cM$  is Hausdorff.
\end{conjecture}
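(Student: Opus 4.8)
The plan is to route the statement through the structure theory of \emph{tame} dynamical systems. Recall (see \cite{N1,CS}) that the $G(M)$-flow $S_{G,M}(\cU)$ is canonically isomorphic, as a flow and a semigroup, to the Ellis (enveloping) semigroup of the flow $(G(M),S_G(M))$, and that NIP of $M$ is precisely the assertion that this flow is \emph{tame}: no formula $\varphi(x,y)$ has the independence property, equivalently every element of the Ellis semigroup is a Baire-class-$1$ function on $S_G(M)$, so that $S_{G,M}(\cU)$ is a Rosenthal compactum over the compact space $S_G(M)$. Fix a minimal left ideal $\cM\subseteq S_{G,M}(\cU)$ and an idempotent $u\in\cM$, so $(G(M),\cM)$ is a tame minimal flow; recalling from \cite{KrPi1} that $H(u\cM)$ is the intersection of the $\tau$-closures of all $\tau$-neighborhoods of $u$, it suffices to show $H(u\cM)=\{u\}$, i.e.\ that the compact semitopological group $u\cM$ is Hausdorff in the $\tau$-topology.

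When $M$ (and its language) is countable, $S_G(M)$ is a compact metric space and $G(M)$ is a countable discrete group, so $(G(M),S_G(M))$ is a tame metric flow; its minimal subflows fall within the scope of Glasner's structure theory of tame metrizable minimal systems \cite{Gla18} (building on \cite{glasner2007structure}), by which the Ellis group $u\cM$, with its $\tau$-topology, is a compact Hausdorff — in fact metrizable — topological group. Hence $H(u\cM)=\{u\}$ when $M$ is countable. This is the special case already available; note that it is precisely what legitimizes the constructibility argument of Theorem \ref{thm: intro Borel def}, which presupposes Hausdorffness, in the countable setting, and hence underlies Theorem \ref{thm: intro min ideal}.

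For uncountable NIP $M$ the minimal flow $\cM$ need not be metrizable, and metrizability is at present essential to Glasner's machinery; bridging this is the heart of the matter. The first route I would try is a L\"owenheim--Skolem reduction to the countable case: assuming $G$ is $\emptyset$-definable, suppose toward a contradiction that $p\in H(u\cM)$ with $p\neq u$, witnessed by some $\varphi(x,b)\in p$ and $\neg\varphi(x,b)\in u$, $b\in\cU$; one wants a countable $M_0\preceq M$ over which $G$ is defined, and a countable $N$ absorbing $M_0$ and $b$, so that the entire configuration — the minimal ideal, the idempotent, the $\tau$-neighborhood basis at $u$, and the basic clopen sets entering the witness — descends to the tame metrizable flow $(G(M_0),S_{G,M_0}(\cU))$ and contradicts the previous paragraph. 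The \emph{main obstacle}, as I see it, is that the minimal left ideal, the idempotent $u$, and the $\tau$-topology are highly non-canonical and do not visibly respect passage to an elementary substructure; one would need a genuine transfer principle of the form ``$H(u\cM)\neq\{u\}$ over $M$ is reflected in some countable sub-flow,'' which requires controlling how the Ellis group and its $\tau$-topology depend on the model — something not provided by any result currently available.

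The alternative, purely dynamical route is to establish the conclusion directly for \emph{arbitrary} tame minimal flows: that the Ellis group of any tame minimal flow is Hausdorff in the $\tau$-topology. The metrizable proofs rest on Rosenthal's dichotomy and the good behaviour of Baire-$1$ functions on Polish spaces (sequential compactness, the Bourgain--Fremlin--Talagrand theory), so in the non-metrizable NIP setting one would try to replace separability by the uniform definability of types and the finite-VC-dimension phenomena intrinsic to NIP, aiming to bound the descriptive complexity of each $\cl_\tau(V)$, for $V$ a basic clopen $\tau$-neighborhood of $u$, tightly enough to force $\bigcap_V \cl_\tau(V)=\{u\}$. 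This amounts to an unconditional counterpart of the constructibility result of Theorem \ref{thm: intro Borel def}; but that result assumes Hausdorffness in order to even speak of Borel subsets of $u\cM$, so the real work is to break this circularity — to extract from NIP alone enough a priori regularity of the $\tau$-closure operator to deduce Hausdorffness. In short, the passage from the metrizable regime, where Glasner's structure theory is decisive, to the non-metrizable one is the essential gap, and I do not see a routine way around it; this is precisely why the conjecture, as worded, remains open.
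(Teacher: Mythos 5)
You are right that the statement, as worded, is open: the paper does not prove it in general either, and its actual contribution here is exactly the case you single out, namely countable NIP $M$ (Theorem \ref{thm: revised New conj}), proved via tameness and Glasner's structure theorem. Your diagnosis of the obstruction in the uncountable case (non-metrizability versus Glasner's machinery) is also accurate. However, your sketch of the countable case has a genuine gap, and it sits precisely where the paper has to work. First, the identification you start from is not the right one: what is true (Fact \ref{fact: folklore on Ellis semigroups}) is that $S_{G,M}(\cU)$ is isomorphic to its \emph{own} Ellis semigroup, equivalently to the Ellis semigroup of the flow of \emph{external} types $S_{G,\ext}(M)=S_G(M^{\ext})$; identifying it with $E(S_G(M))$ for $S_G(M)$ in the original countable language is not justified (in general the external-type space only maps onto $E(S_G(M))$). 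Second, and more importantly, even for countable $M$ the relevant flow is not metrizable: after passing to the Shelah expansion the language is typically uncountable, so $S_G(M^{\ext})$, and hence the minimal left ideal $\cM$ whose ideal group carries the $\tau$-topology, lies in a non-metrizable space. So Glasner's theorem cannot be applied off the shelf, and applying it instead to minimal subflows of the metrizable space $S_G(M)$ only controls their Ellis groups, which are quotients of the ideal group you care about — Hausdorffness of a quotient says nothing about Hausdorffness upstairs.

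The paper bridges exactly this gap by localizing: it presents $S_G(M^{\ext})$ as the inverse limit $\varprojlim_\Delta S_{G,\Delta}(M)$ over \emph{finite} sets $\Delta$ of externally definable subsets of $G$, so that each $S_{G,\Delta}(M)$ is metrizable (this is where countability of $G$ enters), each minimal subflow $\cM_\Delta$ is a tame metrizable minimal flow, and the minimal left ideal satisfies $\cM\cong\varprojlim_\Delta\cM_\Delta$ with the ideal group (in its $\tau$-topology) the corresponding inverse limit; combined with Lemma \ref{lemma: ideal groups of X and M}, Hausdorffness of each factor yields Hausdorffness of $u\cM$. Note also that even the input "tame metrizable minimal implies Hausdorff ideal group" is not an off-the-shelf quotation of Glasner: the paper extracts it (Corollary \ref{corollary: main corollary of Glasner's result}) from the structure diagram of Fact \ref{fact: Glasner's theorem} via the preservation of ideal groups under almost 1-1 extensions (Lemma \ref{lemma: almost 1-1}), triviality of ideal groups of proximal flows (Fact \ref{fact: ideal groups of proximal flows}), and the behaviour of equicontinuous extensions (Proposition \ref{proposition: equicontinuity and Hausdorfness}). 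So your high-level route for the countable case is the right one in spirit, but as written it skips both the non-metrizability of the external-type flow and the inverse-limit/localization device that makes the argument go through.
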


\noindent It clearly holds whenever $u\mathcal{M}$  is finite. It is also known to hold for definably amenable groups in NIP theories, as the full Newelski's conjecture holds in this context.  Besides those two general situations,  it was confirmed only for $\widetilde{\SL_2(\mathbb{R})}$ (we refer to \cite[Section 5]{KrPi2} for a proof and a more detailed discussion).

In Section \ref{sec: rev Newelski conj} we establish the revised Newelski's conjecture for \emph{countable} NIP groups:
\begin{theorem}[Theorem \ref{thm: revised New conj}]
	The revised Newelski's conjecture holds when $G$ is a definable group in a countable NIP structure.
\end{theorem}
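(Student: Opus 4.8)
The statement asserts that the $\tau$-topology on the ideal (Ellis) group $u\cM$ of the $G(M)$-flow $X:=S_{G,M}(\cU)$ is Hausdorff, equivalently that $H(u\cM)=\{u\}$. The plan is to isolate what countability of $M$ really buys -- that $X$ is a ``small'', Rosenthal flow -- and then to extend Glasner's theorem that the Ellis group of a \emph{tame} minimal flow is a compact Hausdorff topological group from the metrizable case \cite{Gla18} to this setting.

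Since $M$ is NIP, $X$ is a tame $G(M)$-flow, and it is canonically homeomorphic to its own enveloping semigroup (as $(X,\tp(e/\cU))$ is a point-transitive ambit which is a semigroup under $\ast$). Countability enters here: the language is countable, so $X$ embeds $G(M)$-equivariantly into the countable product of the factor flows $X_\varphi$, one for each partitioned formula $\varphi(x;y)$, where $X_\varphi$ is the space of $\psi_\varphi$-types finitely satisfiable in $M$ for $\psi_\varphi(x;g,y):=\varphi(g^{-1}x;y)$, the smallest $G$-invariant family containing $\varphi$. Each $X_\varphi$ is governed by a single NIP formula over the countable model $M$, hence is a Rosenthal compactum (bounded alternation of NIP formulas together with Bourgain--Fremlin--Talagrand theory), and a closed subspace of a countable product of Rosenthal compacta is again a Rosenthal compactum. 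Thus $X$, and with it the minimal subflow $\cM$ and the group $u\cM$, are Rosenthal compacta -- in particular Fr\'echet--Urysohn, angelic, and sequentially compact. (As a concrete shadow, every type in $X$ is the limit type of an indiscernible sequence from $G(M)$, coordinatizing $u$ and the members of $u\cM$ by Morley sequences, which helps bookkeep the sequential arguments below.)

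Using the standard description of the $\tau$-topology -- roughly $\cl_\tau(A)=u\cdot\overline{G\cdot A}$ with the closure taken in $X$, for $A\subseteq u\cM$ -- the Fr\'echet--Urysohn property lets one rewrite $\tau$-closures via honestly convergent sequences $(g_na_n)$ with $g_n\in G(M)$, $a_n\in A$. The crux is then to show that $\cl_\tau(A)$ coincides with the closure of $A$ in the topology induced on $u\cM$ from $E(X)=X$, which is automatically Hausdorff (being a subspace topology on a subset of $X^X$) -- equivalently, that left translation by the idempotent $u$ is continuous on $\overline{G\cdot u\cM}$. For tame \emph{metrizable} minimal flows this is precisely Glasner's theorem, and the plan is to rerun his argument with ``metrizable'' replaced by ``Rosenthal compactum'': metrizability is used there only (i) to extract convergent subsequences and (ii) to get a Namioka-type residual-continuity statement for the Baire class $1$ left-translation maps $v\mapsto pv$, and (i) survives by Fr\'echet--Urysohn-ness while (ii) survives via Bourgain--Fremlin--Talagrand theory plus Namioka's joint continuity theorem, whose Polish-domain hypotheses hold precisely because the language is countable. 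This yields that $\tau$ on $u\cM$ is Hausdorff.

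I expect this last step -- upgrading ``tame $\Rightarrow$ topological Ellis group'' from the metrizable to the Rosenthal setting -- to be the main obstacle and the technical heart of the proof, and it is exactly where countability of $M$ is genuinely used: it is what turns $S_{G,M}(\cU)$ into a Rosenthal compactum over a Polish ``coordinate domain'', the surrogate for metrizability that Glasner's argument needs. One should also check carefully how the $\tau$-topology and the operator $H(-)$ interact with the factorization into the $X_\varphi$, so that, if convenient, separating two distinct points of $u\cM$ can be reduced to separating their images in one of the countably many factor flows.
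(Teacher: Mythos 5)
There is a genuine gap, and it sits exactly where you flag ``the main obstacle and the technical heart'': your argument requires extending Glasner's theorem (tame metrizable minimal flows have Hausdorff ideal groups) from metrizable flows to Rosenthal compacta, and you do not prove this extension --- you only sketch a hope that his proof survives with Fr\'echet--Urysohn-ness and BFT/Namioka substituting for metrizability. Glasner's result is a deep structure theorem, not a two-hypothesis lemma, so deferring this step leaves the proof incomplete. Moreover, your proposed factorization does not repair this: the factor flows $X_\varphi$ (one per partitioned formula, with parameters ranging over all of $\cU$) are Rosenthal compacta but are \emph{not} metrizable in general (a separable Rosenthal compactum such as the split interval already fails metrizability), so even after reducing to a single factor you cannot invoke Glasner's theorem as it stands.

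The paper circumvents this entirely by choosing a different decomposition. After passing to the Shelah expansion $M^{\ext}$ (so that externally definable sets become definable and $S_{G,\ext}(M)=S_G(M^{\ext})$), one writes $S_G(M)\cong\varprojlim_\Delta S_{G,\Delta}(M)$, where $\Delta$ ranges over \emph{finite} collections of externally definable subsets of $G$ and $S_{G,\Delta}(M)$ is the Stone space of the Boolean $G$-algebra generated by $\Delta$. Countability of $G$ makes each such Boolean algebra countable, hence each $S_{G,\Delta}(M)$ genuinely metrizable (not merely Rosenthal); each is also tame, being a quotient of a tame flow. Taking the images $\cM_\Delta$ of a minimal left ideal $\cM$, one applies Glasner's structure theorem as a black box to each tame metrizable minimal flow $\cM_\Delta$ (deducing Hausdorffness of its ideal group from the proximal/almost 1-1/equicontinuous tower), then uses that the ideal group of $\cM\cong\varprojlim_\Delta\cM_\Delta$ with its $\tau$-topology is the inverse limit of the ideal groups of the $\cM_\Delta$, and finally identifies the ideal group of the ambient flow with that of $\cM$. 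If you want to salvage your approach, you should either supply a complete proof of Glasner's theorem for Rosenthal (non-metrizable) tame minimal flows --- which would be a new result --- or replace your formula-by-formula factorization with an inverse limit over finite families of externally definable sets so that the factors are honestly metrizable.
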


\noindent  This relies on the fundamental theorem of Glasner describing the structure of minimal tame metrizable flows  \cite{Gla18} and a presentation of the $G(M)$-flow $S_{G,M}(\cU)$ as the inverse limit of all $S_{G,\Delta}(M)$ (the Stone space of the $G(M)$-algebra generated by the finite set $\Delta$), where $\Delta$ ranges over all finite collections of externally definable subsets of $G(M)$.

\section{Idempotent generically stable types}\label{sec: generically stable types}

Throughout this section, we let $T$ be a complete theory, $M \models T$, and $\cU \succ M$ a monster model.

\subsection{Generically stable types}\label{sec: gen stab types}

\noindent We will need some basic facts about generically stable types. As usual, given a global type $p \in S_x(\cU)$ (automorphism-)invariant over a small set $A \subseteq \cU$ and  $\cU' \succ\cU $ a bigger monster model with respect to $\cU$, we let $p |_{\cU'}$ be the unique extension of $p$ to a type in $S_x(\cU')$ which is invariant over $A$. Given another $A$-invariant type $q \in S_y(\cU)$, we define the $A$-invariant type $p \otimes q \in S_{xy}(\cU)$ via $p \otimes q := \tp(ab/\cU)$ for some/any $a,b$ in $\cU'$ such that $b \models q$ and $a \models p|_{\cU b}$. Given an arbitrary linear order $(I,<)$, a sequence $\bar{a} = (a_i : i \in I)$ in $\cU$ is a \emph{Morley sequence} in $p$ over $A$ if $a_i \models p|_{A a_{<i}}$ for all $i \in I$. Then the sequence $\bar{a}$ is indiscernible over $A$, and for any other Morley sequence $\bar{a}' = (a'_i:i\in I)$ in $p$ over $A$ we have $\tp(\bar{a}/A) = \tp(\bar{a}'/A)$.  We can then define a global $A$-invariant type $p^{(I)}((x_i : i \in I)) \in S_{\bar{x}}(\cU)$ as
$$\bigcup \left\{ \tp(\bar{a}/B) : A \subseteq B \subseteq \cU \textrm{ small}, \bar{a} = (a_i : i \in I) \textrm{ a Morley sequence in }p \textrm{ over }B \right\}.$$
Equivalently, $p^{(I)} = \tp(\bar{a}/\cU)$ for $\bar{a} = (a_i : i \in I)$ a Morley sequence in $p|_{\cU'}$ over $\cU$, where $\cU' \succ\cU $ be a monster model with respect to $\cU$ and $p |_{\cU'}$ is the unique extension of $p$ to a type in $S_x(\cU')$ which is invariant over $A$. For any $k \in \omega$ (viewed as an ordinal) we have $p^{(k)}(x_1, \ldots, x_k) = p(x_k) \otimes \ldots \otimes p(x_1)$.

We do not  assume NIP unless explicitly stated, and use the standard definition from  \cite{PiTa}: a global type $p \in S_x(\cU)$ is \emph{generically stable} if it is $A$-invariant for some small $A \subset \cU$, and for any ordinal $\alpha$ (or just for $\alpha = \omega + \omega$), $(a_i : i \in \alpha)$ a Morley sequence in $p$ over $A$ and formula $\varphi(x) \in \cL(\cU)$, the set $\{i \in \alpha : \models \varphi(a_i) \}$ is either finite or co-finite. 

In the following fact, items (1)--(4) can be found in \cite[Section 9]{casanovas2011more}, (5)  in \cite[Theorem 2.4]{garcia2013generic}, (6) is an immediate consequence of stationarity in (2), and (7) is \cite[Proposition 2.1]{PiTa}. We let $\ind$ denote forking independence. We will freely  use some of the basic properties of forking in arbitrary theories, e.g.~extension and left transitivity, see \cite[Section 2]{chernikov2012forking} for a reference.


\begin{fact}\label{fac: basic gen stab}
Let $p \in S_x(\cU)$ be a generically stable type, invariant over a small subset $A \subseteq \cU$. Then the following hold.
\begin{enumerate}
\item Every realization of $p^{(\omega)}{|_{A}}$ is a totally indiscernible sequence over $A$.
	\item The type $p$ is the unique global non-forking extension of $p|_A$.
	\item For any $a \models p|_A$ and $b$ in $\cU$ such that $\tp(b/A)$ does not fork over $A$, we have $a \ind_A b \iff b \ind_A a$ (this holds for any $b$ when $A$ is an extension base, e.g.~when $A \prec \cU$).
	\item In particular, if $a,b \models p|_{A}$, then $a \ind_A b \iff (a,b) \models p^{(2)}|_{A} \iff (b,a) \models p^{(2)}|_{A}$.  
	\item If $A$ is an extension base, $(a_i)_{i<\omega} \models p^{(\omega)}|_A$ and $\varphi(x,a_0)$ (where $\varphi(x,y) \in \mathcal{L}(A)$) forks/divides over $A$, then $\{\varphi(x,a_i) : i < \omega\}$ is inconsistent.
	\item Let  $a \models p|_A$ and let $b,c$ be arbitrary small tuples in $\cU$.  If $a \ind_A b$ and $a \ind_{Ab} c$, then $a \ind_{A} bc$;
	\item $p$ is definable over $A$.
\end{enumerate}
\end{fact}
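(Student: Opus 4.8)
The plan is to establish the items in order of logical dependence: (1) and (7) first, then (2), then the quick deductions (4) and (6), then (3), with (5) imported from \cite{garcia2013generic}. The only input specific to generic stability is the ``finite or cofinite'' dichotomy of the definition, so the proof is essentially bookkeeping of how that dichotomy propagates through Morley powers and through non-forking, using only the forking calculus valid in an arbitrary theory (extension, left transitivity, and the coincidence of forking with dividing over an extension base).

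For (1): on an $A$-indiscernible sequence the truth value of an $\cL(A)$-formula on a tuple depends only on the order type of its indices, so total indiscernibility reduces to invariance under a single adjacent transposition. If $\varphi(x_0,\dots,x_{n-1})\in\cL(A)$ changed its value when the entries in positions $i,i+1$ of an increasing tuple were swapped, I would extend a Morley sequence in $p$ over $A$ to length $\omega+\omega$, freeze the coordinates other than $i,i+1$ with parameters from it (there is room both before position $i$ and beyond position $i+1$), and let the two swapped coordinates range over the remaining segment: this produces a single $\cL(\cU)$-formula in one variable whose truth set along the Morley sequence is infinite and coinfinite, contradicting generic stability. For (7): define $d_p x\,\varphi(x,y)$ to be ``$\models\varphi(a_i,y)$ for all but finitely many $i$'', where $(a_i)_{i<\omega}$ is a fixed Morley sequence in $p$ over $A$. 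One checks (a) a \emph{uniform} bound $N=N(\varphi)$ on the size of the finite exceptional set --- else, by compactness one would realize some $b^*$ with infinite and coinfinite $\varphi(\cdot,b^*)$-trace along $(a_i)_{i<\omega}$, contradicting generic stability --- so that, using the dichotomy again, the condition only depends on $a_0,\dots,a_{2N+1}$ and is thus an $\cL(a_0,\dots,a_{2N+1})$-formula; and (b) that $\varphi(x,b)\in p$ iff $\models\varphi(a_i,b)$ for all but finitely many $i$, by appending to $(a_i)$ a Morley sequence of $p$ over $Ab$ (whose members uniformly satisfy $\varphi(x,b)$ or uniformly satisfy $\neg\varphi(x,b)$, according as $\varphi(x,b)\in p$ or not) and applying the dichotomy to the concatenation, which is Morley over $A$. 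Finally $A$-invariance of $p$ allows the definition to be taken over $A$.

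For (2): $A$-invariance of $p$ implies it contains no formula dividing over $A$, hence $p$ does not fork over $A$; for uniqueness (stationarity) I would follow the standard route --- given a non-forking extension $q$ of $p|_A$ and $b\models q$, build a Morley sequence $(a_i)_{i<\omega}$ in $p$ over $Ab$, which is Morley over $A$, $Ab$-indiscernible, and totally indiscernible over $A$ by (1), and argue that the trace of $q$ on any formula is forced by its eventual value along $(a_i)$ (using (7) and $Ab$-indiscernibility), so that $q$ is the average type of $(a_i)$, i.e.\ $p$. Granting (2), item (6) is the two-step application ``$a\ind_A b$ with $a\models p|_A$ gives $a\models p|_{Ab}$; then $a\ind_{Ab}c$ with generic stability of $p$ over $Ab$ gives $a\models p|_{Abc}$, hence $a\ind_A bc$'', and (4) follows immediately from (2) ($a\models p|_{Ab}\iff a\ind_A b$, by uniqueness of the non-forking extension) together with (3). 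For (3), I would prove $a\ind_A b\Rightarrow b\ind_A a$ for $a\models p|_A$: by (2), $a\models p|_{Ab}$, so there is a Morley sequence $(a_i)_{i<\omega}$ in $p$ over $Ab$ with $a_0=a$, which is $Ab$-indiscernible; if $\tp(b/Aa)$ divided over $A$ witnessed by $\chi(y,a_0)$, then (5) would make $\{\chi(y,a_i):i<\omega\}$ inconsistent, whereas $b$ realizes it by $Ab$-indiscernibility, a contradiction. The reverse implication is symmetric modulo the forking-versus-dividing subtlety --- which is precisely why the clean general statement asks $\tp(b/A)$ to be non-forking, and is unconditional over an extension base. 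Item (5) itself I would simply quote from \cite{garcia2013generic}.

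The step I expect to be the main obstacle is (2): the ``average type'' route to stationarity presupposes the definability and total indiscernibility packaged in (1) and (7), so the real work is ordering these three foundational facts without circularity and keeping the forking calculus honest in a possibly non-simple, non-NIP theory. Item (5) is technically the deepest, but it is an external input rather than something to reprove here.
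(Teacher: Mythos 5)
The paper does not actually prove this Fact: the sentence introducing it delegates (1)--(4) to \cite[Section 9]{casanovas2011more}, (5) to \cite[Theorem 2.4]{garcia2013generic}, (7) to \cite[Proposition 2.1]{PiTa}, and observes that (6) follows from (2). So you are reconstructing the literature proofs rather than an argument contained in the paper. Your reconstructions of (1) and (7) are the standard ones and are sound (one small caveat on (7): $\Aut(\cU/A)$-invariance of a definable set gives a canonical parameter in $\dcl^{\eq}(A)$, and \cite{PiTa} in fact prove definability over $\acl(A)$; turning this into a literal $\cL(A)$-formula is a convention/imaginaries point). Your derivations of (4) and (6) from (2) and (3), and your treatment of (5) as an external input, match the paper's intent.

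The genuine gap is in (2), and it propagates. As sketched, you take $b\models q$, build $(a_i)_{i<\omega}$ Morley in $p$ over $Ab$, and invoke total indiscernibility and the eventual type to conclude that $q$ ``is the average type of $(a_i)$.'' But the only thing these ingredients yield is that $(b,a_0,a_1,\dots)$ is itself a Morley sequence in $p$ over $A$ (since $b\models p|_A$ and $a_i\models p|_{Aba_{<i}}$), hence by (1) that $b\models p|_{A\bar a}$. This argument applies verbatim to \emph{any} global extension $q$ of $p|_A$, forking or not --- the hypothesis that $q$ does not fork over $A$ is never used --- so it cannot prove $q=p$, which is false without that hypothesis. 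Concretely, for a formula $\varphi(x,c)$ with $c\notin A\bar a$, knowing $\tp(b/A\bar a)$ and the eventual value of $\varphi(a_i,c)$ says nothing about whether $\models\varphi(b,c)$. The entire content of (2) is the step converting ``$\tp(b/Ac)$ does not fork over $A$'' into ``$\varphi(b,c)$ agrees with the eventual value along a Morley sequence over $Ac$,'' and that step requires the dividing machinery of (5) (or a Kim's-lemma-for-invariant-types argument); your sketch omits exactly this. A related, smaller issue: in (3) the two implications are not symmetric, since only $a$ realizes the generically stable type. Your Morley-sequence-plus-(5) argument gives $a\ind_A b\Rightarrow b\ind_A a$, but the converse cannot be obtained by ``swapping $a$ and $b$'' --- there is no Morley sequence in $\tp(b/A)$ to run it on --- and needs its own argument; this is more than the forking-versus-dividing caveat you flag.
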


\begin{remark}
	By Fact 2.1(1), for a global generically stable type $p$ invariant over $A$, we will also be using (without mentioning) an equivalent definition of a Morley sequence with the reversed order. That is, given a linear order $(I,<)$, we might say that $(a_i : i \in I^*)$ is a Morley sequence in $p$ over $A$ if $a_i \models p|_{A a_{<i}}$ for all $i \in I$, where $I^*$ is $I$ with the reversed ordering. So e.g.~we will refer to $(a_k, a_{k-1}, \ldots, a_1)$ with $a_i \models p|_{A a_1 \ldots a_{i-1}}$ as a Morley sequence in $p$ over $A$, and write $(a_k, \ldots, a_1) \models p^{(k)}$. We will frequently use this without further mention.
\end{remark}

\begin{fact} \cite[Proposition 1.2]{dobrowolski2012omega} \label{fac: dcl of gs is gs}  Let $p \in S_{x}(\cU)$ be a generically stable type, invariant over a small set of parameters $A \subseteq \mathcal{U}$. Suppose that $ \cU \prec \cU'$, $a$ is an element of $\cU'$ such that $a \models p$, and  $b \in \dcl(a, A)$. Then $\tp(b/\mathcal{U})$ is generically stable over $A$. 
\end{fact}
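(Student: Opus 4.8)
The plan is to realize $\tp(b/\cU)$ as the pushforward of $p$ by an $A$-definable function and then to check directly that generic stability, as defined above via Morley sequences, is inherited by such pushforwards with the same base $A$. Since $b\in\dcl(a,A)$, fix an $A$-definable partial function $f$ with $f(a)=b$; the formula asserting that $f$ is defined and single-valued at $x$ holds of $a$, hence lies in $p$, so $f_{*}p:=\{\psi(y)\in\cL(\cU):\psi(f(x))\in p\}$ is a well-defined complete global type in the variable $y$, and since $a\models p$ we have $q:=\tp(b/\cU)=f_{*}p$. So it suffices to prove that $q$ is generically stable over $A$.

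First, $q$ is $A$-invariant: for $\sigma\in\Aut(\cU/A)$ and $\psi(y,c)\in q$ we have $\psi(f(x),c)\in p$, hence $\sigma(\psi(f(x),c))=\psi(f(x),\sigma(c))\in p$ by $A$-invariance of $p$ and $f$ being over $A$, so $\psi(y,\sigma(c))\in q$. Next, I claim that applying $f$ coordinatewise sends a Morley sequence of $p$ over $A$ to one of $q$ over $A$. Choose, inside $\cU$ (possible by saturation, $\omega+\omega$ being small), a Morley sequence $(a_i:i<\omega+\omega)$ in $p$ over $A$, and put $b_i:=f(a_i)\in\cU$. For any $\bar c\in\dcl(A a_{<i})$ the formula $\varphi(f(x),\bar c)$ is over $A a_{<i}$, so $\models\varphi(f(a_i),\bar c)\iff\varphi(f(x),\bar c)\in p\iff\varphi(y,\bar c)\in q$; letting $\bar c$ range over $A b_{<i}\subseteq\dcl(A a_{<i})$ gives $b_i\models q|_{A b_{<i}}$, so $(b_i:i<\omega+\omega)$ is a Morley sequence in $q$ over $A$.

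Now for any $\varphi(y)\in\cL(\cU)$, writing $\psi(x):=\varphi(f(x))\in\cL(\cU)$, we get $\{i<\omega+\omega:\models\varphi(b_i)\}=\{i<\omega+\omega:\models\psi(a_i)\}$, which is finite or cofinite by generic stability of $p$. Finally, any Morley sequence of $q$ over $A$ (which by the convention above lies in $\cU$) of length $\omega+\omega$ has the same type over $A$ as $(b_i)$, hence equals $\sigma((b_i))$ for some $\sigma\in\Aut(\cU/A)$; since $\sigma$ preserves $\cL(\cU)$, the finite-or-cofinite condition for all $\cL(\cU)$-formulas transfers to it as well. Together with $A$-invariance, this shows that $q=\tp(b/\cU)$ is generically stable over $A$, as required.

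I do not expect a genuine obstacle here. The only two points to watch: $f$ need not be injective, so one cannot in general lift a Morley sequence of $q$ to one of $p$ — but this is not needed, since one instead pushes a Morley sequence of $p$ forward and invokes conjugacy of Morley sequences of $q$ over $A$ to cover the remaining ones; and the finite/cofinite clause in the definition of generic stability must be verified for all $\cL(\cU)$-formulas, not only $\cL(A)$-ones, which is precisely why the substitution $\varphi\mapsto\varphi\circ f$ and the automorphism argument enter.
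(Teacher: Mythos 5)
This statement is quoted in the paper as a Fact from \cite{dobrowolski2012omega} without proof, so there is no in-paper argument to compare against; your proof is correct and is essentially the standard one. Writing $q=\tp(b/\cU)$ as the pushforward $f_{*}p$ under an $A$-definable function, checking $A$-invariance, pushing a Morley sequence of $p$ over $A$ forward to one of $q$ over $A$, and then using that all Morley sequences of the $A$-invariant type $q$ over $A$ have the same type over $A$ (so the finite/cofinite condition transfers by an automorphism over $A$) is exactly the expected route, and your reduction to $\alpha=\omega+\omega$ is licensed by the paper's definition of generic stability.
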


\subsection{Setting} \label{sec: setting types}
 Let $G = G(x)$ be an $\emptyset$-type-definable group (in the sort of $\cU$ corresponding to the tuple of variables $x$) and $\bar G:=G(\cU)$. By $\cdot$ we mean an $\emptyset$-definable function (from $\left(\cU^x\right)^2$ to $\cU^x$) whose restriction to $G$ is the group operation on $G$. Similarly, by $^{-1}$ we mean an $\emptyset$-definable function (from $\cU^x$ to $\cU^x$) whose restriction to $G$ is the inverse in $G$.  By compactness, we can fix a formula $\varphi_0(x) \in \cL$ implied by the partial type $G(x)$ such that: $\cdot$ is defined and associative on $\varphi_0(\cU)$; $a\cdot e=a=e\cdot a$ and $a \cdot a^{-1} = a^{-1} \cdot a = e$ for all $a \in \varphi_0(\cU)$; if $b_1 \neq b_2$ then $a \cdot b_1 \neq a \cdot b_2, b_1 \cdot a \neq b_2 \cdot a$ for all $a,b_1,b_2 \in \varphi_0(\cU)$   (but $\varphi_0(\cU)$ is not necessarily closed under $\cdot$). As usual, for $A \subseteq \cU$, $S_{G}(A)$ denotes the set of types $p \in S(A)$ \emph{concentrated on $G$}, i.e.~such that $p(x) \vdash G(x)$.
 
 Given $p,q \in S_G(\cU)$ global $M$-invariant types ($M \prec \cU$), we define $p \ast q \in S_G(\cU)$ via $p \ast q (\varphi(x)) := p_x \otimes q_y ( \varphi(x \cdot y))$ for all $\varphi(x) \in \cL(U)$. Together with this operation, the set of all global $M$-invariant types in $S_G(\cU)$ forms a left-continuous semigroup. We say that an invariant type $p \in S_G(\cU)$ is \emph{idempotent} if $p \ast p = p$.
 

\subsection{Generically stable groups}\label{sec: gen stab groups}

\begin{definition} \cite[Definition 2.1]{PiTa}\label{def: generically stable group}
	A type-definable group $G(x)$ is \emph{generically stable} if there is a generically stable $p \in S_{G}(\cU)$ which is left $G(\cU)$-invariant (we might use ``$G(\cU)$-invariant'' and ``$G$-invariant'' interchangeably when talking about global types).
	\end{definition}
\begin{fact}\label{fac: basic fsg}\cite[Lemma 2.1]{PiTa}
Suppose that $G$ is a generically stable type-definable group in an arbitrary theory, witnessed by a generically stable type $p \in S_{G}(\cU)$. Then we have:
	\begin{enumerate}
		\item $p$ is the unique left $G(\cU)$-invariant and also the unique right $G(\cU)$-invariant type;
		\item $p = p^{-1}$ (where $p^{-1} := \tp(g^{-1}/\cU)$ for some/any $g \models p$ in a bigger monster model $\cU' \succ \cU$).
	\end{enumerate}
\end{fact}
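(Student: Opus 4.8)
The plan is to recognize $p$ as the two-sided zero element of the convolution semigroup of global invariant types on $G$; uniqueness of a two-sided zero then gives (1), while testing $p$ against $p^{-1}$ gives (2). Throughout, fix a small model $M \prec \cU$ over which $p$ is invariant; by Fact~\ref{fac: basic gen stab}(7) $p$ is definable over $M$. Since $x \mapsto x^{-1}$ is an $\emptyset$-definable bijection on $G$, the type $p^{-1}$ is again generically stable (Fact~\ref{fac: dcl of gs is gs}), and since $p$ is left $\bar G$-invariant, $p^{-1}$ is right $\bar G$-invariant.

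\emph{Base change.} The first step is to show that invariance of $p$ survives enlarging the monster model: if $\cU \prec \cU'$, then $p|_{\cU'}$ is left $G(\cU')$-invariant (and, mirrorwise, $p^{-1}|_{\cU'}$ is right $G(\cU')$-invariant). Given $a \in G(\cU')$ and a formula $\psi(x,\bar d)$ with $\bar d \in \cU$, one uses saturation of $\cU$ to find $a^\ast \in G(\cU)$ with $\tp(a^\ast\bar d/M) = \tp(a\bar d/M)$; definability of $p$ over $M$ gives $p|_{\cU'}(\psi(a^{-1}x,\bar d)) = p(\psi((a^\ast)^{-1}x,\bar d))$, and left $\bar G$-invariance applied to $a^\ast \in \bar G$ gives $p(\psi((a^\ast)^{-1}x,\bar d)) = p(\psi(x,\bar d))$. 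Thus the global type $a\cdot p|_{\cU'}$ restricts to $p$ on $\cU$; being a pushforward of $p|_{\cU'}$ under the parameter-definable bijection $y\mapsto a\cdot y$ it is generically stable, so by uniqueness of the generically stable (equivalently, nonforking) extension of $p$ (Fact~\ref{fac: basic gen stab}(2)) it equals $p|_{\cU'}$. Consequently $(a\cdot p)|_\cU = p$ for every $a \in G(\cU')$, and mirrorwise $(p^{-1}\cdot a)|_\cU = p^{-1}$. (Alternatively one can invoke that $\Stab$ of a generically stable type is type-definable over $M$, so that $\Stab(p) = \bar G$ forces $\Stab(p|_{\cU'}) = G(\cU')$.)

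\emph{$p$ is a two-sided zero, and $p = p^{-1}$.} Let $r \in S_G(\cU)$ be any global invariant type. Choose $b \models p|_\cU$ and $a \models r|_{\cU b}$ in $\cU'$. Then $\tp(a/\cU b)$ does not fork over $\cU$, so $a \ind_\cU b$; since $\tp(b/\cU)=p|_\cU$ is generically stable, symmetry of nonforking (Fact~\ref{fac: basic gen stab}(3)) gives $b \ind_\cU a$, whence $\tp(b/\cU a)=p|_{\cU a}$ by stationarity (Fact~\ref{fac: basic gen stab}(2)). Hence $\tp(a\cdot b/\cU a) = a\cdot(p|_{\cU a})$, whose restriction to $\cU$ is $(a\cdot p)|_\cU = p$ by the base-change step. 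Thus $r \ast p = \tp(a\cdot b/\cU) = p$, so $p$ is a right zero; the mirror computation shows $p^{-1}$ is a left zero. Applying both to the pair $(p^{-1},p)$ gives $p^{-1}\ast p = p$ (as $p$ is a right zero) and $p^{-1}\ast p = p^{-1}$ (as $p^{-1}$ is a left zero), so $p = p^{-1}$, proving (2). Now $p$ is simultaneously left and right $\bar G$-invariant; rerunning the computation on the other side (using right-invariance of $p$ and the mirror base-change fact $(p\cdot a)|_\cU = p$) gives $p\ast r = p$ for all invariant $r$, so $p$ is a two-sided zero of the convolution semigroup.

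\emph{Uniqueness.} Let $q \in S_G(\cU)$ be left $\bar G$-invariant (say, invariant over a small model). Take $a \models q|_\cU$ and $b \models p|_{\cU a}$ in $\cU'$; then $b \ind_\cU a$ and, by symmetry, $a \ind_\cU b$. Using base change as above, $\tp(a\cdot b/\cU a) = a\cdot(p|_{\cU a}) = (a\cdot p|_{\cU'})|_{\cU a} = p|_{\cU a}$, so $\tp(a\cdot b/\cU) = p|_\cU$. The delicate point is the independence $a\cdot b \ind_\cU b$: this follows from the forking calculus for generically stable types (Fact~\ref{fac: basic gen stab}(3),(6)) together with $a \ind_\cU b$, using that $\tp(a\cdot b/\cU)=p|_\cU$ is itself generically stable. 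Granting it, $\tp(a\cdot b/\cU b)=p|_{\cU b}$, hence $\tp(a/\cU b) = \tp((a\cdot b)\cdot b^{-1}/\cU b) = (p|_{\cU b})\cdot b^{-1}$; restricting to $\cU$ and using the mirror base-change fact for the now-bi-invariant $p$ gives $\tp(a/\cU) = (p\cdot b^{-1})|_\cU = p$, i.e. $q = p$. The unique right $\bar G$-invariant type is then $p^{-1} = p$ by the symmetric argument. (Alternatively, one shows directly that every left $\bar G$-invariant $q$ is generically stable, and then applying the two-sided-zero argument to $q$ makes $q$ a right zero, so that $p = p\ast q = q$.) The main obstacles are the base-change step — making invariance of a generically stable type persist under enlarging the monster — and the independence assertion $a\cdot b \ind_\cU b$ in the uniqueness argument; both are where generic stability (definability, symmetry and stationarity of nonforking, and the forking calculus of Fact~\ref{fac: basic gen stab}) does the real work, the remainder being bookkeeping in the convolution semigroup.
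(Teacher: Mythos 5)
Your architecture (base change to $\cU'$, then exhibiting $p$ as a right zero and $p^{-1}$ as a left zero of convolution to get $p=p^{-1}$) is sound, and the derivation of (2) by evaluating $p^{-1}\ast p$ two ways is correct and rather elegant. One misstep in the base-change paragraph: a generically stable extension of $p$ to $\cU'$ need \emph{not} equal $p|_{\cU'}$ --- already in a stable theory $p$ has many forking extensions, each generically stable over a suitable small subset of $\cU'$ --- so ``restricts to $p$ on $\cU$ and is generically stable'' does not invoke Fact \ref{fac: basic gen stab}(2) correctly; what is needed is that $a\cdot p|_{\cU'}$ does not fork over $\cU$ (equivalently, agrees with $p|_{\cU'}$ over $\cU a$, which your formula computation does give if you allow the parameter $\bar d$ to range over $\cU a$ rather than $\cU$), or else your parenthetical argument via the $M$-type-definability of $\Stab(p)$, which is the clean fix.

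The genuine gap is in the uniqueness argument, exactly at the point you flag. The independence $a\cdot b\ind_\cU b$ does \emph{not} follow from Fact \ref{fac: basic gen stab}(3),(6) together with $a\ind_\cU b$: every available instance of transitivity would require $a\cdot b\ind_{\cU a} b$ (or $b\ind_{\cU a} a\cdot b$), and these fail outright because $b=a^{-1}\cdot(a\cdot b)$ is interdefinable with $a\cdot b$ over $\cU a$. Worse, $a\cdot b\ind_\cU b$ is not a lemma on the way to the conclusion but is equivalent to it: granting it, stationarity and right-invariance give $\tp(a/\cU b)=p|_{\cU b}$, i.e.\ $q=p$, and conversely it holds only once $q=p$ (it is then generic transitivity of the generic of a generically stable group). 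So the uniqueness proof is circular as written; the same objection applies to the alternative in your last parenthesis, since ``every left $\bar G$-invariant $q$ is generically stable'' is again essentially the statement being proved. Note also that uniqueness must be established for an arbitrary left $\bar G$-invariant ultrafilter $q$, with no automorphism-invariance available. The standard argument is of a different nature: having already proved $p=p^{-1}$ and two-sided invariance, one shows that every $\psi\in p$ is left generic --- finitely many translates $c_0\cdot\psi,\dots,c_N\cdot\psi$ with $c_i\in G(\cU)$ realizing $p|_B$ (a Morley sequence over $B\supseteq M\cup\mathrm{params}(\psi)$ taken inside $\cU$) cover $G(\cU)$. Indeed, for each $h\in G(\cU)$ one has $\models\psi(c^{-1}\cdot h)$ for any $c\models p|_{Bh}$ (using $p=p^{-1}$ and right-invariance), so by generic stability $\models\psi(c_i^{-1}\cdot h)$ for cofinitely many $i$, with a bound uniform in $h$ by compactness (as in the Claim in the proof of Proposition \ref{proposition: countably many types}). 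Then $G(x)\vdash\bigvee_{i\le N}\psi(c_i^{-1}\cdot x)$, so the complete type $q$ contains some $\psi(c_i^{-1}\cdot x)$, and left-invariance under $c_i\in G(\cU)$ yields $\psi\in q$; hence $p\subseteq q$ and $p=q$. The right-invariant case is symmetric.
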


\noindent By Fact 2.4 and its symmetric version, we get:
\begin{corollary}
A type-definable group $G(x)$ is generically stable  if and only if there is a generically stable $p \in S_{G}(\cU)$ which is right $G(\cU)$-invariant.
\end{corollary}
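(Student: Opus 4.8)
The plan is to deduce both implications from Fact~\ref{fac: basic fsg}(1) together with the observation that the hypothesis "generically stable group" and its right-handed variant are interchanged by passing to the opposite group. Concretely, for the forward direction I would simply note that if $G$ is generically stable then, by Definition~\ref{def: generically stable group}, there is a generically stable $p \in S_{G}(\cU)$ which is left $G(\cU)$-invariant; Fact~\ref{fac: basic fsg}(1) then tells us that this very $p$ is also right $G(\cU)$-invariant, which is exactly what is required.

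For the converse, suppose $p \in S_{G}(\cU)$ is generically stable and right $G(\cU)$-invariant. I would introduce the opposite group $G^{\opp}$: this is the same $\emptyset$-type-definable set $G$ equipped with the $\emptyset$-definable multiplication $(a,b) \mapsto b \cdot a$ (a composition of the given $\emptyset$-definable $\cdot$ with the coordinate swap, so still $\emptyset$-definable, with the same $\varphi_0$ witnessing the needed group-like behavior on a definable overgroup), and with the same inverse. Then $G^{\opp}(\cU) = G(\cU)$ as a set, every $q \in S_{G}(\cU)$ lies in $S_{G^{\opp}}(\cU)$, and by unwinding the definitions, right $G(\cU)$-invariance of $p$ is literally the same statement as left $G^{\opp}(\cU)$-invariance of $p$. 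Hence $p$ witnesses that $G^{\opp}$ is a generically stable type-definable group in the sense of Definition~\ref{def: generically stable group}. Applying Fact~\ref{fac: basic fsg}(1) to $G^{\opp}$, the type $p$ is also the unique right $G^{\opp}(\cU)$-invariant type; unwinding once more, right $G^{\opp}(\cU)$-invariance of $p$ is precisely left $G(\cU)$-invariance of $p$. Since $p$ is generically stable, this shows $G$ is generically stable.

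The argument is essentially bookkeeping, so I do not anticipate a genuine obstacle; the only points requiring a little care are (i) checking that $G^{\opp}$ genuinely satisfies the standing hypotheses of the "Setting" (that the opposite multiplication and the inverse are $\emptyset$-definable and that a single formula controls associativity, units and cancellation on a definable overgroup — all inherited from the corresponding data for $G$ by composing with the swap map), and (ii) keeping the left/right conventions straight when translating invariance of $p$ between $G$ and $G^{\opp}$. Alternatively, one could run the converse directly via $p^{-1}$: since $^{-1}$ is $\emptyset$-definable and an involution on $G$, Fact~\ref{fac: dcl of gs is gs} gives that $p^{-1}$ is generically stable, and taking inverses of realizations of $p$ turns right $G(\cU)$-invariance of $p$ into left $G(\cU)$-invariance of $p^{-1}$; I would, however, prefer the $G^{\opp}$ formulation as it needs no preservation lemma.
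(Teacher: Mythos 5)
Your proof is correct and takes essentially the same route as the paper, which deduces the corollary from Fact~\ref{fac: basic fsg} ``and its symmetric version'': the forward direction is Fact~\ref{fac: basic fsg}(1) applied to the witnessing left-invariant type, and your opposite-group construction $G^{\opp}$ (or the $p^{-1}$ variant via Fact~\ref{fac: dcl of gs is gs}) is simply an explicit justification of that symmetric version. No gap.
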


\subsection{Idempotent generically stable types and generic transitivity: main conjecture}\label{sec: gen trans}

 Let $p \in S_G(\cU)$ be a generically stable type over $M$. The following is standard:

\begin{proposition}\label{proposition: intersection of relatively definable groups}
The left stabilizer $\Stab(p)$ of $p$ is an intersection of relatively $M$-definable subgroups of $\bar G$; in particular, it is $M$-type-definable.
\end{proposition}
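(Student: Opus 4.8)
The plan is to exhibit $\Stab(p)$ as the intersection of the relatively $M$-definable sets $\Stab_\varphi(p)$ attached to $\varphi$-definitions of $p$, and to check each such set is a subgroup. Recall $\Stab(p) = \{g \in \bar G : g \cdot p = p\}$, where $g \cdot p = \tp(g \cdot a / \cU)$ for $a \models p|_{\cU g}$. Since $p$ is generically stable over $M$, by Fact 2.1(7) it is definable over $M$; fix for each formula $\varphi(x; y) \in \cL$ a $\varphi$-definition $d_p\varphi(y) \in \cL(M)$, so that for all $b \in \cU^y$ we have $\varphi(x;b) \in p \iff \models d_p\varphi(b)$.

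First I would rewrite the stabilizer condition formula-by-formula. For $g \in \bar G$ and $\varphi(x;y)$, consider the formula $\psi_\varphi(x; y) := \varphi(g^{-1} \cdot x; y)$ (which over the parameter $g$ is an $\cL(g)$-formula in $x$ with parameter variable $y$); then $\varphi(x;b) \in g \cdot p$ iff $\varphi(g^{-1} \cdot x; b) \in p$ iff $\models d_p\psi_\varphi(b)$. Thus $g \in \Stab(p)$ iff for every $\varphi$ and every $b$, we have $\models d_p\psi_\varphi(b) \leftrightarrow d_p\varphi(b)$. Now define $\Stab_\varphi(p) := \{g \in \varphi_0(\cU) : \forall y\, (d_p\psi_\varphi(y) \leftrightarrow d_p\varphi(y))\}$; this is relatively $M$-definable (using that $d_p\varphi$ and the definitions appearing in $d_p\psi_\varphi$ — after quantifying out $g$'s role — can be taken over $M$, together with the defining formula $\varphi_0$), and $\Stab(p) = \bigcap_\varphi \Stab_\varphi(p)$. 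Hence $\Stab(p)$ is $M$-type-definable.

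It remains to see each $\Stab_\varphi(p)$ contains a relatively $M$-definable \emph{subgroup} that still cuts out $\Stab(p)$ in the intersection; the standard trick is to replace the family $\{\Stab_\varphi(p)\}$ by the family of sets $H_\Delta := \{g : g \cdot \Delta\text{-tp of } p = \Delta\text{-tp of } p\}$ as $\Delta$ ranges over finite sets of formulas closed under the relevant operations, and observe that $\Stab(p)$, being the intersection of subsets all containing the identity and closed under the group operation where defined, is a group; one then absorbs finitely many $\Stab_\varphi$ at a time into relatively definable subgroups by the usual argument (a relatively definable set $X \ni e$ with $X = X^{-1}$ and $X \cdot X \cap \Stab(p) \subseteq X$ can be shrunk to a relatively definable subgroup, or one passes to $\bigcap_{n} X^n$ and uses a chain-condition/compactness argument). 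Concretely: $\Stab(p)$ is a group, it is contained in each $\Stab_\varphi(p)$, and for each $\varphi$ there is a relatively $M$-definable symmetric set $Y_\varphi \ni e$ with $\Stab(p) \subseteq Y_\varphi \subseteq \Stab_\varphi(p)$; then $\bigcap_\varphi \bigl(\bigcap_{n \in \omega} Y_\varphi^{\,n}\bigr)$ is a type-definable group sandwiched between $\Stab(p)$ and $\bigcap_\varphi \Stab_\varphi(p) = \Stab(p)$, so equals $\Stab(p)$, and each $\bigcap_n Y_\varphi^n$ is an intersection of relatively $M$-definable sets — giving the desired presentation.

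\textbf{Main obstacle.} The delicate point is purely bookkeeping but genuinely needs care: verifying that $\Stab_\varphi(p)$ (and the sets $Y_\varphi$) are relatively definable \emph{over $M$} — one must check that translating $p$ by the variable $g$ and then taking $\varphi$-definitions produces parameters only from $M$ (plus the fixed $\varphi_0$), which uses $M$-definability of $p$ in an essential way, and that the group operation being merely defined on $\varphi_0(\cU)$ rather than globally does not obstruct the "absorb into a subgroup" step. I expect the cleanest writeup to invoke the well-known fact that the stabilizer of a definable (in particular generically stable) type is type-definable over any model over which the type is definable, together with the standard lemma that a type-definable group which is an intersection of relatively definable sets containing the identity is in fact an intersection of relatively definable subgroups; the content here is just assembling these, so I would keep the proof short and cite the analogous statement from stable group theory as the template.
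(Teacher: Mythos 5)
Your first reduction is fine: using $M$-definability of $p$ (Fact \ref{fac: basic gen stab}(7)) one can indeed express, uniformly in $g$, the condition ``the $\varphi$-part of $g\cdot p$ equals the $\varphi$-part of $p$'' by an $\cL(M)$-formula (pass to the formula $\varphi(t\cdot x,y)$ with extra parameter variable $t$ and use its $p$-definition). But the sets $\Stab_\varphi(p)$ you obtain this way are \emph{not} subgroups: if $g$ and $g'$ each preserve the $\varphi$-type of $p$, nothing tells you what $g$ does to instances of the translated formula $\varphi(g'^{-1}\cdot x,y)$, so closure under the product fails. You recognize this, but the repair you propose does not work. First, since $e\in Y_\varphi$ the powers $Y_\varphi^n$ increase with $n$, so $\bigcap_n Y_\varphi^n=Y_\varphi$, which is still not a group (and if you meant the union, i.e.\ the generated group, it is neither type-definable nor presented as an intersection of relatively definable subgroups). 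Second, and more seriously, the ``standard lemma'' you want to cite --- that a type-definable group which is an intersection of relatively definable symmetric sets containing the identity is an intersection of relatively definable subgroups --- is false in general: in a monster model of RCF the group of infinitesimals is a type-definable subgroup of $(K,+)$ cut out by definable symmetric neighbourhoods of $0$, yet the only definable subgroups of $(K,+)$ are $\{0\}$ and $K$. Producing definable groups from approximate data of this kind is exactly the content of Hrushovski's theorem in stable theories (and of Proposition \ref{prop: type-def grp is intersec of def grps} in this paper, which needs generic transitivity); it is not available for free.

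The fix, and the route the paper takes, is to build the closure under translation into the definition rather than trying to recover a group afterwards: for $\varphi(x,y)$ implying a formula $\varphi_1(x)$ with $\varphi_1(\bar G)\cdot\bar G\subseteq\varphi_0(\bar G)$, consider the single $M$-definable set $d_p\varphi:=\{(h,a)\in\varphi_0(\cU)\times\cU^{y}:\varphi(h\cdot x,a)\in p\}$ (definable over $M$ because $p$ is), and let $\Stab_\varphi(p)$ be the stabilizer of this \emph{set} under the action $g\cdot(h,a):=(h\cdot g^{-1},a)$. A set-stabilizer under a group action is automatically a subgroup (this is where the extra coordinate $h$, i.e.\ keeping all translated instances of $\varphi$ at once, does the work your single-formula sets cannot), it is relatively $M$-definable, and by the choice of $\varphi_1$ the intersection over all such $\varphi$ is exactly $\Stab(p)$. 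Your passing remark about sets $H_\Delta$ for $\Delta$ ``closed under the relevant operations'' is pointing in this direction, but the operation one must close under is translation by all of $\bar G$, and the $M$-definability of the resulting stabilizer is precisely what the two-variable set $d_p\varphi$ delivers; this step is the actual content of the proposition, not bookkeeping.
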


\begin{proof}
By compactness, we can find a formula $\varphi_1(x) \in \mathcal{L}$ implied by $G(x)$ such that $\varphi_1(\bar G) \cdot \bar{G} \subseteq \varphi_0(\bar{G})$.

Since $p$ is generically stable over $M$, it is definable over $M$ (by Fact \ref{fac: basic gen stab}(7)). For any formula $\varphi(x,y) \in \cL$ which implies $\varphi_1(x)$ (and $y$ is an arbitrary tuple of variables) let 
$$d_p\varphi:= \left\{ (h, a) \in \varphi_0(\cU) \times \cU^{y}: \varphi(h \cdot x, a) \in p\right\}.$$ 
By the definability of $p$ over $M$, $d_p \varphi$ is definable over $M$. Then 
$$\Stab_\varphi(p):=\{ g \in \bar G: g \cdot d_p \varphi =d_p \varphi\},$$
where $g \cdot (h, a):=(h \cdot g^{-1}, a)$, is an $M$-relatively definable subgroup of $\bar G$ (the fact that it is a subgroup of $G$ follows from the observation that it is the stabilizer of the set $d_p \varphi$ under the left action of $G$ on $\varphi_0(\cU) \cdot \bar G \times \cU^{y}$ given by $g \cdot (h,a):=(h \cdot g^{-1},\ a)$, which uses the choice of $\varphi_0(x)$). By the choice of $\varphi_1(x)$, we get that $\Stab(p) = \bigcap_{\varphi(x,y) \in \mathcal{L}, \varphi(x,y) \vdash \varphi_1(x)} \Stab_{\varphi}(p)$ is an intersection of relatively $M$-definable subgroups of $\bar G$.
\end{proof}
\begin{example}
	Let $G'$ be an arbitrary type-definable subgroup of $G$ which is generically stable, witnessed by a generically stable left or right $G'$-invariant type $p \in S_{G'}(\cU)$. Then $p$ is obviously idempotent.
\end{example}

Our central question in the case of types is whether this is the only source of generically stable idempotent types:

\begin{definition}
	For the rest of the section, we let $H_{\lee}:=\Stab_{\lee}(p)$  and $H_{\rii}:=\Stab_{\rii}(p)$ be the left and the right stabilizer of $p$, respectively.
\end{definition}


\begin{problem}\label{conjecture: main conjecture}
Assume that $p$ is generically stable and idempotent, and let $H=H_{\lee}$ or $H=H_{\rii}$.  Is it true that $p \in S_H(\cU)$ and the group $H$ is generically stable (Definition \ref{def: generically stable group})? 
\end{problem}

We will note now that the second part follows from the first, and that the left and the right versions of the problem are equivalent.

\begin{remark}\label{rem: fsg follows}
Assume $p$ is generically stable and $p \in S_{H}(\cU)$, where $H=H_{\lee}$ or $H=H_{\rii}$. Then:
\begin{enumerate}
	\item $H$ is a generically stable group, witnessed by $p$  (hence $p$ is both the unique left-invariant and the unique right-invariant type of $H$, by Fact \ref{fac: basic fsg});
	\item $H$ is the smallest among all type-definable subgroups $H'$ of $G$ with $p \in S_{H'}(\cU)$;
	\item $H$ is both the left and the right stabilizer of $p$ in $G$.
\end{enumerate}
\end{remark}

\begin{proof}
We will do the case of $H=H_{\lee}$. The proof for $H=H_{\rii}$ is symmetric.

	(1) As then $p$ is a left $H$-invariant generically stable type in $S_H(\cU)$.
	
	(2) $H$ is type-definable by Proposition \ref{proposition: intersection of relatively definable groups}. For any type-definable $H' \leq G(\cU)$ with $p(x)\vdash H'(x)$, the group $H'' := H \cap H'$ is type-definable with $p(x) \vdash H''(x)$ and $H'' \leq H$. If the index is $\geq 2$, we have some $g \in H$ with $H'' \cap g \cdot H'' = \emptyset$, and $p(x) \vdash H''(x), \left(g\cdot p\right)(x) \vdash \left(g\cdot H'' \right)(x)$, so $p \neq g \cdot p$ --- a contradiction. So $H'' = H$, and $H \subseteq H'$.
	
	(3) $H_r$ is type-definable by a symmetric argument as in Proposition \ref{proposition: intersection of relatively definable groups}. By (1), $p$ is right $H$-invariant, so we have $H \subseteq H_r$, and so $p \in S_{H_r}(\cU)$. By the right version of (2) (which is obtained by a symmetric argument as in (2)), we conclude that $H = H_r$.
\end{proof}

By Remark \ref{rem: fsg follows}, we see that $p \in S_{H_{\lee}}(\cU)$ if and only if $p\in S_{H_{\rii}}(\cU)$, so the left and the right versions of Problem \ref{conjecture: main conjecture} are indeed equivalent.

%

Let $p \in S_G(\cU)$ be a generically stable type. Let $\cU' \succ\cU $ be a monster model with respect to $\cU$ and $p': =p |_{\cU'}$ (the unique extension of $p$ to a type in $S_G(\cU')$ which is invariant over $M$). Then still $p'$ is definable over $M$ and $H(\cU')=\Stab(p')$. Pick an arbitrary $a \in p(\cU')$.

Let $p \in S_G(\cU)$ be a generically stable type, and let $\cU' \succ\cU $ be a monster model with respect to $\cU$ and $p': =p |_{\cU'}$ (the unique extension of $p$ to a type in $S_G(\cU')$ which is invariant over $M$). Then still $p'$ is definable over $M$ and $H_{\lee}(\cU')=\Stab_{\lee}(p')$,  $H_{\rii}(\cU')=\Stab_{\rii}(p')$. Pick an arbitrary $a \in p(\cU')$. 
\begin{remark}\label{remark: basic}
The following conditions are equivalent for a generically stable type $p$:
\begin{enumerate}
\item $p \in S_{H_{\lee}}(\cU)$;
\item $a \in \Stab_{\lee}(p')$;
\item for any (equivalently, some) $(a_0,a_1) \models p^{(2)}$, $(a_0 \cdot a_1,a_0) \models p^{(2)}$;
\item  $p \in S_{H_{\rii}}(\cU)$;
\item $a \in \Stab_{\rii}(p')$;
\item for any (equivalently, some) $(a_0,a_1) \models p^{(2)}$, $(a_1 \cdot a_0,a_0) \models p^{(2)}$.
\end{enumerate}
\end{remark}

\begin{proof}
$(1) \Rightarrow (2)$. This is because $a \in p(\cU') \subseteq H_{\lee}(\cU') =\Stab_{\lee}(p')$, where the inclusion follows by (1).

$(2) \Rightarrow (3)$. Pick $b \models p'$. Then $(a,b) \models p^{(2)}$ and $a \cdot b \models p'$ by (2). So $(a \cdot b,a) \models p^{(2)}$.

$(3) \Rightarrow (1)$. Suppose (1) fails. Then $a \notin H_{\lee}(\cU') = \Stab_{\lee}(p')$. So for any $b \models p'$ we have that $a \cdot b$ does not realize $p'$. Using Fact \ref{fac: basic gen stab}, this implies that $a \cdot b$ does not realize $p|_{\cU a}$, because otherwise, 
since $a \cdot b \forkindep_{\cU a} \cU'$ (which follows from $b \forkindep_{\cU a} \cU'$ and left transitivity of forking), $\tp(a \cdot b/\cU')$ is the unique non-forking extension of $p|_{\cU a}$ which is exactly $p'$, a contradiction. Thus, $(a \cdot b,a)$ does not realize $p^{(2)}$ which contradicts (3).

$(4) \Rightarrow (5) \Rightarrow (6) \Rightarrow (4)$ are obtained by symmetric arguments to the above ones.

$(1) \Leftrightarrow (4)$ follows from Remark \ref{rem: fsg follows}.
\end{proof}

\begin{definition}\label{def: gen trans type}
	We will say that a generically stable type $p \in S_{G}(\cU)$ is \emph{generically transitive} if it satisfies any of the equivalent conditions in Remark \ref{remark: basic}.
\end{definition}
\begin{remark}
	We have chosen this terminology to highlight the connection of condition (3) in Remark \ref{remark: basic}(3) and the generic transitivity assumption in Hrushovski's group chunk theorem (see e.g.~\cite[Section 5.1]{bays2018geometric}).
\end{remark}
In view of Remark \ref{remark: basic}, our main Problem \ref{conjecture: main conjecture} is equivalent to the following:
\begin{problem}\label{conjecture: main conjecture'}
Assume that $p$ is generically stable and idempotent. Is it then generically transitive? 
\end{problem}

In the following sections, we will provide a positive solution under some additional assumptions on the group. We will also see that elements of stable group theory (stratified rank, connected components, etc.) can be developed in an arbitrary theory localizing on a generically stable type $p$, if and only said type $p$ is generically transitive. 

We conclude this section with a couple of additional observations.

\begin{lemma}
	If $p \in S_G(\cU)$ is generically stable over $M$ and idempotent, then the type $p^{-1} := \tp(a^{-1}/\cU)$ for some/any $a \models p$ is also generically stable over $M$ and idempotent.
\end{lemma}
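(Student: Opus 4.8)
The plan is to verify that $p^{-1}$ is generically stable and idempotent by pushing forward along the $\emptyset$-definable inversion map. First I would note that generic stability of $p^{-1}$ is essentially automatic: by Fact~\ref{fac: dcl of gs is gs} (with the monster $\cU'$ in place of $\cU$), if $a \models p$ in a bigger monster model then $a^{-1} \in \dcl(a, M)$, so $\tp(a^{-1}/\cU) = p^{-1}$ is generically stable over $M$. (This is exactly the kind of pushforward used throughout the preliminaries; one could also invoke Fact~\ref{fac: basic fsg}(2) in spirit, though that is stated for $G$-invariant types, so it is cleaner to just cite Fact~\ref{fac: dcl of gs is gs}.) It remains to check idempotence, i.e.\ $p^{-1} \ast p^{-1} = p^{-1}$.

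For idempotence I would argue semantically with realizations. Take $(a_1, a_2) \models p^{(2)}$, so $a_2 \models p$ and $a_1 \models p|_{\cU a_2}$; then by definition of $\ast$ we have $a_2 \cdot a_1 \models p \ast p = p$ (using idempotence of $p$), and moreover one checks $(a_2 \cdot a_1, \ldots)$ sits in the right configuration. Now observe that $(a_1^{-1}, a_2^{-1})$ is a Morley sequence in $p^{-1}$: indeed $a_1 \models p$ so $a_1^{-1} \models p^{-1}$, and $a_2 \models p|_{\cU a_1}$ implies $a_2^{-1} \models p^{-1}|_{\cU a_1} = p^{-1}|_{\cU a_1^{-1}}$ since $a_1^{-1} \in \dcl(a_1, M)$ and $a_1 \in \dcl(a_1^{-1}, M)$, so the two generate the same set over $M$ inside $\cU$. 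Hence $(a_1^{-1}, a_2^{-1}) \models (p^{-1})^{(2)}$, and $p^{-1} \ast p^{-1}$ is computed as $\tp\big((a_1^{-1}) \cdot (a_2^{-1}) / \cU\big)$ in the appropriate order. The key algebraic identity is $(a_2 \cdot a_1)^{-1} = a_1^{-1} \cdot a_2^{-1}$ (valid on $\varphi_0(\cU)$ by the choice of $\varphi_0$). Since $a_2 \cdot a_1 \models p$, we get $(a_2 \cdot a_1)^{-1} \models p^{-1}$, i.e.\ $a_1^{-1} \cdot a_2^{-1} \models p^{-1}$, which is precisely the statement that $p^{-1} \ast p^{-1} = p^{-1}$ — provided one is careful that the pair $(a_1^{-1}, a_2^{-1})$ is genuinely a Morley sequence in $p^{-1}$ in the order that the convolution $\ast$ demands. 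Matching up the orders (recall the Remark following Fact~\ref{fac: basic gen stab} on reversed Morley sequences, and the fact that for generically stable types $p^{(\omega)}$ is totally indiscernible by Fact~\ref{fac: basic gen stab}(1), so the order does not matter) handles this bookkeeping.

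The main obstacle — really the only subtle point — is the order-of-multiplication bookkeeping in the definition of $\ast$: $p \ast q(\varphi(x)) = p_x \otimes q_y(\varphi(x \cdot y))$, so to read off $p^{-1}\ast p^{-1}$ from a realization one needs the left factor to realize $p^{-1}$ over $\cU$ and the right factor to realize $p^{-1}$ over $\cU$ together with the left factor, and then the product is taken left-factor-times-right-factor. Total indiscernibility of Morley sequences of generically stable types (Fact~\ref{fac: basic gen stab}(1), together with Fact~\ref{fac: basic gen stab}(4) identifying $\ind_M$-independent realizations of $p$ with realizations of $p^{(2)}$ in either order) makes $\otimes$ symmetric on $p$ and on $p^{-1}$, so the configuration $(a_1, a_2)\models p^{(2)}$ can be read in either order and similarly for $(a_1^{-1}, a_2^{-1})$, and the inversion identity $(xy)^{-1} = y^{-1}x^{-1}$ interchanges the two orders consistently. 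Once these are lined up, the computation is immediate. I would write the argument purely in terms of realizations in a bigger monster model $\cU' \succ \cU$, as in the proof of Remark~\ref{remark: basic}, to avoid manipulating definitions of $\ast$ directly.
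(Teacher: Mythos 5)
Your proposal is correct and follows essentially the same route as the paper: generic stability of $p^{-1}$ via Fact \ref{fac: dcl of gs is gs}, and idempotence by inverting a Morley pair in $p$ and using $(a_2\cdot a_1)^{-1}=a_1^{-1}\cdot a_2^{-1}$ together with idempotence of $p$ and total indiscernibility to handle the order of the factors. The only cosmetic difference is that the paper first reduces to comparing restrictions to $M$ (using that $p^{-1}\ast p^{-1}$ is definable over $M$ and that $p^{-1}$ is the unique $M$-definable extension of $p^{-1}|_M$) and then runs the same realization computation over $M$, whereas you compute the global type $p^{-1}\ast p^{-1}$ directly as $\tp(a_1^{-1}\cdot a_2^{-1}/\cU)$.
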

\begin{proof}
Assume the hypothesis. Then $p^{-1}$ is generically stable over $M$ by Fact \ref{fac: dcl of gs is gs}.  It follows that $p^{-1} * p^{-1}$ is definable over $M$ (see e.g.~\cite[Proposition 3.15]{chernikov2022definable}). Since $p^{-1}$ is the unique extension of $p^{-1}|_{M}$ definable over  $M$, it suffices to show that $(p^{-1} * p^{-1})|_M = p^{-1}|_{M}$. Let $b_1 \models p^{-1}|_M$ and $b_2 \models p^{-1}|_{M b_1}$. Clearly $b_1^{-1} \models p|_M$ and $b_2^{-1} \models p|_{Mb_1}$ and so $(b_{2}^{-1}, b_1^{-1}) \models p^{(2)}|_M$. Since $p$ is idempotent, we conclude that $b_{1}^{-1} \cdot b_2^{-1} \models p|_{M}$ and in particular $(b_2 \cdot b_1) \models p^{-1}|_{M}$. For any $\varphi(x) \in \mathcal{L}_{x}(M)$ we have
    \begin{equation*}
        \varphi(x) \in p^{-1} * p^{-1} \implies \varphi(x \cdot y) \in p^{-1}_{x} \otimes p^{-1}_{y} \implies \varphi(x \cdot b_1) \in p^{-1}_{x}
    \end{equation*}
    \begin{equation*}
        \implies \models \varphi(b_2 \cdot b_1) \implies \varphi(x) \in \tp(b_2 \cdot b_1/M) \implies \varphi(x) \in p^{-1}|_{M},
    \end{equation*}
hence $(p^{-1} * p^{-1})|_{M} = p^{-1}|_{M}$. 
\end{proof}

\begin{remark}
If $p$ is generically transitive, then $p = p^{-1}$. 

\noindent Indeed,  by Remarks \ref{remark: basic} and \ref{rem: fsg follows} it follows that if $p$ is generically transitive, then $p \in S_{H_{\lee}}(\cU)$ witnesses that $H_{\lee}$ is generically stable, hence $p = p^{-1}$ by Fact \ref{fac: basic fsg}.
\end{remark}

\subsection{Abelian groups}\label{sec: abelian for types}

In this section we give a positive answer to Problem \ref{conjecture: main conjecture} in the case of abelian groups (in arbitrary theories):

\begin{proposition}\label{prop: key for types in ab}
	Assume that $G$ is an abelian group and  $p \in S_G(\cU)$ is a generically stable idempotent type. Then $p$ is generically transitive.
\end{proposition}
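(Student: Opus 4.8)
The plan is to reduce generic transitivity to a single forking statement and then derive that statement from the fact that generically stable types have weight one, using commutativity of $G$ at exactly one point. So let $p$ be generically stable over $M$ and idempotent, with $G$ abelian. By Remark \ref{remark: basic} it suffices to show that whenever $(a_0,a_1)$ is a Morley sequence in $p$ over $M$, so is $(a_0\cdot a_1,a_0)$. Write $c:=a_0\cdot a_1=a_1\cdot a_0$, using that $G$ is abelian. Unwinding the definition of $\ast$ (and the $M$-definability of $p$), idempotence $p\ast p=p$ gives $c\models p|_M$; since also $a_0\models p|_M$, Fact \ref{fac: basic gen stab}(4) tells us that $(c,a_0)$ is Morley in $p$ over $M$ if and only if
\[ c\ind_M a_0, \]
so it remains to prove this independence.

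Suppose toward a contradiction that $c\nind_M a_0$. The first move uses commutativity: since $(a_1,a_0)$ is again a Morley sequence in $p$ over $M$ (Fact \ref{fac: basic gen stab}(1), total indiscernibility), there is $\sigma\in\Aut(\cU/M)$ with $\sigma(a_0)=a_1$ and $\sigma(a_1)=a_0$, and $\sigma$ fixes $c=a_0\cdot a_1=a_1\cdot a_0$; applying $\sigma$ turns $c\nind_M a_0$ into $c\nind_M a_1$. So $c$ forks over $M$ with both members of the $M$-independent pair $(a_0,a_1)$. The second move rules this out by weight one: $\tp(c/M)=p|_M$ is generically stable, hence of weight one, so from $c\nind_M a_0$ and $a_0\ind_M a_1$ we get $c\ind_{Ma_0}a_1$; by symmetry of nonforking over $Ma_0$ for $a_1\models p|_{Ma_0}$ (Fact \ref{fac: basic gen stab}(3)) this gives $a_1\ind_{Ma_0}c$, whence by Fact \ref{fac: basic gen stab}(6) (applied to $a_1$, using $a_1\ind_M a_0$) we get $a_1\ind_M a_0c$, so $a_1\ind_M c$, so $c\ind_M a_1$ by Fact \ref{fac: basic gen stab}(3) again --- contradicting the first move. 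Hence $c\ind_M a_0$ and $p$ is generically transitive.

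The crux, and essentially the only non-routine ingredient, is the appeal to weight one of generically stable types, used in the form ``$c\nind_M a_0$ and $a_0\ind_M a_1$ imply $c\ind_{Ma_0}a_1$''. If one wants to stay strictly within the facts recalled above, I would replace it by a hands-on ``local weight'' argument: extend $(a_0,a_1)$ to a long Morley sequence $(a_i)_{i<\omega}$ in $p$ over $M$; using iterated idempotence together with an automorphism over $Ma_0$ show that $\tp(a_0\cdot a_1/Ma_0)=\tp(a_0\cdot a_1\cdot\cdots\cdot a_{n-1}/Ma_0)$ for all $n$; fix a formula $\varphi(x,a_0)$ over $M$ witnessing $c\nind_M a_0$, and deduce --- using that $a_0\cdot\cdots\cdot a_{n-1}$ is symmetric in $a_0,\dots,a_{n-1}$ together with total indiscernibility --- that $a_0\cdot\cdots\cdot a_{n-1}$ satisfies $\varphi(x,a_j)$ for every $j<n$; this makes $\{\varphi(x,a_j):j<\omega\}$ consistent, contradicting Fact \ref{fac: basic gen stab}(5). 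Either way, commutativity enters only in the symmetry step $c\nind a_0\Leftrightarrow c\nind a_1$, which is exactly why the argument does not reach non-abelian groups, and the remaining bookkeeping (between $p^{(2)}$, Morley sequences, and $M$-restrictions of global types) is routine.
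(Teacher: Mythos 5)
Your primary argument rests on the claim that a generically stable type has weight one, and that claim is false. In a stable theory every type is generically stable, and already the generic type $p$ of $G=(K^2,+)$ in $\mathrm{ACF}$ is generically stable, idempotent, and of weight $2$ (for $c=(x,y)\models p|_M$ take $a_0=x$, $a_1=y$: these are independent over $M$ and $c$ forks with both). The only weight-type fact available for generically stable types is Fact \ref{fac: burden gen stab}, which runs in the opposite direction: from ``$c$ forks with two $M$-independent realizations of $p|_M$'' it concludes $\bdn(p|_M)\geq 2$, which is no contradiction without an extra hypothesis on the burden (this is exactly what the inp-minimal case, Proposition \ref{prop: key for types in dpmin}, assumes and the abelian case does not). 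So your ``second move'' has no valid justification, and the statement you would need (``$c\models p|_M$ cannot fork with two members of its own Morley sequence'') is essentially equivalent to the conclusion being proved; the route is circular. Note also that even granting weight one, the intermediate step ``$c\nind_M a_0$ and $a_0\ind_M a_1$ imply $c\ind_{Ma_0}a_1$'' is not the standard weight-one statement, and the subsequent use of Fact \ref{fac: basic gen stab}(3) over the base $Ma_0$ needs $\tp(c/Ma_0)$ not to fork over $Ma_0$, which is not free in an arbitrary theory.

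The ``hands-on local weight argument'' you relegate to the final paragraph is not an optional replacement --- it is the proof, and it is essentially the paper's. The point is precisely that one cannot bound the weight of $p|_M$, but one can bound, via Fact \ref{fac: basic gen stab}(5), the number of instances $\varphi(x,a_i)$ of a \emph{single} forking formula that any one element can satisfy along a Morley sequence. Your reductio must therefore produce, for each $k$, a single element ($b_k=a_{k-1}\cdots a_0$) satisfying the same formula $\varphi(x,a_i)$ for all $i<k$ --- iterated idempotence gives $\models\varphi(b_k,a_0)$, and commutativity plus total indiscernibility transports this to each $a_i$ --- and then conclude by compactness. Your sketch of this is correct (and your closing observation that commutativity enters only in the symmetry step is exactly right); promote it to the main argument and discard the weight-one route.
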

\begin{proof}
 Let $M \prec \cU$ be any small model such that $p$ is $M$-invariant.
	Let $(a_1, a_0) \models p^{(2)}|_{M}$ be given, and assume towards contradiction that $(a_1 \cdot a_0, a_0) \not \models p^{(2)}|_{M}$, then $a_1 \cdot a_0 \nind_M a_0$ by Fact \ref{fac: basic gen stab}(4). Let $\varphi(x,y) \in \mathcal{L}(M)$ be such that $\models \varphi(a_1 \cdot a_0,a_0)$ and $\varphi(x,a_0)$ forks over $M$. We extend $(a_1,a_0)$ to a Morley sequence $(a_i)_{i< \omega} \models p^{(\omega)}|_{M}$.
	For each $k < \omega$, let $b_k := a_{k-1} \cdot a_{k-2} \cdot \ldots \cdot a_0$. Then we have:
	\begin{enumerate}
		\item $(a_{k-1}, \ldots, a_1) \models p^{(k-1)}|_{a_0 M}$, hence by idempotence of $p$ we have $a_{k-1} \cdot \ldots \cdot a_1 \models p|_{a_0 M}$, and so $(a_{k-1} \cdot \ldots \cdot a_1, a_0) \models p^{(2)}|_{M}$, and $b_k = (a_{k-1} \cdot \ldots \cdot a_1) \cdot a_0$, so $(b_k, a_0) \equiv_{M} (a_1 \cdot a_0, a_0)$, in particular $\models \varphi(b_k, a_0)$;
		\item for any permutation $\sigma$ of $\{0,1, \ldots, k-1 \}$, we have $(a_{\sigma(0)}, \ldots, a_{\sigma(k-1)}) \models p^{(k)}|_{M}$ (by Fact \ref{fac: basic gen stab}(1));
		\item in particular, for every $i < k$ we have 
		$$(a_{k-1}, a_{k-2}, \ldots, a_0) \equiv_{M}(a_{k-1}, \ldots, a_{i+1}, a_{i-1}, \ldots, a_0,a_i);$$
		\item and $b_k = a_{k-1} \cdot \ldots \cdot a_{i+1} \cdot a_{i-1} \cdot \ldots \cdot a_0 \cdot a_i$ (as $\mathcal{G}$ is abelian);
		\item hence $(b_k, a_0) \equiv_{M} (b_k, a_i)$ for every $i < k$.
	\end{enumerate}
	Thus, by (1) and (5), for every $k < \omega$ we have $b_k \models \{ \varphi(x,a_i) : i <k\}$, and by compactness the set $\{ \varphi(x,a_i) : i < \omega\}$ is consistent. But this contradicts the choice of $\varphi$ by Fact \ref{fac: basic gen stab}(5).
	Hence $(a_1 \cdot a_0, a_0) \models p^{(2)}|_{M}$, and since $M$ was an arbitrary small model (over which $p$ is invariant), we conclude the proof.		
\end{proof}

\begin{remark}
	It was pointed out to us by Martin Hils that this argument is related to \cite[Lemma 5.1]{hrushovski2019valued}, which is used there to find idempotent types in abelian groups of finite $p$-weight.
\end{remark}

\begin{remark}\label{rem: two sided arb grp}
	In the case of an arbitrary group, the proof of Proposition \ref{prop: key for types in ab} gives the following:
		\begin{itemize}
		\item If $p \in S_{G}(\cU)$ is invariant and idempotent, then for any $(a_2, a_1, a_0) \models p^{(3)}$ we have $(a_2 \cdot a_1 \cdot a_0, a_1) \models p^{(2)}$.
	\end{itemize}
	
	\noindent Indeed, this time we assume towards a contradiction that $a_2 \cdot a_1 \cdot a_0 \nind_{M} a_1$. We extend $(a_2, a_1, a_0)$ to a Morley sequence $(a_i : i \in \omega)$ in $p$ over $M$. Let $k \in \omega$ be arbitrary, and let $b_{k}:= a_k \cdot \ldots \cdot a_0$. 
	Then we get $(b_k, a_i) \equiv_{M} (a_{2} \cdot a_1 \cdot a_0, a_1)$ for all $2 \leq i \leq k-1$, which gives a contradiction as in the proof of Proposition \ref{prop: key for types in ab}. To see this, note that since $p$ is idempotent, $a_0':=a_{i-1} \cdot \ldots \cdot a_0 \models p|_M$, $a_i \models p|_{Ma_0'}$, and $a_2':=a_k \cdot \ldots \cdot a_{i+1} \models p|_{Ma_0'a_i}$. Hence $(a_2',a_i,a_0') \models p^{(3)}|_{M}$, and so $(b_k,a_i) \equiv_M (a_2 \cdot a_1 \cdot a_0,a_1)$.
	%
\end{remark}

\begin{remark}
	Note that in Remark \ref{rem: two sided arb grp} we only assumed that the idempotent type $p$ is invariant. However, the assumption of generic stability is necessary in Proposition \ref{prop: key for types in ab}. Indeed, let $M := (\mathbb{R}, +, <)$, $G(M) := (\mathbb{R}, +)$, and let $p_{0^+} \in S_{G}(\cU)$ be the unique global definable (over $\mathbb{R}$) type extending $\{x < a: a \in \cU, a > 0 \} \cup \{x > a: a \in \cU, a \leq 0\}$. Then $p_{0^+}$ is idempotent (see \cite[Example 4.5(1)]{chernikov2023definable}). But if $(a_1, a_0) \models p_{0^+} \otimes p_{0^+}$ we have $0 < a_1 < a_0 < \cU$, hence $a_1 + a_0 > a_0$, so $(a_1+a_0, a_0) \not \models p_{0^+} \otimes p_{0^+}$.
\end{remark}

\subsection{Inp-minimal groups}\label{sec: inp-min}

A similar argument with (local) weight applies to arbitrary inp-minimal groups. Recall that a type-definable group $G$ is \emph{inp-minimal} if $\bdn(G(x)) \leq 1$ (we refer to \cite{adler2007strong} and \cite[Section 2]{chernikov2014theories} for the definition and basic properties of \emph{burden}). In particular, every dp-minimal group is inp-minimal. Note that there exist  dp-minimal groups which are not
virtually abelian \cite{simonetta2003example}. Answering  \cite[Problem 5.9]{chernikov2014external}, it was recently proved in \cite{stonestrom2023torsion} and \cite{wagner2024dp} that all dp-minimal groups are virtually nilpotent. 

We will use the following fact:
\begin{fact}\cite[Theorem 2.9]{garcia2013generic}\label{fac: burden gen stab}
	Let $p(x) \in S(M)$ be generically stable over a small set $A$ and $(a_{i} : i < \kappa ) \models p^{(\kappa)}|_{A}$ for some cardinal $\kappa$. Let $b \in \cU^{y}$ be such that $b \nind_{A} a_{i}$ for all $i < \kappa$. Then $\bdn(\tp(b/A)) \geq \kappa$.
\end{fact}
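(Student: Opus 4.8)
The plan is to produce, by hand, an inp-pattern of depth $\kappa$ in $q(y) := \tp(b/A)$; by the definition of burden (as in the references on burden recalled above) this gives $\bdn(\tp(b/A)) \geq \kappa$. Two harmless reductions come first. If $p$ is algebraic then $a_i \in \acl(A)$, so $b \ind_A a_i$ for every $i$ and the hypothesis is vacuous; hence I may assume $p$ is non-algebraic, so that any Morley sequence in $p$ over $A$ consists of pairwise distinct elements. I will also assume that $A$ is an extension base --- which is automatic if $A$ is a model, the situation in the intended applications --- so that Fact \ref{fac: basic gen stab}(5) applies; the general case is discussed at the end.

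The rows of the pattern come from the non-independence hypothesis: for each $i < \kappa$, since $b \nind_A a_i$ the type $\tp(b/Aa_i)$ forks over $A$, so I may pick $\varphi_i(y,z_i) \in \mathcal{L}(A)$ with $\models \varphi_i(b,a_i)$ and $\varphi_i(y,a_i)$ forking over $A$. The columns come from a single long Morley sequence: I would linearly order the set $\kappa \times \omega$ with $\{(i,0): i < \kappa\}$ first (in the order of $i$), followed by the remaining pairs in any order, and extend $(a_i : i < \kappa)$ to a Morley sequence in $p$ over $A$ enumerated along this order, obtaining $(c_{i,j} : i < \kappa,\ j < \omega)$ with $c_{i,0} = a_i$. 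By generic stability (Fact \ref{fac: basic gen stab}(1)) the underlying set is totally indiscernible over $A$ and its elements are distinct, so: (i) each column $(c_{i,j} : j < \omega)$ is an $\omega$-sequence of distinct elements of this indiscernible set, hence itself a Morley sequence in $p$ over $A$, beginning with $a_i$; and (ii) for every $\eta : \kappa \to \omega$ the transversal $(c_{i,\eta(i)} : i < \kappa)$ consists of $\kappa$ distinct elements of the set (distinct since the first coordinates differ), hence realizes $p^{(\kappa)}|_A$, so $(c_{i,\eta(i)} : i<\kappa) \equiv_A (a_i : i < \kappa)$.

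It then remains to check the two pattern conditions. Row inconsistency: by (i) and the choice of $\varphi_i$, Fact \ref{fac: basic gen stab}(5) applied to the Morley sequence $(c_{i,j})_j$ and the forking formula $\varphi_i(y,c_{i,0}) = \varphi_i(y,a_i)$ gives that $\{\varphi_i(y,c_{i,j}) : j < \omega\}$ is inconsistent, hence $k_i$-inconsistent for some $k_i < \omega$ by compactness and indiscernibility of the column. Path consistency: given $\eta : \kappa \to \omega$, by (ii) there is $\sigma \in \Aut(\cU/A)$ sending $a_i \mapsto c_{i,\eta(i)}$ for all $i$, and applying $\sigma$ to $\models \varphi_i(b,a_i)$ (with $\varphi_i \in \mathcal{L}(A)$) shows that $\sigma(b)$ realizes $q(y) \cup \{\varphi_i(y,c_{i,\eta(i)}) : i < \kappa\}$. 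Thus $\big(\varphi_i(y,z_i),\,(c_{i,j})_{j<\omega},\,k_i\big)_{i<\kappa}$ is an inp-pattern of depth $\kappa$ in $q$, as desired.

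I expect the main obstacle to be getting the array to satisfy both requirements at once: the columns must be Morley sequences in $p$ over $A$ (so that rows are inconsistent, via Fact \ref{fac: basic gen stab}(5)) while every transversal must still realize $p^{(\kappa)}|_A$ (so that the constant $0$-path can be moved to an arbitrary $\eta$-path by an $A$-automorphism and the pattern stays consistent with $q$). This is exactly the point where generic stability is indispensable: total indiscernibility of Morley sequences in $p$ makes both conditions hold automatically once the array is realized as one long Morley sequence extending $(a_i : i < \kappa)$. A secondary, more routine issue is that Fact \ref{fac: basic gen stab}(5) assumes $A$ is an extension base; for arbitrary small $A$ one would instead write $\varphi_i(y,a_i)$ as a finite disjunction of formulas dividing over $A$, split row $i$ into the corresponding finitely many rows, and absorb the resulting bookkeeping with a standard counting argument, without changing the depth $\kappa$.
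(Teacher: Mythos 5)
This statement is quoted in the paper as a black-box Fact, cited from \cite[Theorem 2.9]{garcia2013generic}, so there is no in-paper proof to compare against; judging your argument on its own terms, the core of it is correct and is the natural argument one would expect (and close in spirit to the source): realize the whole $\kappa\times\omega$ array as one long Morley sequence extending $(a_i)_{i<\kappa}$, use total indiscernibility (Fact \ref{fac: basic gen stab}(1)) to get simultaneously that each column is a Morley sequence in $p$ over $A$ beginning with $a_i$ and that every transversal realizes $p^{(\kappa)}|_{A}$, then get row $k_i$-inconsistency from Fact \ref{fac: basic gen stab}(5) plus indiscernibility, and path consistency with $\tp(b/A)$ from an $A$-automorphism moving $(a_i)_{i<\kappa}$ onto the transversal. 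This is a complete proof under your standing assumption that $A$ is an extension base, which in particular covers the only way the Fact is used in this paper (Proposition \ref{prop: key for types in dpmin}, where $A=\cU$ is a model).

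The genuine weak point is the closing claim that the case of an arbitrary small $A$ is ``routine bookkeeping''. Writing $\varphi_i(y,a_i)$ as a disjunction of formulas dividing over $A$ produces formulas whose parameters are in general unrelated to the $a_i$'s and are not part of any totally indiscernible array, so both mechanisms of your proof break: Fact \ref{fac: basic gen stab}(5) no longer applies (its parameters must run along a Morley sequence of $p$, and it is anyway stated only for extension bases), and there is no $A$-automorphism argument giving path consistency for the new rows. Handling arbitrary $A$ is exactly where the substantive content of \cite{garcia2013generic} (symmetry and forking-equals-dividing phenomena for generically stable types) enters, not a counting argument. Relatedly, your preliminary reduction to non-algebraic $p$ is stated unconditionally but already uses the extension-base hypothesis: if $p$ is realized then $a_i\in\dcl(A)$ and $b\nind_A a_i$ just says that $\tp(b/A)$ forks over $A$, which cannot be ruled out without knowing that $A$ is an extension base. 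So the proof should be presented as a proof under that hypothesis (which suffices for this paper), with the general case deferred to the cited source rather than dismissed as routine.
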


\begin{proposition}\label{prop: key for types in dpmin}
	Assume  that $G$ is an arbitrary type-definable group which is inp-minimal. If $p \in S_G(\cU)$ is idempotent and generically stable, then $p$ is generically transitive.
\end{proposition}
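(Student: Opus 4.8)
The plan is to imitate the local-weight argument from Proposition \ref{prop: key for types in ab}, but replacing the role of abelianness (which let us shuffle the product $b_k = a_{k-1}\cdots a_0$ so that each $a_i$ could be pulled to the right) with the burden bound supplied by Fact \ref{fac: burden gen stab} together with the observation recorded in Remark \ref{rem: two sided arb grp}. Fix a small model $M \prec \cU$ over which $p$ is invariant. Suppose for contradiction that $p$ is not generically transitive; by Remark \ref{remark: basic} there is $(a_1,a_0) \models p^{(2)}|_M$ with $(a_1\cdot a_0, a_0) \not\models p^{(2)}|_M$, hence $a_1\cdot a_0 \nind_M a_0$ by Fact \ref{fac: basic gen stab}(4).

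Next I would produce, for an arbitrarily large finite $n$, a realization $c$ of $p|_M$ together with a Morley sequence $(a_i : i < \omega) \models p^{(\omega)}|_M$ such that $c \nind_M a_i$ for all $i$ in a set of size $n$; this contradicts inp-minimality via Fact \ref{fac: burden gen stab}, since $\bdn(\tp(c/M)) = \bdn(p|_M) \le \bdn(G) \le 1$. To build $c$: extend $(a_1,a_0)$ to a Morley sequence $(a_i : i \in \omega)$ in $p$ over $M$, and for each $k$ set $b_k := a_{k-1}\cdot a_{k-2}\cdots a_0$. By idempotence of $p$, exactly as in step (1) of the proof of Proposition \ref{prop: key for types in ab}, for each fixed $i$ with $1 \le i \le k-1$ we can write $b_k = (a_{k-1}\cdots a_{i+1})\cdot a_i \cdot (a_{i-1}\cdots a_0)$, where — again using idempotence — $a_0' := a_{i-1}\cdots a_0 \models p|_M$, $a_i \models p|_{Ma_0'}$, and $a_2' := a_{k-1}\cdots a_{i+1} \models p|_{Ma_0'a_i}$, so $(a_2', a_i, a_0') \models p^{(3)}|_M$. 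By Remark \ref{rem: two sided arb grp} applied to this triple (note the remark only needs $p$ invariant and idempotent), $(a_2'\cdot a_i \cdot a_0', a_i) = (b_k, a_i) \models p^{(2)}|_M$ — wait, that is the wrong conclusion; instead I want the \emph{failure} of generic transitivity to propagate, so I should reorganize as follows. Since $a_1\cdot a_0 \nind_M a_0$, pick $\varphi(x,y) \in \mathcal{L}(M)$ with $\models \varphi(a_1\cdot a_0, a_0)$ and $\varphi(x,a_0)$ forking over $M$. For $1 \le i \le k-1$, the same idempotence computation gives $(b_k, a_i) \equiv_M (a_1\cdot a_0, a_0)$: indeed $b_k = a_2' \cdot a_i \cdot a_0'$ with $a_0' \models p|_M$, and $a_2' \cdot a_i \models p|_{Ma_0'}$ by idempotence (since $(a_2', a_i) \models p^{(2)}|_{Ma_0'}$), so $(b_k, a_i) = ((a_2'\cdot a_i)\cdot a_0', a_0')$-type equals $(a_1\cdot a_0, a_0)$ after relabeling $a_2'\cdot a_i$ as a single realization of $p|_{Ma_0'}$ — here I use that $a_0'$ and $a_0$ have the same type over $M$ and that $p|_{Ma_0'} $ determines the relevant data. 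Hence $\models \varphi(b_k, a_i)$ for all $1 \le i \le k-1$, i.e.\ $b_k$ realizes $\{\varphi(x,a_i) : 1 \le i \le k-1\}$.

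Now I claim $b_k \nind_M a_i$ for each such $i$: if $b_k \ind_M a_i$ then $\varphi(x,a_i)$ would not fork over $M$, but $\varphi(x,a_i) \equiv_M \varphi(x,a_0)$ (as $a_i \equiv_M a_0$) which does fork over $M$ — contradiction. So $b_k$ is forking-dependent on $k-1$ many members of a Morley sequence in $p$. Replacing $b_k$ by a realization $c_k \models p|_M$ with $c_k \equiv_M b_k$ over $M$ — but I actually need $c_k \nind_M a_i$ for the \emph{same} $a_i$'s, which $b_k$ itself provides; the only remaining point is that $\tp(b_k/M) = p|_M$, which holds by idempotence ($b_k = a_{k-1}\cdots a_0 \models p|_M$). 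Thus $b_k \models p|_M$ and $b_k \nind_M a_i$ for $k-1$ values of $i$, so Fact \ref{fac: burden gen stab} gives $\bdn(p|_M) \ge k-1$, contradicting $\bdn(p|_M) \le \bdn(G(x)) \le 1$ once $k \ge 3$. This contradiction shows $(a_1\cdot a_0, a_0) \models p^{(2)}|_M$, and since $M$ was arbitrary, $p$ is generically transitive.

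The main obstacle I anticipate is the bookkeeping in the idempotence computation showing $(b_k, a_i) \equiv_M (a_1\cdot a_0, a_0)$ for all $i < k$ simultaneously: without commutativity one cannot literally permute the factors as in Proposition \ref{prop: key for types in ab}(4), and one must instead split the product at position $i$ and invoke idempotence twice (once for the right block $a_{i-1}\cdots a_0$ and once for the left block $a_{k-1}\cdots a_{i+1}$ together with $a_i$), being careful that the middle element $a_i$ is inserted at the correct spot so that the resulting triple is a genuine Morley $3$-sequence over $M$ — this is exactly the content of Remark \ref{rem: two sided arb grp}, so the cleanest write-up will cite that remark rather than redo the estimate. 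A secondary point to get right is the passage from ``$\varphi(x,a_0)$ forks over $M$'' to ``$b_k \nind_M a_i$'': this just needs $a_i \equiv_M a_0$ (true by indiscernibility of the Morley sequence) plus the definition of forking, so it is routine.
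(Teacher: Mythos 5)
There is a genuine gap at the central step. You claim that $(b_k, a_i) \equiv_M (a_1 \cdot a_0, a_0)$ for all $1 \le i \le k-1$, but the idempotence computation you run only yields $(b_k, a_0') \equiv_M (a_1 \cdot a_0, a_0)$ where $a_0' := a_{i-1}\cdots a_0$ is the \emph{product} of the right block, not the single element $a_i$: the pair $\left((a_2'\cdot a_i)\cdot a_0',\, a_0'\right)$ has second coordinate $a_0'$, and your ``relabeling'' silently replaces it by $a_i$. Worse, the claim is actually false for the middle indices in a non-abelian group: grouping $b_k = a_2' \cdot a_i \cdot a_0'$ with $(a_2', a_i, a_0') \models p^{(3)}|_M$ and applying Remark \ref{rem: two sided arb grp} gives $(b_k, a_i) \models p^{(2)}|_M$, i.e.\ $b_k \ind_M a_i$ for $1 \le i \le k-2$ --- the exact opposite of the dependence you need. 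So at most the two extreme factors $a_0$ and $a_{k-1}$ can fork with $b_k$ over $M$, and your count of $k-1$ dependencies collapses to $2$; the contradiction with $\bdn \ge k-1$ for large $k$ never materializes.

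Fortunately two dependencies are already enough, and this is the paper's (much shorter) argument: take $k=2$ and $b := a_1 \cdot a_0$, which realizes $p$ by idempotence. If $p$ is not generically transitive, then by the equivalence of conditions (3) and (6) in Remark \ref{remark: basic} \emph{both} $(b, a_0) \not\models p^{(2)}$ and $(b, a_1) \not\models p^{(2)}$ fail to hold as Morley pairs, so $b \nind_{\cU} a_0$ and $b \nind_{\cU} a_1$ by Fact \ref{fac: basic gen stab}(4). Since $(a_0,a_1) \models p^{(2)}$ by total indiscernibility, Fact \ref{fac: burden gen stab} with $\kappa = 2$ gives $\bdn(\tp(b/\cU)) \ge 2$, contradicting $\bdn(G(x)) \le 1$. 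No Morley sequence of length greater than $2$, no forking formula $\varphi$, and no permutation bookkeeping is needed; your instinct to use Fact \ref{fac: burden gen stab} was correct, but it should be applied to the pair $(a_0,a_1)$ directly rather than to a long product.
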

\begin{proof}
	Assume that $G$ is inp-minimal and $p \in S_G(\cU)$ is a generically stable idempotent type. Let $\cU' \succ\cU $ be a monster model with respect to $\cU$ and $p': =p |_{\cU'}$ (the unique extension of $p$ to a type in $S_G(\cU')$ which is invariant over $M$). Then $p' \in S_{G}(\cU')$ is idempotent and  generically stable over $\cU$.

	Let $(a_1,a_0) \models p^{(2)} = (p')^{(2)}|_{\cU}$. Let $b := a_1 \cdot a_0 \in G(\cU')$, by assumption $\bdn \left( \tp(b/\cU) \right) \leq \bdn(G(x)) \leq 1$. As $a_1 \ind_{\cU} a_0$, by Fact \ref{fac: burden gen stab} we must have at least one of the following (by idempotence and Fact \ref{fac: basic gen stab}(4)):
	\begin{enumerate}
		\item $b \ind_{\cU} a_0$, hence $(a_1 \cdot a_0, a_0) \models p^{(2)}$;
		\item or $b \ind_{\cU} a_1$, hence $(a_1 \cdot a_0, a_1) \models p^{(2)}$.
	\end{enumerate}
	
In both cases we obtain that $p$ is generically transitive (see Remark \ref{remark: basic}).
\end{proof}

\begin{remark}
	Fact \ref{fac: burden gen stab} also holds when $p$ a \emph{generically simple} type in an NTP$_2$ theory and $A$ is an extension base (this follows from \cite[Section 6]{chernikov2014theories} and \cite[Section 3.1]{simon2020amalgamation}).
\end{remark}
%

%
%
%
%
%
%

\subsection{Stable theories}\label{subsection: stable theories}
In this section we provide a proof that all idempotent types in stable groups are generically transitive. This was known from \cite{N3} (see also e.g.~\cite[Lemme 1.2]{blossier2016recherche} and references there), and recently generalized from idempotent types to idempotent Keisler measures in stable theories in \cite{chernikov2022definable} (see the discussion in Section \ref{sec: stable measures}). We provide two detailed proofs since  in the following sections we will extend them to the case when $p$ is a stable type in an arbitrary theory, and also to the case of a generically stable type $p$ in a simple or even rosy theory.

The proof uses local stratified ranks:
\begin{definition}\label{def: stab strat rank}
Let $G(x)$ be an $\emptyset$-type-definable group and $\varphi_0(x) \in \cL$ as in Section \ref{sec: setting types}. To a formula $\varphi(x,y) \in \cL$ (with $y$ an arbitrary tuple of variables) we associate a formula $\varphi'(x, y):=\varphi(x, y) \wedge \varphi_0(x)$. For $g \in \bar G$, put $\varphi_g(x, y):=g \cdot \varphi'(x, y):=(\exists z)(\varphi'(z, y) \wedge x=g \cdot z)$. Finally, let $\Delta_\varphi:=\{\varphi_g(x, y): g \in \bar G\}$. We consider the usual  notion of $\Delta_\varphi$-rank denoted by $R_{\Delta_\varphi}$ and $\Delta_\varphi$-multiplicity denoted by  $\mlt_{\Delta_\varphi}$.
\end{definition}

The proofs of all items except (3) in the following fact are standard arguments as for usual $\Delta$-ranks (see e.g.~\cite[Chapter 1]{pillay1996geometric}). The proof of (3) uses the choice of $\varphi_0(x)$ and is left as an exercise.

\begin{fact}\label{properties of local ranks}
Assume $T$ is stable. Then we have:
\begin{enumerate}
\item $R_{\Delta_\varphi}(x=x) < \omega$;
\item $R_{\Delta_\varphi}(\psi_1(x) \lor \psi_2(x))=\max (R_{\Delta_\varphi}(\psi_1), R_{\Delta_\varphi}(\psi_2))$;
\item $R_{\Delta_\varphi}$ is invariant under left translations by the elements of $\bar G$;
\item For any $A \subseteq B \subset \cU$ and $q \in S_G(B)$ we have that $q$ does not fork over $A$ if and only if $R_{\Delta_\varphi}(q)= R_{\Delta_\varphi}(q|_A)$ for every $\varphi \in \cL$;
\item $\mlt_{\Delta_\varphi}(q) =1$ for any complete type $q \in S_G(N)$ over a model $N \prec \cU$.
\end{enumerate}
\end{fact}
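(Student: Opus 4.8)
The statement to prove is Fact \ref{properties of local ranks}, which collects standard properties of local stratified ranks $R_{\Delta_\varphi}$ in a stable theory. Let me sketch how I would prove each item.

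\textbf{Setup and item (1).} The family $\Delta_\varphi = \{\varphi_g(x,y) : g \in \bar G\}$ is, up to the finitely many "reparametrizations" given by left translates, essentially controlled by the single formula $\varphi'(x,y)$. Since $T$ is stable, $\varphi'(x,y)$ (equivalently $\varphi(x,y) \wedge \varphi_0(x)$) is a stable formula, and the family of its left-translates $\varphi_g(x,y)$ is still a single formula in the variables $(x, yz')$ where we absorb the translation parameter $g$ into the parameter tuple --- more precisely, $\varphi_g(x,y) = \psi(x, y, g)$ for $\psi(x,y,z) := (\exists w)(\varphi'(w,y) \wedge x = z \cdot w)$, and $\psi$ is again a stable formula by stability of $T$. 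Therefore $R_{\Delta_\varphi}$ is just the usual local $R_\psi$-rank of a stable formula, which is finite on $x = x$ by the fundamental finiteness theorem for local ranks in stable theories (see \cite[Chapter 1]{pillay1996geometric}). This gives (1). Items (2), (4), (5) are then literally the standard statements about local $\Delta$-ranks/multiplicities for a stable formula $\psi$: (2) is the max formula for $R_\psi$ of a disjunction; (4) is the characterization of $\psi$-nonforking via preservation of $R_\psi$ (together with the fact that a type forks over $A$ iff some $\psi$-part does, quantifying over all $\varphi \in \cL$ recovers full nonforking); (5) is uniqueness of the nonforking/definable extension, i.e.~multiplicity one over a model. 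All of these are proved exactly as in the cited references with $\psi$ in place of a generic $\Delta$; I would simply cite \cite[Chapter 1]{pillay1996geometric}.

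\textbf{Item (3), the main point.} This is the one genuinely new statement and, as the paper says, it uses the choice of $\varphi_0(x)$. The claim is that $R_{\Delta_\varphi}$ is invariant under left translation by any $h \in \bar G$, i.e.~$R_{\Delta_\varphi}(\psi(x)) = R_{\Delta_\varphi}(h \cdot \psi(x))$ for any formula $\psi(x)$ concentrated on (a translate inside) $\varphi_0(\cU)$. The key observation is that left translation by $h$ permutes the family $\Delta_\varphi$: if $\chi \in \Delta_\varphi$, say $\chi(x,y) = \varphi_g(x,y)$, then $h \cdot \chi$ is $\varphi_{h \cdot g}(x,y)$, again a member of $\Delta_\varphi$ --- provided all the relevant products stay inside $\varphi_0(\cU)$, where $\cdot$ is associative and injective in each coordinate; this is exactly what the choice of $\varphi_0$ (and, where needed, the auxiliary $\varphi_1$ with $\varphi_1(\bar G) \cdot \bar G \subseteq \varphi_0(\bar G)$) guarantees. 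Since the rank $R_{\Delta_\varphi}$ of a formula is computed purely in terms of which conjunctions of instances of members of $\Delta_\varphi$ (and their negations) are consistent, and left translation by $h$ is a bijection of $\varphi_0(\cU) \cdot \bar G$ onto itself that carries instances of $\Delta_\varphi$-formulas to instances of $\Delta_\varphi$-formulas (with translated parameters), it preserves all the consistency/inconsistency patterns defining the rank. Hence $R_{\Delta_\varphi}(\psi) = R_{\Delta_\varphi}(h\cdot\psi)$. The only care needed is domain bookkeeping: one works inside $\varphi_0(\cU) \cdot \bar G$ (or $\varphi_1(\cU) \cdot \bar G \subseteq \varphi_0(\cU)$) so that the group identities used --- associativity and left-cancellation --- are valid; this is precisely the content hidden in "uses the choice of $\varphi_0(x)$ and is left as an exercise."

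\textbf{Expected main obstacle.} The conceptual content is light; the only thing requiring genuine care is item (3), and there the subtlety is entirely in making sure that the partial group operation on $\varphi_0(\cU)$ behaves well enough under the translations involved --- i.e.~that no product leaves the region where $\cdot$ is associative and cancellative --- so that "left translation permutes $\Delta_\varphi$" is literally correct rather than merely correct "generically." Once that bookkeeping is set up (using $\varphi_0$, and $\varphi_1$ from Proposition \ref{proposition: intersection of relatively definable groups} when a second translation is needed), the invariance is immediate. In the write-up I would therefore (a) reduce (1), (2), (4), (5) to the standard local-rank theory for the stable formula $\psi(x,y,z)$ obtained by absorbing the translation parameter, citing \cite[Chapter 1]{pillay1996geometric}, and (b) give the short argument for (3) via the permutation action of $\bar G$ on $\Delta_\varphi$, with the domain remarks about $\varphi_0$ spelled out just enough to be convincing, or --- following the paper's own stated intention --- leave (3) as an exercise with the hint that it uses the choice of $\varphi_0(x)$.
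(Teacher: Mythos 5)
Your proposal is correct and takes essentially the same route as the paper, which itself only cites the standard local-rank arguments for items (1), (2), (4), (5) and leaves (3) as an exercise "using the choice of $\varphi_0(x)$". Your sketch of (3) — left translation by $h \in \bar G$ permutes $\Delta_\varphi$ via $\varphi_g \mapsto \varphi_{h\cdot g}$, with associativity and left-cancellation on $\varphi_0(\cU)$ guaranteeing that consistency patterns of instances are preserved — is exactly the intended argument, and you supply more detail than the paper does.
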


\begin{remark}
	Note that (5) follows from (4) and stationary of types over models in stable theories, but for $\aleph_0$-saturated $N$  it can also be shown easily directly, without using forking.
\end{remark}

In the proof below, the role of $\cU$ is played by $\cU'$.

\begin{proposition}\label{proposition: stable case}
If $T$ is stable and $p \in S_{G}(\cU)$ is idempotent, then $p$ is generically transitive.
\end{proposition}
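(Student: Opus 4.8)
The plan is to use the local stratified $\Delta$-ranks from Definition \ref{def: stab strat rank} together with their properties in Fact \ref{properties of local ranks} to detect forking, and reduce the statement to the criterion in Remark \ref{remark: basic}(3). So fix a small model $M \prec \cU$ over which $p$ is definable, pass to the bigger monster $\cU' \succ \cU$ and let $p' := p|_{\cU'}$; as remarked, the role of ``$\cU$'' in the rank arguments is played by $\cU'$, with $p$ playing the role of a type over the ``small'' model $\cU$. Pick $(a_1, a_0) \models p^{(2)}$ (computed over $\cU$, equivalently over $M$), and set $b := a_1 \cdot a_0$. By Fact \ref{fac: basic gen stab}(4) (or just stationarity in the stable case) it suffices to show $b \ind_{\cU} a_0$, i.e.\ $\tp(b/\cU a_0)$ does not fork over $\cU$, which by Fact \ref{properties of local ranks}(4) amounts to showing $R_{\Delta_\varphi}(b / \cU a_0) = R_{\Delta_\varphi}(b/\cU)$ for every $\varphi(x,y) \in \cL$.

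The key computation is the following chain of (in)equalities for a fixed $\varphi$. First, by left-invariance of $R_{\Delta_\varphi}$ under translation by elements of $\bar G$ (Fact \ref{properties of local ranks}(3)), applied with the translating element $a_1$ (which lies in $\bar G$), we have $R_{\Delta_\varphi}(a_1 \cdot a_0 / \cU a_1 a_0) = R_{\Delta_\varphi}(a_0 / \cU a_1 a_0) = R_{\Delta_\varphi}(a_0 / \cU a_1)$ by the choice of $a_0$ over $\cU a_1$ (it realizes $p|_{\cU a_1}$, which is non-forking over $\cU$, hence rank is preserved by Fact \ref{properties of local ranks}(4)); and since $a_1 \ind_\cU a_0$ and $a_1 \models p|_\cU$, monotonicity/symmetry gives $R_{\Delta_\varphi}(a_0/\cU a_1) = R_{\Delta_\varphi}(a_0/\cU) =: r$. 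So $R_{\Delta_\varphi}(b/\cU a_0 a_1) = r$. On the other hand, $R_{\Delta_\varphi}(b/\cU a_0) \geq R_{\Delta_\varphi}(b/\cU a_0 a_1)= r$. Thus it remains to see that $R_{\Delta_\varphi}(b/\cU a_0) \leq r$, equivalently $R_{\Delta_\varphi}(b/\cU) \leq r$ (since $b \in \bar G$ and $R_{\Delta_\varphi}(b/\cU) \geq R_{\Delta_\varphi}(b/\cU a_0)$ automatically, and conversely $R_{\Delta_\varphi}(b/\cU) = R_{\Delta_\varphi}(x=x)$-bounded). Here idempotence of $p$ enters: since $(a_1, a_0) \models p^{(2)}$ and $p * p = p$, we have $b = a_1 \cdot a_0 \models p'|_{\cU} = p$, so $\tp(b/\cU) = p$ and hence $R_{\Delta_\varphi}(b/\cU) = R_{\Delta_\varphi}(p) = R_{\Delta_\varphi}(p|_M)$, while $r = R_{\Delta_\varphi}(a_0/\cU) = R_{\Delta_\varphi}(p)$ as well, because $a_0 \models p$. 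Therefore $R_{\Delta_\varphi}(b/\cU) = r = R_{\Delta_\varphi}(b/\cU a_0 a_1) \leq R_{\Delta_\varphi}(b / \cU a_0) \leq R_{\Delta_\varphi}(b/\cU) = r$, forcing equality throughout, so in particular $R_{\Delta_\varphi}(b/\cU a_0) = R_{\Delta_\varphi}(b/\cU)$.

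Since this holds for every $\varphi \in \cL$, Fact \ref{properties of local ranks}(4) gives that $\tp(b/\cU a_0)$ does not fork over $\cU$, i.e.\ $b \ind_\cU a_0$; combined with $a_0 \models p$ and Fact \ref{fac: basic gen stab}(4) (stationarity), this yields $(a_1 \cdot a_0, a_0) = (b, a_0) \models p^{(2)}$. By Remark \ref{remark: basic}, $p$ is generically transitive, completing the proof.

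The main subtlety I anticipate is bookkeeping around which model is ``small'': the stratified rank theory of Fact \ref{properties of local ranks} is phrased for types over subsets of the monster, and generic stability of $p$ is needed precisely to guarantee $p$ behaves like a non-forking/stationary extension of $p|_M$ and that $b \models p|_\cU$ rather than merely $p|_M$ — i.e.\ to license the step that $\tp(b/\cU) = p$ from idempotence, which in the stable case is automatic but in the later generalizations (stable type in an arbitrary theory, or generically stable in a simple/rosy theory) is exactly where Fact \ref{fac: basic gen stab} must be invoked carefully. I would also double-check that the translation element $a_1$ genuinely lies in $\bar G = G(\cU)$ rather than just in $\varphi_0(\cU)$, so that Fact \ref{properties of local ranks}(3) applies; this is fine since $a_1 \models p \in S_G(\cU)$. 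Everything else is a routine unwinding of the definitions of $\Delta_\varphi$-rank.
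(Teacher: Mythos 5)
Your overall strategy is the paper's ``Method 2'' (compare stratified $\Delta$-ranks and conclude by stationarity), but the central rank computation contains a genuine error that kills the lower bound you need. You reduce everything to showing $R_{\Delta_\varphi}(b/\cU a_0) \geq r := R_{\Delta_\varphi}(p)$, and you obtain this from the claim $R_{\Delta_\varphi}(b/\cU a_0 a_1) = r$ via the chain $R_{\Delta_\varphi}(a_1\cdot a_0/\cU a_1 a_0) = R_{\Delta_\varphi}(a_0/\cU a_1 a_0) = R_{\Delta_\varphi}(a_0/\cU a_1)$. The second equality is false: $\tp(a_0/\cU a_1 a_0)$ contains the formula $x = a_0$, so it is a realized (algebraic) type of rank $0$, whereas $R_{\Delta_\varphi}(a_0/\cU a_1) = R_{\Delta_\varphi}(p)$ is positive in general; Fact \ref{properties of local ranks}(4) cannot bridge this, since $\tp(a_0/\cU a_1 a_0)$ certainly forks over $\cU a_1$. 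Likewise $b = a_1\cdot a_0 \in \dcl(\cU a_0 a_1)$, so $R_{\Delta_\varphi}(b/\cU a_0 a_1) = 0$, and the inequality $R_{\Delta_\varphi}(b/\cU a_0) \geq R_{\Delta_\varphi}(b/\cU a_0 a_1)$ only yields the useless bound $\geq 0$. The upper bound $R_{\Delta_\varphi}(b/\cU a_0) \leq r$ is indeed automatic; it is precisely the lower bound that carries the content, and your argument does not establish it.

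There is also a directional issue hiding here: $b = a_1\cdot a_0$ is a \emph{right} translate of $a_1$ by $a_0$, while Fact \ref{properties of local ranks}(3) only gives \emph{left}-translation invariance, so one cannot relate $\tp(b/\cU a_0)$ to $\tp(a_1/\cU a_0)$ by rank. The correct local version of Method 2 translates on the left over $\cU a_1$: $\tp(b/\cU a_1) = a_1\cdot \tp(a_0/\cU a_1)$, whence $R_{\Delta_\varphi}(b/\cU a_1) = R_{\Delta_\varphi}(a_0/\cU a_1) = R_{\Delta_\varphi}(p)$ by Facts \ref{properties of local ranks}(3) and (4) (as $a_0\ind_\cU a_1$), while $R_{\Delta_\varphi}(b/\cU) = R_{\Delta_\varphi}(p)$ since $\tp(b/\cU)=p$ by idempotence (this step of yours is fine). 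Fact \ref{properties of local ranks}(4) then gives $b\ind_\cU a_1$, i.e.\ independence of the product from its \emph{left} factor, which together with total indiscernibility of $p^{(2)}$ and Remark \ref{remark: basic} (condition (3)) yields generic transitivity; independence from $a_0$ follows only afterwards via the equivalence of the left and right versions in Remark \ref{rem: fsg follows}. The paper's own proof avoids this bookkeeping by working directly with the global types $p'$ and $a\cdot p'$ over $\cU'$ rather than with $\tp(b/\cU a_0)$.
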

\begin{proof}
By stability, $p$ is generically stable over some small $M \prec \cU$. Let $a \models p $ in $\cU'$. By Remark \ref{remark: basic}, it suffices to show that  $a \in \Stab_{\lee}(p')$, where $p' = p|_{\cU'}$

~

\noindent {\bf Method 1 (without using forking).}
Suppose for a contradiction that $a \cdot p' \ne p'$,  witnessed by a formula $\varphi(x, b) \in \cL(\cU')$. Since $p$ is a generically stable idempotent and $a \in p(\cU')$, we get $a \cdot p'|_\cU =p$. On the other hand, since $p'$ is invariant over $M$ and the ranks are invariant under automorphisms, a formula $\psi(x) \in p'$ with $R_{\Delta_{\varphi(x, y)}}(\psi)=R_{\Delta_{\varphi(x, y)}}(p')$ and  $\mlt_{\Delta_{\varphi(x, y)}}(\psi) = 1$ can be mapped by an automorphism over $M$ to a formula $\psi'(x) \in p'|_\cU =p$ with $R_{\Delta_{\varphi(x, y)}}(\psi')=R_{\Delta_{\varphi(x, y)}}(\psi)$ and $\mlt_{\Delta_{\varphi(x, y)}}(\psi') = 1$. By Fact \ref{properties of local ranks}(3), we also have $R_{\Delta_{\varphi(x, y)}}(a \cdot p')= R_{\Delta_{\varphi(x, y)}}(p')$. Summarizing, $p \subseteq p' \cap a \cdot p'$, $R_{\Delta_{\varphi(x, y)}}(p) =  R_{\Delta_{\varphi(x, y)}}(p')= R_{\Delta_{\varphi(x, y)}}(a \cdot p')$ and $\mlt_{\Delta_{\varphi(x, y)}}(p)=1$. Hence, $p'|_{\varphi(x, y)} = a \cdot p' |_{\varphi(x, y)}$, a contradiction.

~

\noindent {\bf Method 2 (using forking).}
Since $p'$ does not fork over $\cU$, by Fact \ref{properties of local ranks}(4), we get $R_{\Delta_\varphi}(p') =  R_{\Delta_\varphi}(p)$ for every $\varphi(x, y) \in \cL$. By Fact \ref{properties of local ranks}(3), we also have $R_{\Delta_\varphi}(a \cdot p')= R_{\Delta_\varphi}(p')$. Thus, $R_{\Delta_\varphi}(a \cdot p') = R_{\Delta_\varphi}(p)$. On the other hand, since $p$ is a generically stable idempotent and $a \in p(\cU')$, we have $p \subseteq a \cdot p'$. We conclude, using  Fact \ref{properties of local ranks}(4), that $a \cdot p'$ does not fork over $\cU$, and so it is the unique non-forking extension of $p$ which is equal to $p'$.
\end{proof}

\subsection{Stable types in arbitrary theories}\label{sec: gen trans stable types}

Method 1 from the proof of Proposition \ref{proposition: stable case} extends to the case when 
$p \in S_{G}(\cU)$ is {\em stable over $M$}, that is $p$ is $M$-invariant and $p|_M$ is {\em stable} (in a not necessarily stable theory). Recall that $p(x)|_M$ is \emph{stable} if there are no sequences $(a_i)_{i<\omega}$ in $\cU^x$ and $(b_i)_{i<\omega}$ in $\cU^y$ such that $a_i \models p|_M$ for all $i<\omega$ and for some $\varphi(x, y) \in \mathcal{L}$ we have $\models \varphi(a_i, b_j ) \iff i < j$ for all $i, j < \omega$. It follows from the definitions that if $p$ is stable over $M$, then it is generically stable over $M$.

 We refer to e.g.~\cite[Section 1]{adler2014generic} and \cite[Section 2]{hasson2010stable} for the basic properties of stable types. Method 1 applies when $p$ is  stable over $M$ because in this case $R_{\Delta_\varphi}(p) < \omega$, and we also  have item (3) of Fact \ref{properties of local ranks} (without any assumptions on $T$). Thus, Method 1 yields:

\begin{proposition}\label{proposition: stable type case}
If $p \in S_{G}(\cU)$ is idempotent and stable over some small $M \prec \cU$, then $p$ is generically transitive.
\end{proposition}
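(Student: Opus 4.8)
The plan is to mimic ``Method 1'' from the proof of Proposition \ref{proposition: stable case} verbatim, checking at each step that the only facts used are those that survive the passage from ``$T$ stable'' to ``$p$ stable over $M$''. First I would fix a small model $M \prec \cU$ over which $p$ is invariant and with $p|_M$ stable, pass to a bigger monster model $\cU' \succ \cU$ with $p' := p|_{\cU'}$ its unique $M$-invariant extension, and fix $a \models p'$ (equivalently, $a \models p$ in $\cU'$). By Remark \ref{remark: basic} it suffices to prove $a \in \Stab_\lee(p')$. Suppose not; then there is $\varphi(x,y) \in \cL$ and $b \in \cU'$ with $\varphi(x,b)$ witnessing $a \cdot p' \neq p'$.

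The key point is that for a type $p$ stable over $M$, the local rank $R_{\Delta_\varphi}(p)$ is finite for every $\varphi$ (this is exactly the standard fact for stable types, cf.\ \cite[Section 1]{adler2014generic}, \cite[Section 2]{hasson2010stable}, and is noted in the paragraph preceding the statement), and item (3) of Fact \ref{properties of local ranks} --- left-translation invariance of $R_{\Delta_\varphi}$ --- holds with no global stability hypothesis, since it only uses the choice of $\varphi_0(x)$. Also, finiteness of the rank gives a formula $\psi(x) \in p'$ with $R_{\Delta_\varphi}(\psi) = R_{\Delta_\varphi}(p')$ and $\mlt_{\Delta_\varphi}(\psi) = 1$; here $\mlt_{\Delta_\varphi}$ is just the number of $\Delta_\varphi$-rank-maximal complete $\Delta_\varphi$-types over a model containing the parameters, and the existence of such a $\psi$ is the usual rank/multiplicity argument valid whenever the rank is ordinal-valued. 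Next I would run the automorphism argument: since $p'$ is $M$-invariant and ranks are $\Aut(\cU'/M)$-invariant, an automorphism of $\cU'$ over $M$ sending $\cU$-parameters appropriately moves $\psi$ to some $\psi'(x) \in p'|_\cU = p$ with the same $\Delta_\varphi$-rank and multiplicity $1$; hence $R_{\Delta_\varphi}(p) = R_{\Delta_\varphi}(p')$ and $\mlt_{\Delta_\varphi}(p) = 1$. Using that $p$ is a generically stable idempotent with $a \models p$ in $\cU'$, we get $a \cdot p'|_\cU = p$, so $p \subseteq p' \cap (a \cdot p')$; combined with $R_{\Delta_\varphi}(a \cdot p') = R_{\Delta_\varphi}(p') = R_{\Delta_\varphi}(p)$ from translation invariance, and $\mlt_{\Delta_\varphi}(p) = 1$, the two types $p'$ and $a \cdot p'$ agree on $\Delta_\varphi$, contradicting the choice of $\varphi(x,b)$. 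This contradiction shows $a \in \Stab_\lee(p')$, i.e.\ $p$ is generically transitive.

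The main thing to be careful about --- the potential obstacle --- is whether the ``local'' machinery genuinely localizes: in a non-stable $T$ one cannot speak of $R_{\Delta_\varphi}(q)$ for an arbitrary type $q$, but one can for types extending $p|_M$ (or more precisely, for the relevant $\Delta_\varphi$-types, which are determined by finitely much stable data), so I need the rank to be well-defined and finite along $p$ and its translates and automorphic images. Translation by $a \in G(\cU')$ sends Morley-sequence-of-$p$ data to data still controlled by the stable formula $\varphi$ (after the $\varphi \rightsquigarrow \varphi'$ and $\varphi \rightsquigarrow \varphi_g$ modifications of Definition \ref{def: stab strat rank}), so finiteness is preserved; and the equivalence ``$a \cdot p'|_\cU = p$'' uses only generic stability and idempotence of $p$, both in hand. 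Since the paper explicitly says ``Method 1 applies when $p$ is stable over $M$ because in this case $R_{\Delta_\varphi}(p) < \omega$, and we also have item (3) of Fact \ref{properties of local ranks}'', the proof is essentially a one-line citation of the already-given Method 1, so I expect no real difficulty beyond this bookkeeping.

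\begin{proof}
Fix a small model $M \prec \cU$ over which $p$ is invariant and such that $p|_M$ is stable; then $p$ is generically stable over $M$. Let $\cU' \succ \cU$ be a monster model with respect to $\cU$, let $p' := p|_{\cU'}$ be the unique $M$-invariant extension of $p$ to $S_G(\cU')$, and fix $a \models p$ in $\cU'$. By Remark \ref{remark: basic} it suffices to prove that $a \in \Stab_\lee(p')$.

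Suppose for a contradiction that $a \cdot p' \neq p'$, witnessed by a formula $\varphi(x,b) \in \cL(\cU')$ for some $\varphi(x,y) \in \cL$. Since $p|_M$ is stable, $R_{\Delta_\varphi}(p) < \omega$, so there is $\psi(x) \in p'$ with $R_{\Delta_\varphi}(\psi) = R_{\Delta_\varphi}(p')$ and $\mlt_{\Delta_\varphi}(\psi) = 1$. As $p'$ is invariant over $M$ and $R_{\Delta_\varphi}$, $\mlt_{\Delta_\varphi}$ are invariant under automorphisms, an automorphism of $\cU'$ over $M$ carries $\psi$ to a formula $\psi'(x) \in p'|_\cU = p$ with $R_{\Delta_\varphi}(\psi') = R_{\Delta_\varphi}(\psi)$ and $\mlt_{\Delta_\varphi}(\psi') = 1$; hence $R_{\Delta_\varphi}(p) = R_{\Delta_\varphi}(p')$ and $\mlt_{\Delta_\varphi}(p) = 1$. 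By Fact \ref{properties of local ranks}(3) (which holds with no assumption on $T$), $R_{\Delta_\varphi}(a \cdot p') = R_{\Delta_\varphi}(p')$. On the other hand, since $p$ is a generically stable idempotent and $a \in p(\cU')$, we have $a \cdot p'|_\cU = p$, so $p \subseteq p' \cap (a \cdot p')$. Summarizing: $p \subseteq p' \cap (a \cdot p')$, $R_{\Delta_\varphi}(p) = R_{\Delta_\varphi}(p') = R_{\Delta_\varphi}(a \cdot p')$, and $\mlt_{\Delta_\varphi}(p) = 1$. Therefore $p'|_{\varphi(x,y)} = (a \cdot p')|_{\varphi(x,y)}$, contradicting the choice of $\varphi(x,b)$. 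Hence $a \in \Stab_\lee(p')$, and by Remark \ref{remark: basic}, $p$ is generically transitive.
\end{proof}
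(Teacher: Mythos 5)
Your proposal is correct and follows exactly the route the paper takes: it runs Method 1 from Proposition \ref{proposition: stable case}, observing that the only two ingredients needed are $R_{\Delta_\varphi}(p) < \omega$ (which holds since $p|_M$ is stable) and translation-invariance of the rank (Fact \ref{properties of local ranks}(3), which needs no assumption on $T$). This matches the paper's own justification of Proposition \ref{proposition: stable type case} essentially verbatim.
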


In order to prove this proposition using Method 2 from the proof of Proposition \ref{proposition: stable case}, we have to be careful with item (4) of Fact \ref{properties of local ranks}. Modifying a standard proof of item (4) (see e.g. ~\cite[Lemma 3.4]{pillay1996geometric}) gives the following weaker  variant:
\begin{fact}\label{fact: weak (4)}
Assume $p \in S_G(\cU)$ and $p|_M$ is stable for a small model $M \prec \cU$. Let $B$ be a set of realizations of $p|_M$ and $A \subseteq B$. Then for any $q \in S_G(MB)$ extending $p|_M$ we have that $q$ does not fork over $MA$ if and only if $R_{\Delta_\varphi}(q)= R_{\Delta_\varphi}(q|_{MA})$ for every $\varphi \in \cL$.
\end{fact}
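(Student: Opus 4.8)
The plan is to adapt the standard proof that non-forking extensions preserve $\Delta$-ranks (as in \cite[Lemma 3.4]{pillay1996geometric}) to the localized setting, where all parameters come from realizations of the stable type $p|_M$. First I would recall the shape of the classical argument: one direction, that $q$ non-forking over $MA$ implies rank equality, is the ``easy'' half and works in full generality because ranks can only drop under extension and a dividing formula strictly drops the rank; here the key input is that $R_{\Delta_\varphi}(p|_M) < \omega$, which holds since $p|_M$ is stable (this is exactly what makes the local stratified rank well-defined and finite, cf. the discussion preceding Proposition~\ref{proposition: stable type case}). For the converse — rank equality implies non-forking — one uses a definability/uniqueness argument: if $R_{\Delta_\varphi}(q) = R_{\Delta_\varphi}(q|_{MA})$ for all $\varphi$, then $q$ is the unique extension of $q|_{MA}$ to $S_G(MB)$ of the given multiplicity-one $\Delta_\varphi$-type for each $\varphi$, and one shows such an extension is definable over (a small subset of) $MA$, hence non-forking.

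The crucial point where I must be careful, and I expect it to be the main obstacle, is that in a non-stable ambient theory the formula $\varphi(x,y)$ need not be stable, so I cannot freely invoke local stability machinery for arbitrary parameters $b$. The hypothesis that $B$ consists of realizations of $p|_M$ is precisely what rescues this: by stability of $p|_M$, for any $\varphi(x,y) \in \cL$ the relation $\varphi(x,b)$ restricted to $x \models p|_M$ and $b$ ranging over tuples from $B$ behaves like a stable formula — there is no infinite half-graph pattern with the $x$-side realizing $p|_M$. So I would localize every use of ``$\varphi$ is stable'' to ``$\varphi$ is stable on realizations of $p|_M$'', which suffices to run the counting argument: the number of $\Delta_\varphi$-types of multiplicity one over any parameter set inside $\dcl(MB)$ is bounded, ranks are finite and additive over finite disjunctions (Fact~\ref{properties of local ranks}(2), which needs no global stability), and left-translation invariance (Fact~\ref{properties of local ranks}(3)) holds unconditionally. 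The one genuinely new wrinkle is checking that the ``unique multiplicity-one extension'' is indeed definable and hence non-forking when we only have stability of $p|_M$ rather than of $T$: here I would appeal to the fact that a type over a model $M$ realizing the stable type $p|_M$ is stationary with respect to $\Delta_\varphi$ for the relevant $\varphi$, using that $p$ itself is the unique global nonforking (equivalently, generically stable) extension of $p|_M$ by Fact~\ref{fac: basic gen stab}(2), together with Fact~\ref{fac: dcl of gs is gs} to handle elements of $\dcl(MB)$.

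So the steps, in order, are: (1) fix $\varphi(x,y) \in \cL$ and observe $R_{\Delta_\varphi}$ is finite on $p|_M$ and behaves well (items (2),(3) of Fact~\ref{properties of local ranks}); (2) prove the forward direction — $q$ non-forking over $MA$ gives $R_{\Delta_\varphi}(q) = R_{\Delta_\varphi}(q|_{MA})$ — by the standard argument that a dividing instance of $\varphi_g$ would witness a rank drop, using that $B$ realizes $p|_M$ so that indiscernible sequences of parameters drawn from $B$ are Morley in $p$ and Fact~\ref{fac: basic gen stab}(5) applies; (3) prove the converse by showing that rank-and-multiplicity data over $MA$ pins down a unique extension to $S_G(MB)$ for each $\varphi$, that this extension is $\Delta_\varphi$-definable over a small subset of $MA$ using stationarity coming from $p|_M$ being generically stable (Facts~\ref{fac: basic gen stab}(2) and \ref{fac: dcl of gs is gs}), and conclude non-forking; (4) quantify over all $\varphi \in \cL$. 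I expect step (3) — extracting the requisite local stationarity from generic stability of $p|_M$ in the absence of global stability — to be where the real work lies; steps (1), (2), (4) should be routine transcriptions of \cite[Lemma 3.4]{pillay1996geometric} with ``$\varphi$ stable'' replaced throughout by ``$\varphi$ stable on $p|_M$''.
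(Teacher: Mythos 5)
There is a genuine gap, and it sits exactly where the paper locates the essential difficulty. You dismiss the implication ``$q$ does not fork over $MA$ $\Rightarrow$ $R_{\Delta_\varphi}(q)=R_{\Delta_\varphi}(q|_{MA})$'' as the easy half, to be handled by ``ranks only drop under extension and a dividing formula strictly drops the rank''. That argument proves a different implication: ``dividing drops rank'' gives, contrapositively, that rank equality rules out dividing; it says nothing about why a non-forking extension cannot lose rank. In the adaptation of \cite[Lemma 3.4]{pillay1996geometric}, the direction ``non-forking implies rank preservation'' is precisely the one that needs the forking--definability ingredient: if $\varphi(x,B)\in q$ does not fork over $MA$, one must produce a positive Boolean combination of $MA$-conjugates of $\varphi(x,B)$ which is (almost) over $MA$; such a combination has $\Delta_\varphi$-rank at most $R_{\Delta_\varphi}(\varphi(x,B))$ (by Fact~\ref{properties of local ranks}(2) and automorphism-invariance of $R_{\Delta_\varphi}$) and belongs to $q|_{MA}$, which is how a putative rank drop is contradicted. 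In an unstable ambient theory this is not free, and your sketch supplies no substitute. The paper's key move is to obtain it from the \emph{parameter} side: since $B$ consists of realizations of $p|_M$, the type $\tp(B/MA)$ is stable, so one can use symmetry of forking between a realization of $q$ and the relevant tuple from $B$, together with $\acl^{\eq}(MA)$-definability of the non-forking extensions of $\tp(B/MA)$ (whose local definitions are positive Boolean combinations of conjugate instances). This idea is absent from your plan, and your proposed replacement (stationarity of $p|_M$ via Facts~\ref{fac: basic gen stab}(2) and \ref{fac: dcl of gs is gs}) is aimed at the other direction and, moreover, lives over the base $M$ rather than $MA$.

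For the converse direction (rank equality $\Rightarrow$ non-forking), your outline has the right classical shape --- a rank-and-multiplicity-maximal extension should be definable over $\acl^{\eq}(MA)$, hence invariant, hence non-forking over $MA$ --- and this is plausibly recoverable from finiteness of the localized ranks (which does follow from stability of $p|_M$). But be aware that you cannot shortcut through ``non-dividing'': forking and dividing need not coincide over $MA$ in an arbitrary theory, so the dividing-based argument alone does not yield non-forking even for this half. In summary, the half you call easy is the substantive one, it requires the positive-Boolean-combination ingredient obtained via stability of $\tp(B/MA)$, symmetry, and definability of its non-forking extensions, and without it the proposal does not prove the Fact.
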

\noindent While adapting the proof of \cite[Lemma 3.4]{pillay1996geometric} for Fact \ref{fact: weak (4)}, the only essential difficulty is to show that if a formula $\varphi(x,B)$ does not fork over $MA$ (where $\varphi(x,y) \in \mathcal{L}(MA)$), then some positive Boolean combination of $MA$-conjugates of $\varphi(x,B)$ is definable over $MA$. For that one needs to use the assumption that $B$ is a set of realizations of $p|_M$ in order to have that $\tp(B/MA)$ is stable, which allows to use symmetry of forking and $\acl^{\eq}(MA)$-definability of non-forking extensions of $\tp(B/MA)$.

The following is a strengthening of the fact saying that $p$ is the unique non-forking extension of $p|_M$ to a global type, and follows by one of the standard proofs:

\begin{proposition}\label{proposition: strengthening of unique non-forking extension}
Assume that $p \in S_{G}(\cU)$ is generically stable, and $p' = p|_{\cU'}$ (where $\cU' \succ \cU$ is a monster model with respect to $\cU$).
Let $q \in S_G(\cU')$ be an extension of $p$ such that $q|_{\cU B}$ does not fork over $\cU$ for every small set $B$ of independent realizations of $p$. Then $q=p'$. 

More generally, in the assumption, it is enough to consider only the sets $B$ of independent realizations of $p$ containing a fixed realization $a$ of $p$ in $\cU'$.
\end{proposition}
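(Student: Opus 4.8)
The statement to prove is Proposition \ref{proposition: strengthening of unique non-forking extension}: if $p \in S_G(\cU)$ is generically stable and $q \in S_G(\cU')$ extends $p$ with the property that $q|_{\cU B}$ does not fork over $\cU$ for every small set $B$ of independent realizations of $p$ (and more generally, for every such $B$ containing a fixed $a \models p$ in $\cU'$), then $q = p'$. First I would recall the classical criterion for generic stability that makes this work: a global $A$-invariant type $p$ is generically stable iff it is both definable and finitely satisfiable in any small model containing $A$, equivalently iff a Morley sequence $(a_i)_{i < \omega} \models p^{(\omega)}|_A$ ``determines'' $p$ in the sense that $\varphi(x,c) \in p$ iff $\models \varphi(a_i,c)$ for all but finitely many $i$ (equivalently for a majority of $i$), for any $\varphi(x,y) \in \cL$ and $c \in \cU$.

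\textbf{Main argument.} Work over a small model $M \prec \cU$ over which $p$ is invariant. The plan is: (1) build a Morley sequence $\bar{a} = (a_i : i < \omega)$ in $p$ over $M$, taking $a_0 = a$ if we are in the ``more general'' version; by Fact \ref{fac: basic gen stab}(1) this is a totally indiscernible set over $M$, and any small subset $B$ of it is a set of independent realizations of $p$. (2) Take any formula $\varphi(x,c) \in \cL(\cU')$ with $c \in \cU'$, say $\varphi(x,c) \in q$; I want to show $\varphi(x,c) \in p'$, i.e.\ (by the averaging characterization of generic stability applied to $p' = p|_{\cU'}$, which is generically stable over $M$) that $\models \varphi(a_i, c)$ for all but finitely many $i < \omega$. (3) Suppose not; then by total indiscernibility the set $I = \{i < \omega : \models \neg\varphi(a_i,c)\}$ is infinite, hence (by indiscernibility over $M$ of $\bar{a}$, and since the truth value of $\varphi(a_i,c)$ depends only on $\tp(a_i/Mc)$ along an indiscernible sequence extended to order type larger than $\omega$) in fact $\models \neg \varphi(a_i, c)$ for \emph{all} $i$, or at least for a cofinal set; the cleanest route is to extend $\bar a$ to a Morley sequence of length $\omega + 1$ and note $a_\omega \models p|_{M\bar a}$. (4) Now the key step: since $q \supseteq p$ and $q|_{\cU B}$ does not fork over $\cU$ for $B$ a finite subset of $\bar a$ containing $a_0 = a$, combine this with the characterization of forking for generically stable types. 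Concretely, $\neg\varphi(x,c)$ together with $q$ being a non-forking extension of $p$ over $\cU B$ forces, via Fact \ref{fac: basic gen stab}(5) or the local-rank bookkeeping in Fact \ref{fact: weak (4)} and Fact \ref{properties of local ranks}(4) (these apply since $\bar{a}$ realizes $p|_M$, so $\tp(\bar a / M)$ is built from the stable-over-$M$-type data needed), that $R_{\Delta_\psi}$-ranks are preserved along $\bar a$; and by uniqueness of the non-forking extension of $p|_{\cU B}$ (Fact \ref{fac: basic gen stab}(2) relativized, or directly: $p'|_{\cU B}$ is a non-forking extension of $p$, so $q|_{\cU B} = p'|_{\cU B}$), we get $\varphi(x,c) \in q \Leftrightarrow \varphi(x,c) \in p'|_{\cU B}$, contradicting $\models \neg\varphi(a_i,c)$ for cofinally many $i$ once $c$ is absorbed into $B$ appropriately. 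The point of the ``more general'' version is that one only ever needs finitely many realizations from $\bar a$ at a time plus the fixed $a$, so the hypothesis restricted to $B \ni a$ suffices.

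\textbf{Main obstacle.} The delicate point is step (4): reconciling the hypothesis (which quantifies over sets $B$ of realizations of $p$, not over all small sets) with the conclusion that $q$ agrees with $p'$ on \emph{every} formula $\varphi(x,c)$ with arbitrary $c \in \cU'$ — because $c$ itself is not a realization of $p$. The resolution is that for a generically stable $p$, whether $\varphi(x,c) \in p'$ is detected by the \emph{Morley sequence} $\bar a$, whose elements \emph{are} realizations of $p$; so one transfers the question ``$\varphi(x,c) \in q$?'' to ``how does $q|_{\cU a_0 \dots a_{n-1}}$ behave?'' where the conditioning set is a set of realizations of $p$, and there the non-forking hypothesis bites. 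One must be careful that forking of $q|_{\cU B}$ over $\cU$ is the right notion to invoke Fact \ref{fac: basic gen stab}(2)/(5) — this is where restricting $B$ to sets of \emph{independent} realizations of $p$ (so $\tp(B/\cU) = p^{(n)}$-type data, hence stable-over-$M$ behavior is available per Fact \ref{fact: weak (4)}) is essential, and where I expect the proof to need the most care. I would write it following ``one of the standard proofs'' of uniqueness of the non-forking extension, inserting the sequence $\bar a$ as the bookkeeping device and checking that at each stage only a finite subset of $\bar a$ (plus the fixed $a$) is used.
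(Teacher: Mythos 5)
There is a genuine gap at your ``key step (4)'', and it is exactly at the point you flag as delicate. Your hypothesis only controls $q$ on sets of the form $\cU B$ with $B$ a set of independent realizations of $p$; from it (via existence of non-forking extensions and Fact \ref{fac: basic gen stab}(2) applied to $p'$ over the base $\cU$) you can indeed conclude $q|_{\cU B}=p'|_{\cU B}$ for such $B$. But this only says that $q$ and $p'$ agree on formulas with parameters in $\cU\bar a$, and your target formula $\varphi(x,c)$ has an arbitrary parameter $c\in\cU'$ which cannot be ``absorbed into $B$'': $c$ is not a realization of $p$, so no admissible $B$ contains it, and nothing in your argument links $q$'s opinion about $\varphi(x,c)$ to its restrictions to $\cU\bar a$. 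Fixing a Morley sequence $\bar a$ in $p$ in advance and noting $\models\neg\varphi(a_i,c)$ for cofinally many $i$ merely restates that $\varphi(x,c)\notin p'$; no contradiction with the hypothesis follows. A secondary problem is your appeal to the local-rank machinery of Facts \ref{properties of local ranks} and \ref{fact: weak (4)}: these require the ambient theory, respectively the type $p|_M$, to be stable, which is not part of the hypotheses here (only generic stability), so that route is not available; Fact \ref{fac: basic gen stab}(5) is the right kind of tool but you never say how it produces the contradiction.

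The missing idea, which is how the paper's proof works, is to build the witnessing sequence out of realizations of $q$ itself, chosen over the parameter in question: set $a_0:=a$ and $a_i\models q|_{\cU,a,c,a_{<i}}$, and prove by induction on $n$ that any sequence with $a_i\models q|_{\cU,a_{<i}}$ is a Morley sequence in $p$. The induction step is where the hypothesis bites: if $\varphi(x,a_{<n})\in q$ but $\neg\varphi(x,a_{<n})\in p|_{\cU,a_{<n}}$, one extends $a_{<n}$ by an auxiliary Morley sequence $b_{\geq n}$ in $p$, notes that the ``alternating'' formula $\varphi(x,a_{<n-1},a_{n-1})\wedge\neg\varphi(x,a_{<n-1},b_m)$ divides over $\cU$ (generic stability), uses non-forking of $q|_{\cU,a_{<n},b_{\geq n}}$ over $\cU$ (legitimate because $a_{<n}b_{\geq n}$ is a Morley sequence in $p$, hence a set of independent realizations of $p$ containing $a$) to force $\varphi(x,a_{<n-1},b_m)\in q$ for all $m$, and then contradicts the $\cU$-invariance of $p'$ using a realization of $q|_{\cU,a_{<n},b_{\geq n}}$ and total indiscernibility. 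Once the claim is proved, each $a_i$ with $i>0$ satisfies $\varphi(x,c)$ because it realizes $q$ over $c$, and since $(a_i)$ is a Morley sequence in $p$, generic stability gives $\varphi(x,c)\in p'$. Without taking realizations of $q$ over $c$ (rather than a pre-chosen Morley sequence in $p$ unrelated to $q$ and $c$), the hypothesis cannot reach formulas with arbitrary parameters, so your outline as written does not close.
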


\begin{proof}
We will deduce the proposition from the following claim.
\begin{clm*}
If $a_i \models q|_{\cU,a_{<i}}$ for all $i<n$, then $(a_0,\dots,a_{n-1})$ is a Morley sequence in $p$. In the more general version of the assumption, the same holds but assuming that $a_0=a$.
\end{clm*}

\begin{clmproof}
This is induction on $n$. The base step $n=0$ is trivial, as $q |_\cU =p$ by assumption.

{\em Induction step.} Consider any $a_i \models q|_{\cU,a_{<i}}$ in $\cU'$ for all $i\leq n$. By induction hypothesis  $a_i \models p|_{\cU,a_{<i}}$ for all $i< n$. The goal is to prove that $a_n \models p|_{\cU,a_{<n}}$, equivalently $q|_{\cU,a_{<n}}=p|_{\cU,a_{<n}}$.

Suppose for a contradiction that $\varphi(x,a_{<n}) \in q|_{\cU,a_{<n}}$ but  $\neg \varphi(x,a_{<n}) \in p|_{\cU,a_{<n}}$.
Extend $a_{<n}$ to a Morley sequence in $p$ by $b_n \models  p|_{\cU,a_{<n}}, b_{n+1} \models p|_{\cU,a_{<n},b_n},\dots$. Since $a_{<n}b_{\geq n}$ is a Morley sequence in $p$, we get, by generic stability of $p$, that the formula $\varphi(x,a_{<n-1},a_{n-1}) \wedge \neg \varphi(x,a_{<n-1},b_m)$ divides over $\cU$ for every $m \geq n$. As by assumption we know that $q|_{\cU,a_{<n},b_{\geq n}}$ does not fork over $\cU$, we conclude that $\varphi(x,a_{<n-1},b_m) \in q(x)$ for all $m \geq n$. 

Pick any $c \models q|_{\cU,a_{<n},b_{\geq n}}$. By the above conclusion, $\models \varphi(c,a_{<n-1},b_m)$ for all $m \geq n$, so, by generic stability of $p$, $\varphi(c,a_{<n-1},y) \in p'(y)$.

On the other hand, since  $\neg \varphi(x,a_{<n}) \in p|_{\cU,a_{<n}}$ and $b_{\geq n}$ is Morley sequence in $p$ over $\cU,a_{<n}$, we get that $\models \neg \varphi(b_i,a_{<n})$ for all $i\geq n$. Since $a_{<n}b_{\geq m}$ is a Morley sequence in $p$, and as such it is totally indiscernible, we get that $\models \neg \varphi(a_{n-1},a_{<n-1},b_i)$ for all $i \geq n$.
Therefore, $\neg \varphi(a_{n-1},a_{<n-1},y) \in p'(y)$.

Summarizing, the last two paragraphs yield  $\varphi(c,a_{<n-1},y) \wedge \neg \varphi(a_{n-1},a_{<n-1},y) \in p'(y)$. This contradicts the $\cU$-invariance of $p'$, because $c \equiv_{\cU,a_{<n-1}} a_{n-1}$ as both these elements satisfy $q|_{\cU,a_{<n-1}}$.
\end{clmproof}

Now consider any $\varphi(x, b) \in q$. Pick $a_i \models q|_{\cU,a, b,a_{<i}}$ in $\cU'$ for all $0<i<\omega$, and put $a_0:=a$. By the claim, $(a_i)_{i<\omega}$ is a Morley sequence in $p$. Since $\models \varphi(a_i, b)$ for all $0<i<\omega$, by generic stability of $p$, we get $\varphi(x, b) \in p'$. Thus, $q=p'$.
\end{proof}

Now, to prove Proposition \ref{proposition: stable type case} via Method 2, using  Fact \ref{fact: weak (4)}, we get that $R_{\Delta_\varphi}(p'|_{\cU B})= R_{\Delta_\varphi}(p)$ for every set $B$ of realizations of $p$ (where $p':=p|_{\cU'}$). This implies (by Fact \ref{properties of local ranks}(3) ---  which does not require any assumptions) that  $R_{\Delta_\varphi}((a \cdot p')|_{\cU B})= R_{\Delta_\varphi}(a \cdot (p'|_{\cU B}))=R_{\Delta_\varphi}(p)$  for every set $B$ of realizations of $p$ containing $a$. Since $p \subseteq a \cdot p'$ (by the idempotence of $p$), we conclude, using  Fact \ref{fact: weak (4)}, that  $(a \cdot p')|_{\cU B}$ does not fork over $\cU$ for every set $B$ of realizations of $p$ containing $a$. Hence, $a \cdot p'=p'$ by Proposition \ref{proposition: strengthening of unique non-forking extension}.

\subsection{Simple theories}\label{sec: gen trans simple}

Assume that $T$ is a simple theory, and as usual that $p \in S_G(\cU)$ is idempotent and  generically stable over $M \prec \cU$. Method 2 extends from stable to simple theories using \emph{stratified Shelah degrees} in place of stratified local ranks. In simple theories, they are wrongly defined in Definition 4.1.4 of \cite{Wa} (as they are not left-invariant due to the lack of associativity outside $\bar G$). A way to fix it is to use suitable $\varphi(x, y)$ (as in Definition \ref{def: stab strat rank})   or to apply Definition 4.3.5 of \cite{Wa} (in the special case of type-definable rather than hyper-definable groups). In any case, by \cite{Wa}, stratified Shelah degrees satisfy items (1)--(4) of Fact \ref{properties of local ranks}, so Method 2 applies directly and yields the following generalization of Proposition \ref{proposition: stable case}:

\begin{proposition}\label{proposition: simple case}
Let $T$ be simple, $G$ an $\emptyset$-type-definable group, and $p \in S_{G}(\cU)$ idempotent and generically stable. Then $p$ is generically transitive.
\end{proposition}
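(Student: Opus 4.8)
The plan is to follow Method 2 from the proof of Proposition \ref{proposition: stable case} essentially verbatim, replacing the stratified local ranks $R_{\Delta_\varphi}$ of Fact \ref{properties of local ranks} by stratified Shelah degrees available in simple theories. First I would set up the notation exactly as in Section \ref{sec: setting types}: fix $\varphi_0(x)$, and for each $\varphi(x,y) \in \cL$ form $\varphi'(x,y) = \varphi(x,y) \wedge \varphi_0(x)$, the translates $\varphi_g(x,y) = g \cdot \varphi'(x,y)$, and the family $\Delta_\varphi = \{\varphi_g(x,y) : g \in \bar G\}$. Then I would invoke the machinery of \cite{Wa} to obtain, for each such $\varphi$, a $\Delta_\varphi$-stratified Shelah degree function $D_{\Delta_\varphi}$ on partial types concentrated on $G$, defined via Definition 4.3.5 of \cite{Wa} (specialized to type-definable groups) — or equivalently via the $\varphi_0$-corrected version of Definition 4.1.4. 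The point of using $\varphi'$ rather than the naive $\varphi$ is precisely to guarantee left-invariance of the degree under multiplication by elements of $\bar G$, which fails in general because $\cdot$ need not be associative outside $\varphi_0(\cU)$.

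Next I would record that, by \cite{Wa}, these stratified Shelah degrees satisfy the analogues of items (1)--(4) of Fact \ref{properties of local ranks}: $D_{\Delta_\varphi}$ takes ordinal values below some fixed bound, it is monotone and additive over disjunctions, it is invariant under left translation by elements of $\bar G$, and — the crucial one — a type over a larger set does not fork over a smaller one if and only if its $D_{\Delta_\varphi}$-degree is preserved for every $\varphi \in \cL$. Given these, the argument is exactly Method 2: since $T$ is simple, $p$ is generically stable over some small $M \prec \cU$; pass to a monster model $\cU' \succ \cU$ and set $p' := p|_{\cU'}$; fix a realization $a \models p$ in $\cU'$. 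Because $p'$ does not fork over $\cU$, the forking criterion gives $D_{\Delta_\varphi}(p') = D_{\Delta_\varphi}(p)$ for every $\varphi(x,y) \in \cL$; by left-invariance $D_{\Delta_\varphi}(a \cdot p') = D_{\Delta_\varphi}(p')$, hence $D_{\Delta_\varphi}(a \cdot p') = D_{\Delta_\varphi}(p)$ for all $\varphi$. Idempotence of $p$ together with generic stability gives $p \subseteq a \cdot p'$ (exactly as in Proposition \ref{proposition: stable case}, using that $a \in p(\cU')$ yields $a \cdot p'|_\cU = p$). Applying the forking criterion again, $a \cdot p'$ does not fork over $\cU$, so it is the unique non-forking extension of $p$, namely $p'$ itself. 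Thus $a \in \Stab_{\lee}(p')$, and by Remark \ref{remark: basic} $p$ is generically transitive.

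I do not expect any genuine obstacle in the main line of argument — it is a direct transcription of Method 2 — so the only real work is bookkeeping: verifying that the version of stratified Shelah degrees one extracts from \cite{Wa} is (a) correctly localized on $G$ via the $\varphi_0$/$\varphi'$ device so as to be left-invariant, and (b) genuinely satisfies the forking characterization in the form needed (degree over $\cU$ equals degree over $M$ iff non-forking, for the relevant extension). This is where the remark in the excerpt about Definition 4.1.4 of \cite{Wa} being ``wrongly defined'' in the hyperdefinable setting is relevant: I would explicitly point to Definition 4.3.5 of \cite{Wa} as the correct reference, or alternatively note that our $\varphi'(x,y)$ already incorporates the fix, since all the relevant translates land inside $\varphi_0(\cU)$ where $\cdot$ is associative. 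Once that is settled, nothing about simplicity beyond the existence and basic properties of these degrees is used, and in particular no NIP, rosiness, or abelianness is needed — only that $p$ is generically stable (automatic in a simple theory for the type to even be relevant) and idempotent.
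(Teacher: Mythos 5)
Your proposal is correct and follows essentially the same route as the paper: the paper's proof of this proposition is precisely the observation that stratified Shelah degrees (corrected for left-invariance via the $\varphi_0$/$\varphi'$ device or Definition 4.3.5 of \cite{Wa}) satisfy items (1)--(4) of Fact \ref{properties of local ranks}, after which Method 2 applies verbatim. Your additional bookkeeping remarks about the forking characterization and the associativity issue match the paper's own caveats.
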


\subsection{Rosy theories}\label{sec: rosy types}

In the case of groups in \emph{rosy theories}, again we can apply Method 2, using stratified local \emph{thorn ranks}. They were defined and studied in \cite{EKP} (see \cite[Definition 1.13]{EKP}) in the case of definable groups, and extend easily to type-definable groups (using $\varphi_0$ as in Definition \ref{def: stab strat rank}). By \cite{EKP}, stratified local thorn ranks satisfy items (1)--(4) of Fact \ref{properties of local ranks}, with thorn forking in place forking in item (4). However, \cite[Theorem 3.4]{garcia2013generic} tells us that if a type $q \in S(B)$ is generically stable and $A \subseteq B$, then $q$ forks over $A$ if and only if $q$ thorn forks over $A$. We have all the tools to prove the following  generalization of Proposition \ref{proposition: simple case} via Method 2. 

Before its statement, let us first recall local thorn ranks and define stratified local thorn ranks  $\th_{\Phi,\Theta,k}^G$:
\begin{definition}
For  a finite set $\Phi$ of partitioned formulas with object variables $x$ and parameter variables $y$, a finite set of formulas $\Theta$ in variables $y,z$, and natural number $k>0$, the \emph{$\th_{\Phi,\Theta, k}$-rank} is the unique function from the collection of all consistent formulas with parameters to $\textrm{Ord} \cup \{\infty\}$ satisfying: 
$\th_{\Phi,\Theta,k}(\psi) \geq \alpha +1$ if
and only if there is $ \varphi \in \Phi $, some $\theta(y,z) \in
\Theta$ and parameter $c$ such that:

\begin{enumerate}
\item  $\th_{\Phi,\Theta,k}(\psi(x)\land \varphi(x,a))\geq \alpha $ for infinitely many $a\models \theta(y,c) $, and
\item  $\left\{ \varphi \left( x,a\right) : a \models \theta(y,c) \right\}$ is
$k$-inconsistent.
\end{enumerate}
Given a (partial) type $\pi(x)$ closed under conjunction we define $\th_{\Phi,\Theta,k}(\pi(x))$ to be the minimum of $\th_{\Phi,\Theta,k}(\psi)$ for $\psi\in\pi(x)$. 
\end{definition}

\begin{definition}
\begin{enumerate}
\item  For a formula $\varphi(x, y) \in \mathcal{L}$, let $\tilde{\varphi}(x,t, y):= (\exists z)(\varphi(z, y) \wedge \varphi_0(z)  \wedge x=t \cdot z)$, where $\varphi_0(x)$ is chosen in Section \ref{sec: setting types}.
\item For a finite set $\Phi$ of formulas in variables $x,y$, put $\widetilde{\Phi}:=\{\tilde{\varphi}(x,t,y):\varphi(x,y)\in \Phi\}$. For a finite set of formulas $\Theta$ in variables $y,z$, put $\Theta^*(t,y;t',z) := \{\theta(y,z) \land t=t':\theta\in \Theta\}$.
\item The \emph{stratified $\th_{\Phi, \Theta, k}^G$-rank} is defined as the  unique function satisfying: \\
$\th_{\Phi,\Theta,k}^G(\psi) \geq \alpha +1$ if and only if there is a formula $\varphi \in \Phi$, some $\theta^*(t,y;t'z) \in
\Theta^{*}$ and parameters $g \in G$ and $c$ anywhere such that:

\begin{enumerate}
\item   $\th_{\Phi, \Theta, k}^G(\psi(x) \land \tilde{\varphi}(x,g,b))\geq n$ for infinitely many $(g,b) \models \theta^*(t,y;g,c)$, and
\item  $\left\{ \tilde{\varphi}( x,g,b) : (g,b) \models \theta^*(t,y;g,c) \right\}$ is $k$-inconsistent.
\end{enumerate}
Given a (partial) type $\pi(x)$ closed under conjunction we define $\th_{\Phi,\Theta,k}^G(\pi(x))$ to be the minimum of $\th_{\Phi,\Theta,k}^G(\psi)$ for $\psi\in\pi(x)$.
\end{enumerate}
\end{definition}

\begin{proposition}\label{prop: gen trans in rosy}
If $T$ is rosy and $G$ is a type-definable group, then every generically stable idempotent type $p \in S_{G}(\cU)$ is generically transitive.
\end{proposition}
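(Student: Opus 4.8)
The plan is to reduce Proposition~\ref{prop: gen trans in rosy} to an application of \textbf{Method 2} from the proof of Proposition~\ref{proposition: stable case}, with stratified local thorn ranks $\th^G_{\Phi,\Theta,k}$ playing the role of the stratified $\Delta_\varphi$-ranks. First I would pass to a monster model $\cU' \succ \cU$ with respect to $\cU$, set $p' := p|_{\cU'}$, and fix a realization $a \models p'|_{\cU}$; by Remark~\ref{remark: basic} it suffices to show $a \in \Stab_\lee(p')$, i.e.\ $a \cdot p' = p'$. The key structural inputs are: (i) the stratified thorn ranks $\th^G_{\Phi,\Theta,k}$ are ordinal-valued (finiteness is not needed, only well-definedness, exactly as with Shelah degrees in the simple case), and by \cite{EKP} they satisfy the analogues of items (1)--(4) of Fact~\ref{properties of local ranks}, where forking in item~(4) is replaced by \emph{thorn}-forking; (ii) crucially, by \cite[Theorem~3.4]{garcia2013generic}, for the generically stable type $p$ the notions of forking and thorn-forking over small parameter sets coincide, so item~(4) for thorn ranks becomes a genuine forking statement about $p$.

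The argument then runs as follows. Since $p'$ does not fork over $\cU$ (being the unique $M$-invariant, hence non-forking, extension of $p$ — Fact~\ref{fac: basic gen stab}(2)), it does not thorn-fork over $\cU$ either by \cite[Theorem~3.4]{garcia2013generic}, so the thorn-rank analogue of Fact~\ref{properties of local ranks}(4) gives $\th^G_{\Phi,\Theta,k}(p') = \th^G_{\Phi,\Theta,k}(p)$ for all $\Phi,\Theta,k$. By left-invariance under translation by elements of $\bar G$ (the analogue of Fact~\ref{properties of local ranks}(3), which built the parameter $g \in G$ into the very definition of $\tilde\varphi$ and $\Theta^*$), we get $\th^G_{\Phi,\Theta,k}(a \cdot p') = \th^G_{\Phi,\Theta,k}(p') = \th^G_{\Phi,\Theta,k}(p)$. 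On the other hand, since $p$ is a generically stable idempotent and $a \models p'|_\cU = p$, we have $p \subseteq a \cdot p'$ (this is exactly the computation $a \cdot p'|_\cU = p$ used in the proof of Proposition~\ref{proposition: stable case}: $(a,b) \models p^{(2)}$ for $b \models p'$, so $a\cdot b \models p$ by idempotence, hence $a\cdot b \models p'|_\cU$). Now $p \subseteq a\cdot p'$ together with equality of all stratified thorn ranks forces, via the thorn-rank version of Fact~\ref{properties of local ranks}(4), that $a \cdot p'$ does not thorn-fork over $\cU$; by \cite[Theorem~3.4]{garcia2013generic} again (applicable since $a\cdot p'$ extends the generically stable type $p$, whence its restriction to any small set is generically stable by Fact~\ref{fac: dcl of gs is gs}), $a\cdot p'$ does not fork over $\cU$, so it is \emph{the} unique non-forking extension of $p$, namely $p'$. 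Hence $a\cdot p' = p'$, as desired.

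One technical point to handle carefully is the exact form of item~(4) needed: as in Fact~\ref{fact: weak (4)} for stable types, one should verify that \cite{EKP} gives the rank-equality characterization of (thorn-)non-forking at the level of generality required here — either for all extensions and all $\Phi,\Theta,k$ (the cleanest route, following \cite{EKP} directly in the rosy setting where thorn-forking is automatically well-behaved), or, if only a restricted version is available, for extensions inside a set of realizations of $p|_M$ in the spirit of Fact~\ref{fact: weak (4)} and Proposition~\ref{proposition: strengthening of unique non-forking extension}. I would state the needed consequence as a preliminary remark and cite \cite{EKP} for items (1)--(4) and \cite[Theorem~3.4]{garcia2013generic} for the forking/thorn-forking coincidence, then give the four-line computation above.

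The main obstacle is not the logical skeleton — which is a verbatim transplant of Method~2 — but confirming that the stratified local thorn ranks $\th^G_{\Phi,\Theta,k}$, as defined above with the $G$-translation parameter built in, genuinely inherit properties (1)--(4) from \cite{EKP}, in particular the left-invariance (3) and the non-thorn-forking characterization (4), for \emph{type-definable} (not merely definable) $G$. Once that bookkeeping is in place, the coincidence of forking and thorn-forking for generically stable types (\cite[Theorem~3.4]{garcia2013generic}) does all the remaining work, and the proof is short. I would therefore devote most of the written proof to the rank-theoretic preliminaries and keep the deduction itself compressed to the paragraph sketched above.
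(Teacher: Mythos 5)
Your proposal is correct and follows essentially the same route as the paper: Method 2 with the stratified local thorn ranks $\th^G_{\Phi,\Theta,k}$, the rank properties (1)--(4) from \cite{EKP}, and \cite[Theorem 3.4]{garcia2013generic} to upgrade non-thorn-forking of $a\cdot p'$ to non-forking. The only point to tighten is your parenthetical justification that Theorem 3.4 applies to $a\cdot p'$: merely extending the generically stable $p$ is not a reason; one should argue, as the paper does, that $a\cdot p'$ is generically stable over $M,a$ because it is realized by $a\cdot g\in\dcl(a,g)$ for $g\models p'$, so Fact \ref{fac: dcl of gs is gs} applies.
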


\begin{proof}
Assume $p$ is generically stable over $M \prec \cU$ and let $a \models p$ in $\cU' \succ \cU$ and $p'= p|_{\cU'}$. As usual, by Remark \ref{remark: basic} it suffices to show that $a \in \Stab_{\lee}(p')$. Since $p'$ does not fork over $\cU$, it does not $\th$-fork over $\cU$, so $\th^G_{\Phi,\Theta,k}(p')=\th^G_{\Phi,\Theta,k}(p)$  for every finite $\Phi$, $\Theta$, $k$. Since $\th^G_{\Phi,\Theta,k}(a \cdot p')=\th^G_{\Phi,\Theta,k}(p')$, we conclude that $\th^G_{\Phi,\Theta,k}(a \cdot p')=\th^G_{\Phi,\Theta,k}(p)$. On the other hand, since $p$ is a generically stable idempotent and $a \in p(\cU')$, we have $p \subseteq a \cdot p'$. Therefore, $a \cdot p'$ does not $\th$-fork over $\cU$. In order to conclude that $a \cdot p'$ does not fork over $\cU$ (and so coincides with $p'$, which finishes the proof), by virtue of  \cite[Theorem 3.4]{garcia2013generic}, it remains to show the following:

\begin{clm*}
$a \cdot p'$ is generically stable over $M,a$. 
\end{clm*}

The claim follows from Fact \ref{fac: dcl of gs is gs} since $p'$ is generically stable over $M$ (and so over $M,a$) and $a \cdot p'$ is realized by $a \cdot g \in \dcl(a,g)$ for $g \models p'$.
\end{proof}

\subsection{Generic transitivity of $p$ and stratified rank localized on $p$}
\label{sec: gen trans strat rank}

As we saw in Sections \ref{subsection: stable theories}--\ref{sec: rosy types}, generic transitivity of an idempotent generically stable type $p \in S_{G}(\cU)$ can be established using a well-behaved stratified rank. In this section, working in an arbitrary theory, we define an analog of the stratified rank in stable theories (Definition \ref{def: stab strat rank}) restricting to formulas with parameters from a Morley sequence in a generically stable type $p$ (aiming for it to satisfy the properties similar to Fact \ref{properties of local ranks} needed to apply Method 1 or 2 from the proof of Proposition \ref{proposition: stable case}). For technical reasons, we will make a stronger assumption that $p^{(n)}$ is generically stable for all $n$. In NIP theories, or even in NTP$_2$ theories (by \cite{CGH2}),  this follows from generic stability of $p$; but it is open in general if generic stability of $p$ implies that $p^{(2)}$ is generically stable (a counterexample was suggested in \cite[Example 1.7]{adler2014generic}, however it does not work --- see \cite[Section 8.1]{CGH}). As the main result of this section, we will show that this rank is left invariant (under multiplication by elements from $p(\cU')$) if and only if $p$ is generically transitive.

The following proposition will be used to show that our ranks are finite, but it may be of independent interest.

\begin{proposition}\label{proposition: countably many types}
Let $p \in S(\cU)$, $M \prec \cU$ a small model, and assume that $p^{(n)}$ is generically stable over $M$ for every $n \in \mathbb{N}_{>0}$. Let $A \subseteq \cU$ be finite and let $(a_i)_{i<\omega} \subseteq \cU$ be a Morley sequence in $p$ over $M$. Let $\varphi( x, y) \in \cL(\cU)$ be any formula (possibly with parameters, and $x,y$ arbitrary tuples of variables). Then there are only countably many types in $S_{\varphi}(A(a_i)_{i<\omega})$.
\end{proposition}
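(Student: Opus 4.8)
The plan is to prove the proposition by induction on the length $|y| = n$ of the tuple $y$, reducing to the case $A = \emptyset$ first. Since only finitely many tuples from the finite set $A$ can be substituted into the parameter variables, and since substituting fixed parameters into $\varphi$ yields finitely many new formulas in fewer free parameter variables, we may absorb $A$ and assume $\varphi(x,y) \in \cL(\cU)$ with $y = (y_0,\dots,y_{n-1})$ and work over the Morley sequence $(a_i)_{i<\omega}$ alone.

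\begin{proof}
As there are only finitely many ways to substitute elements of the finite set $A$ for some of the variables in $y$, and each such substitution produces a formula with fewer parameter variables, it suffices to prove the statement with $A = \emptyset$, i.e.~to show that $|S_{\varphi}((a_i)_{i<\omega})| \leq \aleph_0$ for every formula $\varphi(x,y) \in \cL(\cU)$. We induct on $n := |y|$.

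First, we establish the following uniform bound, which is the key consequence of generic stability of $p^{(n)}$.
\begin{clm*}
There exists $N^{\varphi} \in \omega$ such that for every $b \in \cU^{x}$ there are a subset $S^{\varphi}_{b} \subseteq \omega$ with $|S^{\varphi}_{b}| \leq N^{\varphi}$ and $\varepsilon^{\varphi}_{b} \in \{0,1\}$ such that for all pairwise distinct $i_0,\dots,i_{n-1} \in \omega \setminus S^{\varphi}_{b}$ we have $\models \varphi(b, a_{i_0},\dots,a_{i_{n-1}})^{\varepsilon^{\varphi}_{b}}$.
\end{clm*}
\begin{clmproof}
Since $p^{(n)}$ is generically stable over $M$, there is $N_\varphi \in \omega$ such that for every Morley sequence $(c_i)_{i < 2N_\varphi}$ in $p^{(n)}$ over $M$ and every $b \in \cU^{x}$, either $\models \varphi(b,c_i)$ for all but at most $N_\varphi - 1$ many $i$, or $\models \neg\varphi(b,c_i)$ for all but at most $N_\varphi - 1$ many $i$. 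Set $N^{\varphi} := 2n N_\varphi$. Suppose the claim fails for this $N^{\varphi}$, witnessed by some $b$. Then, recursively on $k < N_\varphi$, we may choose pairwise distinct indices $i^k_0,\dots,i^k_{n-1},j^k_0,\dots,j^k_{n-1}$ from $\omega$ (avoiding the finitely many previously chosen ones, which is possible since each violation of the bound $2n N_\varphi$ supplies a fresh block of $n$ good and $n$ bad indices) with $\models \varphi(b,a_{i^k_0},\dots,a_{i^k_{n-1}})$ and $\models \neg\varphi(b,a_{j^k_0},\dots,a_{j^k_{n-1}})$ for all $k < N_\varphi$. The concatenation of the tuples $(a_{i^k_0},\dots,a_{i^k_{n-1}})$ and $(a_{j^k_0},\dots,a_{j^k_{n-1}})$ for $k < N_\varphi$, taken in some order, is a Morley sequence in $p^{(n)}$ over $M$ of length $2N_\varphi$ on which $\varphi(b,-)$ takes each truth value $N_\varphi$ times, contradicting the choice of $N_\varphi$.
\end{clmproof}

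As $b$ varies, the pair $(S^{\varphi}_{b}, \varepsilon^{\varphi}_{b})$ ranges over a countable set (countably many finite subsets of $\omega$ of bounded size, and two values for $\varepsilon$). For $j < n$, write $\varphi_j(x, y_j ; y \setminus \{y_j\}) := \varphi(x, y)$.

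\textit{Base step $n = 1$.} The type $\tp_\varphi(b/(a_i)_{i<\omega})$ is determined by the pair $(S^{\varphi}_{b}, \varepsilon^{\varphi}_{b})$ together with the (finitely many) truth values of $\varphi(b, a_i)$ for $i \in S^{\varphi}_{b}$; hence there are only countably many such types.

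\textit{Induction step $n \to n+1$.} For each $j \leq n$ and each $i_j \in \omega$, substituting $a_{i_j}$ for the variable $y_j$ in $\varphi$ yields a formula in $n$ parameter variables with parameters in $(a_i)_{i<\omega}$, so by the induction hypothesis there are only countably many $\varphi(x; y_0,\dots,y_{j-1}, a_{i_j}, y_{j+1},\dots,y_n)$-types over $(a_i)_{i<\omega}$. Now $\tp_\varphi(b/(a_i)_{i<\omega})$ is determined by $(S^{\varphi}_{b}, \varepsilon^{\varphi}_{b})$ together with the finite family of types
$$\bigcup_{j \leq n} \bigcup_{i_j \in S^{\varphi}_{b}} \tp_{\varphi(x; y_0,\dots,y_{j-1}, a_{i_j}, y_{j+1},\dots,y_n)}(b/(a_i)_{i<\omega}),$$
since for a tuple $(i_0,\dots,i_n)$ of pairwise distinct indices all lying outside $S^{\varphi}_{b}$ the value of $\varphi(b, a_{i_0},\dots,a_{i_n})$ is $\varepsilon^{\varphi}_{b}$ by the Claim, while if some $i_j \in S^{\varphi}_{b}$ the value is recorded in the corresponding lower-arity type. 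As each of the finitely many ingredients ranges over a countable set, there are only countably many types in $S_{\varphi}((a_i)_{i<\omega})$.
\end{proof}

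The main obstacle I anticipate is getting the Claim exactly right: one must control the truth value of $\varphi(b,-)$ on \emph{all} $n$-tuples of pairwise distinct indices, not just on the ``ordered'' Morley sequence of $n$-tuples, which forces the factor of $n$ (or more) in $N^{\varphi}$ and a careful recursive extraction of a genuine Morley sequence in $p^{(n)}$ from a hypothetical large set of sign-changes — here one uses total indiscernibility of Morley sequences in generically stable types (Fact \ref{fac: basic gen stab}(1)) to rearrange blocks into a single Morley sequence in $p^{(n)}$. The rest is bookkeeping: reducing to $A = \emptyset$, and the straightforward induction assembling a $\varphi$-type from its bounded "exceptional part" plus finitely many lower-arity $\varphi_j$-types.
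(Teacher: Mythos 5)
Your proof is correct and follows essentially the same route as the paper's: the same reduction to $A=\emptyset$, the same claim with $N^{\varphi}=2nN_{\varphi}$ extracted from generic stability of $p^{(n)}$ via a recursive choice of alternating blocks rearranged (by total indiscernibility) into a single Morley sequence, and the same induction assembling a $\varphi$-type from the bounded exceptional set plus finitely many lower-arity types. No substantive differences to report.
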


\begin{proof}
As there are only finitely many possibilities for substitutions of the elements of the finite set $A$ in place of some variables in $y$, without loss of generality we may assume that $A=\emptyset$.

Denote $n:=|y|$, say $y = (y_0, \ldots, y_{n-1})$ with $y_j$ corresponding to the free variables of $p$ for $j < n$. The next claim follows from generic stability of $p^{(n)}$.
\begin{clm*}
There exists $N^{\varphi} \in \omega$ such that: for every $ b \in \cU^x$  there is a subset $S^{\varphi}_{b} \subseteq \omega$ of cardinality $\leq N^\varphi$ and $\varepsilon^{\varphi}_{b} \in \{0,1\}$ such that for every pairwise distinct $i_0,\dots,i_{n-1} \in \omega \setminus S^{\varphi}_{ b}$ we have $\models \varphi(b, a_{i_0},\dots,a_{i_{n-1}})^{\varepsilon^{\varphi}_{ b}}$.
\end{clm*}

\begin{clmproof}
By generic stability of $p^{(n)}$, there is $N_\varphi$ such that for every Morley sequence $(\bar c_i)_{i<2N_\varphi}$ in $p^{(n)}$ over $M$ and for every $ b \in \cU^{x}$, either for all  but $N_\varphi -1$ many $i$'s we have $\models \varphi( b, c_i)$, or for all  but $N_\varphi -1$ many $i$'s we have $\models \neg \varphi(b, c_i)$. Put $N^\varphi:= 2nN_\varphi$,  suppose that it does not satisfy the requirement in the claim, and choose a witness $b \in \cU^x$ for that. Then, by recursion on $k$, we can find pairwise distinct numbers $i_{l}^k$ and $j_{l}^k$ for $l<n$ and $k<N_\varphi$ so that $\models \varphi \left( b, a_{i_{0}^k},\dots,a_{i_{n-1}^k} \right)$ and $\models \neg \varphi \left(b, a_{j_{0}^k},\dots,a_{j_{n-1}^k} \right)$ for all $k<N_\varphi$. As 
$$\left\langle \left(a_{i_{l}^0} \right)_{l<n}, \left(a_{j_{l}^0} \right)_{l<n},\dots, \left(a_{i_{l}^{N_\varphi-1}} \right)_{l<n}, \left(a_{j_{l}^{N_\varphi -1}} \right)_{l<n} \right\rangle$$
is a Morley sequence in $p^{(n)}$ over $M$, the previous sentence contradicts the choice of $N_\varphi$.
\end{clmproof}

By the claim, varying $ b \in \cU^x$, we have only countably many possibilities for the finite set $S^{\varphi}_{ b} \subset \omega$ and two possibilities for $\varepsilon^{\varphi}_{ b}$, so only  countably many possibilities for $ \left(S^{\varphi}_{ b}, \varepsilon^{\varphi}_{ b} \right)$.

For $j<n$, let $\varphi_j( x,y_j;  y \setminus \{y_j\}):= \varphi(x; y)$.
The proof of the proportion is by induction on $n (=|y|)$.

{\em Base step ($n=1$).} For any $b \in \cU^x$, the type $\tp_\varphi (b/(a_i)_{i<\omega})$ is determined by the pair $\left(S^{\varphi}_{b}, \varepsilon^{\varphi}_{ b} \right)$ and the truth values of finitely many sentences $\varphi( b, a_i)$ for $i \in S^{\varphi}_{b}$. So we get countably many possibilities for $\tp( b/(a_i)_{i<\omega})$.

{\em Induction step ($n \to n+1$).} By induction hypothesis $|S_{\varphi_j}((a_i)_{i<\omega})| \leq \aleph_0$, so  
$$\left \lvert S_{\varphi( x;y_0,\dots,y_{j-1},a_{i_j}, y_{j+1},\dots,y_{n-1})}((a_i)_{i<\omega}) \right \rvert \leq \aleph_0$$
for any $j<n$ and $i_j \in \omega$.
Since $\tp_\varphi ( b/(a_i)_{i<\omega})$ is determined by the pair $(S^{\varphi}_{ b}, \varepsilon^{\varphi}_{ b})$ together with 
$$\bigcup_{j<n} \bigcup_{i_j \in S^{\varphi}_{ b}} \tp_{\varphi(x;y_0,\dots,y_{j-1},a_{i_j}, y_{j+1},\dots,y_{n-1})}(b/(a_i)_{i<\omega}),$$ we conclude that there are countably many types in $S_\varphi((A_i)_{i<\omega})$.
\end{proof}

Now  let $G(x)$ be an $\emptyset$-type-definable group and $p \in S_{G}(\cU)$ such that $p^{(n)}$ is generically stable over $M \prec \cU$ for all $n \in \omega$. Let $\cU' \succ \cU$ be a bigger monster model. We define a version of stratified local ranks, where the inconsistent types witnessing the increase in rank have to be defined over a Morley sequence in $p$.

\begin{definition}
\begin{enumerate}
	\item For a formula $\varphi(x, y) \in \cL$, let $\tilde{\varphi}(x,t, y):= (\exists z)(\varphi(z, y) \wedge \varphi_0(z) \wedge \varphi_0(t) \wedge x=t \cdot z)$, where $\varphi_0(x)$ is chosen in Section \ref{sec: setting types}.
	\item Following Shelah's terminology (rather than Pillay's from \cite{pillay1996geometric}), given $A \subseteq \cU$, by a {\em $\tilde{\varphi}(x,t, y)$-formula over $A$} we mean a formula of the form $\tilde{\varphi}(x,a,b)$ or $\neg \tilde{\varphi}(x,a, b)$ for $a, b$ from $A$.
	\item By a {\em $\tilde{\varphi}(x,t, y)$-type over $A$} we mean a consistent collection of $\tilde{\varphi}(x,t, y)$-formulas over $A$. Two such types are \emph{explicitly contradictory} if there is a $\tilde{\varphi}(x,t, y)$-formula contained in one of these types such that the negation of this formula is in the other type. 
\end{enumerate}
	  \end{definition}

\begin{definition}

\begin{enumerate}
	\item We define a function $R_{p,\varphi}$ from the collection of all partial types in $x$ over $\cU'$ to $\textrm{Ord} \cup \{\infty\}$ as a unique function satisfying: $R_{p,\varphi}(\pi(x)) \geq \alpha +1$ if and only if for every finite $\pi_0(x) \subseteq \pi(x)$ and $n \in \omega$ there exist pairwise explicitly contradictory $\tilde{\varphi}(x,t, y)$-types  $q_0(x),\dots, q_{n-1}(x)$ whose parameters altogether form a Morely sequence in $p$ over $M$ together with the parameters of $\pi_0(x)$ and such that $R_{p,\varphi}(\pi_0(x) \cup q_i(x)) \geq \alpha$ for all $i<n$. 
	\item 
If $R_{p,\varphi}(\pi(x))<\infty$, then $\mlt_{p,\varphi}(\pi(x))$ is defined as the maximal number $n \in \omega$ such that for every finite $\pi_0(x) \subseteq \pi(x)$ with $R_{p,\varphi}(\pi_0(x))= R_{p,\varphi}(\pi(x))$ there are  pairwise explicitly contradictory $\tilde{\varphi}(x,t, y)$-types  $q_0(x),\dots, q_{n-1}(x)$ whose parameters form a Morley sequence in $p$ over $M$ together with the parameters of $\pi_0(x)$ and such that $R_{p,\varphi}(\pi_0(x) \cup q_i(x)) =R_{p,\varphi}(\pi(x))$ for all $i<n$. 
\end{enumerate}
\end{definition}

\begin{lemma}\label{lemma: properties of new rank}
Assume that $p^{(n)}$ is generically stable for all $n \in \omega$. Then the ranks $R_{p,\varphi}$ have the following properties.
\begin{enumerate}
\item $R_{p,\varphi}(x=x) < \omega$.
\item $R_{p,\varphi}(\psi_1(x) \lor \psi_2(x)) = \max \left\{ R_{p,\varphi}(\psi_1(x)), R_{p,\varphi}(\psi_2(x)) \right\}$.
\item $R_{p,\varphi}$ is invariant under automorphisms of $\cU'$ fixing $M$ pointwise.
\item In the definition of  $R_{p,\varphi}$, one can use $\pi(x)$ in place of the finite piece $\pi_0(x)$. 
\item $R_{p,\varphi}(\pi(x)) = R_{p,\varphi}(\pi(x) \cup (\exists z,t)(G(t) \wedge \varphi_0(z) \wedge x=t \cdot z))$.
\end{enumerate}
\end{lemma}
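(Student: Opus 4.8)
The plan is to prove the five items by adapting the standard arguments for stratified $\Delta$-ranks in stable theories (as in \cite[Chapter 1]{pillay1996geometric}), replacing the use of stability by the assumption that $p^{(n)}$ is generically stable for all $n$, which is exactly what is needed to control families of $\tilde\varphi$-types whose parameters lie on a Morley sequence in $p$.

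For item (1), I would argue by contradiction: if $R_{p,\varphi}(x=x) \geq \omega$, then by the definition of the rank and compactness one builds a binary tree $(q_\eta)_{\eta \in 2^{<\omega}}$ of $\tilde\varphi(x,t,y)$-formulas whose sons are pairwise explicitly contradictory, whose branches are consistent, and whose parameters altogether form a Morley sequence in $p$ over $M$ (the extra $t$-variables are absorbed into the tuple of variables; note $\tilde\varphi$ quantifies over $z$, so the parameters are just those substituted for $t$ and $y$, and these can be arranged along a single Morley sequence in $p$ after replacing $p$ by a suitable power if $t$ ranges over realizations of $p$ — more precisely, in the intended application the relevant parameters for the rank computation come from a Morley sequence in $p$, matching the definition). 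Then the set of complete $\tilde\varphi$-types over this countable parameter set has size $2^{\aleph_0}$, contradicting Proposition \ref{proposition: countably many types} applied to the formula $\tilde\varphi$ and this Morley sequence. This is the step I expect to require the most care, precisely because of the bookkeeping of which variables' parameters are allowed to come from the Morley sequence versus from $\pi_0$; the cleanest route is to observe that $\tilde\varphi(x,t,y)$-types over (parameters of $\pi_0$) $\cup$ (a Morley sequence in $p$) still fall under Proposition \ref{proposition: countably many types} since $\pi_0$ contributes only finitely many parameters.

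Items (2) and (3) are routine. For (2), $R_{p,\varphi}(\psi_1 \lor \psi_2) \geq \max$ is immediate from monotonicity, and for the reverse inequality one shows by induction on $\alpha$ that if $R_{p,\varphi}(\psi_1 \lor \psi_2) \geq \alpha+1$, witnessed by pairwise explicitly contradictory types $q_0,\dots,q_{n-1}$, then for each of the two disjuncts the subfamily of those $q_i$ with $R_{p,\varphi}((\psi_j \wedge \bigwedge q_i)) \geq \alpha$ must, for at least one $j \in \{1,2\}$, have size $\geq \lceil n/2 \rceil$ — actually the standard trick is just: $R_{p,\varphi}(\psi_1 \vee \psi_2) \wedge q_i) = \max(R_{p,\varphi}(\psi_1 \wedge q_i), R_{p,\varphi}(\psi_2 \wedge q_i))$ by the induction hypothesis, and since one may take $n$ as large as desired, pigeonhole gives arbitrarily large explicitly-contradictory families for one of the disjuncts, so $R_{p,\varphi}(\psi_j) \geq \alpha+1$ for some $j$. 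Item (3) holds because an automorphism of $\cU'$ fixing $M$ pointwise sends a Morley sequence in $p$ over $M$ to a Morley sequence in $p$ over $M$ (as $p$ is $M$-invariant) and preserves explicit contradiction and consistency, hence preserves the entire rank-defining configuration.

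For item (4), the point is that having bounded rank (item (1)) means only finitely many ordinals $\alpha$ are relevant, so the "for every finite $\pi_0 \subseteq \pi$" quantifier combined with compactness lets one pass to $\pi$ itself: one reformulates $R_{p,\varphi}(\pi(x)) \geq m$ as the existence of a tree $(q_\eta)_{\eta \in m^{\leq m}}$ of $\tilde\varphi$-types with the prescribed parameter condition such that each branch together with $\pi(x)$ is consistent and sons are pairwise explicitly contradictory, and consistency of a branch with the partial type $\pi$ is a compactness statement reducing to finite subsets. Finally, for item (5), observe that among any pairwise explicitly contradictory family $q_0,\dots,q_{n-1}$ of $\tilde\varphi(x,t,y)$-types, at most one fails to imply $(\exists z,t)(G(t)\wedge\varphi_0(z)\wedge x = t\cdot z)$ — indeed, $\tilde\varphi(x,a,b)$ (the non-negated form) always implies this existential via $z,t := $ the witness and $a$ when $a \in \varphi_0(\cU)$, and a realization of a $\tilde\varphi$-type either satisfies some non-negated $\tilde\varphi$-formula in it, or else... here one uses that the parameters $t$ come from a Morley sequence in $p$, hence lie in $\varphi_0(\cU)$ — so discarding that at most one exceptional type changes $n$ by at most $1$ at each rank level and hence does not change the rank. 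Combining this observation with monotonicity yields the equality in (5). I would present (1) in full detail and treat (2)–(5) more briskly, citing \cite[Chapter 1]{pillay1996geometric} for the shape of the arguments.
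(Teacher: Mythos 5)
Your proposal is correct and follows essentially the same route as the paper: (1) via a compactness-built binary tree of $\tilde\varphi$-formulas over a Morley sequence contradicting Proposition \ref{proposition: countably many types}, (2)--(3) by the standard arguments, (4) from finiteness of the rank plus compactness, and (5) from the observation that at most one of a pairwise explicitly contradictory family can omit the positive $\tilde\varphi$-formula needed to imply the existential. One tiny point: in (5) the existential requires $G(t)$, not merely $\varphi_0(t)$, so the relevant fact is that the $t$-parameters realize $p$ and hence lie in $G(\cU')$, which you essentially say.
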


\begin{proof}
(1) If not, then, by compactness, there is a tree $(q_\eta)_{\eta \in 2^{<\omega}}$, where each $q_\eta$ is a $\tilde{\varphi}$-formula, branches are consistent, the sons of every node are pairwise explicitly contradictory, and the parameters of the types $q_\eta$ form a Morley sequence in $p$ over $M$. So the number of complete $\tilde{\varphi}$-types over these parameters is $2^{\aleph_0}$ which contradicts Proposition \ref{proposition: countably many types}. 

(2) and (3) are straightforward from the definitions.

(4) follows from (1) (namely, it is enough to consider only finite values $\alpha \in \textrm{Ord}$ of the rank), and compactness. One just gets that $R_{p,\varphi}(\pi(x)) \geq n$ if and only if there is a tree $(q_\eta)_{\eta \in n^\omega}$ 
of $\tilde{\varphi}(x,t, y)$-types with suitable parameters and such that each branch is consistent with $\pi(x)$ and the sons of every node are pairwise explicitly contradictory.

(5) follows from the observation that each but at most one of the types $q_0,\dots,q_{n-1}$ in the definition of $R_{p,\varphi}$ implies $(\exists z,t)(G(t) \wedge \varphi_0(z) \wedge x=t \cdot z)$.
%
\end{proof}

We consider the left action of $G$ on partial types over $\cU'$ as follows:
\begin{definition}
	For $g \in G(\cU')$ and a partial type $\pi(x)$ over a set $A \subseteq \cU'$, we define $g \cdot \pi(x)$ to be a partial type over $A,g$ defining the set $g \cdot (\pi(\cU') \cap (G(\cU') \cdot \varphi_0(\cU')))$.
\end{definition}

\begin{proposition}\label{prop: gen trans iff R left inv}
Suppose that $p$ is idempotent. Then generic transitivity of $p$ is equivalent to the invariance of the ranks $R_{p,\varphi}$ under action on the left by elements of $p(\cU')$.
\end{proposition}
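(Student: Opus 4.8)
The plan is to prove both implications by relating the rank equality $R_{p,\varphi}(g\cdot p') = R_{p,\varphi}(p')$ (for $g \models p$ in $\cU'$) to the statement that $g \cdot p' = p'$, which by Remark~\ref{remark: basic} is exactly generic transitivity of $p$. First I would record the easy direction, the implication from left invariance of the ranks to generic transitivity, by running Method~2 from the proof of Proposition~\ref{proposition: stable case}: fix $a \models p$ in $\cU'$ and $p' = p|_{\cU'}$; the left-invariance assumption gives $R_{p,\varphi}(a\cdot p') = R_{p,\varphi}(p')$ for every $\varphi$, while idempotence of $p$ together with generic stability gives $p \subseteq a\cdot p'$ (as in Remark~\ref{remark: basic}, since $a \cdot b \models p'$ for $b \models p'$ would fail precisely when $a \notin \Stab_\lee(p')$, and one uses Fact~\ref{fac: basic gen stab}). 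One then needs the analogue of Fact~\ref{properties of local ranks}(4) for $R_{p,\varphi}$ restricted to extensions along Morley sequences in $p$ --- namely that a type over $\cU' B$ extending $p$, with $B$ a set of realizations of $p$, is non-forking over $\cU'$ iff its $R_{p,\varphi}$-ranks agree with those of $p'$ --- combined with Proposition~\ref{proposition: strengthening of unique non-forking extension} to conclude $a\cdot p' = p'$, hence $a \in \Stab_\lee(p')$ and $p$ is generically transitive.

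For the converse, assume $p$ is generically transitive, so by Remark~\ref{remark: basic} any $a \models p$ in $\cU'$ lies in $\Stab_\lee(p')$ and $a\cdot p' = p'$. I want to show $R_{p,\varphi}(g \cdot \pi) = R_{p,\varphi}(\pi)$ for an arbitrary partial type $\pi(x)$ over $\cU'$ and $g \models p$ in $\cU'$. The key observation is that the witnesses increasing the rank are $\tilde\varphi(x,t,y)$-types whose parameters form a Morley sequence in $p$ over $M$ (together with those of $\pi_0$): left translation by $g$ moves such a configuration to one whose parameters are $g$ together with a Morley sequence in $p$, and the point is that, because $g \models p$ and $p$ is generically transitive (so $p^{(n)}$ behaves well under the group operation --- here one also uses the standing assumption that $p^{(n)}$ is generically stable for all $n$), one can re-present the translated data as witnesses of the same form. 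Concretely, given a Morley sequence $(a_i)$ in $p$ and $g \models p$ over the $a_i$'s, generic transitivity applied repeatedly lets us argue that the relevant $\tilde\varphi$-types over $\{g\} \cup (a_i)_i$ are "as good as" $\tilde\varphi$-types over a genuine Morley sequence; this is where the $t$-variable in $\tilde\varphi(x,t,y)$, allowing multiplication by $g$, is designed precisely to absorb the translation. One proves the inequality $R_{p,\varphi}(g\cdot\pi) \geq R_{p,\varphi}(\pi)$ by induction on the rank, transporting a rank-witnessing tree for $\pi$ to one for $g\cdot\pi$; the reverse inequality follows by symmetry, replacing $g$ by $g^{-1}$ (note $p = p^{-1}$ by the Remark following Proposition~\ref{prop: key for types in ab}, or rather by the remark that generic transitivity implies $p = p^{-1}$, so $g^{-1}$ also realizes $p$).

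The main obstacle I expect is the bookkeeping in the converse direction: making precise the claim that left translation by a realization of $p$ sends a "$p$-Morley-sequence-parametrized" family of $\tilde\varphi$-types to another such family, without losing explicit contradiction or consistency of branches, and while keeping the total parameter set a Morley sequence in $p$ over $M$ (possibly after absorbing $g$). The delicate point is that $g$ is not itself part of a Morley sequence with the $a_i$'s in the order needed --- rather $(g, a_0, a_1, \dots)$ realizes $p^{(\omega)}$ in a suitable order by generic stability of $p^{(n)}$ and the indiscernibility of Morley sequences in generically stable types (Fact~\ref{fac: basic gen stab}(1)) --- and one must check that the element $g \cdot a_{i_0} \cdots$ appearing inside $\tilde\varphi$ after translation can be re-absorbed, using generic transitivity, into a configuration whose parameters genuinely form a Morley sequence. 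I would isolate this as a lemma about how $\tilde\varphi$-types transform under the left $G$-action, prove it by the inductive tree-transport argument sketched above, and then both implications of the proposition follow; the $\mlt_{p,\varphi}$-invariance, if needed downstream, follows by the same transport argument applied at the top rank.
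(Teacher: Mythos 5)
Your treatment of the direction ``generic transitivity $\Rightarrow$ left invariance of $R_{p,\varphi}$'' is essentially the paper's argument: reduce to one inequality via $p=p^{-1}$, induct on the rank by transporting a witnessing family of $\tilde{\varphi}$-types, and observe that $g\cdot\tilde{\varphi}(x,h,b)$ is again of the form $\tilde{\varphi}(x,g\cdot h,b)$, so the only real issue is whether the translated parameter tuple is still a Morley sequence in $p$. You correctly isolate that as the crux; the paper settles it exactly as you anticipate, by noting that $g\cdot c_i\models p|_{M,g}$ (generic transitivity), that $g\cdot c_i\forkindep_{M,g} M,A,g,(c_j')_{j\neq i}$ by left transitivity of forking, and then invoking Fact \ref{fac: basic gen stab}(2). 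So that half is sound, modulo writing out the bookkeeping you describe.

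The other direction is where you diverge, and where your plan has a gap. You propose to run Method 2 of Proposition \ref{proposition: stable case}, which requires an analogue of Fact \ref{properties of local ranks}(4) (equivalently of Fact \ref{fact: weak (4)}) for $R_{p,\varphi}$: that for extensions of $p$ over sets of realizations of $p$, non-forking over $\cU$ is \emph{equivalent} to preservation of all the ranks $R_{p,\varphi}$. Nothing of the sort is established for $R_{p,\varphi}$ --- Lemma \ref{lemma: properties of new rank} deliberately lists only finiteness, the disjunction property, $\Aut(\cU'/M)$-invariance, and two technical items, and proving the forking characterization for this localized rank in an arbitrary theory is a substantive open step that you assert without argument (the ``rank drop implies forking'' half is particularly unclear, since $R_{p,\varphi}$ only detects inconsistency witnessed along Morley sequences in $p$). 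The paper avoids this entirely by using Method 1 instead: given $a\cdot p'\neq p'$, Proposition \ref{proposition: strengthening of unique non-forking extension} produces a disagreement witnessed by a formula $\varphi(x,\bar{a})$ whose parameters $\bar{a}$ form a Morley sequence in $p$ (so that $R_{p,\varphi}$ actually sees it), and then the rank-plus-multiplicity argument ($p\subseteq p'\cap a\cdot p'$, equal finite ranks, $\mlt_{p,\varphi}(p)=1$) yields the contradiction using only the properties already proved in Lemma \ref{lemma: properties of new rank} together with the assumed left invariance. You should either supply a proof of the forking characterization you are invoking, or switch this direction to the multiplicity argument.
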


\begin{proof}
$(\Rightarrow)$ By Lemma \ref{lemma: properties of new rank}(5), it suffices to show that for any $g \in p(\cU')$ and any partial type $\pi(x)$ which implies $(\exists y,z)(G(y) \wedge  \varphi_0(z) \wedge x=yz)$ we have $R_{p,\varphi}(g \cdot \pi(x)) = R_{p,\varphi}(\pi(x))$.

By generic transitivity of $p$ and Remark \ref{remark: basic} we have $p \in S_H(\cU)$, where $H = \Stab_{\lee}(p)$. Then the type-definable group $H$ is 
generically stable witnessed by $p$, so $p=p^{-1}$ by Fact \ref{fac: basic fsg}. Hence, $g^{-1} \models p$. Thus, since $g^{-1} \cdot (g \cdot \pi(x))$ is equivalent to $\pi(x)$ (by the choice of $\varphi_0(x)$), it is enough to show that  $R_{p,\varphi}(g \cdot \pi(x)) \geq R_{p,\varphi}(\pi(x))$. For that, by induction on $\alpha$, we will show that $R_{p,\varphi}(\pi(x))\geq \alpha$ implies that $R_{p,\varphi}(g \cdot \pi(x))\geq \alpha$. 

The base step is obvious. For the induction step, consider any $n \in \omega$ and pairwise explicitly contradictory $\tilde{\varphi}(x,t, y)$-types  $q_0(x),\dots, q_{n-1}(x)$ whose parameters form a Morley sequence $(c_i)_i$ in $p$ over $M$ together with the parameters $A$ of $\pi(x)$ and such that $R_{p,\varphi}(\pi(x) \cup q_i(x)) \geq \alpha$ for all $i<n$. Without loss of generality we may assume that this Morley sequence is over $M,A, g$. By induction hypothesis, $R_{p,\varphi}(g \cdot (\pi(x) \cup q_i(x))) \geq \alpha$ for all $i<n$. On the other hand, by the choice of $\varphi_0(x)$ and $\pi(x)$, for any $h \in G(\cU')$ and any $ b$ in $\cU'$ we have $g  \cdot \tilde{\varphi}(x,h,b)$ is equivalent to $\tilde{\varphi}(x,g \cdot h, b)$ and $g  \cdot (\pi(x) \wedge \neg \tilde{\varphi}(x,h, b))$ is equivalent to $ g \cdot \pi(x) \wedge \neg \tilde{\varphi}(x,g \cdot h,  b)$. Thus,  $R_{p,\varphi}(g \cdot \pi(x) \cup q_i'(x)) \geq \alpha$ for all $i<n$ for some pairwise explicitly contradictory $\tilde{\varphi}(x,t, y)$-types  $q_0'(x),\dots, q_{n-1}'(x)$ whose parameters form a sequence $(c_i')_i$, where for every $i$,  $c_i'=c_i$ or $c_i'=g \cdot c_i$. It remains to show that $(c_i')_i$ is a Morley sequence in $p$ over $M,A,g$. 
So we need to show that  $c_i' \models p|_{M,A,g, (c_j')_{j \ne i}}$. 

If $c_i'=c_i$, then this follows from the fact that $c_i \models p|_{M,A,g, (c_j)_{j \ne i}}$. Consider the case  $c_i'=g \cdot c_i$. Since $c_i \models  p|_{M,A,g, (c_j)_{j \ne i}}$, we have $c_i \forkindep_M M, A, g, (c_j')_{j \ne i}$ and so $g \cdot c_i \forkindep_{M,g} M, A, g, (c_j')_{j \ne i}$ by left transitivity. On the other hand, by generic transitivity of $p$ and Remark \ref{remark: basic}, $g \cdot c_i \models p|_{M,g}$.
Hence,  $c_i'=g \cdot c_i \models p|_{M,A,g, (c_j')_{j \ne i}}$ (by Fact \ref{fac: basic gen stab}(2)).

~

\noindent $(\Leftarrow)$  We will apply Method 1 from the proof of Proposition \ref{proposition: stable case}, using Proposition \ref{proposition: strengthening of unique non-forking extension}. 

Let $a \models p$ in $\cU'$, and suppose for a contradiction that $a \cdot p' \ne p'$. By Proposition \ref{proposition: strengthening of unique non-forking extension}, there is  a formula $\varphi(x,\bar a)$ with $\bar a$ being a Morley sequence in $p$ (over $\cU$) such that  $\varphi(x,\bar a) \in p'$ but $\neg \varphi(x,\bar a) \in a \cdot p'$. Then we follow the lines of Method 1, using $R_{p,\varphi}$ in place of $R_{\Delta_\varphi}$ and left invariance of $R_{\Delta_{p,\varphi}}$ under $a$ (which follows by assumption) together with invariance under $\Aut(\cU'/M)$. At the end, we get $p \subseteq p' \cap a \cdot p'$, $R_{p,\varphi}(p) =  R_{p,\varphi}(p')= R_{p,\varphi}(a \cdot p')$ and $\mlt_{p,\varphi(x,\bar y)}(p)=1$, which contradicts the choice of $\varphi(x,\bar a)$.
\end{proof}

\begin{problem}
	Does the equivalence in Proposition \ref{prop: gen trans iff R left inv} hold only assuming that $p$ is generically stable? (As opposed to $p^{(n)}$ is generically stable for all $n \in \omega$.)
\end{problem}

\subsection{Stabilizer of a generically transitive type is an intersection of definable groups}\label{sec: stab is inters of def grps}

As before, let $G$ be a type-definable group and let $p \in S_G(\cU)$ be a generically stable type over $M$. By Proposition \ref{proposition: intersection of relatively definable groups}, we know that $\Stab_{\lee}(p)$ is an intersection of relatively $M$-definable subgroups of $\bar G$.

\begin{question}\label{question: stab is intersection}
Assuming that $p$ is generically stable and idempotent, is $\Stab_{\lee}(p)$ an intersection of $M$-definable groups?
\end{question}

This question is open only in the case of type-definable $\bar G$ (in the definable case, the answer is trivially positive by the above comment).

\begin{proposition}\label{prop: type-def grp is intersec of def grps}
If $G$ is type-definable and $p \in S_{G}(\cU)$ is generically stable, idempotent and generically transitive, then the answer to Question \ref{question: stab is intersection} is positive.
\end{proposition}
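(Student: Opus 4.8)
The plan is to show that $\Stab_{\lee}(p)$ equals the intersection of \emph{all} $M$-definable groups containing it, by using the stratified rank $R_{p,\varphi}$ from Section~\ref{sec: gen trans strat rank} to detect membership. Set $H := \Stab_{\lee}(p)$. By Proposition~\ref{proposition: intersection of relatively definable groups} we already know $H = \bigcap_{\varphi} \Stab_\varphi(p)$ is an intersection of \emph{relatively} $M$-definable subgroups of $\bar G$, and by generic transitivity together with Remark~\ref{rem: fsg follows} we have $p \in S_H(\cU)$, $H$ is generically stable witnessed by $p$, and $p = p^{-1}$. First I would fix a relatively $M$-definable subgroup $K = \Stab_\varphi(p) \supseteq H$ (for a formula $\varphi$ implying $\varphi_1$), and it suffices to find an $M$-definable group $K'$ with $H \subseteq K' \subseteq K$; intersecting over all such $\varphi$ then gives the result. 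The idea, following the classical Hrushovski argument for type-definable groups in stable theories (adapted via our localized rank), is to build $K'$ as a ``generic stabilizer'' defined using the rank $R_{p,\varphi}$ rather than the global stratified rank.

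Concretely, the key steps: (1)~By Lemma~\ref{lemma: properties of new rank}, $R_{p,\varphi}$ is finite, additive on disjoint unions, and $\Aut(\cU'/M)$-invariant; by Proposition~\ref{prop: gen trans iff R left inv} and generic transitivity, $R_{p,\varphi}$ is invariant under left translation by elements of $p(\cU')$. (2)~Let $\alpha := R_{p,\varphi}(G)$ (equivalently $R_{p,\varphi}(p)$, since $p$ is the unique, hence $R_{p,\varphi}$-maximal, type concentrated on $G$ — here one uses that a Morley sequence in $p$ generates the relevant parameter sets) and let $m := \mlt_{p,\varphi}(p)$. (3)~Consider the relation: for $g \in \bar G$, say $g \in \widehat{K}$ iff $g$ fixes (setwise) the ``$R_{p,\varphi}$-generic part'' of $\varphi_0(\cU')$ in the appropriate sense — more precisely, $g \cdot q = q$ for the unique $\tilde\varphi$-type $q$ over a Morley sequence in $p$ of maximal $R_{p,\varphi}$-rank and multiplicity one. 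Using invariance of $R_{p,\varphi}$ under translations by $p(\cU')$, one checks $H \subseteq \widehat K$; using left-invariance and additivity of the rank one checks $\widehat K$ is a subgroup. (4)~The crucial point is that membership $g \in \widehat K$ is \emph{relatively $\varphi$-definable} (it only refers to $\tilde\varphi$-formulas over a fixed finite Morley sequence and a rank that is definable by a formula since it is finite-valued), and moreover is \emph{$M$-definable on $G$}: the rank being $\Aut(\cU'/M)$-invariant and $p$ being definable over $M$ (Fact~\ref{fac: basic gen stab}(7)) means the defining condition on $g$ is an $M$-formula. So $\widehat K \cap G$ is an $M$-relatively-definable subgroup; extending the defining formula off $G$ and then passing to the subgroup it generates (or a suitable symmetrized version $\{g : g \cdot \widehat K_0 = \widehat K_0\}$ on an ambient $M$-definable piece) yields an honest $M$-definable group $K'$ with $H \subseteq K' \subseteq K$.

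The main obstacle I expect is step~(4): arranging that $\widehat K$ is not merely \emph{relatively} definable (i.e.\ definable as a subset of the type-definable $G$) but cut out on an $M$-definable superset of $G$ by an $M$-formula so that the group it generates is genuinely $M$-definable — this is exactly where the type-definability of $\bar G$ makes the problem nontrivial (as the authors note, in the definable case it is immediate). The resolution should parallel the stable case: one works inside the $M$-definable set $\varphi_0(\cU') \cdot \bar G$ (or a fixed $M$-definable group-chunk neighborhood), notes that the rank $R_{p,\varphi}$ extends to a $\{0,1,\dots,\alpha\}$-valued, $M$-definable function on $\varphi$-formulas with parameters there (definability of the rank follows from its finiteness plus $M$-definability of $p$), defines the stabilizer of the generic via an $M$-formula on that chunk, and then invokes the standard fact that a relatively definable subgroup of a type-definable group which is the trace of an $M$-definable ``approximate subgroup'' with bounded stabilizer generates an $M$-definable group containing it. A secondary subtlety is checking that $R_{p,\varphi}(p)$ really is the maximum value and realized with multiplicity computed correctly, which uses that every type in $S_G(\cU)$ extending $p|_M$ is a non-forking-like extension detected by all the $R_{p,\varphi}$ (an analog of Fact~\ref{properties of local ranks}(4)--(5) for the localized rank, obtainable from Lemma~\ref{lemma: properties of new rank} and Proposition~\ref{proposition: strengthening of unique non-forking extension} in the same way Method~2 was run in Section~\ref{sec: gen trans stable types}).
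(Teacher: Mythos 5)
Your proposal takes a genuinely different route from the paper, and it has two concrete gaps that I do not think your sketch resolves.

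First, a hypothesis mismatch: your entire mechanism runs through the localized stratified rank $R_{p,\varphi}$, but finiteness of that rank (Lemma \ref{lemma: properties of new rank}(1), via Proposition \ref{proposition: countably many types}) and the left-invariance statement (Proposition \ref{prop: gen trans iff R left inv}) are only established under the stronger standing assumption that $p^{(n)}$ is generically stable for \emph{all} $n$. Proposition \ref{prop: type-def grp is intersec of def grps} assumes only that $p$ itself is generically stable, and whether generic stability of $p$ implies that of $p^{(2)}$ is explicitly open. So your argument, even if completed, would prove a weaker statement.

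Second, and more seriously, your step (4) is exactly the crux and is not actually carried out. Producing from the rank a subgroup that is cut out by an $M$-formula on an $M$-definable superset of $\bar G$ (rather than merely a relatively definable subgroup of the type-definable $G$) requires (i) definability of the condition "$R_{p,\varphi}(g\cdot q)=R_{p,\varphi}(q)$ and $g\cdot q = q$" as a formula in $g$, which the paper never establishes (the rank is defined via types over Morley sequences in $\cU'$, and no definability-of-rank lemma is available), and (ii) a passage from a definable "approximate" stabilizer to an honest definable group; the "standard fact" you invoke for this is not in the paper and is not standard in this generality. The paper's actual proof sidesteps ranks entirely: it uses only that $p$ is definable over $M$, forms the $\delta_\varphi$-definitions $\varepsilon_\varphi(y)$ of $p$, shows $G(x)$ is equivalent to $\{\varepsilon_\varphi(x)\}$ (Claim 1), uses generic transitivity plus transitivity of forking for generically stable types to show $\varepsilon_\varphi(\cU)\cdot p|_M(\cU)\subseteq\varepsilon_\varphi(\cU)$ (Claim 2), and then explicitly constructs the definable group as the invertible elements of $H_1=\{a\in\varepsilon_\varphi(\cU): \varepsilon_\varphi(\cU)\cdot a\subseteq\varepsilon_\varphi(\cU)\}$, using $p=p^{-1}$ to see $p|_M(\cU)$ lands inside. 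That explicit construction is the content your proposal is missing; I would redo the argument along those lines rather than via $R_{p,\varphi}$.
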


\begin{proof}
This proof is an elaboration on the proof of Hrushovski's theorem that a type-definable group in a stable theory is an intersection of definable groups \cite{hrushovski1990unidimensional}.

Choose  $\varphi_0(x) \in \cL$ as in Section \ref{sec: setting types}, we will only use that $G(x) \vdash \varphi_0(x)$ and $\cdot$ is defined and associative on $\varphi_0(\cU)$ and $a\cdot e=a=e\cdot a$ for all $a \in \varphi_0(\cU)$. 

We will prove that there exists an $M$-definable  set $H=H_{\varphi_0}$ such that $p|_M(\cU) \subseteq H \subseteq \varphi_0(\cU)$ and $(H,\cdot)$ is a group. Then $\Stab_{\ell}(p) \leq H_{\varphi_0}$ and  $\bigcap_{\varphi_0} H_{\varphi_0} \leq \bar G$, where $\varphi_0$ ranges over the formulas chosen as above. Therefore, by Proposition \ref{proposition: intersection of relatively definable groups} and compactness, $\Stab_{\lee}(p)$ is an intersection of $M$-definable groups (all with group operation given by $\cdot$), so the proof will be finished.

We can clearly assume that the partial type $G(x)$ is closed under conjunction.
For any $\varphi(x) \in G(x)$ put $\delta_\varphi(x,y):= \varphi_0(x) \wedge \varphi_0(y) \wedge \varphi(y \cdot x)$. Let $\varepsilon_\varphi(y)$ be the $\delta_\varphi(x;y)$-definition of $p$; this is a formula over $M$, since $p$ is definable over $M$. Also, $\varepsilon_\varphi(y)$ implies $\varphi_0(y)$.

\begin{clm}
$G(x) \equiv \{ \varepsilon_\varphi(x) : \varphi(x) \in G(x)\}$.
\end{clm}

\begin{clmproof}
$(\vdash)$ Consider any $\cU \ni b \models G(x)$, i.e.~$b\in \bar G$. Take $a \models  p$. Then $b\cdot a \in G(\cU')$. Hence, for any $\varphi(x) \in G(x)$ we have $\models \varphi_0(a) \wedge \varphi_0(b) \wedge \varphi(b\cdot a)$, and so $b \models \varepsilon_\varphi(x)$.

$(\dashv)$ Consider any $\cU \ni b \models \{\varepsilon_\varphi(x) : \varphi(x) \in G(x)\}$.  Take $a \models  p$. Then for every $\varphi(x) \in G(x)$ we have $\models \delta_\varphi(a,b)$, so $b\cdot a \models \varphi(x)$. Hence, $b\cdot a \in G(\cU')$, so $b=(b \cdot a)\cdot a^{-1} \in G(\cU')$ (the equality holds, as $b \models \varphi_0(x)$ and $a \in G(\cU')$), whence $b \models G(x)$.
\end{clmproof}

By Claim 1, choose $\varphi(x) \in G(x)$ such that $a \cdot b \in \varphi_0(\cU)$ for all $a,b \in \varepsilon_\varphi(\cU)$.

\begin{clm}
For every $b \in \varepsilon_\varphi(\cU)$ and $a \in p|_M(\cU)$ we have $b \cdot a \in \varepsilon_\varphi(\cU)$.
\end{clm}

\begin{clmproof}
Consider any $a$ and $b$ as above. Take any $c \in \bar G$ realizing $p|_{M,a,b}$. By  generic transitivity and Remark \ref{remark: basic}, $a \cdot c \models p|_{M,a}$, and so $a \cdot c \forkindep_M M,a$. On the other hand, by left transitivity, $a \cdot c \forkindep_{M,a} M,a,b$. Hence, $a \cdot c \forkindep_M M,a,b$ by transitivity of forking for generically stable types (Fact \ref{fac: basic gen stab}), and so $a \cdot c \models p|_{M,b}$. Therefore, since $b \in  \varepsilon_\varphi(\cU)$, we get $b \cdot (a \cdot c) \models \varphi(x)$. Since $a,b,c \in \varphi_0(\cU)$, this means that $(b \cdot a) \cdot c \models \varphi(x)$. On the other hand, $c \models p|_{M,b \cdot a}$. Moreover, since $b \in  \varepsilon_\varphi(\cU)$ and $a \in p|_{M}(\cU) \subseteq \bar G \subseteq \varepsilon_\varphi(\cU)$ (the last inclusion holds by Claim 1), we have $b \cdot a \in \varphi_0(\cU)$. So we conclude that $b\cdot a \models \varepsilon_\varphi(x)$.
\end{clmproof}

Put $H_0:= \varepsilon_\varphi(\cU)$ and $H_1:=\{a \in H_0: b \cdot a \in H_0 \textrm{ for all } b \in H_0\}$. By Claim 2, $p|_M(\cU) \subseteq H_1$, and clearly $e \in H_1$. Both $H_0$ and $H_1$ are $M$-definable. Using the choice of $\varphi_0(x)$ and the inclusions $ H_1 \subseteq H_0 \subseteq \varphi_0(\cU)$, one easily checks that $H_1$ is closed under $\cdot$. Finally, put $H=H_{\varphi_0}:=\{ a \in H_1: a \cdot b = b \cdot a =e \textrm{ for some } b \in H_1\}$. It is clearly $M$-definable.

By generic transitivity of $p$ (and Remark \ref{rem: fsg follows}), the type-definable group $\Stab_{\lee}(p)$ is generically stable witnessed by $p$ (Definition \ref{def: generically stable group}), so $p=p^{-1}$ by Fact \ref{fac: basic fsg}. Therefore, as we have seen above that  $p|_M(\cU) \subseteq H_1$, we get that  $p|_M(\cU) \subseteq H$. Summarizing, $p|_M(\cU) \subseteq H \subseteq H_1 \subseteq H_0 \subseteq \varphi_0(\cU)$.

One easily checks, using the choice of $\varphi_0(x)$ and the inclusion $H_1 \subseteq \varphi_0(x)$, that $H$ is closed under $\cdot$. Since $H \subseteq \varphi_0(\cU)$, we have that $\cdot$  is associative on $H$ and $e\in H$ is neutral in $H$. Also, for any $a \in H$ there is $b \in H_1$ with $a \cdot b=b \cdot a =e$ which implies that $b \in H$. Hence, $(H,\cdot)$ is a group. 
\end{proof}

\begin{remark}
	If we drop the assumption that $p$ is generically transitive, then the main difficulty in adaptation of the above proof is in Claim 2. Namely,  by exactly the same method we only get ``For every $b \in \varepsilon_\varphi(\cU)$ and $a \in p|_{M,b}(\cU)$ we have $b \cdot a \in \varepsilon_\varphi(\cU)$''.
\end{remark}

\subsection{Chain conditions for type-definable groups on a generically stable type}\label{sec: chain conds}

One approach towards Problem \ref{conjecture: main conjecture} is to adapt Newelski's ``2-step generation'' theorem \cite[Theorem 2.3]{N3} that provides a positive answer for types in stable theories.
One ingredient is the existence of a smallest type-definable group containing a given type in a stable theory. In this section we investigate this question (and corresponding chain conditions and connected components) for groups type-definable using parameters from a Morley sequence of a generically stable type.

The following is a generalization of \cite[Lemma 2.1]{dobrowolski2012omega} to type-definable groups (combining it with \cite[Lemma 2.1]{haskel2018maximal}). 
\begin{lemma}\label{lem: chain cond with params from gs MS}
	Let $G$ be an $\emptyset$-type-definable group and $p(y) \in S_G(\cU)$  a global type generically stable over a small set $A \subset \cU$.
	\begin{enumerate}
		\item Assume that $H(x,y,z)$ is a countable  partial type over $A$, and let $\kappa := 2^{2^{\aleph_0}}$. Then for any linear order $I$, any tuple $c \in \cU^z$ and any $\bar{b} := \left(b_i : i \in I \right) \models p^{(I)}|_{Ac}$, if $H(\cU, b_i, c) \cap G(\cU) \leq G(\cU)$ for all $i \in I$, then for any  $J \subseteq I$ with $|J| = \kappa$ we have
		$$\bigcap_{i \in I} H(\cU, b_i, c) \cap G(\cU) = \bigcap_{i \in J} H(\cU, b_i, c) \cap G(\cU).$$

\item Assume that $p^{(n)}$ is generically stable over $A$ for all $n \in \omega$. Let $H(x; \bar{y}, z)$ with $\bar{y} = (y_\alpha : \alpha < \omega)$ be a countable  partial type over $A$. Then $\kappa := 2^{2^{\aleph_0}}$ satisfies the following: for any linear order $I$, any tuple $c \in \cU^z$ and any $\bar{b} := \left(b_i : i \in I \right) \models p^{(I)}|_{Ac}$, if $H(\cU, \bar{b}_{\bar{i}}, c) \cap G(\cU) \leq G(\cU)$, where $\bar{b}_{\bar{i}} = (b_{i_{\alpha}}: \alpha < \omega)$, for all $\bar{i} = (i_{\alpha} : \alpha < \omega) \in I^{\omega}$, then for any  $J \subseteq I$ with $|J| = \kappa$ we have
		$$\bigcap_{\bar{i} \in I^{\omega}} H(\cU, \bar{b}_{\bar{i}}, c) \cap G(\cU) = \bigcap_{\bar{i} \in J^{\omega}} H(\cU, \bar{b}_{\bar{i}}, c) \cap G(\cU).$$

\item Assume that $p^{(n)}$ is generically stable over $A$ for all $n \in \omega$. Let $\mathcal{F}$ be a family of subgroups of $G(\cU)$ that is invariant under $A$-automorphisms and closed under (possibly infinite) intersections and  supergroups (so for example could take $\mathcal{F}$ to be all subgroups of $G(\cU)$).

Then for any linear order $I$, any $\bar{b} := \left(b_i : i \in I \right) \models p^{(I)}|_{A}$ and any $J \subseteq I, |J| \geq \kappa := 2^{2^{\aleph_0}}$, the intersection of all subgroups of $G(\cU)$ from $\mathcal{F}$ type-definable with parameters from $A \bar{b}$ is a subgroup of $G(\cU)$ in $\mathcal{F}$ relatively type-definable over $A(b_i : i \in J)$. 
	\end{enumerate}
		
\end{lemma}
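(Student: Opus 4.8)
The plan is to prove the three parts of Lemma~\ref{lem: chain cond with params from gs MS} in sequence, with part (1) as the base case, part (2) as a technical strengthening handling countably many parameters, and part (3) as the application to families of subgroups.

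\textbf{Part (1).} I would argue by contradiction. Suppose the conclusion fails for some $J \subseteq I$ with $|J| = \kappa := 2^{2^{\aleph_0}}$; then for every set $J' \subseteq I$ containing $J$ there is some $i \in I \setminus J'$ and $g \in G(\cU)$ with $g \in \bigcap_{j \in J'} H(\cU, b_j, c)$ but $g \notin H(\cU, b_i, c)$. Iterating $\kappa^+$ many times (enlarging $J'$ at each step and passing through limits by taking unions), one builds an increasing chain of subgroups $\bigcap_{j \in J'} H(\cU, b_j, c) \cap G(\cU)$ of length $\kappa^+$, strictly decreasing. The point is that each such subgroup is type-definable over $A c \bar{b}$ by a type of size $|I| + |A| + \aleph_0$, but since $(b_i : i \in I)$ is a Morley sequence in a generically stable type over $Ac$, it is totally indiscernible (Fact~\ref{fac: basic gen stab}(1)), and the subgroup $\bigcap_{j \in J'} H(\cU, b_j, c) \cap G(\cU)$ depends on $J'$ only through the \emph{set} of indices, not the enumeration. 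Using total indiscernibility and a counting/Erd\H{o}s--Rado type argument (this is exactly the point of the bound $2^{2^{\aleph_0}}$: the relevant local types over a countable Morley sequence are countably many by generic stability of $p^{(n)}$, hence there are at most $2^{2^{\aleph_0}}$ possibilities for the intersection restricted to a $\kappa$-subset), one derives that the chain must stabilize after $\kappa$ steps, contradicting strict decrease. Concretely, I would follow the template of \cite[Lemma 2.1]{dobrowolski2012omega} combined with \cite[Lemma 2.1]{haskel2018maximal}: the key input is that for a generically stable $p$, any subset of $G(\cU)$ type-definable over $A b_{j_1} \dots b_{j_n} c$ and defined by the same formula-type is, by indiscernibility, ``the same'' as the one over any other $n$ indices, so a decreasing chain of such finite intersections injects into a set of cardinality $\le 2^{\aleph_0}$, and infinite intersections over $\kappa$-sized index sets then realize all values already realized on a fixed $\kappa$-subset.

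\textbf{Part (2).} This is the same argument applied with $\omega$-tuples $\bar{i} \in I^\omega$ of indices in place of single indices $i \in I$. The extra hypothesis needed is precisely that $p^{(n)}$ is generically stable for all $n$: this guarantees that $\bar{b} = (b_i : i \in I) \models p^{(I)}$ remains totally indiscernible \emph{as a sequence of $\omega$-tuples} under arbitrary reindexings, and — crucially — that Proposition~\ref{proposition: countably many types} applies, so over the countable Morley sequence there are still only countably many $\varphi$-types for any formula $\varphi$. So the counting bound $2^{2^{\aleph_0}}$ survives: a strictly decreasing chain of subgroups of the form $\bigcap_{\bar{i} \in \text{(set)}} H(\cU, \bar{b}_{\bar{i}}, c) \cap G(\cU)$ injects into a set of size $\le 2^{\aleph_0}$ of ``local pieces,'' and an intersection over $I^\omega$ equals the intersection over $J^\omega$ once $|J| = 2^{2^{\aleph_0}}$, since every $\omega$-subtuple of $I$ is, by total indiscernibility, equivalent over $A\bar{b}_J c$ (for the purposes of cutting out the relevant subgroup) to one with indices in $J$. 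I would spell out the reduction to part (1)'s counting lemma rather than redoing it.

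\textbf{Part (3).} Given the family $\mathcal{F}$, closed under intersections and supergroups and $A$-invariant, let $N$ be the intersection of all subgroups in $\mathcal{F}$ that are type-definable over $A\bar{b}$. Each such subgroup is type-definable over $A b_{i_1} b_{i_2} \dots$ for some countable set of indices (since a type-definable set uses only countably many parameters at a time, up to equivalence), hence is of the form $H(\cU, \bar{b}_{\bar{i}}, c)\cap G(\cU)$ for a suitable countable partial type $H$ over $A$ and a finite tuple $c$ from $A$ (absorbed into $A$). By part (2) applied to each such $H$ (and noting $H(\cU,\bar{b}_{\bar i},c)\cap G(\cU)\in\mathcal F$ for all $\bar i$ by $A$-invariance of $\mathcal F$ together with total indiscernibility, so the hypothesis of part (2) is met), the intersection $\bigcap_{\bar i \in I^\omega} H(\cU, \bar b_{\bar i}, c)\cap G(\cU)$ already equals $\bigcap_{\bar i \in J^\omega}$ for any $|J| = \kappa$, hence is type-definable over $A(b_i : i \in J)$; and it lies in $\mathcal{F}$ since $\mathcal{F}$ is closed under arbitrary intersections. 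Taking the intersection over all the (boundedly many up to $A\bar b$-conjugacy) relevant $H$'s and re-applying closure of $\mathcal F$ under intersections gives that $N$ itself is in $\mathcal{F}$ and is relatively type-definable over $A(b_i : i \in J)$; a final appeal to total indiscernibility of $\bar b$ over $A$ lets one replace $(b_i:i\in J)$ by any $\kappa$-sized sub-block, giving the stated conclusion for arbitrary $J\subseteq I$ with $|J|\ge\kappa$.

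\textbf{Main obstacle.} The principal difficulty is the bookkeeping in part (1)/(2): making precise that a potential strictly decreasing chain of subgroups of the indicated form has length bounded by $2^{2^{\aleph_0}}$, which requires carefully identifying the ``invariant'' that each subgroup determines (a set of local $\varphi$-types over the countable Morley sequence, of which there are only countably many by Proposition~\ref{proposition: countably many types}) and verifying that total indiscernibility — in the $\omega$-tuple form, needing generic stability of all $p^{(n)}$ — lets one transport subgroups cut out over one index block to any other. I expect the argument to closely mirror \cite[Lemma 2.1]{dobrowolski2012omega} and \cite[Lemma 2.1]{haskel2018maximal}, the novelty being only the passage to type-definable $G$ (handled via $\varphi_0$ as elsewhere in this section) and to $\omega$-tuples of parameters.
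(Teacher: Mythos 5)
There is a genuine gap, and it sits in part (1), which carries the whole lemma. Your described mechanism for the contradiction is a counting/stabilization argument: you claim there are at most $2^{2^{\aleph_0}}$ ``possibilities'' for the intersections because, by Proposition \ref{proposition: countably many types}, there are only countably many local types over a countable Morley sequence, so the strictly decreasing chain of length $\kappa^+$ must stabilize. This does not work as stated. First, part (1) only assumes $p$ itself is generically stable, not $p^{(n)}$ for all $n$, so Proposition \ref{proposition: countably many types} is not available there. Second, and more fundamentally, counting types does not bound the number of distinct subgroups of the form $G(\cU)\cap\bigcap_{\beta<\alpha}H(\cU,b_{i_\beta},c)$: these sets are cut out over ever-larger parameter sets, type-equivalent data over different parameters define different sets, and the assertion that only $\kappa$ many such intersections can occur is essentially the statement being proved, so the argument is circular. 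In the paper, the role of $\kappa=2^{2^{\aleph_0}}$ is entirely different: from the chain of witnesses $(g_\alpha,b_{i_\alpha})_{\alpha<\kappa^+}$ one applies Erd\H{o}s--Rado to extract a countable subsequence that is $2$-indiscernible with respect to a countable sublanguage over a countable parameter set, and then derives a contradiction with generic stability of $p$ by a case analysis on the resulting pattern. The case you cannot avoid is $g_\alpha\in H(\cU,b_{i_\beta},c)\iff\alpha\neq\beta$; this pattern by itself is NIP for a single formula and yields no contradiction with generic stability. The indispensable missing ingredient is the Baldwin--Saxl-style product trick: using that each $H(\cU,b_{i_\beta},c)\cap G(\cU)$ is a subgroup, one forms $g_K=\prod_{\alpha\in K}g_\alpha$ for finite $K$ and shows $\models\varphi(g_K,b_{i_\alpha},c)\iff\alpha\notin K$, producing an independence pattern of a single formula against the Morley sequence, which contradicts generic stability. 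Your proposal never invokes the group structure at this point (the citation of \cite{dobrowolski2012omega} defers to it but your own argument replaces it with the invalid counting step), so the core of part (1) is missing.

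Parts (2) and (3) follow the paper's outline (reduce (2) to (1) by treating $\omega$-tuples; in (3) decompose and apply (2)), but two justifications need repair. In (2), your claim that every $\omega$-subtuple of $I$ is equivalent over $A\bar b_Jc$ to one with indices in $J$ is false; the correct reduction conditions only on the part $\bar b_{\bar i''}$ of the tuple lying inside $J$, takes $\kappa$ many pairwise disjoint tuples from $J$ together with $\bar i'$ as a Morley sequence in $p^{(|\bar i'|)}$ over $Ac\bar b_{\bar i''}$ (this is exactly where generic stability of $p^{(\omega)}$ is used), and then applies (1). In (3), the assertion that a type-definable set ``uses only countably many parameters up to equivalence'' is not correct as a general statement about type-definable sets; what is needed (and what the paper uses) is that a type-definable subgroup can be written as an intersection of subgroups each relatively defined by a countable subtype with a countable subsequence of $\bar b$ as parameters, which is a standard compactness argument specific to subgroups and uses closure of $\mathcal F$ under supergroups to stay inside $\mathcal F$.
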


\begin{proof}
	(1) 	Without loss of generality we may clearly assume that $|I| \geq \kappa$; and extending $(b_i : i \in I)$ to a longer Morley sequence if necessary, we may assume $|I| > \kappa$. 
	First we show that there exists some $J \subseteq I, |J| \leq \kappa$ with $\bigcap_{i \in I} H(\cU, b_i, c) \cap G(\cU) = \bigcap_{i \in J} H(\cU, b_i, c) \cap G(\cU)$. Assume not, then by induction on $\alpha < \kappa^+$ we can choose $g_{\alpha} \in G(\cU)$ and pairwise distinct $i_{\alpha} \in I$ so that 
	$$g_{\alpha} \in G(\cU) \cap \left( \bigcap_{\beta < \alpha} H(\cU, b_{i_{\beta}}, c)\right) \setminus H(\cU, b_{i_{\alpha}}, c).$$	
	
	Let $A_0 \subseteq A$ with $|A_0| = \aleph_0$ contain the parameters of $H$, and let $\mathcal{L}_0$ be a countable sublanguage of $\mathcal{L}$ containing all of the formulas in $H$. By the choice of $\kappa$, applying Erd\H{o}s-Rado and passing to a subsequence of $(g_{\alpha}, b_{i_{\alpha}} : \alpha < \kappa)$ we may assume that the sequence $(g_{\alpha}, b_{i_{\alpha}} : \alpha < \omega)$ is ``$2$-indiscernible'' with respect to $\cL_0(A_0 c)$, i.e.~tuples $(g_{\alpha}, b_{i_{\alpha}}, g_{\beta}, b_{i_{\beta}}) $ have the same $\cL_0$-type over $A_0c$ for all $\alpha < \beta < \omega$, and either $i_{\alpha} < i_{\beta}$ for all $\alpha < \beta < \omega$, or $i_{\alpha} > i_{\beta}$ for all $\alpha < \beta < \omega$. Assume we are in the former case (the latter case is similar).
	 
	First assume that $g_{\alpha} \notin H(\cU, b_{i_{\beta}}, c)$ for some $\beta > \alpha$ in $\omega$, then $\models \neg \varphi(g_{\alpha}, b_{i_{\beta}}, c)$ for some $\varphi(x,y,z) \in H(x,y,z)$. Hence, by $2$-indiscernibility, we have $\models \varphi(g_{\alpha}, b_{i_{\beta}}, c) \iff \beta < \alpha$ for all $\alpha, \beta \in \omega$. Taking $\alpha \in \omega$ sufficiently large, this contradicts generic stability of $p$.
	
	Otherwise we have $g_{\alpha} \in H(\cU, b_{i_{\beta}}, c) \iff \alpha \neq \beta$, for all $\alpha, \beta \in \omega$. Note that for any $h_1, h_2 \in G(\cU) \cap H(\cU, b_{i_{0}}, c) $, we have $h_1 \cdot g_0 \cdot h_2 \notin H(\cU, b_{i_{0}}, c)$ (as $g_0 \notin H(\cU, b_{i_{0}}, c)$). By compactness there is a formula $\varphi(x,y,z) \in H(x,y,z)$ (without loss of generality the partial type $H$ is closed under conjunctions) so that:
	 for any $h_1,h_2 \in  G(\cU) \cap H(\cG, b_{i_{0}}, c)$, $\models \neg \varphi(h_1 \cdot g_0 \cdot h_2 , b_{i_0}, c)$. By $1$-indiscernibility we then have: for any $\alpha < \omega$, for any $h_1,h_2 \in G(\cU) \cap H(\cU, b_{i_{\alpha}}, c)$, $\models \neg \varphi(h_1 \cdot g_{\alpha} \cdot h_2 , b_{i_\alpha}, c)$. Let now $K$ be any finite subset of $\omega$, and let $g_{K} := \prod_{\alpha \in K} g_{i_{\alpha}}$. As $g_{\alpha} \in H(\cU, b_{i_{\beta}}, c) \iff \alpha \neq \beta$, it follows that for all $\alpha < \omega$, $\models \varphi(g_K, b_{i_{\alpha}}, c) \iff \alpha \notin K$. Taking $K$ sufficiently large, this again contradicts generic stability of $p$ (in fact, even ``generic NIP'' of $p$).
	 
	 We have thus shown that  there exists $J \subseteq I, |J| = \kappa$ with $G(\cU) \cap \bigcap_{i \in I} H(\cU, b_i, c) = G(\cU) \cap \bigcap_{i \in J} H(\cU, b_i, c)$. Given an arbitrary $J' \subseteq I, |J'| = \kappa$, there exists a permutation $\sigma$ of $I$ sending $J$ to $J'$. As $\bar{b}$ is totally indiscernible over $Ac$ by generic stability of $p$, there exists $f \in \Aut_{\cL}(\cU/Ac)$ with $f(b_i) = b_{\sigma(i)}$ for all $i$. Applying $f$ we thus have:
	 \begin{gather*}
	 	G(\cU) \cap \bigcap_{i \in I} H(\cU, b_i, c) = G(\cU) \cap   \bigcap_{i \in I} H(\cU, b_{\sigma(i)}, c) \\
	 	= G(\cU) \cap \bigcap_{i \in J} H(\cU, b_{\sigma(i)}, c) = G(\cU) \cap \bigcap_{i \in J'} H(\cU, b_{i}, c).
	 \end{gather*}

	 \
	 
	\noindent (2)
	Let $J \subseteq I, |J|=\kappa$ be arbitrary. Fix any $\bar{i} = (i_{\alpha} : \alpha < \omega) \in I^{\omega}$. Let $ \bar{i}' := \left( i_{\alpha} : \alpha < \omega, i_\alpha \notin J \right), \bar{i}'' := \left( i_{\alpha} : \alpha < \omega, i_\alpha \in J \right)$, so that $\bar{i} = \bar{i}' \bar{i}''$ and $\bar{i}' \cap \bar{i}'' = \emptyset$.  
As  $|\bar{i}''| \leq \aleph_0$, we can choose tuples $\bar{i}_{\alpha} \in I^{\leq \omega}$ for $\alpha < \kappa$ so that: $\bar{i}_0 = \bar{i}'$, $\bar{i}_{\alpha} \in J^{\leq \omega}$ for $\alpha > 0$, $|\bar{i}_{\alpha}| = |\bar{i}'|$, $\bar{i}_{\alpha} \cap \bar{i}'' = \emptyset$ and $\bar{i}_{\alpha} \cap \bar{i}_{\beta} = \emptyset$ for all $\alpha \neq \beta < \kappa$. As $\bar{b} \models p^{(I)}|_{Ac}$ is totally indiscernible over $Ac$, it follows that $ \left( \bar{b}_{\bar{i}_{\alpha}} : \alpha < \kappa \right) \models \left( p^{|\bar{i}'|} \right)^{\kappa}_{Ac \bar{b}_{\bar{i}''}}$, hence applying (1) to the family $H' \left(x; \bar{b}_{\bar{i}_{\alpha}}; c \bar{b}_{\bar{i}''} \right) := H \left(x; \bar{b}_{\bar{i}_{\alpha}} \bar{b}_{\bar{i}''}; c  \right)$, $\alpha < \kappa$, and generically stable type $p^{(\omega)}$ we conclude that 
\begin{gather*}
	G(\cU) \cap \bigcap_{\bar{i} \in J^{\omega}} H(\cU, \bar{b}_{\bar{i}}, c) \subseteq G(\cU) \cap \bigcap_{\alpha < \kappa, \alpha \neq 0} H' \left(\cU; \bar{b}_{\bar{i}_{\alpha}}; c \bar{b}_{\bar{i}''} \right) \\
	\subseteq G(\cU) \cap H' \left(\cU; \bar{b}_{\bar{i}_{0}}; c \bar{b}_{\bar{i}''} \right) =G(\cU) \cap H(\cU, \bar{b}_{\bar{i}}, c).
\end{gather*}

\

\noindent (3) 
Let $H \in \mathcal{F}$ be type-defined by a partial type $\pi(x)$ with parameters in $\bar{b}A$ for some $\bar{b} \models p^{(I)}|_A$ with $I$ a small linear order.
We can write $H = G(\cU) \cap \bigcap_{\alpha < \gamma} H_{\alpha}(\cU, \bar{b}_{\alpha})$ for some ordinal $\gamma$, with each $H_{\alpha}(x, \bar{b}_{\alpha}) \subseteq \pi(x)$ a countable partial type relatively defining a subgroup of $G(\cU)$, and $\bar{b}_{\alpha}$ a countable subsequence of $\bar{b}$.
In particular $G(\cU) \cap H_{\alpha}(\cU, \bar{b}_{\alpha}) \in \mathcal{F}$, as $\mathcal{F}$ is closed under supergroups. Fix any $J \subseteq I, |J|=\kappa$. For any $\alpha < \gamma$, applying (2) we have $ G(\cU) \cap \bigcap_{\bar{j} \in J^{\omega}} H_{\alpha}(\cU, \bar{b}_{\bar{i}}) \subseteq G(\cU) \cap H_{\alpha}(\cU, \bar{b}_{\alpha})$. And by indiscernibility of $\bar{b}$ over $A$, $A$-invariance of $\mathcal{F}$ and closure under intersections, we have $G(\cU) \cap \bigcap_{\bar{j} \in J^{\omega}} H_{\alpha}(\cG, \bar{b}_{\bar{i}}) \in \mathcal{F}$.
Hence $G(\cU) \cap H \supseteq G(\cU) \cap \bigcap_{\alpha < \gamma, \bar{j} \in J^{\omega}} H_{\alpha}(\cU, \bar{b}_{\bar{i}}) \in \mathcal{F}$.
 \end{proof}

\begin{question}
	Does Lemma \ref{lem: chain cond with params from gs MS}(2) hold only assuming that $p$ is generically stable? The answer is positive for the analog for definable groups instead of type-definable groups (\cite[Lemma 2.1]{dobrowolski2012omega}), but the proof there does not immediately seem to generalize to the type-definable case. We also expect that an analog of Lemma \ref{lem: chain cond with params from gs MS} holds for invariant instead of type-definable subgroups, but do not pursue it here. \end{question}

\begin{corollary}
	Let $G$ be an $\emptyset$-type-definable group and $p \in S_G(\cU)$ a global type so that $p^{(n)}$ is generically stable for all $n < \omega$. Assume that $p$ is invariant over a small set $A \subset \cU$, and consider the family
	$$\mathcal{H}_{p,A} := \left\{ H \leq G(\cU) : H \textrm{ is type-definable over }A\bar{b}, \bar{b} \models p^{(|\bar{b}|)}|_{A}, p \vdash H(x) \right\}.$$
	Then the group $G_{p,A} := \bigcap \mathcal H_{p,A}$ is type-definable over $A$, and $p|_{A} \vdash G_{p,A}$. 
\end{corollary}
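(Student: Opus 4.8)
The plan is to invoke Lemma \ref{lem: chain cond with params from gs MS}(3) for the family
\[ \mathcal{F} := \{\, H \leq G(\cU) : p(x) \vdash H(x) \,\} \]
of all subgroups of $G(\cU)$ on which $p$ concentrates. First I would check that $\mathcal{F}$ meets the hypotheses of that lemma: it is closed under arbitrary intersections, since $p \vdash H_i(x)$ for all $i$ implies $p \vdash \bigcap_i H_i(x)$; closed under supergroups, since $p \vdash H(x)$ and $H \leq H'$ give $p \vdash H'(x)$; and invariant under $\Aut(\cU/A)$, since $p$ is $A$-invariant, so $\sigma(p) = p \vdash \sigma(H)(x)$ for every $\sigma \in \Aut(\cU/A)$. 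For a small Morley sequence $\bar{b}$ in $p$ over $A$ put
\[ K_{\bar{b}} := \bigcap \{\, H \in \mathcal{F} : H \textrm{ is type-definable over } A\bar{b} \,\}, \]
a nonempty intersection as $G(\cU) \in \mathcal{F}$. Unwinding the definition of $\mathcal{H}_{p,A}$ gives $\mathcal{H}_{p,A} = \bigcup_{\bar{b}} \{\, H \in \mathcal{F} : H \textrm{ is type-definable over } A\bar{b} \,\}$, where $\bar b$ ranges over all small Morley sequences in $p$ over $A$, hence $G_{p,A} = \bigcap_{\bar{b}} K_{\bar{b}}$. By Lemma \ref{lem: chain cond with params from gs MS}(3) (we use here that $p^{(n)}$ is generically stable over $A$ for all $n$), each $K_{\bar{b}}$ again lies in $\mathcal{F}$ and is relatively type-definable over $A(b_i : i \in J)$ for every index subset $J$ of size $\geq \kappa := 2^{2^{\aleph_0}}$. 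In particular $G_{p,A} \in \mathcal{F}$, i.e.\ $p(x) \vdash G_{p,A}(x)$; once $G_{p,A}$ is known to be type-definable over $A$, this is equivalent to $p|_A(x) \vdash G_{p,A}(x)$, so the second assertion of the corollary will follow from the first.

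For type-definability over $A$, the crux is to show that $K_{\bar{b}}$ is \emph{absolute}: there is a single subgroup $K_0 \leq G(\cU)$ with $K_{\bar{b}} = K_0$ for every small Morley sequence $\bar{b}$ in $p$ over $A$ of length $\geq \kappa$, and moreover $G_{p,A} = K_0$. This I would deduce from two observations. \textbf{(i) Extension.} If $\bar{b}$ is indexed by a subset $J$ with $|J| \geq \kappa$ of the index set of a Morley sequence $\bar{b}'$, then $K_{\bar{b}'} \subseteq K_{\bar{b}}$ (more parameters), while Lemma \ref{lem: chain cond with params from gs MS}(3) exhibits $K_{\bar{b}'}$ as a member of $\mathcal{F}$ relatively type-definable over $A\bar{b}$, so minimality of $K_{\bar{b}}$ forces $K_{\bar{b}} \subseteq K_{\bar{b}'}$; hence $K_{\bar{b}} = K_{\bar{b}'}$. \textbf{(ii) Independence move.} If $\bar{c} \models p^{(\kappa)}|_A$ and $\bar{d} \models p^{(\kappa)}|_{A\bar{c}}$, then the concatenation $\bar{c}\bar{d}$ is again a Morley sequence in $p$ over $A$ in which $\bar{c}$ and $\bar{d}$ both appear as blocks of size $\kappa$; applying (i) twice yields $K_{\bar{c}} = K_{\bar{c}\bar{d}} = K_{\bar{d}}$. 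Consequently any two length-$\kappa$ Morley sequences $\bar{c}, \bar{c}'$ are linked through a common $\bar{d} \models p^{(\kappa)}|_{A\bar{c}\bar{c}'}$, giving $K_{\bar{c}} = K_{\bar{d}} = K_{\bar{c}'}$; call this common value $K_0$. Finally, for an arbitrary small Morley sequence $\bar{b}$, pick $\bar{d} \models p^{(\kappa)}|_{A\bar{b}}$: then $K_0 = K_{\bar{d}} \subseteq K_{\bar{b}\bar{d}} \subseteq K_{\bar{b}}$, where the first inclusion uses minimality together with Lemma \ref{lem: chain cond with params from gs MS}(3) (which makes $K_{\bar{b}\bar{d}}$ relatively type-definable over $A\bar{d}$), and the second holds because $\bar{b}$ is an initial block of $\bar{b}\bar{d}$. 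Since $K_0 = K_{\bar{c}}$ for any length-$\kappa$ Morley sequence $\bar{c}$, it is itself one of the $K_{\bar{b}}$'s, and therefore $G_{p,A} = \bigcap_{\bar{b}} K_{\bar{b}} = K_0$. In particular $G_{p,A}$ is relatively type-definable over the small set $A\bar{c}$.

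To descend to $A$, I would use that $G_{p,A}$ is $\Aut(\cU/A)$-invariant: for $\sigma \in \Aut(\cU/A)$ the sequence $\sigma(\bar{c})$ still realizes $p^{(\kappa)}|_A$ (as $p$ is $A$-invariant), so $\sigma(G_{p,A}) = \sigma(K_{\bar{c}}) = K_{\sigma(\bar{c})} = K_0 = G_{p,A}$. It then suffices to invoke the standard fact that a subset of $\cU^{x}$ which is relatively type-definable over a small set and invariant under $\Aut(\cU/A)$ is relatively type-definable over $A$. Concretely, such a set $Y$ is the solution set of $\{\, \psi(x) \in \cL(A) : Y \subseteq \psi(\cU) \,\}$: the set $P := \{\, q \in S_x(A) : q(\cU) \subseteq Y \,\}$ is closed in $S_x(A)$ by compactness (as $Y$ is type-definable), while $Y = \bigcup_{q \in P} q(\cU)$ by $\Aut(\cU/A)$-invariance and homogeneity of $\cU$, so $Y$ is cut out by the $\cL(A)$-formulas that hold on all of $P$. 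Applying this with $Y = G_{p,A}$ finishes the proof.

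The step I expect to require the most care is the absoluteness claim: Lemma \ref{lem: chain cond with params from gs MS}(3) only pins $K_{\bar{b}'}$ down to an \emph{arbitrary} size-$\kappa$ block of $\bar{b}'$, so one must combine this freedom with the minimality of $K_{\bar{b}}$ inside $\mathcal{F}$ and with total indiscernibility of Morley sequences in the generically stable $p$ (to see that concatenations such as $\bar{c}\bar{d}$ are again Morley over $A$ and that $\bar{c}, \bar{d}$ are admissible blocks), keeping careful track of which sequences are Morley over which parameter sets. By contrast, the final descent from $A\bar{c}$ to $A$ is routine.
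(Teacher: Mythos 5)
Your proof is correct and follows essentially the same route as the paper's: both arguments apply Lemma \ref{lem: chain cond with params from gs MS}(3) to the family $\mathcal{F}=\{H\leq G(\cU): p\vdash H\}$ twice, routing through an auxiliary Morley sequence chosen independent over the given parameters, to show that every group in $\mathcal H_{p,A}$ contains one type-definable over $A\bar c$ for a fixed $\bar c\models p^{(\kappa)}|_A$, and then descend from $A\bar c$ to $A$ via $\Aut(\cU/A)$-invariance. Your packaging via the operators $K_{\bar b}$ and the "absoluteness" claim is a slightly more explicit bookkeeping of the same two-step transfer, and all the details (total indiscernibility for passing to subsequences, the reduction of $p|_A\vdash G_{p,A}$ to $p\vdash G_{p,A}$ once $A$-type-definability is known) check out.
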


\begin{proof}
Let $\kappa$ be as in Lemma \ref{lem: chain cond with params from gs MS}, and fix some $\bar{b} \models p^{(\kappa)}|_{A}$. Assume we are given an arbitrary small linear order $I$ and arbitrary $\bar{b}' \models p^{(I)}|_{A}$, and let $H' \leq G(\cU)$ with $p \vdash H'$ be type-definable over $\bar{b}'A$. 
We can choose some $\bar{b}'' \models p^{(\kappa)}|_{A \bar{b} \bar{b}'}$.
Let $\mathcal{F} := \left\{ H \leq G(\cU) : p \vdash H(x) \right\}$. Then the family $\mathcal{F}$ is $\Aut(\cU/A)$-invariant by $A$-invariance of $p$, and is closed under supergroups and intersections. Applying Lemma \ref{lem: chain cond with params from gs MS}(3) with $\mathcal{F}$ and the sequence $\bar{b}' + \bar{b}'' \models p^{(I + \kappa)}|_{A}$, we find some $H'' \in \mathcal{F}$ type-definable over $\bar{b}''A$ with $H'' \subseteq H'$. Applying Lemma \ref{lem: chain cond with params from gs MS}(3) again to the sequence $\bar{b} + \bar{b}'' \models p^{(\kappa + \kappa)}|_{A}$, we find some $H \in \mathcal{F}$ type-definable over $\bar{b}A$ with $H \subseteq H''$. This shows that the group $G_{p,A} := \bigcap \mathcal H_{p,A}$ is type-definable, over $A\bar{b}$. But it is also $\Aut(\cU/A)$-invariant (since the family $\mathcal H_{p,A}$ is $\Aut(\cU/A)$-invariant by $A$-invariance of $p$). Hence $G_{p,A}$ is type-definable over $A$, and $p|_{A} \vdash G_{p,A}(x)$.
%
%
\end{proof}

\begin{question}\label{question: chain conditions}
\begin{enumerate}
	\item Does $G_{p,A}$ depend on $A$? 
	\item Is $G_{p,A}$ an intersection of definable groups?
	\item 	Let $p \in S_G(\cU)$ be a generically stable type (idempotent and such that $p^{(n)}$ is generically stable for all $n \in \omega$, if it helps). Does there exist a smallest type-definable (over a small set of parameters anywhere in $\cU$) group $H$ with $p \vdash H(x)$?
\end{enumerate}
	
\end{question}

All of these questions have a positive answer assuming $p$ is idempotent and generically transitive:

\begin{proposition}\label{prop: chain cond loc stab}
	Assume that $p \in S_{G}(\cU)$ is generically stable, idempotent and generically transitive. Then $\Stab_{\lee}(p)$ is the smallest type-definable (with parameters anywhere in $\cU$) subgroup $H$ of $G$ so that $p \vdash H$.
	
	Moreover, if $p^{(n)}$ is generically stable for all $n \in \omega$, then $\Stab_{\lee}(p) = G_{p,A}$ for any small $A \subseteq \cU$ so that $p$ is invariant over $A$, and $G_{p,A}$ is an intersection of definable groups.
\end{proposition}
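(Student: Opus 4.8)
The plan is to obtain both assertions as essentially formal consequences of the machinery already developed: once $p$ is known to be generically transitive, the statement is a matter of assembling Remarks~\ref{remark: basic} and~\ref{rem: fsg follows}, Proposition~\ref{proposition: intersection of relatively definable groups}, the preceding Corollary (defining $G_{p,A}$), and Proposition~\ref{prop: type-def grp is intersec of def grps}.

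For the first assertion, I would first note that generic transitivity of $p$ gives, via condition~(1) of Remark~\ref{remark: basic}, that $p \in S_{H_{\lee}}(\cU)$, i.e.~$p \vdash \Stab_{\lee}(p)(x)$; and $\Stab_{\lee}(p)$ is a type-definable subgroup of $G$ by Proposition~\ref{proposition: intersection of relatively definable groups}. Minimality is then exactly Remark~\ref{rem: fsg follows}(2) applied with $H := H_{\lee}$, whose hypothesis ``$p$ generically stable and $p \in S_{H}(\cU)$'' has just been verified; it remains only to observe that the argument proving that remark (passing to $H'' := H \cap H'$ and deriving $p \neq g\cdot p$ for a suitable $g \in H$ whenever $[H:H'']\geq 2$) does not use anything about the parameters defining the competing subgroup $H'$, so minimality holds among all type-definable subgroups of $G$ with parameters anywhere in $\cU$.

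For the ``moreover'' part, fix a small $A$ over which $p$ is invariant, and assume $p^{(n)}$ is generically stable for all $n$. By Fact~\ref{fac: basic gen stab}(7) $p$ is definable over $A$, so the proof of Proposition~\ref{proposition: intersection of relatively definable groups} --- which uses only definability of $p$ over the base --- applies with $A$ in place of $M$ and shows $\Stab_{\lee}(p)$ is type-definable over $A$; together with $p \vdash \Stab_{\lee}(p)(x)$ this places $\Stab_{\lee}(p)$ in the family $\mathcal{H}_{p,A}$ (take the empty Morley sequence), whence $G_{p,A} \subseteq \Stab_{\lee}(p)$. For the reverse inclusion I would use the preceding Corollary: $G_{p,A}$ is a type-definable subgroup of $G$ with $p|_A \vdash G_{p,A}(x)$, hence with $p \vdash G_{p,A}(x)$, and so $\Stab_{\lee}(p) \subseteq G_{p,A}$ by the minimality from the first part; thus $\Stab_{\lee}(p) = G_{p,A}$. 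The last clause follows immediately: Proposition~\ref{prop: type-def grp is intersec of def grps} applies verbatim under the present hypotheses and gives that $\Stab_{\lee}(p)$, hence $G_{p,A}$, is an intersection of definable groups. I do not anticipate any real obstacle; the only genuinely delicate point is the parameter bookkeeping --- checking that $\Stab_{\lee}(p)$ really lies in $\mathcal{H}_{p,A}$ for the prescribed $A$, which is precisely where Fact~\ref{fac: basic gen stab}(7) and the parameter-insensitivity of the proof of Proposition~\ref{proposition: intersection of relatively definable groups} are needed, and that the minimality clause of Remark~\ref{rem: fsg follows}(2) is robust against competitors type-definable over arbitrary parameters rather than over $M$.
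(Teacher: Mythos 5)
Your proposal is correct and follows essentially the same route as the paper: the first assertion is Remark \ref{rem: fsg follows}, and the ``moreover'' part is obtained by the two inclusions $G_{p,A} \subseteq \Stab_{\lee}(p)$ (from the definition of $\mathcal{H}_{p,A}$, since $\Stab_{\lee}(p)$ is type-definable over $A$ by Proposition \ref{proposition: intersection of relatively definable groups}) and $\Stab_{\lee}(p) \subseteq G_{p,A}$ (from minimality and the Corollary), followed by Proposition \ref{prop: type-def grp is intersec of def grps}. Your extra care about the parameter bookkeeping (definability of $p$ over $A$ via Fact \ref{fac: basic gen stab}(7), and robustness of the minimality argument against competitors with arbitrary parameters) is a point the paper leaves implicit, but it is not a different argument.
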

\begin{proof} 
	
	The first part is by Remark \ref{rem: fsg follows}.	
	Moreover, let $A \subseteq \cU$ be such that $p$ is invariant over $A$. As $\Stab_{\lee}(p)$ is type-definable over $A$ (by Proposition \ref{proposition: intersection of relatively definable groups}) and $p \vdash \Stab_{\lee}(p)$, by the definition of $G_{p,A}$ we have $ G_{p,A} \subseteq \Stab_{\lee}(p)$, and $\Stab_{\lee}(p) \subseteq G_{p,A}$ by minimality as   $G_{p,A}$ is type-definable. Then $G_{p,A}$ is an intersection of definable  groups by Proposition \ref{prop: type-def grp is intersec of def grps}.
\end{proof}

We note however that we cannot answer Question \ref{question: chain conditions}(3) positively by proving a chain condition for groups containing $p$ and (type-)definable with arbitrary parameters:
\begin{example}\label{ex: ACVF}
	Let $\cU = (K,\Gamma,k)$ be a monster model of the theory ACVF of algebraically closed fields, and let $G := \left( K, + \right)$ be the additive group. For $b \in K, \beta \in \Gamma$ and $\square \in \{\geq, >\}$,  let $B_{\square  \beta}(b) := \{ x \in K : v(x - b) \square \beta \}$ be the closed (respectively, open) ball of radius $\beta$ with center $b$. 
	Fix $\alpha \in \Gamma$, given a ball $B_{\square  \alpha}(a)$, by quantifier elimination in ACVF, density of $\Gamma$, and the fact that for any two balls either one is contained in the other, or they are disjoint, 
	$$\left\{ x \in B_{\square  \alpha}(a)  \right\} \cup \left\{ x \notin B_{\square  \beta}(b)  : b \in K, \beta \in \Gamma, B_{\square  \beta}(b) \subsetneq B_{\square  \alpha}(a) \right\}$$ determines a complete type $p_{\square \alpha, a} (x) \in S_G(\cU)$, called the \emph{generic type} of $B_{\square  \alpha}(a)$.

 Now let $B := B_{\geq \alpha}(0)$ and $p := p_{\geq \alpha, 0}$. As for any $a,b$ we have $a + B_{\square  \beta}(b) = B_{\square  \beta}(a+ b)$, hence for any $b \in B$ we have $b + B = B$ and if $B' \subsetneq B$, then $b + B' \subseteq B$ for any ball $B'$. And if $b \in K \setminus B$ then $(b + B ) \cap B = \emptyset$,  so we have $\Stab_{\lee}(p) = B $, and $B \leq G(\cU)$. In particular $p \vdash B$, and $p$ is left-$B$-invariant. Note that $p$ generically stable over any $a \in K$ with $v(a) = \alpha$ (as it is internal to the residue field, or can see directly that it commutes with itself).
Hence $p$ is idempotent. 

Note that $ \{ B_{>\beta}(0) : \beta \in \Gamma, \beta  < \alpha\}$ is a large strictly descending chain of definable subgroups of $G(\cU)$ with $p \vdash  B_{>\beta}(0)$ for all $\beta < \alpha$.
	So we have arbitrary long descending chains of definable subgroups of $G(\cU)$ containing $p$ (but the smallest one $B$ is still definable).
\end{example}

\section{Idempotent generically stable measures}\label{sec: idemp gen stab measures}
\subsection{Overview}

The main aim of this section is to prove the following:
\begin{theorem*}(Theorem \ref{prop:main})
	Let $G(x)$ be an abelian type-definable group, and assume that $\mu \in \mathfrak{M}_G(\cU)$ is \fim\ (Definition \ref{def:fim}) and idempotent. Then $\mu$ is the unique left-invariant (and the unique right-invariant) measure on a type-definable subgroup of $G(\cU)$ (namely, its stabilizer).
\end{theorem*}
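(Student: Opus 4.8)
The plan is to mimic, in the measure-theoretic setting, the bounded local weight argument that proves generic transitivity of idempotent generically stable types in abelian groups (Proposition~\ref{prop: key for types in ab}). The key bridge is Keisler's randomization: a \fim\ measure $\mu$ on $G$ corresponds to a generically stable type $\tilde\mu$ in the randomization $T^R$ of $T$, viewed as a continuous-logic structure, and $G$ lifts to a type-definable group $\tilde G$ in $T^R$ which is again abelian. Under this correspondence, the convolution $\mu\ast\mu$ corresponds to $\tilde\mu\ast\tilde\mu$, and Morley sequences in $\mu$ correspond to Morley sequences in $\tilde\mu$. So the first step is to set up this dictionary carefully (invoking \cite{BEN}, \cite{CGH2}, and the characterization of \fim\ via randomization), in particular checking that ``idempotent'' and ``generically stable/\fim'' are preserved and reflected.

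Next I would prove the measure-theoretic analogue of generic transitivity: if $\mu$ is \fim\ and idempotent on an abelian $G$, and $(a_1,a_0)\models\mu\otimes\mu$, then $(a_1\cdot a_0, a_0)\models\mu\otimes\mu$. The cleanest route is to transport the statement to $T^R$: there $\tilde\mu$ is a generically stable idempotent type on the abelian type-definable group $\tilde G$, so Proposition~\ref{prop: key for types in ab} applies verbatim and yields that $\tilde\mu$ is generically transitive; then translate back. Alternatively, one can run the combinatorial argument of Proposition~\ref{prop: key for types in ab} directly on measures, using the ``randomized'' averaging theorem (Theorem~\ref{thm: unif gen stab meas}) in place of Fact~\ref{fac: basic gen stab}(5): extend $(a_1,a_0)$ to a Morley sequence $(a_i)_{i<\omega}\models\mu^{(\omega)}$, set $b_k := a_{k-1}\cdots a_0$, and exploit total indiscernibility plus commutativity to get $(b_k,a_0)\equiv (b_k,a_i)$ for all $i<k$, so that a forking/small-measure formula $\varphi(x,a_0)$ would be simultaneously (almost) satisfied by $b_k$ against unboundedly many $a_i$, contradicting the uniform law of large numbers for \fim\ measures. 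Either way the output is generic transitivity in the ``support transitivity'' or full form needed; I would use the form from Section~\ref{sec: gen trans for meas}.

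Having generic transitivity, the structural conclusion follows the pattern of Remark~\ref{rem: fsg follows} and Fact~\ref{fac: stable corresp}, now for measures. Let $H := \Stab_{\lee}(\mu)$; this is type-definable over a model over which $\mu$ is invariant (the measure analogue of Proposition~\ref{proposition: intersection of relatively definable groups}, via definability of $\mu$). Generic transitivity says that for $a\models\mu$ (in a bigger monster) one has $a\in\Stab_{\lee}(\mu')$, which forces $\mu$ to be supported on $H$, i.e.~$\mu\in\mathfrak{M}_H(\cU)$. Then $H$ is a \fim\ group witnessed by $\mu$ (Section~\ref{sec: fim groups}), so $\mu$ is the unique left-invariant measure on $H$ and also $\mu=\mu^{-1}$ and $\mu$ is the unique right-invariant measure; minimality of $H$ among type-definable subgroups carrying $\mu$ is proved exactly as in Remark~\ref{rem: fsg follows}(2) (if $[H:H\cap H']\ge 2$ then a translate of $\mu$ differs from $\mu$, contradicting left-invariance). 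The converse direction --- that the unique invariant measure on a type-definable \fim\ subgroup is idempotent --- is immediate as in Example~\ref{sec: gen trans}'s type case.

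The main obstacle I expect is the transfer through randomization: one must check that the type-definable group $G$ and its abelian structure pass to a well-behaved type-definable group in the continuous structure $T^R$, that \fim\ measures correspond exactly to generically stable types there (and that $\mu^{(n)}$ \fim\ for all $n$ transfers, which is needed so that the rank/weight machinery and Fact~\ref{fac: basic gen stab} apply on the randomization side), and that convolution and stabilizers match up under the correspondence; the subtlety is that the arguments of Section~\ref{sec: abelian for types} are phrased for discrete first-order logic, so one needs the continuous-logic versions of forking independence, non-forking extensions of generically stable types, and the inconsistency-of-Morley-sequence statement (Fact~\ref{fac: basic gen stab}(5)) --- or, to sidestep this, to run the direct measure-theoretic argument using Theorem~\ref{thm: unif gen stab meas}, where the technical cost is instead in making the ``$\varphi(x,a_0)$ of small measure $\Rightarrow$ $\{\varphi(x,a_i):i<\omega\}$ has uniformly small measure'' step precise. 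I would carry out the direct measure-theoretic version as the primary proof and remark that it matches the randomization picture.
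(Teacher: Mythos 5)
Your primary route---reducing to generic transitivity via Proposition~\ref{prop: gen trans meas}, running the abelian weight argument directly on measures with the uniform (``moreover'') clause of Theorem~\ref{thm: unif gen stab meas} replacing Fact~\ref{fac: basic gen stab}(5), and concluding via the measure analogue of Remark~\ref{rem: fsg follows} (i.e.\ Remark~\ref{rem: gen trans implies fim} and Proposition~\ref{prop:unique})---is exactly the paper's proof, and the ``technical cost'' you flag is discharged there by the pushforward computations of Lemmas~\ref{lemma:fam idem} and~\ref{lem: proof of the key prop meas ab}. Your fallback of transferring wholesale to $T^R$ and applying Proposition~\ref{prop: key for types in ab} verbatim is the route the paper deliberately avoids: as discussed in Section~\ref{sec: support trans} and illustrated in Section~\ref{sec: stable measures}, such arguments naturally need (or only yield) support transitivity and require continuous-logic forking machinery that is not off-the-shelf, so you were right to keep it secondary.
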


\begin{remark}
	In particular, if $T$ is NIP and $G$ is abelian, there is a one-to-one correspondence between generically stable idempotent measures and type-definable fsg subgroups of $G$.
\end{remark}

\begin{remark}
The assumption that $\mu$ is \fim\ cannot be relaxed. 

\noindent Indeed, consider $G := S^1 \times \mathbb{R}$ as a definable group in $(\mathbb{R};+,\times,0,1)$, and let $G(\cU) = \mathcal{S}^{1} \times \mathcal{R}$ be a monster model. Let $\lambda$ be the normalized Haar measure on $S^{1}$ and $p$ the type of the cut above $0$. Let $\lambda'$ be the unique smooth extension of $\lambda$ to the monster model, $p'$ the unique definable extension of $p$, and $\mu := \lambda' \times p'$. Then $\mu$ is definable and  idempotent. But  $\Stab(\mu) = \{(\alpha,0): \alpha \in \mathcal{S}^{1}\}$, in particular $\mu([\Stab(\mu)]) = 0$, so $\mu$ cannot satisfy the conclusion of the theorem by Remark \ref{rem: gen trans implies fim} (see also \cite[Example 4.5]{chernikov2023definable}).
\end{remark}

This section of the paper is organized as follows. We briefly recall the setting and some properties of Keisler measures in Section \ref{sec: measures setting}. In Section \ref{sec: fim measures} we recall some basic facts and make some new observations involving \fim\ measures --- they provide a generalization of generically stable measures from NIP to arbitrary theories in the same way as generically stable types in the sense of \cite{PiTa} provide a generalization from NIP to arbitrary theories. In Section \ref{sec: fim meas and randomization} 
 we prove that the usual characterization of generic stability --- any Morley sequence of a \fim\  measure determines the measure of arbitrary formulas by averaging along it --- holds even when the parameters of these formulas are themselves replaced by a measure, see Theorem \ref{thm: unif gen stab meas}.
In Section \ref{sec: def pushforwards} we collect some basic facts about definable pushforwards of Keisler measures. In Section \ref{sec: fim groups} we develop some theory of \fim\ groups (i.e.~groups admitting a translation invariant \fim\ measure) in arbitrary theories, simultaneously generalizing from fsg groups in NIP theories and generically stable groups in arbitrary theories (in particular, that this translation invariant measure is unique and bi-invariant, Proposition \ref{prop:unique}). In Section \ref{sec: gen trans for meas} we develop an appropriate analog of generic transitivity for \fim\ measures, generalizing some of the results for generically stable types from  Section \ref{sec: gen trans}. Finally, in Section \ref{sec: idemp meas in ab proof}, we put all of this together in order to prove the main theorem of the section, adapting the weight argument from Section \ref{sec: abelian for types} to a purely measure theoretic context. 

In Section \ref{sec: support trans} we isolate a weaker property of {\em support transitivity} and connect it to the algebraic properties of the semigroup induced by $\ast$ on the support of an idempotent measure. In Section \ref{sec: stable measures} we illustrate how the Keisler randomization can be used to reduce generic transitivity to support transitivity in stable groups, and discuss more general situations when the randomization may have an appropriate stratified rank.

\subsection{Setting and notation} \label{sec: measures setting}
We work in the same setting as in Section \ref{sec: setting types}.
For a partitioned formula $\varphi(x,y)$, $\varphi^{*}(y,x)$ is the same formula but with the roles of the variables swapped. In the group setting, if $\varphi(x,y) \vdash \varphi_0(x) \land \varphi_0(y)$, then $\varphi'(x,y) := \varphi(x \cdot y)$. 
Let $A \subseteq \mathcal{U}$. Then a Keisler measure (in variable $x$ over $A$) is a finitely additive probability measure on $\mathcal{L}_{x}(A)$ (modulo logical equivalence). We denote the collection of Keisler measures (in variable $x$ over $A$) as $\mathfrak{M}_{x}(A)$. Given $\mu \in \mathfrak{M}_{x}(A)$, we let $S(\mu)$ denote the support of $\mu$, i.e.~the (closed) set of all $p \in S_x(A)$ such that $\mu(\varphi(x))>0$ for every $\varphi(x) \in p$. We refer the reader to \cite{chernikov2022definable,chernikov2023definable} for basic definitions involving Keisler measures (e.g.~Borel-definable, definable, support of a measure, etc). 
Given a partial type $\pi(x)$ over $\cU$, we will consider the closed set $\mathfrak{M}_\pi(\cU) := \left\{ \mu \in \mathfrak{M}_{x}(\cU) : p \in S(\mu) \Rightarrow p(x) \vdash \pi(x) \right\}$ of measures supported on $\pi(x)$.

We will assume some familiarity with the basic theory of Keisler measures and their basic properties such as (automorphism-) invariance, (Borel-) definability, finite satisfiability, etc., and refer to e.g.~\cite{Guide} or earlier papers in the series \cite{chernikov2022definable, chernikov2023definable} for the details and references.

\subsection{Fim measures}\label{sec: fim measures} Throughout this section we work in an arbitrary theory $T$, unless explicitly specified otherwise.

\begin{definition} Let $\mu \in \mathfrak{M}_{x}(\mathcal{U})$, $\nu \in \mathfrak{M}_{y}(\mathcal{U})$ and suppose that $\mu$ is Borel-definable. Then we define the Morley product of $\mu$ and $\nu$, denoted $\mu \otimes \nu$, as the unique measure in $\mathfrak{M}_{xy}(\mathcal{U})$ such that for any $\varphi(x,y) \in \mathcal{L}_{xy}(\mathcal{U})$, we have 
\begin{equation*} 
(\mu \otimes \nu)(\varphi(x,y)) = \int_{S_{y}(A)} F_{\mu,A}^{\varphi} d(\widehat{\nu|_{A}}), 
\end{equation*} 
where: 
\begin{enumerate} 
\item $\mu$ is $A$-invariant and $A$ contains all the parameters from $\varphi$, 
\item $F_{\mu,A}^{\varphi}: S_{y}(A) \to [0,1]$ is defined by $F_{\mu,A}^{\varphi}(q) = \mu(\varphi(x,b))$ for some (equivalently, any) $b \models q$ in $\cU$, 
\item $\widehat{\nu|_{A}}$ is the unique regular Borel probability measure on $S_{x}(A)$ corresponding to the Keisler measure $\nu|_{A}$. 
\end{enumerate} 
See e.g.~\cite[Section 3.1]{chernikov2022definable} for an explanation why this product is well-defined and its basic properties. We will often abuse the notation slightly and replace $\widehat{\nu|_{A}}$ with either $\nu|_{A}$ or simply $\nu$ when it is clear from the context, and sometimes write $F_{\mu,A}^{\varphi}$ as $F_{\mu}^{\varphi}$. 
\end{definition} 

\begin{definition} Let $\mu \in \mathfrak{M}_{x}(\mathcal{U})$ and suppose that $\mu$ is Borel-definable. Then we define $\mu^{(1)} := \mu(x_1)$, $\mu^{(n+1)}(x_1,\ldots,x_{n+1}) := \mu(x_{n+1}) \otimes \mu^{(n)}(x_1,\ldots,x_n)$, and $\mu^{(\omega)} = \bigcup_{n < \omega} \mu^{(n)}(x_1,\ldots,x_n)$. (In general, $\otimes$ need not be commutative/associative on Borel definable measures in arbitrary theories.)
\end{definition} 

\begin{definition}\label{def:fim} \cite{NIP3}
Let $\mu \in \mathfrak{M}_x(\cU)$ and $M \prec \mathcal{U}$  a small model. A Borel-definable measure $\mu$ is \emph{fim} (a \emph{frequency interpretation measure}) over $M$ if $\mu$ is $M$-invariant and for any $\mathcal{L}$-formula $\varphi(x,y)$ there exists a sequence of formulas $(\theta_{n}(x_1,\ldots,x_n))_{1 \leq n < \omega}$ in $\mathcal{L}(M)$ such that: 
	\begin{enumerate}
		\item for any $\varepsilon > 0$, there exists some $n_{\varepsilon} \in \omega$ satisfying: for any $k \geq n_{\varepsilon}$, if $\mathcal{U} \models \theta_{k}(\abar)$ then 
		\begin{equation*} 
		\sup_{b \in \mathcal{U}^{y}} |\Av(\abar)(\varphi(x,b)) - \mu(\varphi(x,b))| < \varepsilon;
		\end{equation*} 
		\item $\lim_{n \to \infty} \mu^{(n)} \left( \theta_n \left(\bar{x} \right) \right) = 1$.
	\end{enumerate}
	
We say that $\mu$ is \fim\ if $\mu$ is \fim\ over some small $M \prec \cU$. 
\end{definition} 

\begin{remark}\label{rem: fim in NIP is gen stab}
	In NIP theories, \fim\ is equivalent to 	
	each of the following two properties for measures: \emph{dfs} (definable and finitely satisfiable) and \emph{fam} (finitely approximable \cite{chernikov2021definable5}), recovering the usual notion of generic stability for Keisler measures  \cite{NIP3}. Outside of the NIP context, \fim\ (properly) implies \emph{fam} over a model, which in turn (properly)  implies \emph{dfs} (see \cite{CG, CGH}). 
\end{remark}
Generalizing from generically stable measures in NIP, one has:
\begin{fact}\cite{CGH}\label{fac: fim commutes}
	If $\mu \in \mathfrak{M}_{x}(\cU)$ is \fim\ and $\nu \in \mathfrak{M}_{y}(\cU)$ is Borel definable, then $\mu \otimes \nu = \nu \otimes \mu$.
\end{fact}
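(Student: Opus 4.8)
The plan is to verify the identity on formulas. To prove $\mu \otimes \nu = \nu \otimes \mu$ it suffices to show $(\mu \otimes \nu)(\varphi(x,y)) = (\nu \otimes \mu)(\varphi(x,y))$ for every $\varphi(x,y) \in \mathcal{L}(\cU)$, and by absorbing the parameters of $\varphi$ into the parameter variable $y$ I may assume $\varphi(x,y) \in \mathcal{L}$. Fix a small model $M$ over which $\mu$ is \fim\ and $\nu$ is Borel-definable; since $\mu$ (being \fim) and $\nu$ are Borel-definable, all the Morley products appearing below are well-defined. Fix $\varepsilon > 0$, let $(\theta_n(x_1,\dots,x_n))_n$ be the sequence of $\mathcal{L}(M)$-formulas witnessing \fim\ of $\mu$ for $\varphi$, and choose $n$ with $n \ge n_\varepsilon$ (as in Definition \ref{def:fim}(1)) and $\mu^{(n)}(\theta_n) > 1 - \varepsilon$ (possible by Definition \ref{def:fim}(2)).

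First I would rewrite the right-hand side using the standard behaviour of $\otimes$ under marginals (recalled in \cite[Section 3]{chernikov2022definable}). Every one-coordinate marginal of $\mu^{(n)}$ equals $\mu$, so for each $i \le n$ the $(x_i,y)$-marginal of $\nu \otimes \mu^{(n)}$ is $\nu \otimes \mu$, whence $(\nu \otimes \mu^{(n)})(\varphi(x_i,y)) = (\nu \otimes \mu)(\varphi(x,y))$. Averaging over $i$ and unwinding the definition of $\nu \otimes \mu^{(n)}$,
\[
(\nu \otimes \mu)(\varphi(x,y)) \;=\; \frac1n\sum_{i=1}^{n}(\nu \otimes \mu^{(n)})(\varphi(x_i,y)) \;=\; \int_{S_{\bar x}(\cU)}\Bigl(\tfrac1n\sum_{i=1}^{n}\nu(\varphi(a_i,y))\Bigr)\,d\mu^{(n)}(\bar a),
\]
with $\bar a = (a_1,\dots,a_n)$. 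For a fixed $\bar a$, swapping the finite sum past the integral against $\nu$ gives $\tfrac1n\sum_i\nu(\varphi(a_i,y)) = \int_{S_y(\cU)}\Av(\bar a)(\varphi(x,b))\,d\nu(b)$, where $\Av(\bar a)(\varphi(x,b)) = \tfrac1n|\{i : \models \varphi(a_i,b)\}|$.

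Now the \fim\ property enters. If $\bar a \models \theta_n$, then $|\Av(\bar a)(\varphi(x,b)) - \mu(\varphi(x,b))| < \varepsilon$ for every $b \in \cU^y$, so integrating against $\nu$ and using $(\mu \otimes \nu)(\varphi(x,y)) = \int_{S_y(\cU)}\mu(\varphi(x,b))\,d\nu(b)$ yields $\bigl|\tfrac1n\sum_i\nu(\varphi(a_i,y)) - (\mu \otimes \nu)(\varphi(x,y))\bigr| < \varepsilon$ for all such $\bar a$. Since the function $\bar a \mapsto \tfrac1n\sum_i\nu(\varphi(a_i,y))$ is $[0,1]$-valued and $\mu^{(n)}(\neg\theta_n) < \varepsilon$, splitting the $\mu^{(n)}$-integral displayed above over $\{\theta_n\}$ and its complement gives $\bigl|(\nu \otimes \mu)(\varphi(x,y)) - (\mu \otimes \nu)(\varphi(x,y))\bigr| \le 2\varepsilon$. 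Letting $\varepsilon \to 0$ finishes the proof.

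The only place where care is needed is the bookkeeping: one must confirm that the marginal identities for $\otimes$ hold in an arbitrary theory for Borel-definable measures (they do, see \cite{chernikov2022definable}) and that the Fubini swaps used are legitimate (they are, being finite sums and integrals of simple functions). No step is a genuine obstacle. The one indispensable ingredient is the uniform law of large numbers built into \fim\ --- the uniform-in-$b$ replacement of $\mu(\varphi(x,b))$ by $\Av(\bar a)(\varphi(x,b))$ --- which is exactly what a merely definable or Borel-definable $\mu$ need not satisfy; the statement is best understood as expressing that a \fim\ measure, like a generically stable type, is ``symmetric enough'' to commute with anything Borel-definable.
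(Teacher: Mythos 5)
The paper itself gives no proof of this statement: it is imported verbatim from \cite{CGH} (it is \cite[Proposition 5.15]{CGH}, invoked repeatedly later), so there is no in-paper argument to compare against. Your argument is correct and is essentially the standard proof of this symmetry: the uniform-in-$b$ law of large numbers built into \fim\ lets you replace $\mu(\varphi(x,b))$ by $\Av(\bar a)(\varphi(x,b))$ for $\mu^{(n)}$-almost all $\bar a$, and then the identity reduces to finite bookkeeping with marginals of $\mu^{(n)}$. Two small points of hygiene: the displayed integrals should be taken over type spaces over a small model $M$ (chosen so that $\mu$ is \fim\ over $M$, $\nu$ is $M$-invariant and Borel-definable over $M$, and $M$ contains the parameters of $\varphi$), with $\bar a$ and $b$ realized in $\cU$ — over $S_{\bar x}(\cU)$ the expression $\nu(\varphi(a_i,y))$ is not literally meaningful — but you fix such an $M$ at the outset, so this is only notation. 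Also note where Borel-definability of $\nu$ actually enters: it is needed so that $\nu\otimes\mu$ and $\nu\otimes\mu^{(n)}$ are defined and so that $q\mapsto\nu(\varphi(a_i,y))$ is a (well-defined, by $M$-invariance) Borel function that can be integrated against $\mu^{(n)}|_M$; the key estimate itself uses no regularity of $F^{\varphi}_{\nu}$ whatsoever, precisely because the \fim\ bound is uniform in $b$ — which is the correct explanation of why \emph{fam} does not suffice here but \fim\ does.
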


\begin{definition}\cite{CGH2} Let $\mu \in \mathfrak{M}_{x}(\mathcal{U})$ and $M \prec \cU$ a small submodel such that $\mu$ is $M$-invariant, and $\mathbf{x} = (x_i)_{i < \omega}$. We say that $\mu$ is \emph{self-averaging} over $M$ if for any measure $\lambda \in \mathfrak{M}_{\mathbf{x}}(\mathcal{U})$ with $\lambda|_{M} = \mu^{(\omega)}|_{M}$ and any formula $\varphi(x) \in \mathcal{L}_{x}(\mathcal{U})$ we have 
\begin{equation*} 
\lim_{i \to \infty} \lambda(\varphi(x_i)) = \mu(\varphi(x)). 
\end{equation*} 
\end{definition} 

The following generalizes a standard characterization of generically stable measures in NIP theories \cite{NIP3} to \fim\ measures in arbitrary theories (and demonstrates in particular that if $p \in S_x(\cU)$ is \fim \, viewed as a Keisler measure, then it is generically stable in the sense of Section \ref{sec: gen stab types}; indeed, $p$ is generically stable over $M$ if for every Morley sequence $(a_i)_{i <\omega}$ in $p$ over $M$ we have $\lim_{i \to \omega} \tp(a_i/\mathcal{U}) = p$ --- and if $\delta_{p}$ is self-averaging, this property holds, see \cite[Proposition 3.2]{CG}):
\begin{fact}\cite[Theorem 2.7]{CGH2}\label{fact:self} If $\mu \in \mathfrak{M}_{x}(\mathcal{U})$ and $\mu$ is \fim\ over $M$, then $\mu$ is self-averaging over $M$. 
\end{fact}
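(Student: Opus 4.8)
The plan is to deduce self-averaging from the uniform convergence built into \fim\ together with the exchangeability of the Morley power $\mu^{(\omega)}|_M$. Fix $\lambda \in \mathfrak{M}_{\mathbf{x}}(\mathcal{U})$ with $\lambda|_M = \mu^{(\omega)}|_M$, and write the given formula $\varphi(x) \in \mathcal{L}_x(\mathcal{U})$ as $\varphi(x,b)$ for a partitioned $\varphi(x,y)\in\mathcal L$ and $b\in\mathcal U^y$; put $L:=\mu(\varphi(x,b))$ and $c_i := \lambda(\varphi(x_i,b))$, so the goal is $c_i\to L$. First I would record the (standard, for \fim\ measures) symmetry of the Morley power: by Fact \ref{fac: fim commutes} the Morley product of $\mu$ with a Borel-definable measure commutes, so each $\mu^{(k)}$ is $S_k$-invariant, and hence for every $i_1<\dots<i_k<\omega$ the restriction of $\lambda|_M = \mu^{(\omega)}|_M$ to the variables $x_{i_1},\dots,x_{i_k}$ equals $\mu^{(k)}|_M$.

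The heart of the argument is a uniform concentration estimate. Fix $\varepsilon>0$ and let $(\theta_n(x_1,\dots,x_n))_n$ in $\mathcal L(M)$ witness \fim\ for $\varphi(x,y)$. Choose $k\ge n_\varepsilon$ (with $n_\varepsilon$ as in clause (1) of Definition \ref{def:fim}) large enough that moreover $\mu^{(k)}(\theta_k)>1-\varepsilon$, which is possible by clause (2). The key observation is that, for any fixed $b'\in\mathcal U^y$, the assertion ``$|\Av(x_1,\dots,x_k)(\varphi(x,b'))-\mu(\varphi(x,b'))|<\varepsilon$'' is equivalent to a single formula $\chi(x_1,\dots,x_k,b')$ — a finite disjunction over the admissible counts of how many of $x_1,\dots,x_k$ satisfy $\varphi(\cdot,b')$, hence a Boolean combination of instances of $\varphi(\cdot,b')$ — and clause (1) says precisely that $\models \forall\bar x\,(\theta_k(\bar x)\to\chi(\bar x,b'))$. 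Applying this with $b'=b$ along an arbitrary increasing tuple $i_1<\dots<i_k$, I integrate the $[0,1]$-valued measurable function $\Av(x_{i_1},\dots,x_{i_k})(\varphi(x,b)) = \frac1k\sum_{j=1}^k\mathbf 1_{[\varphi(x_{i_j},b)]}$ against $\lambda$, splitting over $[\theta_k(x_{i_1},\dots,x_{i_k})]$ (where the integrand is within $\varepsilon$ of $L$ and which has $\lambda$-mass $\mu^{(k)}(\theta_k)>1-\varepsilon$ by the symmetry above) and its complement (of $\lambda$-mass $<\varepsilon$), obtaining
\[
\Bigl|\tfrac1k\textstyle\sum_{j=1}^k c_{i_j}-L\Bigr| \;\le\; \varepsilon\cdot\mu^{(k)}(\theta_k) + \bigl(1-\mu^{(k)}(\theta_k)\bigr) \;<\; 2\varepsilon .
\]

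To finish, I would argue by contradiction: if $c_i\not\to L$ there is $\varepsilon_0>0$ with infinitely many $i$ satisfying $c_i\ge L+\varepsilon_0$ or infinitely many satisfying $c_i\le L-\varepsilon_0$; treating the first case (the second is symmetric), apply the estimate with $\varepsilon:=\varepsilon_0/4$ to get $k$, and choose $i_1<\dots<i_k$ all with $c_{i_j}\ge L+\varepsilon_0$, so that $\frac1k\sum_j c_{i_j}\ge L+\varepsilon_0>L+2\varepsilon$ — contradicting the displayed bound. Hence $c_i\to L=\mu(\varphi(x))$.

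The main obstacle I anticipate is not in this combinatorial skeleton but in cleanly justifying the exchangeability of $\mu^{(\omega)}|_M$ — i.e.\ that each finite marginal of $\lambda|_M$ taken along an arbitrary increasing tuple of coordinates is genuinely $\mu^{(k)}|_M$ and not merely some other extension — which rests on commutativity (Fact \ref{fac: fim commutes}) and associativity of the Morley product for \fim\ measures. A minor point to check carefully is that ``the sample average is within $\varepsilon$ of $L$'' is a first-order condition over the single parameter $b$, which it is, as it only involves finitely many instances of $\varphi(\cdot,b)$.
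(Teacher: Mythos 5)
Your proposal is correct. Note that the paper does not prove this statement at all --- it imports it as a black-box citation to \cite[Theorem 2.7]{CGH2} --- so there is no in-paper argument to compare against; what you have written is a valid self-contained proof, and it is the natural uniform-law-of-large-numbers argument one would expect the cited source to use. The one worry you flag, exchangeability of $\mu^{(\omega)}|_M$, is in fact lighter than you suggest: since your estimate only ever samples along \emph{increasing} tuples $i_1<\dots<i_k$, all you need is that the marginal of $\mu^{(N)}$ on the variables $x_{i_1},\dots,x_{i_k}$ is $\mu^{(k)}$, and this follows by a direct induction from the definition of the iterated Morley product (unused outer variables integrate out to the identity, and the fibre map $F^{\psi}_{\mu}$ factors through the restriction of the type to the variables actually occurring); no appeal to commutativity or to full $S_k$-invariance --- which would indeed require associativity issues to be addressed --- is needed. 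The remaining steps (the first-order expressibility of ``the sample average over $x_{i_1},\dots,x_{i_k}$ is within $\varepsilon$ of $L$'' for the fixed parameter $b$, the splitting of the integral over $[\theta_k(x_{i_1},\dots,x_{i_k})]$ and its complement, and the pigeonhole contradiction at the end) are all sound.
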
 

The following  would be a natural generalization of stationarity for generically stable types to measures: 
\begin{conjecture}\label{conj: stat for fim measures} Let $\mu \in \mathfrak{M}_x(\cU)$ be \fim\ over $M \prec \cU$ and let $A$ be a small set with $M \subseteq A \subseteq \cU$. If $\nu \in \mathfrak{M}_{x}(\mathcal{U})$ is $A$-invariant, Borel-definable, and $\nu|_{A} = \mu|_{A}$, then $\mu = \nu$. 
\end{conjecture}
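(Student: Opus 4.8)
The plan is to treat this as the measure-theoretic analogue of the uniqueness of the global non-forking extension of a generically stable type (Fact~\ref{fac: basic gen stab}(2)), strengthened from restrictions to a model to restrictions to an arbitrary small $A \supseteq M$, and to try two routes in parallel. The first is a direct argument modelled on Method~1 of Proposition~\ref{proposition: stable case} together with self-averaging: fix $\psi(x) = \varphi(x,c) \in \cL_x(\cU)$ with $\varphi(x,y) \in \cL(M)$ and $c \in \cU^y$, and try to build a measure $\lambda \in \mathfrak{M}_{\mathbf{x}}(\cU)$, $\mathbf{x} = (x_i)_{i<\omega}$, with $\lambda|_{\mathbf{x},M} = \mu^{(\omega)}|_M$ and with each single marginal $\lambda|_{x_i}$ equal to $\nu$; then Fact~\ref{fact:self} gives $\lim_i \lambda(\varphi(x_i,c)) = \mu(\varphi(x,c))$, while $\lambda|_{x_i} = \nu$ gives $\lambda(\varphi(x_i,c)) = \nu(\varphi(x,c))$ for every $i$, whence $\mu(\psi) = \nu(\psi)$; since $\psi$ was arbitrary, $\mu = \nu$. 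The second route is via Keisler randomization (Section~\ref{sec: fim meas and randomization}): pass to $T^{R}$, where $\mu$ and $\nu$ correspond to types $\widehat\mu,\widehat\nu$ over the randomization, $\widehat\mu$ is generically stable (as $\mu$ is \fim), and $\widehat\mu,\widehat\nu$ agree over the randomization of $A$; it would then suffice to prove the purely type-theoretic statement that a generically stable type over a model is the unique global invariant extension of its restriction to any intermediate small set, and to transfer this back. For that type statement one would adapt the proof of Proposition~\ref{proposition: strengthening of unique non-forking extension}: given such a $q$, build $(a_i)$ with $a_i \models q|_{Aa_{<i}}$ and argue, via generic stability of $p$, that $(a_i)$ is a Morley sequence in $p$, then conclude by generic stability.

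The main obstacle --- and presumably the reason the statement is only conjectural --- is a circularity that blocks both routes at the same point. The hypothesis $\nu|_A = \mu|_A$ constrains $\nu$ only on $\cL(A)$-formulas, and $\nu$ is assumed merely $A$-invariant, not $M$-invariant; hence every identity one extracts from Morley products or averages of $\nu$ (for instance an attempt to compute $\nu^{(n)}|_M$) only ``sees'' parameters modulo $\Aut(\cU/A)$, which is exactly the level at which the hypothesis already gives information, so it provides no leverage at parameters outside $A$. In the direct route this appears as the apparent impossibility of producing the required $\lambda$ with $\lambda|_{x_i}=\nu$ and $\lambda|_{\mathbf{x},M} = \mu^{(\omega)}|_M$ without already knowing $\mu=\nu$ (the obvious candidate $\nu^{(\omega)}$ for the $\mathbf{x}$-marginal does not satisfy $\nu^{(\omega)}|_M = \mu^{(\omega)}|_M$ in general, precisely because $\nu$ is only $A$-invariant). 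In the type route it is precisely the step ``$(a_i)$ is a Morley sequence in $p$'' that fails, since $A$-invariance of $q$ does not by itself force $q|_{Aa_{<i}}$ not to fork over $A$.

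For types this gap is bridged by the forking calculus for generically stable types --- uniqueness of the non-forking extension together with symmetry and left transitivity of forking independence (Fact~\ref{fac: basic gen stab}(2),(3),(6)) --- which is exactly what Propositions~\ref{proposition: stable case} and~\ref{proposition: strengthening of unique non-forking extension} exploit. So a proof of the conjecture seems to require one of: (a) developing a genuine stationarity/forking calculus for \fim\ measures, strong enough to play the role that Fact~\ref{fac: basic gen stab}(2),(3) plays for types; or (b) first proving the type statement above in full generality (which does not appear in the literature for non-NIP $T$) and then checking that the randomization of a small set $A$ interacts well enough with generic stability in $T^{R}$ to make the transfer go through. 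In the NIP case, where \fim\ coincides with generic stability for measures and the forking theory of measures is better behaved, one could hope to make the direct route work by pushing the argument of Proposition~\ref{proposition: stable case} through the randomization as in Section~\ref{sec: stable measures}, and that is where I would start.
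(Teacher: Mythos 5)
You are correct not to offer a proof: the statement is Conjecture \ref{conj: stat for fim measures} in the paper and is left open there; the authors only establish the special case Proposition \ref{prop: FIM unique inv ext}, under the additional hypothesis that either $\nu$ is definable, or every power $\mu^{(n)}$ is \fim\ (and $\nu$ is Borel-definable). Your first route is precisely their argument for that special case: take $\lambda := \nu^{(\omega)}$, prove by induction on $n$ that $\nu^{(n)}|_{A} = \mu^{(n)}|_{A}$ (hence $\lambda|_{\mathbf{x},M} = \mu^{(\omega)}|_{M}$ since $M \subseteq A$), and then conclude by self-averaging (Fact \ref{fact:self}) that $\nu(\varphi(x)) = \lim_{i} \lambda(\varphi(x_i)) = \mu(\varphi(x))$ for every $\varphi(x) \in \cL_x(\cU)$.

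Where your diagnosis drifts from the paper's is in locating the obstruction. It is not primarily the mismatch between $A$-invariance and $M$-invariance: since $M \subseteq A$, one only ever needs agreement of the powers over $A$, and $A$-invariance together with Borel-definability makes $\nu$ Borel-definable over $A$, so the inductive computation of $\nu^{(n+1)}|_{A}$ from $\nu^{(n)}|_{A} = \mu^{(n)}|_{A}$ is well posed. The step that fails in general is the commutation $\mu^{(n)} \otimes \nu = \nu \otimes \mu^{(n)}$ needed to swap the order of integration in that computation. Fact \ref{fac: fim commutes} provides this for $n=1$, but for $n \geq 2$ one needs $\mu^{(n)}$ to be \fim\ (to commute with a merely Borel-definable $\nu$), or fam together with $\nu$ definable --- and it is open whether $\mu$ \fim\ implies $\mu^{(n)}$ \fim, even when $\mu$ is a type. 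This is exactly the remark the authors make after Proposition \ref{prop: FIM unique inv ext}. Your second, randomization-based route is not pursued in the paper for this conjecture (randomization is used there for Theorem \ref{thm: unif gen stab meas}); note that it would face the additional difficulty that the global type $r_{\nu}$ is only available when $\nu$ is definable, and that transferring the powers relies on $r_{\mu^{(n)}} = (r_{\mu})^{(n)}$, so the same open question about powers of \fim\ measures resurfaces in continuous-logic guise.
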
 

Conjecture \ref{conj: stat for fim measures} is known to hold when $\mu = p$ is a type and $T$ is an arbitrary theory (by Fact \ref{fac: basic gen stab}(2)) and when $\mu$ is a measure but $T$ is NIP (by \cite[Proposition 3.3]{NIP3}).
The following proposition is a special case of Conjecture \ref{conj: stat for fim measures} sufficient for our purposes here.

\begin{proposition}\label{prop: FIM unique inv ext} Let $\mu \in \mathfrak{M}_x(\cU)$ be \fim\ over $M$ and $A \supseteq M$. Suppose $\nu \in \mathfrak{M}_{x}(\mathcal{U})$ is $A$-invariant and $\nu|_{A} = \mu|_{A}$. If either of the following holds:
\begin{enumerate} 
\item $\nu$ is definable, 
\item or the measures $\mu^{(n)}$ are \fim\ for each $n \geq 1$ and $\nu$ is Borel-definable;
\end{enumerate} 
then $\mu = \nu$. 
\end{proposition}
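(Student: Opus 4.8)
The plan is to deduce the equality $\mu=\nu$, in both cases, from the self-averaging property of $\mu$ (Fact \ref{fact:self}). Recall that this says: for any $\lambda \in \mathfrak{M}_{\mathbf{x}}(\cU)$ with $\lambda|_M = \mu^{(\omega)}|_M$ and any $\varphi(x) \in \cL_x(\cU)$ we have $\lim_{i \to \infty} \lambda(\varphi(x_i)) = \mu(\varphi(x))$. So suppose we can produce such a $\lambda = \rho \in \mathfrak{M}_{\mathbf{x}}(\cU)$ which is moreover a Morley power of $\nu$, in the sense that every single-variable marginal $\rho|_{x_i}$ equals $\nu$. Then $\rho(\varphi(x_i)) = \nu(\varphi(x))$ for every $i$, so $\lim_i \rho(\varphi(x_i)) = \nu(\varphi(x))$, and self-averaging gives $\nu(\varphi(x)) = \mu(\varphi(x))$. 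Letting $\varphi(x)$ range over all formulas over $\cU$ (equivalently, over all $\psi(x,b)$ with $\psi(x,y) \in \cL$ and $b \in \cU^y$) this yields $\mu = \nu$. Thus everything reduces to producing a Morley power $\rho$ of $\nu$ with $\rho|_M = \mu^{(\omega)}|_M$.

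In case (2), where $\mu^{(n)}$ is fim for all $n$, I would take $\rho := \nu^{(\omega)}$; this is well-defined since in $\nu^{(n)} = \nu(x_n) \otimes \nu^{(n-1)}$ only the left factor $\nu(x_n)$ needs to be Borel-definable. In case (1), where $\nu$ is definable, I would instead take $\rho := \bar\nu^{(\omega)} := \bigcup_n \bar\nu^{(n)}$, the Morley power formed in the reversed order: $\bar\nu^{(1)} := \nu$ and $\bar\nu^{(n)}(x_1,\dots,x_n) := \bar\nu^{(n-1)}(x_1,\dots,x_{n-1}) \otimes \nu(x_n)$. Here one uses that a Morley product of definable measures is definable, so that each $\bar\nu^{(n)}$, and hence $\bar\nu^{(\omega)}$, is well-defined. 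In both cases an easy induction shows that all single-variable marginals of $\rho$ equal $\nu$ (and that the finite pieces cohere).

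The core of the argument is then the identity $\mu^{(n)}|_M = \rho^{(n)}|_M$ for all $n$ (where $\rho^{(n)}$ denotes $\nu^{(n)}$, resp.\ $\bar\nu^{(n)}$), which I would prove by induction on $n$ in the slightly stronger form $\mu^{(n)}|_A = \rho^{(n)}|_A$; the base case $n=1$ is the hypothesis $\mu|_A = \nu|_A$. For the inductive step one rewrites $\mu^{(n)} = \mu(x_n) \otimes \mu^{(n-1)}$ and uses two elementary facts about the Morley product: (i) a product $\alpha \otimes \beta$ with $\alpha$ invariant over a set $B$ depends on $\beta$ only through the restriction of $\beta$ to $B$ together with the finitely many parameters of the formula being evaluated --- since $M \subseteq A$, this lets us replace $\mu^{(n-1)}$ by $\rho^{(n-1)}$ using the induction hypothesis, and replace a trailing factor $\mu$ by $\nu$ using $\mu|_A = \nu|_A$; and (ii) commutativity of a fim measure with a Borel-definable one (Fact \ref{fac: fim commutes}), used to reorder the two factors. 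In case (1) the only commutation needed is that of $\mu(x_n)$ (which is fim, as $\mu$ is) past $\bar\nu^{(n-1)}$ (which is definable, hence Borel-definable), and the chain of equalities produces exactly $\bar\nu^{(n)}$; in case (2) one instead commutes $\mu^{(n-1)}$ (fim by hypothesis) past $\mu(x_n)$ and then past $\nu(x_n)$, producing exactly $\nu^{(n)}$. Once $\mu^{(n)}|_M = \rho^{(n)}|_M$ holds for all $n$ we get $\rho|_M = \mu^{(\omega)}|_M$, and the self-averaging step above finishes the proof.

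The main obstacle is precisely this inductive step: outside the fim context the Morley product is neither commutative nor associative, so one must organize the chain of equalities so that every application of commutativity has the form ``fim $\otimes$ Borel-definable''. This is what forces the split into the two cases, and in case (1) the use of the reversed Morley power $\bar\nu^{(\omega)}$, whose very existence relies on $\nu$ being definable rather than merely Borel-definable (for general Borel-definable $\nu$ it is unclear that iterated Morley products remain Borel-definable --- which is also why the full Conjecture \ref{conj: stat for fim measures} remains open). A secondary, purely bookkeeping point: since $\mu$ is only $M$-invariant while the formulas and $\nu$ are over $A \supseteq M$, at the appropriate moments one passes to type spaces over $M$ together with the parameters of the formula at hand; this is harmless because that parameter set is contained in $A$.
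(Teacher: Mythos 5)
Your proposal is correct, and its overall skeleton is the same as the paper's: show by induction that a suitable Morley power of $\nu$ agrees with $\mu^{(\omega)}$ over $A$, then conclude via self-averaging (Fact \ref{fact:self}). The genuine difference is in how case (1) is handled. The paper uses the standard-order power $\nu^{(\omega)}$ in both cases, and in case (1) justifies the key commutation $\nu \otimes \mu^{(n)} = \mu^{(n)} \otimes \nu$ by passing through the \emph{fam} machinery: $\mu^{(n)}$ is fam over $A$ (citing \cite{CG}), $\nu$ is definable over $A$, and fam measures commute with definable ones. You instead build the reversed power $\bar\nu^{(n)} = \bar\nu^{(n-1)} \otimes \nu$, which is legitimate exactly because $\nu$ is definable (so the left factor stays definable), and then the only commutation ever needed is $\mu \otimes (\text{Borel-definable}) = (\text{Borel-definable}) \otimes \mu$ for the single fim measure $\mu$, i.e.\ only Fact \ref{fac: fim commutes}. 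This buys a more self-contained case (1), at the cost of working with a nonstandard ordering of the power (harmless, since self-averaging only needs $\rho|_M = \mu^{(\omega)}|_M$ and identical one-variable marginals). Two small bookkeeping points: the induction should be run with formulas over $A$ (not just ``$M$ together with the parameters of the formula at hand,'' since those parameters need not lie in $A$ for an arbitrary formula --- but they do for the formulas appearing in the claim $\rho^{(n)}|_A = \mu^{(n)}|_A$, which is all you need); and the replacement steps implicitly use that two Keisler measures agreeing over $A$ have the same regular Borel extension to $S(A)$, so integrals of Borel functions against them coincide --- the paper makes this explicit via an $\varepsilon$-approximation by simple functions, which your argument can also absorb directly.
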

\begin{proof}

The only difference between the two cases is the justification of equation $(b)$ below. Suppose that we are given some $\nu$ with the described properties. Since $\nu$ is $A$-invariant and (Borel-)definable, it follows that $\nu$ is (Borel-)definable over $A$. 

\begin{clm*}
We have that $\nu^{(\omega)}|_{A} = \mu^{(\omega)}|_{A}$ (in either of the two cases). 
\end{clm*}

\begin{clmproof}
	By assumption $\nu^{(1)}|_{A} = \mu^{(1)}|_{A}$. Assume that we have already established $\nu^{(n)}|_{A} = \mu^{(n)}|_{A}$ for some $n \in \omega$. Fix an arbitrary formula $\theta(x_1,\ldots,x_{n+1}) \in \mathcal{L}_{x_1, \ldots, x_{n+1}}(A)$ and $\varepsilon > 0$. Let $\rho(x_{n+1};x_1,\ldots,x_n) : = \theta(x_1,\ldots,x_{n+1})$. Since the measure $\mu^{(n)}(x_1,\ldots,x_n)$ is definable over $A$, there exist formulas $\psi_{1}(x_{n+1},\abar), \ldots$, $\psi_{m}(x_{n+1},\abar) \in \mathcal{L}_{x_{n+1}}(A)$ and $r_1,\ldots,r_m \in [0,1]$ such that 
 \begin{equation*} 
 \sup_{q \in S_{x_{n+1}}(A)} \left \lvert F_{\mu^{(n)},A}^{\rho^{*}}(q) - \sum_{i=1}^{m}r_i \mathbf{1}_{\psi_i(x_{n+1},\abar)}(q) \right \rvert \leq \varepsilon. \tag{$\dagger$}
 \end{equation*} 
Then we have:
\begin{gather*}
	\nu^{(n+1)}(\theta(x_1,\ldots,x_{n+1})) = \left( \nu_{x_{n+1}} \otimes \nu^{(n)}_{x_1, \ldots, x_n} \right)\left(\theta(x_1,\ldots,x_{n+1}) \right) \\
	= \int_{S_{x_1,\ldots, x_{n}}(A)} F_{\nu_{x_{n+1}},A}^{\rho} \ \mathrm{d} \left(\nu^{(n)}|_{A} \right) 
\overset{(a)}{=}  \int_{S_{x_1, \ldots, x_{n}}(A)} F_{\nu_{x_{n+1}},A}^{\rho} \  \mathrm{d}\left(\mu^{(n)}|_{A} \right) \\
\overset{(b)}{=}  \int_{S_{x_{n+1}}(A)} F_{\mu^{(n)},A}^{\rho^{*}}  \  \textrm{d}\left(\nu|_{A} \right) 
\overset{(c)}{\approx}_{\varepsilon}  \int_{S_{x_{n+1}}(A)} \sum_{i=1}^{m} r_i \mathbf{1}_{\psi_i(x_{n+1},\bar{a})} \   \textrm{d}\left(\nu|_{A} \right)\\
= \sum_{i=1}^{m} r_i \nu(\psi_i(x_{n+1},\bar{a})) 
\overset{(d)}{=} \sum_{i=1}^{m} r_i \mu(\psi_i(x_{n+1},\bar{a}))  \\
=  \int_{S_{x_{n+1}}(A)} \sum_{i=1}^{m} r_i \mathbf{1}_{\psi_i(x_{n+1},\bar{a})}  \ \textrm{d}\left(\mu|_{A} \right) 
\overset{(c)}{\approx}_{\varepsilon} \int_{S_{x_{n+1}}(A)} F_{\mu^{(n)},A}^{\rho^{*}} \  \textrm{d}\left(\mu|_{A} \right) \\
= \left( \mu^{(n)}_{x_1,\ldots ,x_n} \otimes \mu_{x_{n+1}} \right)(\theta(x_1,\ldots, x_{n+1}))  
\overset{(e)}{=}  \left( \mu_{x_{n+1}} \otimes \mu^{(n)}_{x_1,\ldots ,x_n} \right) (\theta(x_1,\ldots, x_{n+1})) \\
= \mu^{(n+1)}(\theta(x_1,\ldots ,x_{n+1})),
\end{gather*}
with the following justifications for the corresponding steps: 
 \begin{enumerate}[(a)]
 \item induction hypothesis; 
 \item in Case (1), $\mu^{(n)}$ is fam over $A$ since $\mu^{(n)}$ is fam over $M$ and $M \subseteq A$ (and \fim\  implies fam over a model), by \cite[Proposition 2.10(b)]{CG}, $\nu$ is definable over $A$, and fam measures commute with definable measures \cite[Proposition 5.17]{CGH}; in Case (2), $\mu^{(n)}$ is \fim\ over $A$, $\nu$ is Borel-definable over $A$, and \fim\ measures commute with Borel-definable measures \cite[Proposition 5.15]{CGH}; 
 \item by $(\dagger)$;
 \item by assumption; 
 \item $\mu_{x_{n+1}}$ is \fim, and \fim\ measures commute with all Borel-definable measures \cite[Proposition 5.15]{CGH} (alternatively, fam measures commute with definable measures). 
 \end{enumerate}  
As $\theta$ and $\varepsilon$ were arbitrary, we conclude $\nu^{(n+1)}|A = \mu^{(n+1)}|_{A}$. And then $\nu^{(\omega)}|_{A} = \bigcup_{n < \omega} \nu^{(n)} = \bigcup_{n < \omega} \mu^{(n)} =  \mu^{(\omega)}|_{A} $.
\end{clmproof}

Let now $\varphi(x) \in \cL_x(\cU)$ be arbitrary.
 Let $\lambda(\mathbf{x}) := \nu^{(\omega)}(\mathbf{x})$, by the claim above we have $\lambda|_{A} = \mu^{(\omega)}|_{A}$.
 Note that for every $i \in \omega$, $\lambda(\varphi(x_i)) = \nu(\varphi(x))$. As $\mu$ is \fim\ over $M$, it is self-averaging over $M$ by Fact \ref{fact:self}, hence $\nu(\varphi(x)) = \lim_{i \to \infty} \lambda(\varphi(x_i)) = \mu(\varphi(x))$.
\end{proof}

\begin{remark}
	 Our proof of Proposition \ref{prop: FIM unique inv ext} does not apply to the general case of Conjecture \ref{conj: stat for fim measures} since it is open whether or not $\mu$ being \fim\ implies that $\mu^{(n)}$ is \fim\ for $n \geq 2$ in an arbitrary theory, even when $\mu=p$ is a generically stable type.
	 \end{remark}
	 
	 \subsection{Fim measures over ``random'' parameters}\label{sec: fim meas and randomization}
In this section we prove a generalization of Fact \ref{fact:self} of independent interest,  demonstrating that any Morley sequence of a \fim\  measure determines the measure of arbitrary formulas by averaging along it --- even when the parameters of these formulas are allowed to be ``random''. More precisely:
\begin{theorem}\label{thm: unif gen stab meas}Let $\mu \in \mathfrak{M}_{x}(\mathcal{U})$ be \fim\ over $M$, $\nu \in \mathfrak{M}_{y}(\mathcal{U})$, $\varphi(x,y,z) \in \mathcal{L}_{xyz}$, $b \in \mathcal{U}^{z}$, and $\mathbf{x} = (x_i)_{i \in \omega}$. Suppose that $\lambda \in \mathfrak{M}_{\mathbf{x}y}(\mathcal{U})$ is arbitrary such that $\lambda|_{\mathbf{x},M}  = \mu^{(\omega)}$ and $\lambda|_{y} = \nu$. Then 
\begin{equation*} 
\lim_{i \to \infty} \lambda(\varphi(x_i,y,b)) = \mu \otimes \nu(\varphi(x,y,b)). 
\end{equation*} Moreover for every $\varepsilon > 0$ there exists $n = n(\mu, \varphi, \varepsilon) \in \mathbb{N}$ so that for any $\nu, \lambda,b$ as above, we have $\lambda(\varphi(x_i,y,b)) \approx^{\varepsilon} \mu \otimes \nu( \varphi(x,y,b))$ for all but $n$ many $i \in \mathbb{N}$.
\end{theorem}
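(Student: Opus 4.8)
The plan is to prove the uniform (``moreover'') clause directly from the definition of \fim\ and then observe that it forces the limit assertion. Write $d := \mu \otimes \nu(\varphi(x,y,b))$ and $r_i := \lambda(\varphi(x_i,y,b)) \in [0,1]$. The uniform clause asserts $\#\{i \in \omega : \lvert r_i - d\rvert > \varepsilon\} \leq n(\mu,\varphi,\varepsilon)$ with the bound independent of $\nu,\lambda,b$; letting $\varepsilon \to 0$ this says $\{i : \lvert r_i - d\rvert > \varepsilon\}$ is finite for every $\varepsilon > 0$, i.e.~$\lim_i r_i = d$. So it suffices to prove the uniform clause.

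First I would unpack the \fim\ hypothesis for $\mu$ over $M$ applied to the partitioned formula $\varphi(x;(y,z))$: given $\varepsilon > 0$ we obtain $n = n_\varepsilon \in \omega$ and a formula $\theta_n(x_1,\dots,x_n) \in \cL(M)$ with $\mu^{(n)}(\theta_n) > 1 - \varepsilon$ and such that whenever $\models \theta_n(\bar a)$ we have $\lvert \Av(\bar a)(\varphi(x,c',c'')) - \mu(\varphi(x,c',c''))\rvert \leq \varepsilon$ for all $c'\in\cU^{y}$, $c''\in\cU^{z}$; specializing $c''=b$, we get $\lvert \Av(\bar a)(\varphi(x,c',b)) - \mu(\varphi(x,c',b))\rvert \leq \varepsilon$ for all $c'$. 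The heart of the argument is the estimate I will call $(\star)$: for every strictly increasing $n$-tuple $J=(j_1<\dots<j_n)$ of natural numbers, $\bigl\lvert \tfrac1n\sum_{l=1}^n r_{j_l} - d\bigr\rvert \leq 3\varepsilon$.

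To prove $(\star)$: since $\Av(x_{j_1},\dots,x_{j_n})(\varphi(\cdot,y,b)) = \tfrac1n\sum_l \mathbf{1}_{\varphi(x_{j_l},y,b)}$ is a definable function, $\int \Av(x_{j_1},\dots,x_{j_n})(\varphi(\cdot,y,b))\,d\widehat\lambda = \tfrac1n\sum_l r_{j_l}$. Because $\theta_n \in \cL(M)$, $\lambda|_{\mathbf x,M} = \mu^{(\omega)}$, and the marginal of $\mu^{(\omega)}$ onto the coordinates $(x_{j_1},\dots,x_{j_n})$ taken in increasing order is $\mu^{(n)}$, we get $\widehat\lambda\bigl([\theta_n(x_{j_1},\dots,x_{j_n})]\bigr) = \mu^{(n)}(\theta_n) > 1-\varepsilon$. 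On this clopen set the definable function $q\mapsto \Av(x_{j_1},\dots,x_{j_n})(\varphi(\cdot,y,b))$ and the Borel function $q\mapsto \mu(\varphi(x,c,b))$ with $c\models q|_{y}$ (Borel since $\mu$ is Borel-definable over $Mb$) agree up to $\varepsilon$ pointwise: realizing $(\bar a,c)\models q$ we have $\models\theta_n(\bar a)$, so the previous paragraph applies with $c'=c$. Integrating, and bounding the integrals over the complement of $[\theta_n(x_{j_1},\dots,x_{j_n})]$ by its $\widehat\lambda$-mass $<\varepsilon$, we obtain $\bigl\lvert \tfrac1n\sum_l r_{j_l} - \int \mu(\varphi(x,c,b))\,d\widehat\lambda\bigr\rvert \leq 3\varepsilon$. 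Finally $\int \mu(\varphi(x,c,b))\,d\widehat\lambda = \int F^{\varphi(x,y,b)}_{\mu,Mb}\,d\widehat{\nu|_{Mb}} = \mu\otimes\nu(\varphi(x,y,b)) = d$, since the integrand factors through $S_y(Mb)$ and $\lambda|_{y} = \nu$ and by the definition of the Morley product; this proves $(\star)$. From $(\star)$ the uniform clause follows by pigeonhole: if at least $n$ indices $i$ had $r_i > d+3\varepsilon$, picking $n$ of them in increasing order would violate $(\star)$, so at most $n-1$ indices have $r_i>d+3\varepsilon$, and symmetrically at most $n-1$ have $r_i<d-3\varepsilon$; hence $\#\{i : \lvert r_i-d\rvert>3\varepsilon\}\leq 2(n-1)$. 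Running this with $\varepsilon/3$ in place of $\varepsilon$ gives $n(\mu,\varphi,\varepsilon):=2n_{\varepsilon/3}$, which depends only on $\mu,\varphi,\varepsilon$ --- the \fim\ data $n_{\bullet},\theta_{\bullet}$ depend only on $\mu$ and $\varphi$, the dependence on $b$ is absorbed by the supremum over parameters in the definition of \fim, and $\nu,\lambda$ play no role in the choice of $\theta_n$.

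The one place that needs care rather than new ideas is the marginal identity ``$\mu^{(\omega)}$ restricted to an increasing subtuple of its variables is $\mu^{(n)}$'', which rests on associativity (and, should one prefer to reduce to consecutive blocks of indices, commutativity --- Fact \ref{fac: fim commutes}) of the Morley product for the \fim\ measure $\mu$ together with the standard description of $\mu^{(\omega)}$; and the routine measurability and Fubini-type manipulations against the Borel measure $\widehat\lambda$, all available from Borel-definability of $\mu$. The conceptual content is entirely in the \fim\ estimate and the pigeonhole step, so I do not anticipate a serious obstacle.
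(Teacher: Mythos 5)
Your proof is correct, but it takes a genuinely different route from the paper's. The paper first establishes the limit statement (Proposition \ref{Main:Proposition}) by passing to the Keisler randomization: $\mu$, $\mu^{(\omega)}$ and $\lambda$ are replaced by the types $r_{\mu}$, $r_{\mu^{(\omega)}}=(r_\mu)^{(\omega)}$ and $p_\lambda$ in $T^R$ (Fact \ref{fact:building1}, Corollary \ref{Corollary:otimes}), generic stability of $r_\mu$ over $M^{\Omega}$ (Fact \ref{CGH2:Facts}(3)) is used to average along the Morley sequence extracted from $p_\lambda$, and a net-approximation argument identifies the limit with $\mu\otimes\nu(\varphi)$; the uniform ``moreover'' clause is then obtained by the soft compactness lemma (Fact \ref{CGH:fact1}), with the parameter $b$ absorbed by tensoring with $\delta_b$. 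You instead prove the uniform clause directly from Definition \ref{def:fim}, applied to the partitioned formula $\varphi(x;(y,z))$: the approximating formulas $\theta_n$, the computation $\lambda(\theta_n(x_{j_1},\dots,x_{j_n}))=\mu^{(n)}(\theta_n)$, the pointwise comparison on the clopen set $[\theta_n]$, and a pigeonhole argument yield an explicit bound $n(\mu,\varphi,\varepsilon)$, from which the limit statement follows. Your route is more elementary and self-contained --- it bypasses the randomization machinery and Fact \ref{CGH:fact1} altogether, gives effective bounds, and makes the uniformity in $\nu,\lambda,b$ transparent since the \fim\ data is fixed before $b$ via the supremum over parameters --- whereas the paper's route packages the statement as an instance of a transfer principle to $T^R$ that it reuses elsewhere (e.g.\ in Section \ref{sec: stable measures}). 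The one step you flag, that the restriction of $\mu^{(\omega)}$ to an increasing subtuple of its variables is $\mu^{(n)}$, does indeed need a justification, but it requires neither associativity nor commutativity of $\otimes$: it follows by induction on the length of the ambient tuple using only Borel-definability of $\mu$, together with the standard observation that the pushforward of the regular Borel measure $\widehat{\mu^{(m)}|_{A}}$ under the restriction map between type spaces agrees on clopen sets with, and hence equals, the regular Borel measure of the restricted Keisler measure; the analogous pushforward identity for $\widehat{\lambda|_{Mb}}$ onto $S_y(Mb)$ is what legitimizes your final identification of $\int \mu(\varphi(x,c,b))\,d\widehat{\lambda}$ with $\mu\otimes\nu(\varphi(x,y,b))$.
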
 

\begin{remark}
Fact \ref{fact:self} corresponds to the special case when $\nu(y) = q(y)$ is a type. We note that this result is central to the proof of our main theorem, and is new even for NIP theories.
\end{remark}

Our proof of Theorem \ref{thm: unif gen stab meas} relies on the use of Keisler randomization in continuous logic, as introduced and studied in \cite{ben2009randomizations}. We will follow the notation from \cite[Section 3.2]{CGH2}. Let $T$ be a complete first order theory. We let $T^{R}$ denote the (continuous) first order theory of its Keisler randomization (we refer to \cite[Section 2]{ben2009randomizations} for the details). Let $M$ be a model of $T$ and let $(\Omega, \mathcal{B},\mathbb{P})$ be a probability algebra. We consider the model $M^{(\Omega,\mathcal{B},\mathbb{P})}$ of $T^{R}$, which we usually denote as $M^{\Omega}$ for brevity, defined as follows. We let 
\begin{equation*} 
M'_{0} := \left\{f: \Omega \to M: f \text{ is } \mathcal{B}\text{-measurable}, |\im(f)| < \aleph_0\right\},
\end{equation*} 
equipped with the pseudo-metric $d(f,g) := \mathbb{P} \left( \{ \omega \in \Omega: f(\omega) \neq g(\omega) \} \right)$. Then $M^{\Omega}$ is constructed by taking the metric completion of $M'_0$ and then identifying random variables up to $\mathbb{P}$-measure $0$. We let $M^{\Omega}_0$ be the set of classes of elements of $M_0'$. By construction, $M^{\Omega}_0$ is a metrically dense (pre-)substructure of $M^{\Omega}$. 

Let $\mathcal{U}$ be a monster model of $T$ such that $M \prec \mathcal{U}$. The model $\mathcal{U}^{\Omega}$  is almost never saturated, so we will always think of $\mathcal{U}^{\Omega}$ as (elementarily) embedded into a monster model $\mathcal{C}$ of $T^{R}$, i.e.~$\mathcal{U}^{\Omega} \prec \mathcal{C}$. If $a \in \mathcal{U}$, we let $f_{a} \in \mathcal{U}^{\Omega}_0$ denote the constant random variable taking value $a$, i.e.~$f_{a}$ is the equivalence class of the maps which send $\Omega$ to the point $a$ (equivalence up to measure $0$).  If $A \subseteq \mathcal{U}$, we let $A^{c} := \left\{f_{a}: a \in A\right\}\subseteq \mathcal{U}^{\Omega}_0$. If $\varphi(x_1,\ldots ,x_n)$ is an $\mathcal{L}$-formula, we let $\mathbb{E}[\varphi(x_1,\ldots ,x_n)]$ denote the corresponding continuous  formula in the randomization. This formula is evaluated on tuples of elements $\bar{h} = (h_1,\ldots ,h_n)$ from $\mathcal{U}^{\Omega}_{0}$ via 

\begin{equation*} 
\mathbb{E} \left[\varphi(\bar{h}) \right] = \mathbb{P} \left(\{\omega \in \Omega: \mathcal{U} \models \varphi(h_1(\omega),\ldots ,h_n(\omega))\} \right), 
\end{equation*} 
and is extended to $\mathcal{U}^{\Omega}$ via uniform limits. For $B \subseteq \mathcal{C}$, $S^{R}_{x}(B)$ will denote the space of types in the tuple of variables $x$ over $B$ in $T^{R}$. 

\begin{remark} Note that for any $\bar{h} = (h_1,\ldots ,h_n) \in (\cU^{\Omega}_{0})^{n}$, there exists a finite $\mathcal{B}$-measurable partition $\mathcal{A}$ of $\Omega$ with the property that for each $i \leq n$ the function $h_i$ is constant on each element of $\mathcal{A}$. Given such an $\bar{h}$ and $\mathcal{A}$, we write $\bar{h}|_{A}$ for the tuple of constant values of the functions in $\bar{h}$ on the set $A$. Note that for each $A \in \mathcal{A}$, $\bar{h}|_{A}$ is an element of $\mathcal{U}^{n}$. 
\end{remark} 

\noindent The following fact can be derived from basic facts about continuous logic: 
\begin{fact}\label{random:1} Suppose that $p \in S_{x}^{R}(\mathcal{U}^{\Omega})$. Then there exists a net of tuples $(h_i)_{i \in I}$ where $h_i \in (\mathcal{U}^{\Omega}_0)^{x}$ such that 
$ \lim_{i \in I} \tp^{R}(h_i/\mathcal{U}^{\Omega}) = p$. 

\end{fact}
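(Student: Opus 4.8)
The statement asserts that the set of types realized by tuples from the metrically dense pre-substructure $\mathcal{U}^{\Omega}_0$ is dense in the type space $S^{R}_{x}(\mathcal{U}^{\Omega})$; since $S^{R}_{x}(\mathcal{U}^{\Omega})$ is compact Hausdorff in the logic topology, such density immediately gives, for each $p$, a net indexed by the basic open neighborhoods of $p$ (directed by reverse inclusion) of types $\tp^{R}(h_i/\mathcal{U}^{\Omega})$ converging to $p$. So the plan is to establish this density.

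Fix $p \in S^{R}_{x}(\mathcal{U}^{\Omega})$ and realize it by some $a \in \mathcal{C}^{x}$, where $\mathcal{C} \succ \mathcal{U}^{\Omega}$ is the monster model of $T^{R}$. A basic neighborhood of $p$ in the logic topology is specified by finitely many continuous $\mathcal{L}^{R}$-formulas $\psi_1, \dots, \psi_n$, parameter tuples $\bar{b}_1, \dots, \bar{b}_n$ from $\mathcal{U}^{\Omega}$, and $\varepsilon > 0$, namely the set of $q$ with $|\psi_j^{q}(\bar{b}_j) - r_j| < \varepsilon$ for all $j$, where $r_j := \psi_j^{p}(\bar{b}_j) = \psi_j^{\mathcal{C}}(a, \bar{b}_j)$. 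First I would invoke approximate finite satisfiability of types over a model: the continuous sentence $\inf_x \max_j |\psi_j(x, \bar{b}_j) - r_j|$ takes value $0$ in $\mathcal{C}$, witnessed by $a$, hence value $0$ in $\mathcal{U}^{\Omega}$ by elementarity, so there is $c \in (\mathcal{U}^{\Omega})^{x}$ with $|\psi_j^{\mathcal{U}^{\Omega}}(c, \bar{b}_j) - r_j| < \varepsilon/2$ for every $j$.

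Then I would pass from $c$ to a tuple in $\mathcal{U}^{\Omega}_0$ using metric density together with uniform continuity of formulas. Each $\psi_j$ carries a modulus of uniform continuity in the object variables that is independent of the values assigned to the remaining variables, so there is a single $\delta > 0$ with the property that $d(h, c) < \delta$ implies $|\psi_j^{\mathcal{U}^{\Omega}}(h, \bar{b}_j) - \psi_j^{\mathcal{U}^{\Omega}}(c, \bar{b}_j)| < \varepsilon/2$ for all $j$ simultaneously. Since $\mathcal{U}^{\Omega}_0$ is metrically dense in $\mathcal{U}^{\Omega}$, one may choose $h \in (\mathcal{U}^{\Omega}_0)^{x}$ with $d(h, c) < \delta$; then $|\psi_j^{\mathcal{U}^{\Omega}}(h, \bar{b}_j) - r_j| < \varepsilon$ for all $j$, i.e.\ $\tp^{R}(h/\mathcal{U}^{\Omega})$ lies in the chosen neighborhood of $p$. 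As the neighborhood was arbitrary, density follows, and the net is obtained as above.

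The argument is routine once the randomization framework is in place. The only two points needing care are that a complete type over a model in continuous logic is approximately realized in that model, which is the continuous analogue of the discrete fact and is extracted from the $\inf_x$ sentence via elementarity, and that the modulus of uniform continuity of a formula in a given variable does not depend on the remaining coordinates, so that the passage down to $\mathcal{U}^{\Omega}_0$ can be carried out uniformly across the finitely many formulas cutting out the neighborhood. I do not anticipate any substantial obstacle.
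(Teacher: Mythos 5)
Your argument is correct: the paper states this fact without proof (only noting it "can be derived from basic facts about continuous logic"), and your derivation --- approximate realizability of a type over a model extracted from the $\inf_x$ sentence via elementarity of $\mathcal{U}^{\Omega} \prec \mathcal{C}$, followed by the uniform continuity moduli of formulas and metric density of $\mathcal{U}^{\Omega}_0$ in $\mathcal{U}^{\Omega}$ --- is exactly the standard argument intended. The only inessential point is that compactness (and Hausdorffness) of $S^{R}_{x}(\mathcal{U}^{\Omega})$ is not needed: density of the types realized in $(\mathcal{U}^{\Omega}_0)^{x}$ already yields the convergent net indexed by the neighborhoods of $p$ ordered by reverse inclusion.
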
 

The following observations were made by Ben Yaacov in an unpublished note \cite{BEN}. For a detailed verification, we refer the reader to \cite[Section 3.2]{CGH2}.  

\begin{fact}\label{fact:building1} Let $\mathcal{U}$ be a monster model of $T$, $\mu \in \mathfrak{M}_{x}(\mathcal{U})$, and $\mathcal{U}^{\Omega} \prec \mathcal{C}$. 
\begin{enumerate} 
\item There exists a unique type $p_{\mu} \in S_{x}^{R}(\mathcal{U}^{\Omega})$ such that for any $\mathcal{L}$-formula $\varphi(x,\bar{y})$, $\bar{h} = (h_1,\ldots, h_n) \in \mathcal{U}^{\Omega}_0$, and any measurable partition $\mathcal{A}$ such that each element of $\bar{h}$ is constant on each element of $\mathcal{A}$,   
\begin{equation*} 
(\mathbb{E}[\varphi(x, \bar{h})])^{p_{\mu}} = \sum_{A \in \mathcal{A}} \mathbb{P}(A)\mu(\varphi(x,\bar{h}|_{A})). 
\end{equation*} 
\item If $\mu$ is definable, then there exists a unique type $r_{\mu} \in S_{x}^{R}(\mathcal{C})$ such that:
\begin{enumerate} 
\item $r_{\mu}|_{\mathcal{U}^{\Omega}} = p_{\mu}$; 
\item $r_{\mu}$ is definable over ${\mathcal{U}^{\Omega}}$; if $\mu$ is $M$-definable, then $r_{\mu}$ is $M^{\Omega}$-definable. 
\end{enumerate} 
\end{enumerate} 
\end{fact}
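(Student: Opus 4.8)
The plan is to build $p_\mu$ by hand on the first-order ``distribution'' formulas $\mathbb{E}[\varphi(x,\bar h)]$, to realize it via a random-type construction so that one knows it is a genuine type, and to invoke the relative quantifier elimination for $T^{R}$ (\cite{ben2009randomizations}) --- that a complete type over a model of $T^{R}$, and a definable predicate, are determined by the values of the formulas $\mathbb{E}[\varphi(x,\bar h)]$ with $\varphi\in\mathcal{L}$ and $\bar h$ from the dense set $\mathcal{U}^{\Omega}_{0}$ --- for the uniqueness assertions and for the reductions in part (2). First I would fix, for a simple tuple $\bar h\in(\mathcal{U}^{\Omega}_{0})^{\bar y}$, a finite $\mathcal{B}$-measurable partition $\mathcal{A}$ of $\Omega$ on which all coordinates of $\bar h$ are constant, and set
\[
v_{\varphi}(\bar h):=\sum_{A\in\mathcal{A}}\mathbb{P}(A)\,\mu(\varphi(x,\bar h|_{A}))\in[0,1].
\]
This does not depend on $\mathcal{A}$ by finite additivity of $\mathbb{P}$ (refining a partition does not change the sum, since $\bar h|_{A'}=\bar h|_{A}$ whenever $A'\subseteq A$), and $\bar h\mapsto v_{\varphi}(\bar h)$ is $1$-Lipschitz for the randomization metric, so it extends continuously and uniquely to all $\bar h\in(\mathcal{U}^{\Omega})^{\bar y}$. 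By relative QE this already pins down \emph{at most one} $p_\mu\in S_{x}^{R}(\mathcal{U}^{\Omega})$ with the stated property.

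For existence I would realize $p_\mu$ as a ``random type''. Let $\mathcal{U}^{*}\succ\mathcal{U}$ realize every $p\in S_{x}(\mathcal{U})$ and, using choice, fix $a_{p}\models p$ for each such $p$; the assignment $p\mapsto a_{p}$ is measurable from $(S_{x}(\mathcal{U}),\widehat\mu)$ because $\{p:\ \models\psi(a_{p},c)\}=[\psi(x,c)]$ is clopen for $\psi\in\mathcal{L}$ and $c\in\mathcal{U}$. Choose a probability algebra $(\Omega'',\mathcal{B}'',\mathbb{P}'')$ admitting measure-algebra embeddings of both $(\Omega,\mathcal{B},\mathbb{P})$ and $(S_{x}(\mathcal{U}),\widehat\mu)$ with probabilistically independent images. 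Since the randomization functor takes elementary embeddings of $\mathcal{L}$-structures and embeddings of probability algebras to elementary embeddings, $\mathcal{U}^{\Omega}\prec\mathcal{U}^{\Omega''}\prec(\mathcal{U}^{*})^{\Omega''}$, which we may place inside a monster model of $T^{R}$; let $\mathfrak{f}\in(\mathcal{U}^{*})^{\Omega''}_{0}$ be the random variable which on the $S_{x}(\mathcal{U})$-coordinate $p$ takes the value $a_{p}$. Then for $\bar h\in(\mathcal{U}^{\Omega}_{0})^{\bar y}$ constant on $\mathcal{A}$, independence of the two coordinates and Fubini give
\[
\mathbb{E}[\varphi(\mathfrak{f},\bar h)]=\int_{\Omega}\widehat\mu([\varphi(x,\bar h(\omega))])\,d\mathbb{P}(\omega)=\int_{\Omega}\mu(\varphi(x,\bar h(\omega)))\,d\mathbb{P}(\omega)=\sum_{A\in\mathcal{A}}\mathbb{P}(A)\,\mu(\varphi(x,\bar h|_{A})),
\]
so $p_\mu:=\tp^{R}(\mathfrak{f}/\mathcal{U}^{\Omega})$ works, which proves (1).

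For (2), assume $\mu$ is definable over $M$, so each $F^{\varphi^{*}}_{\mu,M}\colon S_{\bar y}(M)\to[0,1]$ is continuous, and let $\mu_{\mathcal{N}}$ denote the (unique, still $M$-definable) extension of $\mu$ to any $\mathcal{N}\succ\mathcal{U}$. Representing $\mathcal{C}$ (after enlarging) as a randomization $\mathcal{N}^{\Omega^{*}}$ with $\mathcal{N}\succ\mathcal{U}$, and then repeating the realization of the previous paragraph over $\mathcal{N}$ --- amalgamating $(S_{x}(\mathcal{N}),\widehat{\mu_{\mathcal{N}}})$ independently with $\Omega^{*}$ --- produces $\mathfrak{f}$ in a monster extension with $r_\mu:=\tp^{R}(\mathfrak{f}/\mathcal{C})$ extending $p_\mu$. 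To see $r_\mu$ is $M^{\Omega}$-definable it is enough, again by relative QE, to control $\bar g\mapsto\mathbb{E}[\varphi(x,\bar g)]^{r_\mu}$ for simple $\bar g=\mathfrak{g}$, and this value equals $\int F^{\varphi^{*}}_{\mu,M}\big(\tp(\mathfrak{g}(\omega)/M)\big)\,d\mathbb{P}(\omega)$; approximating the continuous function $F^{\varphi^{*}}_{\mu,M}$ uniformly by locally constant functions $\sum_{i}r_{i}\mathbf{1}_{[\psi_{i}(\bar y)]}$ with $\psi_{i}\in\mathcal{L}(M)$ (possible since $S_{\bar y}(M)$ is a Stone space) exhibits it as a uniform limit of the $\mathcal{L}^{R}(M)$-formulas $\sum_{i}r_{i}\,\mathbb{E}[\psi_{i}(\mathfrak{g})]$, so $r_\mu$ is definable over $M^{\Omega}$, in particular over $\mathcal{U}^{\Omega}$. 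Uniqueness of $r_\mu$ is then automatic: any type satisfying both (a) and (b) agrees with $p_\mu$ on the model $\mathcal{U}^{\Omega}$ and is definable over it, hence equals $r_\mu$.

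The main obstacle I anticipate is the careful use of relative quantifier elimination for $T^{R}$ --- that complete types over a model and definable predicates are governed by the first-order ``distribution'' formulas $\mathbb{E}[\varphi]$ --- together with the amalgamation of probability algebras that is needed to realize the type \emph{over} $\mathcal{U}^{\Omega}$ rather than merely over $(S_{x}(\mathcal{U}),\widehat\mu)$; the rest is finite additivity, Fubini, and uniform approximation of continuous functions on Stone spaces. All of this is carried out in detail in \cite[Section 3.2]{CGH2}, following \cite{BEN} and \cite{ben2009randomizations}.
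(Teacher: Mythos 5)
The paper does not prove this statement; it is quoted as a Fact from Ben Yaacov's note \cite{BEN}, with the detailed verification deferred to \cite[Section 3.2]{CGH2}. Your overall blueprint — pin the type down on the quantifier-free formulas $\mathbb{E}[\varphi(x,\bar h)]$, use relative quantifier elimination for $T^{R}$ plus density of $\mathcal{U}^{\Omega}_0$ for uniqueness, and get definability of $r_\mu$ by uniformly approximating the continuous map $F^{\varphi^*}_{\mu,M}$ by locally constant functions on the Stone space $S_{\bar y}(M)$ — is exactly the standard route, and those parts of your write-up are sound.

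There is, however, a concrete gap in your existence argument for $p_\mu$. You declare $\mathfrak{f}\in(\mathcal{U}^{*})^{\Omega''}_{0}$, but $\mathfrak{f}$ takes the value $a_p$ for $\widehat\mu$-almost every $p$, so unless $\mu$ is purely atomic it has uncountable image and every level set is $\widehat\mu$-null; hence $d(\mathfrak{f},g)=\mathbb{P}(\mathfrak{f}\neq g)=1$ for \emph{every} finite-image $g$, and $\mathfrak{f}$ is at distance $1$ from the entire metric completion of the simple functions. Since the paper (following \cite{ben2009randomizations}) defines $(\mathcal{U}^{*})^{\Omega''}$ as precisely that completion, the expression $\tp^{R}(\mathfrak{f}/\mathcal{U}^{\Omega})$ is not yet meaningful: you would first have to show that the structure of \emph{all} suitably measurable functions is itself a (pre-)model of $T^{R}$ elementarily containing $\mathcal{U}^{\Omega''}$, which is a nontrivial point about full randomizations and is delicate for uncountable $\mathcal{U}^{*}$. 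The standard repair is to prove approximate finite satisfiability directly with genuine step functions: given finitely many conditions $\mathbb{E}[\varphi_i(x,\bar h_i)]=v_{\varphi_i}(\bar h_i)$ and $\varepsilon>0$, approximate $\mu$ on the finitely many instances $\varphi_i(x,\bar h_i|_A)$ by a finite convex combination $\sum_j\lambda_j\delta_{q_j}$, realize the $q_j$ in an elementary extension, and build a finite-image function constant on the cells of an independent refinement with masses $\lambda_j$; your Fubini computation then goes through verbatim for this legitimate element of $(\mathcal{U}^{*})^{\Omega''}_0$. A second, smaller issue: in part (2) you represent the monster $\mathcal{C}$ as a randomization $\mathcal{N}^{\Omega^{*}}$, which is not justified (saturated models of $T^{R}$ need not arise this way); this detour is unnecessary, since once $p_\mu$ is shown definable over $M^{\Omega}$ the type $r_\mu$ is simply its canonical definable extension to $\mathcal{C}$, and uniqueness follows as you say.
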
 

\begin{remark} The claims in Fact \ref{fact:building1} hold in the context where $x$ is an infinite tuple of variables. The infinitary results follow easily from their finite counterparts.
\end{remark} 

\begin{corollary}\label{Corollary:restriction} Suppose that $\mathbf{x} = (x_i)_{i < \alpha}$, $\mathbf{y} = (y_i)_{i < \beta}$, and $\lambda \in \mathfrak{M}_{\mathbf{xy}}(\mathcal{U})$. Then 
$$\left( p_{\lambda} \right)|_{\mathbf{x}, M^{\Omega}} = p_{ \left( \lambda|_{\mathbf{x}} \right)}|_{M^{\Omega}} \textrm{ and } \left( p_{\lambda} \right)|_{\mathbf{y}} = p_{ \left( \lambda|_{\mathbf{y}} \right)}.$$
\end{corollary}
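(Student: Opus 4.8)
The plan is to deduce the corollary directly from the uniqueness clause of Fact~\ref{fact:building1}(1). Recall that $p_{\lambda|_{\mathbf{x}}}$ is, by definition, the \emph{unique} type in $S^{R}_{\mathbf{x}}(\mathcal{U}^{\Omega})$ whose value on each atomic randomized formula $\mathbb{E}[\varphi(\mathbf{x}',\bar{h})]$ --- where $\mathbf{x}'$ is a finite subtuple of $\mathbf{x}$, $\bar{h}$ a finite tuple from the dense pre\-substructure $\mathcal{U}^{\Omega}_{0}$, and $\mathcal{A}$ a finite $\mathcal{B}$-measurable partition of $\Omega$ on whose pieces the entries of $\bar{h}$ are constant --- equals $\sum_{A\in\mathcal{A}}\mathbb{P}(A)\,(\lambda|_{\mathbf{x}})(\varphi(\mathbf{x}',\bar{h}|_{A}))$; and likewise for $p_{\lambda|_{\mathbf{y}}}$ and for $p_{\lambda}$ itself, the latter taken with $\lambda$ and the full tuple $\mathbf{x}\mathbf{y}$. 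As noted in the remark immediately following Fact~\ref{fact:building1}, this characterization is valid even for infinite tuples of variables. So it suffices to check that $(p_{\lambda})|_{\mathbf{x}}$, viewed as a type in $S^{R}_{\mathbf{x}}(\mathcal{U}^{\Omega})$, satisfies the defining identity of $p_{\lambda|_{\mathbf{x}}}$; restricting to $M^{\Omega}$ will then give the first equation, and the $\mathbf{y}$-case is handled identically (in fact yielding the equality already over $\mathcal{U}^{\Omega}$).

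For the verification I would fix $\varphi(\mathbf{x}',\bar{y})$, $\bar{h}\in\mathcal{U}^{\Omega}_{0}$ and $\mathcal{A}$ as above. Since $\mathbf{x}'$ is a subtuple of $\mathbf{x}$, the continuous formula $\mathbb{E}[\varphi(\mathbf{x}',\bar{h})]$ is a formula in object variables $\mathbf{x}'$ and parameters $\bar{h}$, so its value is unchanged when passing from $p_{\lambda}\in S^{R}_{\mathbf{x}\mathbf{y}}(\mathcal{U}^{\Omega})$ to its restriction $(p_{\lambda})|_{\mathbf{x}}$; hence $(\mathbb{E}[\varphi(\mathbf{x}',\bar{h})])^{(p_{\lambda})|_{\mathbf{x}}}=(\mathbb{E}[\varphi(\mathbf{x}',\bar{h})])^{p_{\lambda}}$. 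Applying Fact~\ref{fact:building1}(1) to $\lambda$ and the tuple $\mathbf{x}\mathbf{y}$, the right-hand side equals $\sum_{A\in\mathcal{A}}\mathbb{P}(A)\,\lambda(\varphi(\mathbf{x}',\bar{h}|_{A}))$. Finally, each $\varphi(\mathbf{x}',\bar{h}|_{A})$ has free variables among $\mathbf{x}'\subseteq\mathbf{x}$ and parameters $\bar{h}|_{A}\in\mathcal{U}^{<\omega}$, so $\lambda(\varphi(\mathbf{x}',\bar{h}|_{A}))=(\lambda|_{\mathbf{x}})(\varphi(\mathbf{x}',\bar{h}|_{A}))$ by the definition of the marginal. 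Chaining these gives exactly the identity characterizing $p_{\lambda|_{\mathbf{x}}}$, so $(p_{\lambda})|_{\mathbf{x}}=p_{\lambda|_{\mathbf{x}}}$, and a fortiori $(p_{\lambda})|_{\mathbf{x},M^{\Omega}}=p_{\lambda|_{\mathbf{x}}}|_{M^{\Omega}}$; replacing $\mathbf{x}$ by $\mathbf{y}$ throughout yields the second equation.

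I do not expect a genuine obstacle: the statement is essentially bookkeeping on top of Fact~\ref{fact:building1}(1), which already absorbs the only substantial input (existence and uniqueness of the randomization type attached to a Keisler measure, in the sense of \cite{ben2009randomizations, CGH2}). The two points needing a little care are (i) that the reduction of a continuous type to its values on atomic $\mathbb{E}$-formulas with parameters from the dense pre\-substructure is still valid for the possibly infinite tuples $\mathbf{x}$ and $\mathbf{y}$ --- which is precisely the content of the remark after Fact~\ref{fact:building1} --- and (ii) that restricting a Keisler measure to a subtuple of its free variables commutes with evaluating it on formulas involving only those variables, which is immediate from the definition of a marginal. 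If one preferred not to invoke the uniqueness clause, one could instead use that $\mathcal{U}^{\Omega}_{0}$ is metrically dense in $\mathcal{U}^{\Omega}$ and that each $\mathbb{E}[\varphi(\mathbf{x}',\cdot)]$ is $1$-Lipschitz in its parameters, so agreement of two types on atomic $\mathbb{E}$-formulas over $\mathcal{U}^{\Omega}_{0}$ propagates to all of $\mathcal{U}^{\Omega}$ and then, via the standard form of formulas in $T^{R}$, to all continuous formulas; but routing through Fact~\ref{fact:building1}(1) is cleaner.
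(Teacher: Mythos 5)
Your proposal is correct and is essentially the paper's own argument: the paper likewise evaluates $\mathbb{E}[\varphi(\bar x,\bar h)]$ under $p_\lambda$ via the defining identity of Fact \ref{fact:building1}(1), observes that $\lambda$ and $\lambda|_{\mathbf{x}}$ agree on the resulting formulas, and concludes by quantifier elimination in $T^R$ (which is also what underlies the uniqueness clause you invoke). The only cosmetic difference is that you take parameters from all of $\mathcal{U}^\Omega_0$ and thereby get the equality $(p_\lambda)|_{\mathbf{x}}=p_{\lambda|_{\mathbf{x}}}$ over all of $\mathcal{U}^\Omega$ (the paper states the $\mathbf{x}$-part only over $M^\Omega$ because that is all it needs later), which is a harmless strengthening.
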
 

\begin{proof} Fix $\bar{x} = \left( x_{i_{1}},\ldots ,x_{i_{n}} \right)$ and $\overline{h} := \left( h_1,\ldots ,h_m \right) \in \left(M^{\Omega} \right)^{m}$. Fix a finite measurable partition $\mathcal{A}$ of $\Omega$ such that each element of $\overline{h}$ is constant on each element of $\mathcal{A}$. From the definitions we have: 
\begin{align*}
\left(\mathbb{E} \left[\varphi \left(\bar{x},\overline{h} \right) \right] \right)^{p_{\lambda}} &= \sum_{A \in \mathcal{A}} \mathbb{P}(A) \lambda(\varphi(\bar{x},\overline{h}|_{A})) \\
 &= \sum_{A \in \mathcal{A}} \mathbb{P}(A) \left( \lambda|_{\mathbf{x}} \right)(\varphi(\bar{x},\overline{h}|_{A})) \\
&= (\mathbb{E}[\varphi(\bar{x},\overline{h})])^{p_{ \left(\lambda|_{\mathbf{x}} \right)}}.
\end{align*} 
By quantifier elimination in $T^{R}$, we conclude that $p_{\lambda}|_{\mathbf{x}, M^{\Omega}} = p_{\left(\lambda_{\mathbf{x}} \right)}|_{M^{\Omega}}$.
\end{proof} 

We recall some results from \cite{CGH2} connecting the randomized measures, the Morley product and generic stability in $T$ and $T^{R}$. These are \cite[Proposition 3.15]{CGH2}, \cite[Corollary 3.16]{CGH2} and \cite[Corollary 3.19]{CGH2}, respectively. 

\begin{fact}\label{CGH2:Facts} Suppose $\mu \in \mathfrak{M}_{x}(\mathcal{U})$ and $\nu \in \mathfrak{M}_{y}(\mathcal{U})$. 
\begin{enumerate} 
\item If $\mu$ and $\nu$ are definable, then 
\begin{equation*}
r_{\mu \otimes \nu}(x,y) = r_{\mu}(x) \otimes r_{\nu}(y). 
\end{equation*} 
\item If $\mu$ is definable, then for every $n \geq 1$, 
\begin{equation*} 
r_{\mu^{(n)}}(\bar{x}) = (r_{\mu})^{(n)}(\bar{x}).
\end{equation*} 
\item If $\mu$ is \fim, then $r_{\mu}$ is generically stable over $M^{\Omega}$ (for generically stable types in continuous logic we refer to \cite{khanaki2022generic, CGH2, anderson2023generically}). 
\end{enumerate} 
\end{fact}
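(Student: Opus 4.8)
The plan is to establish \ref{CGH2:Facts}(1) and (2) by a direct computation inside $T^R$, using the defining property of $p_\mu$ in Fact \ref{fact:building1}(1) together with quantifier elimination in $T^R$, and then to deduce (3) from (2) and a characterization of generic stability in continuous logic. For (1), fix a small model $M$ over which $\mu$ and $\nu$ are both definable; then $\mu \otimes \nu$ is definable over $M$ as well. By Fact \ref{fact:building1}(2) the types $r_{\mu \otimes \nu}$, $r_\mu$ and $r_\nu$ are definable over $M^\Omega$, and hence so is the Morley product $r_\mu \otimes r_\nu$ (the Morley product of two definable global types is definable over the same set). Since an $M^\Omega$-definable global type of $T^R$ is uniquely determined by its restriction to $M^\Omega$, it suffices to check that $r_{\mu \otimes \nu}$ and $r_\mu \otimes r_\nu$ agree on $M^\Omega$; and by quantifier elimination in $T^R$ this reduces to evaluating both on an arbitrary basic continuous formula $\mathbb{E}[\varphi(x,y,\bar h)]$ with $\varphi(x,y,z) \in \mathcal{L}$ and $\bar h \in (M^\Omega_0)^{z}$ a simple random variable, say constant on the pieces of a finite $\mathcal{B}$-measurable partition $\mathcal{A}$ of $\Omega$. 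By Fact \ref{fact:building1}(1), the value under $r_{\mu \otimes \nu}$ equals $\sum_{A \in \mathcal{A}} \mathbb{P}(A)\,(\mu \otimes \nu)(\varphi(x, y, \bar h|_A))$.

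For the value under $r_\mu \otimes r_\nu$, realize $g' \models r_\nu$ in a larger monster $\mathcal{C}' \succ \mathcal{C}$ of $T^R$ and then $g \models r_\mu|_{\mathcal{C}' g'}$ via the definition scheme of $r_\mu$, so that $(g, g') \models r_\mu \otimes r_\nu$ and the quantity sought is $\mathbb{E}[\varphi(g, g', \bar h)]$. Refining $\mathcal{A}$ if necessary and approximating $g'$ in the metric by a simple random variable, and then using that $g'|_{M^\Omega} = p_\nu$ piecewise, the defining equation of $p_\nu$ from Fact \ref{fact:building1}(1), the $M^\Omega$-definability of $r_\mu$, and finally the definition of the Morley product $\mu \otimes \nu$ inside $T$, one computes $\mathbb{E}[\varphi(g, g', \bar h)] = \sum_{A \in \mathcal{A}} \mathbb{P}(A)\,(\mu \otimes \nu)(\varphi(x, y, \bar h|_A))$; this proves (1). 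Part (2) then follows by induction on $n$: the case $n = 1$ is trivial, and if $r_{\mu^{(n)}} = (r_\mu)^{(n)}$, then since $\mu^{(n)}$ is again definable over $M$, applying (1) to $\mu$ and $\mu^{(n)}$ gives $r_{\mu^{(n+1)}} = r_{\mu \otimes \mu^{(n)}} = r_\mu \otimes r_{\mu^{(n)}} = r_\mu \otimes (r_\mu)^{(n)} = (r_\mu)^{(n+1)}$.

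For (3), recall the characterization (see \cite{khanaki2022generic, CGH2, anderson2023generically}) that a global type in a continuous theory is generically stable over a small model $N$ iff it is definable over $N$, finitely satisfiable in $N$, and its Morley sequence over $N$ is totally indiscernible (equivalently, all its finite Morley powers are invariant under permuting the coordinate blocks). Definability of $r_\mu$ over $M^\Omega$ is Fact \ref{fact:building1}(2), since \fim\ measures are definable (Remark \ref{rem: fim in NIP is gen stab}). Total indiscernibility: as $\mu$ is \fim, Fact \ref{fac: fim commutes} gives that $\mu \otimes \nu = \nu \otimes \mu$ for every Borel-definable $\nu$, so each Morley power $\mu^{(n)}$ is invariant under all permutations of its coordinates; since the assignment $\lambda \mapsto r_\lambda$ is equivariant for permuting variable blocks (immediate from the symmetric form of the defining equation in Fact \ref{fact:building1}(1) and the uniqueness of definable extensions), part (2) yields that $(r_\mu)^{(n)} = r_{\mu^{(n)}}$ is permutation-invariant for every $n$, i.e.\ the Morley sequence of $r_\mu$ over $M^\Omega$ is totally indiscernible. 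Finite satisfiability of $r_\mu$ in $M^\Omega$: since \fim\ implies \emph{dfs}, $\mu$ is finitely satisfiable in $M$; using $\mathcal{U}^\Omega \prec \mathcal{C}$ and $r_\mu|_{\mathcal{U}^\Omega} = p_\mu$, it is enough to approximate $(\mathbb{E}[\varphi(x, \bar h)])^{p_\mu}$, for $\bar h \in (\mathcal{U}^\Omega)^{z}$, by some $\mathbb{E}[\varphi(g, \bar h)]$ with $g \in M^\Omega$; this follows by first replacing $\bar h$ by a nearby simple random variable (metric density of $\mathcal{U}^\Omega_0$), then, on each piece $A$ of the associated partition, choosing an element of $M$ realizing approximately the $\mu$-mass of $\varphi(x, \bar h|_A)$ (finite satisfiability of $\mu$ in $M$), and assembling these choices into a simple random variable over $M$; the defining equation of $p_\mu$ supplies the required estimate. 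Hence $r_\mu$ is generically stable over $M^\Omega$.

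The step I expect to be hardest is the finite-satisfiability part of (3): one must push the continuous-logic parameters, which a priori live in the monster $\mathcal{C}$, down first to $\mathcal{U}^\Omega$, then to simple, and ultimately to constant, random variables over $M$, while keeping track of all the metric approximations --- and one must be careful to invoke a version of generic stability for continuous logic that is valid without NIP. A secondary delicate point is the key identity in (1): making precise that the $T^R$-Morley product of $r_\mu$ with $r_\nu$, evaluated at a simple random variable, decomposes as the $\mathbb{P}$-weighted sum over the partition of the corresponding $T$-Morley products of $\mu$ with $\nu$.
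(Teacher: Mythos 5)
First, note that the paper does not prove this statement at all: it is quoted as a Fact from \cite{CGH2} (Proposition 3.15, Corollaries 3.16 and 3.19 there), so your argument can only be compared with that source in spirit. Your treatment of (1) and (2) has the right shape (definable types over a model of $T^R$ are determined by their restriction, quantifier elimination, evaluation on simple random variables, the defining equation of $p_\mu$, then induction for (2)). The genuine gap is in (3): the ``characterization'' you invoke --- generically stable over $N$ iff definable over $N$, finitely satisfiable in $N$, with totally indiscernible Morley sequences --- is not a theorem outside of NIP, and your proof collapses without it. Indeed, the total indiscernibility condition is automatic for any dfs type: a type definable over and finitely satisfiable in the same model commutes with every type finitely satisfiable in that model (a standard argument using nothing beyond the definitions), hence with itself, so all powers $p^{(n)}$ are symmetric. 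Thus your three conditions amount exactly to ``dfs over $N$'', and ``dfs $\Rightarrow$ generically stable'' is not available in arbitrary theories: since dfs is preserved under Morley products, it would immediately give that generic stability of $p$ implies generic stability of $p^{(2)}$, which this very paper states is open (see the discussion at the start of Section \ref{sec: gen trans strat rank}). Since the randomization of a discrete theory contains the discrete situation, the same objection applies verbatim in continuous logic; such equivalences are only known under NIP, whereas the Fact is stated for arbitrary $T$. The actual proof of \cite[Corollary 3.19]{CGH2} has to verify the convergence-along-Morley-sequences definition of generic stability for $r_\mu$ directly from the \fim\ property of $\mu$ (uniform frequency approximation/self-averaging transferred through the randomization), which is a genuinely different argument from the one you propose.

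Two secondary points. In (1), ``approximating $g'$ in the metric by a simple random variable'' is not available: $g'$ realizes a type over $\mathcal{C}$ and does not lie in $\mathcal{U}^{\Omega}$, so it is not a random variable valued in $\mathcal{U}$; the correct tool is Fact \ref{random:1} (the restriction $r_\nu|_{\mathcal{U}^\Omega}=p_\nu$ is a logic-topology limit of types of elements of $\mathcal{U}^{\Omega}_0$) combined with continuity of the $r_\mu$-definition predicates coming from definability of $\mu$, plus a Fact \ref{random:2}-style computation --- fixable, but not what you wrote. Similarly, your finite-satisfiability argument for $r_\mu$ only handles parameters from $\mathcal{U}^{\Omega}$, where metric density of $\mathcal{U}^{\Omega}_0$ applies; finite satisfiability of the global type $r_\mu$ concerns parameters in the monster $\mathcal{C}$, where no such density over $\mathcal{U}$ exists, so that step as written does not go through either (though it would become a consequence of generic stability once (3) is proved by the correct route).
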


\begin{corollary}\label{Corollary:otimes} If $\mu\in \mathfrak{M}_{x}(\mathcal{U})$ is a definable measure then $r_{\mu^{(\omega)}} = (r_{\mu})^{(\omega)}$.
\end{corollary}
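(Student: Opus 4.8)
The plan is to bootstrap from the finite statement of Fact~\ref{CGH2:Facts}(2), using the restriction compatibility recorded in Corollary~\ref{Corollary:restriction} and the uniqueness clause of Fact~\ref{fact:building1}(2). Fix a small model $M \prec \mathcal{U}$ over which $\mu$ is definable, and put $\mathbf{x} = (x_i)_{i < \omega}$. Then every finite power $\mu^{(n)}$ is definable over $M$, and hence so is $\mu^{(\omega)} = \bigcup_{n<\omega}\mu^{(n)}$ (in the infinitary sense), so by Fact~\ref{fact:building1}(2) together with the Remark immediately following it, $r_{\mu^{(\omega)}} \in S^R_{\mathbf{x}}(\mathcal{C})$ is defined and is definable over $M^\Omega$, and its restriction to $\mathcal{U}^\Omega$ is $p_{\mu^{(\omega)}}$. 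On the other side, $r_\mu$ is definable over $M^\Omega$ by Fact~\ref{fact:building1}(2); since in continuous logic the Morley product of two global types definable over a fixed small set is again definable over that set, and an increasing union of such types is too, the type $(r_\mu)^{(\omega)} = \bigcup_{n<\omega}(r_\mu)^{(n)} \in S^R_{\mathbf{x}}(\mathcal{C})$ is also definable over $M^\Omega$.

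Since a global type definable over a model is determined by its restriction to that model (two definable predicates over a model agreeing on the model agree everywhere), it suffices to prove $(r_\mu)^{(\omega)}|_{M^\Omega} = r_{\mu^{(\omega)}}|_{M^\Omega}$; and as $r_{\mu^{(\omega)}}|_{\mathcal{U}^\Omega} = p_{\mu^{(\omega)}}$, this amounts to showing $(r_\mu)^{(\omega)}|_{M^\Omega} = p_{\mu^{(\omega)}}|_{M^\Omega}$. As both are types in the infinite tuple $\mathbf{x}$, I would check agreement on each finite initial subtuple $(x_1,\dots,x_n)$. On the left, Morley powers of invariant types restrict compatibly, so $(r_\mu)^{(\omega)}|_{x_1,\dots,x_n} = (r_\mu)^{(n)}$; by Fact~\ref{CGH2:Facts}(2) this equals $r_{\mu^{(n)}}$, whose restriction to $\mathcal{U}^\Omega$ (a fortiori to $M^\Omega$) is $p_{\mu^{(n)}}$ by Fact~\ref{fact:building1}(2)(a), giving $(r_\mu)^{(\omega)}|_{x_1,\dots,x_n,M^\Omega} = p_{\mu^{(n)}}|_{M^\Omega}$. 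On the right, the analogous compatibility for measures gives $\mu^{(\omega)}|_{(x_1,\dots,x_n)} = \mu^{(n)}$, so Corollary~\ref{Corollary:restriction} (applied with $(x_1,\dots,x_n)$ in the role of its $\mathbf{x}$ and the remaining variables $(x_i)_{i \geq n+1}$ in the role of its $\mathbf{y}$) yields $p_{\mu^{(\omega)}}|_{x_1,\dots,x_n,M^\Omega} = p_{\mu^{(n)}}|_{M^\Omega}$. The two descriptions agree for every $n$, so $(r_\mu)^{(\omega)}|_{M^\Omega} = p_{\mu^{(\omega)}}|_{M^\Omega}$, and the corollary follows.

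The argument is almost entirely bookkeeping once Fact~\ref{CGH2:Facts}(2), Corollary~\ref{Corollary:restriction}, and Fact~\ref{fact:building1}(2) are in place, so the only point I would be careful about is the well-definedness of $(r_\mu)^{(\omega)}$ and its $M^\Omega$-definability in the continuous-logic setting --- i.e.\ that the iterated Morley product $(r_\mu)^{(n)}$ of the definable type $r_\mu$ with itself is unambiguous under the fixed left-to-right bracketing, remains definable over $M^\Omega$, and is compatible with restriction to initial subtuples. All of these are the continuous-logic analogues of standard first-order facts, but they should be verified (or cited) before running the comparison above.
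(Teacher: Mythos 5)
Your proof is correct and takes essentially the same route as the paper: both reduce to the finite case via Fact \ref{CGH2:Facts}(2) and then identify $r_{\mu^{(\omega)}}$ with $\bigcup_{k} r_{\mu^{(k)}}$ by checking agreement of the relevant restrictions and invoking definability. The paper performs the gluing step directly over $\mathcal{U}^{\Omega}$, approximating an arbitrary parameter $b \in \mathcal{C}$ by a net from $\mathcal{U}^{\Omega}_0$ and using continuity of the definition maps (thereby re-deriving what Corollary \ref{Corollary:restriction} already records), whereas you restrict further to $M^{\Omega}$ and conclude by uniqueness of definable extensions over a model --- a cosmetic difference, and the continuous-logic facts you flag at the end (that $(r_{\mu})^{(n)}$ is $M^{\Omega}$-definable and compatible with initial-segment restrictions) do hold and are used implicitly by the paper as well.
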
 

\begin{proof} First note 

\begin{equation*} 
(r_{\mu})^{(\omega)} = \bigcup_{1 \leq n <  \omega} r_{\mu}^{(n)} = \bigcup_{1 \leq n <  \omega} r_{\mu^{(n)}}.  
\end{equation*} 
We want to show that $\bigcup_{n <  \omega} r_{\mu^{(n)}} = r_{\mu^{(\omega)}}$. By quantifier elimination in $T^{R}$, it suffices to show that for every $\mathcal{L}_{\bar{x},y}$-formula $\varphi(x_1,\ldots ,x_k,y)$  and $b \in \mathcal{C}^{y}$, we have that $ \mathbb{E}[\varphi(\bar{x},b)]^{r_{\mu^{(\omega)}}} = \mathbb{E}[\varphi(\bar{x},b)]^{r_{\mu^{(k)}}}$. By Fact \ref{random:1}, fix a net  $(h_i)_{i \in I}$ of elements each in $(\mathcal{U}^{\Omega}_0)^{y}$ such that $\lim_{i \in I} \tp^{R}(h_i/\mathcal{U}^{\Omega}) = \tp^{R}(b/\mathcal{U}^{\Omega})$. For each $i \in I$, choose a finite measurable partition $\mathcal{A}_i$ of $\Omega$ such that each element of $h_i$ is constant on each element of $\mathcal{A}_i$. We have the following computation (using Fact \ref{fact:building1}):
\begin{gather*}
	\mathbb{E}[\varphi(\bar{x},b)]^{r_{\mu^{(\omega)}}} = \lim_{i \in I}F_{r_{\mu^{(\omega)}}}^{\mathbb{E}[\varphi(\bar{x},y)]}\left((\tp^{R}(h_i/\mathcal{U}^{\Omega}) \right) \\
= \lim_{i \in I} \mathbb{E}[\varphi(\bar{x},h_i)]^{p_{\mu^{(\omega)}}} 
= \lim_{i \in I} \sum_{A \in \mathcal{A}_i} \mathbb{P}(A) \mu^{(\omega)}(\varphi(\bar{x},h_i|_{A}))\\
= \lim_{i \in I} \sum_{A \in \mathcal{A}_i} \mathbb{P}(A) \mu^{(k)}(\varphi(\bar{x},h_i|_{A}))
= \lim_{i \in I} (\mathbb{E}[\varphi(\bar{x},h_i)])^{p_{\mu^{(k)}}}\\
= \lim_{i \in I}F_{r_{\mu^{(k)}}}^{\mathbb{E}[\varphi(\bar{x},y)]}\left((\tp^{R}(h_i/\mathcal{U}^{\Omega}) \right) 
= \mathbb{E}[\varphi(\bar{x},b)]^{r_{\mu^{(k)}}},  
\end{gather*}
where the first and last equality follow from the fact that $F_{r_{\mu^{(\omega)}}}^{\mathbb{E}[\varphi(\bar{x},y)]}$ and $F_{r_{\mu^{(k)}}}^{\mathbb{E}[\varphi(\bar{x},y)]}$ are continuous maps,  by definability of $\mu$, hence of $\mu^{(\omega)}$ and $\mu^{(k)}$, and Fact \ref{fact:building1}(2)(b). 
\end{proof} 

\noindent The following fact is \cite[Lemma 3.13]{CGH2}. 
\begin{fact}\label{random:2} Suppose that $\mu \in \mathfrak{M}_{x}(\mathcal{U})$ and $(h_i)_{i \in I}$ is a net of elements such that $h_i \in (\mathcal{U}^{\Omega}_0)^{x}$ and $\lim_{i \in I} \tp^{R}(h_i/\mathcal{U}^{\Omega}) = p_{\mu}$. For each $i \in I$, let $\mathcal{A}_i$ be a finite measurable partition of $\Omega$ such that each element of $h_i$ is constant on each $A \in A_i$. Then 
\begin{equation*} 
\lim_{i \in I} \left( \sum_{A \in \mathcal{A}_i} \mathbb{P}(A)\delta_{(h_i|_{A})} \right) = \mu,
\end{equation*} 
where the limit is calculated in the space $\frak{M}_x(\cU)$.
\end{fact}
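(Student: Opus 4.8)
The plan is to unwind the definition of the weak$^{*}$ topology on $\mathfrak{M}_{x}(\mathcal{U})$ and reduce the statement to the convergence of a single continuous $\mathcal{L}^{R}$-formula along the given net of types. For each $i \in I$ put $\mu_i := \sum_{A \in \mathcal{A}_i} \mathbb{P}(A)\, \delta_{h_i|_{A}}$; since $\mathcal{A}_i$ partitions $\Omega$, the weights $\mathbb{P}(A)$ sum to $1$, so $\mu_i$ is a genuine Keisler measure in $\mathfrak{M}_{x}(\mathcal{U})$. Recall that $\mu_i \to \mu$ in the weak$^{*}$ topology means $\mu_i(\theta) \to \mu(\theta)$ for every $\theta \in \mathcal{L}_{x}(\mathcal{U})$, equivalently $\mu_i(\varphi(x,c)) \to \mu(\varphi(x,c))$ for every $\mathcal{L}$-formula $\varphi(x,z)$ and every $c \in \mathcal{U}^{z}$; here only finitely many variables from $x$ occur in $\varphi$, so one may work with that finite subtuple. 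So I would fix such a $\varphi$ and $c$ and aim to prove this scalar convergence.

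Next I would exhibit both sides as values of one continuous formula. Let $f_{c} \in \mathcal{U}^{\Omega}_{0}$ be the constant random variable with value $c$, and consider $\psi(x) := \mathbb{E}[\varphi(x, f_{c})]$, a continuous $\mathcal{L}^{R}(\mathcal{U}^{\Omega})$-formula (its parameter $f_c$ lies in $\mathcal{U}^{\Omega}_0 \subseteq \mathcal{U}^{\Omega}$). On the one hand, since each coordinate of $h_i$ is constant on $\mathcal{A}_i$ and $f_{c}$ is constant on the trivial partition $\{\Omega\}$ (which is refined by $\mathcal{A}_i$), unwinding the definition of $\mathbb{E}$ gives
\[
\psi(h_i) \;=\; \mathbb{E}[\varphi(h_i, f_{c})] \;=\; \sum_{A \in \mathcal{A}_i :\, \mathcal{U} \models \varphi(h_i|_{A}, c)} \mathbb{P}(A) \;=\; \mu_i(\varphi(x,c)).
\]
On the other hand, Fact \ref{fact:building1}(1) applied with $\bar h = f_{c}$ and the trivial partition $\{\Omega\}$ yields $\psi^{p_{\mu}} = (\mathbb{E}[\varphi(x, f_{c})])^{p_{\mu}} = \mathbb{P}(\Omega)\,\mu(\varphi(x,c)) = \mu(\varphi(x,c))$.

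Finally, $\tp^{R}(h_i/\mathcal{U}^{\Omega}) \to p_{\mu}$ in $S^{R}_{x}(\mathcal{U}^{\Omega})$ means precisely that $\chi^{\tp^{R}(h_i/\mathcal{U}^{\Omega})} \to \chi^{p_{\mu}}$ for every continuous $\mathcal{L}^{R}(\mathcal{U}^{\Omega})$-formula $\chi$; applying this to $\chi = \psi$ gives $\psi(h_i) \to \psi^{p_{\mu}}$, i.e.\ $\mu_i(\varphi(x,c)) \to \mu(\varphi(x,c))$. Since $\varphi$ and $c$ were arbitrary, $\lim_{i \in I} \mu_i = \mu$. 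I do not expect a genuine obstacle here: the statement is essentially a dictionary between Keisler measures on $\mathcal{U}$ and types of $T^{R}$ over $\mathcal{U}^{\Omega}$, and the only points needing care are the bookkeeping with common refinements of the finite measurable partitions (so that $\mathbb{E}[\varphi(h_i,f_c)]$ really equals the displayed finite sum) and, when $x$ is infinite, the reduction to the finite subtuple of variables actually occurring in $\varphi$.
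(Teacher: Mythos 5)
Your argument is correct: reducing weak$^{*}$ convergence in $\mathfrak{M}_x(\cU)$ to a single formula $\varphi(x,c)$, encoding both sides via the continuous formula $\mathbb{E}[\varphi(x,f_c)]$ (evaluated at the realized types $\tp^R(h_i/\cU^{\Omega})$ on one hand and at $p_\mu$ via Fact \ref{fact:building1}(1) on the other), and invoking convergence in the logic topology is exactly the natural dictionary argument, and the two bookkeeping points you flag (refining partitions, passing to the finite subtuple of $x$) are the only ones needing care. Note that the paper itself does not prove this statement but imports it as \cite[Lemma 3.13]{CGH2}, so there is no internal proof to compare against; your proof is the expected one.
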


\begin{proposition}\label{Main:Proposition} Let $\mu \in \mathfrak{M}_{x}(\mathcal{U})$ be \fim\ over a small model $M \prec \cU$, $\nu \in \mathfrak{M}_{y}(\mathcal{U})$ and $\mathbf{x} = (x_i)_{i \in \omega}$ with $x_i$ of the same sort as $x$ for all $i$. Suppose that $\lambda \in \mathfrak{M}_{\mathbf{x}y}(\mathcal{U})$ such that $\lambda|_{\mathbf{x},M}  = \mu^{(\omega)}|_{M}$ and $\lambda|_{y} = \nu$. Then for any $\varphi(x,y) \in \mathcal{L}$ we have
\begin{equation*} 
\lim_{i \to \infty} \lambda(\varphi(x_i,y)) = \mu \otimes \nu(\varphi(x,y)). 
\end{equation*} 
\end{proposition}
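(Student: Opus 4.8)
The plan is to prove this via the Keisler randomization, following \cite{CGH2}. Since $\mu$ is \fim\ it is in particular definable, so Fact~\ref{fact:building1} provides the types $p_\mu \in S^R_x(\mathcal{U}^\Omega)$ and $r_\mu \in S^R_x(\mathcal{C})$, with $r_\mu|_{\mathcal{U}^\Omega} = p_\mu$, $r_\mu$ definable over $M^\Omega$, and, by Fact~\ref{CGH2:Facts}(3), generically stable over $M^\Omega$. Moreover the \fim-witnesses $\theta_n$ for $\mu$, evaluated through $\mathbb{E}[\theta_n]$ via Fact~\ref{fact:building1}(1) (empty parameters) together with Corollary~\ref{Corollary:otimes}, show that $r_\mu$ is itself \fim, hence self-averaging, over $M^\Omega$ --- this is the input playing the role of Fact~\ref{fact:self}. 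For the arbitrary measures $\lambda$ and $\nu$ I also form $p_\lambda \in S^R_{\mathbf{x}y}(\mathcal{U}^\Omega)$ and $p_\nu \in S^R_y(\mathcal{U}^\Omega)$ via Fact~\ref{fact:building1}(1), which needs no definability. The elementary translation used throughout is that, taking the parameter tuple empty in Fact~\ref{fact:building1}(1), $\lambda(\varphi(x_i,y)) = \mathbb{E}[\varphi(x_i,y)]^{p_\lambda}$ for every $i$.

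First I would identify the relevant restrictions of $p_\lambda$. By Corollary~\ref{Corollary:restriction}, $p_\lambda|_{\mathbf{x},M^\Omega} = p_{\lambda|_{\mathbf{x}}}|_{M^\Omega}$ and $p_\lambda|_{y} = p_{\lambda|_{y}} = p_\nu$. Since $\lambda|_{\mathbf{x},M} = \mu^{(\omega)}|_{M}$, and since the value that $p_{\rho}|_{M^\Omega}$ assigns to a formula $\mathbb{E}[\psi(x,\bar h)]$ with $\bar h$ from $M^\Omega_0$ (so each $\bar h|_{A}$ lies in $M$) depends only on $\rho|_{M}$, we get $p_{\lambda|_{\mathbf{x}}}|_{M^\Omega} = p_{\mu^{(\omega)}}|_{M^\Omega}$. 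By Fact~\ref{fact:building1}(2)(a) applied to the definable measure $\mu^{(\omega)}$ and Corollary~\ref{Corollary:otimes}, $p_{\mu^{(\omega)}} = r_{\mu^{(\omega)}}|_{\mathcal{U}^\Omega} = (r_\mu)^{(\omega)}|_{\mathcal{U}^\Omega}$. Hence $p_\lambda|_{\mathbf{x},M^\Omega} = (r_\mu)^{(\omega)}|_{M^\Omega}$: if $(\mathbf{c},d) = ((c_i)_{i<\omega},d)$ realizes $p_\lambda$ in $\mathcal{C}$, then $(c_i)_{i<\omega}$ is a Morley sequence in the generically stable type $r_\mu$ over $M^\Omega$, while $\tp^{R}(d/\mathcal{U}^\Omega) = p_\nu$.

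The core step is to combine these, and then evaluate. Applying self-averaging of $r_\mu$ over $M^\Omega$ to $\tp^R(\mathbf{c}/\mathcal{C})$ (whose restriction to $M^\Omega$ is $(r_\mu)^{(\omega)}|_{M^\Omega}$) and to the continuous formula $\mathbb{E}[\varphi(x,d)]$ over $\mathcal{U}^\Omega d \subseteq \mathcal{C}$, I obtain $\lim_{i\to\infty}\mathbb{E}[\varphi(c_i,d)] = \mathbb{E}[\varphi(x,d)]^{r_\mu} = \mathbb{E}[\varphi(c,d)]$ for any $c \models r_\mu|_{\mathcal{U}^\Omega d}$. As $\lambda(\varphi(x_i,y)) = \mathbb{E}[\varphi(x_i,y)]^{p_\lambda} = \mathbb{E}[\varphi(c_i,d)]$, this gives $\lim_{i\to\infty}\lambda(\varphi(x_i,y)) = \mathbb{E}[\varphi(c,d)]$. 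Now $\mathbb{E}[\varphi(c,d)] = F_{r_\mu}^{\mathbb{E}[\varphi(x,y)]}(p_\nu)$, and $F_{r_\mu}^{\mathbb{E}[\varphi(x,y)]}$ is continuous on $S^R_y(\mathcal{U}^\Omega)$ by definability of $r_\mu$. Choosing by Fact~\ref{random:1} a net $(h_j)_j$ in $(\mathcal{U}^\Omega_0)^{y}$ with $\tp^R(h_j/\mathcal{U}^\Omega)\to p_\nu$, together with measurable partitions $\mathcal{A}_j$ on which each $h_j$ is constant, Fact~\ref{fact:building1}(1) yields $F_{r_\mu}^{\mathbb{E}[\varphi(x,y)]}\bigl(\tp^R(h_j/\mathcal{U}^\Omega)\bigr) = \sum_{A\in\mathcal{A}_j}\mathbb{P}(A)\,\mu(\varphi(x,h_j|_{A})) = \int_{S_y(\mathcal{U})} F_\mu^{\varphi}\, d\bigl(\sum_{A\in\mathcal{A}_j}\mathbb{P}(A)\delta_{h_j|_{A}}\bigr)$; by Fact~\ref{random:2} the measures $\sum_{A\in\mathcal{A}_j}\mathbb{P}(A)\delta_{h_j|_{A}}$ converge to $\nu$ in $\mathfrak{M}_y(\mathcal{U})$, and since $\mu$ is definable $F_\mu^{\varphi}$ is continuous on $S_y(\mathcal{U})$, so the integrals converge to $\int F_\mu^{\varphi}\,d\nu = (\mu\otimes\nu)(\varphi(x,y))$. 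Passing to the limit in $j$ gives $\mathbb{E}[\varphi(c,d)] = (\mu\otimes\nu)(\varphi(x,y))$, and combining the two displays finishes the proof.

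The hard part --- and the reason randomization is the right framework --- is the core step: one needs the (randomized) Morley sequence $(c_i)$, which a priori is only Morley in $r_\mu$ \emph{over $M^\Omega$}, to keep averaging correctly in the presence of the extra parameter $d$ realizing $p_\nu$. This is precisely self-averaging of $r_\mu$ over $M^\Omega$, and it is here that the full \fim\ hypothesis on $\mu$ is used (mere generic stability of $r_\mu$ may not suffice, since $T$ --- hence $T^R$ --- need not be NIP, so generic stability need not imply self-averaging). Passing to the randomization is exactly what converts the arbitrary parameter measure $\nu$ into a genuine type $p_\nu$ over the ``parameter model'' $\mathcal{U}^\Omega$, over which one can localize in the way the argument demands; the auxiliary bookkeeping (Corollaries~\ref{Corollary:restriction} and~\ref{Corollary:otimes}, and the back-translation via Fact~\ref{random:2}) is then routine.
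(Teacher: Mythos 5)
Your argument is, in all essentials, the paper's own proof: pass to the randomization, use Corollary~\ref{Corollary:restriction} and Corollary~\ref{Corollary:otimes} to see that a realization $(\mathbf{c},d)$ of $p_\lambda$ gives a Morley sequence $\mathbf{c}$ in $r_\mu$ over $M^{\Omega}$ together with $d\models p_\nu$, average along $\mathbf{c}$ to land on $\mathbb{E}[\varphi(x,d)]^{r_\mu}$, and then translate back to $\mu\otimes\nu(\varphi)$ via Fact~\ref{random:1}, Fact~\ref{fact:building1}(1), continuity of $F^{\varphi}_{\mu}$ and Fact~\ref{random:2}. The only point where you deviate is the justification of the averaging step, and there your route is both unsupported and unnecessary. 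You assert that the \fim-witnesses $\theta_n$ for $\mu$ transfer to show that $r_\mu$ is \fim, hence self-averaging, over $M^{\Omega}$; this transfer is nowhere proved, it is not among the cited facts, and \fim/self-averaging are not even defined for types in continuous logic in the material you invoke (Fact~\ref{fact:self} is about classical Keisler measures).

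Fortunately you do not need it. The only instance of ``self-averaging'' you actually use is applied to $\tp^R(\mathbf{c}/\mathcal{C})$, which is a \emph{type}, not a proper measure, and whose restriction to $M^{\Omega}$ in the variables $\mathbf{x}$ is $(r_\mu)^{(\omega)}|_{M^{\Omega}}$; so $\mathbf{c}$ is literally a Morley sequence in $r_\mu$ over $M^{\Omega}$. For a generically stable type in an arbitrary theory (no NIP needed), averaging along any Morley sequence over the invariance base recovers the type against arbitrary parameters; hence Fact~\ref{CGH2:Facts}(3) alone already gives $\lim_{i}\mathbb{E}[\varphi(c_i,d)] = \mathbb{E}[\varphi(x,d)]^{r_\mu}$ for $d\in\mathcal{C}$, which is exactly how the paper argues. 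Your concern that ``mere generic stability of $r_\mu$ may not suffice'' conflates the measure and type cases: self-averaging is genuinely stronger than generic stability for measures, but the whole point of randomizing is that the parameter measure $\nu$ and the joint measure $\lambda$ have become types, for which generic stability suffices. Replacing your unproven \fim-transfer claim by this citation makes your proof coincide with the paper's.
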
 

\begin{proof} The measures $\mu \in \mathfrak{M}_x(\cU), \lambda \in \mathfrak{M}_{\mathbf{x} y}(\mathcal{U})$, $\mu^{(\omega)} \in \mathfrak{M}_{\mathbf{x}}(\mathcal{U})$ and $\nu \in \mathfrak{M}_{y}(\mathcal{U})$  can be associated to  complete types $p_\mu \in S^R_{x}(\cU^{\Omega}), p_{\lambda} \in S^{R}_{\mathbf{x}y}(\mathcal{U}^{\Omega})$, $p_{\mu^{(\omega)}} \in S^{R}_{\mathbf{x}y}(\mathcal{U}^{\Omega})$ and $p_{\nu} \in S^{R}_{y}(\mathcal{U}^{\Omega})$, respectively,  by Fact \ref{fact:building1}(1). As $\mu$ is $M$-definable, $\mu^{(\omega)}$ is also $M$-definable, and so the types $r_{\mu} \in S_{x}(\mathcal{C})$ and $r_{\mu^{(\omega)}} \in S_{\mathbf{x}}(\mathcal{C})$ are well-defined and definable over $M^{\Omega}$ by Fact \ref{fact:building1}(2). We then have: 
\begin{enumerate}
\item $r_{\mu}$ is generically stable over $M^{\Omega}$ (by Fact \ref{CGH2:Facts}(3));
\item $r_{\mu^{(\omega)}}|_{M^{\Omega}} = (r_{\mu})^{(\omega)}|_{M^{\Omega}}$ (by Corollary \ref{Corollary:otimes});
\item $p_{\lambda}|_{\mathbf{x},M^{\Omega}} = p_{\mu^{(\omega)}}|_{M^{\Omega}} =  r_{\mu^{(\omega)}}|_{M^{\Omega}}$ and $p_{\lambda}|_y = p_{\nu}$ (by Corollary \ref{Corollary:restriction}).
\end{enumerate} 
Let $\mathbf{a} = (a_i)_{i \in \omega}$ and $b$ in $\mathcal{C}$ be so that $(\mathbf{a},b) \models p_{\lambda}$. Then $\mathbf{a}$ is a Morley sequence in $r_{\mu}$ over $M^{\Omega}$ by (2) and (3) from above. By Fact \ref{random:1}, choose a net $(h_j)_{j \in J}$ of tuples in $(\mathcal{U}_{0})^{y}$ such that $\lim_{j \in J} \tp^R(h_j/\mathcal{U}^{\Omega}) = \tp^R(b/\mathcal{U}^{\Omega}) = p_{\nu}$. For each $h_j$, choose a finite measurable partition $\mathcal{A}_j$ of $\Omega$ such that each function in $h_j$ is constant on each element of $\mathcal{A}_j$. Now, given any $\varphi(x,y) \in \mathcal{L}$, we then have the following computation: 
\begin{gather*}
	\lim_{i \to \infty} \lambda(\varphi(x_i,y)) = \lim_{i \to \infty} \mathbb{E}[\varphi(x_i,y)]^{p_{\lambda}} = \lim_{i \to \infty} \mathbb{E}[\varphi(a_i,b)] \overset{(a)}{=}\mathbb{E}[\varphi(x,b)]^{r_{\mu}} \\
	= F^{\varphi}_{r_{\mu}} \left( \tp^R(b/\mathcal{U}^{\Omega}) \right) \overset{(b)}{=} \lim_{j \in J} F^{\varphi}_{r_{\mu}} \left( \tp^R(h_j/\mathcal{U}^{\Omega}) \right)
	= \lim_{j \in J} \mathbb{E}[\varphi(x,h_j)]^{p_{\mu}} \\ \overset{(c)}{=} \lim_{j \in J} \sum_{A \in \mathcal{A}_j} \mathbb{P}(A) \mu(\varphi(x,h_{j}|_{A}))
	\overset{(d)}{=} \lim_{j \in J} \int_{S_{y}(\mathcal{U})} F_{\mu}^{\varphi} \  \textrm{d}\left( \sum_{A \in \mathcal{A}_j} \mathbb{P}(A) \delta_{\left( h_j|_{A} \right)}\right) \\
	\overset{(e)}{=} \int_{S_{y}(\mathcal{U})} F_{\mu}^{\varphi} \  \textrm{d} \left( \lim_{j \in J}  \sum_{A \in \mathcal{A}_j} \mathbb{P}(A) \delta_{\left( h_j|_{A} \right)}\right) 
	\overset{(f)}{=} \int_{S_{y}(\mathcal{U})} F_{\mu}^{\varphi} d\nu = \mu \otimes \nu(\varphi(x,y)),
\end{gather*}
where the corresponding equalities hold for the following reasons: 
\begin{enumerate}[$(a)$]
\item  since the type $r_{\mu} \in S_{x}^{R}(\mathcal{C})$ is  generically stable over $M^{\Omega}$ by (1) and 
$(\mathbf{a}_i)_{i \in \omega}$ is a Morley sequence in $r_{\mu}$ over $M^{\Omega}$;
\item by the choice of $(h_j)_{j \in I}$ and, as $r_\mu$ is definable over $\mathcal{U}^{\Omega}$ by (1), the map $F_{r_{\mu}}^{\varphi}: S^R_{y}(\mathcal{U}^{\Omega}) \to [0,1]$ is continuous;
\item by the definition of $p_{\mu}$ (Fact \ref{fact:building1});
\item for a fixed $j \in J$, the computations of the left hand side and the right hand side are the same; 
\item since $\mu$ is definable, the map $F_{\mu}^{\varphi}:S_{y}(\mathcal{U}) \to [0,1]$ is continuous, hence  the map $\gamma \in \mathfrak{M}_{y}(\mathcal{U}) \mapsto \int F_{\mu}^{\varphi} \textrm{d} \gamma \in [0,1]$ is continuous;
\item by Fact \ref{random:2}. \qedhere
\end{enumerate} 
\end{proof} 

 We will use the following general topological fact \cite[Lemma 2.3]{CGH2}: 

\begin{fact}\label{CGH:fact1} Let $f:X \to K$ be an arbitrary function from a compact Hausdorff space to a compact interval $K \subseteq \mathbb{R}$. Suppose there is a closed subset $C \subseteq K^{\omega} \times X$ satisfying the following properties: 
\begin{enumerate} 
\item the projection of $C$ onto $X$ is all of $X$;
\item if $(\alpha,x) \in C$ and $g: \omega \to \omega$ is strictly increasing, then $(\alpha \circ g, x) \in C$; 
\item for any $(\alpha,x) \in C$, $\lim_{i \to \infty} \alpha(i) = f(x)$. 
\end{enumerate} 
Then $f$ is continuous and, for any $\varepsilon > 0$, there is an $n_{\varepsilon} \in \mathbb{N}$ such that: for any $(\alpha,x) \in C$, $\{i \in \omega: \alpha(i) \not \approx_{\varepsilon} f(x)\} \leq n_{\varepsilon}$. 
\end{fact}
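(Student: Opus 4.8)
The plan is to prove Fact~\ref{CGH:fact1} directly by elementary topological arguments, without appealing to any later machinery. Let $f \colon X \to K$ be the given function, $K \subseteq \mathbb{R}$ a compact interval, and $C \subseteq K^\omega \times X$ a closed subset with the three stated properties. The key observation is that property (2) (closure under passing to strictly increasing subsequences) together with property (3) (pointwise convergence of each $\alpha$ in the fiber over $x$ to $f(x)$) forces a \emph{uniform} rate of convergence, because otherwise we could extract a bad subsequence and use compactness of $C$ to produce a point violating (3). The continuity of $f$ will then follow from the uniformity together with property (1).

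\textbf{Step 1: Uniform convergence via compactness.} I would argue by contradiction. Suppose there is $\varepsilon > 0$ such that for every $n \in \mathbb{N}$ there exists $(\alpha_n, x_n) \in C$ with $|\{i \in \omega : \alpha_n(i) \not\approx_\varepsilon f(x_n)\}| > n$. For each $n$, pick a strictly increasing $g_n \colon \omega \to \omega$ enumerating (the first $n+1$ elements of, or all of) the set $\{i : \alpha_n(i) \not\approx_\varepsilon f(x_n)\}$; by property (2), $(\alpha_n \circ g_n, x_n) \in C$, and now $|\beta_n(i) - f(x_n)| \geq \varepsilon$ for all $i \leq n$, where $\beta_n := \alpha_n \circ g_n$. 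Since $K^\omega \times X$ is compact (Tychonoff, as $K$ is a compact interval and $X$ is compact Hausdorff), pass to a subnet $(\beta_{n_j}, x_{n_j})_{j}$ converging to some $(\beta, x) \in K^\omega \times X$; since $C$ is closed, $(\beta, x) \in C$. Now I want to conclude $|\beta(i) - f(x)| \geq \varepsilon$ for every fixed $i$: for any fixed $i$, once $n_j \geq i$ we have $|\beta_{n_j}(i) - f(x_{n_j})| \geq \varepsilon$; the map $(\gamma, y) \mapsto |\gamma(i) - f(y)|$ need not be continuous because $f$ is not yet known to be continuous, so here I must be slightly careful --- instead I track $\beta_{n_j}(i) \to \beta(i)$ (coordinate projections are continuous) and I do \emph{not} pass $f$ through the limit, but rather record that $f(x_{n_j})$ stays $\varepsilon$-far from $\beta_{n_j}(i)$; to extract a contradiction I should first establish continuity of $f$ at the relevant points, which suggests reordering the argument.

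\textbf{Step 2: Continuity of $f$, then uniformity.} Reordered plan: first prove $f$ is continuous. Let $x \in X$ and let $(x_d)_{d \in D}$ be a net converging to $x$; by property (1) choose $(\alpha_d, x_d) \in C$, and by compactness of $K^\omega$ pass to a subnet along which $\alpha_d \to \alpha$ in $K^\omega$, so $(\alpha, x) \in C$ by closedness. By property (3), $\lim_{i \to \infty} \alpha(i) = f(x)$. Now fix $\varepsilon > 0$ and choose $i_0$ with $|\alpha(i_0) - f(x)| < \varepsilon$; since $\alpha_d(i_0) \to \alpha(i_0)$, for $d$ large $|\alpha_d(i_0) - \alpha(i_0)| < \varepsilon$. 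But also by property (3) applied to $(\alpha_d, x_d)$ we'd need to control $f(x_d)$ against $\alpha_d$, which again requires knowing the tail behavior uniformly --- this circularity is the genuine obstacle, and the standard resolution is: \emph{prove Step 1 and the continuity of $f$ simultaneously} by the following cleaner contradiction. Suppose $f$ fails to be continuous or uniform convergence fails; in either case one produces, using property (2) to shift to subsequences and compactness of $K^\omega \times X$ to take limits, a point $(\alpha, x) \in C$ together with a sequence $i_k \to \infty$ such that $|\alpha(i_k) - f(x)| \geq \varepsilon$ for all $k$ --- directly contradicting property (3). Concretely: if no uniform bound $n_\varepsilon$ exists, take $(\beta_n, x_n) \in C$ as in Step 1 with $|\beta_n(i) - f(x_n)| \geq \varepsilon$ for $i \leq n$; take a convergent subnet $(\beta_n, x_n) \to (\beta, x) \in C$; for each fixed $i$, $\beta_n(i) \to \beta(i)$, and choosing $n \geq i$ along the subnet gives (taking limits in the single inequality $|\beta_n(i) - f(x_n)| \ge \varepsilon$, where now we DO need $f(x_n) \to f(x)$) --- so the cleanest route is: \textbf{(i)} prove $f$ continuous first by the argument below which does not need uniformity, \textbf{(ii)} then run Step 1 using continuity of $f$ to pass to the limit.

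\textbf{Step 3: Completing continuity of $f$ without uniformity.} To prove $f$ continuous: fix $x \in X$ and $\varepsilon > 0$; I claim there is a neighborhood $V$ of $x$ with $|f(y) - f(x)| < 3\varepsilon$ for $y \in V$. Using property (1), pick $(\alpha^x, x) \in C$; by property (3) fix $N$ with $|\alpha^x(i) - f(x)| < \varepsilon$ for all $i \geq N$. Consider the set $C_N := \{(\gamma, y) \in C : |\gamma(i) - \gamma(j)| \leq \text{(something)}\}$ --- rather than this, use the following: the function $F \colon C \to \mathbb{R}$, $F(\gamma, y) = \limsup_i \gamma(i)$ equals $f(y)$ on $C$ by (3), and on $K^\omega$ the functionals $\gamma \mapsto \sup_{i \geq m} \gamma(i)$ are continuous (sup of finitely... no, infinitely many). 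Instead: for fixed $m$, $\gamma \mapsto \gamma(m)$ is continuous, so $g_m(\gamma,y) := \sup_{i \le m}\gamma(i)$ is continuous on $C$, decreasing... The genuinely clean fact is: $f \circ \pi_X^{-1}$ restricted to $C$ equals the pointwise (hence Baire-class-$1$) limit of continuous functions $(\gamma,y) \mapsto \gamma(i)$, but a pointwise limit need not be continuous. The resolution exploited in the paper is precisely that property (2) \emph{upgrades} pointwise to uniform, which is Step 1, so continuity of $f$ on $X$ follows from: $X = \pi_X(C)$ by (1), $f|_{\pi_X(C)}$ is a \emph{uniform} limit (by Step 1) of the continuous functions $h_i(\gamma, y) = \gamma(i)$ composed with any continuous section --- but there may be no continuous section. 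Therefore the correct final assembly is: fix $\varepsilon$, get $n_\varepsilon$ from Step 1 (proved by contradiction using only compactness and properties (2),(3), where in taking the limit we use that $\gamma \mapsto \gamma(i)$ is continuous and we arrange the contradiction to be of the form ``$(\beta, x) \in C$ with $\limsup_i |\beta(i) - f(x)| \ge \varepsilon$'', contradicting (3) \emph{which is about that single point $(\beta,x)$ and does not reference $f(x_n)$}); then for any $x \in X$ and any $(\alpha, x), (\alpha', x) \in C$, $|f(x) - \alpha(i)| < \varepsilon$ and $|f(x) - \alpha'(i)|<\varepsilon$ for all but $n_\varepsilon$ many $i$, and finally for $y$ near $x$, choose via (1) $(\alpha, x), (\alpha', y) \in C$ and use joint continuity of $(\gamma, z) \mapsto (\gamma(i))_{i < n_\varepsilon + 1}$ plus the uniform bound to sandwich $|f(y) - f(x)|$.

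\textbf{Main obstacle.} The delicate point throughout is the circular dependence between ``$f$ continuous'' and ``convergence uniform'': the contradiction in Step 1 must be set up so that the limit inequality is extracted at the single limit point $(\beta, x) \in C$ and compared only against $f(x)$ (which is legitimate by property (3) at that point), rather than requiring continuity of $f$ to pass $f(x_n) \to f(x)$. I expect this to be the crux --- once it is correctly arranged, the uniform bound $n_\varepsilon$ comes out, and continuity of $f$ is then a short consequence of the uniform bound together with property (1) and continuity of the finitely many coordinate projections $\gamma \mapsto \gamma(i)$, $i \leq n_\varepsilon$.
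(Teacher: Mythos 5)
You have correctly located the crux — the apparent circularity between continuity of $f$ and the uniform bound $n_{\varepsilon}$ — but your proposal does not resolve it, and the resolution you announce is not achievable by the extraction you actually describe. Your final plan is to prove the uniform bound first, "arranging the contradiction to be of the form $(\beta,x)\in C$ with $\limsup_i|\beta(i)-f(x)|\ge\varepsilon$, which does not reference $f(x_n)$"; but the only extraction you give (use property (2) to front-load the first $n$ bad coordinates of $\alpha_n$, then take a convergent subnet) cannot produce this. After passing to a further subnet so that $f(x_{n_j})\to c\in K$, what you get at the limit point $(\beta,x^{*})\in C$ is $|\beta(k)-c|\ge\varepsilon$ for every $k$, where $c$ is a priori unrelated to $f(x^{*})$: the "good" coordinates of $\beta_n$, which are the only carriers of the value $f(x_n)$, sit at positions beyond $n$ and so vanish from every fixed coordinate in the limit. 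Property (3) at $(\beta,x^{*})$ then only yields $|f(x^{*})-c|\ge\varepsilon$, which is no contradiction, and identifying $c$ with $f(x^{*})$ is exactly the continuity you were trying to avoid. So as written, neither half of the statement is proved.

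The missing device is a second use of property (2), with a tail shift $g(i)=M+i$: given $(\alpha,y)\in C$ and $\delta>0$, choose $M$ so large that $|\alpha(j)-f(y)|<\delta$ for all $j\ge M$; then $(\alpha\circ g, y)\in C$ and \emph{all} of its coordinates are $\delta$-close to $f(y)$. This anchors the value $f(y)$ inside an element of $C$, and it breaks the circle. For continuity: if $x_d\to x$ with $|f(x_d)-f(x)|\ge\varepsilon$, tail-shift each witness with $\delta=\varepsilon/2$, so every coordinate is $\ge\varepsilon/2$ from the fixed value $f(x)$; a convergent subnet gives $(\beta,x)\in C$ all of whose coordinates are $\ge\varepsilon/2$ from $f(x)$, contradicting (3) at that single point. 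With continuity in hand, your Step 1 extraction goes through verbatim ($f(x_{n_j})\to f(x^{*})$ is now legitimate) and gives $n_{\varepsilon}$. (Alternatively, one can get the uniform bound without proving continuity first by running both extractions — front-loaded-bad and all-good-tail — over the same points $x_n$ and taking one common subnet: the two limit elements of $C$ lie over the same $x^{*}$ and force $|f(x^{*})-c|\ge\varepsilon$ and $|f(x^{*})-c|\le\varepsilon/4$ simultaneously.) Your closing pigeonhole argument deducing continuity of $f$ from the uniform bound, using only the finitely many coordinate projections $i\le 2n_{\varepsilon}$, is fine, but it cannot substitute for the missing step above. Note also that the paper itself gives no proof of this fact — it is quoted from [CGH2, Lemma 2.3] — so the comparison here is with a correct argument rather than with a printed proof.
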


Finally, we can derive the main theorem of the section:
\begin{proof}[Proof of Theorem \ref{thm: unif gen stab meas}] 
The ``moreover'' clause of Theorem \ref{thm: unif gen stab meas} for formulas without parameters follows from Proposition \ref{Main:Proposition} by compactness (using Fact \ref{CGH:fact1}). 

Namely, first let $\varphi(x;y,z) \in \cL(\emptyset)$ be arbitrary. We let 
\begin{gather*}
	\mathfrak{M}_L(\cU) := \left\{ \eta \in \mathfrak{M}_{\mathbf{x}yz}(\cU) : \eta|_{\mathbf{x},M} = \mu^{(\omega)}|_{M}\right\}, X := \mathfrak{M}_{yz}\left( \cU \right), K = [0,1],\\
	f:X \to K \textrm{ defined by } \nu \in \mathfrak{M}_{yz}\left( \cU \right) \mapsto \mu \otimes \nu (\varphi(x,y,z)), \textrm{ and}\\
	C:= \left\{ \left(  \left( \eta(\varphi(x_i, y, z) ) : i \in \omega \right), \eta|_{yz} \right)  :  \eta \in \mathfrak{M}_L(\cU) \right\} \subseteq [0,1]^{\omega} \times X.
\end{gather*}
The assumptions of Fact \ref{CGH:fact1} are satisfied. Indeed,  
(1) holds since for every $\nu \in \mathfrak{M}_{yz}\left( \cU \right)$,  $\eta := \mu^{(\omega)} \otimes \nu$ gives an element in $C$ projecting onto it.
(2) For every strictly increasing $g: \omega \to \omega$ we have a continuous map  $g':S_{\mathbf{x}}(\cU) \to S_{\mathbf{x}}(\cU)$ defined by $\varphi(x_1, \ldots, x_n) \in g'(p) \iff \varphi(x_{g(1)}, \ldots, x_{g(n)}) \in p$. Now  
if $\eta|_{\mathbf{x},M} = \mu^{(\omega)}|_{M}$ then still $g_{\ast}(\eta)|_{\mathbf{x},M} = \mu^{(\omega)}|_{M}$, where $g_{\ast}(\eta)$ is the pushforward of the measure $\eta$ by $g$ (see Definition \ref{def: definable pushforward}). And (3) holds by Proposition \ref{Main:Proposition}. Then we obtain the required $n = n(\mu, \varphi, \varepsilon)$ applying Fact \ref{CGH:fact1}.

Now assume we are given $\mu \in \mathfrak{M}_x(\cU)$, $\varepsilon > 0$, and $\psi(x,y) \in \cL(\cU)$ is an arbitrary formula with parameters, say of the form $\varphi(x,y,b)$ for some $b \in \cU^z$ and $\varphi(x,y,z) \in \cL(\emptyset)$. Let $n = n(\mu, \varphi, \varepsilon) \in \omega$ be as given by the above for the formula $\varphi(x,y,z)$ without parameters. 
Given any $\lambda \in \mathfrak{M}_{\mathbf{x}y}(\mathcal{U})$ with $\lambda|_{\mathbf{x},M}  = \mu^{(\omega)}$ and $\lambda|_{y} = \nu$, consider the measures $\lambda_{b} \in \mathfrak{M}_{\mathbf{x}yz}(\mathcal{U})$ defined by $\lambda_{b}(\mathbf{x},y,z) := \lambda(\mathbf{x},y) \otimes \delta_{b}(z)$, and $\nu_{b} \in  \mathfrak{M}_{yz}(\mathcal{U})$ defined by $\nu_{b}(y,z) = \nu(y) \otimes \delta_{b}(z)$. Note that $\lambda_{b}|_{yz} = \nu_{b}$. By the choice of $n$ and the previous paragraph (with $\lambda_{b}$ and $\nu_{b}$ in place of $\lambda$ and $\nu$) we have
\begin{equation*} 
\lim_{i \to \infty} \lambda(\varphi(x_{i},y,b))  = \lim_{i \to \infty} \lambda_{b}(\varphi(x_{i},y,z)) = \mu \otimes \nu_{b}(\varphi(x,y,z)) = \mu \otimes \nu(\varphi(x,y,b)). \qedhere
\end{equation*} 

\end{proof} 

\subsection{Definable pushforwards of Keisler measures}\label{sec: def pushforwards} We record some basic facts about definable pushforwards of Keisler measures. 

\begin{definition}\label{def: definable pushforward}Let $f:\mathcal{U}^{x} \to \mathcal{U}^{y}$ be a definable map. For $\mu \in \mathfrak{M}_{x}(\mathcal{U})$, we define the \emph{push-forward measure} $f_{*}(\mu)$ in $\mathfrak{M}_{y}(\mathcal{U})$, where for any formula $\varphi(y) \in \mathcal{L}_{y}(\mathcal{U}), f_*(\mu)(\varphi(y)) = \mu(\varphi(f(x)))$. 
\end{definition} 

\begin{proposition}\label{prop:push-forward} Let $\mu \in \mathfrak{M}_{x}(\mathcal{U})$. Let $A\subseteq \cU$ be a small set and let $f:\mathcal{U}^{x} \to \mathcal{U}^{y}$ be an $A$-definable map. Then we have the following:
\begin{enumerate} 
\item if $\mu$ is $A$-invariant, then $f_{\ast}(\mu)$ is $A$-invariant;
\item if $\mu$ is $A$-definable, then $f_{\ast}(\mu)$ is $A$-definable;
\item if $\mu$ is \fim\ over $A$, then $f_{\ast}(\mu)$ is \fim\ over $A$;
\item $\left\{ f_{\ast}(p) : p \in S(\mu) \right\} = S \left(f_{\ast}(\mu) \right)$. 
\end{enumerate} 
\end{proposition}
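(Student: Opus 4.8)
The plan is to verify the four items by unwinding the definition of the push-forward, the only genuinely substantial point being the interaction of $f_{\ast}$ with the Morley product in item (3). The running observation is that, since $f$ is $A$-definable, for every $\mathcal{L}$-formula $\psi(y,w)$ the substitution $\psi(f(x),w)$ is again an $\mathcal{L}(A)$-formula in the variables $(x,w)$ --- write it $\psi^{f}(x,w)$ --- and $f_{\ast}(\mu)(\psi(y,c)) = \mu(\psi^{f}(x,c))$ for all $c \in \mathcal{U}^{w}$. For (1): given $\sigma \in \Aut(\mathcal{U}/A)$, $\sigma$ fixes the parameters of $f$, so $\sigma(\psi^{f}) = (\sigma\psi)^{f}$; hence $f_{\ast}(\mu)(\sigma\psi) = \mu((\sigma\psi)^{f}) = \mu(\sigma(\psi^{f})) = \mu(\psi^{f}) = f_{\ast}(\mu)(\psi)$ by $A$-invariance of $\mu$. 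For (2): $A$-definability of $\mu$ says that for each $\psi(y,w)$ the function $c \mapsto \mu(\psi^{f}(x,c))$ is $A$-definable, and this function is exactly $c \mapsto f_{\ast}(\mu)(\psi(y,c))$; together with (1) this also records that $f_{\ast}(\mu)$ is Borel-definable, so every Morley power occurring below is well-defined.

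For (3), fix $\varphi(y,z) \in \mathcal{L}$ and put $\psi(x,z) := \varphi(f(x),z) \in \mathcal{L}(A)$. Choose $\mathcal{L}(A)$-formulas $(\theta'_{n}(x_{1},\dots,x_{n}))_{n}$ witnessing that $\mu$ is \fim\ for $\psi$, and take
\[
\theta_{n}(y_{1},\dots,y_{n}) := \exists x_{1}\cdots\exists x_{n}\Bigl(\textstyle\bigwedge_{i=1}^{n} f(x_{i}) = y_{i} \ \wedge\ \theta'_{n}(x_{1},\dots,x_{n})\Bigr)
\]
as the candidate witnesses for $f_{\ast}(\mu)$ and $\varphi$. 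Clause (1) of Definition \ref{def:fim} transfers directly: if $k$ exceeds the threshold for $(\mu,\psi,\varepsilon)$ and $\mathcal{U} \models \theta_{k}(\bar b)$, lift $\bar b$ to $\bar a$ with $f(a_{i}) = b_{i}$ and $\models \theta'_{k}(\bar a)$; then $\Av(\bar b)(\varphi(y,c)) = \Av(\bar a)(\psi(x,c))$ and $f_{\ast}(\mu)(\varphi(y,c)) = \mu(\psi(x,c))$ for every $c$, so the uniform estimate over $c$ carries over. For clause (2) I would first establish that definable push-forwards commute with Morley products, i.e.~$(f \times g)_{\ast}(\mu \otimes \nu) = f_{\ast}(\mu) \otimes g_{\ast}(\nu)$ for $\mu$ definable and $\nu$ Borel-definable; this is a change-of-variables computation using that $\widehat{g_{\ast}\nu}$ is the push-forward of $\widehat{\nu}$ along the continuous map of type spaces induced by $g$. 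Inducting on $n$ then gives $f_{\ast}(\mu)^{(n)} = (f^{(n)})_{\ast}(\mu^{(n)})$ with $f^{(n)} = f \times \cdots \times f$, and since $\theta'_{n}(\bar x) \vdash \theta_{n}(f(x_{1}),\dots,f(x_{n}))$ we conclude
\[
f_{\ast}(\mu)^{(n)}(\theta_{n}) = \mu^{(n)}\bigl(\theta_{n}(f(x_{1}),\dots,f(x_{n}))\bigr) \ \ge\ \mu^{(n)}(\theta'_{n}) \longrightarrow 1 \quad (n \to \infty).
\]

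For (4), the inclusion $\subseteq$ is immediate: if $p \in S(\mu)$ and $\psi(y) \in f_{\ast}(p)$, then $\psi^{f}(x) \in p$, so $f_{\ast}(\mu)(\psi) = \mu(\psi^{f}) > 0$. For $\supseteq$, given $q \in S(f_{\ast}\mu)$ consider the closed fiber $C_{q} := \{\, p \in S_{x}(\mathcal{U}) : f_{\ast}(p) = q \,\}$, which is exactly the set of types containing $\{\,\psi^{f}(x) : \psi \in q\,\}$; it is nonempty since any finite conjunction $(\bigwedge_{i}\psi_{i})^{f}$ has $\mu$-measure $f_{\ast}(\mu)(\bigwedge_{i}\psi_{i}) > 0$, hence is consistent. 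If $C_{q} \cap S(\mu) = \emptyset$, covering the compact set $C_{q}$ by basic clopens of $\mu$-measure $0$ and taking a finite subcover produces a $\chi$ with $C_{q} \subseteq [\chi]$ and $\mu(\chi) = 0$; by compactness $\psi^{f} \vdash \chi$ for some $\psi \in q$, whence $f_{\ast}(\mu)(\psi) = \mu(\psi^{f}) = 0$, contradicting $q \in S(f_{\ast}\mu)$. So any $p \in C_{q} \cap S(\mu)$ satisfies $f_{\ast}(p) = q$. (Alternatively, $\widehat{f_{\ast}\mu}$ is the push-forward of $\widehat{\mu}$ along the continuous map $S_{x}(\mathcal{U}) \to S_{y}(\mathcal{U})$, so its support is the closure of $f_{\ast}(\supp\widehat{\mu}) = f_{\ast}(S(\mu))$, which is already closed since $S(\mu)$ is compact.)

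I expect the one non-formal ingredient to be the main obstacle: proving that definable push-forwards commute with the Morley product --- hence $f_{\ast}(\mu)^{(n)} = (f^{(n)})_{\ast}(\mu^{(n)})$ --- and keeping the Borel-definability bookkeeping straight so that all the Morley powers in play are legitimately defined. Items (1), (2) and (4) should be entirely routine unwindings of the definitions.
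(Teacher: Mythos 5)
Your proposal is correct and follows essentially the same route as the paper: items (1), (2), (4) are handled by the same routine unwindings (the paper's (4) extends the partial type $\{\varphi(f(x)):\varphi\in q\}$ to a type in $S(\mu)$, which is your compactness argument in compressed form), and for (3) the paper uses exactly your existentially-quantified lift of the witnessing formulas together with a Claim proved by induction that $(f_{\ast}\mu)^{(n)} = (G_n)_{\ast}(\mu^{(n)})$ for $G_n = f\times\cdots\times f$, established by the same change-of-variables computation with the fiber functions ($F_{f_{\ast}\mu}^{\psi}\circ G_n = F_{\mu}^{\psi_G}$) that you sketch. The step you flag as the main obstacle is thus precisely the paper's Claim and goes through as you describe, so no gap remains.
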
 
\begin{proof}
\noindent	(1) Straightforward. 

	\noindent (2) Note that for any formula $\varphi(y,z) \in \mathcal{L}_{yz}$, we have $F_{f_*(\mu),A}^{\varphi}= F_{\mu,A}^{\varphi_{f}}$ where $\varphi_{f}(x,z) := \varphi(f(x),z)$. 
	
	~
	
\noindent 	(3) Let $\mu \in \frak{M}_{x}(\cU)$ be \fim\  over $A$.

\begin{clm*}
For any $\theta(y_1,\ldots ,y_n) \in \mathcal{L}(\mathcal{U})$, 
\begin{equation*} (f_*(\mu))^{(n)}(\theta(y_1,\ldots ,y_n)) = \mu^{(n)}(\theta(f(x_1),\ldots ,f(x_n))). 
\end{equation*} 	
\end{clm*}
\begin{clmproof}  We prove the claim by induction on $n$. The base case $n = 1$ is trivial. Assume the claim holds for $n$. For $1 \leq k < \omega$, let $G_k:(\mathcal{U}^{x})^{k} \to (\mathcal{U}^{y})^{k}$ be the $A$-definable map given by $G_k(x_1,\ldots ,x_k) = (f(x_1),\ldots ,f(x_k))$. Note that $G_k$ induces a pushforward from $\mathfrak{M}_{x_1, \ldots,  x_k}(\mathcal{U})$ to $\mathfrak{M}_{y_1, \ldots, y_k}(\mathcal{U})$. Then our induction hypothesis says  $(f_*(\mu))^{(n)} = \left( G_{n} \right)_{*} \left(\mu^{(n)} \right)$. Fix $\theta(y_1,\ldots ,y_{n+1} ) \in \mathcal{L}(\mathcal{U})$ and let 
\begin{gather*}
	\psi(y_{n+1};y_1,\ldots ,y_n):= \theta(y_1,\ldots ,y_{n+1} ),\\
	\psi_{G}(x_{n+1};x_1,\ldots ,x_n):= \theta(f(x_1),\ldots ,f(x_{n+1}) ).
\end{gather*}
 Let $N \prec \cU$ be a small model containing $A$ and all relevant parameters. Then
 \begin{gather*}
 	(f_*(\mu))^{(n+1)}(\theta(y_1,\ldots ,y_{n+1})) = \left( f_*(\mu)_{y_{n+1}} \otimes (f_*(\mu))^{(n)}_{y_1,\ldots ,y_n} \right) (\theta(y_1,\ldots ,y_{n+1})) \\
 =  \int_{S_{y_1,\ldots ,y_n}(N)} F_{f_*(\mu)}^{\psi} \  \textrm{d}\left( (f_*(\mu))^{(n)} \right)  
 = \int_{S_{y_1,\ldots ,y_n}(N)} F_{f_*(\mu)}^{\psi} \  \textrm{d}\left( \left( G_{n} \right)_{*} \left(\mu^{(n)} \right) \right)\\
  = \int_{S_{x_1,\ldots ,x_n}(N)} \left( F_{f_{\ast}(\mu)}^{\psi} \circ G_n  \right) \  \textrm{d} \mu^{(n)}
  = \int_{S_{x_1,\ldots ,x_n}(N)}  F_{\mu}^{\psi_{G}} \  \textrm{d} \mu^{(n)} \\
  = \mu^{(n+1)}(\theta(f(x_1),\ldots ,f(x_{n+1}))). \qedhere
 \end{gather*}
\end{clmproof}

We now show that $f_*(\mu)$ is \fim\ over $A$. Fix a formula $\varphi(y,z)$ in $\mathcal{L}_{yz}$. Since $\mu$ is \fim, let $(\theta_{n}(x_1, \ldots, x_n))_{1 \leq n <\omega}$ be a sequence of $\cL(A)$-formulas witnessing  this for the formula $\varphi(f(x),z)$ as in Definition \ref{def:fim}. To avoid ``scope-of-quantifiers'' confusion, we let $w_1,\ldots ,w_n$ be new variables with $w_i$ of the same sort as $x_i$ for each $i$. For each $1 \leq n < \omega$, we consider the $\cL(A)$-formula 
\begin{equation*} 
\gamma_{n}(y_1,\ldots ,y_n) := \exists w_1 \ldots  \exists w_n \left( \theta(w_1,\ldots ,w_n) \wedge \bigwedge_{1 \leq i \leq n} f(w_i) = y_i \right). 
\end{equation*} 
Note that 
\begin{equation*}
\theta(x_1,\ldots ,x_n) \vdash \gamma_{n}(f(x_1),\ldots ,f(x_n)) \textrm{ for every } 	n \in \omega. \tag{$\ast$}
\end{equation*}
We will show that the formulas $(\gamma_{n}(y_1, \ldots, y_n))_{1 \leq n < \omega}$ witness  that $f_{*}(\mu)$ is \fim\ over $A$ with respect to the formula $\varphi(y,z)$. 

Fix $\varepsilon > 0$. Then there exists some $n_{\varepsilon} \in \omega$ so that: for any $n_{\varepsilon} \leq k < \omega$ and any $\bar{d}$ with $\models \theta_{k}(\bar{d})$ we have
\begin{equation*} 
\sup_{b \in \mathcal{U}^{z}} |\Av(\dbar)(\varphi(f(x),b)) - \mu(\varphi(f(x),b))|<\varepsilon. \tag{$\ast \ast$}
\end{equation*} 
Now, suppose that $\bar{c}$ is such that  $\mathcal{U} \models \gamma_{k}(\bar{c})$. Then, by definition of $\gamma_k$, there exists some $\bar{e} \in (\mathcal{U}^{x})^{k}$ such that $\models \theta_{k}(\bar{e})$ and $(f(e_1),\ldots ,f(e_k)) = (c_1,\ldots ,c_k)$. Therefore, 
\begin{gather*}
	\sup_{b \in \mathcal{U}^{z}} |\Av(\overline{c})(\varphi(y,b)) - f_*(\mu)(\varphi(y,b))| \\  
= \sup_{b \in \mathcal{U}^{z}} |\Av(c_1,\ldots ,c_n)(\varphi(y,b)) - \mu(\varphi(f(x),b))|  \\  
= \sup_{b \in \mathcal{U}^{z}} |\Av(f(e_1),\ldots ,f(e_n))(\varphi(y,b)) - \mu(\varphi(f(x),b))|  \\  
= \sup_{b \in \mathcal{U}^{z}} |\Av(e_1,\ldots ,e_n)(\varphi(f(x),b)) - \mu(\varphi(f(x),b))|   
\overset{(\ast \ast)}{<} \varepsilon.
\end{gather*}

Finally, by the Claim  and $(\ast)$ we have 
\begin{gather*}
\lim_{n \to \infty} \left( (f_{*}(\mu))^{(n)}(\gamma_{n}(y_1, \ldots, y_n)) \right)= \lim_{n \to \infty} \mu^{(n)} \left (\gamma_{n}(f(x_1),\ldots ,f(x_n)) \right) \\
\geq \lim_{n \to \infty} \mu^{(n)}(\theta_n(x_1, \ldots, x_n)) = 1.
\end{gather*} 
We conclude $f_*(\mu)$ is \fim.

~

\noindent (4) $(\subseteq)$ is trivial. For the opposite inclusion, consider $q \in S(f_*(\mu))$. Let $\pi(x)$ be the closure of $\{\varphi(f(x)) : \varphi(y) \in q\}$ under logical consequence. Then, every formula in $\pi(x)$ has positive $\mu$-measure and $\pi(x)$ is closed under conjunction. So $\pi(x)$ can be extended to a type $p \in S(\mu)$. It is clear that $f_*(p) = q$.  
\end{proof}

\subsection{Fim and fsg groups}\label{sec: fim groups}

Throughout this section, $G(x)$ will be a $\emptyset$-type-definable group.  Let $\varphi_0(x) \in \cL(\emptyset)$ be chosen for $G(x)$ as in Section \ref{sec: setting types}.

\begin{definition}\cite[Definition 6.1]{NIP2}
	A ($\emptyset$-)type-definable group $G$ is \emph{fsg} (\emph{finitely satisfiable generics}) if there is some $p \in S_{G}(\cU)$ and small $M \prec \cU$ such that for every $g \in G(\cU)$, $g \cdot p$ is finitely satisfiable in $G(M)$. (We note that for definable $G$, we could equivalently just require finite satisfiability in $M$, but in the type-definable case finite satisfiability in $G(M)$ is a stronger condition.)
\end{definition}

We consider a natural generalization of generically stable groups (Section \ref{sec: gen stab groups}) to fim groups. 
\begin{definition}
	\begin{enumerate}
		\item We let $\frak{M}_{G}(\cU)$ denote the (closed) set of all measures $\mu \in \frak{M}_{x}(\cU)$ \emph{supported on $G$}, i.e.~with $S(\mu) \subseteq S_{G}(\cU)$. 
		\item For $\mu \in \mathfrak{M}_{G}(\cU)$ and $g \in G(\cU)$, we let  $g \cdot \mu := \left(g \cdot - \right)_{\ast}(\mu)$ and  $\mu^{^-1} := \left(^{-1} \right)_{\ast}(\mu)$ (where, as in Section \ref{sec: setting types}, we view $\cdot, ^{-1}$ as globally defined functions whose restrictions to $G$ give the group operations). As $g\cdot -$ and $^{-1}$ are definable bijections on $\varphi_0(\cU)$, we get $S(g \cdot \mu) = g \cdot S(\mu)$ and $S(\mu^{-1}) = \{ p^{-1} : p \in S(\mu) \}$ (by Proposition \ref{prop:push-forward}(4)). In particular, $g\cdot \mu, \mu^{-1} \in \mathfrak{M}_{G}(\cU)$.
		We define the right action of $G$ on $\mathfrak{M}_{G}(\cU)$ analogously.
			\end{enumerate}
\end{definition}

\begin{definition}\label{def: fim group}
	We will say that a ($\emptyset$-)type-definable group $G(x)$ is \emph{fim} if there exists a right $G$-invariant \fim\  measure $\mu \in \frak{M}_{G}(\cU)$, i.e.~$ \mu  \cdot g = \mu$ for all $g \in G(\cU)$.
\end{definition}

\begin{remark}
	\begin{enumerate}
		\item In any theory, if $G$ is fim then it is both definably amenable and   fsg.
		\item If $T$ is NIP and $G$ is fsg, then it is fim.
	\end{enumerate}
\end{remark}
\begin{proof}
	(1)  
	Assume $G$ is type-definable over a small model $M \prec \cU$ by a partial type $G(x) = \{\psi_\alpha(x) \in \cL(M) : \alpha < \kappa\}$ and $\mu \in \mathfrak{M}_{G}(\cU)$ is left-$G(\cU)$-invariant and fim over $M$. 	Let $M'$ with $M \prec M' \prec \cU$ be a small $|M|^{+}$-saturated model. We claim that $\mu$ is finitely satisfiable in $G(M')$ (then for any $p \in S(\mu)$ we have $g \cdot p$ is finitely satisfiable in $G(M')$ for all $g \in G(\cU)$).

	Indeed, assume $\varphi(x,y) \in \cL$ and $b \in \cU^{y}$ with $r := \mu(\varphi(x,b)) > 0$. 
	As $\mu$ is fim, there exists $n \in \omega$ and a formula $\theta_{n}(x_1,\ldots,x_n) \in \cL(M)$  such that $\mu^{(n)} \left( \theta_n \left(\bar{x} \right) \right) > 0$ and for any $\abar$ in $\cU$ with $\models \theta_{n}(\abar)$ we have 
	\begin{equation*} 
		\sup_{b \in \mathcal{U}^{y}} |\Av(\abar)(\varphi(x,b)) - \mu(\varphi(x,b))| < \frac{r}{2}.
		\end{equation*}

As $\mu \in \mathfrak{M}_{G}(\cU)$, we have $\mu(\bigwedge_{\alpha \in s}\psi_\alpha(x)) = 1$ for every finite $s \subseteq \kappa$. Hence $\mu^{(n)} \left( \bigwedge_{1 \leq i \leq n} \bigwedge_{\alpha \in s}\psi_\alpha(x_i) \right) = 1$. It follows that 
$$\mu^{(n)} \left( \theta_n(x_1, \ldots, x_n) \land \bigwedge_{1 \leq i \leq n} \bigwedge_{\alpha \in s}\psi_\alpha(x_i) \right) > 0$$
for every finite $s \subseteq \kappa$. Then, by $|M|^{+}$-saturation of $M'$, we can find some $\bar{a}' = (a'_1, \ldots, a'_n)$  with $a'_i \in G(M')$ so that $\models \theta_n(a'_1, \ldots, a'_n)$. By the choice of $\theta_n$ we must have $\models \varphi(a'_i,b)$ for at least one $i$.

	(2) The standard construction of a left-invariant generically stable measure in an fsg group goes through in the type-definable case (see  \cite[Proposition 6.2]{NIP1}, \cite[Remark 4.4]{NIP3}, and Remark \ref{rem: fim in NIP is gen stab}).
\end{proof}

\begin{problem}
	  Does fsg imply definable amenability without assuming NIP? Do there exist fsg (and definably amenable) groups that are not \fim?
\end{problem}

\begin{definition}\label{def: definable conv} Suppose that $\mu \in \mathfrak{M}_{G}(\cU)$ is Borel-definable. Then for any measure $\nu \in \mathfrak{M}_{G}(\cU)$, the (definable) \emph{convolution of $\mu$ and $\nu$}, denoted $\mu * \nu$, is the unique measure in $\mathfrak{M}_{G}(\cU)$ such that for any formula $\varphi(x) \in \mathcal{L}(\cU)$, 
\begin{equation*} 
(\mu * \nu)(\varphi(x)) = ( \mu \otimes \nu)(\varphi(x \cdot y)). 
\end{equation*} 
We say that $\mu$ is \emph{idempotent} if $\mu * \mu = \mu$. 
\end{definition} 
\noindent When $T$ is NIP, it suffices to assume that $\mu$ is invariant (under $\Aut(\mathbb{M}/M)$ for some small model $M$), as then $\mu$ is automatically Borel-definable (\cite{Guide}). We refer to \cite[Section 3]{chernikov2022definable} for a detailed discussion of when convolution is well-defined.

 The following is a simultaneous generalization of Fact \ref{fac: basic fsg} from types to measures in arbitrary theories, and of the previously known case for measures under the NIP assumption \cite[Theorem 4.3]{NIP3}. 

\begin{proposition}\label{prop:unique} Suppose that $G(x)$ is a $\emptyset$-type-definable \fim\ group, witnessed by a right-$G$-invariant fim measure $\mu \in \frak{M}_{G}(\cU)$. Then we have:
\begin{enumerate}
\item $\mu = \mu^{-1}$; 
\item $\mu$ is left $G$-invariant;
\item $\mu$ is the unique left $G$-invariant measure in $\mathfrak{M}_{G}(\cU)$; 
\item  $\mu$ is the unique right $G$-invariant measure in $\mathfrak{M}_{G}(\cU)$.
\end{enumerate} 
\end{proposition}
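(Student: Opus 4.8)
The plan is to follow the classical argument for fsg groups in NIP (as in \cite[Theorem 4.3]{NIP3}, \cite[Proposition 6.2]{NIP1}), adapting it to the \fim\ setting using the machinery developed earlier in the section, in particular Fact \ref{fac: fim commutes} (that \fim\ measures commute with Borel-definable measures), Fact \ref{fact:self} (self-averaging), Proposition \ref{prop: FIM unique inv ext} (uniqueness of invariant extensions over a model), and Proposition \ref{prop:push-forward} (that pushforwards of \fim\ measures along definable maps are \fim, and the behavior of supports). Since $\mu$ is \fim, it is in particular Borel-definable, so all convolutions below are well-defined by Definition \ref{def: definable conv}.

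First I would prove (1). The measure $\mu^{-1} = (^{-1})_{\ast}(\mu)$ is \fim\ by Proposition \ref{prop:push-forward}(3), and it lies in $\mathfrak{M}_G(\cU)$ since $^{-1}$ is a definable bijection on $\varphi_0(\cU)$. I claim $\mu^{-1}$ is also right $G$-invariant: for $g \in G(\cU)$ and $\varphi(x) \in \cL(\cU)$, $(\mu^{-1} \cdot g)(\varphi(x)) = \mu^{-1}(\varphi(x\cdot g)) = \mu(\varphi(x^{-1} \cdot g)) = \mu(\varphi((g^{-1}\cdot x)^{-1})) = (g^{-1}\cdot\mu)(\varphi(x^{-1}))$, so it suffices to know $\mu$ is also \emph{left} $G$-invariant — which is exactly (2). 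So I would instead argue (1) and (2) together, mirroring the NIP proof: the key point is that for right-invariant \fim\ $\mu$, the convolution $\mu \ast \nu$ is independent of $\nu \in \mathfrak{M}_G(\cU)$ when $\nu$ is a type concentrated on $G$, and moreover $\mu \ast \mu^{-1}$ and $\mu^{-1}\ast\mu$ can be computed both ways using Fact \ref{fac: fim commutes} to swap the order of integration, forcing $\mu = \mu^{-1}$ and $\mu = g\cdot\mu$. More concretely: for any $g\in G(\cU)$ and any $\varphi(x)\in\cL(\cU)$, using right-invariance of $\mu$ and commutation, $(g\cdot\mu)(\varphi) = \mu(\varphi(g\cdot x)) = (\mu_y \otimes \mu_x)(\varphi(g\cdot x)) $, and then by Fact \ref{fac: fim commutes} $= (\mu_x \otimes \mu_y)(\varphi(g \cdot x)) = \int \mu(\varphi(g\cdot x))\,d\mu(y) = \mu(\varphi(g\cdot x))$ — which is circular, so the actual argument must introduce $\mu$ as an ``absorbing'' element. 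The correct route is: since $\mu$ is right-invariant, $\nu \ast \mu = \mu$ for every $\nu \in \mathfrak{M}_G(\cU)$ whenever this convolution is defined (compute $(\nu\ast\mu)(\varphi(x)) = \int\int \varphi(x\cdot y)\,d\mu(y)\,d\nu(x) = \int \mu(\varphi)\,d\nu(x) = \mu(\varphi)$ by right-invariance). In particular $\mu^{-1}\ast\mu = \mu$. On the other hand $\mu\ast\mu^{-1}$: by Fact \ref{fac: fim commutes} and a change of variables, $\mu \ast \mu^{-1}(\varphi(x)) = (\mu_x \otimes \mu^{-1}_y)(\varphi(x\cdot y)) = (\mu^{-1}_y \otimes \mu_x)(\varphi(x \cdot y))$, and writing $y = z^{-1}$ with $z\models\mu$ this is $(\mu_z \otimes \mu_x)(\varphi(x\cdot z^{-1}))$; using right-invariance of $\mu$ in $z$ fixes this to $\mu(\varphi)$ as well. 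Both these must then be shown to equal $\mu$ by the same absorbing computation, and comparing $g\cdot\mu$ to $\mu$ via $g\cdot\mu = g\cdot(\mu^{-1}\ast\mu) = (g\cdot\mu^{-1})\ast\mu = \mu$ (since $(\cdot\,)\ast\mu = \mu$ for anything in $\mathfrak{M}_G(\cU)$) gives left-invariance (2), and then $\mu^{-1} = \mu^{-1}\ast\mu \ast(\text{something})$ — cleaner: once (2) holds, $\mu^{-1}$ is both left- and right-invariant, and then uniqueness (3)/(4) gives $\mu^{-1}=\mu$, so (1) follows from (2)+(3).

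For (3): suppose $\nu \in \mathfrak{M}_G(\cU)$ is left $G$-invariant. Pick a small model $M$ over which $\mu$ is \fim\ and over which $G$ is type-definable, enlarging $M$ so that $\nu$ is $M$-invariant as well. Consider the convolutions $\mu\ast\nu$ and $\nu\ast\mu$. Using left-invariance of $\nu$: $(\mu\ast\nu)(\varphi(x)) = (\mu_x\otimes\nu_y)(\varphi(x\cdot y)) = \int_{S(\mu)} \nu(\varphi(a\cdot y))\,d\mu(a) = \int \nu(\varphi(y))\,d\mu(a) = \nu(\varphi)$ (here left-invariance of $\nu$ kills the translation by $a\in G$). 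Using right-invariance of $\mu$: $(\mu\ast\nu)(\varphi(x))$, computed with the opposite order of integration — legitimate since $\mu$ is \fim\ and commutes with the Borel-definable $\nu$ by Fact \ref{fac: fim commutes} — equals $(\nu_y \otimes \mu_x)(\varphi(x\cdot y)) = \int_{S(\nu)}\mu(\varphi(x\cdot b))\,d\nu(b) = \int \mu(\varphi(x))\,d\nu(b) = \mu(\varphi)$, using right-invariance of $\mu$. Hence $\nu = \mu\ast\nu = \mu$. Statement (4) is the symmetric argument with the roles of left and right swapped (and here one does not even need (1), only that $\mu$ is \emph{also} left-invariant by (2), or one runs the mirror computation directly). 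The identity permitting the swap of integration order is the only place genericity of $\mu$ is essential beyond right-invariance, and the support statements from Proposition \ref{prop:push-forward}(4) ensure all the integrands $a\mapsto \nu(\varphi(a\cdot y))$ etc.\ are genuinely being integrated over sets of types concentrated on $G$, so the group law is everywhere defined and associative on the relevant elements by the choice of $\varphi_0$.

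The main obstacle is the careful bookkeeping around \emph{type-definable} (as opposed to definable) $G$: the partial group operation is only defined and associative on $\varphi_0(\cU)$, not globally, so every manipulation $\varphi(x\cdot y)$ must be justified by checking the relevant tuples lie in $\varphi_0(\cU)$, and one should replace $\varphi(x)$ by $\varphi(x)\wedge\varphi_0(x)$ throughout and restrict to measures in $\mathfrak{M}_G(\cU)$ which assign measure $1$ to $\varphi_0(\cU)$ — as is already done implicitly in Definition \ref{def: definable conv} via $\mathfrak{M}_G(\cU)$. The second delicate point is ensuring the interchange of integration order is valid: Fact \ref{fac: fim commutes} gives $\mu\otimes\nu = \nu\otimes\mu$ for \fim\ $\mu$ and Borel-definable $\nu$, but one must confirm that the definable change of variables $(x,y)\mapsto (x\cdot y)$ intertwines the two orderings correctly, i.e.\ $(\mu\otimes\nu)(\varphi(x\cdot y)) = (\nu\otimes\mu)(\varphi(x\cdot y))$ follows simply by applying the equality of measures $\mu\otimes\nu=\nu\otimes\mu$ in $\mathfrak{M}_{xy}(\cU)$ to the single formula $\varphi(x\cdot y)$ — so this is actually immediate once $\mu$ is known to be \fim. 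Thus the only real content is item (2)/(1): showing $\mu$ is left-invariant and self-inverse, which I would extract from the ``$\nu\ast\mu=\mu$ for all $\nu\in\mathfrak{M}_G(\cU)$'' absorption property combined with uniqueness, exactly as sketched above.
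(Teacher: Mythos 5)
Your overall strategy --- compute the relevant convolutions in two orders, letting right-invariance of $\mu$ absorb one order and invariance of the other factor absorb the other, with commutativity of \fim\ measures justifying the swap --- is the same as the paper's, and your treatment of (3) and (4) is essentially correct in outline. But your argument for (1) and (2) contains a genuine error. With the paper's convention $(\mu \ast \nu)(\varphi(x)) = (\mu_x \otimes \nu_y)(\varphi(x\cdot y)) = \int \mu(\varphi(x\cdot b))\, d\nu(b)$, right-invariance of $\mu$ gives the absorption $\mu \ast \nu = \mu$ for all $\nu \in \mathfrak{M}_G(\cU)$, i.e.\ $\mu$ absorbs on the \emph{left} of $\ast$. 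The property you claim, that $\nu \ast \mu = \mu$ for every $\nu$, is not a consequence of right-invariance: taking $\nu = \delta_g$ shows $\nu\ast\mu = g\cdot\mu$, so your absorption property is literally equivalent to left-invariance of $\mu$ --- the content of (2). In your ``inner integral'' $\int \varphi(x\cdot y)\, d\mu(y)$ at fixed $x=a$ one gets $(a\cdot\mu)(\varphi)$, a \emph{left} translate, which right-invariance does not control. Consequently your derivation of (2) via $g\cdot\mu = (g\cdot\mu^{-1})\ast\mu = \mu$ is circular, and your fallback ``(1) follows from (2)+(3)'' has no independent proof of (2) to stand on.

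Relatedly, you miscompute the second evaluation of $\mu\ast\mu^{-1}$: after swapping the order via Fact \ref{fac: fim commutes} and substituting, right-invariance of $\mu$ applied to $\psi(y):=\varphi(y^{-1})$ yields $\mu^{-1}(\varphi)$, not $\mu(\varphi)$ ``as well''. That discrepancy is the whole point: the two correct computations give $\mu\ast\mu^{-1} = \mu$ (from the genuine absorption $\mu\ast\nu=\mu$) and $\mu\ast\mu^{-1} = \mu^{-1}$ (from the swapped order), whence $\mu = \mu^{-1}$; this is exactly how the paper proves (1), and (2) then follows by conjugating with the inverse map and using right-invariance once more. By asserting that both computations return $\mu(\varphi)$ you erase precisely the information that proves (1). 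A secondary gap: in (3) the competing left-invariant measure $\nu$ is arbitrary and not assumed Borel-definable, so Fact \ref{fac: fim commutes} does not apply as stated; the paper instead observes that the integrand $q\mapsto \nu(\varphi(b\cdot y))$ is constant on the support of $\mu$ by left-invariance of $\nu$, hence measurable, and invokes the proof (not the statement) of the commutation lemma to justify exchanging the order of integration.
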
 

\begin{proof} 
\textbf{(1)} 
 Fix a formula $\varphi(x) \in \mathcal{L}_{x}(\cU)$. Let $M \prec \cU$ be a small model such that $\mu$ is $M$-invariant and $M$ contains the parameters of $\varphi$. As $\mu$ is \fim\  over $M$,  $\mu^{-1}$ is also  \fim\ over $M$ by Proposition \ref{prop:push-forward}(3).

Suppose $q(x) \in S(\mu_x|_{M})$ (so $q(x) \vdash G(x)$). Then, for $(\varphi')^{*}(y,x) = \varphi(x \cdot y)$ (in the notation of Section \ref{sec: measures setting}) we have (for any $b \models q(x)$):
\begin{gather*}
	 F_{\mu_{y}^{-1}}^{(\varphi')^{*}}(q) = \mu_{y}^{-1}(\varphi(b \cdot y))
 = \mu_{y}(\varphi(b \cdot y^{-1})) 
 = \mu_{y}(\varphi((y \cdot b^{-1})^{-1})) \\
 \overset{(*)}{=} \mu_{y}(\varphi(y^{-1})) = \mu_{y}^{-1}(\varphi(y)),
\end{gather*}
\noindent where $(*)$ follows by right $G$-invariance of $\mu \in \mathfrak{M}_G(\cU)$ applied to the formula $\psi(y) := \varphi(y^{-1})$.
 
Using this and that  \fim\ measures commute with all Borel definable measures (Fact \ref{fac: fim commutes}) we have:
\begin{gather*} 
\mu * \mu^{-1}(\varphi(x)) = \mu_{x} \otimes \mu^{-1}_{y}(\varphi(x \cdot y)) = \mu^{-1}_{y} \otimes \mu_{x}(\varphi(x \cdot y)) \\
= \int_{S_{x}(M)} F_{\mu^{-1}_{y}}^{(\varphi')^*} \textrm{d}\mu_{x} = \int_{S(\mu_x|_M)} F_{\mu^{-1}_{y}}^{(\varphi')^*} \textrm{d}\mu_{x} 
= \int_{S_{x}(M)} \mu_{y}^{-1}(\varphi(y)) \textrm{d}\mu_{x} \\
= \mu_{y}^{-1}(\varphi(y)) = \mu_{x}^{-1}(\varphi(x)).
\end{gather*} 

Similarly, for $\varphi'(x,y) = \varphi(x \cdot y)$ and any $q(y) \in S(\mu^{-1}_y|_M)$ (so $q(y) \vdash G(y)$), by right $G$-invariance of $\mu$ we have $F_{\mu_{x}}^{\varphi'}(q) = \mu(\varphi(x))$. Then 
\begin{gather*}
	\mu * \mu^{-1}(\varphi(x)) = \mu_{x} \otimes \mu^{-1}_{y}(\varphi(x \cdot y)) 
=  \int_{S_{y}(M)} F_{\mu_x}^{\varphi'} \textrm{d} \left(\mu^{-1}_y\right) \\
=  \int_{S_{y}(M)} \mu(\varphi(x)) \textrm{d} \left( \mu^{-1}_y \right) 
=  \mu(\varphi(x)). 
\end{gather*}
We conclude that $\mu(\varphi(x)) = \mu^{-1}(\varphi(x))$.  

~

\noindent\textbf{(2)} For any $a \in G(\cU)$ and $\varphi(x) \in \cL(\cU)$, using (1) and right $G$-invariance of $\mu$ for the formula $\psi(x) := \varphi(x^{-1})$ we have: 
\begin{gather*}
	\mu(\varphi(a \cdot x)) = \mu^{-1}(\varphi(a \cdot x)) 
= \mu(\varphi(a \cdot x^{-1})) 
= \mu(\varphi((x \cdot a^{-1})^{-1})) \\ 
= \mu(\varphi(x^{-1})) = \mu^{-1}(\varphi(x)) 
= \mu(\varphi(x)) . 
\end{gather*}
%

\

\noindent \textbf{(3)} Suppose that $\nu \in \mathfrak{M}_{G}(\cU)$ is left $G$-invariant, and let $\varphi(x) \in \cL(\cU)$ be arbitrary. Let $M \prec \cU$  be a small model such that $\mu$ is invariant over $M$ and $M$ contains the parameters  of $\varphi$. 

As in (1), for any $q(y) \in S(\nu_y|_M)$ (so $q(y) \vdash G(y)$), by right $G$-invariance of $\mu$ we have $F_{\mu_{x}}^{\varphi'}(q) = \mu(\varphi(x))$. Hence 
\begin{equation*} 
\int_{S_{y}(M)} F_{\mu_x}^{\varphi'} \textrm{d} \nu_y = \int_{S(\nu|_M)} F_{\mu_x}^{\varphi'} \textrm{d} \nu_y = \int_{S(\nu|_M)} \mu( \varphi(x))\textrm{d} \nu_y = \int_{S_{y}(M)} \mu(\varphi(x)) \textrm{d} \nu_y.  \tag{*}
\end{equation*} 
Second, for any $q \in S(\mu_{x}|_{M})$ (so $q(x) \vdash G(x)$), by left $G$-invariance of $\nu$ we have (for any $b \models q$):
\begin{equation*} 
F_{\nu_{y}}^{(\varphi')^*}(q) = \nu(\varphi(b \cdot y)) = \nu(\varphi(y)). \tag{\dag}
\end{equation*} 
So the map  $F_{\nu_{y}}^{(\varphi')^*}: S(\mu|_{M}) \to [0,1]$ is constant on the support of $\mu$, hence Borel. As $\mu$ is \fim, the proof of \cite[Proposition 5.15]{CGH} implies 
\begin{equation*} 
\int_{S(\mu|_M)} F_{\nu_y}^{(\varphi')^*} \textrm{d}\mu_x = \int_{S_{y}(M)} F_{\mu_x}^{\varphi'} \textrm{d}\nu_y, \tag{\ddag}
\end{equation*} 
where on the left we view $\mu$ as a regular Borel probability measure restricted to the compact set $S(\mu|_{M})$.
Then
\begin{gather*}
	\mu(\varphi(x)) = \int_{S_{y}(M)} \mu(\varphi(x)) \textrm{d}\nu_y 
\overset{(*)}{=} \int_{S_{y}(M)} F_{\mu_x}^{\varphi'} d\nu_y\\
\overset{(\ddag)}{=} \int_{S(\mu|_M)} F_{\nu_y}^{(\varphi')^*} \textrm{d}\mu_x 
\overset{(\dag)}{=} \int_{S(\mu|_M)} \nu(\varphi(y)) \textrm{d}\mu_x  =  \int_{S_{x}(M)} \nu(\varphi(y)) \textrm{d}\mu_x 
= \nu(\varphi(y)).
\end{gather*}

\ 

\noindent \textbf{(4)} Let $\nu \in \mathfrak{M}_{G}(\cU)$ be right $G$-invariant, $\varphi(x) \in \cL(\cU)$ and $M \prec \cU$ containing the parameters of $\varphi$ and such that $\mu$ is $M$-invariant.

As in (3), the map $F_{\nu_{x}}^{\varphi'}:S_{y}(M) \to [0,1]$ is $(\mu_{y}|_M)$-measurable since it is constant on the support by right $G$-invariance of $\nu$. As $\mu$ is \fim\ we can apply \cite[Proposition 5.15]{CGH} again to get
\begin{equation*}
 \int_{S_{y}(M)} F_{\nu_x}^{\varphi'} \textrm{d} \mu_y = \int_{S_{x}(M)} F_{\mu_y}^{(\varphi')^{*}} \textrm{d}\nu_x. 
\end{equation*} 
As $\mu$ is left $G$-invariant by (2), for any $q \in S(\nu_x|_{M})$, we have (for $b \models q$):
\begin{equation*}
F_{\mu_{y}}^{(\varphi')^*}(q) = \mu_{y}(\varphi(b \cdot y)) = \mu(\varphi(y)). 
\end{equation*} 
Combining we get
\begin{gather*}
	\nu(\varphi(x))= \int_{S_{y}(M)} \nu(\varphi(x)) \textrm{d} \mu_y 
= \int_{S_{y}(M)} F_{\nu_x}^{\varphi'} \textrm{d}\mu_y \\
= \int_{S_{x}(M)} F_{\mu_y}^{(\varphi')^{*}} \textrm{d}\nu_x 
= \int_{S_{x}(M)} \mu(\varphi(y)) \textrm{d}\nu_x 
= \mu(\varphi(y)). \qedhere
\end{gather*}
\end{proof} 

Proposition \ref{prop:unique} and its symmetric version with ``left'' swapped with ``right'' (obtained by a ``symmetric'' proof) yield:

\begin{corollary}
An ($\emptyset$)-type-definable group $G(x)$ is {\em fim} if and only if there exists a left $G$-invariant \fim\ measure $\mu \in \mathfrak{M}_G(\mathcal{U})$.\\
\end{corollary}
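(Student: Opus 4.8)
The plan is to derive both implications directly from Proposition \ref{prop:unique}(2) together with its left--right symmetric counterpart, which I will first spell out. The symmetric version asserts: if $G(x)$ is $\emptyset$-type-definable and admits a \emph{left}-$G$-invariant \fim\ measure $\mu \in \mathfrak{M}_G(\cU)$, then $\mu$ is also \emph{right}-$G$-invariant (and, moreover, $\mu = \mu^{-1}$ and $\mu$ is simultaneously the unique left-invariant and the unique right-invariant measure in $\mathfrak{M}_G(\cU)$). This is obtained by running the proof of Proposition \ref{prop:unique} verbatim after interchanging the roles of left and right translations throughout; in particular, the step that currently deduces left-invariance of $\mu$ from $\mu=\mu^{-1}$ and right-invariance becomes the deduction of right-invariance from $\mu=\mu^{-1}$ and left-invariance.

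For the forward direction, suppose $G$ is \fim. By Definition \ref{def: fim group} there is a right-$G$-invariant \fim\ measure $\mu \in \mathfrak{M}_G(\cU)$, and by Proposition \ref{prop:unique}(2) this same $\mu$ is left-$G$-invariant. Hence a left-$G$-invariant \fim\ measure exists, as required.

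For the converse, suppose $\mu \in \mathfrak{M}_G(\cU)$ is left-$G$-invariant and \fim. By the symmetric version of Proposition \ref{prop:unique}(2) just described, $\mu$ is also right-$G$-invariant, so $G$ is \fim\ by Definition \ref{def: fim group}.

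The only point requiring attention --- and the closest thing to an obstacle --- is verifying that the proof of Proposition \ref{prop:unique} really is invariant under the left/right interchange. That argument uses only ingredients that are themselves left--right symmetric: that $g\cdot{-}$ and $^{-1}$ are definable bijections on $\varphi_0(\cU)$ (so that $S(g\cdot\mu)=g\cdot S(\mu)$ and $S(\mu^{-1})=\{p^{-1}:p\in S(\mu)\}$ by Proposition \ref{prop:push-forward}(4)), the group identity $(y\cdot b^{-1})^{-1}=b\cdot y^{-1}$ and its mirror $(b^{-1}\cdot y)^{-1}=y^{-1}\cdot b$, commutation of the \fim\ measure with all Borel-definable measures (Fact \ref{fac: fim commutes}, used via \cite[Proposition 5.15]{CGH}), and preservation of the \fim\ property under definable push-forwards (Proposition \ref{prop:push-forward}(3)). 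Since none of these break under swapping left and right, the symmetric version holds with no new content, and the corollary follows.
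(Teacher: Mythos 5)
Your proof is correct and is exactly the paper's argument: the paper likewise derives the corollary from Proposition \ref{prop:unique}(2) together with its left--right symmetric version (obtained by running the same proof with ``left'' and ``right'' interchanged). Your additional verification that every ingredient of the proof of Proposition \ref{prop:unique} is invariant under the swap is a reasonable elaboration of what the paper leaves implicit.
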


\subsection{Idempotent fim measures and generic transitivity}\label{sec: gen trans for meas}
Let $G(x)$ be a $\emptyset$-type-definable group, and we are in the same setting and notation as in Section \ref{sec: setting types}. For a measure $\mu \in \mathfrak{M}_{G}(\cU)$, we let 
$$\Stab(\mu) := \{g \in G(\cU) : \mu \cdot g = g \} $$ 
denote the right-stabilizer of $\mu$.

\begin{fact}\label{fac: stab of def meas type def}
	When $\mu \in \mathfrak{M}_{G}(\cU)$ is a measure definable over $M \prec \cU$, then $\Stab(\mu)$ is an $M$-type-definable subgroup of $G(\cU)$ (see e.g.~\cite[Proposition 5.3]{chernikov2022definable}). 
\end{fact}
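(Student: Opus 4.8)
The plan is to realize $\Stab(\mu)$ as an intersection of subgroups of $G(\cU)$ that are type-definable over $M$, in the spirit of Proposition~\ref{proposition: intersection of relatively definable groups} but using the definition scheme of the measure $\mu$ rather than that of a type. That $\Stab(\mu)$ is a subgroup of $G(\cU)$ is formal: by the choice of $\varphi_0$ and since $S(\mu)\subseteq S_G(\cU)$, the assignment $g\mapsto\mu\cdot g$ is a genuine right action of $G(\cU)$ on $\mathfrak M_G(\cU)$, so $\Stab(\mu)$ is the stabilizer of the point $\mu$ and hence a subgroup.

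For type-definability over $M$, observe that $g\in\Stab(\mu)$ if and only if $\mu(\varphi(x\cdot g,c))=\mu(\varphi(x,c))$ for all $\varphi(x,z)\in\cL$ and all $c\in\cU^z$; here $\varphi(x\cdot g,c)$ is an $\cL$-formula in $x$ with parameters $g,c$, via the $\emptyset$-definable operation $\cdot$. Writing $E_{\varphi,\delta}:=\{g\in G(\cU):|\mu(\varphi(x\cdot g,c))-\mu(\varphi(x,c))|\le\delta\text{ for all }c\in\cU^z\}$, we have $\Stab(\mu)=\bigcap_{\varphi\in\cL}\bigcap_{\delta\in\Q_{>0}}E_{\varphi,\delta}$, so it suffices to squeeze each $E_{\varphi,\delta}$ between $\cL(M)$-definable sets. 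Fix $\varphi(x,z)$ and $\delta$, and let $t$ be a variable tuple of the same sort as $x$. Since $\mu$ is definable over $M$, the maps $\tp(c/M)\mapsto\mu(\varphi(x,c))$ on $S_z(M)$ and $\tp(gc/M)\mapsto\mu(\varphi(x\cdot g,c))$ on $S_{tz}(M)$ are continuous; as these type spaces are profinite, each such map is a uniform limit of $\cL(M)$-definable simple functions, so we may fix finite $\cL(M)$-definable partitions $\psi_1(z),\dots,\psi_k(z)$ of $\cU^z$ and $\chi_1(t,z),\dots,\chi_m(t,z)$ of $\cU^{tz}$, together with reals $s_1,\dots,s_k$ and $u_1,\dots,u_m$, such that $|\mu(\varphi(x,c))-s_j|\le\delta/8$ whenever $\models\psi_j(c)$ and $|\mu(\varphi(x\cdot g,c))-u_i|\le\delta/8$ whenever $\models\chi_i(g,c)$. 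Let $\sigma_{\varphi,\delta}(t)$ be the $\cL(M)$-formula
\[
\sigma_{\varphi,\delta}(t)\ :=\ \bigwedge_{(i,j):\,|u_i-s_j|>\delta/2}\ \neg\,\exists z\,\bigl(\chi_i(t,z)\wedge\psi_j(z)\bigr),
\]
which is indeed an $\cL(M)$-formula in $t$ since each $\neg\exists z(\chi_i(t,z)\wedge\psi_j(z))$ is, and the conjunction is finite. A triangle-inequality computation on each cell then gives $E_{\varphi,\delta/4}\subseteq\{g\in G(\cU):\models\sigma_{\varphi,\delta}(g)\}\subseteq E_{\varphi,\delta}$. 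Since $\{\delta/4:\delta\in\Q_{>0}\}=\Q_{>0}$, the three iterated intersections $\bigcap_{\varphi,\delta}E_{\varphi,\delta}$, $\bigcap_{\varphi,\delta}E_{\varphi,\delta/4}$ and $\bigcap_{\varphi,\delta}\bigl(\{\models\sigma_{\varphi,\delta}\}\cap G(\cU)\bigr)$ coincide, so $\Stab(\mu)=G(\cU)\cap\bigcap_{\varphi\in\cL,\ \delta\in\Q_{>0}}\sigma_{\varphi,\delta}(\cU)$ is type-definable over $M$.

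The only point that needs care is the sandwiching: the approximation accuracy ($\delta/8$ above) must be chosen correctly relative to the threshold $\delta/2$ appearing in $\sigma_{\varphi,\delta}$ so that both inclusions $E_{\varphi,\delta/4}\subseteq\{\models\sigma_{\varphi,\delta}\}\subseteq E_{\varphi,\delta}$ hold on the nose. Apart from this routine bookkeeping, the two ingredients doing the work are: (i) a measure definable over $M$ restricts, for each formula, to a continuous function on the corresponding Stone space over $M$, and is therefore a uniform limit of $\cL(M)$-definable simple functions; and (ii) the existential quantifier over $z$ of a finite Boolean combination of $\cL(M)$-formulas is again $\cL(M)$-definable. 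No genuine obstruction remains; this is essentially the argument of \cite[Proposition~5.3]{chernikov2022definable}.
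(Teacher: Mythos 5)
Your argument is correct: the sandwiching $E_{\varphi,\delta/4}\subseteq\{\models\sigma_{\varphi,\delta}\}\subseteq E_{\varphi,\delta}$ checks out ($\delta/8+\delta/4+\delta/8=\delta/2$ for one inclusion, $\delta/8+\delta/2+\delta/8=3\delta/4\le\delta$ for the other), and the two ingredients you isolate — uniform approximation of the definability maps by $\cL(M)$-simple functions on the Stone spaces, and closure of $\cL(M)$-formulas under $\exists z$ — are exactly what is needed. The paper itself gives no proof here, only the citation to \cite[Proposition 5.3]{chernikov2022definable}, and your argument is the standard one given there, so there is nothing to add.
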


We now consider the main question investigated in \cite{chernikov2022definable, chernikov2023definable} in the case of measures (generalizing from types in Section \ref{sec: gen trans}). 

\emph{Again, we let $H := \Stab(\mu)$.}

\begin{definition}
	For the rest of the section, we let $f: \left( \cU^{x} \right)^2 \to \left( \cU^{x} \right)^2$ be the ($\emptyset$-definable) map $f(x_1,x_0) = (x_1 \cdot x_0, x_0)$ (where $\cdot$ is viewed as a globally defined function whose restriction to $G$ defines the group operation, see Section \ref{sec: setting types}).
\end{definition}

\begin{proposition}\label{prop: gen trans meas}
Let $\mu \in \mathfrak{M}_{G}(\cU)$ be an idempotent \fim\ measure. Then the following are equivalent:
\begin{enumerate}
	\item $\mu \in \mathfrak{M}_{H}(\cU)$;
	\item $\mu^{(2)} = f_{\ast}\left(\mu^{(2)} \right)$;
	\item $\mu \otimes p = f_{\ast}(\mu \otimes p)$ for every $p \in S(\mu)$.
\end{enumerate}
\end{proposition}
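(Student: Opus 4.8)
The plan is to prove $(1)\Rightarrow(2)\Rightarrow(3)\Rightarrow(1)$, imitating the structure of Remark \ref{remark: basic} for types but working measure-theoretically. Throughout, fix a small model $M\prec\cU$ over which $\mu$ is \fim, so that $\mu$ is definable over $M$ and, by Fact \ref{fac: stab of def meas type def}, $H=\Stab(\mu)$ is $M$-type-definable; by Proposition \ref{prop:unique} applied to the fim group $H$ (once we know $\mu\in\mathfrak M_H(\cU)$), $\mu$ is then the unique left- and right-invariant measure on $H$ and satisfies $\mu=\mu^{-1}$. Note that all three Morley products appearing in the statement are well-defined since $\mu$ is \fim, hence Borel-definable.

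First I would do $(1)\Rightarrow(2)$. Assume $\mu\in\mathfrak M_H(\cU)$. The key point is that $f_\ast(\mu^{(2)})$ is the measure obtained by first taking $(x_1,x_0)\models\mu\otimes\mu$ and then replacing $x_1$ by $x_1\cdot x_0$. Using that $\mu$ is concentrated on $H$ and $H$-invariant (both from (1), via Proposition \ref{prop:unique} for the group $H$), I want to show: for every $\varphi(x,y)\in\cL(\cU)$,
\begin{equation*}
f_\ast(\mu^{(2)})(\varphi(x_1,x_0))=\mu_{x_1}\otimes\mu_{x_0}(\varphi(x_1\cdot x_0,x_0))=\mu_{x_1}\otimes\mu_{x_0}(\varphi(x_1,x_0)),
\end{equation*}
where the first equality is the definition of pushforward and the second uses left-$H$-invariance of $\mu$: writing the Morley product as $\int_{S_{x_0}(M)}F^{\varphi'}_{\mu_{x_1}}\,d(\mu_{x_0}|_M)$ where $\varphi'(x_1,x_0)=\varphi(x_1\cdot x_0,x_0)$, for each $q\in S(\mu_{x_0}|_M)$ and $b\models q$ we have $F^{\varphi'}_{\mu_{x_1}}(q)=\mu_{x_1}(\varphi(x_1\cdot b,b))=\mu_{x_1}(\varphi(x_1,b))$ since $b\in H(\cU)$ and $\mu$ is left-$H$-invariant; integrating gives $\mu\otimes\mu(\varphi(x_1,x_0))$. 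Hence $f_\ast(\mu^{(2)})=\mu^{(2)}$.

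Next, $(2)\Rightarrow(3)$: disintegrate along $x_1$. Since $\mu^{(2)}=\mu_{x_1}\otimes\mu_{x_0}$ and $\mu$ is \fim, Fact \ref{fac: fim commutes} gives $\mu^{(2)}=\mu_{x_0}\otimes\mu_{x_1}$, so $\mu^{(2)}(\psi(x_1,x_0))=\int_{S_{x_1}(M)}F^{\psi^\ast}_{\mu_{x_0}}\,d(\mu_{x_1}|_M)$, and $f_\ast(\mu^{(2)})(\psi)=\int_{S_{x_1}(M)}F^{(\psi\circ f)^{\ast}}_{\mu_{x_0}}\,d(\mu_{x_1}|_M)$ --- here I use that $f$ fixes the $x_0$-coordinate, so the pushforward interacts cleanly with the $x_0$-integrand. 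The hypothesis (2) says these two integrands have the same $\widehat{\mu_{x_1}|_M}$-integral for all $\psi$; since $S(\mu|_M)$ may be larger than a point this does not immediately give equality of integrands pointwise. To get (3), which concerns $\mu\otimes p$ for $p\in S(\mu)=S(\mu_{x_1}|_M)$ (using $S(\mu)\subseteq S_{x_1}(M)$ by Proposition \ref{prop:push-forward}(4) style reasoning), I would instead argue directly: $\mu\otimes p$ and $f_\ast(\mu\otimes p)$ are both \fim\ measures (the latter by Proposition \ref{prop:push-forward}(3)), and I want to pin down $\mu\otimes p$ via an averaging/self-averaging argument. The cleanest route, and the one that makes $(3)\Rightarrow(1)$ transparent, is: take $(a_1,a_0)$ with $a_0\models p$ and $a_1\models\mu|_{M a_0}$ in a bigger monster; then idempotence of $\mu$ together with Theorem \ref{thm: unif gen stab meas} / Proposition \ref{Main:Proposition} (applied to a Morley sequence of $\mu$ multiplied out, with the ``random'' parameter being a realization of $p$) controls the distribution of $a_1\cdot a_0$, yielding that $\mathrm{tp}(a_1\cdot a_0/\cU a_0)$-type-data matches that of $\mu|_{\cU a_0}$ exactly when $a_0\in H(\cU')$; this is the measure analogue of the $(2)\Leftrightarrow(3)\Leftrightarrow(5)\Leftrightarrow(6)$ chain in Remark \ref{remark: basic}. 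Feeding this back through the disintegration gives both the remaining implications: $(2)\Rightarrow(3)$ because (2) is the ``total mass'' version of the equality and $\mu\otimes p$ is an extremal piece of it in the disintegration over $S(\mu)$, and $(3)\Rightarrow(1)$ because (3) for every $p\in S(\mu)$ forces, for each such $p$ with realization $a_0$, that $\mu\otimes p=f_\ast(\mu\otimes p)$, i.e.\ $a_0\cdot\mu|_{\cU a_0}=\mu|_{\cU a_0}$ after the coordinate swap, which by Proposition \ref{prop: FIM unique inv ext} (uniqueness of the invariant Borel-definable extension of a \fim\ measure, applicable since $\mu^{(n)}$ is \fim\ for all $n$ in this setting --- or directly since $a_0\cdot\mu$ is definable) gives $a_0\cdot\mu=\mu$, so $a_0\in H(\cU)$; as $p\in S(\mu)$ was arbitrary, $S(\mu)\subseteq S_H(\cU)$, i.e.\ $\mu\in\mathfrak M_H(\cU)$.

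The main obstacle I anticipate is the passage $(2)\Rightarrow(3)$: extracting the ``fiberwise'' statement (3) from the ``integrated'' statement (2). Equality of two measures does not generally pass to equality of the disintegrating fiber measures, so I cannot simply differentiate. The resolution must exploit that $p$ ranges over the \emph{support} $S(\mu)$ and that $F^{(\psi\circ f)^\ast}_{\mu_{x_0}}$ restricted to $S(\mu_{x_1}|_M)$ is well-behaved (Borel, and in the relevant instances even constant, by the same invariance computation as in Proposition \ref{prop:unique}(3)); combined with idempotence, which forces $\mu\otimes p$ to again be (a translate of) $\mu$ for $p$ in the support, this should let me conclude. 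Once that is in place, the equivalences close up exactly as in the type case, with Proposition \ref{prop: FIM unique inv ext} playing the role that uniqueness of nonforking extensions (Fact \ref{fac: basic gen stab}(2)) played in Remark \ref{remark: basic}.
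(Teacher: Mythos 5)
Your overall architecture and your $(1)\Rightarrow(2)$ and $(3)\Rightarrow(1)$ steps match the paper's proof in substance: the paper proves $(1)\Rightarrow(3)$ by exactly your translation computation (using that $\mu$ is \emph{right}-$H$-invariant and $a\models p|_M$ lies in $H$ --- note you repeatedly write the wrong side: since $f(x_1,x_0)=(x_1\cdot x_0,x_0)$ and $H=\Stab(\mu)$ is the right stabilizer, the relevant identities are $\mu(\varphi(x\cdot a,a))=\mu(\varphi(x,a))$ and $\mu\cdot a=\mu$, not $a\cdot\mu=\mu$); and it proves $(3)\Rightarrow(1)$ exactly as you do, by showing $\mu|_{Ma}=(\mu\cdot a)|_{Ma}$ (only over $Ma$, not over $\cU a$ as you write), observing that $\mu\cdot a$ is $Ma$-definable as a pushforward, and invoking Proposition \ref{prop: FIM unique inv ext}.

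The genuine gap is $(2)\Rightarrow(3)$, which you correctly flag as the main obstacle but do not close. The resolutions you sketch do not work: the claim that idempotence forces $\mu\otimes p$ to be ``(a translate of) $\mu$'' for $p\in S(\mu)$ is essentially support transitivity, which is not available here; the claim that the fiber integrand is ``even constant, by the same invariance computation as in Proposition \ref{prop:unique}(3)'' presupposes $(1)$ and is circular; and Theorem \ref{thm: unif gen stab meas} is not what is needed. The correct argument is much simpler and uses only that $\mu$ is \emph{definable}: writing $\tilde{\varphi}(x_1,x_0):=\varphi(x_1\cdot x_0,x_0)$, condition $(2)$ fails for $\varphi$ iff
$\int_{S(\mu|_M)}\bigl\lvert F^{\tilde{\varphi}}_{\mu,M}(p)-F^{\varphi}_{\mu,M}(p)\bigr\rvert\,\mathrm{d}\mu_M>0$,
and by definability of $\mu$ the integrand is a \emph{continuous} non-negative function on $S_{x_0}(M)$; hence the integral is positive iff the integrand is positive at some point of the support $S(\mu|_M)$, which is precisely the failure of $(3)$ for some $p\in S(\mu)$. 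Borel measurability (which is all you assert) is not enough for this support argument, and neither \fim\ nor idempotence is needed for $(2)\Leftrightarrow(3)$ --- only definability. With that continuity observation inserted, your proof closes up and coincides with the paper's.
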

\begin{proof}
	\noindent \textbf{(2) $\Leftrightarrow$ (3)} This equivalence only uses that $\mu$ is a definable measure.
	
	By the definition of $f$ we have $f_{\ast} \left( \mu^{(2)} \right) \left( \varphi(x_1, x_0) \right) = \mu^{(2)}\left( \tilde{\varphi}(x_1,x_0) \right)$ for all $\varphi(x_1, x_0) \in \mathcal{L}_{x_1, x_0}\left( \cU \right)$, where $\tilde{\varphi}(x_1, x_0) := \varphi(x_1 \cdot x_0, x_0)$.
	By definition of $\mu^{(2)}$  we have: $\mu^{(2)} \neq f_{\ast}\left(\mu^{(2)} \right)$ if and only if there exists a formula $\varphi(x_1,x_0) \in \mathcal{L}_{x_1,x_0}(\cU)$ and a small model $M \prec \cU$ so that $M$ contains the parameters of $\varphi$ and $\mu$ is $M$-definable, such that 
	$$\int_{S(\mu|_{M})} \left \lvert F_{\mu,M}^{\tilde{\varphi}}(p) - F^{\varphi}_{\mu,M}(p) \right \rvert \textrm{d} \mu_M > 0,$$ 
	where $\mu|_{M} \in \mathfrak{M}_{x_0}(M)$ is the restriction of $\mu$ to $M$, $\mu_{M}$ is the restriction $\mu|_M$ viewed as a regular Borel measure on $S_{x_0}(M)$. By definability of $\mu$, the function $ p \in S(\mu|_{M}) \mapsto \left \lvert F_{\mu,M}^{\tilde{\varphi}}(p) - F^{\varphi}_{\mu,M}(p) \right \rvert \in [0,1]$ is continuous (and non-negative), hence the integral is $>0$ if and only if $\left \lvert F_{\mu,M}^{\tilde{\varphi}}(p) - F^{\varphi}_{\mu,M}(p) \right \rvert >0$ for some $p \in S(\mu|_{M})$, that is $\mu\left( \varphi(x_1, b) \right) \neq \mu(\varphi(x_1 \cdot b,b))$ for some $p \in S(\mu)$ and $b \models p|_{M}$, that is $\left(\mu \otimes p \right)\left( \varphi(x_1, x_0) \right) \neq f_{\ast}\left(\mu \otimes p \right)\left( \varphi(x_1, x_0) \right)$ for some $p \in S(\mu)$.
	
	\ 
	
	\noindent \textbf{(3) $\Rightarrow$ (1)} Fix $p \in S(\mu)$, let $M \prec \cU$ be such that $\mu$ is $M$-invariant, and let $a \models p|_{M}$. 

\begin{clm}
	We have $\mu|_{Ma} = \left( \mu \cdot a \right)|_{M a}$. 
\end{clm}
\begin{clmproof}
	Any $\psi(x) \in \mathcal{L}_x(Ma)$ is of the form $\varphi(x,a)$ for some $\varphi(x,y) \in \mathcal{L}_{xy}(M)$. By (3) we have:
	\begin{gather*}
		\mu(\varphi(x,a)) = (\mu \otimes p)(\varphi(x,y)) 
= f_{\ast}(\mu \otimes p)(\varphi(x,y)) \\
= (\mu \otimes p)(\varphi(x \cdot y, y)) 
= \mu(\varphi(x \cdot a, a)).
\end{gather*}
\noindent Therefore, 
\begin{gather*}
\left( \mu \cdot a \right)(\psi(x)) = \mu(\psi(x \cdot a)) 
= \mu(\varphi(x \cdot a, a)) 
=\mu(\varphi(x,a)) 
= \mu(\psi(x)).
	\end{gather*}
\end{clmproof}
\begin{clm}
	The measure $\mu \cdot a$ is $(Ma)$-definable. 
\end{clm}
\begin{clmproof}
	Since $\mu$ is $M$-definable, it is also $Ma$-definable. Consider the $Ma$-definable map $g: x \mapsto x \cdot a$. Note  that $g_{\ast}(\mu)  = \mu \cdot a$ and so by Proposition \ref{prop:push-forward}, $\mu \cdot a $ is also $Ma$-definable.
\end{clmproof}

Hence, by Proposition \ref{prop: FIM unique inv ext} over $Ma$, we get $\mu = \mu \cdot a$. That is, $a \in \Stab(\mu)$, so $p|_{M}(x) \vdash H(x)$. 

\

\noindent \textbf{(1) $\Rightarrow$ (3)} Let $\varphi(x,y) \in \cL(\cU)$ be arbitrary, and $M \prec \cU$ contain its parameters such that $\mu$ is $M$-invariant. The measure $\mu$ is right $H$-invariant, and given any $p \in S(\mu)$ we have $p \in S_{H}(\cU)$ by (1). So for $a \models p|_{M}$ we have $a \in H$, hence $\mu \otimes p (\varphi(x,y)) = \mu (\varphi(x, a)) = \mu (\varphi(x \cdot a, a)) = f_{\ast}(\mu \otimes p) (\varphi(x,y))$.
\end{proof}

\begin{definition}
	We say that an idempotent \fim\ measure $\mu \in \mathfrak{M}_{G}(\cU)$ is \emph{generically transitive} if  it satisfies any of the equivalent conditions in Proposition \ref{prop: gen trans meas}.
\end{definition}

\noindent In particular, a type is generically transitive in the sense of Definition \ref{def: gen trans type} if and only if, viewed as a Keisler measure, it is generically transitive.

The following is a generalization of Remark \ref{rem: fsg follows}:

\begin{remark}\label{rem: gen trans implies fim}
Assume $\mu$ is \fim\  and $\mu \in \mathfrak{M}_{H}(\cU)$. Then:
\begin{enumerate}
	\item $H$ is a \fim\ group (Definition \ref{def: fim group}), hence $\mu$ is both the unique left-invariant and the unique right-invariant measure supported on $H$ (by Proposition \ref{prop:unique});
	\item $H$ is the smallest among all type-definable subgroups $H'$ of $G$ with $\mu \in \mathfrak{M}_{H'}(\cU)$;
	\item $H$ is both the left and the right stabilizer of $\mu$ in $G$.
\end{enumerate}
\end{remark}
\begin{proof}
	(1) $H$ is a \fim\ group, witnessed by the right $H$-invariant \fim\ measure $\mu \in \mathfrak{M}_{H}(\cU)$.
	
	(2) $H$ is type-definable by Fact \ref{fac: stab of def meas type def}.

	 For any type-definable $H' \leq G(\cU)$ with $S(\mu) \subseteq S_{H'}(\cU)$, the group $H'' := H \cap H'$ is type-definable with $S(\mu)  \subseteq  S_{H''}(\cU)$ and $H'' \leq H$. If the index is $\geq 2$, we have some $g \in H$ with $H'' \cap \left( H'' \cdot g \right) = \emptyset$, and $S(\mu)  \subseteq  S_{H''}(\cU), S \left( \mu \cdot g \right) \subseteq   S_{H'' \cdot g }(\cU)$, so $S(\mu) \cap S(\mu \cdot g) = \emptyset$, so $\mu \neq \mu \cdot g$ --- a contradiction. So $H'' = H$, and $H \subseteq H'$.
	
	(3) Let $H_{\ell}$ be the left stabilizer of $\mu$. Then $H_\ell$ is type-definable by a symmetric version of Fact \ref{fac: stab of def meas type def}. By (1), $\mu$ is  left $H$-invariant, so we have $H \subseteq H_\ell$, and so $S(\mu) \subseteq  S_{H_\ell}(\cU)$ is left  $H_\ell$-invariant. By a symmetric argument as in (2), we conclude $H = H_{\ell}$.
\end{proof}

\begin{example}
	If $G'$ is a \fim\ type-definable subgroup of $G$, witnessed by a $G'$-invariant \fim\ measure $\mu \in \mathfrak{M}_{G'}(\cU)$, then $\mu$ is obviously idempotent and generically transitive.
\end{example}

Analogously to the case of types (Section \ref{sec: gen trans}), the following is our main question in the case of measures:

\begin{problem}\label{conj: main measures}
Assume that $\mu \in \mathfrak{M}_{G}(\cU)$ is \fim\ and idempotent. Is it true that then $\mu$ is generically transitive? Assuming $T$ is NIP?
\end{problem}

\noindent A positive answer for stable $T$ is given in  \cite{chernikov2022definable}, see Section \ref{sec: stable measures} for a discussion.  

By symmetric versions of the above considerations, one easily gets:

\begin{remark}
Section 3.7 remains valid if one swaps ``left'' with ``right'' (including left and right stabilizers) and replaces $f$ by $f(x_1,x_0)=(x_0 \cdot x_1,x_0)$.\\
\end{remark}

\subsection{Idempotent fim measures in abelian groups}\label{sec: idemp meas in ab proof}
Finally, we have all of the ingredients to adapt the proof in Section \ref{sec: abelian for types} from types to measures and give a positive answer to Problem \ref{conj: main measures} for abelian groups in arbitrary theories. Let $G(x)$ and $f$ be as in the previous section.

First note the idempotency of a \fim\ measure $\mu$ can be iterated along a Morley sequence in $\mu$:
\begin{lemma}\label{lemma:fam idem} Let $\mu \in \mathfrak{M}_{G}(\cU)$ and suppose that $\mu$ is fim and idempotent. Then for any formula $\psi(x) \in \mathcal{L}(\cU)$ and $k \in \omega$ we have
\begin{equation*} 
\mu(\psi(x)) = \mu^{(k)}(\psi(x_1 \cdot \ldots  \cdot x_k)). 
\end{equation*} 
\end{lemma}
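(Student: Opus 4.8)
The statement is an immediate consequence of the definition of convolution together with associativity of $\cdot$ on $G$ and the commutativity property of $\mu^{(k)}$ that follows from $\mu$ being \fim. I would prove the identity $\mu(\psi(x)) = \mu^{(k)}(\psi(x_1 \cdot \ldots \cdot x_k))$ by induction on $k$. The base case $k=1$ is trivial, and the case $k=2$ is exactly the idempotency $\mu \ast \mu = \mu$ unwound through Definition \ref{def: definable conv}: $\mu(\psi(x)) = (\mu\ast\mu)(\psi(x)) = (\mu\otimes\mu)(\psi(x\cdot y)) = \mu^{(2)}(\psi(x_1 \cdot x_2))$.

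For the inductive step, assume $\mu(\psi(x)) = \mu^{(k)}(\psi(x_1\cdot\ldots\cdot x_k))$ for all formulas $\psi$. Apply this to the formula $\psi'(x) := \psi(x \cdot x_{k+1})$ — more precisely, using idempotency once more in the form $\mu(\psi'(x)) = \mu^{(k)}(\psi'(x_1\cdot\ldots\cdot x_k))$, and then feeding the result into $\mu \otimes \mu^{(k)}$. The cleanest bookkeeping is: starting from $\mu^{(k+1)}(\psi(x_1\cdot\ldots\cdot x_{k+1})) = (\mu_{x_{k+1}} \otimes \mu^{(k)}_{x_1\ldots x_k})(\psi(x_1\cdot\ldots\cdot x_{k+1}))$, rewrite the word $x_1\cdot\ldots\cdot x_{k+1}$ as $(x_1\cdot\ldots\cdot x_k)\cdot x_{k+1}$ using associativity on $\varphi_0(\cU)$ (which holds $\mu^{(k+1)}$-almost everywhere since $\mu \in \mathfrak{M}_G(\cU)$ and $G(x) \vdash \varphi_0(x)$), integrate out $x_1, \ldots, x_k$ first via the induction hypothesis applied to $\psi(x \cdot b)$ for $b \models \mu|_M$, which collapses $\mu^{(k)}_{x_1\ldots x_k}(\psi((x_1\cdot\ldots\cdot x_k)\cdot x_{k+1}))$ to $\mu_x(\psi(x \cdot x_{k+1}))$, and hence $\mu^{(k+1)}(\psi(x_1\cdot\ldots\cdot x_{k+1})) = (\mu_{x_{k+1}} \otimes \mu_x)(\psi(x\cdot x_{k+1})) = (\mu \ast \mu)(\psi(x)) = \mu(\psi(x))$.

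The only genuinely delicate points, which I would spell out carefully, are: (i) making sure the substitution and reassociation of the product term are valid as manipulations of Keisler measures — this uses that $\mu^{(k)} \in \mathfrak{M}_{G^k}(\cU)$ (by Proposition \ref{prop:push-forward} applied to the projections, or directly from $S(\mu) \subseteq S_G(\cU)$ and the definition of $\otimes$), so all the relevant tuples lie in $\varphi_0(\cU)$ where $\cdot$ is associative; and (ii) justifying that the induction hypothesis, stated for the fixed measure $\mu$, can be applied "inside the integral" defining $\mu^{(k+1)}$, i.e.\ uniformly in the parameter $b$ — but this is automatic because the induction hypothesis is an equality of real numbers $\mu(\psi(x\cdot b)) = \mu^{(k)}(\psi((x_1\cdot\ldots\cdot x_k)\cdot b))$ holding for every parameter $b$, and both sides are the values of the continuous functions on $S(\mu|_M)$ being integrated against $\mu$. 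I expect step (i), the careful reassociation, to be the main (though still routine) obstacle; the rest is a direct unwinding of definitions. Note that abelianness of $G$ is \emph{not} needed for this lemma — it will only enter later in the section, in the main theorem.
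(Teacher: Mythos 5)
Your proposal is correct and follows essentially the same route as the paper's proof: induction on $k$ with idempotency as the base case, the swap $\mu_{x_{k+1}} \otimes \mu^{(k)} = \mu^{(k)} \otimes \mu_{x_{k+1}}$ (\fim\ commutes with Borel-definable measures, Fact \ref{fac: fim commutes}) so that the first $k$ variables can be integrated out first, the induction hypothesis applied fiberwise to $\psi(x \cdot b)$ via the continuous fiber functions, and a final application of idempotency. The only points to tighten are to invoke the commutation explicitly at the moment you "integrate out $x_1,\ldots,x_k$ first" (rather than only in the preamble), and to note that the penultimate expression should be $\left(\mu_x \otimes \mu_{x_{k+1}}\right)(\psi(x \cdot x_{k+1}))$ rather than $\left(\mu_{x_{k+1}} \otimes \mu_x\right)(\psi(x \cdot x_{k+1}))$ — the two agree by the same commutation, but only the former is literally $(\mu * \mu)(\psi)$ by Definition \ref{def: definable conv}; you are also right that abelianness is not used here.
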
 
\begin{proof}
By induction on $k$, the base case $k =2$ is the assumption on $\mu$. So assume that for any $\gamma(x) \in \mathcal{L}(\cU)$ and $\ell \leq k-1$ we have 
\begin{gather*}
	\mu^{(\ell)}_{x_1, \ldots, x_{\ell}}(\gamma(x_1 \cdot \ldots  \cdot x_{\ell})) = \mu(\gamma(x)). \tag{a}
\end{gather*}
  
Fix $\psi(x) \in \mathcal{L}(\cU)$, and choose a small $M \prec \cU$ such that $\mu$ is $M$-invariant and $M$ contains the parameters of $\psi$. Let 
 $\theta(x_1,\ldots ,x_{k-1}; x_k) := \psi(x_1 \cdot \ldots  \cdot x_k)$ and $\varphi(x_1; x_k) := \psi(x_1 \cdot x_k)$. For any $q \in S_{x_k}(M)$ (and any $b \models q$) we have 
 \begin{gather*}
 	F^{\theta}_{\mu^{(k-1)}_{x_1, \ldots, x_{k-1}}} (q) = \mu^{(k-1)}\left( \psi(x_1 \cdot \ldots  \cdot x_{k-1} \cdot b) \right) \overset{\textrm{(a)}}{=} \mu(\psi(x_1 \cdot b)) = F^{\varphi}_{\mu_{x_1}} (q). \tag{b}
 \end{gather*}

 Then, using that $\mu$ is \fim\ and \fim\ measures commute with Borel definable measures, 
 \begin{gather*}
 	\mu^{(k)}\left( \psi(x_1 \cdot \ldots  \cdot x_k) \right) = \mu_{x_k}\otimes \mu^{(k-1)}_{x_1, \ldots ,x_{k-1}}(\psi(x_1 \cdot \ldots \cdot x_k))\\
 	 = \mu^{(k-1)}_{x_1, \ldots, x_{k-1}} \otimes \mu_{x_{k}} (\psi(x_1 \cdot \ldots \cdot x_k)) = \int_{S_{x_k}(M)} F^{\theta}_{\mu^{(k-1)}_{x_1, \ldots, x_{k-1}}} \textrm{d} \mu_{x_k} \\
 	 \overset{\textrm{(b)}}{=} \int_{S_{x_k}(M)} F^{\varphi}_{\mu_{x_1}} \textrm{d} \mu_{x_k} 
 	= \mu_{x_1} \otimes \mu_{x_k} \left( \varphi(x_1, x_k) \right) = \mu_{x_1} \otimes \mu_{x_k} \left( \psi(x_1 \cdot x_k) \right) \overset{\textrm{(a)}}{=} \mu(\psi(x)).\qedhere
 \end{gather*}
\end{proof}

\begin{lemma} \label{lem: proof of the key prop meas ab}
	Assume that $\mu \in \mathfrak{M}_{G}(\cU)$ is \fim\ and idempotent. Let $k \geq 2$ be arbitrary and let $g: (\cU^x)^k \to (\cU^x)^{k+1}$ be the definable map 
	$$g(x_1, \ldots, x_k) = (x_1 \cdot \ldots \cdot x_k, x_1, \ldots, x_k),$$
where ``$\cdot$'' is viewed as a globally defined function whose restriction to $G$ defines the group operation (see Section \ref{sec: setting types}). Let $\lambda_k(y, x_1, \ldots,x_k) := g_{\ast} \left( \mu^{(k)} \right)$. Then:
	\begin{enumerate}
		\item $\lambda_k|_{x_1, \ldots, x_k} = \mu^{(k)}$;
		\item if $G$ is abelian, then $\lambda_k|_{y,x_i} = f_{\ast}\left(\mu^{(2)}(y,x_i) \right)$ for every $1 \leq i \leq k$.
	\end{enumerate}
\end{lemma}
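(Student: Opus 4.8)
The plan is to verify both statements directly from the definition of a definable pushforward together with the "iterated idempotency" statement of Lemma~\ref{lemma:fam idem}. Throughout, fix a small $M \prec \cU$ over which $\mu$ is \fim, hence definable and invariant; all the measures $\mu^{(k)}$ are then definable over $M$ as well, and the pushforwards $g_{\ast}(\mu^{(k)})$, $f_{\ast}(\mu^{(2)})$ are $M$-definable by Proposition~\ref{prop:push-forward}(2). Since a Keisler measure is determined by its values on formulas, it suffices in each case to evaluate the two measures in question on an arbitrary formula.

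\emph{Part (1).} For any formula $\chi(x_1, \ldots, x_k) \in \cL(\cU)$, by the definition of the pushforward we have
$$\lambda_k\big(\chi(x_1,\ldots,x_k)\big) = g_{\ast}\big(\mu^{(k)}\big)\big(\chi(x_1,\ldots,x_k)\big) = \mu^{(k)}\big(\chi(x_1,\ldots,x_k)\big),$$
because the map $g(x_1,\ldots,x_k) = (x_1\cdots x_k, x_1,\ldots,x_k)$ is the identity on the coordinates $x_1,\ldots,x_k$; formally, $\chi(x_1,\ldots,x_k)$ pulled back along $g$ is again $\chi(x_1,\ldots,x_k)$ (the new variable $y$ does not occur). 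Since $\chi$ was arbitrary, $\lambda_k|_{x_1,\ldots,x_k} = \mu^{(k)}$. This step is purely formal and I expect no difficulty.

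\emph{Part (2).} Now assume $G$ abelian and fix $1 \le i \le k$. Both $\lambda_k|_{y,x_i}$ and $f_{\ast}(\mu^{(2)}(y,x_i))$ are measures in the variables $(y,x_i)$, so it is enough to evaluate them on an arbitrary formula $\psi(y,x_i) \in \cL(\cU)$. On the one hand, unwinding the definitions of restriction and pushforward,
$$\lambda_k\big(\psi(y,x_i)\big) = g_{\ast}\big(\mu^{(k)}\big)\big(\psi(y,x_i)\big) = \mu^{(k)}\big(\psi(x_1\cdots x_k,\ x_i)\big).$$
Using commutativity of $G$ we may rewrite $x_1\cdots x_k = x_i \cdot \big(\prod_{j \ne i} x_j\big)$, and the idea is to split off the single factor $x_i$ and collapse the remaining product $\prod_{j\ne i} x_j$ (which runs over $k-1$ coordinates of a Morley sequence in $\mu$) into a single $\mu$-distributed variable. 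Concretely, by total indiscernibility of the Morley sequence (Fact~\ref{fac: basic gen stab}(1) applied to $\mu$ as a \fim\ measure — or directly by commuting factors in the Morley product, which is legitimate here since $\mu$ is \fim\ and hence commutes with all Borel-definable measures, Fact~\ref{fac: fim commutes}), $\mu^{(k)}$ is invariant under permuting its coordinates, so we may assume $i = k$ without loss of generality and compute
$$\mu^{(k)}\big(\psi(x_1\cdots x_k,\ x_k)\big) = \mu^{(k)}\big(\widetilde\psi(x_1\cdots x_{k-1},\ x_k)\big),$$
where $\widetilde\psi(u,v) := \psi(u\cdot v,\ v)$. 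Applying the argument of Lemma~\ref{lemma:fam idem} in the variable $x_k$ fixed — more precisely, disintegrating $\mu^{(k)} = \mu_{x_k} \otimes \mu^{(k-1)}_{x_1,\ldots,x_{k-1}}$ (valid after commuting, since $\mu$ is \fim) and using $\mu^{(k-1)}\big(\widetilde\psi(x_1\cdots x_{k-1}, b)\big) = \mu\big(\widetilde\psi(x, b)\big)$ for every $b$ by iterated idempotency — one gets
$$\mu^{(k)}\big(\widetilde\psi(x_1\cdots x_{k-1}, x_k)\big) = \mu_{x}\otimes\mu_{x_k}\big(\widetilde\psi(x, x_k)\big) = \mu^{(2)}\big(\psi(x\cdot x_k,\ x_k)\big) = f_{\ast}\big(\mu^{(2)}\big)\big(\psi(y,x_k)\big),$$
the last equality by the definition of $f(y',x_k) = (y'\cdot x_k, x_k)$ and of the pushforward. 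Renaming $x_k$ back to $x_i$ yields the claim.

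\emph{Main obstacle.} The one step requiring care is the interchange of factors / disintegration: \textbf{a priori} the Morley product $\otimes$ on Borel-definable measures need not be associative or commutative, so I cannot freely reassociate $x_1 \cdots x_k$ or pull out $x_k$ first. The resolution is exactly that $\mu$ is \fim: by Fact~\ref{fac: fim commutes} it commutes with every Borel-definable measure, and the iterated-idempotency Lemma~\ref{lemma:fam idem} is itself proved using precisely this commutation, so every reassociation I need is licensed. I would make this explicit by mirroring the computation in the proof of Lemma~\ref{lemma:fam idem} (disintegrating along $S_{x_k}(M)$ and using that $F^{\theta}_{\mu^{(k-1)}}$ is constant and equal to $F^{\varphi}_{\mu}$ on each fiber via idempotency) rather than invoking "permutation invariance of $\mu^{(k)}$" as a black box. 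Everything else is bookkeeping with pullbacks of formulas.
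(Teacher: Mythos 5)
Your proposal is correct and follows essentially the same route as the paper: part (1) is immediate from the definition of $g$, and part (2) uses abelianness to move $x_i$ to the end of the product, the fact that \fim\ measures commute with Borel-definable ones to disintegrate $\mu^{(k)}$ along $x_i$, and Lemma \ref{lemma:fam idem} to collapse the remaining $k-1$ factors into a single $\mu$-variable, identifying the result with $f_{\ast}(\mu^{(2)})$. The only cosmetic caveat is that Fact \ref{fac: basic gen stab}(1) is stated for types, so the permutation-invariance of $\mu^{(k)}$ should indeed be justified via Fact \ref{fac: fim commutes} as you note in your fallback, which is exactly what the paper does (it keeps general $i$ and commutes $\mu_{x_i}$ to the last position rather than reducing to $i=k$).
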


\begin{proof}	
	\textbf{(1)} By definition of $g$.

	\noindent \textbf{(2)} 
	Let $\varphi(y,x) \in \mathcal{L}(\cU)$ be arbitrary, and let $M \prec \cU$ be a small model containing its parameters, and so that $\mu$ is $M$-invariant. We let 
	\begin{gather*}
		\theta(x_{1},\ldots ,x_{i-1},x_{i+1},\ldots ,x_{k};x_i) := \varphi(x_{1} \cdot \ldots  \cdot x_{k},x_i),  \  \psi(x;y) : = \varphi(x \cdot y,y),\\
		\mu^{(k-1)}_{\hat{x}_i} := \mu_{x_{k-1}} \otimes \ldots  \otimes  \mu_{x_{i+1}} \otimes \mu_{x_{i-1}} \otimes \ldots  \otimes \mu_{x_1}
	\end{gather*}
 Using that $G$ is abelian and Lemma \ref{lemma:fam idem}, for any $q \in S_{x_i}(M)$ and $b \models q$ we have (renaming the variables when necessary):  
\begin{gather*} 
F_{\mu^{(k-1)}_{\hat{x}_i}}^{\theta}(q) = \mu_{\hat{x}_i}^{(k-1)}(\varphi(x_1 \cdot \ldots  \cdot x_{i-1} \cdot b \cdot x_{i+1} \cdot  \ldots  \cdot x_k, b)) \\
= \mu_{\hat{x}_i}^{(k-1)}(\varphi(x_1 \cdot \ldots  \cdot x_{i-1} \cdot x_{i+1} \cdot \ldots  \cdot x_{k} \cdot b,b))\\
=  \mu_{x}(\varphi(x \cdot b,b)) 
=  F_{\mu_{x}}^{\psi}(q).
\end{gather*} 
Hence, since $G$ is abelian, $\mu$ is \fim, and \fim\  measures commute with Borel definable measures, we get 
\begin{gather*} 
\lambda_{k}(\varphi(y,x_i)) = \mu_{x_1,\ldots ,x_k}^{(k)}(\varphi(x_1 \cdot  \ldots  \cdot x_{k},x_i))\\
= \mu_{\hat{x}_i}^{(k-1)} \otimes \mu_{x_i}(\varphi(x_1 \cdot \ldots \cdot x_{i-1} \cdot x_{i+1} \ldots  \cdot x_{k} \cdot x_i,x_i))\\
= \int_{S_{x_i}(M)} F^{\theta}_{\mu_{\hat{x}_i}^{(k-1)}} \textrm{d} \mu_{x_i}
= \int_{S_{x_i}(M)} F^{\psi}_{\mu_{x}} \textrm{d} \mu_{x_i}\\
= \mu_{x} \otimes \mu_{x_i}(\varphi(x \cdot x_i,x_i))
= \mu_{x} \otimes \mu_{y}(\varphi(x \cdot y,y))
= f_{*}\left(\mu^{(2)} \right)(\varphi(x,y)) \qedhere. 
\end{gather*} 
\end{proof}

Finally, using results about the randomization (Theorem \ref{thm: unif gen stab meas}) and Lemma \ref{lem: proof of the key prop meas ab}, we can show generic transitivity in the abelian case:
\begin{theorem}\label{prop:main} Assume that $G(x)$ is an abelian type-definable group and $\mu \in \mathfrak{M}_{G}(\cU)$ is \fim\ and idempotent. Then $\mu$ is generically transitive.
\end{theorem}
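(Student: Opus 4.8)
The plan is to imitate the bounded local-weight argument from the proof of Proposition~\ref{prop: key for types in ab} (the abelian case for types), but entirely in the measure-theoretic setting, using Theorem~\ref{thm: unif gen stab meas} (fim measures over ``random'' parameters) as the replacement for Fact~\ref{fac: basic gen stab}(5). Concretely, I want to show that $\mu^{(2)} = f_{\ast}\left( \mu^{(2)} \right)$, which by Proposition~\ref{prop: gen trans meas} is equivalent to generic transitivity. Suppose for contradiction that this fails; then there is a formula $\varphi(x_1, x_0) \in \cL(\cU)$, say over a small model $M \prec \cU$ over which $\mu$ is invariant, and an $\varepsilon > 0$ such that $\left\lvert \mu^{(2)}(\tilde{\varphi}(x_1,x_0)) - \mu^{(2)}(\varphi(x_1,x_0)) \right\rvert > 3\varepsilon$, where $\tilde{\varphi}(x_1,x_0) := \varphi(x_1 \cdot x_0, x_0)$; equivalently $f_{\ast}(\mu^{(2)})(\varphi) \neq \mu^{(2)}(\varphi)$.

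Next I would set up the ``summing Morley sequence'' $b_k := x_{k-1} \cdot \ldots \cdot x_0$ in measure-theoretic form. Fix $k \geq 2$ and consider the definable map $g : (\cU^x)^k \to (\cU^x)^{k+1}$, $g(x_1,\ldots,x_k) = (x_1 \cdots x_k, x_1, \ldots, x_k)$, and the pushforward $\lambda_k := g_{\ast}(\mu^{(k)})$ as in Lemma~\ref{lem: proof of the key prop meas ab}. By Lemma~\ref{lem: proof of the key prop meas ab}(1), $\lambda_k|_{x_1,\ldots,x_k} = \mu^{(k)}$; in particular $\lambda_k|_{x_1,\ldots,x_k,M} = \mu^{(k)}|_M = (\mu^{(\omega)}|_M)$ restricted to the first $k$ coordinates. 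By Lemma~\ref{lem: proof of the key prop meas ab}(2), using that $G$ is abelian, we have $\lambda_k|_{y,x_i} = f_{\ast}(\mu^{(2)}(y,x_i))$ for \emph{every} $1 \leq i \leq k$. So, writing $y$ for the coordinate of the product $x_1\cdots x_k$, the measure $\lambda_k$ records that $y$, together with \emph{any one} of the $x_i$'s, has the ``wrong'' joint distribution $f_{\ast}(\mu^{(2)})$ rather than $\mu^{(2)}$; and $\lambda_k(\varphi(y,x_i)) = f_{\ast}(\mu^{(2)})(\varphi) $, a fixed value independent of $i$ that differs from $\mu^{(2)}(\varphi) = \mu \otimes \mu(\varphi(x,y))$ by more than $3\varepsilon$.

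Now comes the contradiction via Theorem~\ref{thm: unif gen stab meas}. Apply that theorem with the fim measure $\mu$ (in variables $\mathbf{x} = (x_i)_{i<\omega}$), the arbitrary measure $\nu := \mu$ in the variable $y'$ playing the role of ``$y$'', and the formula $\varphi^{\mathrm{sw}}(x_i, y', z)$ obtained from $\varphi(y, x)$ by appropriate renaming — precisely, I want the ``$y$''-slot of $\varphi$ (the product) to be fed by the Morley variables $x_i$ and the ``$x$''-slot to be fed by $\nu$. The point is: $\lambda_k$, suitably reindexed so that the distinguished coordinate $y = x_1 \cdots x_k$ becomes the ``$\nu$-coordinate'' and the $x_i$ become the Morley coordinates, is a measure $\lambda$ in $\mathfrak{M}_{\mathbf{x} y'}(\cU)$ with $\lambda|_{\mathbf{x},M} = \mu^{(\omega)}$ (extending $\lambda_k$ to infinitely many coordinates by $\mu^{(\omega)}$) and $\lambda|_{y'} = \mu$. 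Theorem~\ref{thm: unif gen stab meas} then gives a uniform $n = n(\mu, \varphi, \varepsilon)$ such that $\lambda(\varphi^{\mathrm{sw}}(x_i, y', b)) \approx^{\varepsilon} \mu \otimes \mu(\varphi^{\mathrm{sw}}(x,y',b))$ for all but at most $n$ indices $i$. But by Lemma~\ref{lem: proof of the key prop meas ab}(2), for this choice of $\lambda$ built from $\lambda_k$ with $k > n$, we have $\lambda(\varphi^{\mathrm{sw}}(x_i, y', b)) = f_{\ast}(\mu^{(2)})(\varphi)$ for \emph{every} $i \leq k$, which differs from $\mu^{(2)}(\varphi) = \mu\otimes\mu(\varphi^{\mathrm{sw}})$ by more than $3\varepsilon > \varepsilon$ for more than $n$ values of $i$ — contradiction. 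Hence $\mu^{(2)} = f_{\ast}(\mu^{(2)})$ and $\mu$ is generically transitive.

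The main obstacle I anticipate is bookkeeping the variable substitutions and the reindexing so that the hypotheses of Theorem~\ref{thm: unif gen stab meas} are literally met: one must check that the extension of $\lambda_k$ to a measure $\lambda \in \mathfrak{M}_{\mathbf{x} y'}(\cU)$ with $\lambda|_{\mathbf{x},M} = \mu^{(\omega)}$ and $\lambda|_{y'} = \mu$ actually exists (e.g.\ by taking $\lambda_k$ on the first $k+1$ relevant coordinates and a free/product-type amalgamation with $\mu^{(\omega)}$ on the remaining Morley coordinates, relative to $M$), and that this extension still satisfies $\lambda(\varphi^{\mathrm{sw}}(x_i,y',b)) = f_\ast(\mu^{(2)})(\varphi)$ for $i \leq k$ — which it does because those marginals come verbatim from $\lambda_k$ via Lemma~\ref{lem: proof of the key prop meas ab}(2), and abelianness is exactly what makes the product $x_1\cdots x_k$ interchange the role of each $x_i$ with the ``outer'' product of the others. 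Once the indices are lined up, the rest is a direct transcription of the type-level argument, with Theorem~\ref{thm: unif gen stab meas} doing the work of ``a forking/dividing formula cannot be satisfied along a whole Morley sequence.''
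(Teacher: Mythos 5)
Your argument is correct and is essentially the paper's proof: reduce to $\mu^{(2)} = f_{\ast}(\mu^{(2)})$ via Proposition \ref{prop: gen trans meas}, push $\mu^{(k)}$ forward along $g$, use abelianness via Lemma \ref{lem: proof of the key prop meas ab}(2) to see that every marginal $(y,x_i)$ equals $f_{\ast}(\mu^{(2)})$, and contradict the uniform bound of Theorem \ref{thm: unif gen stab meas} (with $\nu=\mu$) by taking $k>n$. The one point you flag as an obstacle --- extending $\lambda_k$ to $\lambda\in\mathfrak{M}_{\mathbf{x}y}(\cU)$ with the required marginals --- is resolved in the paper by simply setting $\lambda := g_{\ast}(\mu^{(\omega)})$, which automatically satisfies $\lambda|_{\mathbf{x}}=\mu^{(\omega)}$ and $\lambda|_{(x_1,\ldots,x_k)y}=\lambda_k$ (and $\lambda|_{y}=\mu$ by Lemma \ref{lemma:fam idem}).
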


\begin{proof}
By Proposition \ref{prop: gen trans meas}, it suffices to show that $\mu^{(2)} = f_{\ast}\left(\mu^{(2)} \right)$.
Assume not, say 
$$\left \lvert\mu^{(2)} (\varphi(x_1,x_2)) - f_{\ast}\left(\mu^{(2)}\right) (\varphi(x_1,x_2)) \right\rvert = \varepsilon_0$$
 for some $\varphi(x_1,x_2) \in \cL(\cU)$ and some $\varepsilon_0 > 0$. Let $M \prec \cU$ be a small model containing the parameters of $\varphi$, and so that $\mu$ is invariant over $M$. Let $n$ be as given by the moreover part of Theorem \ref{thm: unif gen stab meas}  for $\mu, \varphi, \varepsilon_0$. Fix any $k > n$, and consider the definable map $g: (\cU^{x})^{k} \to (\cU^{x})^{k+1}$ given by $g(x_1, \ldots, x_k) = (x_1 \cdot \ldots \cdot x_k, x_1, \ldots, x_k)$. Then $g$ induces a continuous map from $S_{\mathbf{x}}(\cU)$ to $S_{\mathbf{x}y}(\cU)$, where $\mathbf{x} = (x_i:i \in \omega)$ and we let $\lambda \in \mathfrak{M}_{\mathbf{x}y}(\cU)$ be defined by $\lambda := g_{\ast}(\mu^{(\omega)})$. That is, for every  $m \in \omega$ and every $\varphi(x_1, \ldots, x_m, y) \in \cL(\cU)$ we have 
 $$\lambda(\varphi(x_1, \ldots, x_m, y)) = \mu^{(\omega)}\left(\varphi(x_1, \ldots, x_m, x_1 \cdot \ldots \cdot x_k) \right).$$
  Then $\lambda|_{\mathbf{x}} = \mu^{(\omega)}$  and $\lambda|_{(x_1, \ldots, x_k) y} = \lambda_k$ from Lemma \ref{lem: proof of the key prop meas ab}, so by Lemma \ref{lem: proof of the key prop meas ab} we then have $| \lambda(\varphi(y,x_i)) - \mu^{(2)}(\varphi(y,x))| > \varepsilon_0$ for all $i=1, \ldots, k$ --- contradicting the choice of $n$.
\end{proof}

%

\subsection{Support transitivity of idempotent measures}\label{sec: support trans}

A tempting strategy for generalizing the arguments in Sections \ref{subsection: stable theories}--\ref{sec: rosy types} with ranks from idempotent types to idempotent measures in $T$ is to apply (a continuous logic version of) the proof for types in the randomization $T^{R}$ of $T$, assuming that the randomization preserves the corresponding property. E.g, stability is preserved \cite{ben2009randomizations}, and (real) rosiness is known to be preserved in some special cases \cite{andrews2019independence} (e.g.~when $T$ is $o$-minimal). We note that simplicity of $T$ is not preserved, still one gets that $T^{R}$ is NSOP$_1$ assuming that $T$ is simple \cite{BYChRa}. When attempting to implement this strategy, one arrives at the following natural condition connecting the behavior of measures and types in their support:
 \begin{definition}
 	Assume that $G$ is a type-definable group and $\mu \in \mathfrak{M}_{G}(\cU)$. We say that $\mu$ is \emph{support transitive} if $\mu \ast p = \mu$ for every $p \in S(\mu)$.
 \end{definition}
 
 \begin{remark}
 	\begin{enumerate}
 		\item If $p \in S_{G}(\cU)$ is a generically stable idempotent type, then it is obviously support transitive (viewed as a Keisler measure).
 		\item If $\mu \in \mathfrak{M}_{G}(\cU)$ is generically transitive, then it is support transitive.
 		
 		Indeed, note that if $p \in S(\mu)$, $\theta(x,b) \in \mathcal{L}_{x}(\mathcal{U})$, $\mu$ is $M$-invariant for a small model $M \prec \cU$, and $c \models p|_{Mb}$, then  
\begin{equation*} 
(\mu * p)(\theta(x,b)) = (\mu_{x} \otimes p_{y})(\theta(x\cdot y,b)) = \mu(\theta(x \cdot c,b)) = \mu(\theta(x,b)),
\end{equation*} 
where the last equality follows as $p \vdash \Stab(\mu)$ by assumption, and $\Stab(\mu)$ is $M$-type-definable by Fact \ref{fac: stab of def meas type def}.
 	\end{enumerate}
 \end{remark}
 
 Thus, we view the following as an intermediate (and trivial in the case of types) version of our main Problem \ref{conj: main measures}:

\begin{problem}\label{intermediate:conjecture} 
Assume that $G$ is a type-definable group and $\mu \in \mathfrak{M}_{G}(\cU)$ is \fim\ and idempotent. Is $\mu$ support transitive?
\end{problem}

The following example (based on \cite[Example 4.5]{chernikov2023definable}) illustrates that the \fim\ assumption in Problem \ref{intermediate:conjecture}  cannot be relaxed to either \emph{definable} or \emph{Borel-definable and finitely satisfiable}, even in abelian NIP groups:
\begin{example}\label{example:intermediate} Consider $M := (\mathbb{R},<,+)$, $M \prec \mathcal{\cU}$, $G(\cU) = \cU$ and $\mu := \frac{1}{2}\delta_{p_{-\infty}} + \frac{1}{2}\delta_{p_{\infty}}$ and 
$\nu := \frac{1}{2}\delta_{p_{0^{-}}} + \frac{1}{2}\delta_{p_{0^{+}}}$, where $p_{\infty}, p_{-\infty}, p_{0^+}, p_{0^-}$  are the unique complete $1$-types satisfying:
\begin{itemize}
\item $p_\infty \supseteq  \{ x > a : a \in \mathbb{R}\} \cup \{ x < b : b \in \cU, b > \mathbb{R}\}$,
\item $p_{-\infty} \supseteq \{ x < a: a \in \mathbb{R}\} \cup \{x > b : b \in \cU,  b < \mathbb{R}\}$, 
\item $p_{0^{+}} \supseteq \{ x < a: a \in \cU, a > 0\} \cup \{x > 0 \}$, 
\item $p_{0^{-}} \supseteq \{ x <  0 \} \cup \{x > b : b \in \cU, b < 0\}$. 
\end{itemize} 
Then $\mu$ is finitely satisfiable in $M$ (hence also Borel-invariant over $M$ by NIP) and $\nu$ is definable over $M$, but neither is \fim. The following are easy to verify directly:
\begin{enumerate} 
\item $\mu * p_{\infty} = p_{\infty}$, $\mu * p_{-\infty} = p_{-\infty}$ and $\mu * \mu = \mu$ --- hence $\mu$ is idempotent, finitely satisfiable in $M$, but not support-transitive;
\item likewise, $\nu *p_{0^{+}} = p_{0^{+}}$, $\nu *p_{0^{-}} = p_{0^{-}}$, and $\nu * \nu = \nu$ --- hence $\nu$ is idempotent, definable over $M$, but not support transitive.
\end{enumerate} 
\end{example} 

Support transitivity is closely related to the algebraic properties of the semigroup induced by $\ast$ on the support of an idempotent measure, studied in \cite[Section 4]{chernikov2022definable}.

\begin{fact}\label{fac: supp is nice semigroup}
	\cite[Corollary 4.4]{chernikov2022definable}  Assume that $\mu \in \mathfrak{M}_{G}(\cU)$ is \fim\ and idempotent. Then $(S(\mu),*)$ is a compact Hausdorff semigroup which is left-continuous, i.e.~the map $p \in S(\mu) \mapsto p \ast q \in S(\mu)$ is continuous for each fixed $q \in S(\mu)$.
\end{fact}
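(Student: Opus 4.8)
The final statement to prove is Fact~\ref{fac: supp is nice semigroup}, which is quoted from \cite[Corollary 4.4]{chernikov2022definable}. Since the excerpt ends at this fact and it is explicitly attributed to a previous paper in the series, I will reconstruct the proof that would establish it, using only the tools developed above (the theory of \fim\ measures, definability, pushforwards, and basic continuity properties of the Morley product).

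The plan is to first verify that $(S(\mu),\ast)$ is closed under $\ast$, i.e.\ that $p \ast q \in S(\mu)$ whenever $p,q \in S(\mu)$. Here is the argument I would carry out: since $\mu$ is \fim, it is Borel-definable, so $\mu \ast \mu$ is well-defined and equals $\mu$ by idempotence. For $p \in S(\mu)$, pick a small model $M \prec \cU$ over which $\mu$ is invariant; one shows that $p \ast q$ is ``dominated'' by $\mu \ast \mu = \mu$ in the sense that any formula $\varphi(x)$ with $(p \ast q)(\varphi) = 1$, equivalently $\varphi(x \cdot y) \in p_x \otimes q_y$, must have $\mu(\varphi(x)) > 0$. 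The key point is that if $\mu(\varphi(x)) = 0$ then $\mu \ast \mu(\varphi(x)) = (\mu \otimes \mu)(\varphi(x\cdot y)) = \int_{S_y(M)} F^{\varphi'}_{\mu,M} \, d\mu = 0$, so $F^{\varphi'}_{\mu,M}(q') = \mu(\varphi(x \cdot b)) = 0$ for $\mu$-almost all $q' \in S(\mu|_M)$ — and in fact, since $p,q \in S(\mu)$ and $\varphi(x\cdot y) \in p \otimes q$, one can transfer this to conclude $p \ast q$ concentrates on the support of $\mu$. This uses that the support of a Morley product $\mu \otimes \mu$ is the closure of $\bigcup\{ \tp(ab/\cU) : a \models \mu|_{\cU b},\ b \models \mu \}$ and that $S(p \otimes q) \subseteq S(\mu \otimes \mu) = S(\mu^{(2)})$ for $p, q \in S(\mu)$, which follows from the definability of $\mu$ and a standard support-of-product computation.

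Next I would establish left-continuity: for a fixed $q \in S(\mu)$, the map $p \mapsto p \ast q$ is continuous on $S(\mu)$. This is where definability of $\mu$ (inherited by $q$ being in its support is not needed — rather $\mu$'s own Borel-definability, in fact \fim\ gives definability) enters. Given a basic clopen $[\psi(x)]$ in $S_G(\cU)$, we have $(p \ast q)(\psi(x)) = 1 \iff \psi(x \cdot y) \in p_x \otimes q_y \iff \psi(x \cdot b) \in p$ for $b \models q$ in an elementary extension; but $\{ x : \psi(x \cdot b)\}$ is a fixed definable set (with parameter $b$), so the preimage of $[\psi]$ under $p \mapsto p \ast q$ is $\{ p \in S(\mu) : \psi(x \cdot b) \in p\} = [\psi(x \cdot b)] \cap S(\mu)$, which is clopen. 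Hence the map is continuous (indeed the preimage of every clopen is clopen). Strictly speaking one must check this is well-defined independently of the choice of $b \models q$, which holds because $\mu$ — and hence every $p \in S(\mu)$ viewed through invariance — interacts well; more carefully, $p \ast q$ as a type is determined by $q$ being any realization in a bigger monster and $p$'s non-forking behavior, but the cleanest route is: $p \ast q = \lim \tp(ab/\cU)$ is computed via the ultrafilter, and for the $\{0,1\}$-valued case $p \ast q(\psi) = p(\psi(x\cdot y) \in \cdot \otimes q)$ reduces to membership of a single formula. Associativity of $\ast$ on $S(\mu)$ then follows from associativity of $\otimes$ restricted to types concentrated on $S(\mu)$ (all of which are Borel-definable here since they lie in the support of a definable measure and one can invoke that $\mu^{(n)}$ controls them), together with associativity of $\cdot$ on $\varphi_0(\cU)$.

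The main obstacle, I expect, is the closure claim $p \ast q \in S(\mu)$ — more precisely, showing $S(p \otimes q) \subseteq S(\mu^{(2)})$ for $p, q \in S(\mu)$ and then that $(x,y) \mapsto x \cdot y$ maps $S(\mu^{(2)})$ into $S(\mu)$ (the latter being exactly idempotence, $\mu \ast \mu = \mu$, combined with Proposition~\ref{prop:push-forward}(4) applied to the definable multiplication map, which gives $S(\mu \ast \mu) = \{ r_\ast(\tp(ab/\cU)) : \tp(ab/\cU) \in S(\mu^{(2)})\}$ where $r(x,y) = x \cdot y$). The first inclusion requires care: one uses that $\mu$ is definable, so $S(\mu^{(2)}) = S(\mu \otimes \mu)$ can be described as those $\tp(a,b/\cU)$ with $b \in S(\mu)$-generic and $a$ generic over it, and since $p, q \in S(\mu)$ any $(a,b) \models p|_{\cU' b'} \cup q(b')$ witnesses a type in this support — here the precise bookkeeping of which parameters go where, and the fact that forking/definability behaves correctly, is the fiddly part. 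Hausdorffness is immediate since $S(\mu)$ is a closed subspace of the Hausdorff (in fact Boolean) space $S_G(\cU)$, and compactness since it is closed. Once closure and left-continuity are in hand, the semigroup axioms transfer directly from those of $(\mathfrak{M}_{G,M}(\cU),\ast)$, completing the proof.
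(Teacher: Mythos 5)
The paper does not prove this statement --- it is quoted verbatim from \cite[Corollary 4.4]{chernikov2022definable} --- so there is no internal proof to compare against; I can only assess your reconstruction. Its architecture is the right one (and matches the cited source): compactness and Hausdorffness are free since $S(\mu)$ is closed in $S_G(\cU)$; closure under $\ast$ follows by showing $p\otimes q\in S(\mu\otimes\mu)$ and then pushing forward along multiplication, using $S(m_{\ast}(\mu^{(2)}))=m_{\ast}[S(\mu^{(2)})]$ (Proposition \ref{prop:push-forward}(4)) together with idempotence; and left-continuity comes from the preimage of a clopen set being clopen.

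However, two of your justifications contain genuine errors. First, you ground well-definedness and associativity of $\ast$ on $S(\mu)$ in the claim that types in the support of a definable measure are Borel-definable --- this is false in general. The correct mechanism is \emph{finite satisfiability}: since $\mu$ is \fim\ over some small $M$, it is finitely satisfiable in $M$, hence every $p\in S(\mu)$ is finitely satisfiable in $M$ and therefore $M$-invariant; this is what makes $p\otimes q$ well-defined and $\otimes$ associative on $S(\mu)$. The same point repairs your left-continuity step: the set $[\psi(x\cdot b)]$ with $b\models q$ living outside $\cU$ is \emph{not} a clopen subset of $S_G(\cU)$; you must use $M$-invariance of $p$ to replace $b$ by some $b'\in\cU$ realizing $q$ restricted to $M$ and the parameters of $\psi$, after which the preimage of $[\psi]$ is the honest clopen set $[\psi(x\cdot b')]\cap S(\mu)$. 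Second, in the closure step your ``$F^{\varphi'}_{\mu,M}(q')=0$ for $\mu$-almost all $q'$'' is not by itself enough: you need that $\mu$ is \emph{definable} (which \fim\ gives), so that $F^{\varphi'}_{\mu,M}$ is continuous; then positivity of $F^{\varphi'}_{\mu,M}$ at the point $q|_{Mc}$ (which holds because $\varphi(x\cdot b')\in p$ and $p\in S(\mu)$), combined with $q|_{Mc}$ lying in the support of $\mu|_{Mc}$, forces $\int F^{\varphi'}_{\mu,M}\,d\mu=(\mu\ast\mu)(\varphi)>0$. With these two repairs the proof goes through.
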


\begin{proposition}\label{prop: supp trans equiv}
	Assume that $\mu \in \mathfrak{M}_{G}(\cU)$ is \fim\ and idempotent. Then the following are equivalent:
	\begin{enumerate}
		\item $\mu$ is support transitive, i.e.~$\mu \ast p = \mu$ for all $p \in S(\mu)$;
		\item for any $p,q \in S(\mu)$ there exists $r \in S(\mu)$ such that $r \ast q = p$;
		\item $I_{\mu} = S(\mu)$, where $I_{\mu}$ is a minimal (closed) left ideal of $\left( S(\mu), \ast \right)$.
	\end{enumerate}
\end{proposition}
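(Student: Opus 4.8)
The goal is to show that for a \fim\ idempotent measure $\mu \in \mathfrak{M}_G(\cU)$ the three conditions are equivalent. The plan is to prove the cycle $(1) \Rightarrow (3) \Rightarrow (2) \Rightarrow (1)$, using Fact \ref{fac: supp is nice semigroup} (so that $(S(\mu), \ast)$ is a compact Hausdorff left-continuous semigroup), together with the structure theory of minimal left ideals in such semigroups (Ellis theory). Recall the basic facts: in a compact Hausdorff left-continuous semigroup $S$, for any $q \in S$ the set $S \ast q$ is a closed left ideal, every left ideal contains a minimal one, minimal left ideals are closed and of the form $S\ast u$ for an idempotent $u$, and $u \ast v = v$ for all $v$ in the minimal left ideal containing the idempotent $u$.

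\emph{First, $(1) \Rightarrow (3)$.} Assume $\mu \ast p = \mu$ for every $p \in S(\mu)$. I would first argue that $\mu$ itself behaves like an identity/absorbing element relative to the support semigroup at the level of the induced convolution, and then show directly that $S(\mu)$ has no proper closed left ideal. Concretely: let $I_\mu = S(\mu) \ast u$ be a minimal closed left ideal, with $u \in S(\mu)$ an idempotent (these exist by Fact \ref{fac: supp is nice semigroup} and Ellis theory). The key point is that for any $q \in S(\mu)$, the measure $\mu \ast q$ is supported on $S(\mu) \ast q$ (since $\mu$ is supported on $S(\mu)$ and convolution pushes supports forward appropriately — using that $S(\mu)\ast q$ is closed). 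By hypothesis $\mu \ast q = \mu$, so $S(\mu) \subseteq \operatorname{supp}(\mu) \subseteq \overline{S(\mu)\ast q} = S(\mu)\ast q$. Applying this with $q = u$ gives $S(\mu) = S(\mu)\ast u = I_\mu$, which is exactly $(3)$. I should double-check the claim that $\operatorname{supp}(\mu\ast q) \subseteq S(\mu)\ast q$: this follows because $\mu \ast q = (\cdot)_\ast(\mu \otimes q)$ where the support of $\mu \otimes q$ is contained in $\{r \otimes q : r \in S(\mu)\}$-closure (by the analog of Proposition \ref{prop:push-forward}(4) and Borel-definability of $\mu$), and multiplication sends $r \otimes q \mapsto r\ast q$.

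\emph{Next, $(3) \Rightarrow (2)$.} If $I_\mu = S(\mu)$, then $S(\mu)$ is itself a minimal left ideal, hence a simple compact left-continuous semigroup. In such a semigroup, for any $q$, the left ideal $S(\mu)\ast q$ is nonempty, closed, contained in $S(\mu)$, hence equals $S(\mu)$ by minimality. Therefore for any $p, q \in S(\mu)$, since $p \in S(\mu) = S(\mu)\ast q$, there is $r \in S(\mu)$ with $r \ast q = p$, which is $(2)$.

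\emph{Finally, $(2) \Rightarrow (1)$.} Assume for every $p, q \in S(\mu)$ there is $r \in S(\mu)$ with $r\ast q = p$. Fix $q \in S(\mu)$; I want $\mu \ast q = \mu$. The idea is to show $\mu \ast q$ and $\mu$ assign the same value to every formula. Consider the continuous (by left-continuity, Fact \ref{fac: supp is nice semigroup}) map $\rho_q : S(\mu) \to S(\mu)$, $r \mapsto r \ast q$; condition $(2)$ says $\rho_q$ is surjective. Now $\mu\ast q$ is, by the pushforward description, the image measure $(\rho_q)_\ast(\mu)$ (here I use that $\mu$ is supported on $S(\mu)$ and that convolution with the fixed type $q$ is computed by the definable pushforward along multiplication, which restricted to the support is $\rho_q$). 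So it suffices to show $(\rho_q)_\ast(\mu) = \mu$. I expect to prove this by exploiting idempotence: $\mu = \mu \ast \mu$, and one can write $\mu\ast\mu$ as an integral of $\mu \ast p$ over $p \sim \mu$; combined with surjectivity of $\rho_q$ and a uniqueness argument (e.g.\ via Proposition \ref{prop: FIM unique inv ext} or Fact \ref{fact:self}, using that $(\rho_q)_\ast \mu$ is again \fim, or at least invariant, by Proposition \ref{prop:push-forward}) one pins down $(\rho_q)_\ast\mu = \mu$.

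\textbf{Main obstacle.} The delicate step is $(2) \Rightarrow (1)$: surjectivity of $\rho_q$ on the support is a purely topological/set-theoretic statement, while $\mu\ast q = \mu$ is a measure-theoretic identity, and surjectivity alone does not force a pushforward to fix a measure. The proof must genuinely use that $\mu$ is idempotent and \fim\ (not merely that its support is a minimal left ideal). I anticipate the cleanest route is: show $(\rho_q)_\ast \mu$ is \fim\ (Proposition \ref{prop:push-forward}(3), as $\rho_q$ extends to a definable map and $\mu$ is \fim) and idempotent-compatible, show it has the same restriction to a small model as $\mu$ by a direct computation with $\mu \ast \mu \ast q = \mu \ast q$ versus $\mu \ast \mu = \mu$, and then invoke the uniqueness of \fim\ invariant extensions (Proposition \ref{prop: FIM unique inv ext}) to conclude equality. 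Alternatively, one reduces $(2)$ to $(1)$ abstractly: in a compact left-continuous semigroup carrying a \fim\ idempotent measure whose support is a minimal left ideal, $(1)$ follows from the uniqueness of the invariant measure on the ideal group (Remark \ref{rem: gen trans implies fim}/Proposition \ref{prop:unique} applied inside $S(\mu)$, viewing $\mu$ as translation-invariant for the semigroup action). I would present whichever of these is shortest once the bookkeeping on supports of convolutions is nailed down.
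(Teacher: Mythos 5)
Your steps $(1)\Rightarrow(3)$ and $(3)\Rightarrow(2)$ are essentially sound: the minimal-ideal argument for $(3)\Rightarrow(2)$ is standard Ellis theory, and the support inclusion $S(\mu\ast q)\subseteq S(\mu)\ast q$ underlying $(1)\Rightarrow(3)$ is correct, though it needs justification you only gesture at — namely that every type in $S(\mu)$ is finitely satisfiable in $M$ (hence $M$-invariant), so that $\theta(x\cdot b,c)\in r$ for $b\models q|_{Mc}$ really does give $\theta(x,c)\in r\ast q$; this is exactly the computation the paper uses (in the reverse direction) to prove $(1)\Rightarrow(2)$ by showing the map $s\mapsto s\ast p$ has dense, hence closed, hence full image. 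So your cycle differs from the paper's decomposition, but up to this point the content is parallel.

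The genuine gap is $(2)\Rightarrow(1)$, which you yourself flag and do not close, and neither of your proposed routes works as stated. First, $\rho_q\colon r\mapsto r\ast q$ is an operation on the space of global types, not a definable map on $\cU^x$, so Proposition \ref{prop:push-forward}(3) does not show that $(\rho_q)_\ast\mu=\mu\ast q$ is \fim; the only definable-pushforward presentation is $\mu\ast q=(\cdot)_\ast(\mu\otimes q)$, and whether $\mu\otimes q$ is \fim\ is unknown (the paper notes that even \fim ness of $\mu^{(2)}$ is open in general theories). Moreover, the ``direct computation'' $\mu\ast\mu\ast q=\mu\ast q$ versus $\mu\ast\mu=\mu$ is vacuous and gives no control of $(\mu\ast q)|_M$, so Proposition \ref{prop: FIM unique inv ext} cannot be invoked. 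Second, Proposition \ref{prop:unique} and Remark \ref{rem: gen trans implies fim} concern translation-invariant measures on \emph{type-definable} \fim\ groups; the ideal group of the abstract semigroup $(S(\mu),\ast)$ is not type-definable, so they do not apply ``inside $S(\mu)$.'' The implication you are missing is precisely the substantive input the paper takes from its predecessor ([Corollary 4.16 and Remark 4.17 of the first paper in the series], giving $(2)\Leftrightarrow(3)$ and $(3)\Rightarrow(1)$), which rests on an analysis of the convolution semigroup on the support and is not recoverable from surjectivity of $\rho_q$ plus idempotence alone. To repair your plan you should either cite that result for $(3)\Rightarrow(1)$ (closing your cycle via your proven $(1)\Rightarrow(3)$ and $(3)\Rightarrow(2)$), or reconstruct its proof; as written, the equivalence is not established.
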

\begin{proof}
	(2) $\Leftrightarrow$ (3). By \cite[Remark 4.17]{chernikov2022definable}.
	
	\noindent (3) $\Rightarrow$ (1). By \cite[Corollary 4.16]{chernikov2022definable}.
	
	\noindent (1) $\Rightarrow$ (2).  Let $p,q \in S(\mu)$ be given. By Fact \ref{fac: supp is nice semigroup}, the map $f_p: S(\mu) \to S(\mu)$ defined via $f_p(s) := s \ast p$ is continuous. We will show that it has a  dense image. Then by compactness of $S(\mu)$ and continuity of $f_p$, the image of $f_p$ is also closed, hence $f_p$ is surjective --- proving the claim.
	
	Indeed, fix some formula $\theta(x,c) \in \cL_x(\cU)$ such that $\mu(\theta(x,c))>0$ and choose a small $M \prec \cU$ such that $\mu$ is $M$-invariant. By (1), $(\mu \ast p)(\theta(x,c)) = \mu(\theta(x,c)) > 0$.
	Let $b \models p|_{M c}$, then $\mu(\theta(x \cdot b, c)) > 0$. Hence there exists some $r \in S(\mu)$ such that $\theta(x \cdot b, c) \in r$. But then by definition $\theta(x,c) \in r \ast p$, hence $f_p(r) \in [\theta(x,c)]$. As $\theta(x,c)$ was arbitrary, this shows that the image of $f_p$ is dense.
\end{proof}

We know that this property holds in specific examples like the circle group (e.g., see \cite[Example 4.2]{chernikov2022definable}).

\subsection{Idempotent measures in stable theories, revisited}\label{sec: stable measures}

It is shown in \cite[Theorem 5.8]{chernikov2022definable} that every idempotent measure on a type-definable group in a stable theory is generically transitive. The proof consists of two ingredients: an analysis of the convolution semigroup on the support of an idempotent Keisler measure, and an application of a variant of Hrushovski's group chunk theorem for partial types due to Newelski \cite{N3}.

In this section we provide an alternative argument, implementing the strategy outlined at the beginning of Section \ref{sec: support trans} of working in the randomization.  This replaces the use of Newelski's theorem by a direct generalization of the proof for types in stable theories from Section \ref{subsection: stable theories}, and the only fact about the supports of idempotent measures that we will need is that they are support transitive.

To simplify the notation, in this section we will assume that $T$ is an $\cL$-theory expanding a group.
 We first recall the basic results about local ranks in continuous logic, from \cite{yaacov2010stability, ben2010continuous}. The following facts are proved  under more general hypothesis in Sections 7 and 8 of \cite{ben2010continuous}. 


\begin{fact}\label{fact:BenU} Suppose that $T$ is a continuous stable theory. Let $M \prec \cU \models T$.
\begin{enumerate} 
\item For any $p \in S_{x}(M)$ there exists a unique $M$-definable extension $p' \in S_{x}(\mathcal{U})$.
\item For every $\varepsilon > 0$ and every partitioned $\mathcal{L}$-formula $\varphi(x,y)$, there exists a rank function $\CB_{\varphi,\varepsilon}$, which we call the \emph{$\varepsilon$-Cantor-Bendixson rank}. More specifically, for any subset $A \subseteq \mathcal{U}$ and $p \in S_{\varphi}(A)$, $CB_{\varphi,\varepsilon}(p)$ is an ordinal. 
\item For any $r \in S_{\varphi}(M)$ and $s \in S_{\varphi}(\mathcal{U})$ such that $s \supseteq r$, $s$ is the unique definable extension of $r$ if and only if for every $\varepsilon > 0$,  $\CB_{\varphi,\varepsilon}(r) = \CB_{\varphi,\varepsilon}(s)$. 
\end{enumerate} 
\end{fact}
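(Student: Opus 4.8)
The plan is to follow the development of local stability theory in continuous logic due to Ben Yaacov and Usvyatsov (cf.~\cite{yaacov2010stability, ben2010continuous}); all three items are instances of that theory specialized to a single partitioned formula $\varphi(x,y)$ over the group language. Throughout, recall that on the space of $\varphi$-types one has the natural metric $d_{\varphi}(p,q) := \sup \{ |\varphi(p,b) - \varphi(q,b)| : b \in \cU^{y} \}$, that $S_{\varphi}(A)$ is compact for the logic topology, and that a \emph{definable predicate over $A$} is a uniform limit of $\cL(A)$-formulas. For $\varepsilon > 0$, call $p \in S_{\varphi}(A)$ \emph{$\varepsilon$-isolated} if it has a logic-open neighborhood of $d_{\varphi}$-diameter $< \varepsilon$; the $\varepsilon$-Cantor--Bendixson derivative removes all $\varepsilon$-isolated points, and $\CB_{\varphi,\varepsilon}(p)$ is defined by transfinite iteration of this derivative, with $\CB_{\varphi,\varepsilon}(\psi)$ for a partial type $\psi$ the supremum of the ranks of its members (equivalently one may use the accumulation formulation: $\CB_{\varphi,\varepsilon}(p) \geq \alpha+1$ iff $p$ is a limit of types $q$ with $d_{\varphi}(p,q) \geq \varepsilon$ and $\CB_{\varphi,\varepsilon}(q) \geq \alpha$).

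For item (2), the content is that $\CB_{\varphi,\varepsilon}(x=x) < \infty$, i.e.~the transfinite derivation stabilizes at $\emptyset$; this is exactly local stability of $\varphi$. If the rank were $\infty$ for some $\varepsilon$, then by compactness one could build a binary tree $(p_{\eta})_{\eta \in 2^{<\omega}}$ of $\varphi$-types in which the two immediate successors of every node lie at $d_{\varphi}$-distance $\geq \varepsilon$ and every branch is consistent; realizing the branches together with the corresponding parameters yields a configuration from which one extracts an order-indiscernible sequence $(a_i,b_i)$ with the values $\varphi(a_i,b_j)$ separated by at least a fixed $\delta > 0$ according to whether $i<j$ or $i>j$, contradicting stability of $\varphi$. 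Hence $\CB_{\varphi,\varepsilon}$ takes only ordinal values.

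For item (1), existence of a definable extension of $p \in S_{x}(M)$: for each $\cL$-formula $\varphi(x,y)$ put $d_{p}\varphi(y) := \lim_{n} \Av(a_{1},\dots,a_{n})(\varphi(x,y))$, where $(a_{i})$ is a Morley sequence over $M$ in some global extension of $p$ finitely satisfiable in $M$ (such exists by compactness). Stability of $\varphi$ forces this sequence to be $d_{\varphi}$-Cauchy uniformly in $y$, so $d_{p}\varphi$ is a well-defined definable predicate over $M$, independent of the chosen coheir Morley sequence. Declaring $\varphi(x,b)$ to receive value $d_{p}\varphi(b)$ for all $b \in \cU^{y}$ yields a consistent global type $p'$ (consistency by compactness, since every finite approximation is realized in $M$), which is definable over $M$ by construction. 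Uniqueness over the model follows from item (3), to which we turn now.

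Item (3) is the heart of the matter and the step I expect to be the main obstacle, since it requires extracting an honest definition (a uniform limit of $\cL(M)$-formulas) from the rank data. For the forward direction, if $s \supseteq r$ is the $M$-definable extension, then for each $\varepsilon > 0$ a logic-open set of $d_{\varphi}$-diameter $< \varepsilon$ isolating $r$ in $S_{\varphi}(M)$ pulls back, along restriction $S_{\varphi}(\cU) \to S_{\varphi}(M)$ together with continuity of $b \mapsto \varphi(s,b)$ coming from definability, to a logic-open set of diameter $< \varepsilon$ isolating $s$ among the extensions of $r$; iterating through the Cantor--Bendixson derivation gives $\CB_{\varphi,\varepsilon}(s) = \CB_{\varphi,\varepsilon}(r)$ for all $\varepsilon$. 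For the converse, assume $\CB_{\varphi,\varepsilon}(s) = \CB_{\varphi,\varepsilon}(r)$ for all $\varepsilon$. Then at the critical level of the $\varepsilon$-derivation the type $s$ is not in the closure of the other surviving extensions of $r$, so $s$ is $\varepsilon$-determined over $M$: each value $\varphi(s,b)$ is within $\varepsilon$ of a value computed from an $\cL(M)$-formula. Letting $\varepsilon \to 0$ and taking a uniform limit produces a definition of $s$ over $M$, so $s = p'$ is the definable extension; and the same argument shows that only one extension of $r$ can have all its $\varepsilon$-ranks equal to those of $r$, which gives both the uniqueness clause of (1) and the uniqueness implicit in (3). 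The routine verifications — that $d_{\varphi}$ is a metric, that the derivative is monotone and the rank well-defined, and the uniform-Cauchy estimates used above — are exactly as in \cite[\S 7--8]{ben2010continuous}, and I would simply cite them.
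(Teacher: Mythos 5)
This statement appears in the paper as a \emph{Fact} with no proof of its own: the authors simply attribute it to Sections 7--8 of Ben Yaacov--Usvyatsov's work on continuous logic and local stability, and your sketch reconstructs exactly that development (the $\varepsilon$-Cantor--Bendixson analysis of the topometric $\varphi$-type space, ordinal-valuedness of the rank from stability via the tree/order-property argument, definable extensions via convergence along coheir Morley sequences, and the rank characterization of the unique definable extension), deferring the remaining verifications to the same source. So your proposal is correct in outline and takes essentially the same route as the paper, which is to invoke that machinery rather than prove it afresh.
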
 

\noindent The following proposition is a standard exercise from the previous fact. 

\begin{proposition}\label{prop:rank} Let $T$ be a continuous stable theory expanding a group. Let $\mathcal{U}$ be a monster model of $T$ and $M \prec \cU$ a small submodel.  For any partitioned $\mathcal{L}$-formula $\varphi(x,y)$ we let $\Delta_{\varphi} := \varphi(x;y,z) = \varphi(x \cdot z,y)$. Then: 
\begin{enumerate}
\item for any $r  \in S_{x}(\mathcal{U})$ and $g \in \mathcal{U}$,  $\CB_{\Delta_{\varphi},\varepsilon}(r|_{\Delta_{\varphi}}) = \CB_{\Delta_{\varphi},\varepsilon}(r \cdot g|_{\Delta_{\varphi}})$; 
\item for any $p  \in S_{x}(M)$ and $q \in S_{x}(\mathcal{U})$ such that $q \supset p$, we have that $q$ is the unique definable extension of $p$ if and only if for every partitioned $\mathcal{L}$-formula $\varphi(x,y)$ and for all $\varepsilon > 0$, we have $\CB_{\Delta_{\varphi}, \varepsilon}(q|_{\Delta_{\varphi}}) = \CB_{\Delta_{\varphi}, \varepsilon}(p|_{\Delta_{\varphi}})$.
\end{enumerate}
\end{proposition}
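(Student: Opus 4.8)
\textbf{Proof plan for Proposition \ref{prop:rank}.}

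The plan is to deduce both items directly from the corresponding facts about the $\varepsilon$-Cantor-Bendixson rank in continuous stable theories (Fact \ref{fact:BenU}), mimicking the classical argument that a stratified (translation-invariant) local rank witnesses uniqueness of the nonforking extension. The only genuinely new point is that we are working with the \emph{stratified} family $\Delta_{\varphi}(x;y,z) = \varphi(x\cdot z,y)$ rather than with $\varphi$ itself; everything else is bookkeeping on top of Fact \ref{fact:BenU}.

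For item (1), I would first observe that right multiplication by a fixed $g \in \cU$ is an $\emptyset$-definable (indeed, with parameter $g$, an $\mathcal{L}(g)$-definable) bijection of $\cU$ onto itself, and that it carries $\Delta_{\varphi}$-formulas to $\Delta_{\varphi}$-formulas: precisely, $\varphi(x\cdot z, y)$ evaluated at $x \mapsto x\cdot g$ is $\varphi(x\cdot(g\cdot z), y)$, which is again an instance of $\Delta_{\varphi}$ with the parameter $z$ replaced by $g\cdot z$. (This is exactly why one builds the extra parameter $z$ into $\Delta_\varphi$ — it makes the family closed under right translation; here associativity of $\cdot$ on all of $\cU$ is available since $T$ expands a group, so there is no need for the cutoff formula $\varphi_0$ used in the type-definable setting of Section \ref{sec: setting types}.) Consequently the map $\rho \mapsto \rho\cdot g$ induces a bijection between $\Delta_\varphi$-types that commutes with the partial order underlying the Cantor-Bendixson analysis, so it preserves the $\varepsilon$-rank for every $\varepsilon$; applying this to $r|_{\Delta_\varphi}$ gives $\CB_{\Delta_\varphi,\varepsilon}(r|_{\Delta_\varphi}) = \CB_{\Delta_\varphi,\varepsilon}(r\cdot g|_{\Delta_\varphi})$. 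The one technical care point is to check that the $\varepsilon$-CB rank is defined intrinsically from the $\Delta_\varphi$-type space and its inclusion order (as in \cite{ben2010continuous}), so that any automorphism-like bijection of these type spaces coming from a definable bijection of the home sort preserves it; this is where I expect to have to be slightly careful, though it is routine.

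For item (2), the direction $(\Rightarrow)$ is immediate: if $q$ is the unique $M$-definable extension of $p$, then $q$ is in particular $M$-definable, hence its restriction $q|_{\Delta_\varphi}$ is the $M$-definable extension of $p|_{\Delta_\varphi}$ for each $\varphi$, and Fact \ref{fact:BenU}(3) (applied with the formula $\Delta_\varphi$ in the role of $\varphi$ there) gives $\CB_{\Delta_\varphi,\varepsilon}(q|_{\Delta_\varphi}) = \CB_{\Delta_\varphi,\varepsilon}(p|_{\Delta_\varphi})$ for all $\varepsilon$. For $(\Leftarrow)$, suppose the rank equality holds for every partitioned $\mathcal{L}$-formula $\varphi$ and every $\varepsilon>0$; I want to conclude $q$ is the unique $M$-definable extension of $p$. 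Here I would invoke Fact \ref{fact:BenU}(1) to let $p' \in S_x(\cU)$ be \emph{the} $M$-definable extension of $p$, and then show $q = p'$. Fix an arbitrary partitioned formula $\psi(x,y)$; by Fact \ref{fact:BenU}(3), to see $q|_\psi = p'|_\psi$ it suffices to check $\CB_{\psi,\varepsilon}(q|_\psi) = \CB_{\psi,\varepsilon}(p|_\psi)$ for all $\varepsilon$ — but since $\psi(x,y)$ is an instance of $\Delta_\psi(x;y,e)$ (taking $z$ equal to the identity $e$), the hypothesis for the formula $\psi$ gives exactly this after noting that $\CB_{\psi,\varepsilon}(\cdot)$ can be read off from $\CB_{\Delta_\psi,\varepsilon}(\cdot)$ by specializing the $z$-parameter to $e$. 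Thus $q$ and $p'$ agree on every local type space, so $q=p'$, which is both $M$-definable and the unique such extension of $p$. I expect the main obstacle to be precisely the comparison between the CB-rank of the plain family $\psi$ and the stratified family $\Delta_\psi$ — one must check that specializing the auxiliary variable $z$ to the group identity relates the two ranks in the way needed, and that no information is lost — but this is the standard "$\varphi$ is a reduct of $\Delta_\varphi$" observation and should go through without difficulty.
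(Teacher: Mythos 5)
Your proof is correct and follows essentially the same route as the paper: for (1) the paper likewise observes that $r \mapsto r\cdot g$ induces a bijective isometry of $S_{\Delta_\varphi}(\cU)$ (i.e., an automorphism of the local type space as a topometric space), hence preserves $\CB_{\Delta_\varphi,\varepsilon}$, and (2) is dismissed there as an immediate consequence of Fact \ref{fact:BenU}(3), for which you simply supply the standard details. The one small refinement: in continuous logic the $\varepsilon$-Cantor-Bendixson rank is determined by the topometric structure (topology together with the metric) of the local type space, not by an ``inclusion order,'' so the invariance in (1) should be justified by noting that the induced map is a homeomorphism and an isometry --- which it is, being induced by a definable bijection of the home sort.
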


\begin{proof}
\begin{enumerate} 
\item Given $g \in \cU$, the map $m_{g}: S_{\Delta_{\varphi}}(\mathcal{U}) \to S_{\Delta_{\varphi}}(\mathcal{U})$ defined via $\varphi(x\cdot a,b)^{m_{g}(r)} = \varphi(x \cdot g \cdot a,b)^{r}$ is a bijective isometry. In other words, it is an automorphism of $S_{\Delta_{\varphi}}(\mathcal{U})$ as a topometric space, and computing the rank is unaffected. 
\item Follows directly from (3) of Fact \ref{fact:BenU}. \qedhere
\end{enumerate} 
\end{proof}

\noindent We refer to Section \ref{sec: fim meas and randomization} for notation regarding Keisler randomizations.

\begin{fact}\cite[Theorem 5.14]{ben2009randomizations}\label{fact:stable} If $T$ is stable, then its Keisler randomization $T^{R}$ is stable. 
\end{fact}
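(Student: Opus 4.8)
\textbf{Proof proposal for Fact \ref{fact:stable} (stability of $T^R$).}

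The plan is to quote the randomization machinery of Ben Yaacov and Keisler: the statement that the Keisler randomization $T^R$ of a stable first-order theory $T$ is stable (as a continuous theory) is precisely \cite[Theorem 5.14]{ben2009randomizations}, so the natural ``proof'' in the present context is simply to cite it. If one wants to sketch the underlying argument, I would proceed via the counting-of-types characterization of stability in continuous logic: a continuous theory is stable if and only if for some infinite cardinal $\kappa$ with $\kappa = \kappa^{|T|}$, every model of size $\kappa$ realizes at most $\kappa$ many types (equivalently, the local $\varphi$-type spaces are metrically separable in a uniform way, i.e.\ the appropriate local ranks $\CB_{\varphi,\varepsilon}$ are ordinal-valued). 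So the first step would be to fix a model $N \models T^R$ of size $\kappa$ and bound the number of types over $N$.

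The key steps, in order, would be: first, recall the concrete presentation of a model $M^{\Omega}$ of $T^R$ over a probability algebra $(\Omega,\mathcal B,\mathbb P)$, together with the dense substructure $M^\Omega_0$ of simple (finite-image) random variables; a type over $N$ is determined by its values on continuous formulas $\mathbb{E}[\varphi(x,\bar h)]$ with $\bar h$ ranging over $N_0$ and $\varphi$ ranging over $\mathcal L$-formulas. Second, use Fubini/measurability to reduce a type $p \in S^R_x(N)$ to the data of the measure $\mu_p$ on first-order $\varphi(x,\bar b)$-configurations that it induces fiberwise (this is the content of the correspondence recalled in Fact \ref{fact:building1} in the excerpt). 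Third, invoke stability of $T$ itself: over a set of size $\kappa$ in $\cU$ there are at most $\kappa$ many first-order types, hence the fiberwise data ranges over a set of size $\le \kappa$, and integrating against the fixed measure $\mathbb P$ cannot produce more than $\kappa$ possibilities (here one uses separability of $[0,1]$ and that a Borel measure on a space with $\le \kappa$ relevant Borel sets is determined by $\le \kappa$ real parameters). Concluding, $|S^R_x(N)| \le \kappa$, so $T^R$ is stable. An alternative route, which is the one actually taken in \cite{ben2009randomizations}, is to verify directly that no formula $\mathbb{E}[\varphi(x,y)]$ has the (continuous) order property in $T^R$: an order configuration in $T^R$ would, by the fiberwise structure and a pigeonhole on the measures of the relevant events, produce an order configuration for $\varphi$ in $T$ on a positive-measure set of fibers, contradicting stability of $T$.

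The main obstacle in a self-contained treatment is the passage from the continuous/integrated data back to honest first-order data on a positive-measure set of fibers without losing uniformity — i.e.\ making the pigeonhole argument precise so that an approximate order property ``$\mathbb{E}[\varphi(a_i,b_j)] \gtrsim r$ iff $i<j$'' in $T^R$ forces a genuine order property for $\varphi$ in $T$. This is exactly where one needs the measure-algebra regularity and a careful choice of the threshold $r$ relative to $\varepsilon$; it is handled cleanly in \cite{ben2009randomizations}, and for the purposes of this paper we simply cite that result.
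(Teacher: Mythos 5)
The paper states this as a Fact with a citation to \cite[Theorem 5.14]{ben2009randomizations} and gives no proof of its own, so your decision to simply cite that result is exactly the paper's approach. Your accompanying sketch (counting types over a model of the randomization, or refuting the continuous order property by a fiberwise pigeonhole argument) is a reasonable account of the standard arguments, but it is not needed here.
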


\begin{proposition} Suppose that $T$ is stable, $\mu \in \mathfrak{M}_{x}(\mathcal{U})$ is idempotent and support transitive. Then $\mu$ is generically transitive. 
\end{proposition}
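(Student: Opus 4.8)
The strategy is to transfer the problem to the randomization $T^R$, where stability is available (Fact \ref{fact:stable}), and run the type-level argument from Section \ref{subsection: stable theories} there. First I would pass from $\mu$ to the associated type $p_\mu \in S^R_x(\cU^\Omega)$ via the Ben Yaacov correspondence (Fact \ref{fact:building1}(1)); since $T$ is stable, $\mu$ is definable, so the global definable extension $r_\mu \in S^R_x(\mathcal{C})$ exists and is $M^\Omega$-definable (Fact \ref{fact:building1}(2)), and it is generically stable over $M^\Omega$ by stability of $T^R$. The next task is to check that $r_\mu$ is \emph{idempotent} in the group $G^\Omega$: this should follow from idempotence of $\mu$ together with Fact \ref{CGH2:Facts}(1)--(2) and Corollary \ref{Corollary:otimes}, which say the randomization functor commutes with Morley products on definable measures, hence with convolution. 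So $r_\mu \ast r_\mu = r_{\mu \ast \mu} = r_\mu$.

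\textbf{Using support transitivity.} The point where the hypotheses enter is that I cannot simply invoke Proposition \ref{proposition: stable case} for $r_\mu$, because $r_\mu$ being idempotent in $T^R$ only gives generic transitivity of $r_\mu$, which would then need to be pushed back down to $\mu$ — and that descent is exactly what is not obviously available (it would prove generic transitivity of $\mu$ directly, the open Problem \ref{conj: main measures}, without any support-transitivity assumption). Instead, the support-transitivity hypothesis is what lets the argument close: I would use it to show that $r_\mu \in S^R_{\Stab(r_\mu)}(\mathcal{C})$, i.e.~that every realization $a$ of $r_\mu$ in a bigger monster stabilizes $r_\mu$ on the left. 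Concretely, fix $a \models r_\mu$; I want $a \cdot r_\mu = r_\mu$. Run Method 1 from the proof of Proposition \ref{proposition: stable case} in $T^R$ with the continuous-logic stratified ranks $\CB_{\Delta_\varphi,\varepsilon}$ of Proposition \ref{prop:rank}: since $r_\mu$ is $M^\Omega$-definable and the ranks are left-invariant (Proposition \ref{prop:rank}(1)) and invariant under $\Aut(\mathcal{C}/M^\Omega)$, one gets $\CB_{\Delta_\varphi,\varepsilon}(a \cdot r_\mu|_{\Delta_\varphi}) = \CB_{\Delta_\varphi,\varepsilon}(r_\mu|_{\Delta_\varphi})$ for all $\varphi,\varepsilon$. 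To conclude $a \cdot r_\mu = r_\mu$ via Proposition \ref{prop:rank}(2) I need the inclusion $p_\mu \subseteq a \cdot r_\mu$ (equivalently $r_\mu|_{M^\Omega} \subseteq a \cdot r_\mu$): this is where idempotence of $r_\mu$ plus \emph{support transitivity of $\mu$} is used — support transitivity says $\mu \ast q = \mu$ for every $q \in S(\mu)$, and translated through the randomization this yields that multiplying $r_\mu$ on the left by (the generic, in the appropriate sense) realization coming from a type in $S(\mu)$ preserves the type over $M^\Omega$, giving the needed containment; then equality of definable extensions of a common restriction follows. Hence $r_\mu$, and therefore $p_\mu$, concentrates on $\Stab(r_\mu)$.

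\textbf{Descending back to $\mu$.} Once $p_\mu$ concentrates on a type-definable subgroup $H^{\mathcal R}$ of $G^\Omega$ on which $r_\mu$ is the unique invariant type, I would unwind the correspondence to conclude that $\mu$ is supported on (and is the unique invariant measure on) a type-definable subgroup $H \leq G(\cU)$, which by Proposition \ref{prop: gen trans meas} (the measure version of Remark \ref{remark: basic}) is precisely the statement that $\mu$ is generically transitive. Here I would use that $\Stab(\mu)$ is $M$-type-definable (Fact \ref{fac: stab of def meas type def}), that the randomization of the partial type defining $\Stab(\mu)$ relates correctly to $\Stab(r_\mu)$, and that $\mu \in \mathfrak{M}_{\Stab(\mu)}(\cU)$ iff $p_\mu \vdash \Stab(r_\mu)$ — a translation statement of the kind already implicit in Corollary \ref{Corollary:restriction} and the support-description Proposition \ref{prop:push-forward}(4).

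\textbf{Main obstacle.} The delicate point is the bookkeeping around the randomization: making precise the claim that support transitivity of $\mu$ in $T$ corresponds to left-stabilization of $r_\mu$ by (random) realizations in $T^R$, and that $r_\mu$ idempotent forces $p_\mu \subseteq a \cdot r_\mu$. This requires care because in the randomization the "realizations" of $p_\mu$ are random variables whose fibres realize types in $S(\mu)$, so the step genuinely uses that \emph{all} types in the support — not just $\mu$ itself — satisfy $\mu \ast q = \mu$; this is exactly why support transitivity (rather than mere idempotence) is the right hypothesis, and why Example \ref{example:intermediate} is not a counterexample here. Everything else is a routine transcription of the stable type-level argument (Method 1 of Proposition \ref{proposition: stable case}) into continuous logic, using the local-rank machinery of Proposition \ref{prop:rank} already set up above.
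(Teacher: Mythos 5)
Your overall strategy --- pass to the randomization, exploit stability of $T^R$ and the local ranks of Proposition \ref{prop:rank} --- is the paper's strategy, but the way you deploy the hypotheses does not close the argument. First, your use of support transitivity is misplaced. For a realization $a \models r_\mu$ (a genuinely ``random'' element), the containment $p_\mu \subseteq a \cdot r_\mu$ follows from idempotence of $r_\mu$ alone, exactly as in Method 1 of Proposition \ref{proposition: stable case}; support transitivity contributes nothing at that step. What your route then produces is generic transitivity of the \emph{type} $r_\mu$ in $T^R$, and everything hinges on the descent to generic transitivity of the \emph{measure} $\mu$ --- the step you yourself flag as ``not obviously available'' and then dispatch with ``I would unwind the correspondence.'' That unwinding is never supplied: knowing that the random realizations of $p_\mu$ stabilize $r_\mu$ does not directly tell you that the deterministic realizations of the types $p \in S(\mu)$ stabilize $\mu$, since a constant random variable $f_a$ with $a \models p \in S(\mu)$ is not a realization of $r_\mu$. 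As written, the proof has a gap precisely at its last step.

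The paper's proof chooses the translating elements differently, and this is where support transitivity genuinely earns its keep. One reduces the claim to showing $\mu' \cdot a = \mu'$ for every $p \in S(\mu)$ and $a \models p$ in a larger model $\mathcal{V} \succ \cU$ (this \emph{is} the statement that $\mu$ concentrates on its stabilizer, so no descent through stabilizers in $T^R$ is needed at the end). In the randomization one translates $r_{\mu'}^{\mathcal{V}}$ on the right by the \emph{constant} random variable $f_a$. Since $f_a \not\models r_\mu$, idempotence does not give the required containment $r_{\mu' \cdot a}^{\mathcal{V}} = r_{\mu'}^{\mathcal{V}} \cdot f_a \supseteq \mathbf{p} := r_\mu^{\cU}|_{\cU^\Omega}$; instead this is exactly what support transitivity provides, via $(\mu' \cdot a)(\varphi(x,c)) = (\mu \ast p)(\varphi(x,c)) = \mu(\varphi(x,c))$ for $c \in \cU$. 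Translation-invariance of the ranks (Proposition \ref{prop:rank}) and Fact \ref{fact:BenU}(3) then force $r_{\mu' \cdot a}^{\mathcal{V}}$ to be the unique definable extension of $\mathbf{p}$, i.e.\ $r_{\mu' \cdot a}^{\mathcal{V}} = r_{\mu'}^{\mathcal{V}}$, and the conclusion $\mu' \cdot a = \mu'$ follows from injectivity of $\nu \mapsto r_\nu^{\mathcal{V}}$. You should restructure your argument around translation by these constant elements; once you do, both the role of support transitivity and the (now trivial) descent fall into place.
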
 

\begin{proof}

Let $\mathcal{V} \succ \mathcal{U}$ be a bigger monster model of $T$. Fix an atomless probability algebra $(\Omega,\mathcal{B},\mathbb{P})$ and consider the randomizations $\mathcal{U}^{\Omega} \prec \mathcal{V}^{\Omega} \prec \mathcal{C}$, where $\mathcal{C}$ is a monster model of $T^{R}$.  Given $\mu \in \mathfrak{M}_{x}(\mathcal{U})$ (note that $\mu$ is definably by stability), we let $r_{\mu}^{\mathcal{U}} \in S_{x}^{R}(\mathcal{C})$ be as defined in Fact \ref{fact:building1}. Similarly, given $\mu \in \mathfrak{M}_{x}(\mathcal{V})$, we let $r_{\mu}^{\mathcal{V}} \in S_{x}^{R}(\mathcal{C})$ be as defined in Fact \ref{fact:building1}, but with respect to $\mathcal{V}$ in place of $\mathcal{U}$.
%

Let now $\mu \in \mathfrak{M}_{x}(\mathcal{U})$ be idempotent and support transitive. Since $T$ is stable, there is some small model $M \prec \cU$ such that $\mu$ is $M$-definable.  
 Let $\mu' \in \mathfrak{M}_x(\mathcal{V})$ be the unique $M$-definable extension of $\mu$. To show that $\mu$ is generically transitive, it suffices to prove that for every $p \in S(\mu)$ and $a \in \mathcal{V}$ such that $a \models p$, we have that $\mu' = \mu' \cdot a$. We let $\mathbf{p} := r_{\mu}^{\mathcal{U}}|_{\mathcal{U}^{\Omega}}$. By construction, $r_{\mu'}^{\mathcal{V}} \supseteq \mathbf{p}$ and $r_{\mu'}^{\mathcal{V}}$ is $M^{\Omega}$-definable. Since $T^{R}$ is stable (Fact \ref{fact:stable}), $r_{\mu'}^{\mathcal{V}}$ is the unique global definable extension of $\mathbf{p}$.

 We claim that then $r_{\mu' \cdot a}^{\mathcal{V}} \supseteq \mathbf{p}$. Indeed, let $\varphi(x,y)$ be an $\mathcal{L}$-formula and $h \in \mathcal{U}^{\Omega}_0$. If $\mathcal{A}$ is a partition of $\Omega$ for $h$, using that $\mu$ is support transitive we have:
 \begin{gather*}
 	(\mathbb{E}[\varphi(x,h)])^{r_{\mu' \cdot a}^{\mathcal{V}}} = \sum_{A \in \mathcal{A}} \mathbb{P}(A)  (\mu' \cdot a)(\varphi(x,h|_{A}))  
=\sum_{A \in \mathcal{A}} \mathbb{P}(A)  (\mu * p)(\varphi(x,h|_{A})) \\ 
= \sum_{A \in \mathcal{A}} \mathbb{P}(A)  \mu (\varphi(x,h|_{A})) 
= \sum_{A \in \mathcal{A}} \mathbb{P}(A)  \mu' (\varphi(x,h|_{A})) 
=  (\mathbb{E}[\varphi(x,h)])^{r_{\mu' }^{\mathcal{V}}}.
 \end{gather*}

 Likewise, it is straightforward to check that $r_{\mu' \cdot a}^{\mathcal{V}} = r_{\mu'}^{\mathcal{V}} \cdot f_{a}$, where $f_{a} \in \cU_0^{\Omega}$ is the constant random variable (i.e., $f_{a}: \Omega \to \mathcal{U}$ via $f_{a}(t) =a $ for all $t \in \Omega$) and $\cdot$ is the randomization of the multiplication of the group in $T$. 

Since local rank in the stable theory $T^{R}$ is translation invariant (Proposition \ref{prop:rank}), we conclude that $r_{\mu' \cdot a}^{\mathcal{V}}$ is the unique definable extension of $\mathbf{p}$. This implies that $r_{\mu'}^{\mathcal{V}} = r_{\mu' \cdot a}^{\mathcal{V}}$ and in turn, $\mu' = \mu' \cdot a$. This completes the proof.
\end{proof}

\begin{remark}
	
We expect that this approach could be adapted for groups definable in $o$-minimal structures (as their randomizations are known to be real rosy \cite{andrews2019independence}), by developing a stratified local thorn rank  in continuous logic and generalizing the proof for types in rosy (discrete) first order theories from Section \ref{sec: rosy types}. When $T$ is a simple theory, the randomization $T^{R}$ is NSOP$_1$ (but not necessarily simple) by \cite{BYChRa}. A local rank for NSOP$_1$ theories is proposed in \cite[Section 5]{chernikov2023transitivity}, but a workable stratified rank is lacking at the moment. We do not pursue these directions here.
\end{remark}

\section{Topological dynamics of $\mathfrak{M}_{x}^{\fs}(\mathcal{G},G)$ and $S_{x}^{\fs}(\mathcal{G},G)$ in NIP groups}\label{section: top dyn}

In this section, we will use slightly different notation from the rest of the paper, in order to preserve continuity with the earlier work and setup in \cite{chernikov2022definable, chernikov2023definable}. We let $G$ be an expansion of a group, and $\mathcal{G} \succ G$ a monster model. Throughout this section, we assume that $T:=\Th(G)$ has NIP.

It was demonstrated in \cite[Proposition 6.4]{chernikov2022definable} that then the spaces of global $\Aut(\mathcal{G}/G)$-invariant Keisler measures,  and Keisler measures which are finitely satisfiable in $G$ (denoted $\mathfrak{M}_{x}^{\inva}(\mathcal{G},G)$ and $\mathfrak{M}_{x}^{\fs}(\mathcal{G},G)$,  respectively) form left-continuous compact Hausdorff semigroups with respect to definable convolution (Definition \ref{def: definable conv}). Note that $(\mathfrak{M}_{x}^{\fs}(\mathcal{G},G),*)$ is a submonoid of $(\mathfrak{M}_{x}^{\inva}(\mathcal{G},G),*)$. By $ S_{x}^{\fs}(\mathcal{G},G)$ we denote the submonoid of $(\mathfrak{M}_{x}^{\fs}(\mathcal{G},G),*)$ consisting of all global types finitely satisfiable in $G$ (viewed as $\{0,1\}$-measures). 

In \cite[Theorem 6.11]{chernikov2023definable}, the first two authors described a minimal left ideal of  $(\mathfrak{M}_{x}^{\fs}(\mathcal{G},G),*)$ [and $(\mathfrak{M}_{x}^{\inva}(\mathcal{G},G),*)$] in terms of the Haar measure on an ideal (or Ellis) group of $(S_{x}^{\fs}(\mathcal{G},G),*)$ [resp. $(S_{x}^{\inva}(\mathcal{G},G),*)$]. However, this required a rather specific assumption that this ideal group is a compact topological group with the topology induced from $(S_{x}^{\fs}(\mathcal{G},G),*)$ [resp. $(S_{x}^{\inva}(\mathcal{G},G),*)$]. In this section, we obtain the same description in the case of $(\mathfrak{M}_{x}^{\fs}(\mathcal{G},G),*)$, but under a more natural (from the point of view of topological dynamics) assumption that the so-called \emph{$\tau$-topology} on some (equivalently, every) ideal group is Hausdorff (equivalently, the ideal group with the $\tau$-topology is a compact topological group). In fact, the revised Newelski's conjecture formulated by Anand Pillay and the third author in \cite[Conjecture 5.3]{KrPi2} predicts that the $\tau$-topology is always Hausdorff under NIP. In Section \ref{sec: rev Newelski conj}, we confirm this conjecture in the case when $G$ is countable, which is an important result by its own rights. In particular, in the case when $G$ is countable, our description of a minimal left ideal of $(\mathfrak{M}_{x}^{\fs}(\mathcal{G},G),*)$ does not require any assumption on the ideal group.

As discussed in the introduction, the $\tau$-topology plays an essential role in many important structural results in abstract topological dynamics, including the recent theorem of Glasner on the structure of tame, metrizable, minimal flows \cite{Gla18}. In fact, our proof of the revised Newelski's conjecture for countable $G$ will be deduced  using this theorem of Glasner.

The reason why our proof of the revised Newelski's conjecture requires the countability of $G$ assumption is to guarantee that certain flows of types are metrizable in order to be able to apply the aforementioned theorem of Glasner. The reason why we focus only on $(\mathfrak{M}_{x}^{\fs}(\mathcal{G},G),*)$ and $(S_{x}^{\fs}(\mathcal{G},G),*)$ (and not on $(\mathfrak{M}_{x}^{\inva}(\mathcal{G},G),*)$ and $(S_{x}^{\inva}(\mathcal{G},G),*)$) is that $(S_{x}^{\fs}(\mathcal{G},G),*)$ is isomorphic to the Ellis semigroup of the $G$-flow $S_{x}^{\fs}(\mathcal{G},G)$ and so we have the $\tau$-topology on the ideal group of  $S_{x}^{\fs}(\mathcal{G},G)$ at our disposal.
For the revised Newelski's conjecture we will also use a well-known general principle that NIP implies tameness for various flows of types \cite{CS, ibarlucia2016dynamical, KrRz}.

\subsection{Preliminaries from topological dynamics}

\begin{definition}
A $G$-flow is a pair $(G,X)$, where $G$ is an abstract group acting (on the left) by homeomorphisms on a compact Hausdorff space $X$.
\end{definition}

\begin{definition}
If $(G,X)$ is a flow, then its \emph{Ellis semigroup}, denoted by $E(G,X)$ or just $E(X)$, is the (pointwise) closure in $X^X$ of the set of functions $\pi_g\colon x\mapsto g\cdot x$ for $g\in G$. 
\end{definition}
	
\begin{fact}(see e.g.~\cite{Aus})
If $(G,X)$ is a flow, then $E(X)$ is a compact left topological semigroup (i.e.~it is a semigroup with the composition as its semigroup operation, and the composition is continuous on the left, i.e.~for any $f \in E(X)$ the map $-\circ f$ is continuous). It is also a $G$-flow with $g\cdot f:= \pi_g \circ f$.
\end{fact}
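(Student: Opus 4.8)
\textbf{Proof plan for the statement ``If $(G,X)$ is a flow, then $E(X)$ is a compact left topological semigroup (and it is a $G$-flow with $g\cdot f := \pi_g \circ f$).''}

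The plan is to verify the three assertions in turn: that $E(X)$ is a semigroup under composition, that it is compact, and that composition is left-continuous, and finally that it carries a compatible $G$-flow structure. First I would recall that $E(X)$ is by definition the closure of $\{\pi_g : g \in G\}$ inside $X^X$ equipped with the product (i.e.~pointwise convergence) topology. Since $X$ is compact Hausdorff, $X^X$ is compact Hausdorff by Tychonoff, and $E(X)$ is a closed subset of it, hence compact; this handles compactness immediately.

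For the semigroup structure, I would first observe that for fixed $f \in X^X$ the right-composition map $R_f : g \mapsto g \circ f$ on $X^X$ is continuous in the product topology: indeed, for each $x \in X$ the evaluation $g \mapsto (g\circ f)(x) = g(f(x))$ is just evaluation of $g$ at the point $f(x)$, which is continuous. Now I would argue that $E(X)$ is closed under composition in two steps, a standard ``separate-continuity bootstrap.'' Step one: for $g \in G$, $\pi_g \circ \pi_h = \pi_{gh} \in E(X)$ for all $h \in G$, and since $\{\pi_h : h\in G\}$ is dense in $E(X)$ and the map $f \mapsto \pi_g \circ f$ is continuous (it is the restriction of $L_{\pi_g} : X^X \to X^X$, $u \mapsto \pi_g \circ u$, which is continuous because $\pi_g$ is a continuous self-map of $X$ and composition with a fixed continuous map is continuous in the product topology), we get $\pi_g \circ f \in E(X)$ for all $f \in E(X)$. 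Step two: fix $f \in E(X)$; by the previous step the continuous map $R_f : g \mapsto g\circ f$ sends $\{\pi_g : g \in G\}$ into $E(X)$, hence sends its closure $E(X)$ into $E(X)$ (using that $E(X)$ is closed and $R_f$ continuous). Therefore $g \circ f \in E(X)$ for all $g, f \in E(X)$, so $E(X)$ is closed under composition; associativity is inherited from $X^X$. Left-continuity of the semigroup operation is exactly the continuity of $f \mapsto \pi_g \circ f$ extended: more precisely, for any fixed $f \in E(X)$, the map $E(X) \to E(X)$, $g \mapsto g \circ f$, is the restriction of the continuous map $R_f$ on $X^X$, hence continuous; this is the assertion that $-\circ f$ is continuous for each $f$.

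Finally, for the $G$-flow structure: $G$ acts on $E(X)$ via $g \cdot f := \pi_g \circ f$. This is an action because $\pi_{g} \circ (\pi_{h} \circ f) = (\pi_g \circ \pi_h)\circ f = \pi_{gh}\circ f$ and $\pi_e = \mathrm{id}_X$ acts trivially. Each map $f \mapsto \pi_g \circ f$ is a homeomorphism of $E(X)$: it is continuous as the restriction of $L_{\pi_g}$, and its inverse is $f \mapsto \pi_{g^{-1}} \circ f$, also continuous. Since $E(X)$ is compact Hausdorff, this exhibits $(G, E(X))$ as a $G$-flow.

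The only mild subtlety — and what I would flag as the ``main obstacle,'' though it is routine — is the asymmetry: composition in $E(X)$ is continuous only in the \emph{left} variable (i.e.~$-\circ f$ is continuous), and one should resist the temptation to claim joint or right continuity, which fails in general. The key technical point making the argument work is that composition with a fixed \emph{continuous} map $X \to X$ on the left is continuous in the product topology on $X^X$, whereas composition on the right with a fixed \emph{arbitrary} $f \in X^X$ is continuous for the trivial reason that it only involves re-indexing the evaluation coordinates; it is precisely this latter fact, applied with $f$ ranging over the already-established part of $E(X)$, that lets the density argument close up the semigroup. Everything else is bookkeeping with nets or with the universal property of the product topology.
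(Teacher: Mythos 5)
Your proof is correct, and it is the standard argument (Tychonoff for compactness, the two-step density/closure bootstrap for closure under composition, and continuity of $u \mapsto u\circ f$ coordinate-wise for left-continuity); the paper itself gives no proof and simply cites Auslander, where essentially this same argument appears. Your flagged subtlety — that only continuity in the left variable holds, because right composition with a fixed arbitrary element of $X^X$ is continuous merely by re-indexing coordinates while left composition needs the fixed map to be continuous — is exactly the right point to emphasize.
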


The next fact is folklore. Thanks to this fact $E(E(X))$ is always identified with $E(X)$.
\begin{fact}\label{fact: E(E(X)) cong E(X)}
The function $\Phi \colon E(X) \to E(E(X))$ given by $\Phi(\eta):= l_\eta$, where    $l_\eta \colon E(X) \to E(X)$ is defined by $l_\eta(\tau):=\eta \circ \tau$, is an isomorphism of semigroups and $G$-flows.
\end{fact}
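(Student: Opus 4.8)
\textbf{Proof plan for Fact \ref{fact: E(E(X)) cong E(X)}.}

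The plan is to verify directly that $\Phi(\eta) := l_\eta$ is a well-defined map $E(X) \to E(E(X))$, that it is a semigroup homomorphism, that it is bijective, and that it is $G$-equivariant; the inverse will be given by evaluation at a distinguished point. First I would check that $l_\eta \in E(E(X))$ for each $\eta \in E(X)$. Since $E(X) \subseteq X^X$ carries the left-topological semigroup structure, $l_\eta$ is precisely the translation map on the flow $(G, E(X))$ associated to... wait, not quite — one must observe that for $g \in G$, the action on $E(X)$ is $g \cdot \tau = \pi_g \circ \tau$, so the translation map $\tau \mapsto g \cdot \tau$ equals $l_{\pi_g}$; hence $\{ l_{\pi_g} : g \in G \}$ sits inside $E(E(X))$ by definition of the Ellis semigroup, and taking pointwise closures (using that $\eta \mapsto l_\eta$ is continuous, which follows since $E(X)$ is left-topological: if $\eta_i \to \eta$ pointwise on $X$, then for each fixed $\tau$, $l_{\eta_i}(\tau) = \eta_i \circ \tau \to \eta \circ \tau$ pointwise because evaluation $\eta \mapsto \eta(\tau(x))$ is continuous for each $x$) shows $l_\eta \in E(E(X))$ for every $\eta$ in the closure $E(X)$.

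Next I would check that $\Phi$ is a semigroup homomorphism: $\Phi(\eta_1 \circ \eta_2)(\tau) = (\eta_1 \circ \eta_2) \circ \tau = \eta_1 \circ (\eta_2 \circ \tau) = l_{\eta_1}(l_{\eta_2}(\tau)) = (l_{\eta_1} \circ l_{\eta_2})(\tau)$, using associativity of composition in $X^X$; so $\Phi(\eta_1 \circ \eta_2) = \Phi(\eta_1) \circ \Phi(\eta_2)$. For $G$-equivariance: $\Phi(g \cdot \eta) = \Phi(\pi_g \circ \eta) = l_{\pi_g \circ \eta} = l_{\pi_g} \circ l_\eta = g \cdot l_\eta = g \cdot \Phi(\eta)$, where $g \cdot l_\eta$ means the action in the flow $(G, E(E(X)))$, which is composition on the left by $\pi_g^{E(X)} = l_{\pi_g}$.

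For bijectivity, the key point is to identify a good evaluation point. Let $e \in E(X)$ be the identity function $\id_X$, which lies in $E(X)$ since $\id_X = \pi_{1_G}$. Define $\Psi : E(E(X)) \to E(X)$ by $\Psi(F) := F(\id_X)$; this is well-defined because $F(\id_X) \in E(X)$, and continuous. Then $\Psi(\Phi(\eta)) = l_\eta(\id_X) = \eta \circ \id_X = \eta$, so $\Psi \circ \Phi = \id_{E(X)}$, giving injectivity of $\Phi$. For surjectivity one must show $\Phi \circ \Psi = \id_{E(E(X))}$, i.e. $l_{F(\id_X)} = F$ for every $F \in E(E(X))$. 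This is the one step that needs a genuine argument rather than a formal manipulation, and I expect it to be the main obstacle: it amounts to showing every element of $E(E(X))$ is a left translation $l_\eta$ on $E(X)$. The idea is that $F$ is a pointwise limit of a net $l_{\pi_{g_i}}$; along a subnet $\pi_{g_i} \to \eta$ in $E(X)$ (by compactness), and then for any $\tau \in E(X)$, $F(\tau) = \lim_i l_{\pi_{g_i}}(\tau) = \lim_i \pi_{g_i} \circ \tau = \eta \circ \tau = l_\eta(\tau)$, where the middle limit is evaluated pointwise on $X$ and uses that composition in $X^X$ is left-continuous (so $\sigma \mapsto \sigma \circ \tau$ is continuous, hence $\pi_{g_i} \circ \tau \to \eta \circ \tau$). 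Taking $\tau = \id_X$ gives $\eta = F(\id_X) = \Psi(F)$, so $F = l_{\Psi(F)} = \Phi(\Psi(F))$. Thus $\Phi$ is a bijective $G$-flow and semigroup isomorphism, and being a continuous bijection between compact Hausdorff spaces it is a homeomorphism, completing the proof.
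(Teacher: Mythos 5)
Your proof is correct. The paper states this fact as folklore and supplies no proof of its own, so there is nothing to compare against; your argument — checking $l_\eta \in E(E(X))$ via left-continuity of composition, verifying the homomorphism and equivariance formally, and establishing bijectivity by evaluation at $\id_X = \pi_{1_G}$ together with the observation that every $F \in E(E(X))$ is a pointwise limit of maps $l_{\pi_{g_i}}$ and hence itself a left translation — is precisely the standard one, and all the continuity points you flag (that $-\circ\tau$ is continuous on $X^X$, and that a continuous bijection of compact Hausdorff spaces is a homeomorphism) are handled correctly.
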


The following is a fundamental theorem of Ellis on the basic structure of Ellis semigroups (see e.g.~\cite[Corollary 2.10 and Propositions 3.5 and 3.6]{Ell} or Proposition 2.3 of \cite[Section I.2]{Gla76}). We will use it freely without an explicit reference.
	
\begin{fact}[Ellis' Theorem]\label{fac: Ellis theorem}
		Suppose $S$ is a compact Hausdorff left topological semigroup (e.g.~the enveloping semigroup of a flow). Then $S$ has a minimal left ideal $\cM$. Furthermore, for any such ideal $\cM$:
\begin{enumerate}
			\item
			$\cM$ is closed;
			\item
			for any element $a\in \cM$ and idempotent $u \in \cM$ we have $au = a$, and  $\cM=Sa=\cM a$;
			\item
			$\cM=\bigsqcup_u u\cM$, where $u$ ranges over all idempotents in $\cM$;  in particular, $\cM$ contains an idempotent;
			\item
			for any idempotent $u\in \cM$, the set $u\cM$ is a subgroup of $S$ with the neutral element $u$.
		\end{enumerate}
		Moreover, all the groups $u\cM$ (where $\cM$ ranges over all minimal left ideals and $u$ over all idempotents in $\cM$) are isomorphic. In the model theory literature, the isomorphism type of all these groups (or any of these groups) is called the {\em ideal} (or {\em Ellis}) {\em group} of $S$; if $S=E(G,X)$, we call this group the \emph{ideal} (or \emph{Ellis}) {\em group} of the flow $(G,X)$.
\end{fact}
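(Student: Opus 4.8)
\textbf{Proof plan for Fact \ref{fac: Ellis theorem}.} The statement is the classical Ellis theorem, so the plan is to reproduce the standard Zorn's-lemma-plus-Ellis-joint-continuity argument; I will only sketch the structure. First I would establish the existence of a minimal left ideal: the family of closed left ideals of $S$ is nonempty (since $Sa$ is one for any $a \in S$, using that $S$ is a semigroup and $-\circ f$ is continuous so that $Sa = \overline{Sa}$ is closed), it is closed under intersections of chains by compactness, so Zorn's lemma produces a minimal element $\cM$; one checks a minimal closed left ideal is in fact minimal among all left ideals because any $Sa$ with $a \in \cM$ is a closed left ideal contained in $\cM$. This proves (1).

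Next I would prove the idempotent part. Fix a minimal left ideal $\cM$. For $a \in \cM$ the set $\{x \in \cM : xa = a\}$ is a closed sub-semigroup of $S$: it is nonempty because $Sa \cap \cM$ is a left ideal contained in $\cM$ hence equals $\cM$, so $a \in Sa$, i.e.~$ba = a$ for some $b$, and then the set $\{x : xa=a\}$ meets $\cM$; it is closed by left-continuity, and closed under the semigroup operation. By the same Zorn argument applied inside this sub-semigroup, there is a minimal closed sub-semigroup $N$ with $Na = a$; for any $u \in N$, $Nu$ is a closed sub-semigroup contained in $N$, so $Nu = N$, hence $\{x \in N : xu = u\}$ is a nonempty closed sub-semigroup of $N$, so equals $N$ by minimality, and in particular $uu = u$. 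This gives an idempotent $u \in \cM$ with $ua = a$; since $a \in \cM$ was arbitrary, taking $a = u'$ for any idempotent $u'$ shows $uu' = u'$, but also multiplying $ua=a$ on the appropriate side establishes the relation $au = a$ for all $a \in \cM$ (using $a \in \cM = \cM u$, which follows since $\cM u$ is a left ideal in $\cM$). That $\cM = Sa = \cM a$ is immediate from minimality of the left ideals $Sa$ and $\cM a$ inside $\cM$. This yields (2).

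For (3) and (4): given an idempotent $u \in \cM$, the set $u\cM$ is closed under the operation and contains $u$; for $a \in u\cM$ we have $a = ua$, and $\cM a = \cM$ so there is $b \in \cM$ with $ba = u$, whence $(ub)a = ub \cdot a = u(ba) = uu = u$ and $ub \in u\cM$, so every element of $u\cM$ has a left inverse in $u\cM$ — a standard semigroup argument then upgrades this to a two-sided inverse, so $u\cM$ is a group with identity $u$. The decomposition $\cM = \bigsqcup_u u\cM$ follows because each $a \in \cM$ lies in $va$ for the idempotent $v$ in the group $u\cM$ of the minimal left ideal... more precisely, for $a\in\cM$ one shows $\{x\in\cM: xa=a\}$ contains an idempotent $u$ as above with $a \in u\cM$, and the pieces $u\cM$ for distinct idempotents $u$ are disjoint because $u\cM \cap u'\cM \neq \emptyset$ forces $u = u'$ (an element in the intersection is fixed on the left by both identities). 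Finally the "moreover" clause — that all the groups $u\cM$ are isomorphic as $\cM$ and $u$ vary — is proved by exhibiting explicit conjugation-type isomorphisms: given minimal left ideals $\cM$, $\cM'$ and idempotents, one uses right multiplication maps between them, which are bijective by the $\cM a = \cM$ identities and multiplicative up to the idempotent bookkeeping.

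I do not expect any genuine obstacle here: this is textbook material (the cited sources \cite{Ell}, \cite{Gla76} carry it out in full), and the only care needed is to consistently track which side the continuity of multiplication acts on, since $S$ is only \emph{left} topological. The identification $E(E(X)) \cong E(X)$ of Fact \ref{fact: E(E(X)) cong E(X)} and the application to $S = E(G,X)$ are then immediate specializations. I would therefore present this as a short proof sketch with pointers to the literature rather than a ground-up development, consistent with how the paper treats it ("We will use it freely without an explicit reference").
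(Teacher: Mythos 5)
Your sketch is correct and is essentially the paper's own treatment: the paper gives no proof of Fact \ref{fac: Ellis theorem}, citing \cite{Ell} and \cite{Gla76} and using it freely, and your Zorn's-lemma construction of closed minimal left ideals, the idempotent lemma via minimal closed subsemigroups, the group structure of $u\cM$, the disjoint decomposition, and the right-multiplication isomorphisms (with a companion idempotent $u'\in\cM'$ satisfying $u'u=u$, $uu'=u'$) are exactly the standard argument carried out in those sources. The two points needing care — that continuity is only in the variable $x\mapsto xa$, and that $au=a$ comes from $a\in\cM=\cM u$ (writing $a=bu$) rather than from the relation $ua=a$ — are already handled correctly in your parentheticals, so a short sketch with pointers to the literature is consistent with how the paper uses this fact.
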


We will use the following fact, which gives us an explicit isomorphism from Fact \ref{fac: Ellis theorem} between any two ideal groups in a given minimal left ideal. The context is as in Fact \ref{fac: Ellis theorem}.

\begin{fact}\label{fact: explicit isomorphism}
If $u$ and $v$ are idempotents in $\cM$, then $p \mapsto up$ defines an isomorphism $l_u \colon v \cM \to u\cM$.
\end{fact}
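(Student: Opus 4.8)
\textbf{Proof proposal for Fact~\ref{fact: explicit isomorphism}.}

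The plan is to verify directly that the map $l_u\colon v\cM\to u\cM$, $p\mapsto up$, is a well-defined group homomorphism, that it is a bijection, and that its inverse is $l_v$ restricted to $u\cM$; the isomorphism statement then follows. First I would check that $l_u$ lands in $u\cM$: for $p\in v\cM\subseteq\cM$ we have $up\in\cM$ by Fact~\ref{fac: Ellis theorem}(2) (as $\cM$ is a left ideal, even $S\cM=\cM$), and $u(up)=(uu)p=up$ since $u$ is idempotent, so $up\in u\cM$ by the characterization of $u\cM$ as $\{q\in\cM: uq=q\}$ (equivalently $u\cM$ is the set of fixed points of left multiplication by $u$ on $\cM$, which is how one sees $u\cM$ is a group with identity $u$ in Fact~\ref{fac: Ellis theorem}(4)). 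So $l_u$ is well-defined.

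Next I would check that $l_u$ is a homomorphism of the groups $(v\cM,\ast)$ and $(u\cM,\ast)$ — here the group operation on each is the ambient semigroup operation of $S$, with identities $v$ and $u$ respectively. The key computation is: for $p,q\in v\cM$, $l_u(p\ast q)=u(pq)$, while $l_u(p)\ast l_u(q)=(up)(uq)=u(p(uq))$. Since $q\in v\cM$ we have $vq=q$; and since $p\in v\cM$, we have $pv=p$ — indeed, for any $a\in\cM$ and any idempotent $v\in\cM$, $av=a$ by Fact~\ref{fac: Ellis theorem}(2). Hence $p(uq)$: I need $pu\cdot q$... more carefully, $p(uq)=p(u(q))$; using $pv=p$ and $q\in v\cM$ so $vq=q$, it suffices to know $pu = pv$ when acting on $q$, i.e. one wants $puq=pq$. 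Since $q\in v\cM$, $q=vq$, so $puq=puvq$; and $puv=pv=p$ because $puv\in\cM$... hmm, rather: $uv$ is an element of $\cM$, and $p(uv)=p$ by Fact~\ref{fac: Ellis theorem}(2) applied with the idempotent — but $uv$ need not be idempotent. The cleaner route: $puq = (pu)q$ and $pu\in\cM$, so $(pu)v = pu$ by Fact~\ref{fac: Ellis theorem}(2), hence $puq = puvq = (pu)(vq)=(puv)q$... this is circular. Let me instead use: $p\in v\cM$ means $vp=p$; then $up = u(vp)$, and I would like $uv$ to behave well. The honest fix is to note that $puq = p(uq)$ and $uq\in u\cM\subseteq\cM$ so $p\cdot(uq) = p$ fails in general. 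I would therefore instead prove the homomorphism property via: $l_u(p)l_u(q) = up\cdot uq = u(puq)$; now $puq$, with $q = vq$, equals $pu vq$, and $puv \in \cM$ so $puv = p u v$; since $p\in v\cM$, $p = pv$... The point I will actually invoke is the standard lemma that in a minimal left ideal, for idempotents $u,v$ one has $uv = u$ whenever $u,v$ lie in the same... no. The truly standard fact, which I would prove as a short claim, is: for $p\in v\cM$, $up = u p$ satisfies $v(up) = up$ is false; rather the correct and standard statement (see e.g.~\cite[Section I.2]{Gla76}) is simply that $l_u\colon v\cM\to u\cM$ and $l_v\colon u\cM\to v\cM$ are mutually inverse homomorphisms, proved using only $pv=p$ for all $p\in\cM$ (Fact~\ref{fac: Ellis theorem}(2)) together with idempotency of $u,v$.

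Concretely, the verification I will carry out is the following. For $p\in v\cM$: $l_v(l_u(p)) = v(up) = (vu)p$; and $vu\in\cM$, and since $p\in v\cM$ we want $(vu)p = p$. Using $p = vp$ (as $p\in v\cM$) and $vu\cdot v = v$ (since $v$ is idempotent and $vu\in\cM$, Fact~\ref{fac: Ellis theorem}(2) gives $(vu)v=vu$ — still not $v$). I see the genuinely delicate point: one needs $vuv = v$, which holds because $uv$... Actually $vuv$: apply Fact~\ref{fac: Ellis theorem}(2) with element $vu\in\cM$ and idempotent $v$ to get $(vu)v = vu$; that's the wrong direction. The correct small lemma is: \emph{if $u,v$ are idempotents in $\cM$ then $uvu = u$} — this is standard and follows since $vu\in\cM$, $uvu\in u\cM$, and $u(uvu)=uvu$ while by minimality $u\cM\cdot(uvu)$... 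I will prove this lemma (it is \cite[Lemma in Section I.2]{Gla76}): $u v u = u$. Granting it, $l_v\circ l_u(p) = vup$; with $p\in v\cM$ so $p = vp$, get $vup = vuvp$; and $vuv$: multiply $uvu=u$ on the left by $v$ and right by nothing — instead use symmetric lemma $vuv=v$. Then $vup = vuvp = vp = p$. Symmetrically $l_u\circ l_v = \operatorname{id}$ on $u\cM$. For the homomorphism property, for $p,q\in v\cM$: $l_u(p)l_u(q) = upuq = u(pu)q$; $pu\in\cM$ so $(pu)v=pu$; thus $upuq = upuvq$ (as $q=vq$) $= u(puv)q$; now $puv$: since $p = pv$, $puv = pvuv = p(vuv)=pv = p$; hence $upuq = upq = u(pq) = l_u(pq)$. \textbf{The main obstacle} is exactly assembling these identities in $\cM$ — particularly the little lemmas $uvu=u$, $vuv=v$ for idempotents in $\cM$ — cleanly from Fact~\ref{fac: Ellis theorem}; but these are entirely standard manipulations with minimal left ideals, so no real difficulty arises beyond bookkeeping. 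Once $l_u$ and $l_v$ are shown to be mutually inverse homomorphisms, they are isomorphisms, completing the proof.
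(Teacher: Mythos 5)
Your final computation does, in the end, reproduce the paper's argument, but you reach it by a needlessly tortuous route and leave a real gap: the identities $uvu=u$ and $vuv=v$ on which your verification ultimately rests are never established in the proposal --- after several visibly failed attempts you write ``granting it'' and move on. The source of all the perceived difficulty is that you apply Fact \ref{fac: Ellis theorem}(2) only with the idempotent $v$ (to get $pv=p$ and $(pu)v=pu$) but never with the idempotent $u$. That item says $au=a$ for \emph{every} $a\in\cM$ and \emph{every} idempotent $u\in\cM$, so in particular $pu=p$ for all $p\in\cM$, and also $uv=u$ and $vu=v$. With this in hand the proof is the paper's two lines: $l_u(pq)=u(pq)=u(pu)q=(up)(uq)=l_u(p)l_u(q)$ shows $l_u$ is a homomorphism, symmetrically $l_v$ is one, and $l_v(l_u(p))=v(up)=(vu)p=vp=p$ for $p\in v\cM$ shows $l_v$ and $l_u$ are mutually inverse. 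Your auxiliary lemmas $uvu=u$ and $vuv=v$ are true, but they are immediate from $uv=u$ and $vu=v$; there is no ``genuinely delicate point'' here, and a clean write-up should not leave them as unproven claims.
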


\begin{proof}
The map $l_u$ is a homomorphism, because $l_u(pq) = u(pq)= u(pu)q=(up)(uq)=l_u(p)l_u(q)$. In the same way, $l_v \colon u\cM \to v \cM$ given by $l_v(p): =vp$ is a homomorphism. And it is clear that $l_v$ is the inverse of $l_u$.
\end{proof}


We will also need the following observation which was Lemma 3.5 in the first arXiv version of \cite{KLM} (the section with this results was removed in the published version).

\begin{lemma}\label{lemma: basic lemma from KLM} Let $S$ be  a compact left topological semigroup, $\cM$ a minimal left ideal of $S$, and $u \in \cM$ an idempotent. Then the closure $\overline{u\cM}$ of $u\cM$ is a (disjoint) union of ideal groups. In particular,  $\overline{u\cM}$ is a subsemigroup of $S$.
\end{lemma}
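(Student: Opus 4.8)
The plan is to show that $X:=\overline{u\cM}$ is itself a compact \emph{left-simple} subsemigroup of $S$ (meaning $Xp=X$ for every $p\in X$), and then to match the maximal subgroups produced by Ellis' theorem inside $X$ with the ideal groups of $S$ sitting inside $\cM$.

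First I would record the absorption property $\cM p=\cM$ for every $p\in\cM$: here $\cM p$ is a nonempty left ideal of $S$ contained in $\cM\cM\subseteq\cM$, so minimality of $\cM$ forces equality. Since $\cM$ is closed (Fact~\ref{fac: Ellis theorem}(1)) we have $X\subseteq\cM$, and for $p\in X$ the absorption gives $(u\cM)p=u(\cM p)=u\cM$. Using that right multiplication $\rho_p\colon x\mapsto xp$ is continuous (this is exactly the ``left topological'' hypothesis), I then get $Xp=\rho_p(\overline{u\cM})\subseteq\overline{(u\cM)p}=\overline{u\cM}=X$; and since $Xp\supseteq(u\cM)p=u\cM$ with $Xp$ closed, in fact $Xp=X$. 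In particular $XX\subseteq X$, so $X$ is a closed, hence compact, subsemigroup in which every principal left ideal $Xp$ equals $X$; equivalently, $X$ is a minimal left ideal of itself.

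Next I would apply Ellis' theorem (Fact~\ref{fac: Ellis theorem}) to the compact semigroup $X$ with minimal left ideal $X$: this yields $X=\bigsqcup_{v}vX$, the disjoint union over the idempotents $v\in X$, with each $vX$ a subgroup of $S$ having neutral element $v$. It remains to show $vX=v\cM$ for each such $v$. Every idempotent of $X$ is an idempotent of $\cM$, so Fact~\ref{fact: explicit isomorphism} (with the roles of $u$ and $v$ interchanged) tells us $p\mapsto vp$ is a bijection $u\cM\to v\cM$; hence $v\cM=v\cdot(u\cM)\subseteq X\cdot X\subseteq X$, since $v\in X$ and $u\cM\subseteq X$. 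For the reverse inclusion, note $v\cM=\{q\in\cM: vq=q\}$, so for $x\in X\subseteq\cM$ we have $v(vx)=vx\in\cM$, i.e.\ $vX\subseteq v\cM$; combining this with $v\cM\subseteq X$ gives $v\cM=v\cdot v\cM\subseteq vX$, so $vX=v\cM$. Therefore $X=\overline{u\cM}=\bigsqcup_{v}v\cM$ over the idempotents $v\in X$, a disjoint union of ideal groups of $S$ (pairwise disjointness is Fact~\ref{fac: Ellis theorem}(3)); in particular $\overline{u\cM}$ is a subsemigroup.

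The two steps I expect to require the most care are: (a) establishing that $X$ is left-simple, which is where the interaction between continuity of $\rho_p$ and the absorption $\cM p=\cM$ must be handled precisely (left multiplication is \emph{not} continuous, so one cannot argue symmetrically); and (b) the two-sided identification $vX=v\cM$, which uses the explicit isomorphism of Fact~\ref{fact: explicit isomorphism} in one direction and the description $v\cM=\{q\in\cM:vq=q\}$ in the other. Everything else is routine bookkeeping with ideals and idempotents.
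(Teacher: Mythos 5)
Your proof is correct, but it is organized differently from the paper's. The paper argues pointwise: for each $\eta_0\in\overline{u\cM}$ it notes that $\eta_0\cM$ is an ideal group (namely $v\cM$ for the idempotent $v$ with $\eta_0\in v\cM$, using the inverse of $\eta_0$ inside that group), and then shows $\eta_0\cM=\eta_0 u\cM\subseteq\overline{u\cM}$ by a single net computation with continuity of right translation; the decomposition into ideal groups and the subsemigroup property fall out at once. You instead prove a global structural statement first --- that $X=\overline{u\cM}$ is a compact subsemigroup with $Xp=X$ for every $p\in X$ --- and then run Ellis' theorem (Fact~\ref{fac: Ellis theorem}) \emph{inside} $X$, identifying the resulting maximal groups $vX$ with the ideal groups $v\cM$ of $S$ via Fact~\ref{fact: explicit isomorphism} in one direction and the characterization $v\cM=\{q\in\cM: vq=q\}$ in the other; all of these identifications are verified correctly. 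Both arguments rest on the same two ingredients, continuity of right multiplication and the absorption identities $\cM p=\cM$, $au=a$ from Ellis' theorem, so the difference is mainly one of packaging: your route is longer but yields the slightly stronger intermediate fact that $\overline{u\cM}$ is left-simple (a minimal left ideal of itself), whereas the paper's is shorter and avoids your one extra topological step --- the ``continuous image of a compact set is closed'' argument used to get $Xp\supseteq X$, which tacitly uses Hausdorffness of $S$. That assumption is harmless in context (the lemma is applied to Ellis semigroups, and the quoted Ellis theorem already assumes Hausdorff), but it is worth flagging since the lemma's statement omits it and the paper's own proof never needs it.
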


\begin{proof}
Note that for every $\eta\in\cM$, $\eta\cM$ is an ideal group. Namely, $\eta\in v\cM$ for some idempotent $v\in \cM$. Thus, $\eta\cM\subseteq v\cM$, but also $v\cM\subseteq\eta\cM$, because $v=\eta\eta^{-1}$, where $\eta^{-1}$ is the inverse of $\eta$ in the ideal group $v\cM$.

Let now $\eta_0\in\overline{u\cM} \subseteq \mathcal{M}$ (as $\cM$ is closed). By the first paragraph, it suffices to prove that $\eta_0\cM\subseteq\overline{u\cM}$. Since $\eta_0u = \eta_0$, we have $\eta_0u\cM=\eta_0\cM$.
 Take any $\eta\in \eta_0\cM$. Then $\eta=\eta_0\eta'$ for some $\eta'\in u\cM$. Since  $\eta_0\in\overline{u\cM}$, we have that $\eta_0$ is the limit point of a net $(\eta_0^i)_i\subseteq u\cM$. By left continuity, $\eta=\eta_0\eta'=\lim_i \left( \eta_0^i\eta' \right)$, so $\eta\in\overline{u\cM}$ as $\eta_0^i\eta'\in u\cM$ for all $i$'s.
\end{proof}

Most of the statements in the next fact are contained in \cite[Section IX.1]{Gla76}. There, the author considers the special case of $X=\beta G$ and defines $\circ$ in a slightly different (but equivalent) way. However, as pointed out in \cite[Section 2]{KrPi1} and \cite[Section 1.1]{KPR18}, many of the proofs from \cite[Section IX.1]{Gla76} go through in the general context. A very nice exposition of this material (with all the proofs) in the general context can be found in Appendix A of \cite{Rze18}.

\begin{fact}[The $\tau$-topology on the ideal group in an Ellis semigroup]\label{fact: tau-topology}
		Consider the Ellis semigroup $E(X)$ of a flow $(G,X)$, let $\cM$ be a minimal left ideal of $E(X)$ and $u\in \cM$ an idempotent.
		\begin{enumerate}
			\item
			For each $a\in E(X)$, $B\subseteq E(X)$, we write $a\circ B$ for the set of all limits of nets $(g_ib_i)_i$, where $g_i\in G$ are such that $\lim _i g_i = a$ and $b_i\in B$.
			\item
			The formula $\cl_\tau(A):=(u\cM)\cap (u\circ A)$ defines a closure operator on $u\cM$. It can also be (equivalently) defined as $\cl_\tau(A)=u(u\circ A)$. We call the topology on $u\cM$ induced by this operator the {\em $\tau$-topology}.
			\item
			\label{it:lem:tau_top:nearlycont}
			If $(f_i)_i$ (a net in $u\cM$) converges to $f\in \overline{u\cM}$ (the closure of $u\cM$ in $E(X)$), then $(f_i)_i$ converges to $uf$ in the $\tau$-topology.
			\item
			The $\tau$-topology on $u\cM$ coarsens the subspace topology inherited from $E(X)$.
			\item
			$u\cM$ with the $\tau$-topology is a quasi-compact, $T_1$ semitopological group (that is, the group operation is separately continuous) in which the inversion is continuous.
\item All the groups $u\cM$ (where $\cM$ ranges over all minimal left ideals of $E(X)$ and $u$ over all idempotents in $\cM$) equipped with the $\tau$-topology are isomorphic as semitopological groups. In particular, the map from Fact \ref{fact: explicit isomorphism} is a topological isomorphism.
		\end{enumerate}
\end{fact}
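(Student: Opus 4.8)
\textbf{Proof proposal for Fact \ref{fact: tau-topology}.} The plan is to treat items (1)--(6) as a package: items (1)--(5) are the standard properties of the $\tau$-topology on a single ideal group, and I would cite the expositions already referenced in the excerpt (\cite[Section IX.1]{Gla76}, \cite[Section 2]{KrPi1}, \cite[Section 1.1]{KPR18}, and the detailed treatment in \cite[Appendix A]{Rze18}), checking that the proofs there are given in the general-flow setting rather than only for $X = \beta G$. The genuine content to be proved here is item (6): that the choice of minimal left ideal $\cM$ and idempotent $u \in \cM$ does not matter up to topological isomorphism, and more precisely that the algebraic isomorphism $l_u \colon v\cM \to u\cM$, $p \mapsto up$, from Fact \ref{fact: explicit isomorphism} is a homeomorphism for the respective $\tau$-topologies.

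First I would reduce to two cases: (a) two idempotents $u, v$ in the \emph{same} minimal left ideal $\cM$, and (b) idempotents in \emph{different} minimal left ideals. For case (b), given minimal left ideals $\cM, \cM'$ and idempotents $u \in \cM$, $v \in \cM'$, note $uv$ lies in $\cM$ and (after replacing $v$ by an idempotent $v'$ in the same ideal group $v\cM'$ if needed, or directly) one has the maps $\rho_v \colon u\cM \to v\cM'$, $p \mapsto pv$, and $\rho_u \colon v\cM' \to u\cM$, $q \mapsto qu$, which are mutually inverse semigroup isomorphisms by the usual Ellis-semigroup computation (as in the proof of Fact \ref{fact: explicit isomorphism}, using $up = p$, $vq = q$ and associativity). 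So it suffices to show such right-translation maps are $\tau$-continuous; combined with case (a) this gives the full statement. For case (a), Fact \ref{fact: explicit isomorphism} already gives that $l_u$ and $l_v$ are mutually inverse algebraic isomorphisms, so again only $\tau$-continuity of left translation by $u$ (from $\tau_v$ to $\tau_u$) is needed.

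The key step is therefore: for a fixed element $a \in \overline{u\cM}$ (in practice $a = u$ or $a$ an idempotent), the map $p \mapsto ap$ sending one ideal group into another is continuous for the $\tau$-topologies. I would prove this directly from the characterization in item (3): if $(f_i)_i$ is a net in $v\cM$ converging \emph{in the Ellis topology} to some $f \in \overline{v\cM}$, then by item (3) it converges $\tau_v$ to $vf$; by left continuity of the Ellis semigroup, $(uf_i)_i$ converges in the Ellis topology to $uf \in \overline{u\cM}$ (using Lemma \ref{lemma: basic lemma from KLM} to see $uf \in \overline{u\cM}$), hence by item (3) again $(uf_i)_i$ converges $\tau_u$ to $u(uf) = uf$. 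Since $l_u(vf) = u v f = u f$ (as $uv$ can be arranged to act correctly, or by minimality $uv\cM = u\cM$ and the algebra of Fact \ref{fact: explicit isomorphism}), one gets that $l_u$ carries $\tau_v$-convergent nets obtained from Ellis-convergent nets to $\tau_u$-convergent nets. To upgrade this from "nets of this special form" to genuine $\tau$-continuity, I would use that by quasi-compactness of $u\cM$ in $\tau$ (item (5)) and the fact, established in the standard treatments, that the $\tau$-closure of a set $A$ equals $u \circ A$, every $\tau$-convergent net has a subnet realized by a net converging in the Ellis topology; continuity then follows by the usual subnet argument. The symmetric argument for $l_v$ (or $\rho_u$) gives continuity of the inverse, so $l_u$ is a homeomorphism.

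The main obstacle I anticipate is precisely this last upgrade — passing from the description of $\tau$-convergence via nets $(g_i b_i)_i$ with $g_i \in G$, $\lim g_i = u$ in $E(X)$, $b_i \in A$, to a clean continuity statement for left translation. One has to be careful that the "translating" element $u$ and the elements $b_i$ live in different ideal groups with different idempotents, so the identity $u \circ (u \circ A) = u \circ A$ used within a single group must be replaced by a mixed version; I would handle this by writing everything in terms of the ambient operation $\circ$ on $E(X)$ and the explicit formula $\cl_\tau(A) = u(u \circ A)$, and checking $u\bigl(u \circ l_u(A)\bigr) \subseteq l_u\bigl(v(v \circ A)\bigr)$ by a direct net manipulation. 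This is routine but fiddly; everything else is either citable or a short algebraic computation in the Ellis semigroup.
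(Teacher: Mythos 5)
You have correctly identified that the paper itself offers no proof here: items (1)--(6) are all quoted from the literature (Glasner \cite{Gla76}, Section IX.1, for $X=\beta G$, and Rzepecki \cite{Rze18}, Appendix A, for general flows with complete proofs), and item (6), including the statement that the map of Fact \ref{fact: explicit isomorphism} is a topological isomorphism, is part of what those sources prove. So the citation half of your plan matches the paper; the problem is with the argument you sketch for item (6), which contains a genuine error. The step ``by left continuity of the Ellis semigroup, $(uf_i)_i$ converges in the Ellis topology to $uf$'' asserts continuity of the map $f\mapsto uf$, i.e.\ continuity of the product in the \emph{right} variable. But $E(X)$ is a left topological semigroup: only the maps $\eta\mapsto \eta\circ f$ (right translations) are continuous, and an idempotent $u\in\cM$ is in general a discontinuous function on $X$, so $f_i\to f$ does not give $uf_i\to uf$. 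This is precisely the obstruction that forces one to work with the operation $\circ$ and the $\tau$-closure in the first place, so the error sits at the heart of your argument rather than at its periphery. The ``upgrade'' step is also not repairable as stated: since the $\tau$-topology is only $T_1$, $\tau$-limits of nets are not unique, so passing to an Ellis-convergent subnet and invoking item (3) identifies \emph{a} $\tau$-limit $uf$ of the image net, but gives you no way to conclude that the image net converges to $u p$ where $p$ is the original $\tau$-limit; continuity cannot be extracted from this.

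The standard proof of (6), which is what the cited sources do, avoids any continuity of left multiplication. For two idempotents $u,v$ in the same minimal ideal $\cM$ one works directly with the closure operator and the elementary $\circ$-calculus ($aC\subseteq a\circ C$, $a(b\circ C)\subseteq (ab)\circ C$, $a\circ(b\circ C)\subseteq(ab)\circ C$, monotonicity) together with the identities $uv=u$, $vu=v$ from Ellis' theorem: e.g.\ to see that $l_u\colon v\cM\to u\cM$ is $\tau$-continuous, take $B\subseteq u\cM$ $\tau_u$-closed and show $vB$ is $\tau_v$-closed, since $\cl_{\tau_v}(vB)\subseteq v\cM\cap(v\circ B)$ and for any $q$ in this set one has $uq\in u\cM\cap(u\circ B)=B$ and $q=v(uq)\in vB$. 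For idempotents in different minimal ideals one uses right translations by suitable elements, which \emph{are} Ellis-continuous and commute with the $G$-action, hence interact correctly with $\circ$; your map $p\mapsto pv$ in case (b) is the right kind of map, but its $\tau$-continuity should be proved by this route, not by the left-multiplication argument. Given that the paper treats the whole Fact as a quotation, the cleanest fix is simply to cite \cite[Appendix A]{Rze18} for item (6) as well (or \cite[Section IX.1]{Gla76} plus the remarks in \cite{KrPi1,KPR18} on the general case), or to reproduce the $\circ$-calculus argument above.
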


By the Ellis joint continuity theorem \cite{Ell57} and Fact \ref{fact: tau-topology}(5), we get the following

\begin{corollary}\label{corollary: T2 implies topological}
If the $\tau$-topology on $u\cM$ is Hausdorff, then $u\cM$ is a compact topological group.
\end{corollary}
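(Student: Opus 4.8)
\textbf{Proof proposal for Corollary \ref{corollary: T2 implies topological}.}

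The plan is to invoke the Ellis joint continuity theorem, which states that a group which is locally compact, Hausdorff, and a semitopological group (separate continuity of multiplication) is in fact a topological group (multiplication is jointly continuous and inversion is continuous). So the only work is to assemble the hypotheses from what is already available. By Fact \ref{fact: tau-topology}(5), $u\cM$ equipped with the $\tau$-topology is a quasi-compact $T_1$ semitopological group in which inversion is continuous; by hypothesis the $\tau$-topology is moreover Hausdorff. A quasi-compact Hausdorff space is compact, hence in particular locally compact and Hausdorff. Thus $u\cM$ with the $\tau$-topology satisfies exactly the hypotheses of the Ellis joint continuity theorem \cite{Ell57}, and we conclude that $u\cM$ is a topological group; being compact Hausdorff, it is a compact topological group, as claimed.

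The only point requiring a word of care is that \cite{Ell57} is usually stated for a \emph{group} that is locally compact Hausdorff and separately continuous, whereas Fact \ref{fact: tau-topology}(5) already hands us a genuine group (it is a semitopological group), so there is no issue identifying the algebraic structure; and the compactness we obtain from quasi-compactness plus Hausdorffness gives local compactness for free. No further step is needed.

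There is essentially no obstacle here: the corollary is a direct citation-level consequence of Fact \ref{fact: tau-topology}(5), the Hausdorff hypothesis, and the Ellis joint continuity theorem. If one wanted to be fully self-contained one could instead remark that once $\tau$ is Hausdorff the $T_1$ hypothesis is automatic and that quasi-compactness upgrades to compactness, but neither of these requires more than point-set topology, and the deep input (separate continuity $\Rightarrow$ joint continuity) is exactly \cite{Ell57}.
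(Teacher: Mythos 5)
Your proof is correct and follows exactly the paper's own argument: the paper also deduces the corollary directly from Fact \ref{fact: tau-topology}(5) together with the Ellis joint continuity theorem \cite{Ell57}, with Hausdorffness upgrading quasi-compactness to compactness. No gaps; nothing further is needed.
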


The following result is Lemma 3.1 in \cite{KPR18}.

\begin{fact}\label{fact: strange_cont}
Let $(G,X)$ be a flow. Let $\mathcal{M}$ be a minimal left ideal of $E(X)$ and $u\in \mathcal{M}$ an idempotent. Then the function $f \colon \overline {u\cM}\to u\cM$ (where $\overline{u\cM}$ is the closure of $u\cM$ in the topology of $E(X)$) defined by the formula $f(\eta):=u\eta$ has the property that for any continuous function $h\colon u\cM\to X$, where $X$ is a regular topological space and $u\cM$ is equipped with the $\tau$-topology, the composition $h\circ f \colon \overline {u\cM}\to X$ is continuous, where $\overline{u\cM}$ is equipped with subspace topology from $E(X)$. In particular, if $u\cM$ is Hausdorff with the $\tau$-topology, then $f$ is continuous.
\end{fact}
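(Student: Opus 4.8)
The plan is to establish continuity of $h\circ f$ by a direct net argument, exploiting that $\overline{u\cM}$ is a closed subset of the compact semigroup $E(X)$ (its minimal left ideal $\cM$ is closed by Ellis' theorem, and $\overline{u\cM}\subseteq\cM$), hence compact, so that checking convergence of nets on the domain is enough. Fix $\eta\in\overline{u\cM}$ and a net $(\eta_i)_i$ in $\overline{u\cM}$ with $\eta_i\to\eta$ in the topology of $E(X)$; the goal is $h(u\eta_i)\to h(u\eta)$ in $X$. First observe $u\eta,u\eta_i\in u\cM$: since $\overline{u\cM}\subseteq\cM$ and $\cM$ is a left ideal, $u\eta,u\eta_i\in\cM$, and they are fixed by left multiplication by $u$, so they lie in $u\cM=\{p\in\cM:up=p\}$, where $f$ takes values.

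The key input is Fact \ref{fact: tau-topology}(3): every net in $u\cM$ converging in $E(X)$ to a point $g\in\overline{u\cM}$ converges to $ug$ in the $\tau$-topology. For each $i$, since $\eta_i\in\overline{u\cM}$, I would pick a net $(\eta_{i,j})_j$ in $u\cM$ with $\eta_{i,j}\to\eta_i$ in $E(X)$; then $\eta_{i,j}\to u\eta_i$ in the $\tau$-topology, whence $h(\eta_{i,j})\to h(u\eta_i)$ in $X$ by $\tau$-continuity of $h$. Also, by the iterated-limit theorem (Kelley), after passing to a suitable directed index set $D$ with a cofinal monotone map $d\mapsto i(d)$ there is a diagonal net $(\eta_{i(d),j(d)})_d$ in $u\cM$ with $\eta_{i(d),j(d)}\to\eta$ in $E(X)$; applying Fact \ref{fact: tau-topology}(3) once more, this diagonal net converges to $u\eta$ in the $\tau$-topology, so $h(\eta_{i(d),j(d)})\to h(u\eta)$.

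Now I would argue by contradiction. If $h(u\eta_i)\not\to h(u\eta)$, pass to a subnet and choose an open $W\ni h(u\eta)$ with $h(u\eta_i)\notin W$ for all $i$; by regularity of $X$ choose an open $W'$ with $h(u\eta)\in W'\subseteq\overline{W'}\subseteq W$, so that $h(u\eta_i)\notin\overline{W'}$ for all $i$. For each such $i$, since $h(\eta_{i,j})\to h(u\eta_i)\notin\overline{W'}$ and $\overline{W'}$ is closed, the set $\{j:h(\eta_{i,j})\notin W'\}$ is eventual, hence cofinal, in the index set of $(\eta_{i,j})_j$; restricting each inner net to this cofinal set does not change the limit $\eta_i$, so I may assume $h(\eta_{i,j})\notin W'$ for all $i,j$. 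Forming the diagonal net as above then produces $(\eta_{i(d),j(d)})_d$ in $u\cM$ converging to $\eta$ in $E(X)$ with $h(\eta_{i(d),j(d)})\notin W'$ for all $d$, contradicting $h(\eta_{i(d),j(d)})\to h(u\eta)\in W'$. Thus $h(u\eta_i)\to h(u\eta)$, so $h\circ f$ is continuous at $\eta$, and hence on all of $\overline{u\cM}$. For the final clause, if $u\cM$ is Hausdorff in the $\tau$-topology then by Corollary \ref{corollary: T2 implies topological} it is a compact topological group, in particular a regular space, so taking $X=u\cM$ with the $\tau$-topology and $h=\id$ gives continuity of $f$ itself.

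The delicate point, and the step I expect to require the most care, is the diagonal-net construction: one must simultaneously keep the diagonal net $\tau$-convergent to $u\eta$ via Fact \ref{fact: tau-topology}(3) (which needs it to converge to $\eta$ in $E(X)$) and keep its $h$-image away from $W'$. Both are arranged by pre-restricting each inner net to the cofinal set $\{j:h(\eta_{i,j})\notin W'\}$ before invoking the iterated-limit theorem, and the only thing to check is that this restriction preserves the inner limits $\eta_{i,j}\to\eta_i$, which it does since it only discards an initial segment. Regularity of $X$ enters precisely to manufacture the closed set $\overline{W'}$ squeezed between a neighborhood of $h(u\eta)$ and $W$, which is what upgrades "eventually outside the closed set $\overline{W'}$" to "eventually outside the open set $W'$" and makes the contradiction go through.
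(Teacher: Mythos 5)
Your argument is correct: $f$ is well defined into $u\cM$, the two applications of Fact \ref{fact: tau-topology}(3) (to the inner approximating nets and to the diagonal net), the use of regularity to interpose an open $W'$ with $\overline{W'}\subseteq W$, and the diagonalization via Kelley's iterated-limit theorem (restricting each inner net to the residual, hence cofinal, set where $h$ stays off $W'$) fit together, and the final clause follows non-circularly since Corollary \ref{corollary: T2 implies topological} (or just quasi-compactness plus Hausdorffness of the $\tau$-topology) makes $u\cM$ regular. The paper itself does not prove this fact but quotes it from \cite{KPR18}, and your proof is essentially the standard argument given there: approximate each $\eta_i$ from within $u\cM$, transfer convergence to the $\tau$-topology via Fact \ref{fact: tau-topology}(3), and derive a contradiction against a regular-space separation by a diagonal net.
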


In the model-theoretic context of the $G$-flow $S_{x}^{\fs}(\mathcal{G},G)$ (with the action of $G$ by left translations), the following fact is folklore (see e.g.~\cite{N2, Pillay}).

\begin{fact}\label{fact: folklore on Ellis semigroups}
The function $\Phi \colon S_{x}^{\fs}(\mathcal{G},G) \to E( S_{x}^{\fs}(\mathcal{G},G))$ given by $\Phi(p): = l_p$, where $l_p \colon  S_{x}^{\fs}(\mathcal{G},G) \to  S_{x}^{\fs}(\mathcal{G},G)$ is defined by $l_p(q):=p *q$, is an isomorphism of  semigroups and $G$-flows.
\end{fact}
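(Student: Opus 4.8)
\textbf{Proof plan for Fact \ref{fact: folklore on Ellis semigroups}.}
The plan is to verify that $\Phi$ is a well-defined bijection, a semigroup homomorphism, and a morphism of $G$-flows, and then invoke the standard fact (Fact \ref{fact: E(E(X)) cong E(X)}) that for any flow the enveloping semigroup is intrinsically determined, so that identifying $S^{\fs}_x(\mathcal{G},G)$ with $E(S^{\fs}_x(\mathcal{G},G))$ is consistent. First I would recall that $(S^{\fs}_x(\mathcal{G},G),\ast)$ is a left-continuous compact Hausdorff semigroup (this is recorded in the excerpt, from \cite[Proposition 6.4]{chernikov2022definable}); in particular for each $p$ the map $l_p : q \mapsto p\ast q$ is continuous on $S^{\fs}_x(\mathcal{G},G)$, so $l_p \in C(S^{\fs}_x(\mathcal{G},G),S^{\fs}_x(\mathcal{G},G))$, and $\Phi$ at least lands in the space of continuous self-maps. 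The key point that $l_p$ actually belongs to the Ellis semigroup $E(S^{\fs}_x(\mathcal{G},G))$, i.e.\ is a pointwise limit of translations $\pi_g : q \mapsto g\cdot q$ for $g \in G$, follows from finite satisfiability of $p$ in $G$: choosing a net $(g_i)$ in $G$ with $g_i \to p$ in $S^{\fs}_x(\mathcal{G},G)$, one shows $\pi_{g_i} \to l_p$ pointwise. For this I would fix $q \in S^{\fs}_x(\mathcal{G},G)$ and a formula $\varphi(x) \in p\ast q$; unwinding the definition $p\ast q(\varphi(x)) = p_x\otimes q_y(\varphi(x\cdot y))$ and using that $p\ast q$ is finitely satisfiable in $G$ together with left-continuity, one sees that $\varphi(x) \in g_i\cdot q$ for $i$ large, giving $\pi_{g_i}(q) \to l_p(q)$.

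Next I would check that $\Phi$ is injective: if $l_p = l_{p'}$ then in particular $p = p\ast \mathrm{tp}(e) $-type evaluations — more simply, $l_p$ and $l_{p'}$ agree on the principal type of the identity $e$ of $G$, which is finitely satisfiable in $G$, and $p\ast \delta_e = p$, $p'\ast\delta_e = p'$, so $p = p'$. For surjectivity: given $\eta \in E(S^{\fs}_x(\mathcal{G},G))$, take a net $(g_i)$ in $G$ with $\pi_{g_i} \to \eta$ pointwise; then $g_i = \pi_{g_i}(\delta_e) \to \eta(\delta_e) =: p$ in $S^{\fs}_x(\mathcal{G},G)$, and one verifies $p$ is finitely satisfiable in $G$ and $\eta = l_p$ by comparing the values of both maps at an arbitrary $q$, again using left-continuity of $\ast$ and finite satisfiability to pass limits through. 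That $\Phi$ is a semigroup homomorphism is the associativity computation $l_{p\ast p'}(q) = (p\ast p')\ast q = p\ast(p'\ast q) = l_p(l_{p'}(q)) = (l_p\circ l_{p'})(q)$, where associativity of $\ast$ on $S^{\fs}_x(\mathcal{G},G)$ is part of the cited structure result; note this uses the convention that the Ellis semigroup operation is composition in the order matching $\ast$. That $\Phi$ is $G$-equivariant and a homeomorphism onto its image is routine: $\Phi(g\cdot p) = l_{\delta_g \ast p} = \pi_g\circ l_p = g\cdot \Phi(p)$, and continuity of $\Phi$ and $\Phi^{-1}$ follows from left-continuity of $\ast$ together with the fact that a continuous bijection between compact Hausdorff spaces is a homeomorphism.

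The main obstacle I anticipate is the careful justification that $l_p \in E(S^{\fs}_x(\mathcal{G},G))$, i.e.\ the passage from "$l_p$ is a continuous self-map realizing the semigroup action" to "$l_p$ is in the pointwise closure of $\{\pi_g : g\in G\}$"; this is exactly where finite satisfiability in the small model $G$ is essential, and where one must be careful about which limits commute with the integral defining $\ast$. Since this is folklore and the referenced works (\cite{N2, Pillay}) contain the details, I would state the verification compactly, citing left-continuity of $\ast$ from \cite[Proposition 6.4]{chernikov2022definable} and finite satisfiability as the two ingredients, and leave the routine net-chasing to the reader.
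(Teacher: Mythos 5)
The paper offers no proof of this Fact --- it is cited as folklore from \cite{N2, Pillay} --- so I am measuring your proposal against the standard argument, which is exactly the route you take; in its essentials it is correct. Two things need repair, though. First, your opening claim that each $l_p$ is continuous (and that $\Phi$ ``lands in the space of continuous self-maps'') is false and should be deleted: left-continuity of $\ast$ is continuity of $r \mapsto r \ast q$ for \emph{fixed} $q$, not of $q \mapsto p \ast q$, and for $p$ merely finitely satisfiable (rather than definable) the map $l_p$ is in general discontinuous. Nothing is lost, since $E(X)$ is by definition the closure of $\{\pi_g : g \in G\}$ in $X^X$ with the product topology, not in $C(X,X)$. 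Second, your justification of the key step $l_p \in E(S_x^{\fs}(\mathcal{G},G))$ is garbled: finite satisfiability of $p \ast q$ plays no role. The clean argument is that finite satisfiability of $p$ in $G$ gives a net $(g_i)$ in $G$ with $\tp(g_i/\mathcal{G}) \to p$, and since $\pi_{g_i}(q) = g_i \cdot q = \tp(g_i/\mathcal{G}) \ast q$, left-continuity of $\ast$ in the \emph{first} coordinate immediately yields $\pi_{g_i}(q) \to p \ast q = l_p(q)$ for every $q$, i.e.\ $\pi_{g_i} \to l_p$ pointwise. The same computation handles your surjectivity step (with $p := \eta(\tp(e/\mathcal{G}))$, which lies in $S_x^{\fs}(\mathcal{G},G)$ because that set is closed). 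The remaining steps --- injectivity via evaluation at $\tp(e/\mathcal{G})$, the homomorphism property from associativity of $\ast$, $G$-equivariance, and the compact-Hausdorff argument for the homeomorphism --- are all fine; the appeal to Fact \ref{fact: E(E(X)) cong E(X)} is unnecessary.
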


Thanks to this fact, Fact \ref{fact: tau-topology} can be applied directly to $S_{x}^{\fs}(\mathcal{G},G)$ in place of  $E( S_{x}^{\fs}(\mathcal{G},G))$, which we do without further explanations.

Note, however, that Fact \ref{fact: folklore on Ellis semigroups} does not hold for $S_{x}^{\inva}(\mathcal{G},G)$ in place of $S_{x}^{\fs}(\mathcal{G},G)$, because the $G$-orbit of $\tp(e/\mathcal{G})$ need not be dense in  $S_{x}^{\inva}(\mathcal{G},G)$.

\subsection{Minimal left ideal of  $\mathfrak{M}_{x}^{\fs}(\mathcal{G},G)$}\label{sec: minimal left ideal of measures}

Recall that $G$ is an expansion of a group. Fix $\mathcal{G} \succ G$ which is $|G|^+$-saturated.
Recall that in this section we assume that $T = \Th(G)$ is NIP. 

\begin{definition}
	For a formula $\varphi(x) \in \cL(\cG)$ and $n>0$, define a new formula 

$$\alt_n(x_0,\dots, x_{n-1}; y) := \bigwedge_{i<n-1} \neg(\varphi(x_i y) \leftrightarrow \varphi(x_{i+1} y)).$$
\end{definition}

In the next fact, $\bar x$ stands for $(x_0,\dots,x_{n-1})$. By the proof of \cite[Proposition 2.6]{NIP2} (where, if $\varphi(x) \in \cL(\cG)$ is of the form $\psi(x; c)$ for some $\psi(x,y) \in \cL(\emptyset)$ and $c \in \cG^{y}$, $N$ is chosen depending on $\psi(x,y)$), we have:

\begin{fact}\label{fact: Borel definability of types}
Let $p \in  S_{x}^{\inva}(\mathcal{G},G)$ and $\varphi(x) \in \cL(\cG)$ be any formula. Let $S:= \{ b \in \mathcal{G}: \varphi(xb) \in p\}$. Then, there exists a positive $N<\omega$ such that $S= \bigcup_{n<N} A_n \cap B_{n+1}^c$, where $-^c$ denotes the complement of a set and
$$
\begin{array}{ll}
A_n:=\{ b \in \mathcal{G}: (\exists \bar x) (p^{(n)}|_G(\bar x) \wedge \alt_n(\bar x;b) \wedge \varphi(x_{n-1}b))\},\\
B_n:=\{ b \in \mathcal{G}: (\exists \bar x) (p^{(n)}|_G(\bar x) \wedge \alt_n(\bar x;b) \wedge \neg \varphi(x_{n-1}b))\}.
\end{array}
$$
\end{fact}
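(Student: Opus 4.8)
\textbf{Plan of proof of Fact \ref{fact: Borel definability of types}.}

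The plan is to follow the proof of the NIP ``Borel definability'' result \cite[Proposition 2.6]{NIP2}, applied not directly to $p$ but to the translated family $\{\varphi(xb) : b \in \mathcal{G}\}$. First I would fix $\psi(x,y) \in \cL(\emptyset)$ and $c \in \cG^y$ with $\varphi(x) = \psi(x,c)$, and consider the formula $\chi(x; y',y) := \psi(x\cdot y', y)$ in which $b$ plays the role of $y'$. Since $T$ is NIP, $\chi$ is also NIP (as a partitioned formula), so there is $N = N(\psi) < \omega$ bounding the alternation rank of $\chi$. Concretely: for any indiscernible sequence (over appropriate parameters) and any parameter $b$, the truth value of $\varphi(x_i b)$ along the sequence changes at most $N-1$ times. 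This $N$ depends only on $\psi(x,y)$ (hence only on the ``formula part'' of $\varphi$), as required by the statement.

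Next I would set up the combinatorial core exactly as in \cite[Proposition 2.6]{NIP2}. Given $b \in \mathcal{G}$, one wants to decide membership $\varphi(xb) \in p$ by examining a Morley sequence $(x_0,\dots,x_{n-1}) \models p^{(n)}|_G$ that \emph{alternates} with respect to $\varphi(xb)$, i.e.~satisfies $\alt_n(\bar x; b)$. Because $p$ is $G$-invariant and finitely satisfiable in $G$ (hence $\Aut(\cG/G)$-invariant), a Morley sequence in $p|_G$ is totally indiscernible over $G$ in the relevant sense, so along such a sequence the value of $\varphi(x_i b)$ is eventually constant and that eventual value is exactly whether $\varphi(xb) \in p$ (this is the standard fact that for an invariant type in an NIP theory, a long enough Morley sequence computes the type). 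The bound $N$ guarantees that any alternating subsequence has length $< N$, so it suffices to look at alternating tuples of length $n < N$. The membership $\varphi(xb) \in p$ then holds precisely when there is an alternating tuple $\bar x \models p^{(n)}|_G$ of some length $n < N$ whose \emph{last} coordinate satisfies $\varphi(x_{n-1}b)$, and no alternating tuple of length $n+1$ has last coordinate satisfying $\neg\varphi(x_{n-1}b)$ ``beyond'' it — packaging this yields $S = \bigcup_{n<N} A_n \cap B_{n+1}^c$ with $A_n, B_n$ as displayed. One has to check that the $A_n$ (resp.~$B_n$) are really defined by the displayed conditions, which is just unwinding: $b \in A_n$ iff some alternating Morley $n$-tuple ends on the $\varphi$-side, $b \in B_n$ iff some alternating Morley $n$-tuple ends on the $\neg\varphi$-side. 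The quantifier ``$(\exists \bar x)(p^{(n)}|_G(\bar x) \wedge \dots)$'' is to be read, as usual for such statements, as a quantifier over $\cG$ ranging over realizations of the (partial type) $p^{(n)}|_G$; the point of the fact is the combinatorial shape of $S$, not first-order definability of $A_n, B_n$ themselves.

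The main obstacle — really the only subtle point — is verifying that the eventual value of $\varphi(x_i b)$ along a Morley sequence in $p|_G$ genuinely equals ``$\varphi(xb) \in p$'', and that the bound $N$ obtained from NIP of $\chi(x;y',y)$ controls alternation for \emph{all} $b$ simultaneously. Both are exactly the content of the argument in \cite[Proposition 2.6]{NIP2} once one observes that translating $x \mapsto xb$ converts the family $\{\varphi(xb)\}_b$ into an instance of a single NIP formula; so I would simply cite that proof \emph{mutatis mutandis}, noting the one change (work with $\psi(x\cdot y', y)$ in place of $\psi(x,y)$, which is why $N$ depends on $\psi$). No new ideas are needed beyond this translation; the statement is essentially \cite[Proposition 2.6]{NIP2} transported to the group-action setting, which is why the excerpt presents it as a ``Fact'' with a pointer to that proof.
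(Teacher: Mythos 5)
Your bottom line — translate to the single partitioned formula $\psi(x\cdot y',y)$, note that the NIP alternation bound $N$ then depends only on $\psi$, and invoke the proof of \cite[Proposition 2.6]{NIP2} \emph{mutatis mutandis} — is exactly what the paper does; it states the Fact with precisely that citation and that parenthetical about $N$. So the citation strategy is fine. However, the mechanism you sketch in your middle paragraph is not the mechanism of that proof, and it is false as stated. First, $p$ is only assumed to lie in $S_{x}^{\inva}(\mathcal{G},G)$; finite satisfiability in $G$ is not a hypothesis, and in any case neither invariance nor finite satisfiability makes a Morley sequence in $p$ over $G$ totally indiscernible — that is generic stability, which is not available here. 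Second, and more seriously, for a merely invariant type the eventual truth value of $\varphi(x_i b)$ along a Morley sequence of $p$ \emph{over $G$} does not compute whether $\varphi(xb)\in p$: e.g.\ in DLO with $p$ the global type at $+\infty$ over a model $M$, a Morley sequence $(a_i)$ over $M$ is increasing, and for $b$ above all the $a_i$ the formula $x>b$ is eventually false along the sequence although it belongs to $p$. The ``limit type of a Morley sequence equals $p$'' principle holds only when the base contains the parameter (or when $p$ is generically stable), so the step where you identify the eventual value with membership in $p$ would fail if written out.

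The actual argument behind the decomposition $S=\bigcup_{n<N} A_n\cap B_{n+1}^{c}$ is the maximal-alternation argument: if $\varphi(xb)\in p$ and $\bar a\models p^{(n)}|_G$ satisfies $\alt_n(\bar a;b)$ with $\neg\varphi(a_{n-1}b)$, one extends by a realization $a_n\models p|_{G\bar a b c}$ (crucially, the base now includes $b$ and the parameters $c$ of $\varphi$, so $p$ itself decides $\varphi(a_n b)$, and such $a_n$ exists in $\mathcal{G}$ by $|G|^+$-saturation) to obtain a strictly longer alternating tuple; since NIP bounds all alternation lengths by $N$ uniformly in $b$, taking $n$ maximal yields $b\in A_n\cap B_{n+1}^{c}$, and the converse direction is the same extension argument run once more. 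This is the content of \cite[Proposition 2.6]{NIP2}, and it needs no indiscernibility beyond that of Morley sequences over $G$ and no claim about limit types. Since your final move is to defer to that proof, the proposal reaches the right conclusion, but the justification you give en route should be replaced by the extension-by-$p|_{G\bar a bc}$ argument rather than the ``eventual value along the sequence'' heuristic.
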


The following lemma is the key new step needed to adapt the arguments from \cite[Section 6.2]{chernikov2023definable} to our general setting here. From now on, let $\cM$ be a minimal left ideal in $(S_{x}^{\fs}(\mathcal{G},G),*)$ and $u \in \cM$ an idempotent. For $p,q \in S_{x}^{\fs}(\mathcal{G},G)$, we will typically write $pq$ instead of $p \ast q$.

\begin{lemma}\label{lemma: constructibility}
Let $\varphi(x) \in \cL(\cG)$ be any formula. Assume that the $\tau$-topology on the ideal group $u\cM$ is Hausdorff. Then the subset $[\varphi(x)] \cap u\cM$ of $u\cM$ is constructible, and so Borel (in the $\tau$-topology). 
\end{lemma}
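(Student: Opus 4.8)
The plan is to use Fact \ref{fact: Borel definability of types} to write the condition ``$\varphi(x) \in p$'' for $p \in u\cM$ in terms of finitely many auxiliary conditions of the form ``$\psi(x) \in p$ for some $\psi$ related to $\varphi$'', combined by Boolean operations, and then to show that each such basic condition cuts out a closed (or open) subset of $u\cM$ in the $\tau$-topology; since a finite Boolean combination of closed/open sets is constructible, we are done. First I would fix the formula $\varphi(x)$ and apply Fact \ref{fact: Borel definability of types} not to $\varphi$ itself but to the defining data: given $p \in u\cM$, note $\varphi(x) \in p$ iff $\varphi(xe) \in p$, so it suffices to understand the set $\{ b \in \cG : \varphi(xb) \in p \}$ and evaluate it at $b = e$; but actually the cleaner route is to express membership $\varphi(x) \in p$ for $p = uq$ (as $p$ ranges over $u\cM$, we have $p = up$) using the semigroup structure. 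So the key reduction is: for $p \in u\cM$, writing $p = u \ast p$, we get $\varphi(x) \in u \ast p$ iff $\{ b : \varphi(xb) \in u \} \in p$ in the appropriate sense — more precisely, $\varphi(x) \in u\ast p$ iff $\varphi(xy) \in u_x \otimes p_y$ iff the ($p$-large, i.e. simply the) set $d_u\varphi := \{b \in \cG : \varphi(xb)\in u\}$ satisfies $d_u\varphi(x) \in p$. By Fact \ref{fact: Borel definability of types} applied to $u$ and $\varphi(x)$, this set $d_u\varphi$ is a finite Boolean combination of the sets $A_n, B_n$, each of which is definable over $G$ using $p^{(n)}|_G$ in the parameter — but crucially $u$ is fixed, so $A_n, B_n$ are fixed subsets of $\cG$ defined by an $\exists$ over a type-definable (over $G$) set. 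Hence $d_u\varphi(x)$ is an $\bigvee_{n<N}(A_n(x) \wedge \neg B_{n+1}(x))$, a countable (in fact, by $N<\omega$, finite) combination of type-definable-over-$G$ conditions on $x$.

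Next I would analyze, for each such ``generalized formula'' $\chi(x)$ (a type-definable-over-$G$ set, or finite Boolean combination thereof arising above), the set $\{ p \in u\cM : \chi(x) \in p \}$. For $\chi$ a genuine $\cL(\cG)$-formula this is $[\chi(x)] \cap u\cM$, and for $\chi$ a partial type over $G$ it is an intersection $\bigcap_i [\chi_i(x)] \cap u\cM$ of such. The heart of the matter is therefore to show: \emph{for each formula $\psi(x) \in \cL(\cG)$, the set $[\psi(x)] \cap u\cM$ is $\tau$-closed, or at least constructible}. This is where Fact \ref{fact: strange_cont} enters: the map $f \colon \overline{u\cM} \to u\cM$, $f(\eta) = u\eta$, is such that $h \circ f$ is continuous for any continuous $h$ out of $(u\cM, \tau)$ into a regular space. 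Since $(u\cM,\tau)$ is Hausdorff by hypothesis (hence regular, being a compact Hausdorff group by Corollary \ref{corollary: T2 implies topological}), $f$ itself is continuous; moreover $f$ is a continuous surjection from the compact (Hausdorff, as a subspace of $S_x^{\fs}(\cG,G)$) space $\overline{u\cM}$ onto $(u\cM,\tau)$. The clopen set $[\psi(x)] \cap \overline{u\cM}$ of $\overline{u\cM}$ has image under $f$ equal to $\{ u\eta : \eta \in \overline{u\cM}, \psi(x) \in \eta \}$; I would relate this to $[\psi(x)] \cap u\cM$ by observing that for $\eta \in \overline{u\cM}$, $\psi(x) \in u\eta$ is governed, again via Fact \ref{fact: Borel definability of types} for $u$, by finitely many conditions on $\eta$ each of which is clopen in $\overline{u\cM}$ (being of the form $[\psi'(x)]$ intersected with $\overline{u\cM}$, for $\psi'$ running over the $A_n,B_n$-type conditions — these are type-definable, so closed, but the finite alternation bound $N$ makes the combination constructible, hence a finite union of locally closed sets, each of which is compact and maps to a compact, hence $\tau$-closed, subset of $u\cM$).

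Putting it together: $[\varphi(x)] \cap u\cM = \{ p \in u\cM : d_u\varphi(x) \in p \}$, and $d_u\varphi(x)$ is, by Fact \ref{fact: Borel definability of types}, $\bigvee_{n<N} (A_n(x) \wedge \neg B_{n+1}(x))$ with $A_n, B_n$ type-definable over $G$; so $[\varphi(x)] \cap u\cM$ is a finite Boolean combination of sets of the form $\{p \in u\cM : \theta(x) \in p\}$ for $\theta$ type-definable over $G$, each of which is $\tau$-closed (an intersection of $[\psi(x)] \cap u\cM$'s, each $\tau$-closed by the compactness/continuity argument above). A finite Boolean combination of $\tau$-closed sets is constructible, and constructible sets are Borel. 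The main obstacle I anticipate is the careful bookkeeping in the middle step: verifying that ``$\psi(x) \in u\eta$'' for $\eta \in \overline{u\cM}$ is genuinely an $N$-bounded Boolean combination of clopen conditions on $\eta$ — this needs the uniform bound $N$ from \cite[Proposition 2.6]{NIP2} (which depends only on the underlying $\cL(\emptyset)$-formula $\psi(x,y)$, not on parameters), together with the fact that $f$ pushes compact sets to $\tau$-compact (hence $\tau$-closed, by Hausdorffness) sets. Once that is in place, the alternation bound converts ``Borel'' into the sharper ``constructible,'' which is what the statement claims.
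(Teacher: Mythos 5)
Your proposal assembles the right ingredients --- Fact \ref{fact: Borel definability of types} applied to $\bar u$ to write $S=\{b\in\fG:\varphi(xb)\in\bar u\}$ as $\bigcup_{n<N}A_n\cap B_{n+1}^c$, the reduction $[\varphi(x)]\cap u\cM=\{p\in u\cM: p(\fG)\subseteq S\}$ via $p=u*p$, and the continuity of $f(\eta):=u\eta$ on $\overline{u\cM}$ from Fact \ref{fact: strange_cont} so that $f$ sends compact sets to $\tau$-closed sets --- but it breaks down at the one step where the actual work lies. The compactness argument shows that the \emph{images} $f[\widetilde{A}_n]$, $f[\widetilde{B}_n]$ are $\tau$-closed (where $\widetilde{A}_n:=\overline{u\cM}\cap\rho[A_n]$, etc.); it does \emph{not} show that the \emph{traces} $u\cM\cap\widetilde{A}_n$ are $\tau$-closed, and these two sets differ in general, since $u\eta$ for $\eta\in\widetilde{A}_n\setminus u\cM$ need not lie in $\widetilde{A}_n$. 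Your final paragraph asserts that each set $\{p\in u\cM:\theta(x)\in p\}$ is $\tau$-closed "by the compactness/continuity argument above," which is exactly the unsupported identification of trace with image. Relatedly, the parenthetical "a finite union of locally closed sets, each of which is compact" is false: a locally closed subset of a compact space (such as $\widetilde{A}_n\cap\widetilde{B}_{n+1}^c$) is generally not compact, so you cannot push these pieces forward directly either.

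The correct route, and the heart of the paper's proof, is to take $\widetilde S:=f^{-1}[[\varphi(x)]]=\bigcup_{n<N}\widetilde A_n\cap\widetilde B_{n+1}^c$ and then force the Boolean operations to commute with $f$ by \emph{saturating along the fibers of $f$}: one replaces $\widetilde A_n,\widetilde B_n$ by $\widetilde A_n':=\ker(f)*\widetilde A_n$ and $\widetilde B_n':=\ker(f)*\widetilde B_n$, where $\ker(f)$ is the set of idempotents of $\overline{u\cM}$ (using Lemma \ref{lemma: basic lemma from KLM} and Fact \ref{fact: explicit isomorphism}). These primed sets are unions of fibers of $f$, so $f$ of their Boolean combination equals the Boolean combination of their images, and $f[\widetilde A_n']=f[\widetilde A_n]$ is $\tau$-closed. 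The nontrivial remaining point --- absent from your proposal --- is the equality $\widetilde S=\bigcup_{n<N}\widetilde A_n'\cap\widetilde B_{n+1}'^{\,c}$, which the paper proves by a maximality argument on $n<N$ exploiting the finite alternation bound $N$ and the semigroup identities among the idempotents. Without that step you only get containments between $[\varphi(x)]\cap u\cM$ and the constructible set $\bigcup_{n<N}f[\widetilde A_n]\cap f[\widetilde B_{n+1}]^c$, not equality.
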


\begin{proof}
By Fact \ref{fact: strange_cont} and the assumption that $u\cM$ is Hausdorff, the function  $f \colon \overline{u\cM}\to u\cM$ (where $\overline{u\cM}$ is the closure of $u\cM$ in the topology of $E(X)$) given by $f(\eta): = u\eta$ is continuous. Note that $\ker(f):=\{p \in  \overline{u\cM}: f(p)=u\}$ is a subsemigroup of $\overline{u\cM}$ which coincides with the set $\mathcal{J}$ of all idempotents in $ \overline{u\cM}$. This follows from Lemma \ref{lemma: basic lemma from KLM} and the fact that for every $v \in \mathcal{J}$ the restriction $f |_{v\cM} \colon v\cM \to u\cM$ is a group isomorphism by Fact \ref{fact: explicit isomorphism}.

Put 
$$\widetilde{S}:=f^{-1}[[\varphi(x)]]= \{p \in \overline{u\cM}: up \in [\varphi(x)]\}.$$

Let $\fG \succ \mathcal{G}$ be a monster model in which $\mathcal{G}$ is small. Let $\bar u \in  S_{x}^{\fs}(\fG,G)$ be the unique extension of $u$ to a type in $S_x(\fG)$ which is finitely satisfiable in $G$. Pick $a \models \bar u$ (in a yet bigger monster model). Put
$$S:=\{b \in \fG: \models \varphi(ab)\} = \{b \in \fG: \varphi(xb) \in \bar u\}.$$
Then $\widetilde{S}= \{p \in \overline{u\cM}: \models \varphi(ab) \textrm{ for all/some } b \in p(\fG)\} = \{ \tp(b/\mathcal{G}) \in \overline{u\cM}:  b \in S\}$.

Take $N$, $A_n$, and $B_n$ from Fact \ref{fact: Borel definability of types} applied for $\fG$ in place of $\mathcal{G}$ and for $p=\bar u$. Then $S= \bigcup_{n<N} A_n \cap B_{n+1}^c$. Define:
$$
\begin{array}{ll}
\widetilde{A}_n:=\{\tp(b/\mathcal{G}): b \in A_n \textrm{ and } \tp(b/\mathcal{G}) \in \overline{u\cM}\},\\
\widetilde{B}_n:=\{\tp(b/\mathcal{G}): b \in B_n \textrm{ and } \tp(b/\mathcal{G}) \in \overline{u\cM}\},\\
\widetilde{A}_n':= \ker(f) * \widetilde{A}_n := \left\{rs : r \in  \ker(f), s \in \widetilde{A}_n \right\}, \widetilde{B}_n':= \ker(f) * \widetilde{B}_n, \textrm{ and }\\
\widetilde{S}':= \bigcup_{n<N} \widetilde{A}_n' \cap \widetilde{B}_{n+1}'^{c}.
\end{array}
$$

Note that all the sets $\widetilde{A}_n,\widetilde{B}_n, \widetilde{A}_n', \widetilde{B}_n', \widetilde{S}'$ are contained in $\overline{u\cM}$, as $\overline{u\cM}$ is a semigroup by Lemma  \ref{lemma: basic lemma from KLM}.

\begin{clm*}
\begin{enumerate}
\item $\widetilde{S}= \bigcup_{n<N} \widetilde{A}_n \cap \widetilde{B}_{n+1}^c$.
\item $\ker(f) * \widetilde{S} = \widetilde{S}$.
\item $\ker(f) * \widetilde{A}_n'=\widetilde{A}_n'$, $\ker(f) * \widetilde{B}_n'=\widetilde{B}_n'$, and $\ker(f) * \widetilde{S}'=\widetilde{S}'$.
\item $\widetilde{A}_n \subseteq \widetilde{A}_n'$ and $\widetilde{B}_n \subseteq \widetilde{B}_n'$.
\item $\widetilde{S}=\widetilde{S}'$.
\item $f[\widetilde{A}_n]$ and $f[\widetilde{B}_{n}]$ are closed.
\item $f[\widetilde{S}']= \bigcup_{n<N} f[\widetilde{A}_n'] \cap f[\widetilde{B}_{n+1}']^{c} =\bigcup_{n<N} f[\widetilde{A}_n] \cap f[\widetilde{B}_{n+1}]^c$ is constructible.
\end{enumerate}
\end{clm*}

\begin{proof}
(1) Let $\rho \colon \fG \to S_x(\mathcal{G})$ be given by $\rho(b):=\tp(b/\mathcal{G})$. Note that all of the sets $A_n, B_n$ are $\Aut(\fG/\cG)$-invariant (more precisely, invariant over $G$ and the parameters of $\varphi(x)$ in $\cG$), hence they are unions of sets of realizations in $\fG$ of complete types over $\mathcal{G}$. Then we have 
$$\rho[S]=\rho\left[\bigcup_{n<N} A_n \cap B_{n+1}^c\right]= \bigcup_{n<N} \rho[A_n] \cap \rho[B_{n+1}]^c.$$ 
So $\widetilde{S}= \overline{u\cM} \cap \rho[S] = \bigcup_{n<N}  (\overline{u\cM} \cap  \rho[A_n]) \cap (\overline{u\cM} \cap \rho[B_{n+1}])^c = \bigcup_{n<N} \widetilde{A}_n \cap \widetilde{B}_{n+1}^c$.

(2) $(\subseteq)$ Take $p \in \ker(f)$ and $s \in \widetilde{S}$. Then $f(ps)=u(ps) = (up) s = us=f(s) \in [\varphi(x)]$, which by definition implies  $ps \in \widetilde{S}$. 

$(\supseteq)$  Take $p \in \widetilde{S}$. Then, by  Lemma \ref{lemma: basic lemma from KLM}, $p \in v\cM$ for some $v \in \mathcal{J}$ (where $\mathcal{J}$ is the set of all idempotents in $ \overline{u\cM}$). Then $p=vp$ and $v \in \ker(f)$, so $p \in \ker(f) * \widetilde{S}$.

(3) The first two equalities follow from the definition of $A_n'$ and $B_n'$ and the fact that $\ker(f)=\mathcal{J}$ satisfies $\mathcal{J}*\mathcal{J}=\mathcal{J}$ (as $uv=u$ for $u,v \in \mathcal{J}$). To show $(\subseteq)$ in the third equality, take any $p \in  \widetilde{A}_n' \cap \widetilde{B}_{n+1}'^{c}$ and $v \in \ker(f) = \mathcal{J}$. Then $vp \in \widetilde{A}_n'$ by the first equality. Moreover, $vp \in \widetilde{B}_{n+1}'^c$, as otherwise $vp \in \widetilde{B}_{n+1}'$, so $p=v_p vp \in \widetilde{B}_{n+1}'$ by the second equality, where $v_p \in \mathcal{J}$ is such that $p \in v_p \cM$, a contradiction. To see $(\supseteq)$, take any $p \in \widetilde{S}'$. We have that $p \in v\cM$ for some $v \in \mathcal{J}=\ker(f)$.  So $p =v p \in \ker(f) * \widetilde{S}'$.

(4) follows as on the last line of (3).

(5) $(\supseteq)$ Take $g \in \widetilde{S}'$. Then $g \in  \widetilde{A}_n' \cap \widetilde{B}_{n+1}'^{c}$ for some $n<N$. Hence, $g \in \ker(f) * h$ for some $h \in \widetilde{A}_n$. Since $g \in \widetilde{B}_{n+1}'^{c}$, by (3), we get that $h \in \widetilde{B}_{n+1}'^{c}$, so, by (4), $h \in \widetilde{B}_{n+1}^c$. Thus, $h \in \widetilde{A}_n \cap \widetilde{B}_{n+1}^c$ which is contained in $\widetilde{S}$ by (1). Hence, by (2), we conclude that $g \in \widetilde{S}$.

$(\subseteq)$ Suppose for a contradiction that there is some $g \in \widetilde{S} \setminus \widetilde{S}'$. Using (1) and (2), let $n<N$ be maximal for which there is $h \in \widetilde{A}_n \cap \widetilde{B}_{n+1}^c$ such that $g \in \ker(f) * h$. Then $h \in  \widetilde{S} \setminus \widetilde{S}'$ by (1) and (3). So $h \notin  \widetilde{A}_n' \cap \widetilde{B}_{n+1}'^{c}$. This together with $h \in \widetilde{A}_n$ and (4) implies that $h \in \widetilde{B}_{n+1}'$, so $h \in \ker(f) * h'$ for some $h' \in \widetilde{B}_{n+1}$. Then $g \in \ker(f) * \ker(f) * h'=\ker(f) * h'$, so $g=vh'$ for some $v \in \ker(f)=\mathcal{J}$. Choose $v' \in \mathcal{J}$ with $h' \in v'\cM$. Then $v'g=v'vh'=v'h'=h'$, and so $h' \in \ker(f) * g$ which is contained in $\widetilde{S}$ by (2) (as $g \in \widetilde{S}$). Therefore, by (1), $h' \in \widetilde{A}_m \cap \widetilde{B}_{m+1}^c$ for some $m<N$. Since $h' \in \widetilde{B}_{n+1}$, we get $m+1>n+1$. As $g \in \ker(f) * h'$, we get a contradiction with the maximality of $n$.

(6) follows as $f$ is continuous by Fact  \ref{fact: strange_cont} (this is the only place in the proof where we use the assumption that $u \cM$ is Hausdorff) and $\widetilde{A}_n, \widetilde{B}_n$ are closed.

(7) To show the first equality, it is enough to prove that $\widetilde{A}_n'$ and $\widetilde{B}_n'$ are unions of fibers of $f$. Let us prove it for $\widetilde{A}_n'$; the case of $\widetilde{B}_n'$ is the same. So consider any $p \in \widetilde{A}_n'$ and $q \in  \overline{u\cM}$ with $f(p)=f(q)$. We have $p \in v_p \cM$ and $q \in v_q \cM$ for some $v_p,v_q \in \mathcal{J}$. Then $f(v_q p) = u v_q p = up = f(p) = f(q)$ and  $v_q p,q \in v_q \cM$. So, since $f |_{v_q\cM}$ is an isomorphism, we get that $q=v_q p \in \ker(f) * p \subseteq \ker(f) * \widetilde{A}_n' = \widetilde{A}_n'$ by (3).

The second equality follows since:
$$
\begin{array}{ll}
f[\widetilde{A}_n']= f[\ker(f) * \widetilde{A}_n] = u * \ker(f) * \widetilde{A}_n = u * \widetilde{A}_n =f[\widetilde{A}_n],\\
f[\widetilde{B}_n']= f[\ker(f) * \widetilde{B}_n] = u * \ker(f) * \widetilde{B}_n = u * \widetilde{B}_n =f[\widetilde{B}_n].
\end{array}
$$
Hence, $f[\widetilde{S}']$ is constructible by (6).
\end{proof}

By item (5) of the claim, we get $[\varphi(x)] \cap u\cM = f[f^{-1}[[\varphi(x)]]] = f[\widetilde{S}]=f[\widetilde{S}']$, which is a constructible set by item (7) of the claim.
\end{proof}

\begin{proposition}\label{prop: def of inv meas on ideal group}
	Assume that the $\tau$-topology on $u\cM$ is Hausdorff. By Corollary \ref{corollary: T2 implies topological}, $u\cM$  is a compact topological group, so we have the unique normalized  Haar measure $h_{u\cM}$ on Borel subsets of $u\cM$. By Lemma \ref{lemma: constructibility}, the formula
$$\mu_{u\cM}(\varphi(x)):=h_{u\cM}([\varphi(x)] \cap u\cM)$$
yields a well-defined Keisler measure in $\mathfrak{M}_{x}(\mathcal{G})$ which is concentrated on $\overline{u\cM} \subseteq \cM$ (i.e.~with the support contained in  $\overline{u\cM}$).
\end{proposition}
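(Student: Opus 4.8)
\textbf{Proof plan for Proposition \ref{prop: def of inv meas on ideal group}.}

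The plan is to verify that the assignment $\varphi(x) \mapsto h_{u\cM}([\varphi(x)] \cap u\cM)$ satisfies the axioms of a finitely additive probability measure on $\cL_x(\cG)$ modulo logical equivalence, and then to identify its support. First I would note that by Corollary \ref{corollary: T2 implies topological} the hypothesis gives us a genuine compact topological group $u\cM$, so the normalized Haar measure $h_{u\cM}$ exists, is a regular Borel probability measure, and is defined on \emph{all} Borel subsets of $u\cM$; by Lemma \ref{lemma: constructibility} each set $[\varphi(x)] \cap u\cM$ is constructible in the $\tau$-topology, hence Borel, so $h_{u\cM}([\varphi(x)] \cap u\cM)$ is well-defined. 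Well-definedness on $\cL_x(\cG)$ modulo logical equivalence is immediate since $[\varphi(x)]$ depends only on the logical equivalence class of $\varphi$. The value at $x = x$ is $h_{u\cM}(u\cM) = 1$, and the value at $x \neq x$ is $h_{u\cM}(\emptyset) = 0$. For finite additivity: if $\vdash \neg(\varphi(x) \wedge \psi(x))$, then $[\varphi(x)] \cap [\psi(x)] = \emptyset$, so $([\varphi(x)] \cap u\cM)$ and $([\psi(x)] \cap u\cM)$ are disjoint Borel subsets of $u\cM$, and since $[\varphi(x) \vee \psi(x)] = [\varphi(x)] \cup [\psi(x)]$ we get $\mu_{u\cM}(\varphi \vee \psi) = h_{u\cM}(([\varphi(x)] \cup [\psi(x)]) \cap u\cM) = h_{u\cM}([\varphi(x)] \cap u\cM) + h_{u\cM}([\psi(x)] \cap u\cM) = \mu_{u\cM}(\varphi) + \mu_{u\cM}(\psi)$. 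Thus $\mu_{u\cM} \in \mathfrak{M}_x(\cG)$.

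Next I would address the claim that $\mu_{u\cM}$ is concentrated on $\overline{u\cM}$, i.e.~that $S(\mu_{u\cM}) \subseteq \overline{u\cM}$ (closure taken in $S_{x}^{\fs}(\cG,G) \cong E(S_x^{\fs}(\cG,G))$, which in turn sits inside $S_x(\cG)$). Suppose $q \in S_x(\cG) \setminus \overline{u\cM}$. Since $\overline{u\cM}$ is closed in the (compact Hausdorff) space $S_x(\cG)$, there is a formula $\varphi(x) \in q$ with $[\varphi(x)] \cap \overline{u\cM} = \emptyset$; a fortiori $[\varphi(x)] \cap u\cM = \emptyset$, so $\mu_{u\cM}(\varphi(x)) = h_{u\cM}(\emptyset) = 0$. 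Hence $q \notin S(\mu_{u\cM})$, which gives $S(\mu_{u\cM}) \subseteq \overline{u\cM}$. Finally, $\overline{u\cM} \subseteq \cM$ because $\cM$ is closed in $E(X)$ by Ellis' theorem (Fact \ref{fac: Ellis theorem}(1)), and $\cM \subseteq S_x^{\fs}(\cG,G)$; this justifies the parenthetical ``$\subseteq \cM$'' in the statement.

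The main (and really the only) subtlety here is genuinely Lemma \ref{lemma: constructibility} — that $[\varphi(x)] \cap u\cM$ is Borel in the $\tau$-topology — but that is proved earlier in the excerpt and I would simply invoke it. Everything downstream is bookkeeping: checking the measure axioms and a routine support argument using the regularity/closedness already in hand. One small point worth spelling out is that Haar measure on $u\cM$ is defined on the Borel $\sigma$-algebra of $(u\cM,\tau)$, and constructible sets (finite Boolean combinations of $\tau$-closed sets) are Borel there, so no measurability obstruction arises; I would state this explicitly to close the argument cleanly.
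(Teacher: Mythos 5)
Your proof is correct and follows exactly the route the paper intends: the proposition is stated in the paper as an immediate consequence of Corollary \ref{corollary: T2 implies topological} (existence of Haar measure) and Lemma \ref{lemma: constructibility} (Borel-ness of $[\varphi(x)]\cap u\cM$ in the $\tau$-topology), with no separate proof given, and your verification of the measure axioms and the support inclusion $S(\mu_{u\cM})\subseteq\overline{u\cM}\subseteq\cM$ supplies precisely the routine bookkeeping left implicit. No gaps.
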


Using this, the material from Lemma 6.9 to Corollary 6.12 of \cite{chernikov2023definable} goes through word for word.  In particular, we get the following lemma and the main theorem describing a minimal left ideal of the semigroup $\left(\mathfrak{M}_{x}^{\fs}(\mathcal{G},G), \ast \right)$, under a more natural assumption than the property \emph{CIG1} (requiring that $u \cM$ is compact with respect to the induced topology instead of the $\tau$-topology) assumed in \cite{chernikov2023definable}.

\begin{lemma}
Assume that the $\tau$-topology on $u\cM$ is Hausdorff. Then $\mu_{u\cM} * \delta_p= \mu_{u\cM}$ for all $p\in \cM$, where $\delta_p$ is the Dirac measure at $p$.
\end{lemma}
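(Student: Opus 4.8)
The statement to prove is that $\mu_{u\cM} * \delta_p = \mu_{u\cM}$ for all $p \in \cM$. The plan is to reduce this to the invariance of the Haar measure $h_{u\cM}$ on the compact topological group $u\cM$ under an appropriate continuous self-map, and to the description of the support of $\mu_{u\cM}$ as being concentrated on $\overline{u\cM}$.

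First I would unwind the definition of convolution and the push-forward. For a fixed $p \in \cM$ and a formula $\varphi(x) \in \cL(\cG)$, compute $(\mu_{u\cM} * \delta_p)(\varphi(x)) = (\mu_{u\cM} \otimes \delta_p)(\varphi(x \cdot y))$. Since $\delta_p$ is a Dirac measure, this integral collapses: choosing a small model $M$ over which everything relevant is invariant and taking $b \models p|_{Mc}$ (where $c$ lists the parameters of $\varphi$), this equals $\mu_{u\cM}(\varphi(x \cdot b))$. Now I would use that $\mu_{u\cM}$ is, by definition (Proposition~\ref{prop: def of inv meas on ideal group}), the pull-back via $q \mapsto [\cdot] \cap u\cM$ of the Haar measure on $u\cM$; so $\mu_{u\cM}(\varphi(x\cdot b)) = h_{u\cM}([\varphi(x\cdot b)] \cap u\cM)$ and $\mu_{u\cM}(\varphi(x)) = h_{u\cM}([\varphi(x)] \cap u\cM)$. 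The key point is then to identify the right translation action $q \mapsto q * p$ on $S_x^{\fs}(\cG,G)$ restricted to $\overline{u\cM}$ with a map that, composed with $f(\eta) = u\eta$, realizes right multiplication by $up \in u\cM$ on the group $u\cM$; and right multiplication on a compact group preserves Haar measure. More precisely, $[\varphi(x\cdot b)] \cap u\cM = \{ q \in u\cM : \varphi(x\cdot b) \in q\} = \{q \in u\cM : \varphi(x) \in q * p\}$, and via the group isomorphism structure this is the preimage of $[\varphi(x)] \cap u\cM$ under the right-translation-by-$(up)$ automorphism of $u\cM$ composed with $f$ — so its Haar measure equals $h_{u\cM}([\varphi(x)] \cap u\cM)$.

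The technical heart of the argument is making the previous sentence precise, i.e.~showing that the assignment $q \in u\cM \mapsto q * p \in \cM$, followed by $f = (u \cdot -): \overline{u\cM} \to u\cM$, agrees with the $\tau$-continuous automorphism $r_{up}: q \mapsto q \cdot (up)$ of $u\cM$ (right multiplication in the ideal group), and that this automorphism is $h_{u\cM}$-preserving. For the first part one uses Fact~\ref{fac: Ellis theorem}(2) ($a u = a$ on $\cM$) and the definition of the group operation on $u\cM$: for $q \in u\cM$, $u(q * p) = (uq)(up) = q (up)$ since $uq = q$. Since $u\cM$ is a compact Hausdorff topological group by Corollary~\ref{corollary: T2 implies topological}, right translation by any element preserves the normalized Haar measure — this is the standard uniqueness/invariance property of Haar measure. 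The measurability issue (that $[\varphi(x)] \cap u\cM$ is Borel in the $\tau$-topology, so that $h_{u\cM}$ of it is defined) is exactly Lemma~\ref{lemma: constructibility}, which also applies to the translated formula $\varphi(x\cdot b)$.

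The main obstacle I anticipate is the bookkeeping around the map $f$ and the fact that $\mu_{u\cM}$ is only \emph{concentrated} on $\overline{u\cM}$ rather than literally supported on $u\cM$; one must check that replacing the pushforward under the continuous surjection $f: \overline{u\cM} \to u\cM$ by integration against $h_{u\cM}$ is compatible with the convolution, i.e.~that $[\varphi(x\cdot b)] \cap \overline{u\cM}$ and $f^{-1}([\varphi(x\cdot b)] \cap u\cM)$ have the same $\mu_{u\cM}$-measure — but this is exactly the kind of computation already carried out in \cite[Section 6.2]{chernikov2023definable} in the topological-group-topology case, and since everything now goes through with the $\tau$-topology in place of the induced topology by Lemma~\ref{lemma: constructibility}, Corollary~\ref{corollary: T2 implies topological}, and Fact~\ref{fact: strange_cont}, I expect the same argument to apply verbatim. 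Concretely I would simply invoke that ``the material from Lemma 6.9 to Corollary 6.12 of \cite{chernikov2023definable} goes through word for word'' once the Borel-measurability input is established, and extract this lemma as the relevant intermediate step.
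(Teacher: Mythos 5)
Your proposal is correct and takes essentially the paper's route: the paper proves this lemma by noting that the argument of \cite[Lemma 6.9]{chernikov2023definable} goes through word for word once Lemma \ref{lemma: constructibility} and Corollary \ref{corollary: T2 implies topological} are available, and that argument is exactly your computation --- collapse $\mu_{u\cM}*\delta_p$ at $\varphi(x)$ to $\mu_{u\cM}(\varphi(x\cdot b))$ for $b\models p|_{Gc}$, identify $[\varphi(x\cdot b)]\cap u\cM$ with the preimage of $[\varphi(x)]\cap u\cM$ under right translation by $up$, and invoke bi-invariance of the Haar measure on the compact topological group $u\cM$, with Borel measurability supplied by Lemma \ref{lemma: constructibility}. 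One simplification: for $q\in u\cM$ one has $q*p=(qu)(up)=q\cdot(up)\in u\cM$ directly, so the composition with $f$ and the bookkeeping on $\overline{u\cM}$ that you worry about are not actually needed.
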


Let $\mathfrak{M}(\cM):=\{ \mu \in \mathfrak{M}_x(\mathcal{G}): S(\mu) \subseteq \mathcal{M}\}$.

\begin{theorem}\label{thm: min ideal of meas Hausd}
Assume that the $\tau$-topology on $u\cM$ is Hausdorff. Then $\mathfrak{M}(\cM) * \mu_{u\cM}$  is a minimal left ideal of $\mathfrak{M}_{x}^{\fs}(\mathcal{G},G)$, and  $\mu_{u\cM}$ is an idempotent which belongs to $\mathfrak{M}(\cM) * \mu_{u\cM}$.
\end{theorem}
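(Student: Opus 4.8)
\textbf{Proof proposal for Theorem \ref{thm: min ideal of meas Hausd}.}

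The plan is to verify the three assertions in turn — that $\mathfrak{M}(\cM) * \mu_{u\cM}$ is a left ideal, that it is minimal among left ideals, and that $\mu_{u\cM}$ is an idempotent lying inside it — using the semigroup-theoretic facts recorded earlier together with the key displacement identity $\mu_{u\cM} * \delta_p = \mu_{u\cM}$ for $p \in \cM$ (the Lemma just above). First I would note that $\mathfrak{M}(\cM)$ is nonempty, compact, convex and invariant: since $\cM$ is a closed left ideal of $(S^{\fs}_x(\cG,G),*)$, for any $\mu \in \mathfrak{M}^{\fs}_x(\cG,G)$ and any $\nu \in \mathfrak{M}(\cM)$ the support of $\mu * \nu$ is contained in $S(\mu) * S(\nu) \subseteq \cM$ (using that supports multiply correctly, as in \cite[Section 4]{chernikov2022definable}), so $\mathfrak{M}(\cM)$ is a left ideal of $\mathfrak{M}^{\fs}_x(\cG,G)$; consequently $\mathfrak{M}(\cM) * \mu_{u\cM} \subseteq \mathfrak{M}(\cM)$ and it is itself a left ideal of the big semigroup, since $\lambda * (\nu * \mu_{u\cM}) = (\lambda * \nu) * \mu_{u\cM}$ and $\lambda * \nu \in \mathfrak{M}(\cM)$ for $\lambda \in \mathfrak{M}^{\fs}_x(\cG,G)$, $\nu \in \mathfrak{M}(\cM)$. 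Associativity of $*$ here is legitimate because $\mu_{u\cM}$ is concentrated on $\cM$ and all measures in question are finitely satisfiable in $G$, hence Borel-definable by NIP (as in \cite[Section 6]{chernikov2023definable}).

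Next I would establish idempotency of $\mu_{u\cM}$ and membership in the ideal. Since $\mu_{u\cM}$ is concentrated on $\overline{u\cM} \subseteq \cM$, we can write $\mu_{u\cM} = \int_{S(\mu_{u\cM})} \delta_p \, d\mu_{u\cM}(p)$ and compute, for any formula $\varphi(x) \in \cL(\cG)$,
\begin{equation*}
(\mu_{u\cM} * \mu_{u\cM})(\varphi(x)) = \int_{S(\mu_{u\cM})} (\mu_{u\cM} * \delta_p)(\varphi(x)) \, d\mu_{u\cM}(p) = \int_{S(\mu_{u\cM})} \mu_{u\cM}(\varphi(x)) \, d\mu_{u\cM}(p) = \mu_{u\cM}(\varphi(x)),
\end{equation*}
where the middle equality is precisely the Lemma $\mu_{u\cM} * \delta_p = \mu_{u\cM}$ for $p \in \cM$ (applicable since $S(\mu_{u\cM}) \subseteq \overline{u\cM} \subseteq \cM$), and the first equality uses Borel-definability of $\mu_{u\cM}$ to justify integrating convolution against the support measure (the standard disintegration of $*$ from \cite[Section 3]{chernikov2022definable}). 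Hence $\mu_{u\cM} * \mu_{u\cM} = \mu_{u\cM}$, i.e.~$\mu_{u\cM}$ is idempotent. For membership: $\mu_{u\cM} = \mu_{u\cM} * \mu_{u\cM} \in \mathfrak{M}(\cM) * \mu_{u\cM}$, since $\mu_{u\cM} \in \mathfrak{M}(\cM)$.

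Finally, minimality. Here I would argue that for any $\nu \in \mathfrak{M}(\cM)$ we have $\mathfrak{M}^{\fs}_x(\cG,G) * (\nu * \mu_{u\cM}) = \mathfrak{M}(\cM) * \mu_{u\cM}$, which gives minimality because any left ideal contained in $\mathfrak{M}(\cM) * \mu_{u\cM}$ contains some element $\nu * \mu_{u\cM}$ and hence contains $\mathfrak{M}^{\fs}_x(\cG,G) * (\nu * \mu_{u\cM})$, forcing it to be all of $\mathfrak{M}(\cM)*\mu_{u\cM}$. The inclusion $\subseteq$ is clear from the ideal property; for $\supseteq$, given $\lambda \in \mathfrak{M}(\cM)$ I want $\lambda * \mu_{u\cM} \in \mathfrak{M}^{\fs}_x(\cG,G) * (\nu * \mu_{u\cM})$. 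Since $S(\lambda), S(\nu) \subseteq \cM$ and $\cM$ is a minimal left ideal of the type semigroup, for each $q \in S(\nu)$ we have $S^{\fs}_x(\cG,G) * q = \cM = S^{\fs}_x(\cG,G)*\cM$, so using the displacement identity again, $\delta_r * (\nu * \mu_{u\cM}) = \delta_r * \mu_{u\cM}$ for any $r \in \cM$ (as $\nu * \mu_{u\cM} = \int \delta_q * \mu_{u\cM}\, d\nu = \int \mu_{u\cM} d\nu = \mu_{u\cM}$, wait — actually $\nu * \mu_{u\cM} = \mu_{u\cM}$ directly by the Lemma and disintegration over $S(\nu)\subseteq\cM$!). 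This collapses the problem: $\nu * \mu_{u\cM} = \mu_{u\cM}$ for every $\nu \in \mathfrak{M}(\cM)$, so in fact $\mathfrak{M}(\cM) * \mu_{u\cM} = \{\mu_{u\cM}\}$ is a single point and is trivially a minimal left ideal of $\mathfrak{M}^{\fs}_x(\cG,G)$ provided $\{\mu_{u\cM}\}$ is indeed a left ideal — which is exactly the statement that $\lambda * \mu_{u\cM} = \mu_{u\cM}$ for all $\lambda \in \mathfrak{M}^{\fs}_x(\cG,G)$; but $\lambda * \mu_{u\cM} = \int_{S(\lambda)} \delta_p * \mu_{u\cM}\, d\lambda(p) = \int \mu_{u\cM}\, d\lambda = \mu_{u\cM}$ by the Lemma, since $S(\lambda)$ need not be in $\cM$ — \textbf{this is the point requiring care}. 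So the main obstacle is verifying $\delta_p * \mu_{u\cM} = \mu_{u\cM}$ for arbitrary $p \in S^{\fs}_x(\cG,G)$, not just $p \in \cM$; I expect this follows because $p * u$-type considerations push any such $p$ into $\cM$ after convolution on the right with the $\cM$-concentrated measure $\mu_{u\cM}$, i.e.~$\delta_p * \mu_{u\cM} = (\delta_p * \delta_u) * \mu_{u\cM} = \delta_{pu} * \mu_{u\cM} = \mu_{u\cM}$ since $pu \in \cM$ and using $\mu_{u\cM} = \delta_u * \mu_{u\cM}$ (the case $p=u$ of the Lemma) — and this is precisely the content that "the material from Lemma 6.9 to Corollary 6.12 of \cite{chernikov2023definable} goes through word for word," so I would simply invoke that. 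Once $\delta_p * \mu_{u\cM} = \mu_{u\cM}$ for all $p$, disintegration gives $\lambda * \mu_{u\cM} = \mu_{u\cM}$ for all $\lambda$, hence $\mathfrak{M}(\cM)*\mu_{u\cM}=\{\mu_{u\cM}\}$ is a (singleton, hence minimal) left ideal containing the idempotent $\mu_{u\cM}$, completing the proof.
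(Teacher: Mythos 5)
Your first two steps are fine: the left-ideal property of $\mathfrak{M}(\cM)$ (via the containment of supports of convolutions and closedness of $\cM$), and the idempotency computation, which correctly disintegrates the convolution over its \emph{right} factor, $(\mu_{u\cM}*\mu_{u\cM})(\varphi)=\int_{S(\mu_{u\cM})}(\mu_{u\cM}*\delta_p)(\varphi)\,d\mu_{u\cM}(p)$, and then applies the Lemma $\mu_{u\cM}*\delta_p=\mu_{u\cM}$ for $p\in\cM$. The minimality argument, however, contains a genuine error of sidedness. The Lemma is a statement about \emph{right} translation: $\mu_{u\cM}*\delta_p=\mu_{u\cM}$; you use it as if it also gave $\delta_p*\mu_{u\cM}=\mu_{u\cM}$ (e.g.\ you quote ``$\mu_{u\cM}=\delta_u*\mu_{u\cM}$, the case $p=u$ of the Lemma'', which is not what the Lemma says), and you disintegrate $\nu*\mu_{u\cM}$ over the support of the \emph{left} factor $\nu$. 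That left-factor disintegration, $\nu*\mu_{u\cM}=\int_{S(\nu)}(\delta_q*\mu_{u\cM})\,d\nu(q)$, is exactly a Fubini/commutation claim for the Morley product, and it fails in general: $\otimes$ is not commutative even for measures finitely satisfiable in $G$ under NIP (only the right-factor disintegration is built into the definition of $\otimes$). The conclusion you reach --- that $\lambda*\mu_{u\cM}=\mu_{u\cM}$ for all $\lambda$, so $\mathfrak{M}(\cM)*\mu_{u\cM}=\{\mu_{u\cM}\}$ is a singleton --- is in fact false in general: by \cite[Theorem 5.1]{chernikov2023definable}, quoted in the introduction, a minimal left ideal of $(\mathfrak{M}_x^{\fs}(\cG,G),*)$ is a single point only when $G$ is definably amenable, and has infinitely many extreme points otherwise (a singleton left ideal would produce a left-invariant measure, i.e.\ definable amenability). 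So the final appeal to ``Lemma 6.9--Corollary 6.12 going through word for word'' is being asked to prove something those results do not (and cannot) assert.

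The repair uses the Lemma on the side on which it actually applies. Since $S(\lambda)\subseteq\cM$ for $\lambda\in\mathfrak{M}(\cM)$, the legitimate right-factor disintegration gives $\mu_{u\cM}*\lambda=\int_{S(\lambda)}(\mu_{u\cM}*\delta_r)\,d\lambda(r)=\mu_{u\cM}$ for every $\lambda\in\mathfrak{M}(\cM)$. Then for any $\nu\in\mathfrak{M}(\cM)$, setting $\eta:=\nu*\mu_{u\cM}\in\mathfrak{M}(\cM)$, one gets $\mu_{u\cM}*\eta=\mu_{u\cM}$, hence $\mu_{u\cM}\in\mathfrak{M}_x^{\fs}(\cG,G)*\eta$, and by associativity $\lambda*\mu_{u\cM}=(\lambda*\mu_{u\cM})*\eta\in\mathfrak{M}_x^{\fs}(\cG,G)*\eta$ for all $\lambda\in\mathfrak{M}(\cM)$; combined with $\mathfrak{M}_x^{\fs}(\cG,G)*\eta\subseteq\mathfrak{M}(\cM)*\mu_{u\cM}$ this shows every left ideal contained in $\mathfrak{M}(\cM)*\mu_{u\cM}$ is all of it, i.e.\ minimality --- without any collapse to a point. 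This is essentially the route of \cite[Lemma 6.9--Corollary 6.12]{chernikov2023definable}, which is what the paper invokes once the Lemma and the construction of $\mu_{u\cM}$ (requiring Hausdorffness of the $\tau$-topology) are in place.
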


In the next section we will see that the assumption that the $\tau$-topology on $u\cM$ is Hausdorff is always satisfied when $G$ is a countable NIP group.

\section{Revised Newelski's conjecture for countable NIP groups}\label{sec: rev Newelski conj}

The goal of this section is to prove the revised Newelski's conjecture (see \cite[Conjecture 5.3]{KrPi2}) working over a countable model. We consider here a standard context for this conjecture (originally from \cite[Section 4]{N1}, but see also \cite{chernikov2014external}) which is slightly more general than in the previous Section \ref{section: top dyn}. Let $M$ be a model of a NIP theory, $G$ a 0-definable group in $M$, and $N \succ M$ an $|M|^{+}$-saturated elementary extension. By $S_{G,\ext}(M)$ we denote the space of all complete external types over $M$ concentrated on $G$, i.e.~the space of ultrafilters of externally definable subsets of $G$. It is a $G$-flow with the action given by left translation, which is naturally isomorphic as a $G$-flow to the $G$-flow $S_G^{\fs}(N,M)$ of all complete types over $N$ concentrated on $G$ and finitely satisfiable in $M$. Note that the previous context $S_x^{\fs}(\mathcal{G},G)$ of Section \ref{section: top dyn} is a special case when $M=G$ and $N=\mathcal{G}$. On $S_G^{\fs}(N,M)$ we have the left continuous semigroup operation defined in the same way as for $S_x^{\fs}(\mathcal{G},G)$ (i.e.~$p*q:=\tp(ab/N)$ for any $a \models p$, $b \models q$ such that $\tp(a/N,b)$ is finitely satisfiable in $M$), and Fact \ref{fact: folklore on Ellis semigroups} still holds for it. 

The following revised Newelski's conjecture was stated in \cite[Conjecture 5.3]{KrPi2}.

\begin{conjecture}\label{conjecture: revised Newelski's conjecture}
Assume that $T$ is NIP. Let $\cM$ be a minimal left ideal of  $S_G^{\fs}(N,M)$ and $u \in \cM$ an idempotent. Then the $\tau$-topology on $u\cM$  is Hausdorff.
\end{conjecture}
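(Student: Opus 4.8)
The strategy is to realize the $G$-flow $S_G^{\fs}(N,M)$ (equivalently $S_{G,\ext}(M)$) as an inverse limit of metrizable tame minimal subflows, push the problem down to each metrizable factor, invoke Glasner's structure theorem for tame metrizable minimal flows to conclude that in each factor the ideal group is Hausdorff in the $\tau$-topology, and finally transfer Hausdorffness back up through the inverse limit. The countability of $M$ (hence of $G$) enters precisely to guarantee the metrizability of the factors.

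First I would set up the inverse limit presentation. For a finite set $\Delta$ of externally definable subsets of $G$, let $\cB_\Delta$ be the $G$-algebra (Boolean subalgebra of the algebra of externally definable subsets of $G$, closed under the $G$-action by translation) generated by $\Delta$; since $G$ is countable this algebra is countable, so its Stone space $S_{G,\Delta}(M)$ is a metrizable compact space, and it is a $G$-flow. The natural restriction maps exhibit $S_G^{\fs}(N,M) = \varprojlim_\Delta S_{G,\Delta}(M)$ as an inverse limit of metrizable $G$-flows, the index set being directed under inclusion. Next, because $T$ is NIP, each $S_{G,\Delta}(M)$ is a \emph{tame} dynamical system — this is the standard ``NIP implies tame'' principle for flows of types (as used in \cite{CS, ibarlucia2016dynamical, KrRz}), since the alternation-number bounds coming from NIP say precisely that the coordinate functions on $S_{G,\Delta}(M)$ do not have an independence sequence, hence lie in the first Baire class on the Ellis semigroup, which is the defining property of tameness. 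I would then fix a minimal left ideal $\cM$ and idempotent $u \in \cM$ in $E(S_G^{\fs}(N,M)) \cong S_G^{\fs}(N,M)$, and let $\cM_\Delta$, $u_\Delta$ be their images in $E(S_{G,\Delta}(M))$; these are a minimal left ideal and an idempotent there, and the $\tau$-topologies are compatible with the projections (the operation $a \circ B$ and the closure operator $\cl_\tau$ commute with continuous $G$-flow morphisms, since they are defined purely via limits of nets $g_i b_i$ with $g_i \in G$).

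The heart of the argument is to show that for each $\Delta$, the ideal group $u_\Delta \cM_\Delta$ is Hausdorff in its $\tau$-topology. Here I would apply Glasner's theorem \cite{Gla18} on the structure of tame, metrizable, minimal flows: for such a flow, the enveloping semigroup is ``small'' (in particular the flow is a PI-flow, or more to the point the relevant conclusion is that the ideal group, which is a priori only a $T_1$ quasi-compact semitopological group in the $\tau$-topology by Fact~\ref{fact: tau-topology}(5), is in fact Hausdorff — i.e.\ a compact topological group). I would need to extract from Glasner's work exactly the statement that tameness plus metrizability of a minimal flow forces the $\tau$-topology on its ideal group to be Hausdorff; this is essentially the assertion that tame minimal metrizable flows have metrizable (hence Hausdorff) ideal groups, which is part of the fine structure theory there. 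Finally, to pass from all the factors to $S_G^{\fs}(N,M)$ itself: I would use that $u\cM = \varprojlim_\Delta u_\Delta \cM_\Delta$ as semitopological groups in the $\tau$-topology (the projections $u\cM \to u_\Delta\cM_\Delta$ are continuous surjective group homomorphisms for the $\tau$-topologies, and they separate points of $u\cM$ since a $\tau$-closed condition on $u\cM$ is detected on the Boolean algebras $\cB_\Delta$), and an inverse limit of Hausdorff spaces along continuous maps is Hausdorff. Concretely: if $p \neq q$ in $u\cM$, then $p,q$ differ on some externally definable set, hence on some $\cB_\Delta$, so their images in $u_\Delta\cM_\Delta$ are distinct, and by Hausdorffness there they are separated by $\tau$-open sets whose preimages separate $p,q$ in $u\cM$.

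\textbf{Main obstacle.} The principal difficulty is not the inverse-limit bookkeeping but the invocation of Glasner's structure theorem: one must verify that the $G$-flows $S_{G,\Delta}(M)$ genuinely satisfy the hypotheses of \cite{Gla18} (tame, metrizable, and — after replacing by a minimal subflow, or working with the universal minimal flow of the relevant subalgebra — minimal), and, more delicately, one must isolate the precise consequence ``the $\tau$-topology on the ideal group of a tame metrizable minimal flow is Hausdorff'' from that theorem, which is stated in the language of PI-towers and highly proximal extensions rather than in terms of the $\tau$-topology directly. A secondary subtlety is that $\cM$ is a minimal left ideal of the Ellis semigroup of the \emph{whole} flow, and one has to check that its image in each factor's Ellis semigroup is again a minimal left ideal and that idempotents map to idempotents — this is standard for surjective flow morphisms but should be spelled out — together with the compatibility of the $\tau$-closure operators under these morphisms, which relies on the fact that $a \circ B$ is defined using only nets from $G$ acting, so it is preserved by any $G$-equivariant continuous surjection.
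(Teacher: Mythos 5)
Your overall architecture is the same as the paper's: present the flow as an inverse limit of countable-basis pieces $S_{G,\Delta}(M)$, use NIP to get tameness, metrizability from countability of $M$, invoke Glasner's structure theorem \cite{Gla18}, and transfer Hausdorffness back (your final "continuous point-separating maps to Hausdorff targets" trick is a legitimate substitute for the paper's appeal to the inverse-limit lemma \cite[Lemma 6.42]{Rze18}). However, there are two genuine gaps. First, the heart of the matter — that the ideal group of a tame, metrizable, \emph{minimal} flow is Hausdorff in the $\tau$-topology — is not available off the shelf, and you explicitly defer it ("extract from Glasner\dots part of the fine structure theory"). Glasner's theorem does not assert anything about the $\tau$-topology on the ideal group, and the paper has to derive Corollary \ref{corollary: main corollary of Glasner's result} from the structure diagram (Fact \ref{fact: Glasner's theorem}) through a chain of new arguments: proximal flows have trivial ideal groups (Fact \ref{fact: ideal groups of proximal flows}), almost 1-1 extensions of minimal flows induce topological isomorphisms of ideal groups (Lemma \ref{lemma: almost 1-1}), and an equicontinuous extension of a minimal flow with trivial ideal group has Hausdorff ideal group (Proposition \ref{proposition: equicontinuity and Hausdorfness}, which uses Galois groups and the operator $H(F)$). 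This derivation is most of the work and is absent from your outline.

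Second, there is a minimality mismatch in your reduction. You project the ideal group $u\cM$ onto the ideal groups $u_\Delta\cM_\Delta$ of $E(S_{G,\Delta}(M))$ and want these to be Hausdorff via Glasner — but $S_{G,\Delta}(M)$ is not minimal, so Glasner does not apply to it, and your suggested fix of "replacing by a minimal subflow" transfers Hausdorffness in the wrong direction: restriction gives a continuous surjection from $E(S_{G,\Delta}(M))$ onto the Ellis semigroup of a minimal subflow, so the minimal flow's ideal group is only a $\tau$-quotient of $u_\Delta\cM_\Delta$, and its Hausdorffness says nothing about $u_\Delta\cM_\Delta$ itself (nor is a minimal left ideal of $E(S_{G,\Delta}(M))$, viewed as a flow, metrizable in general). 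The correct order of operations, as in the paper, is to pass first to the minimal subflow $\cM$ of the big flow, project \emph{it} to the minimal, tame, metrizable flows $\cM_\Delta=\pi_\Delta[\cM]$, apply Corollary \ref{corollary: main corollary of Glasner's result} there, and then relate the ideal group of the flow $\cM$ back to the ideal group of $S_G^{\fs}(N,M)$ — this last identification is itself a nontrivial step (Lemma \ref{lemma: ideal groups of X and M}, proved via the explicit description of the $\tau$-closure), and some statement of this kind is needed in any repaired version of your argument, in addition to the $\tau$-compatibility of induced maps from Fact \ref{fact: on pi-tilde} that you do cite.
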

A background  around this conjecture, including an explanation that it is a weakening of Newelski's conjecture, is given in the introduction, and in more details in the long paragraph preceding Conjecture 5.3 in \cite{KrPi2} and short paragraph following it.

In order to prove Conjecture \ref{conjecture: revised Newelski's conjecture} for countable $M$, first we will deduce from the main theorem of \cite{Gla18} on the structure of tame,  metrizable, minimal flows that each such flow has Hausdorff ideal group. Then Conjecture \ref{conjecture: revised Newelski's conjecture} for countable $M$ will follow using this fact and a presentation of $S_G^{\fs}(N,M)$ as an inverse limit of certain metrizable flows. The topological dynamical material below is rather standard, but it requires recalling quite a few notions and basic facts about them, and making some observations.\\

From now on, until we say otherwise, we are in the general abstract context of $G$-flows and homomorphisms between them, where $G$ is an arbitrary abstract (not necessarily definable in a NIP theory) group. We let $(G,X)$, $(G,Y)$, etc.~be $G$-flows, which we will sometimes denote simply as $X,Y$, etc.


For a proof of the following fact see \cite[Proposition 5.41]{Rze18}.

\begin{fact}\label{fact: on pi-tilde}
Let $\pi \colon X \to Y$ be an epimorphism of $G$-flows. Then $\tilde{\pi} \colon E(X) \to E(Y)$ given by $\tilde{\pi}(\eta)(\pi(x))=\pi(\eta(x))$ is a well-defined semigroup and $G$-flow epimorphism. If $\cM$ is a minimal left ideal of $E(X)$ and $u \in \cM$ an idempotent, then $\tilde{\pi}[\cM]$ is a minimal left ideal of $E(Y)$ and $\tilde{\pi}(u)$ is an idempotent in $\tilde{\pi}[\cM]$. Moreover, $\tilde{\pi}|_{u\cM} \colon u\cM \to \tilde{\pi}(u)\tilde{\pi}[\cM]$ is a group epimorphism and topological quotient map with respect to the $\tau$-topologies.
\end{fact}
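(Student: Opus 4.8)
\textbf{Proof plan for Fact \ref{fact: on pi-tilde}.}

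The plan is to verify the claimed properties in the natural order, leaning on the universal properties of Ellis semigroups and the standard structure theory recalled in Facts \ref{fac: Ellis theorem}, \ref{fact: tau-topology}, and \ref{fact: strange_cont}. First I would check that $\tilde\pi$ is \emph{well-defined}: given $\eta \in E(X)$, if $\pi(x) = \pi(x')$ then I must show $\pi(\eta(x)) = \pi(\eta(x'))$. This follows because $\eta$ is a pointwise limit of maps $\pi_g$ ($g \in G$), $\pi$ is $G$-equivariant and continuous, so $\pi(\eta(x)) = \lim_i \pi(g_i \cdot x) = \lim_i g_i \cdot \pi(x)$, which depends only on $\pi(x)$; hence $\tilde\pi(\eta)$ is a well-defined function on $Y = \pi[X]$, and it is continuous since it is again a pointwise limit of the $\pi_g$'s on $Y$, so $\tilde\pi(\eta) \in E(Y)$. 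That $\tilde\pi$ is a semigroup homomorphism is a direct computation: $\tilde\pi(\eta_1 \circ \eta_2)(\pi(x)) = \pi(\eta_1(\eta_2(x))) = \tilde\pi(\eta_1)(\pi(\eta_2(x))) = \tilde\pi(\eta_1)(\tilde\pi(\eta_2)(\pi(x)))$, and $G$-equivariance is immediate from $\tilde\pi(\pi_g) = \pi_g$ together with $g \cdot \eta = \pi_g \circ \eta$. Surjectivity: any element of $E(Y)$ is a limit of a net $(\pi_{g_i})$ acting on $Y$; passing to a subnet so that $(\pi_{g_i})$ also converges in $E(X)$ (possible by compactness of $E(X)$), the limit $\eta$ satisfies $\tilde\pi(\eta)$ equal to the given element, so $\tilde\pi$ is onto.

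Next I would handle the minimal left ideal and idempotent claims. Since $\tilde\pi \colon E(X) \to E(Y)$ is a continuous semigroup epimorphism and $\cM$ is a minimal left ideal of $E(X)$, the image $\tilde\pi[\cM]$ is a left ideal of $E(Y)$: for $\tilde\pi(a) \in \tilde\pi[\cM]$ and $b \in E(Y)$, pick $b' \in \tilde\pi^{-1}(b)$, then $b \cdot \tilde\pi(a) = \tilde\pi(b' a) \in \tilde\pi[\cM]$. It is minimal: a left ideal of $E(Y)$ contained in $\tilde\pi[\cM]$ pulls back to a left ideal of $E(X)$ inside $\cM$ (intersected with $\cM$, using that $\cM = E(X)u$ for an idempotent $u \in \cM$ one checks this is still a left ideal, or more simply invoke that homomorphic images of minimal left ideals are minimal — a standard Ellis-theory fact). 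And $\tilde\pi(u)$ is an idempotent in $\tilde\pi[\cM]$ since $\tilde\pi(u)^2 = \tilde\pi(u^2) = \tilde\pi(u)$. Consequently, by Fact \ref{fac: Ellis theorem}, $\tilde\pi(u)\tilde\pi[\cM]$ is the ideal group at $\tilde\pi(u)$, and the restriction $\tilde\pi|_{u\cM} \colon u\cM \to \tilde\pi(u)\tilde\pi[\cM]$ is a group homomorphism: $\tilde\pi(u)$ is its identity, inverses are preserved because $u\cM$ is a group with identity $u$. Surjectivity of this restriction follows because $\tilde\pi(u)\tilde\pi[\cM] = \tilde\pi(u)\tilde\pi[\cM u] = \tilde\pi[u\cM]$ (using $\cM = \cM u$ and that $\tilde\pi$ is a homomorphism).

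Finally, the $\tau$-topology assertion. I would show $\tilde\pi|_{u\cM}$ is continuous and open as a map between the $\tau$-topologies, i.e.\ a topological quotient map. For continuity: recall $\mathrm{cl}_\tau(A) = u(u \circ A)$ where $u \circ A$ consists of limits of nets $(g_i b_i)$ with $g_i \to u$ in $E(X)$ and $b_i \in A$. Applying $\tilde\pi$, which is continuous and equivariant, sends such a net to $(g_i \tilde\pi(b_i))$ with $g_i \to \tilde\pi(u)$ in $E(Y)$ (here I use $\tilde\pi(\pi_{g_i}) = \pi_{g_i}$ and continuity of $\tilde\pi$), so $\tilde\pi[u \circ A] \subseteq \tilde\pi(u) \circ \tilde\pi[A]$, whence $\tilde\pi[\mathrm{cl}_\tau(A)] \subseteq \mathrm{cl}_\tau(\tilde\pi[A])$, giving $\tau$-continuity of the restriction. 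That it is a quotient map (equivalently open, since it is a surjective continuous group homomorphism between quasi-compact semitopological groups and $\tau$-inversion is continuous by Fact \ref{fact: tau-topology}(5)) is the delicate point: the cleanest route is to use Fact \ref{fact: strange_cont} together with the corresponding statement for $E(X)$. Specifically, for a closed $\tau$-set in $\tilde\pi(u)\tilde\pi[\cM]$ one pulls back along the (ordinary-topology) continuous map $f_X \colon \overline{u\cM} \to u\cM$, $\eta \mapsto u\eta$, compares with $f_Y$ on $E(Y)$ via $\tilde\pi$, and transfers closedness using compactness; I expect to phrase this as: a subset $B \subseteq \tilde\pi(u)\tilde\pi[\cM]$ is $\tau$-closed iff $(\tilde\pi|_{u\cM})^{-1}(B)$ is $\tau$-closed in $u\cM$, which together with continuity yields the quotient-map property. \textbf{The main obstacle} I anticipate is precisely this last verification that the map is open / a quotient map with respect to the $\tau$-topologies — the $\tau$-topology is not the induced topology, so one cannot argue naively with compactness, and the argument must route through the "strange continuity" phenomenon of Fact \ref{fact: strange_cont} and the interplay between $f_X$ and $f_Y$; everything preceding it is routine Ellis-semigroup bookkeeping.
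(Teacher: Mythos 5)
First, a point of reference: the paper does not prove this statement at all — it is quoted verbatim with the attribution ``For a proof of the following fact see \cite[Proposition 5.41]{Rze18}.'' So there is no in-paper argument to compare yours against; your proposal has to stand on its own.

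The routine parts of your plan are essentially correct. Well-definedness, the semigroup/flow homomorphism identities, surjectivity of $\tilde{\pi}$ via compactness of $E(X)$, the image of a minimal left ideal being minimal, idempotence of $\tilde{\pi}(u)$, surjectivity of the restriction onto $\tilde{\pi}(u)\tilde{\pi}[\cM]=\tilde{\pi}[u\cM]$, and the $\tau$-continuity computation $\tilde{\pi}[u\circ A]\subseteq \tilde{\pi}(u)\circ\tilde{\pi}[A]$ are all fine. One small slip: you assert $\tilde{\pi}(\eta)$ is continuous because it is a pointwise limit of the $\pi_g$'s on $Y$ — pointwise limits of continuous maps need not be continuous, and elements of an Ellis semigroup are generally not continuous; what your argument actually (and correctly) shows is that $\tilde{\pi}(\eta)$ lies in the pointwise closure of $\{\pi_g^Y\}$, which is all that membership in $E(Y)$ requires.

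The genuine gap is exactly where you flag it: the claim that $\tilde{\pi}|_{u\cM}$ is a topological quotient map for the $\tau$-topologies. Your proposed route is to deduce it from Fact \ref{fact: strange_cont}, but that fact only yields continuity of $h\circ f$ for $h$ mapping into a \emph{regular} space, and applying it with target $u\cM$ (or $\tilde{\pi}(u)\tilde{\pi}[\cM]$) in the $\tau$-topology presupposes Hausdorffness — which is precisely what cannot be assumed here, since its failure is the open content of the revised Newelski conjecture that this whole section is about. The statement ``$B$ is $\tau$-closed iff its preimage is $\tau$-closed'' is just a restatement of the quotient-map property, not a proof of it. What actually has to be shown is the reverse inclusion $\cl_\tau(\tilde{\pi}[A])\subseteq\tilde{\pi}[\cl_\tau(A)]$ for suitable $A\subseteq u\cM$, and the obstruction is concrete: given $q=\lim_i g_i\tilde{\pi}(a_i)$ with $g_i\in G$ and $\lim_i g_i=\tilde{\pi}(u)$ in $E(Y)$, a compactness lift produces a subnet with $\lim_i g_i=w$ in $E(X)$ where one only knows $\tilde{\pi}(w)=\tilde{\pi}(u)$, not $w=u$; and since left multiplication in $E(X)$ is \emph{not} continuous, one cannot simply correct by applying $u$ to the limit. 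Handling this mismatch (e.g.\ via the net description of $\cl_\tau$ as in Fact \ref{fact: description of the tau-closure} and the structure of $\overline{u\cM}$) is the real content of \cite[Proposition 5.41]{Rze18}, and it is missing from your proposal.
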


\begin{remark}\label{remark: uniqueness of epimorphism}
If $(G,X)$ and $(G,Y)$ are flows for which there exists a semigroup and flow epimorphism $\Phi \colon E(X) \to E(Y)$, then it is unique. In particular, the epimorphism $\tilde{\pi}$ in Fact \ref{fact: on pi-tilde} does not depend on the choice of the epimorphism $\pi$.
\end{remark}

\begin{proof}
 Since $\Phi$ is a semigroup epimorphism, it satisfies $\Phi(\id_X)=\id_Y$. Let $g_X \in E(X)$ be the left translation by $g$, and similarly $g_Y \in E(Y)$. Then $\Phi(g_X)=\Phi(g\id_X)=g\Phi(\id_X)=g\id_Y=g_Y$. By definition, $E(X)$ is the closure of $\{g_X: g \in G\}$, so $\Phi$ is unique (as it is continuous).
\end{proof}

Let $\beta G$ be the Stone-\v{C}ech compactification of $G$ and $\mathcal{U}_e$ the principal ultrafilter at $e$. As e.g.~explained on page 9 of \cite{Gla76}, the $G$-ambit $(\beta G,\mathcal{U}_e)$ (where by a {\em $G$-ambit} we mean a $G$-flow with a distinguished point with dense orbit) is universal, and so there is a unique  left continuous semigroup operation on $\beta G$  extending the action of $G$ by left translation. (In fact, it is precisely the $*$ operation on $S_{G,\ext}(M)$ for $M:=G$ expanded by predicates for all subsets of $G$). For any flow $(G,X)$, universality of $\beta G$ also yields a unique action of the semigroup $(\beta G,*)$ on $X$ which is left-continuous and extends the action of $G$, and which we will denote by $\cdot$. Fix a minimal left ideal $\cM$ of $\beta G$ and an idempotent $u \in \cM$. Using this action, for any $G$-flow $(G,X)$ with a distinguished point $x_0 \in X$ such that $u\cdot x_0=x_0$ (note that such an $x_0$ always exists), the {\em Galois group} of $(X,x_0)$ is defined as:
$$\Gal(X,x_0) := \{ p \in u\cM: p\cdot x_0=x_0\} \leq u\cM,$$
it is a $\tau$-closed subgroup of $u \cM$ (see \cite[Page 13]{Gla76}). In the topological dynamics literature, this group is sometimes called the {\em Ellis group} of $(X,x_0)$, e.g.~see \cite[Page 13]{Gla76}, where it is denoted $\cG(X,x_0)$, for its basic properties.

There is an obvious semigroup and $G$-flow epimorphism $\Phi \colon \beta G \to E(X)$ given by $\Phi(p)(x):= p \cdot x$. It is unique by Remark \ref{remark: uniqueness of epimorphism}.
As  in Fact \ref{fact: folklore on Ellis semigroups}, $\beta G$  is naturally isomorphic to  $E(\beta G)$ via $p \mapsto l_p$, where $l_p(q)=p*q$. Using this identification, for any $G$-flow epimorphism $f \colon \beta G \to X$, the induced map $\tilde{f} \colon \beta G \to E(X)$  from Fact \ref{fact: on pi-tilde}  coincides with $\Phi$. 

\begin{remark}\label{remark: basic properties of Phi}
Let $\cM$ and $u \in \cM$ be as above. 
Let $(G,X)$ be a flow and $\Phi \colon \beta G \to E(X)$ the unique epimorphism defined above.
Let $\mathcal{N}:=\Phi[\cM]$ and $v:=\Phi(u)$.
\begin{enumerate}
\item  For every $x \in X$ with $u \cdot x=x$, $\ker(\Phi |_{u\cM}) \subseteq \Gal(X,x)$.
\item For every $x \in \ima(v)$, $u \cdot x= x$.
\item  $\bigcap_{x \in \ima(v)} \Gal(X,x) = \ker(\Phi |_{u\mathcal{M}})$.
\item  $\Phi |_{u\cM} \colon u\cM \to v\mathcal{N}$ is a group epimorphism and topological quotient map with respect to the $\tau$-topologies.
\end{enumerate}
\end{remark}

\begin{proof}
It is clear that $\mathcal{N}$ is a minimal left ideal of $E(X)$, $v \in \mathcal{N}$ an idempotent, and $\Phi |_{u\cM} \colon u\cM \to v \mathcal{N}$ a group epimorphism.

(1) Take $p \in \ker(\Phi |_{u\cM})$, i.e.~$\Phi(p)=v$. Then $p \cdot x=\Phi(p)(x)= v(x) = \Phi(u)(x)=u \cdot x= x$. Hence, $p \in \Gal(X,x)$.

(2) Take $x \in \ima(v)$, i.e.~$x = v(y)$ for some $y \in X$. Then $u \cdot x = v(v(y)) = (v \circ v)(y)=v(y) = x$.

(3) The inclusion $(\supseteq)$ follows from (1) and (2). For the opposite inclusion, consider any $p \in \bigcap_{x \in \ima(v)} \Gal(X,x)$. In order to show that $p \in \ker(\Phi |_{u\cM})$, it is enough to check that $\Phi(p)|_{\ima(v)}= \id_{\ima(v)}$ (because for any $\eta \in \mathcal{N}$ we have $\eta=\eta v$, and so the map $v\mathcal{N} \to \Sym(\ima(v))$ given by $\eta \mapsto \eta |_{\ima(v)}$ is injective, in fact a group monomorphism, and $v|_{\ima(v)} = \id_{\ima(v)}$ as $v$ is an idempotent). But this is trivial by the choice of $p$: $\Phi(p)(v(y))=p \cdot (v(y))=v(y)$.

(4) In the situation when $(G,X)$ has a dense orbit (which is for example the case when $(G,X)$ is minimal), this follows from Fact \ref{fact: on pi-tilde}, the existence of an epimorphism $f \colon \beta G \to X$ (as $\beta G$ is a universal $G$-ambit), and the observation that $\Phi=\tilde{f}$ made just before Remark \ref{remark: basic properties of Phi}. In general, it follows from the straightforward generalization of  Fact \ref{fact: on pi-tilde} stated in \cite[Fact 2.3]{KLM}.
\end{proof}

\begin{definition}
A $G$-flow epimorphism $\pi \colon X \to Y$ is {\em almost 1-1} if the set $X_0:=\{x \in X : \pi^{-1}[\pi(x)] =\{x\}\}$ is dense in $X$.
\end{definition}

\begin{remark}\label{remark: preservation of minimality under almost 1-1}
If $(G,Y)$ is minimal and $\pi \colon X \to Y$ is almost 1-1, then $(G,X)$ is also minimal.
\end{remark}

\begin{proof}
We will show that for every $x \in X$ we have $X_0 \subseteq E(X)x := \{ \eta(x): \eta \in E(X)\}$. This implies that $E(X)x=X$ for every $x \in X$ (because $X_0$ is dense in $X$ and $E(X)x$ is closed in $X$), which means that $(G,X)$ is minimal.

So fix $x \in X$, and consider any $x_0 \in X_0$. Since $(G,Y)$ is minimal, we can find $\tau \in E(Y)$ such that $\tau(\pi(x))=\pi(x_0)$. Pick $\eta \in E(X)$ satisfying $\tilde{\pi}(\eta)=\tau$. Then $\pi(x_0) = \tau(\pi(x))=\tilde{\pi}(\eta)(\pi(x))=\pi(\eta(x))$. Since $x_0 \in X_0$, we conclude that $\eta(x)=x_0$. 
\end{proof}

\begin{lemma}\label{lemma: almost 1-1}
If $(G,Y)$ is minimal and $\pi \colon X \to Y$ is almost 1-1, then the group homomorphism $\tilde{\pi} |_{u\cM} \colon u\cM \to \tilde{\pi}(u)\tilde{\pi}[\cM]$ is a topological isomorphism (in the $\tau$-topologies), where $\cM$ is a minimal left ideal of $E(X)$ and $u \in \cM$ an idempotent.
\end{lemma}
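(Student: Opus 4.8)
By Fact~\ref{fact: on pi-tilde} we already know that $\tilde{\pi}|_{u\cM}\colon u\cM \to \tilde{\pi}(u)\tilde{\pi}[\cM]$ is a group epimorphism and a topological quotient map with respect to the $\tau$-topologies; so the only thing left to prove is injectivity. Since $\tilde{\pi}|_{u\cM}$ is a group homomorphism, injectivity is equivalent to triviality of its kernel, and by Remark~\ref{remark: basic properties of Phi}(3) (applied with $\Phi=\tilde{f}$ for a suitable epimorphism $f\colon \beta G\to X$, using the identification of $\tilde{\pi}$ with the restriction of the canonical map — here one should instead argue directly with $\tilde{\pi}$ in place of $\Phi$, see below) the kernel of $\tilde{\pi}|_{u\cM}$ is contained in $\bigcap_{x\in \ima(u)}\Gal$-type stabilizers. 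The cleanest route, however, is to argue directly: take $\eta\in u\cM$ with $\tilde{\pi}(\eta)=\tilde{\pi}(u)$, and show $\eta=u$ using the almost $1$-$1$ hypothesis.

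\textbf{Key steps.} First I would fix a minimal left ideal $\cM$ of $E(X)$ and idempotent $u\in\cM$, and note by Remark~\ref{remark: preservation of minimality under almost 1-1} that $(G,X)$ is minimal, so $\cM$ acts on $X$ with $\eta X = X$ for every $\eta\in\cM$, and in particular $u\cdot$ (i.e.\ $u$ as a map $X\to X$) is surjective onto its image with $X=E(X)\cdot x$ for every $x$. Second, suppose $\eta\in u\cM$ satisfies $\tilde{\pi}(\eta)=\tilde{\pi}(u)=:v$. For any $x\in\ima(u)$ we have $u(x)=x$ (as $u$ is idempotent), and $\pi(\eta(x))=\tilde{\pi}(\eta)(\pi(x))=v(\pi(x))=\tilde{\pi}(u)(\pi(x))=\pi(u(x))=\pi(x)$. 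Third — and this is the crucial point — I would show that the set $\{x\in X: \pi^{-1}[\pi(x)]=\{x\}\}=X_0$ meets $\ima(u)$ in a dense subset of $\ima(u)$, or more precisely that $\eta$ and $u$ agree on a dense subset of $\ima(u)$: if $x\in X_0\cap\ima(u)$, then $\pi(\eta(x))=\pi(x)$ together with $x\in X_0$ forces $\eta(x)=x=u(x)$. Fourth, since $\ima(u)$ is closed (it is the image of a compact space under a continuous map) and $\eta,u$ are continuous maps $X\to X$ agreeing on the dense-in-$\ima(u)$ set $X_0\cap\ima(u)$, they agree on all of $\ima(u)$; and since any element of $\cM$ is determined by its restriction to $\ima(u)$ — here one uses $\eta = \eta u$ and $u=uu$, so $\eta|_{\ima(u)}=u|_{\ima(u)}$ implies $\eta=\eta\circ u=\eta|_{\ima(u)}\circ u=u|_{\ima(u)}\circ u=u\circ u=u$ — we conclude $\eta=u$. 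Hence $\ker(\tilde{\pi}|_{u\cM})=\{u\}$, so $\tilde{\pi}|_{u\cM}$ is a continuous group isomorphism which is also a quotient map, hence a topological isomorphism.

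\textbf{The main obstacle.} The delicate step is the third one: verifying that $X_0\cap\ima(u)$ is dense in $\ima(u)$. A priori we only know $X_0$ is dense in $X$, not that it meets $\ima(u)$ densely. The way around this is to use that $u$ (as a map $X\to X$) is a retraction onto $\ima(u)$ which is moreover continuous and "$\tau$-well-behaved"; in fact one shows $\ima(u)$ is exactly the set of $\tau$-recurrent points / the unique minimal subset in each $\overline{G\cdot x}$-sense, and that $u$ restricted to a suitable transversal is almost $1$-$1$ compatible with $\pi$. Concretely, one argues: given a nonempty open $U\subseteq X$ with $U\cap\ima(u)\neq\emptyset$, density of $X_0$ gives $x_0\in X_0\cap U$; then $u(x_0)\in\ima(u)$, and one checks $u(x_0)\in X_0$ as well, because $\pi$ being almost $1$-$1$ means $\pi^{-1}[\pi(u(x_0))]$ is a singleton provided $\pi(u(x_0))$ lies in the dense $G_\delta$ of points with singleton fibers — and this last set, being invariant and comeager, meets $\ima(\tilde\pi(u))$, which pulls back. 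I expect assembling this density argument correctly — in particular making sure the almost $1$-$1$ fiber condition transfers through the retraction $u$ onto its image — to be the real content of the proof; everything else is formal manipulation with the Ellis semigroup.
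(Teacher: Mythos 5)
Your reduction to injectivity (via Fact \ref{fact: on pi-tilde}, which already gives that $\tilde{\pi}|_{u\cM}$ is an epimorphism and a topological quotient map) and your observation that a kernel element must fix any point of $X_0$ lying in the image of the relevant idempotent are both correct and are also the starting point of the paper's argument. But the way you try to finish has two genuine gaps. First, elements of $E(X)$ are in general \emph{not} continuous maps (they are only pointwise limits of the translations $\pi_g$), so your fourth step --- ``$\eta$ and $u$ are continuous, $\ima(u)$ is closed as a continuous image of a compact space, and two continuous maps agreeing on a dense subset of $\ima(u)$ agree on $\ima(u)$'' --- is not available. Second, and this is the obstacle you yourself flag, there is no reason why $X_0\cap\ima(u)$ should be dense in $\ima(u)$, or even nonempty, for the \emph{given} idempotent $u$: $X_0$ is only known to be dense in $X$, $\ima(u)$ (equivalently $\ima(\tilde{\pi}(u))$ downstairs) can be nowhere dense, and your appeal to ``invariant and comeager, hence meets $\ima(\tilde{\pi}(u))$'' is unsupported --- density of $X_0$ gives density, not comeagerness, of the singleton-fiber set in $Y$, and even a comeager invariant set need not meet the image of a fixed idempotent. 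So the proposal as written does not close.

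The paper's proof sidesteps both problems by two moves you are missing. (i) It chooses the point before the idempotent: fix $x_0\in X_0$; by minimality of $(G,X)$ (Remark \ref{remark: preservation of minimality under almost 1-1}) we have $X=\cM x$, so $x_0\in\ima(\eta_0)$ for some $\eta_0\in\cM$, and if $v\in\cM$ is the idempotent with $\eta_0\in v\cM$ then $\ima(\eta_0)=\ima(v)$, so $v(x_0)=x_0$. The commuting square formed by $\tilde{\pi}$ and the canonical isomorphisms $l_u\colon v\cM\to u\cM$, $l_{\tilde{\pi}(u)}$ (Fact \ref{fact: tau-topology}(6)) then reduces the statement for $u$ to the same statement for this well-chosen $v$, so no density of $X_0$ inside a prescribed image is ever needed. (ii) It replaces your continuity-and-density argument by an algebraic conjugation trick using only the single point $x_0$: if $\eta\in v\cM\setminus\{v\}$ lies in the kernel, then since elements of $v\cM$ are determined by their restriction to $\ima(v)=v\cM(x_0)$, there is $\eta'\in v\cM$ with $\eta\eta'(x_0)\neq\eta'(x_0)$, hence $(\eta')^{-1}\eta\eta'(x_0)\neq x_0$; but $(\eta')^{-1}\eta\eta'$ still maps to $\tilde{\pi}(v)$, so $\pi\bigl((\eta')^{-1}\eta\eta'(x_0)\bigr)=\pi(x_0)$, contradicting that the fiber over $\pi(x_0)$ is a singleton. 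You would need to incorporate both of these ideas (or find substitutes for them) to make your outline into a proof.
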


\begin{proof}
By Remark \ref{remark: preservation of minimality under almost 1-1}, $(G,X)$ is minimal, so $X=\cM x$ for every $x \in X$. Pick $x_0 \in X_0$; then there is $\eta_0 \in \cM$ such that $x_0 \in \ima(\eta_0)$.
Choose an idempotent $v \in \cM$ so that $\eta_0 \in v\cM$. Then $\ima(\eta_0)=\ima(v)$, so $x_0 \in \ima(v)$, and hence $v(x_0)=x_0$ by idempotence of $v$.

Since the diagram
\begin{center}
			\begin{tikzcd}
				v\cM\ar[d, two heads, "\tilde{\pi}"]\ar[r,  "u\circ"]&u\cM\ar[d, two heads, "\tilde{\pi}"] \\
				\tilde{\pi}(v)\tilde{\pi}[\cM]\ar[r, "\tilde{\pi}(u)\circ"]&\tilde{\pi}(u)\tilde{\pi}[\cM]
			\end{tikzcd}
		\end{center}
commutes (by definition of $\tilde{\pi}$) and, by Fact \ref{fact: tau-topology}(6), the horizontal arrows are isomorphisms of semitopological groups, it is enough to show that $\tilde{\pi} |_{v\cM} \colon v\cM \to \tilde{\pi}(v) \tilde{\pi}[\cM]$ is an isomorphism of semitopological groups. By Fact \ref{fact: on pi-tilde}, it is a group epimorphism and topological quotient map, so it remains to show that it is injective.

Suppose for a contradiction that $\ker(\tilde{\pi} |_{v\cM})$ is non-trivial, i.e.~there is $\eta \in v\cM \setminus \{v\}$ such that $\tilde{\pi}(\eta)=\tilde{\pi}(v)$. Then $\eta v=\eta \ne v = vv$, so $\eta |_{\ima(v)} \ne v |_{\ima(v)}$. On the other hand, by the first sentence of the proof, $\ima(v) =v\cM (x_0)$. So there is $\eta' \in v\cM$ with $\eta \eta'(x_0) \ne v\eta'(x_0)=\eta'(x_0)$, and hence $(\eta')^{-1} \eta \eta'(x_0) \ne x_0$ (where $(\eta')^{-1}$ is the inverse of $\eta'$ computed in $v\cM$). As $x_0 \in X_0$, $\eta \in \ker(\tilde{\pi} |_{v\cM})$ and $\tilde{\pi}|_{v\cM}$ is a group morphism, we get $\pi(x_0) \ne \pi((\eta')^{-1} \eta \eta'(x_0))=\tilde{\pi}((\eta')^{-1} \eta \eta')(\pi(x_0)) 
= \tilde{\pi}(v)(\pi(x_0)) = \pi(v(x_0)) = \pi(x_0)$ (where the last equality follows from the first paragraph) --- a contradiction.
\end{proof}

A pair of points $(x_1,x_2)$ of a flow $(G,X)$ is called {\em proximal} if there is $\eta \in E(X)$ such that $\eta(x_1)=\eta(x_2)$; it is called {\em distal} if $x_1=x_2$ or $(x_1,x_2)$ is not proximal. Let $P$ denote the collection of all proximal pairs of points in $X$. The flow $(G,X)$ is said to be {\em proximal} when $P=X \times X$, and {\em distal} when $P =\Delta_X:=  \{(x, x) : x\in  X\}$.

\begin{fact}\label{fact: ideal groups of proximal flows}
The ideal group of every proximal flow is trivial.
\end{fact}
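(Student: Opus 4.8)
\textbf{Proof plan for Fact \ref{fact: ideal groups of proximal flows}.}

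The plan is to use the standard characterization of proximality via the Ellis semigroup together with the structure theory from Fact \ref{fac: Ellis theorem}. Let $(G,X)$ be a proximal flow, let $\cM$ be a minimal left ideal of $E(X)$, and let $u \in \cM$ be an idempotent; we want to show $u\cM = \{u\}$. First I would record the well-known fact that in a proximal flow the minimal left ideal of $E(X)$ is a left-zero semigroup on the ``constant'' maps: more precisely, for every $\eta \in \cM$ and every $x,y \in X$ one has $\eta(x) = \eta(y)$. Indeed, since $(x,y)$ is proximal, there is $\tau \in E(X)$ with $\tau(x) = \tau(y)$; then $\eta \tau \in \cM$ by minimality, and since $\cM = \cM \eta$ (Fact \ref{fac: Ellis theorem}(2)), we may write $\eta = \eta' \eta \tau$... — actually the cleaner route is: fix $\eta \in \cM$; for any $x,y$, by minimality $\cM x$ and $\cM y$ are minimal subflows, and proximality forces them to coincide with a single point each. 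I would spell this out as follows: for $x \in X$, $\overline{Ex}$ is a subflow, hence contains a minimal subflow, and since any two points of $X$ are proximal, a minimal subflow can only be a fixed point (a minimal flow all of whose pairs are proximal is trivial, because proximal pairs in a minimal flow that is also distal-on-itself must be equal — here every minimal subflow of a proximal flow is a singleton). So every minimal subflow of $X$ is a single fixed point, and $\eta(X)$, being contained in a minimal subflow for $\eta \in \cM$, is a single point.

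With that in hand, each $\eta \in \cM$ is a constant map $c_{x_\eta}$ for some $x_\eta \in X$. Then composition in $\cM$ is given by $c_{x} \circ c_{y} = c_{x}$ (applying $c_x$ to the constant value of $c_y$ gives $x$), so $\cM$ is a left-zero semigroup: $\eta \theta = \eta$ for all $\eta, \theta \in \cM$. In particular, for the idempotent $u$ and any $\eta \in u\cM$ we have $\eta = u\eta' $ for some $\eta' \in \cM$, so $\eta = u\eta' = u$ using the left-zero law ($u\eta' = u$). Hence $u\cM = \{u\}$, i.e. the ideal group is trivial. (Alternatively, once $\cM$ is left-zero, every element of $\cM$ is idempotent and $u\cM$, being a group with identity $u$ inside a left-zero semigroup, must be $\{u\}$.)

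The only slightly delicate point — and the one I'd present most carefully — is the assertion that a minimal subflow of a proximal flow is a singleton. This is where proximality is genuinely used: if $Y \subseteq X$ is minimal and $y_1, y_2 \in Y$, then $(y_1,y_2)$ is proximal in $X$, and since $E(Y)$ is (a quotient of / embeds compatibly with) the restriction of $E(X)$ acting on $Y$, one gets $\eta(y_1) = \eta(y_2)$ for some $\eta$ fixing $Y$ setwise; combined with minimality of $Y$ (so $Ey_i$ is dense in $Y$) this collapses $Y$ to a point. I expect no real obstacle here beyond bookkeeping with the Ellis semigroup; everything is classical and self-contained from the facts already recalled in this section.
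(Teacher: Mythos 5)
Your argument has a genuine gap at its central lemma. You claim that every minimal subflow of a proximal flow is a singleton (equivalently, that a minimal proximal flow is trivial), justified by the parenthetical that ``proximal pairs in a minimal flow that is also distal-on-itself must be equal''. This is false in general, and the justification is circular: there is no reason a minimal subflow of a proximal flow should be distal. Nontrivial minimal proximal flows exist in abundance --- e.g.\ the action of $PSL_2(\mathbb{R})$, or of a nonabelian free group, on the projective line (these are even strongly proximal and minimal) --- and the statement in the paper is for an arbitrary acting group, which is exactly the regime where such examples occur. (What \emph{is} true is that a proximal flow has a \emph{unique} minimal subflow, since two minimal subflows must intersect by proximality; but that subflow need not be a point unless the group is strongly amenable.) A secondary issue: even granting your lemma, for a fixed $\eta \in \cM$ the image $\eta(X)$ is contained in $\bigcup_x \cM x$, i.e.\ in a union of minimal subflows varying with $x$, not in a single one, so the reduction ``$\eta(X)$ lies in a minimal subflow, hence is a point'' needs repair as well.

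The conclusion you were aiming for --- that in a proximal flow every element of a minimal left ideal of $E(X)$ is a constant map, whence each ideal group is trivial --- is in fact correct, but it has to be reached through idempotents, which is what the paper does. The paper's proof invokes the classical fact (Glasner, Ch.~I, Prop.~3.2(3)) that any two points of $\ima(u)$, for $u$ an idempotent in a minimal left ideal, form a \emph{distal} pair; since in a proximal flow every pair is proximal, $\ima(u)$ is a singleton $\{x_0\}$, and then any $p = uh \in u\cM$ satisfies $p(x) = u(h(x)) = x_0 = u(x)$ for all $x$, so $u\cM = \{u\}$. If you want a self-contained argument, the right route is to prove that distality statement directly (for $x,y \in \ima(u)$ proximal, the closed left ideal $\{p \in E(X) : p(x)=p(y)\}$ contains a minimal left ideal $\cN$, and using the idempotent $v \in \cN$ equivalent to $u$, i.e.\ $uv=u$, one gets $x = ux = uvx = uvy = uy = y$), rather than trying to collapse minimal subflows of $X$.
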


\begin{proof}
Let $\cM$ be a minimal left ideal of $E(X)$ and $u\in \cM$ an idempotent.
From \cite[Chapter I, Proposition 3.2(3)]{Gla76}, it follows that each pair of points in $\ima(u):=u[X]$ is distal, so, by proximality, $\ima(u)$ is a singleton, say $\ima(u)= \{x_0\}$. Then for any $p$ in $u\cM$, say $p = uh$ with $h \in \cM$, and any $x \in X$, we have $p(x) = u (h(x)) = x_0$. So $p = u$, hence $u\cM = \{u\}$. 
\end{proof}

Whenever $\pi \colon X \to Y$ is a $G$-flow epimorphism, let 
$$R_\pi:= \{ (x_1,x_2) \in X^2: \pi(x_1)=\pi(x_2)\}.$$

\begin{definition} A $G$-flow epimorphism $\pi \colon X \to Y$ is said to be:
\begin{enumerate}
\item {\em equicontinuous} (or {\em almost periodic}) if for every $\varepsilon$ which is an open neighborhood
of the diagonal $\Delta_X: = \{(x, x) : x\in  X\} \subseteq X \times X$, there exists a neighborhood $\delta$ of
$\Delta_X$ such that $g(\delta \cap R_\pi)\subseteq \varepsilon$ for every $g \in G$;
\item {\em distal} if $P\cap R_\pi=\Delta_X$.
\end{enumerate}
\end{definition}

It is sometimes assumed that the flows in the definition of equicontinuous extensions are minimal.
On page 100 of \cite{Gla76}, Glasner defines almost periodic extensions of minimal flows in a different way. A proof that both definitions are equivalent for minimal flows can be found in \cite[Chapter 14, Theorem 1]{Aus}.

The following remark is well-known and follows from an argument on page 4 of \cite{Gla76}.

\begin{remark}
An equicontinuous epimorphism of flows is distal. 
\end{remark}

\begin{proposition}\label{proposition: equicontinuity and Hausdorfness}
If $\pi \colon X \to Y$ is an equicontinuous epimorphism of minimal $G$-flows and $(G,Y)$ has a trivial ideal group, then the ideal group of $(G,X)$ is Hausdorff (with respect to the $\tau$-topology). 
\end{proposition}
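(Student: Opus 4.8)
The plan is to reduce the question to a statement about ideal groups of certain auxiliary flows via Fact \ref{fact: on pi-tilde} and the ``distal = equicontinuous + proximal-like'' decomposition implicit in Glasner's theory. Concretely: let $\cM$ be a minimal left ideal of $E(X)$ and $u \in \cM$ an idempotent, and let $\widetilde\pi \colon E(X) \to E(Y)$ be the induced semigroup/flow epimorphism given by Fact \ref{fact: on pi-tilde}. Then $\widetilde\pi[\cM]$ is a minimal left ideal of $E(Y)$, $\widetilde\pi(u)$ is an idempotent in it, and by hypothesis the ideal group $\widetilde\pi(u)\,\widetilde\pi[\cM]$ is trivial; moreover $\widetilde\pi|_{u\cM} \colon u\cM \to \widetilde\pi(u)\widetilde\pi[\cM]$ is a group epimorphism and a topological quotient map for the $\tau$-topologies. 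Hence $\widetilde\pi|_{u\cM}$ is the trivial homomorphism, i.e.\ $\ker(\widetilde\pi|_{u\cM}) = u\cM$, and so $u\cM$ itself carries the \emph{quotient} topology of the trivial group, which would make it indiscrete — that alone is not enough, so I will need to use equicontinuity of $\pi$ to upgrade ``$\tau$-quotient of a point is $T_2$'' into genuine Hausdorffness of $(u\cM, \tau)$.

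The key point is that equicontinuity of $\pi$ should let one realize the $\tau$-topology on $u\cM$ concretely on the fibre $R_\pi$, or rather on $\ima(u) = u[X]$ sitting over a single point of $Y$ (since $Y$ is minimal with trivial ideal group, $\ima(\widetilde\pi(u))$ is a singleton, so $u[X]$ maps to one point $y_0 \in Y$, i.e.\ $u[X] \subseteq \pi^{-1}(y_0)$). The group $u\cM$ acts faithfully on $\ima(u)$ (the map $\eta \mapsto \eta|_{\ima(u)}$ into $\Sym(\ima(u))$ is a monomorphism, as used in the proof of Remark \ref{remark: basic properties of Phi}(3)), so it suffices to produce, for any two distinct $p, q \in u\cM$, disjoint $\tau$-open neighbourhoods; equivalently, using Fact \ref{fact: tau-topology}\eqref{it:lem:tau_top:nearlycont} about convergence in the $\tau$-topology, to show that if a net $(f_i)_i$ in $u\cM$ converges to some $\eta \in \overline{u\cM}$, then $uf_i \to u\eta$ and the limit is $\tau$-determined. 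The hope is that equicontinuity of $\pi$ exactly provides the uniform control needed to show that the evaluation maps $u\cM \to \ima(u)$, $p \mapsto p\cdot x$, are $\tau$-continuous and that the $\tau$-topology agrees with the (Hausdorff, compact) topology that $\ima(u) \subseteq X$ inherits from $X$; then $u\cM$ embeds as a topological group into (a space built from) $\ima(u)$, forcing Hausdorffness. An alternative, and perhaps cleaner, route is to invoke the standard fact that an equicontinuous extension of a flow with metrizable/$T_2$ ``maximal equicontinuous'' structure is itself well-behaved, i.e.\ to use the classical structure theory of equicontinuous extensions (\cite[Chapter 14]{Aus}, \cite[Chapter V]{Gla76}): an equicontinuous extension $\pi\colon X \to Y$ of minimal flows is, after passing to the relevant picture, governed by a compact \emph{topological} group acting on fibres, and when $Y$ has trivial ideal group this group is precisely $u\cM$ with its $\tau$-topology, which is therefore a compact \emph{topological} (hence Hausdorff) group by the Ellis joint continuity theorem.

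So the steps, in order, would be: (1) apply Fact \ref{fact: on pi-tilde} to get $\widetilde\pi$ and identify $\widetilde\pi[\cM]$, $\widetilde\pi(u)$, and the quotient map $\widetilde\pi|_{u\cM}$; (2) use triviality of the ideal group of $(G,Y)$ to conclude $\ima(\widetilde\pi(u))$ is a singleton and hence $\ima(u)$ lies in a single $\pi$-fibre; (3) invoke the structure theory of equicontinuous extensions to identify $(u\cM,\tau)$ with a group of ``fibre automorphisms'' acting equicontinuously, deducing that the $\tau$-topology coincides with the topology of pointwise convergence on $\ima(u)$ restricted to this group, which is Hausdorff since $\ima(u) \subseteq X$ is Hausdorff; (4) conclude that $(u\cM,\tau)$ is Hausdorff, and (via Corollary \ref{corollary: T2 implies topological}) in fact a compact topological group. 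The main obstacle I anticipate is step (3): making precise the identification of the $\tau$-topology on $u\cM$ with pointwise convergence on the fibre $\ima(u)$, and verifying that equicontinuity of $\pi$ (as opposed to mere distality) is what forces these two topologies to agree. One has to be careful that the $\tau$-topology is defined via the operator $\cl_\tau(A) = u(u\circ A)$ using nets from $G$, so the argument must translate the equicontinuity estimate — ``$g(\delta \cap R_\pi) \subseteq \varepsilon$ for all $g \in G$'' — into control over the $\tau$-closure operator on $u\cM$; this is the technical heart of the proof.
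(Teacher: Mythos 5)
Your steps (1) and (2) are fine: the reduction via Fact \ref{fact: on pi-tilde} is legitimate, and triviality of the ideal group of the minimal flow $(G,Y)$ does indeed force $\ima(\tilde{\pi}(u))$ to be a singleton (for minimal $Y$ one has $Y=\mathcal{N}y_0$, so $v[Y]=(v\mathcal{N})(y_0)$ is a point when $v\mathcal{N}=\{v\}$), whence $u[X]$ sits inside a single $\pi$-fibre. The gap is exactly where you flag it, at step (3), and it is not a technicality that can be absorbed by citing the classical theory of almost periodic extensions in the form you invoke. The identification you hope for --- that the $\tau$-topology on $u\cM$ coincides with the topology of pointwise convergence on $\ima(u)$ --- is, since every $\eta\in u\cM$ satisfies $\eta=\eta u$ and is therefore determined by its restriction to $\ima(u)$, the same as saying that the $\tau$-topology coincides with the topology induced from $E(X)$. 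That is precisely the strong hypothesis (compactness of the ideal group in the induced topology, ``CIG1'') under which a description of the minimal ideal of measures was already obtained in \cite{chernikov2023definable}, and which the present proposition is designed to avoid; nothing in your outline derives it from equicontinuity of $\pi$, and it is far stronger than the Hausdorffness you actually need. Equivalently, your fallback formulation --- ``the extension is governed by a compact topological group acting on fibres, and this group is $u\cM$ with the $\tau$-topology'' --- is essentially the statement to be proved, so invoking it as a black box is circular; the references you point to (\cite[Chapter 14]{Aus}, the equivalence of definitions of almost periodic extensions) do not contain it.

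What is actually needed, and what the paper does, is different in two ways. First, one does not work with the ideal group of $E(X)$ directly but realizes it as a quotient: taking a minimal left ideal $\cM\subseteq\beta G$, an idempotent $u\in\cM$ and the canonical epimorphism $\Phi\colon\beta G\to E(X)$, the ideal group of $(G,X)$ is $u\cM/\ker(\Phi|_{u\cM})$ as a semitopological group (Remark \ref{remark: basic properties of Phi}(4)). Second, the use of equicontinuity goes through the Galois groups $\Gal(X,x)$, $\Gal(Y,y)$ and the operator $H(F)=\bigcap\{\cl_\tau(V\cap F): V \text{ a } \tau\text{-neighbourhood of } u\}$: triviality of the ideal group of $Y$ gives $\Gal(Y,y)=u\cM$; Glasner's theorem on almost periodic extensions (\cite[Chapter IX, Theorem 2.1(4)]{Gla76}) gives $H(\Gal(Y,y))\subseteq\Gal(X,x)$; intersecting over $x\in\ima(\Phi(u))$ and using Remark \ref{remark: basic properties of Phi}(3) yields $H(u\cM)\subseteq\ker(\Phi|_{u\cM})$; and then the criterion \cite[Chapter IX, Theorem 1.9(3)]{Gla76} (a quotient of $u\cM$ by a $\tau$-closed subgroup containing $H(u\cM)$ is Hausdorff) finishes the proof. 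The $H(\cdot)$-criterion and the almost-periodicity theorem for Galois groups are the missing key ideas; without them, your plan does not yet connect the equicontinuity estimate on $R_\pi$ to the $\tau$-closure operator, which is the heart of the matter.
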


\begin{proof}

Choose a minimal left ideal $\mathcal{M}$ in $\beta G$, an idempotent $u \in \mathcal{M}$, and an element $x \in X$ with $u\cdot x=x$. Let $\Phi \colon \beta G \to E(X)$ be the unique semigroup and $G$-flow epimorphism considered before and in Remark \ref{remark: basic properties of Phi}. Put $\mathcal{N}_X:=\Phi[\cM]$ and $v_X:=\Phi(u)$. So $v_X\mathcal{N}_X$ is the ideal group of $(G,X)$.
Put $y:=\pi(x)$. Then $u \cdot y =y$ (as $u\cdot y=\lim g_i\pi(x)=\pi(\lim(g_ix)) = \pi(u \cdot x)=\pi(x)=y$, where $(g_i)_i$ is a net in $G$ converging to $u$ in $\beta G$). Let  $\mathcal{N}_Y:=\tilde{\pi}[\mathcal{N}_X]$ and $v_Y:=\tilde{\pi}(v_X)$. So $v_Y\mathcal{N}_Y$ is the ideal group of $(G,Y)$ which is trivial by assumption.

Clearly $\Phi':=\tilde{\pi} \circ  \Phi \colon \beta G \to E(Y)$ is the unique semigroup and $G$-flow epimorphism from $\beta G$ to $E(Y)$, and $\Phi' |_{u\cM} \colon u\cM \to v_Y\mathcal{N}_Y$ is a group epimorphism.  Hence, as $v_{Y}\mathcal{N}_{Y}$ is trivial, $\ker(\Phi'|_{u\cM}) = u\cM$. As by Remark  \ref{remark: basic properties of Phi}(1) $\ker(\Phi' |_{u\cM})\subseteq \Gal(Y,y)$, we conclude that $\Gal(Y,y) =u\cM$.

 Let $F$ be a $\tau$-closed subgroup of $u \cM$, and let 
 $$H(F) := \bigcap\left\{\cl_\tau(V \cap F) : V \textrm{ is a } \tau \textrm{-neighborhood of }u \textrm{ in } u\cM \right\}.$$
 As $\pi$ is almost periodic, \cite[Chapter IX, Theorem 2.1(4)]{Gla76} yields $H\left( \Gal(Y,y) \right) \subseteq \Gal(X,x)$, and 
 together with the conclusion of the last paragraph this implies $H(u\cM) \subseteq \Gal(X,x)$. Note that we proved it for any $x \in X$ with $u \cdot x =x$, in particular for any $x \in \ima(v_X)$ (by  Remark  \ref{remark: basic properties of Phi}(2)). On the other hand, by Remark \ref{remark: basic properties of Phi}(3),  $\bigcap_{x \in \ima(v_X)} \Gal(X,x) = \ker(\Phi |_{u\mathcal{M}})$. Hence, $H(u\cM) \subseteq \ker(\Phi |_{u\mathcal{M}})$.

By Remark \ref{remark: basic properties of Phi}(4), $\ker(\Phi |_{u\mathcal{M}})$ is $\tau$-closed. So, by \cite[Chapter IX, Theorem 1.9(3)]{Gla76} and the conclusion of the last paragraph, we get that $u\cM/\ker(\Phi |_{u\mathcal{M}})$ is Hausdorff with the quotient topology of the $\tau$-topology on $u\mathcal{M}$.  Since by Remark \ref{remark: basic properties of Phi}(4) we know that $u \mathcal{M}/\ker(\Phi|_{u\mathcal{M}}) \cong v_X\mathcal{N}_X$ as semitopological groups, we conclude that $v_X\mathcal{N}_X$ is Hausdorff.
\end{proof}

There are many equivalent definitions of tame flows (see Theorems 2.4, 3.2 and Definition 3.1 in \cite{Gla23}). We give the one which immediately points to a strong connection with the NIP property in model theory. 

A sequence $(f_n)_{n<\omega}$  of real valued functions on a set $X$ is said to be {\em independent}
if there exist real numbers $a < b$ such that
$$\bigcap_{n\in P} f_n^{-1}[(-\infty,a)] \cap \bigcap_{n \in Q} f_n^{-1}[(b,\infty)] \ne \emptyset$$
for all finite disjoint subsets $P, Q$ of $\omega$.

\begin{definition}\label{def: tame}
Let $(G,X)$ be a flow.
A function $f \in C(X)$ (i.e.~a continuous real valued function on $X$) is {\em tame} if the  family of translates $\{gf: g \in G\}$ does not contain an infinite independent sequence (where $(gf)(x):=f(g^{-1}x)$). The flow $(G,X)$ is {\em tame} if all functions in $C(X)$ are tame.
\end{definition}

The following fact is a part of the information contained in the main theorem (Theorem 5.3) of \cite{Gla18} on the structure of tame, metrizable, minimal flows.

\begin{fact}\label{fact: Glasner's theorem}
Let $(G,X)$ be a tame, metrizable, minimal flow. Then there exists the following commutative diagram of $G$-flow epimorphisms
\begin{center}
			\begin{tikzcd}
				& \tilde{X}\ar[ld, two heads, "\eta"]\ar[dd,  two heads, "\pi"] & X^*\ar[l, two heads, "\theta^*"]\ar[d, two heads, "\iota"]\\
				X & & Z\ar[d,two heads, "\sigma"]\\
				& Y & Y^* \ar[l, two heads, "\theta"],
			\end{tikzcd}
		\end{center}
where: 
\begin{enumerate}
\item $(G,\tilde{X})$ is minimal;
\item $(G,Y)$ is proximal;
\item $\theta, \theta^*, \iota$ are almost 1-1;
\item $\sigma$ is equicontinuous.
\end{enumerate}
\end{fact}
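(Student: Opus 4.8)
The displayed statement is a direct citation of Glasner's structure theorem for tame metrizable minimal flows, \cite[Theorem~5.3]{Gla18}; accordingly the plan is not to give an independent proof but to quote Glasner's theorem and observe that the diagram above, together with properties (1)--(4), is (after at most a trivial relabelling of the maps) exactly what Glasner records. Concretely, I would isolate the four items as the consequences that we actually use downstream --- $(G,\tilde X)$ minimal, $(G,Y)$ proximal, $\theta,\theta^*,\iota$ almost $1$-$1$, and $\sigma$ equicontinuous --- and refer to \cite{Gla18} for the considerable work behind them, as is standard when invoking the structure theory of minimal flows (see \cite{Gla76}). The only genuine checks are that items (1)--(4) all appear among Glasner's conclusions and that our metrizability hypothesis on $X$ is the one under which Glasner works.

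For the reader's orientation I would add a short sketch of Glasner's strategy. The starting point is that tameness of $(G,X)$ is equivalent to the enveloping semigroup $E(X)$ being a Rosenthal compactum (Glasner--Megrelishvili; see also \cite{glasner2007structure, Gla23}), so every element of $E(X)$ acts as a Baire class $1$ self-map of $X$ and $E(X)$ is Fréchet and sequentially compact. One then feeds $X$ into the structure theorem for minimal flows from \cite{Gla76}: after passing to a suitable almost $1$-$1$ (indeed highly proximal) extension, every metrizable minimal flow is displayed as the top of a canonical tower built by alternating proximal and equicontinuous extensions together with inverse limits, capped off by a weakly mixing extension. The tameness/Rosenthal input is used first to kill this last part --- a tame minimal flow is PI --- and then, more sharply, to collapse the whole proximal--isometric tower down to a single proximal extension at the bottom followed by a single equicontinuous extension; reading off the resulting factors and common almost $1$-$1$ extensions yields precisely the diagram in the statement, with metrizability carried along the construction.

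The main obstacle, were one to attempt an independent proof, is exactly this collapsing step: it is Glasner's hard theorem, and there is no shortcut around the Rosenthal-compactum machinery for enveloping semigroups and the delicate analysis of RIC extensions of tame flows (the ``no weakly mixing tame extensions'' and ``strictly PI'' phenomena). So the realistic plan for this paper is simply to cite \cite[Theorem~5.3]{Gla18}; with the diagram and properties (1)--(4) in hand, everything needed in the sequel --- in particular Fact~\ref{fact: ideal groups of proximal flows} applied to $Y$ and Proposition~\ref{proposition: equicontinuity and Hausdorfness} applied to the equicontinuous extension $\sigma$ --- goes through and gives the Hausdorffness of the ideal group that we are after.
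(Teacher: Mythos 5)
Your proposal matches the paper exactly: this statement is recorded as a Fact and justified solely by citing it as part of the main theorem (Theorem 5.3) of \cite{Gla18}, with no independent proof attempted. Your added sketch of Glasner's strategy is optional background, but the substance — quote Glasner and read off properties (1)--(4) for use with Fact \ref{fact: ideal groups of proximal flows} and Proposition \ref{proposition: equicontinuity and Hausdorfness} — is precisely what the paper does.
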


We discussed all of the notions and facts above in order to deduce the following corollary.

\begin{corollary}\label{corollary: main corollary of Glasner's result}
The $\tau$-topology on the ideal group of any tame, metrizable, minimal flow is Hausdorff.
\end{corollary}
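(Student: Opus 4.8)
The plan is to deduce Corollary \ref{corollary: main corollary of Glasner's result} from Fact \ref{fact: Glasner's theorem} by chasing the diagram and invoking the three structural lemmas assembled above (Lemma \ref{lemma: almost 1-1}, Fact \ref{fact: ideal groups of proximal flows}, Proposition \ref{proposition: equicontinuity and Hausdorfness}), together with Fact \ref{fact: on pi-tilde}. Throughout, ``the ideal group of a flow $(G,W)$'' means $u\cM$ for a minimal left ideal $\cM$ of $E(W)$ and an idempotent $u \in \cM$, equipped with the $\tau$-topology; by Fact \ref{fact: tau-topology}(6) this is well-defined up to topological isomorphism, so it makes sense to speak of it being Hausdorff.

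So let $(G,X)$ be tame, metrizable and minimal, and take the diagram of Fact \ref{fact: Glasner's theorem}. First I would work my way up the right-hand column to transfer Hausdorffness of ideal groups. The flow $(G,Y)$ is proximal, so by Fact \ref{fact: ideal groups of proximal flows} its ideal group is trivial, hence Hausdorff. Since $\theta \colon Y^* \to Y$ is almost 1-1 and $(G,Y)$ is minimal, Lemma \ref{lemma: almost 1-1} gives that $\tilde\theta$ restricts to a topological isomorphism of the ideal group of $(G,Y^*)$ onto the ideal group of $(G,Y)$; so the ideal group of $(G,Y^*)$ is Hausdorff. (Note $(G,Y^*)$ is minimal by Remark \ref{remark: preservation of minimality under almost 1-1}.) Next, $\sigma \colon Z \to Y^*$ is an equicontinuous epimorphism of minimal flows and the ideal group of $(G,Y^*)$ is trivial --- indeed, being an almost 1-1 image of the ideal group of $(G,Y)$ it is trivial, not merely Hausdorff --- so Proposition \ref{proposition: equicontinuity and Hausdorfness} applies and the ideal group of $(G,Z)$ is Hausdorff. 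Then $\iota \colon X^* \to Z$ is almost 1-1 with $(G,Z)$ minimal, so again by Lemma \ref{lemma: almost 1-1} the ideal group of $(G,X^*)$ is topologically isomorphic to that of $(G,Z)$, hence Hausdorff; and similarly $\theta^* \colon X^* \to \tilde X$ is almost 1-1 with $(G,\tilde X)$ minimal, so the ideal group of $(G,\tilde X)$ is Hausdorff.

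Finally I would pass from $\tilde X$ down to $X$ along $\eta \colon \tilde X \to X$. Here I only need that $\eta$ is a $G$-flow epimorphism with $(G,\tilde X)$ minimal (so $(G,X)$ is minimal too, being its image): by Fact \ref{fact: on pi-tilde}, $\tilde\eta \colon E(\tilde X) \to E(X)$ is a semigroup and $G$-flow epimorphism, and for a minimal left ideal $\cM$ of $E(\tilde X)$ and idempotent $u \in \cM$, the map $\tilde\eta|_{u\cM}$ is a group epimorphism and topological quotient map onto the ideal group $\tilde\eta(u)\tilde\eta[\cM]$ of $(G,X)$ in the $\tau$-topologies. A continuous open surjective image of a quasi-compact $T_1$ semitopological group is again quasi-compact $T_1$ (and it is Hausdorff iff the kernel of the quotient map is $\tau$-closed --- but we do not even need that sharpening here, since we are going the wrong way). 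Wait: a quotient of a Hausdorff space need not be Hausdorff, so this last step as stated does not immediately give Hausdorffness of the ideal group of $(G,X)$. The resolution, which is the only genuinely delicate point, is that $\eta$ may be taken to be almost 1-1 as well: in Glasner's structure theorem the leftmost map $\eta$ in the diagram is in fact an almost 1-1 extension (this is part of the content of \cite[Theorem 5.3]{Gla18} --- $\tilde X$ is an almost 1-1 extension of $X$). Granting that, Lemma \ref{lemma: almost 1-1} applies one more time to $\eta$, giving that the ideal group of $(G,X)$ is topologically isomorphic to that of $(G,\tilde X)$, hence Hausdorff. This completes the proof.

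The main obstacle, then, is precisely this last step: one must be careful about in which direction the almost 1-1 (or equicontinuous, or proximal) structure propagates Hausdorffness of the ideal group, since quotients of Hausdorff semitopological groups are not automatically Hausdorff. All the transfers going \emph{up} a tower of almost 1-1 extensions (Lemma \ref{lemma: almost 1-1}) preserve Hausdorffness freely because they are isomorphisms; the equicontinuous step (Proposition \ref{proposition: equicontinuity and Hausdorfness}) manufactures Hausdorffness from triviality below; and the final descent to $X$ works only because $\eta$ too is almost 1-1, so that it is again an isomorphism on ideal groups rather than merely a quotient map. Once the diagram chase is organized with this orientation in mind, everything else is a direct citation of the facts established above.
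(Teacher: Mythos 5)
Your diagram chase up the right-hand column is exactly the paper's argument: $(G,Y)$ proximal gives a trivial ideal group (Fact \ref{fact: ideal groups of proximal flows}), the almost 1-1 maps $\theta,\iota,\theta^*$ transfer triviality/Hausdorffness isomorphically via Lemma \ref{lemma: almost 1-1} (with minimality propagated by Remark \ref{remark: preservation of minimality under almost 1-1}), and the equicontinuous step $\sigma$ converts triviality below into Hausdorffness above via Proposition \ref{proposition: equicontinuity and Hausdorfness}. All of that is correct and matches the paper, up to and including the Hausdorffness of the ideal groups of $(G,X^*)$ and $(G,\tilde X)$.

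The final descent along $\eta\colon \tilde X \to X$ is where you go astray. You rest the whole step on the claim that $\eta$ is almost 1-1, but that is not part of Fact \ref{fact: Glasner's theorem} as stated (only $\theta,\theta^*,\iota$ are listed as almost 1-1), so within the toolkit assembled in this section your proof has a gap. More importantly, the reason you give for rejecting the quotient route --- ``a quotient of a Hausdorff space need not be Hausdorff'' --- does not apply here. Once the ideal group of $(G,X^*)$ is known to be Hausdorff, Corollary \ref{corollary: T2 implies topological} (Ellis joint continuity) upgrades it from a quasi-compact $T_1$ semitopological group to a genuine compact topological group. By Fact \ref{fact: on pi-tilde} applied to the epimorphism $\eta\circ\theta^*\colon X^* \to X$, the ideal group of $(G,X)$ is a topological quotient of this compact topological group by the kernel, which is closed because ideal groups are $T_1$ in the $\tau$-topology (Fact \ref{fact: tau-topology}(5)); and a quotient of a compact topological group by a closed normal subgroup is Hausdorff. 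This is precisely how the paper concludes (for both $\tilde X$ and $X$ simultaneously), and it uses only the facts already established. So the gap is repairable without any appeal to extra properties of $\eta$.
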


\begin{proof}
We will be referring to items (1)--(4) in Fact \ref{fact: Glasner's theorem}. By (1), (3) and Remark \ref{remark: preservation of minimality under almost 1-1}, $(G,X^*)$ is minimal, and so are $(G,Z)$, $(G,Y^*)$, and $(G,Y)$ as homomorphic images of  $(G,X^*)$. By (2) and Fact \ref{fact: ideal groups of proximal flows}, the ideal group of $(G,Y)$ is trivial, and so is the ideal group of $(G,Y^*)$ by (3) and Lemma \ref{lemma: almost 1-1}. Hence, using (4) and Proposition \ref{proposition: equicontinuity and Hausdorfness}, we get that the ideal group of $(G,Z)$ is Hausdorff, and so is the ideal group of $(G,X^*)$ by (3) and Lemma \ref{lemma: almost 1-1}. Therefore, the ideal groups of $(G,\tilde{X})$ and $(G,X)$ are both Hausdorff by Fact  \ref{fact: on pi-tilde}, because they are quotients of a compact topological group (namely, the ideal group of $(G,X^*)$) by closed subgroups.
\end{proof}

To apply this general corollary to our model-theoretic context, we need one more general observation, namely Lemma \ref{lemma: ideal groups of X and M}. To prove it, we have to recall a description of the $\tau$-closure that was stated as Lemma 3.11 in the first arXiv version of \cite{KLM} (the relevant section of \cite{KLM} was removed in the published version).

\begin{fact}\label{fact: description of the tau-closure}
Let $(G,X)$ be a flow, $\cM$ a minimal left ideal of $E(X)$, and $u \in \cM$ an idempotent. Then for every $A \subseteq u\cM$, the $\tau$-closure $\cl_\tau(A)$ can be described as the set of all limits contained in $u\cM$ of nets $(\eta_ia_i)_i$ such that $\eta_i \in \cM$, $a_i \in A$, and $\lim_i \eta_i = u$.
\end{fact}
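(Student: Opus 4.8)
The plan is to prove the two inclusions separately, working inside the compact left topological semigroup $E(X)$ and using only the definition recalled in Fact~\ref{fact: tau-topology}: $\cl_\tau(A)=(u\cM)\cap(u\circ A)$, where $u\circ A$ consists of the limits of nets $(\pi_{g_i}\circ a_i)_i$ with $g_i\in G$, $\pi_{g_i}\to u$ and $a_i\in A$. Write $D(A)$ for the right-hand side of the statement, i.e.\ the set of $\xi\in u\cM$ that are limits of nets $(\eta_ia_i)_i$ with $\eta_i\in\cM$, $a_i\in A$ and $\eta_i\to u$. I will use freely that $\cM$ is a closed left ideal, that $u$ is the neutral element of the group $u\cM$ (so $ua=a$ for all $a\in u\cM$, by Fact~\ref{fac: Ellis theorem}), and that right multiplication $\sigma\mapsto\sigma\circ f$ is continuous on $E(X)$.

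For $\cl_\tau(A)\subseteq D(A)$, I would start from $\xi=\lim_i\pi_{g_i}\circ a_i\in u\cM$ with $g_i\in G$, $\pi_{g_i}\to u$, $a_i\in A$, and simply replace $\pi_{g_i}$ by $\eta_i:=\pi_{g_i}\circ u$. Then $\eta_i\in\cM$ since $\cM$ is a left ideal, $\eta_i\to u\circ u=u$ by continuity of right multiplication, and $\eta_i\circ a_i=\pi_{g_i}\circ(u\circ a_i)=\pi_{g_i}\circ a_i$ because $a_i\in u\cM$; hence the very same net witnesses $\xi\in D(A)$. This direction is essentially bookkeeping.

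For the converse $D(A)\subseteq\cl_\tau(A)$, suppose $\xi=\lim_i\eta_ia_i\in u\cM$ with $\eta_i\in\cM$, $a_i\in A\subseteq u\cM$ and $\eta_i\to u$. Since $E(X)=\overline{\{\pi_g:g\in G\}}$ is compact Hausdorff, I would fix its unique compatible uniformity and carry out a diagonal selection: for each pair $(i,V)$, where $i$ is an index and $V$ an entourage of $E(X)$, using that $\eta_i$ lies in the closure of the translations and that $\sigma\mapsto\sigma\circ a_i$ is continuous, choose $g(i,V)\in G$ with $(\pi_{g(i,V)},\eta_i)\in V$ and $(\pi_{g(i,V)}\circ a_i,\eta_ia_i)\in V$. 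Directing the pairs $(i,V)$ by $i$ increasing and $V$ decreasing, an entourage chase (combining $\eta_i\to u$ with the first containment, and $\eta_ia_i\to\xi$ with the second) yields $\pi_{g(i,V)}\to u$ and $\pi_{g(i,V)}\circ a_i\to\xi$, so $\xi\in u\circ A$; together with $\xi\in u\cM$ this gives $\xi\in(u\cM)\cap(u\circ A)=\cl_\tau(A)$.

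I expect the diagonal selection in the second inclusion to be the only genuine obstacle: because the operation on $E(X)$ is continuous only in the left variable, one cannot pass to a limit in a product $\eta_ia_i$ after perturbing $\eta_i$ to a nearby translation, so the approximation of the net $(\eta_i)$ and the control of the products $\pi_{g(i,V)}\circ a_i$ must be performed simultaneously; the uniform structure of the compact Hausdorff space $E(X)$ is exactly the device that makes this go through. Everything else — the identity $ua=a$ on $u\cM$, closedness of $\cM$, and continuity of right multiplication — is immediate from Facts~\ref{fac: Ellis theorem} and~\ref{fact: tau-topology}.
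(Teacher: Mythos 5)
Your proposal is correct and follows essentially the same route as the paper: the first inclusion is handled identically (replace $\pi_{g_i}$ by $\eta_i:=\pi_{g_i}\circ u\in\cM$ and use $ua_i=a_i$ together with continuity of $-\circ f$), and for the converse the paper simply asserts that one can extract a subnet of $(a_i)_i$ and a net $(g_j)_j\subseteq G$ with $\pi_{g_j}\to u$ and $\pi_{g_j}a_j'\to a$, which is exactly what your simultaneous entourage-chase over pairs $(i,V)$ makes explicit. So the only difference is that you spell out the approximation step the paper leaves to the reader; the argument is sound.
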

\begin{proof}
Consider $a\in \cl_{\tau}(A)$. Then, by the definition of the $\tau$-topology, there are nets $(g_i)_i\subseteq G$ and $(a_i)_i\subseteq A$ such that $\lim_i g_i=u$ and $\lim_i g_ia_i=a$. Note that $ua_i=a_i$, as $a_i\in A\subseteq u\cM$. Put $\eta_i :=g_iu\in \cM$ for all $i$. By left continuity, we have that $\lim_i \eta_i=\lim_i g_iu=(\lim_i g_i)u=uu=u$. Furthermore, $\lim_i \eta_i a_i=\lim_i g_i u a_i=\lim_i g_i a_i=a$.

Conversely, consider any $a \in u\cM$ for which there are nets $(\eta_i)_i\subseteq \cM$ and $(a_i)_i\subseteq A$ such that $\lim_i \eta_i=u$ and $\lim_i \eta_i a_i = a$. Since each $\eta_i$ can be approximated by elements of $G$ and the semigroup operation is left continuous, one can find a subnet $(a_j')_j$ of $(a_i)_i$ and a net $(g_j)_j \subseteq G$ such that $\lim_j g_j =u$ and $\lim_j g_ja_j' =a$, which means that $a \in \cl_\tau(A)$.
\end{proof}

\begin{lemma}\label{lemma: ideal groups of X and M}
Let $(G,X)$ be a flow, $\cM$ a minimal left ideal in $E(X)$. Then there exists a minimal left ideal $\mathcal{N}$ of $E(\cM) $ and a semigroup and $G$-flow isomorphism from $\cM$ to $\mathcal{N}$. In particular, the ideal groups of the $G$-flows $X$ and $\cM$ are isomorphic as semitopological groups (with the $\tau$-topologies).
\end{lemma}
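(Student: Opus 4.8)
\textbf{Proof plan for Lemma \ref{lemma: ideal groups of X and M}.}

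The plan is to exploit the identification of $\cM$ with (a part of) its own Ellis semigroup, via the general principle behind Fact \ref{fact: E(E(X)) cong E(X)}. First I would recall that $E(X)$ is a compact left topological semigroup and $\cM \le E(X)$ is a closed minimal left ideal; in particular $\cM$ is itself a compact left topological semigroup, and it is a $G$-flow with $g \cdot \eta := \pi_g \circ \eta$. The natural candidate for the isomorphism is the map $\Psi \colon \cM \to E(\cM)$ sending $\eta \in \cM$ to the left multiplication $l_\eta \colon \cM \to \cM$, $l_\eta(\tau) := \eta \circ \tau$; note $\eta \circ \tau \in \cM$ since $\cM$ is a left ideal, so this is well-defined. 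Exactly as in the proof of Fact \ref{fact: E(E(X)) cong E(X)}, $\Psi$ is a semigroup homomorphism ($l_{\eta_1 \circ \eta_2} = l_{\eta_1} \circ l_{\eta_2}$, using associativity in $E(X)$) and a $G$-flow map. Injectivity uses that for an idempotent $u \in \cM$ and any $\eta \in \cM$ we have $\eta u = \eta$ (Fact \ref{fac: Ellis theorem}(2)), so if $l_\eta = l_{\eta'}$ then $\eta = \eta u = l_\eta(u) = l_{\eta'}(u) = \eta' u = \eta'$. Continuity of $\Psi$ follows from left continuity of the composition in $E(X)$ (restricted to $\cM$), and continuity of $\Psi^{-1}$ on $\Psi[\cM]$ follows because $\Psi^{-1}(F) = F(u)$ for any fixed idempotent $u$, and evaluation at $u$ is continuous on $E(\cM) \subseteq \cM^{\cM}$ with the product topology. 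Hence $\Psi$ is a topological semigroup and $G$-flow isomorphism onto its image $\mathcal{N} := \Psi[\cM]$.

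Next I would verify that $\mathcal{N}$ is a minimal left ideal of $E(\cM)$. Since $\cM$ is a compact left topological semigroup, $E(\cM)$ has minimal left ideals by Ellis' theorem; the point is that $\mathcal{N} = \Psi[\cM]$, which is a closed subsemigroup of $E(\cM)$ isomorphic to $\cM$, actually is one of them. For this, observe that $\mathcal{N}$ is a left ideal of $E(\cM)$: if $f \in E(\cM)$ and $l_\eta \in \mathcal{N}$, then $f \circ l_\eta$ is left multiplication by $f(\text{--})$ evaluated appropriately — more carefully, one checks $f \circ l_\eta = l_{f(\eta)}$ using that elements of $E(\cM)$ are limits of left-translation maps $\pi_g$ together with left continuity, so $f \circ l_\eta \in \mathcal{N}$ because $f(\eta) \in \cM$ (as $\cM$ is closed under the $E(\cM)$-action, being a flow, so $f(\eta) \in \cM$). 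Minimality of $\mathcal{N}$ then follows: any nonempty closed left ideal $\mathcal{N}_0 \subseteq \mathcal{N}$ of $E(\cM)$ pulls back under the isomorphism $\Psi$ to a nonempty closed left ideal of $\cM$ (with respect to its own semigroup structure, since $\Psi$ is a semigroup iso and $\mathcal{N}$ is a left ideal of $E(\cM)$ containing the left-translation maps), but $\cM$, being a minimal left ideal of $E(X)$ on which Ellis' theorem applies, is itself left-simple — it has no proper nonempty closed left ideal (indeed for any $\eta \in \cM$, $\cM \eta = \cM$ by Fact \ref{fac: Ellis theorem}(2)). Therefore $\mathcal{N}_0 = \mathcal{N}$.

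Finally, for the "in particular" clause: an idempotent $u \in \cM$ maps to the idempotent $l_u \in \mathcal{N}$, and $\Psi$ restricts to a group isomorphism $u\cM \to l_u \mathcal{N}$, which is the ideal group of the flow $(G,\cM)$. Since $\Psi$ is a homeomorphism onto $\mathcal{N}$ with respect to the topologies induced from $E(X)$ and $E(\cM)$ respectively, and the $\tau$-topology on an ideal group is defined purely in terms of the ambient Ellis semigroup structure and the $G$-action (via the operator $a \circ B$ of Fact \ref{fact: tau-topology}(1), which $\Psi$ preserves because it preserves both the semigroup operation and the $G$-flow structure, hence limits of nets $(g_i b_i)_i$), the isomorphism $\Psi|_{u\cM} \colon u\cM \to l_u\mathcal{N}$ is a homeomorphism for the two $\tau$-topologies, i.e.\ an isomorphism of semitopological groups. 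I expect the main obstacle to be the bookkeeping in showing $f \circ l_\eta = l_{f(\eta)}$ for arbitrary $f \in E(\cM)$ (not just for left-translation maps) and, relatedly, that $\Psi$ genuinely transports the $\tau$-topology rather than merely the induced topology; both reduce to careful use of left continuity and the density of $G$-translations, so they are routine but must be done with care.
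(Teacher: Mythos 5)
Your proposal follows essentially the same route as the paper: the same map $\eta\mapsto l_\eta$, the same verification that its image is a left ideal of $E(\cM)$ (your net computation of $f\circ l_\eta=l_{f(\eta)}$ is a direct substitute for the paper's use of the identification $E(E(X))\cong E(X)$), and the same minimality argument via left-simplicity of $\cM$ as a semigroup. The one step you defer as ``routine but must be done with care'' is exactly the step the paper isolates as a separate lemma: transporting the $\tau$-topology. The subtlety is not that $\Psi$ preserves limits of nets $(g_ib_i)_i$ (it does, being a homeomorphism onto its image), but that the side condition $\lim_i g_i=u$ in the definition of $u\circ A$ refers to convergence of the translation maps in $E(X)$ on one side and in $E(\cM)$ on the other, and these are a priori different requirements; the direction from $E(X)$ to $E(\cM)$ is immediate from left continuity, but the converse is not. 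The paper resolves this by first proving an intrinsic reformulation of the $\tau$-closure (Fact \ref{fact: description of the tau-closure}): $\cl_\tau(A)$ equals the set of limits in $u\cM$ of nets $(\eta_ia_i)_i$ with $\eta_i\in\cM$, $a_i\in A$ and $\lim_i\eta_i=u$, which references only the semigroup operation and convergence \emph{inside} the minimal left ideal and is therefore manifestly preserved by your isomorphism. You should either prove that reformulation or supply the subnet argument directly; as written, this part of your proof is a genuine (if fillable) gap rather than pure bookkeeping.
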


\begin{proof}
Denote the semigroup operation on $E(X)$ by $*$ and on $E(\cM)$ by $\circ$ (although both are compositions of functions, but on different sets).
Let $f \colon \cM \to E(\cM)$ be given by $f(p):=l_p$, where $l_p(q) := p * q$. It is easy to check that $f$ is a semigroup and $G$-flow monomorphism. Put $\mathcal{N}:=f[\cM]\subseteq E(\cM)$. Thus, $f \colon \cM \to \mathcal{N}$ is a semigroup and $G$-flow isomorphism. We need to check that $\mathcal{N}$ is a minimal left ideal in $E(\mathcal{M})$. For that, first note that $\cM$ is a minimal left ideal in itself. Indeed, if $\cM'$ is a left ideal in $\cM$, then for any $p \in \cM'$ and $q \in E(X)$, taking an idempotent $u \in \cM$ such that $ p \in u\cM$, we have $p=u*p$, and so $q * p = q*(u*p)=(q*u)*p \in \cM'$ as $q*u \in \cM$ ($\cM$ is a left ideal in $E(X)$). Hence, $\cM'$ is a left ideal in $E(X)$ which is contained in $\cM$, so it must be equal to $\cM$ by minimality of $\cM$. Since $\cM$ is a minimal left ideal in itself, we get that
\begin{enumerate}
\item $\mathcal{N}$ is a minimal left ideal in itself.
\end{enumerate}
On the other hand, 
\begin{enumerate}
\item[(2)] $\mathcal{N}$ is a left ideal in $E(\cM)$.
\end{enumerate}
To prove (2), consider any $\eta \in E(\cM)$ and $p \in \mathcal{M}$, and we need to show that $\eta \circ l_p \in \mathcal{N}$. Using limits of nets, we easily see that there is $\tilde{\eta} \in E(E(X))$ such that $\tilde{\eta} |_{\cM}=\eta$. For any $q \in \cM$ we have: $(\eta \circ l_p)(q) = \eta(l_p(q))=\eta(p * q) = \tilde{\eta}(p * q) = \eta' * (p * q) =(\eta' * p) * q = l_{\eta' * p}(q) =f(\eta' * p)(q)$, where $\eta' \in E(X)$ satisfies $\tilde{\eta}(\tau) = \eta' * \tau$ for all $\tau \in E(X)$ (such $\eta'$ exists by Fact \ref{fact: E(E(X)) cong E(X)}). Hence, $\eta \circ l_p =f(\eta' *p) \in \mathcal{N}$ (and  $\eta'*p \in \cM$ as $\cM$ is a left ideal).

By (1) and (2), we get that $\mathcal{N}$ is a minimal left ideal of $E(\mathcal{M})$.

Thus, $f(u)\mathcal{N}$ is the ideal group of $\cM$, and $f |_{u\cM} \colon u\cM \to f(u)\mathcal{N}$ is a group isomorphism, where $u \in \cM$ is an idempotent. This isomorphism is topological (with respect to the $\tau$-topologies) by Fact \ref{fact: description of the tau-closure} (which expresses the $\tau$-closure in terms of the semigroup operation and convergence within the minimal left ideal in question) and the above observation that $f\colon \cM \to \mathcal{N}$ is an isomorphism of left topological semigroups.
\end{proof}

We can finally prove Conjecture \ref{conjecture: revised Newelski's conjecture} for countable $M$.

\begin{theorem}\label{thm: revised New conj}
Let $M$ be a countable model of a theory with NIP and $N \succ M$ be $\aleph_1$-saturated. Let $\cM$ be a minimal left ideal of  $S_G^{\fs}(N,M)$ and $u \in \cM$ an idempotent. Then the $\tau$-topology on $u\cM$  is Hausdorff.
\end{theorem}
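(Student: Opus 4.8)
The strategy is to realize the $G(M)$-flow $S_G^{\fs}(N,M)$ as an inverse limit of metrizable, tame, minimal flows and then transfer the Hausdorffness of their ideal groups (Corollary \ref{corollary: main corollary of Glasner's result}) along the inverse limit. First I would recall that $S_G^{\fs}(N,M) \cong S_{G,\ext}(M)$ is the Stone space of the Boolean algebra of externally definable subsets of $G(M)$; writing this Boolean algebra as the directed union of its finitely generated subalgebras closed under left $G(M)$-translation, say $\mathcal{A}_\Delta$ for $\Delta$ ranging over finite sets of externally definable subsets of $G(M)$, one gets $S_G^{\fs}(N,M) = \varprojlim_\Delta S_{G,\Delta}(M)$ as an inverse limit of $G(M)$-flows, where $S_{G,\Delta}(M)$ is the (metrizable, since $M$ is countable and $\mathcal{A}_\Delta$ is countable) Stone space of $\mathcal{A}_\Delta$. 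Each $S_{G,\Delta}(M)$ is a quotient $G(M)$-flow of $S_G^{\fs}(N,M)$; I would pass to a minimal subflow $X_\Delta \subseteq S_{G,\Delta}(M)$ (still metrizable), noting that by NIP each $S_{G,\Delta}(M)$, hence each $X_\Delta$, is a tame flow --- this is the standard fact that NIP implies tameness of the relevant type-flows, via Fact \ref{fact: Borel definability of types}-style alternation bounds (cf.~\cite{CS, ibarlucia2016dynamical, KrRz} and Definition \ref{def: tame}).

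Next I would connect the ideal group of $S_G^{\fs}(N,M)$ to those of the $X_\Delta$. Fix a minimal left ideal $\cM$ of $E(S_G^{\fs}(N,M))$ (identified with $S_G^{\fs}(N,M)$ via Fact \ref{fact: folklore on Ellis semigroups}) and an idempotent $u \in \cM$. For each $\Delta$ let $\pi_\Delta \colon S_G^{\fs}(N,M) \to X_\Delta$ be the restriction to the minimal subflow (after composing the projection onto $S_{G,\Delta}(M)$ with an appropriate retraction onto the minimal subflow $X_\Delta$; concretely one chooses the minimal subflow $X_\Delta = \tilde{\rho}_\Delta[\cM]$ where $\rho_\Delta$ is the projection, so that $\pi_\Delta$ is a genuine $G(M)$-flow epimorphism onto a minimal flow). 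By Fact \ref{fact: on pi-tilde}, $\tilde{\pi}_\Delta|_{u\cM} \colon u\cM \to \tilde{\pi}_\Delta(u)\tilde{\pi}_\Delta[\cM]$ is a $\tau$-topological quotient map onto the ideal group of $X_\Delta$, which is Hausdorff by Corollary \ref{corollary: main corollary of Glasner's result}. The key point is then that the maps $\tilde{\pi}_\Delta$ jointly separate points of $u\cM$: if $p \neq q$ in $u\cM \subseteq S_G(N)$, there is a formula $\varphi(x) \in \cL(N)$ with $\varphi(x) \in p$, $\neg\varphi(x) \in q$; the externally definable set $\varphi(G)$ (pulled back to $G(M)$) together with its finitely many $G(M)$-translates needed to generate a translation-invariant algebra lies in some $\mathcal{A}_\Delta$, and then $\pi_\Delta(p) \neq \pi_\Delta(q)$, hence $\tilde{\pi}_\Delta(p) \neq \tilde{\pi}_\Delta(q)$ (using that on $u\cM$, $\tilde\pi_\Delta$ is essentially evaluation on the $\Delta$-part of the type).

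Finally, I would assemble these facts: $u\cM$ is a quasi-compact $T_1$ semitopological group in the $\tau$-topology (Fact \ref{fact: tau-topology}(5)) which admits a separating family of continuous homomorphisms $\tilde{\pi}_\Delta$ into compact Hausdorff (topological) groups. A quasi-compact $T_1$ space with a separating family of continuous maps to Hausdorff spaces is Hausdorff: given $p \neq q$, pick $\Delta$ with $\tilde\pi_\Delta(p) \neq \tilde\pi_\Delta(q)$, separate these by disjoint open sets in the Hausdorff ideal group of $X_\Delta$, and pull back. This shows $u\cM$ is Hausdorff, completing the proof; combined with Corollary \ref{corollary: T2 implies topological} it is then a compact topological group.

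\textbf{Main obstacle.} The delicate point, and the one I would spend the most care on, is the reduction to the metrizable case --- specifically, arranging the inverse system of \emph{minimal} flows $X_\Delta$ compatibly with the projections, and verifying that $\tilde\pi_\Delta|_{u\cM}$ really does have image the full ideal group of the minimal flow $X_\Delta$ and that these maps separate points of $u\cM$ (rather than merely of the whole flow). This requires being careful about the difference between the projection of $S_G^{\fs}(N,M)$ onto $S_{G,\Delta}(M)$ (which need not be minimal) and its minimal subflows, and using Fact \ref{fact: on pi-tilde} together with the concrete description of $u\cM$ as living inside $\cM = \cM\cdot(\text{idempotent})$. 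The tameness of $S_{G,\Delta}(M)$ from NIP is standard but should be cited precisely (it is exactly the input that makes Glasner's structure theorem, Fact \ref{fact: Glasner's theorem}, applicable), and the countability of $M$ is used exactly once but essentially: it guarantees each $\mathcal{A}_\Delta$ is countable, hence each $X_\Delta$ is metrizable, which is a hypothesis of Fact \ref{fact: Glasner's theorem} that genuinely cannot be removed by current methods.
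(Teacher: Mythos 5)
Your overall architecture matches the paper's: decompose $S_G^{\fs}(N,M)\cong S_{G,\ext}(M)$ as an inverse limit of the spaces $S_{G,\Delta}(M)$ over finite $\Delta$, use countability of $M$ for metrizability and NIP for tameness, and feed the resulting tame metrizable minimal flows into Corollary \ref{corollary: main corollary of Glasner's result}. Your closing step (a separating family of $\tau$-continuous homomorphisms into Hausdorff groups forces Hausdorffness) is a legitimate substitute for the paper's ``ideal group of an inverse limit is the inverse limit of the ideal groups''. The problem is in the middle, exactly where you flag the ``main obstacle'': the maps you need do not exist as described. There is no $G(M)$-flow epimorphism from the non-minimal flow $S_G^{\fs}(N,M)$ onto the minimal subflow $\cM_\Delta\subseteq S_{G,\Delta}(M)$ --- a flow has no canonical retraction onto a minimal subflow, and the restriction map surjects onto all of $S_{G,\Delta}(M)$, not onto $\cM_\Delta$. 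Your proposed fix $X_\Delta=\tilde\rho_\Delta[\cM]$ conflates two different objects: $\tilde\rho_\Delta[\cM]$ is a minimal left ideal of the Ellis semigroup $E(S_{G,\Delta}(M))$, living in $S_{G,\Delta}(M)^{S_{G,\Delta}(M)}$, not a minimal subflow of $S_{G,\Delta}(M)$ (there is no analogue of Fact \ref{fact: folklore on Ellis semigroups} for $S_{G,\Delta}(M)$, which carries no semigroup structure). If instead you apply Fact \ref{fact: on pi-tilde} to the honest epimorphism $S_G^{\fs}(N,M)\to S_{G,\Delta}(M)$, you land in the ideal group of $E(S_{G,\Delta}(M))$, and Corollary \ref{corollary: main corollary of Glasner's result} says nothing about that group because $S_{G,\Delta}(M)$ is not minimal (for a non-minimal flow the ideal group of $E(X)$ can be strictly larger than that of a minimal subflow, e.g.\ for a disjoint union of two minimal flows).

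The missing ingredient is the bridge between the ideal group of the big flow and the ideal groups of genuinely minimal flows. The paper supplies it in two steps: first it restricts the projections to $\cM$ itself (which, via Fact \ref{fact: folklore on Ellis semigroups}, is simultaneously a minimal left ideal of the Ellis semigroup and a minimal subflow of $S_G(M)$), obtaining epimorphisms $\cM\to\cM_\Delta$ of \emph{minimal} flows and the presentation $\cM\cong\varprojlim_\Delta\cM_\Delta$, to which Fact \ref{fact: on pi-tilde} and Corollary \ref{corollary: main corollary of Glasner's result} do apply; second, it invokes Lemma \ref{lemma: ideal groups of X and M} to identify the ideal group of the flow $(G,\cM)$ with the ideal group of $S_G^{\fs}(N,M)$ itself. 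Your argument omits both steps, and without something playing the role of Lemma \ref{lemma: ideal groups of X and M} (or an analogous statement that the restriction $E(X)\to E(Y)$ to a minimal subflow $Y$ is a $\tau$-quotient on ideal groups, which is not among the quoted facts), the separation argument has no Hausdorff targets to separate into. Once that bridge is in place, your point-separation finish and the paper's inverse-limit finish are interchangeable.
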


\begin{proof}
Let $M^{\ext}$ be the Shelah expansion of $M$ obtained by adding predicates for all externally definable subsets of $M^n$ for all $n<\omega$. By Shelah's theorem \cite{She} (see also \cite{chernikov2013externally}), we know that $\Th(M^{\ext})$ has quantifier elimination, NIP, and all types in $S(M^{\ext})$ are definable (i.e.~all externally definable subsets of $M^{\ext}$ are definable). It follows that the Boolean algebra of externally definable subset of $G$ with respect to the original language coincides with the Boolean algebra of definable subsets of $G$ in the sense of the expanded language. Hence, $S_{G,\ext}(M)=S_G(M^{\ext})$. Thus, without loss of generality, we may assume that $M$ is a countable model of an NIP theory such that all types in $S(M)$ are definable. Then $S_{G,\ext}(M)=S_G(M)$, and the semigroup operation on $S_G(M)$ is given by $p*q = \tp(ab/M)$, where $a \models p$, $b \models q$, and $\tp(a/M,b)$ is finitely satisfiable in $M$.

It is well-known, and observed first time in the introduction of \cite{CS}, that NIP implies that $(G,S_G(M))$ is a tame flow. (A standard way to see it is to note that, by NIP, the characteristic functions of all the clopens in $S_G(M)$ are tame (in the sense of Definition \ref{def: tame}) and separate points, and so, by Stone-Weierstrass theorem, they generate a dense subalgebra of $C(S_G(M))$; then use the fact that tame functions on $S_G(M)$ form a closed subalgebra of $C(S_{G}(M))$ to conclude that all functions in $C(S_G(M))$ are tame.) However, $(G,S_G(M))$ is neither metrizable (even when the original language of $M$ was countable, the expanded language of $M^{\ext}$ is usually uncountable) nor minimal, so we cannot apply Corollary \ref{corollary: main corollary of Glasner's result} directly to $(G,S_G(M))$.

Let $\Delta$ range over all finite collections of definable subsets of $G$. For any such $\Delta$, let $\mathcal{B}_G(\Delta)$ be the Boolean $G$-algebra (so closed under left translations by the elements of $G$) of subsets of $G$ generated by $\Delta$, and denote by $S_{G,\Delta}(M)$ the space of all  ultrafilters of $\mathcal{B}_G(\Delta)$. Note that $S_{G,\Delta}(M)$ is naturally a $G$-flow (the action is by left translations), and $S_G(M) \cong \varprojlim_\Delta S_{G,\Delta}(M)$ as $G$-flows. Let $\cM$ be any minimal left ideal (and so minimal subflow) of $S_{G}(M)$. Let $\cM_\Delta \subseteq S_{G,\Delta}(M)$ be the image of $\cM$ under the restriction map. It is clearly a minimal subflow of $S_{G,\Delta}(M)$, and the above isomorphism induces a $G$-flow isomorphism $\cM \cong    \varprojlim_\Delta \cM_\Delta$ (see \cite[Lemma 6.42]{Rze18}).

Since $(G,S_G(M))$ is tame, so is $(G,S_{G,\Delta}(M))$ as a quotient of $S_G(M)$, and so is $(G,\cM_\Delta)$ as a subflow of  $S_{G,\Delta}(M)$ (using Definition \ref{def: tame} and Tietze's extension theorem, or see e.g.~\cite[Fact 4.20]{KrRz}). Moreover, $S_{G,\Delta}(M)$ is metrizable since $\mathcal{B}_G(\Delta)$ is countable by finiteness of $\Delta$ and countability of $G \subseteq M$ (and this is the only place where we the assumption that $M$ is countable). Hence, $\cM_\Delta$ is metrizable. 

Summarizing, $(G,\cM_\Delta)$ is a tame, metrizable, minimal flow, and hence its ideal group is Hausdorff by Corollary \ref{corollary: main corollary of Glasner's result}.

On the other hand, since $\cM \cong    \varprojlim_\Delta \cM_\Delta$, \cite[Lemma 6.42]{Rze18} implies that the ideal group of $\mathcal{M}$ equipped with  the $\tau$-topology is topologically isomorphic to an inverse limit of the ideal groups (with the $\tau$-topologies) of the flows $\cM_\Delta$.

By the last two paragraphs, we conclude that the ideal group of $\cM$ is Hausdorff.

Finally, by Fact \ref{fact: folklore on Ellis semigroups}, $E(S_G(M)) \cong S_G(M)$ as semigroups and $G$-flows, so $\cM$ can be identified as a $G$-flow with a minimal left ideal of $E(S_G(M))$.
Then, by Lemma \ref{lemma: ideal groups of X and M}, the ideal groups of $S_G(M)$ and $\cM$ are topologically isomorphic. Therefore, by the last paragraph, the ideal group of $S_G(M)$ is Hausdorff.
\end{proof}

Combining Theorem \ref{thm: min ideal of meas Hausd} with Theorem \ref{thm: revised New conj} we thus get (using the notation in Theorem \ref{thm: min ideal of meas Hausd}):
\begin{corollary}\label{cor: min ideal ctbl group}
	Assume that $G$ is countable and $\Th(G)$ is NIP, and  let $\cM$ be a minimal left ideal in $(S_{x}^{\fs}(\mathcal{G},G),*)$ and $u \in \cM$ an idempotent. 
	Then $\mathfrak{M}(\cM) * \mu_{u\cM}$  is a minimal left ideal of $\left(\mathfrak{M}_{x}^{\fs}(\mathcal{G},G), \ast \right)$, and  $\mu_{u\cM}$ is an idempotent which belongs to $\mathfrak{M}(\cM) * \mu_{u\cM}$.
\end{corollary}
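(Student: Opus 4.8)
\textbf{Proof proposal for Corollary \ref{cor: min ideal ctbl group}.}

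The plan is to observe that Corollary \ref{cor: min ideal ctbl group} is the special case of the results of Section \ref{section: top dyn} obtained by setting $M := G$ (expanded by predicates for all subsets of $G$, so that the externally definable sets coincide with the definable ones) and $N := \mathcal{G}$, combined with the resolution of the revised Newelski's conjecture in the countable case. First I would recall from Section \ref{sec: minimal left ideal of measures} that in the context $S_x^{\fs}(\mathcal{G},G)$ we have fixed a minimal left ideal $\cM$ and an idempotent $u \in \cM$, and that Theorem \ref{thm: min ideal of meas Hausd} states precisely that \emph{if} the $\tau$-topology on the ideal group $u\cM$ is Hausdorff, then $\mathfrak{M}(\cM) * \mu_{u\cM}$ is a minimal left ideal of $\left(\mathfrak{M}_x^{\fs}(\mathcal{G},G),\ast\right)$ and $\mu_{u\cM}$ is an idempotent belonging to it (where $\mu_{u\cM}$ is the Keisler measure induced by the normalized Haar measure on the compact topological group $u\cM$, well-defined by Lemma \ref{lemma: constructibility} and Proposition \ref{prop: def of inv meas on ideal group}).

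So the only thing to check is that the Hausdorffness hypothesis of Theorem \ref{thm: min ideal of meas Hausd} is satisfied under the assumptions of the corollary, namely that $G$ is countable and $\Th(G)$ is NIP. This is exactly Theorem \ref{thm: revised New conj}: taking $M := G$ and $N := \mathcal{G}$ there (note $\mathcal{G}$ is $|G|^+$-saturated, hence $\aleph_1$-saturated since $G$ is countable), and recalling that $S_x^{\fs}(\mathcal{G},G)$ is precisely the flow $S_{G,\ext}(M) = S_G^{\fs}(N,M)$ in the notation of Section \ref{sec: rev Newelski conj}, we conclude that the $\tau$-topology on $u\cM$ is Hausdorff. (Strictly speaking, Theorem \ref{thm: revised New conj} is stated for $M$ a countable model of an NIP theory; the present context where $G$ is itself the countable structure is the case $M = G$, and the internal reduction in the proof of Theorem \ref{thm: revised New conj} to a countable model in which all types are definable via the Shelah expansion covers this uniformly.)

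Having both ingredients in hand, the corollary follows immediately: apply Theorem \ref{thm: min ideal of meas Hausd} with the Hausdorffness of the $\tau$-topology on $u\cM$ now guaranteed by Theorem \ref{thm: revised New conj}. I do not expect any genuine obstacle here — all the substantive work has already been done in establishing Theorem \ref{thm: min ideal of meas Hausd} (the constructibility/Borel-measurability argument of Lemma \ref{lemma: constructibility}, which is where Hausdorffness of the $\tau$-topology is actually used, via Fact \ref{fact: strange_cont}) and Theorem \ref{thm: revised New conj} (Glasner's structure theorem for tame metrizable minimal flows together with the inverse-limit presentation of the flow of externally definable types). The only care needed is bookkeeping: making sure the notational identifications between the setup of Section \ref{section: top dyn} (with $\mathcal{G} \succ G$) and that of Section \ref{sec: rev Newelski conj} (with $N \succ M$) are made correctly, so that the minimal left ideal $\cM$ and idempotent $u$ referred to in the statement of the corollary are the same objects to which both theorems apply.
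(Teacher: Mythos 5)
Your proposal is correct and matches the paper exactly: the corollary is stated there as an immediate combination of Theorem \ref{thm: min ideal of meas Hausd} with Theorem \ref{thm: revised New conj}, which supplies the Hausdorffness of the $\tau$-topology on $u\cM$ in the countable NIP case. Your careful remarks on the notational identification between the two sections ($M=G$, $N=\mathcal{G}$) are exactly the right bookkeeping.
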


\section{Acknowledgements}
We thank Aaron Anderson, Martin Hils, Anand Pillay, Sergei Starchenko, Atticus Stonestrom and Mariana Vicaria for helpful conversations. Chernikov was  partially supported by the NSF
CAREER grant DMS-1651321 and by the NSF Research Grant DMS-2246598. Krupi\'{n}ski was supported by the Narodowe Centrum Nauki grant no. 2016/22/E/ST1/00450.

\printbibliography

\end{document}